\documentclass{amsart}
\usepackage[hmargin=1.7in, vmargin=1.7in]{geometry}
\usepackage[dvipsnames]{xcolor}

\usepackage{tikz}
\usetikzlibrary{positioning}
\usetikzlibrary{arrows.meta}
\usetikzlibrary{patterns}
\usetikzlibrary{calc} 
\usepackage[english]{babel}
\usepackage{amssymb, amsmath,mathtools}
\usepackage{mathrsfs,mathabx}
\usepackage{amscd}
\usepackage{verbatim}
\usepackage{stmaryrd}
\usepackage{subfig}

\usepackage{enumerate}
\usepackage{esint}
\newcommand{\nlongto}{\mathrel{\ooalign{$\longrightarrow$\cr\hidewidth$/$\hidewidth}}}

\usepackage[colorlinks,linkcolor={blue},citecolor={blue},urlcolor={purple},]{hyperref}

\usepackage[colorinlistoftodos,prependcaption,textsize=tiny]{todonotes}

\theoremstyle{plain}
\newtheorem{theorem}{Theorem}[section]
\theoremstyle{remark}
\newtheorem{remark}[theorem]{Remark}

\theoremstyle{plain}
\newtheorem{corollary}[theorem]{Corollary}
\newtheorem{lemma}[theorem]{Lemma}
\newtheorem{proposition}[theorem]{Proposition}

\newtheorem{definition}[theorem]{Definition}

\newtheorem{assumption}[theorem]{Assumption}

\numberwithin{equation}{section}

\def\N{{\mathbb N}}
\def\Z{{\mathbb Z}}
\def\R{{\mathbb R}}
\def\C{{\mathbb C}}

\newcommand{\E}{{\mathbf E}}
\renewcommand{\P}{{\mathbf P}}
\newcommand{\F}{{\mathscr F}}

\newcommand{\Progress}{\mathscr{P}}

\newcommand{\g}{\gamma}

\newcommand{\om}{\omega}
\renewcommand{\O}{\Omega}

\newcommand{\loc}{{\rm loc}}
\newcommand{\Tor}{\mathbb{T}}
\newcommand{\A}{\mathcal{A}}

\newcommand{\Sob}{\mathtt{Sob}}

\newcommand{\one}{{{\bf 1}}}
\newcommand{\embed}{\hookrightarrow}

\renewcommand{\emptyset}{\varnothing}
\newcommand{\wh}{\widehat}
\newcommand{\dd}{\mathrm{d}}
\newcommand{\Borel}{\mathscr{B}}

\newcommand{\ud}{u_{{\rm det}}}

\newcommand{\T}{\Tor}
\newcommand{\im}{\mathrm{i}}
\newcommand{\p}{\mathbb{P}}
\newcommand{\q}{\mathbb{P}^{\perp}}
\newcommand{\qq}{\mathbb{Q}}

\newcommand{\wt}{\widetilde}
\renewcommand{\i}{\mathrm{i}}
\newcommand{\Ls}{\mathbb{L}}
\newcommand{\Hs}{\mathbb{H}}
\newcommand{\Bs}{\mathbb{B}}

\newcommand{\D}{\mathscr{D}}
\newcommand{\Do}{\mathsf{D}}

\renewcommand{\H}{\mathcal{H}}

\newcommand{\uh}{\overline{v}}
\newcommand{\vh}{\overline{v}}
\newcommand{\whom}{\overline{w}}
\newcommand{\ph}{\overline{\pi}}

\newcommand{\supp}{\mathrm{supp}\,}

\newcommand{\pb}{\mathrm{p}}
\newcommand{\Cov}{\mathcal{Q}}
\newcommand{\Dir}{\mathrm{Dir}}
\newcommand{\Sto}{\mathrm{Sto}}
\newcommand{\En}{\mathcal{L}}
\newcommand{\Eno}{\mathcal{E}}

\newcommand{\sm}{\eta}

\newcommand{\Errd}{\mathrm{Err}^{{\rm det}}}
\newcommand{\Errs}{\mathrm{Err}^{{\rm sto}}}
\newcommand{\X}{\mathscr{X}}
\newcommand{\Y}{\mathscr{Y}}
\newcommand{\W}{\mathscr{W}}
\newcommand{\Errdloc}{\Errd_{{\rm loc}}}
\newcommand{\Errdglo}{\Errd_{{\rm loc}\to {\rm glo}}}

\newcommand{\qmax}{q_{{\rm max}}}
\newcommand{\qmin}{q_{{\rm min}}}
\newcommand{\pmax}{p_{{\rm max}}}
\newcommand{\pmin}{p_{{\rm min}}}

\newcommand{\ucut}{u_{{\rm cut}}}
\newcommand{\ucutd}{\overline{u}_{{\rm det}}}
\newcommand{\picutd}{\overline{\Pi}_{{\rm det}}}
\newcommand{\vcut}{w_{{\rm cut}}}
\newcommand{\tcut}{\tau_{{\rm cut}}}
\newcommand{\vcutd}{w_{{\rm cut,d}}}

\newcommand{\LL}{\mathcal{L}}
\newcommand{\const}{K}
\newcommand{\LinS}{\mathcal{S}}
\newcommand{\Lin}{\mathcal{R}}
\newcommand{\Tr}{\mathrm{Tr}}

\allowdisplaybreaks

\begin{document}

\thanks{The author is a member of GNAMPA (INdAM) and acknowledges support from INdAM through the GNAMPA 2026 project ``Fluidodinamica stocastica: irregolarità, trasporto e fenomeni di regolarizzazione''}

\author{Antonio Agresti}
\address{Department of Mathematics Guido Castelnuovo\\
	Sapienza University of Rome \\ P.le Aldo Moro 5\\ 00185 Rome\\ Italy.}
\email{antonio.agresti@uniroma1.it}

\date\today

\title[Absence of blow-up in 3D NSE\lowercase{s} with transport noise]{On the absence of blow-up in the 3D Navier-Stokes equations with transport noise}

\keywords{3D Navier-Stokes equations, regularization by noise, global smoothness, transport noise, stochastic maximal regularity, Caccioppoli inequality, scaling limits, homogenization, blow-up, iteration, large-scale regularity, enhanced dissipation.}

\subjclass[2010]{Primary: 60H50, Secondary: 60H15, 76M35, 35Q35}

\begin{abstract}
Establishing the global-in-time smoothness of solutions to the 3D Navier-Stokes equations (NSEs) with large initial data remains a long-standing open problem.
In this paper, we prove that the 3D NSEs driven by a suitably chosen transport noise---a physically motivated stochastic perturbation---admit global-in-time smooth solutions with high probability. 
This regularizing effect holds uniformly for initial data in arbitrarily large balls of subcritical function spaces with positive smoothness. 
\end{abstract}

\maketitle

\tableofcontents

\newpage

\section{Introduction}
\label{s:intro}
In this paper, we establish a regularizing effect of transport noise on the Navier-Stokes equations (NSEs) on the three-dimensional torus $\T^3=\R^3/\Z^3$:
\begin{equation}
\label{eq:navier_stokes_intro}
\left\{
\begin{aligned}
\partial_t u +(u\cdot \nabla)u&=-\nabla P+ \Delta u +\sqrt{\frac{3\mu}{2}}\sum_{k,\alpha}\, [-\nabla \wt{P}_{k,\alpha} +\theta_k (\sigma_{k,\alpha}\cdot\nabla) u]\circ \dot{W}^{k,\alpha}_t,\\
 \nabla \cdot u&=0,\\
 u(0,\cdot)&=u_0,
 \end{aligned}
\right.
\end{equation} 
where the summation is taken over $k\in \Z^3_0=\Z^3\setminus\{0\}$ and $\alpha\in \{1,2\}$, and 
$$
u:[0,\infty)\times\O\times \T^3\to \R^3 \quad \text{ and } \quad P,\ \wt{P}_{k,\alpha}:[0,\infty)\times\O\times \T^3\to \R
$$ 
denote the unknown velocity field, the deterministic pressure, and the stochastic pressures, respectively.
Moreover, $(\sigma_{k,\alpha})_{k,\alpha}$ is a family of smooth divergence-free vector fields, 
 $(W^{k,\alpha})_{k\in \Z^3_0,\alpha\in \{1,2\}}$ is a family of Brownian motions on a filtered probability space; both families are described in Subsection \ref{ss:probabilistic}, and $\circ$ denotes the Stratonovich integration. Finally, $\theta=(\theta_k)_{k\in \Z^3_0}\in \ell^2$ is the sequence of noise coefficients, and $\mu>0$ is the noise intensity.
The physical motivation for such stochastic perturbations is discussed in Subsection \ref{ss:physical_motivation} below.

\smallskip

A long-standing conjecture in stochastic fluid mechanics suggests that \emph{physically motivated} stochastic perturbations may improve the well-posedness theory for the 3D NSEs \cite{HB22_tame,F15_lectures_saint_flour,F15_open_3DNSE,FGP10}. 
Such a regularizing mechanism is consistent with the physical intuition that small-scale turbulence may stabilize the corresponding fluid dynamics (see Subsection \ref{ss:physical_motivation}).

\smallskip

We establish the existence of global-in-time smooth solutions to the stochastic 3D NSEs, provided that the coefficients and the noise intensity are appropriately chosen. An informal version of our main result, Theorem \ref{t:global_NSE}, reads as follows.

\begin{theorem}[Global-in-time regularity of 3D NSEs with transport noise -- Informal statement of Theorem \ref{t:global_NSE}]
\label{t:intro}
For all $\g>1/2$, $M \geq 1$ and $\varepsilon\in (0,1)$, there exist $\mu>0$ and $
\theta\in \ell^2(\Z^3_0) $ such that, for any divergence-free vector field $u_0$ satisfying
$$
\|u_0\|_{H^{\g}(\T^3;\R^3)}\leq M,
$$
there exists a unique smooth solution $u$ to the {\normalfont{3D NSEs}} with transport noise \eqref{eq:navier_stokes_intro} that is global in time with probability at least $1-\varepsilon$.
\end{theorem}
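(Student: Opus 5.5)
The strategy is the scaling-limit / enhanced-dissipation approach of Flandoli--Luo type, strengthened so that it works uniformly on balls of $H^{\g}$ with $\g>1/2$ and gives global (not just long-time) smoothness with high probability.

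\emph{Step 1: local theory and blow-up criterion.} Using stochastic maximal $L^p$-regularity for the Stratonovich transport noise, written in It\^o form with corrector $\tfrac{3\mu}{2}\cdot\tfrac{2}{3}\sum_k\theta_k^2\,\Delta$, I would show local well-posedness of \eqref{eq:navier_stokes_intro} in a subcritical space near $H^{\g}$. Instantaneous regularization then yields smoothness on $(0,\tau)$, where $\tau$ is the maximal time. I would also establish a serrin-type blow-up criterion: if $\sup_{t<\tau}\|u(t)\|_{H^{\g}}<\infty$ (or a suitable $L^p_tL^q_x$ norm is finite), then $\tau=\infty$. Independently, I would use the fact that small data in $H^{1/2}$ (critical) yield global smooth solutions with high probability. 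The noise is energy-neutral, so the stochastic analogue of the Fujita--Kato argument works pathwise up to stopping times.

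\emph{Step 2: comparison with an enhanced-dissipation deterministic problem.} Normalize $\sum_k\theta_k^2=1$ and take $\theta$ spread over high modes $N\le|k|\le 2N$, with $\mu$ large. The It\^o corrector produces the dissipation $(1+\mu)\Delta$. As $N\to\infty$, the martingale part converges to zero in negative Sobolev norms, so $u$ is close to the solution $\ud$ of $\partial_t \ud+(\ud\cdot\nabla)\ud=-\nabla P+(1+\mu)\Delta\ud$. For $\|u_0\|_{H^\g}\le M$ with $\g>1/2$, interpolation gives $\|e^{t(1+\mu)\Delta}u_0\|_{H^{1/2}}$ small after a short time $t_0\sim\mu^{-1}$. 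The nonlinear term is controlled by $M^{2}\mu^{-\delta}$ with $\delta=\delta(\g)>0$. Hence for $\mu$ large, $\ud$ is global and becomes small in $H^{1/2}$ by time $t_0$, uniformly over the ball.

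\emph{Step 3: quantitative error estimate on $[0,T]$.} I would bound $\E\sup_{t\le T\wedge\tau_{\rm cut}}\|u-\ud\|_{H^{1/2-}}$, where $\tau_{\rm cut}$ is the time the solution exits a ball of size $2M$ in $H^\g$. The error splits into a stochastic convolution term, which is small as $N\to\infty$ by the $\ell^\infty$ smallness of $\theta$ (Flandoli--Galeati--Luo mixing estimate), and deterministic error terms handled via maximal regularity. Combining this with the interpolation from Step 2 shows that, with probability $\ge1-\varepsilon$, $u(t_0)$ is small in $H^{1/2}$. Applying the small-data result of Step 1 from time $t_0$ then gives global smoothness, and uniqueness follows from the local theory.

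\textbf{Main obstacle.} The hardest step is Step 3. The error estimate must close in a norm strong enough to feed the critical small-data theorem: weak-norm convergence alone does not control $H^{1/2}$, and the cutoff must be removed uniformly in $u_0$. I would handle this first by interpolating the weak-norm error against uniform $H^{\g}$ bounds up to $\tau_{\rm cut}$. I would then show $\P(\tau_{\rm cut}<t_0)\le\varepsilon/2$ using energy-type estimates that exploit the large dissipation $\mu\Delta$ in the It\^o form.
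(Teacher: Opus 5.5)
\textbf{Gap in Step~3.} You need $\P(\tau_{\rm cut}<t_0)\le\varepsilon/2$ uniformly in $N$ and in $u_0$, that is, an $N$-independent control of $\sup_{t\le t_0}\|u(t)\|_{H^{\g}}$. This is essentially the whole content of the theorem, and the mechanism you propose for it does not exist.

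The term $\mu\Delta u$ in the It\^o drift is not dissipation. In the $L^2$ balance it is cancelled exactly by the It\^o correction, which is why the pathwise identity \eqref{eq:pathwise_preservation} does not depend on $\mu$. In the $\dot H^{\g}$ balance, the It\^o correction $\tfrac{3\mu}{2}\sum_{k,\alpha}\theta_k^2\|(-\Delta)^{\g/2}\p[(\sigma_{k,\alpha}\cdot\nabla)u]\|_{L^2}^2$ sends the modes of $u$ at frequencies $\ll N$ to frequencies $\sim N$. It is therefore of order $\mu N^{2\g}\|\nabla u\|_{L^2}^2$, while the drift dissipates only $O(\mu)\,\|(-\Delta)^{\g/2}\nabla u\|_{L^2}^2$ on those modes. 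Equivalently, $(\theta^N_k\sigma_{k,\alpha})_{k,\alpha}$ is bounded in $L^\infty(\T^3;\ell^2)$ but not in $H^{\g}(\T^3;\ell^2)$ for any $\g>0$ (see \eqref{eq:irregularity_of_coefficients}). So every commutator constant in an $H^{\g}$, $\dot H^{1/2}$ or $\dot H^{1}$ estimate blows up as $N\to\infty$. Taking $\mu$ large makes this worse, and a short $t_0$ does not help, since $\int_0^{t_0}\|\nabla u\|_{L^2}^2$ is only controlled by the energy.

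Uniform-in-$N$ energy methods stop at $L^\infty_tL^2_x\cap L^2_tH^1_x$, whose Sobolev index is at most $-3/2$. With only this input, the mixing and corrector error estimates you quote are small only in supercritical norms. Interpolating them up to $H^{1/2}$ therefore presupposes exactly the uniform subcritical bound that is missing.

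The same non-uniformity breaks your restart at $t_0$. The smallness threshold in the critical theory based on stochastic maximal regularity depends on the smoothness of the noise coefficients, hence on $N$. Transport noise conserves $L^2$ but not $\dot H^{1/2}$, so a pathwise Fujita--Kato argument does not give an $N$-independent threshold. A minor point: the limit viscosity is $1+\frac35\mu$, not $1+\mu$, because the It\^o drift also contains $-\mathcal{P}u$, which tends to $-\frac25\mu\Delta u$.

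\textbf{What the paper does instead.} It supplies exactly this missing ingredient, in a form that avoids both $H^{\g}$ and any restart.
\begin{enumerate}
\item It cuts off the nonlinearity in $L^{2p}(0,t;L^{2q})$, which turns the uniform-in-$N$ question into one about the linear oscillating Stokes system.
\item It proves $\E\int_0^\infty\|\vcut^N\|_{L^{q_0}}^{p_0}\lesssim 1$ with $p_0>2p$, $q_0>2q$. This space has spatial regularity $0$ (regularity must stay below $1$, since gradients do not converge strongly) but Sobolev index $>-1$.
\item That bound rests on the supremum moment bound of Theorem~\ref{t:almost_Linfty}. This comes from localized stochastic maximal regularity at the microscopic scale $N^{-1}$, where the coefficients are uniformly smooth. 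It is combined with an Avellaneda--Lin one-step improvement and blow-up iteration that controls the energy down to that scale, using stochastic Caccioppoli inequalities and quantitative local homogenization with rates depending only on $\delta N$. The result is then interpolated with the energy equality.
\item Compactness on $\R_+$ and uniqueness of the limit give convergence in probability in $L^{2p}(\R_+;L^{2q})$, uniformly over the ball of data.
\item The cutoff is removed for all times at once by choosing $R=R_0+1$, where $R_0$ bounds the deterministic solution with viscosity $1+\frac35\mu$.
\end{enumerate}
To rescue your architecture, you would need a homogenization-type argument of this kind to produce the $N$-uniform $H^{\g}$ control; energy estimates in It\^o form cannot provide it.
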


In the deterministic setting ($\mu=0$), proving global smoothness for arbitrarily large initial data is one of the Millennium Prize Problems \cite{F00_NSproblem}; see Remark \ref{r:high_dimensions} for the extension of our results to non-trivial deterministic forcing.

\smallskip

Establishing Theorem \ref{t:intro} is a highly nontrivial task, and it yields the first regulari\-zation-by-noise result for the 3D NSEs driven by a stochastic perturbation arising from physically motivated models of fluid transport. A primary challenge in establishing this regularization phenomenon is that transport noise does not affect the energy balance. Indeed, smooth solutions to \eqref{eq:navier_stokes_intro} satisfy the \emph{pathwise energy equality}:
\begin{equation}
\label{eq:pathwise_preservation}
\frac{1}{2}\int_{\T^3 }|u(t,x)|^2\,\dd x +\int_0^t\int_{\T^3}| \nabla u(s,x)|^2 \,\dd x \,\dd s =
\frac{1}{2}\int_{\T^3 }|u_0(x)|^2\,\dd x
\end{equation}
a.s.\ for all $t<\tau$. Here, $\tau$ is the lifetime of the smooth solution $u$. In particular, one recovers the same energy balance that holds in the absence of noise ($\mu=0$) independently of the noise intensity $\mu>0$, and of the coefficients $\theta=(\theta_k)_{k\in \Z^3_0}$.

\smallskip

The key to overcoming the limitations of standard energy methods lies in capturing the mixing properties of transport noise. This mechanism provides a mathematical framework to extract a macroscopic regularizing effect---reminiscent of the classical Boussinesq hypothesis \cite{Boussinesq77,Fgeophysical}---without altering the fundamental energy balance \eqref{eq:pathwise_preservation}. The resulting \emph{enhanced dissipation} of the Navier-Stokes dynamics is quantified in Corollary \ref{cor:enhanced_dissipation}.

\smallskip

The proof of Theorem \ref{t:intro} is outlined in Figure \ref{fig:scheme}, and the main ingredients are presented in Section \ref{s:proof_strategy}. The underlying analysis extends to subcritical initial data with positive Besov regularity (see Theorem \ref{t:global_NSE}):
\begin{equation}
\label{eq:Besov_regularity_data_intro}
\|u_0\|_{B^{1-2/p}_{q,p}(\T^3;\R^3)}\leq M \qquad \text{ where }\qquad p,q\in (2,\infty)\ \ \ \text{ satisfy }\ \ \  \frac{2}{p}+\frac{3}{q}<2.
\end{equation}
For instance, this accommodates initial data in $W^{\varepsilon,3}(\T^3;\R^3)$ for any $\varepsilon>0$.

\smallskip

In the remaining part of the introduction, we first compare our results with the deterministic case (Subsection \ref{ss:deterministic}), then we briefly discuss physical motivations of transport noise in stochastic fluid mechanics (Subsection \ref{ss:physical_motivation}), and finally, we outline some previous works on regularization by noise (Subsection \ref{ss:regularization_intro}).

\subsection{Comparison with the deterministic 3D NSEs}
\label{ss:deterministic}
The literature on deterministic 3D NSEs is vast; the reader is referred to \cite{LePi} for a comprehensive overview.
While the fundamental problem of global regularity remains open \cite{F00_NSproblem}, several recent breakthroughs have revealed various pathological behaviors of the deterministic 3D NSEs and related models, and, in particular, have shown that energy methods are insufficient to rule out singularities.

\smallskip

In the context of strong solutions, Tao \cite{T16_finite_time_blow_up} constructed an averaged version of the 3D NSEs that exhibits the same energy cancellation as the true 3D NSEs, yet admits strong solutions that blow up in finite time.
Moreover, in \cite[Subsection 1.3]{T16_finite_time_blow_up}, the author discusses how the mechanism driving this singularity formation might also be adapted to prove finite-time blow-up in the true deterministic NSEs.

\smallskip

In the context of weak solutions, several non-uniqueness results have been discovered. Notably, the breakthrough development of convex integration techniques by De Lellis and Sz\'{e}kelyhidi \cite{DS09_euler} has led to major results. Indeed, these techniques laid the groundwork for Isett's proof of Onsager's conjecture for the Euler equations \cite{I18_onsager}, and subsequently enabled Buckmaster and Vicol \cite{BV19_NSE} to prove the flexible non-uniqueness of weak solutions with finite kinetic energy (i.e., $u\in L^\infty_t(L^2_x)$). More recently, non-uniqueness of Leray-Hopf weak solutions (i.e., $u\in L^\infty_t(L^2_x)\cap L^2_t(H^1_x)$) was established for the forced 3D NSEs by Albritton, Bru\'{e} and Colombo \cite{ABC22_annals}, in accordance with the predictions of Jia and \v{S}ver\'{a}k \cite{JS15}. A possible extension to the unforced 3D NSEs has recently appeared as a preprint \cite{hou2025nonuniqueness}.

\smallskip

Finally, non-uniqueness phenomena in the class of finite-kinetic-energy weak solutions have been shown to persist even in the presence of transport noise, as established by Hofmanov\'a, Lange, and Pappalettera \cite{HLP24_convex} in the Euler case, and by Pappalettera \cite{P23_convex_NS} for NSEs via convex integration techniques. 

While these results show that transport noise cannot restore the uniqueness of weak solutions with finite kinetic energy \cite{HB22_tame}, Theorem \ref{t:intro} shows that suitably chosen transport noise prevents finite-time blow-up of strong solutions with high probability.

\subsection{Physical motivation}
\label{ss:physical_motivation}
Transport-type stochastic perturbations of the NSEs arise from several physically motivated modeling approaches: via homogenization of two-scale systems \cite{DP22_two_scale}, by a two-scale ansatz \cite[Subsection 1.2]{FL19} and \cite[Subsection 1.2]{A24_anomalous}, through principles of location uncertainty \cite{DM25_local,M14_derivation}, and from Newton's law \cite{MR01,MR04}. In the latter approach, \eqref{eq:navier_stokes_intro} is derived from the momentum balance, assuming that a fluid particle $\phi_t(x)$ located at $x$ at the initial time (i.e., $\phi_0(x)=x$) evolves according to the following stochastic differential equation:
\begin{equation*}
\frac{\dd}{\dd t}\phi_t(x) = u(t,\phi_t(x)) + \sqrt{\frac{3\mu}{2}}\sum_{k,\alpha}\theta_k\sigma_{k,\alpha}(\phi_t(x))\circ \dot{W}^{k,\alpha}_t  .
\end{equation*}
The physical intuition behind this decomposition is the so-called \emph{separation of scales} (see e.g., \cite{BCF91,FL32_book,MR01}). In a turbulent regime, the fluid is modeled as a two-scale system: a \emph{large regular} component $u$, reflecting our (continuum) observation scale, and a small, \emph{highly time-oscillatory} component $\sqrt{(3\mu)/2}\sum_{k,\alpha}\theta_k\sigma_{k,\alpha}\circ \dot{W}^{k,\alpha}_t$, reflecting the microscopic nature of the fluid.
Although such a strict separation of scales remains a modeling idealization, this reasoning is often used in multiscale fluid modeling \cite{F15_lectures_saint_flour,Fgeophysical,MTVE01}.

\smallskip

Transport noise is not the only stochastic perturbation arising in fluid modeling; indeed, alternative models emerge depending on the specific physical quantities being conserved (see, e.g., \cite{DM25_local,FL32_book,HLN21_annals,H15_SVP} and references therein). 
Nevertheless, transport noise is distinguished by two structural features: it is both energy-preserving (as seen in \eqref{eq:pathwise_preservation}) and scaling-invariant (see Subsection \ref{ss:almost_self}).
Moreover, we note that transport noise serves as a fundamental building block for many stochastic perturbations used in fluid modeling.

\subsection{Related works}
\label{ss:regularization_intro}
The question of whether noise can prevent the formation of singularities in the 3D NSEs, and more generally in fluid equations, has inspired an extensive body of research over the last few decades. While a comprehensive overview is beyond our scope, we highlight a few key results. For stationary weak solutions to the 3D NSEs with additive noise, a stochastic Caffarelli-Kohn-Nirenberg theory was established by Flandoli and Romito \cite{FR02}, proving the almost sure absence of singularities at any fixed time. Still, in the case of 3D NSEs with additive noise, Markov selections with the \emph{strong Feller property} were obtained by the same authors in \cite{FR08}, relying on the analysis of the infinite-dimensional Kolmogorov equation by Da Prato and Debussche \cite{DPD_3DNS} (see \cite{HZZ24} for a result on non-uniqueness in law via convex integration). In the case of multiplicative noise depending linearly on the velocity, global well-posedness with high probability for the Euler equations with large noise intensity was proven by Glatt-Holtz and Vicol \cite{GHV14}. More recently, for the 2D Euler equations with rough Kraichnan noise, Coghi and Maurelli \cite{CM26} established uniqueness beyond the Yudovich class (see also \cite{BGM25,JL25_SQG} for subsequent related works).

\smallskip

The regularizing potential of \emph{transport noise} was first discovered in the seminal work of Flandoli, Gubinelli, and Priola \cite{FGP10} on linear transport equations. In this setting, the authors proved well-posedness for drifts with H\"older regularity below the threshold of the classical DiPerna-Lions theory \cite{DPL89} (see \cite{BCK26} for recent related optimality results). However, as noted in \cite{FGP10}, transport noise with spatially independent coefficients cannot, in general, prevent blow-up in nonlinear fluid equations.
 
\smallskip

A fundamental step in understanding the regularization properties of transport noise was made by Galeati in \cite{G20}. In this work, the author showed that solutions to transport equations with suitably chosen transport noise converge to solutions of a deterministic heat equation (thus, with a non-trivial diffusivity) in a weak sense. 
This methodology is now commonly referred to as the \emph{scaling limit} (see Subsection \ref{ss:scaling_intro}). Flandoli and Luo \cite{FL19} subsequently exploited it to establish, with high probability, the global existence of smooth solutions to the 3D NSEs in vorticity form with transport noise.
However, as acknowledged by the same authors in \cite[Appendix 3]{FL19}, incorporating transport noise directly into the vorticity formulation lacks a clear physical motivation. The key analytic advantage of considering transport noise in the vorticity form is that the energy/enstrophy (i.e., the $L^2$-norm of the corresponding unknown) is a subcritical quantity. 
By contrast, the $L^2$-energy estimate for the velocity is supercritical; see Subsection \ref{ss:scaling_intro}. Therefore, the energy methods used in \cite{FL19} break down for the stochastic 3D NSEs \eqref{eq:navier_stokes_intro}, requiring techniques beyond energy estimates; see Section \ref{s:proof_strategy}.
The scaling limit technique has subsequently been widely employed across a broad range of SPDEs, yielding several results on regularization by noise \cite{A22,AKT26,FGL21,L23_regularization_NS}, mixing and enhanced dissipation \cite{FGL21_quantitative,L23_enhanced,LTZ24}, and anomalous dissipation \cite{A24_anomalous,BFLT26,HPZZ23}.
The mechanism exploited in \cite{FL19} is reminiscent of classical stabilization by noise results \cite{Arn90,ACW83}, as well as the suppression of blow-up by deterministic vector fields with suitable mixing properties \cite{IXZ21,KX16}. The latter line of research finds its roots in the seminal work of Constantin, Kiselev, Ryzhik, and Zlato\v{s} \cite{CKRZ08_mixing}. We refer to \cite{CZC24,MFNsurvery} and the references therein for subsequent related results in the deterministic setting.

Further progress in the context of scaling limits was achieved by the author in \cite{A24_global_small}, where, by combining maximal $L^p$-regularity methods for SPDEs pioneered by Krylov \cite{Kry} and van Neerven, Veraar, and Weis \cite{MaximalLpregularity}, the analogue of Theorem \ref{t:intro} was established when the Laplace operator in the stochastic NSEs \eqref{eq:navier_stokes_intro} is replaced by a hyperviscous operator $-(-\Delta)^{\alpha}$ with $\alpha>1$ (see also \cite{ABL26_Lie}). 
The former provides a significant improvement of the well-posedness theory for the 3D hyperviscous NSEs compared to the deterministic theory, where global regularity results are limited to the case $\alpha\geq \frac{5}{4}$ (so-called J.L.\ Lions exponent \cite{Lio69}), or more recently, $\alpha> \frac{5}{4}-\varepsilon_0$ for some $\varepsilon_0>0$ depending on the initial data, see \cite[Corollary 1.6]{CDLM20} and \cite{CH21}.
However, the presence of hyperviscosity breaks the natural scaling of the NSEs \eqref{eq:navier_stokes_intro} (see Subsection \ref{ss:scaling_intro}), and transport noise becomes a lower-order perturbation along the scaling limit (see \cite[Section 3]{A24_global_small} for details).

\smallskip

For the true stochastic NSEs \eqref{eq:navier_stokes_intro}, the approach in \cite{A24_global_small} breaks down. Indeed, the Laplace operator and the transport noise are of the same differential order (see \cite[Subsection 1.1]{AV21_NS} for the explicit scaling argument), and therefore, the latter cannot be treated via perturbation techniques. Overcoming this difficulty requires the development of new machinery, such as stochastic Caccioppoli inequalities, localized stochastic maximal regularity, quantitative localized scaling limits, and new supremum bounds for SPDEs with highly oscillating coefficients. Moreover, our investigation reveals novel structural connections between scaling limits and homogenization theory (we refer to the recent work of Armstrong and Vicol \cite{AV25} for another application of homogenization techniques to anomalous dissipation in fluid mechanics).

A thorough discussion of the novelty and strategy is given in the following section.

\begin{figure}[htpb]
    \centering
    \begin{tikzpicture}[
        >={Straight Barb[scale=0.8]}, 
        mathnode/.style={
            align=center,
            text width=4.2cm, 
            inner sep=4pt
        }
    ]

    \node[mathnode] (N2) {Local smoothing for\\ homogenized Stokes \\ \emph{Lemma \ref{l:local_smoothing_homogenized}} (\textsection\ref{ss:caccioppoli_intro}-\ref{ss:role_pressure_intro})}; 
    \node[mathnode, left=0.2cm of N2] (N3) {Stochastic Caccioppoli \\ inequality \\ \emph{Theorem \ref{t:caccioppoli}} (\textsection\ref{ss:caccioppoli_intro})};
    \node[mathnode, right=0.2cm of N2] (N1) {Spatially localized\\ scaling limits \\ \emph{Theorem \ref{t:universal_scaling_limit}} (\textsection\ref{ss:caccioppoli_intro}-\ref{ss:role_pressure_intro})};

    \node[mathnode, below=0.65cm of N2] (N4) {Control of energy at \\ microscopic scales \\ \emph{Theorem \ref{t:control_mesoscopic_energy}} (\textsection\ref{ss:control_energy_intro})};
    \node[mathnode] at (N3 |- N4) (N5) {Localized stochastic \\ maximal $L^p$-regularity \\ \emph{Theorem \ref{t:localized_SMR}} (\textsection\ref{ss:almost_self})};
    
    \coordinate[below=1.1cm of N4] (base Tier3);

    \node[mathnode, text width=5.5cm, left=2.5cm of base Tier3, anchor=center] (N6) {Supremum moment bounds  \\ for oscillating Stokes \\ \textbf{Theorem \ref{t:almost_Linfty}} (\textsection\ref{ss:almost_self})};
    
    \node[mathnode, right=2.5cm of base Tier3, anchor=center] (N7)  {Global energy bound \\ \emph{Lemma \ref{l:global_energy_estimate} or \eqref{eq:energy_estimate_vN}} (\textsection\ref{ss:almost_self})};   
    
    \node[mathnode, below=1.5cm of base Tier3] (N8b) {Almost optimal \\ $L^p(L^q)$-bounds for \\  oscillating Stokes \\ \emph{Theorem \ref{t:suboptimal_Lq_Rpositive}} (\textsection\ref{ss:scaling_intro})};   

    \node[mathnode, below=0.65cm of N8b] (N8) {Scaling limits for \\ Navier-Stokes \\ \emph{Theorem \ref{t:scaling_limit_cutoff}} (\textsection\ref{ss:scaling_intro})};

    \node[mathnode, below=0.65cm of N8] (N9) {Regularization by noise  \\  for 3D NSEs \\ \textbf{Theorem \ref{t:global_NSE}} (\textsection\ref{ss:scaling_intro})};
        
    \draw[->] (N3) -- (N4);
    \draw[->] (N2) -- (N4);
    \draw[->] (N1) -- (N4);
    
    \draw[->] (N5) -- (N6);
    \draw[->] (N4) -- (N6);
    
    \draw[->] (N6.south) -- ([xshift=-1.0cm]N8b.north);
    \draw[->] (N7.south) -- ([xshift=1.0cm]N8b.north);
    
    \draw[->] (N8b) -- (N8);
    \draw[->] (N8) -- (N9);

    \end{tikzpicture}
\caption{Proof architecture. The main result (Theorem \ref{t:global_NSE}) and the crucial intermediate step (Theorem \ref{t:almost_Linfty}) are highlighted in bold. Relevant subsections are indicated in parentheses.}    
\label{fig:scheme}
\end{figure}

\section{Proof outline}
\label{s:proof_strategy}
This section provides an overview of the proof strategy for Theorem \ref{t:global_NSE}, which generalizes Theorem \ref{t:intro}. 
The architecture is summarized in Figure \ref{fig:scheme} and proceeds via several steps. 
We present the strategy in reverse order, starting from the main theorem and tracing back to the fundamental estimates, to underline the role of each ingredient.
First, we explain how the regularization by noise for the 3D NSEs can be established via certain $L^p(L^q)$-estimates. Second, we introduce the central mathematical novelty of this paper: \emph{supremum moment bounds} for the stochastic oscillating Stokes system (Subsection \ref{ss:almost_self}). These bounds are derived by combining localized stochastic maximal $L^p$-regularity with \emph{uniform control of the energy at microscopic scales}. 
Finally, we present the main tools to obtain such uniform bounds at microscopic scales, namely, stochastic Caccioppoli inequalities, local smoothing for the homogenized Stokes system, and quantitative local scaling limits.

\smallskip

To establish the supremum moment estimates fundamental to our approach, we cast the classical large-scale regularity and blow-up ideas of Avellaneda and Lin \cite{AL87_compactness} into a novel quantitative stochastic framework (Subsection \ref{ss:control_energy_intro}).
Implementing this approach requires overcoming two fundamental obstructions:
\begin{itemize}
\item
\emph{Breakdown of qualitative methods}. Compactness techniques fail in the stochastic setting, as enforcing strong convergence in the probability space entails a loss of integrability, which destroys the propagation of bounds across spatial scales. Consequently, we establish quantitative error estimates for the oscillating Stokes system relative to its homogenized counterpart (see the end of Subsection \ref{ss:control_energy_intro}).
\item 
\emph{Non-locality and time irregularity of the pressures}. The deterministic and stochastic pressures provide several additional complications, particularly due to their non-local nature and lack of time regularization. To handle them, we follow the spirit of the celebrated work of Caffarelli, Kohn, and Nirenberg \cite{CKN82}, utilizing local solutions, local energy inequalities, and local approximations of the pressures to capture their smoothness (Subsection \ref{ss:role_pressure_intro}).
\end{itemize}

We begin by discussing the relevant background. For any unexplained notation, see Subsection \ref{ss:notation} below.

\subsection{Background: Criticality, scaling limits and $L^p(L^q)$-estimates}
\label{ss:scaling_intro}
As in the deterministic case, solutions to stochastic 3D NSEs \eqref{eq:navier_stokes_intro} are (locally) scaling invariant under the transformation $u\mapsto u_\lambda$ where $\lambda>0$ and 
\begin{align}
\label{eq:NS_scaling_intro}
u_{\lambda}(t,x)&=\lambda u(\lambda^2 t ,\lambda x), 
\end{align}
and the corresponding transformation for the deterministic and stochastic pressures, see \cite[Subsection 1.1]{AV21_NS} and Subsection \ref{ss:role_pressure_intro} below. 
In particular, invariant (or critical) spaces for the stochastic problem \eqref{eq:navier_stokes_intro} coincide with deterministic ones, e.g.,
\begin{equation}
L^3(\T^3;\R^3)\quad \text{ and }\quad B^{3/q-1}_{q,p}(\T^3;\R^3) \ \text{ for }\ q,p\in [1,\infty].
\end{equation}
Scaling invariance uniquely identifies the regularity threshold $-1$ in the sense of Sobolev index,\footnote{The Sobolev index of the Bessel-potential space $H^{s,q}$ on a $d$-dimensional domain is $s-d/q$, and similarly for Besov spaces $B^{s}_{q,p}$. This index rules the scaling of the corresponding homogeneous space: $\|f(\lambda \cdot )\|_{\dot{H}^{s,q}(\R^d)}=\lambda^{s-d/q}\|f\|_{\dot{H}^{s,q}(\R^d)}$. For an interval $I$ and under parabolic scaling, the space-time Sobolev index of $L^p(I;H^{s,q})$ is $-2/p+s-d/q$.} under which well-posedness \emph{cannot} in general be ensured (see e.g., \cite{ABC22_annals,AC23_energy_inequality}). Therefore, to establish Theorem \ref{t:intro} (or Theorem \ref{t:global_NSE}), we need to obtain information in a function space with space-time Sobolev index $\geq -1$. 

\subsubsection*{Scaling limits}
Consider a cutoff version of the NSEs \eqref{eq:navier_stokes_intro} on $\T^3$:
\begin{align}
\label{eq:navier_stokes_intro_cut}
\partial_t \ucut &+\phi_{\X}^R(\cdot,\ucut)(\ucut\cdot \nabla)\ucut=-\nabla \Pi+ \Delta \ucut \\
\nonumber
&\qquad \qquad  +\sqrt{\frac{3\mu}{2}}\sum_{k,\alpha}\, [-\nabla \wt{\Pi}_{k,\alpha} +\theta_k (\sigma_{k,\alpha}\cdot\nabla) \ucut]\circ \dot{W}^{k,\alpha}_t,
\end{align}
together with the conditions $\nabla \cdot \ucut=0$ and $\ucut(0)=u_0$.
In the above, for $R>0$ and $\phi\in C^{\infty}_{{\rm c}}([0,\infty))$ such that $\phi|_{[0,1]}=1$ and $\phi|_{[2,\infty)}=0$, we denote
\begin{equation}
\label{eq:cutoff_nonlinearity_intro}
\phi_{\X}^R(t,\ucut)
= \phi\big(R^{-1}\|\ucut|_{(0,t)}\|_{\X(t)}\big),
\end{equation}
where $\X(t)$ is a space of functions on $(0,t)\times\T^3$. 
The specific space $\X$ depends on the available \emph{a priori} bounds for \eqref{eq:navier_stokes_intro_cut} under \emph{rough} transport noise. However, due to the scaling argument below \eqref{eq:NS_scaling_intro}, the global well-posedness of \eqref{eq:navier_stokes_intro_cut} requires that $\X$ employed in the cutoff \eqref{eq:cutoff_nonlinearity_intro} has a space-time Sobolev index $\geq -1$. 
For instance, in our analysis, we choose 
\begin{equation}
\label{eq:space_chosen_scaling_limit}
\X(t)=L^{2p}(0,t;L^{2q}(\T^3;\R^3)) \quad \text{ where }\quad \frac{2}{p}+\frac{3}{q}<2.
\end{equation}
Following the arguments in \cite{FL19}, it follows that there exists a sequence $(\theta^N)_{N}\subset \ell^2(\Z^3_0)$ satisfying $\supp\theta^N =\{k\in \Z^3_0\,:\, N \leq |k|\leq 2N\}$ and 
\begin{equation}
\label{eq:scaling_limit}
\lim_{N\to \infty} \|\theta^N\|_{\ell^\infty}=0 \qquad \text{ and }\qquad \|\theta^N\|_{\ell^2}=1 \ \text{ for all }N\geq 1,
\end{equation}
such that the unique global solution $\ucut^N$ to \eqref{eq:navier_stokes_intro_cut} with $\theta=\theta^N$ converges in probability within a space $\Y$ (discussed below) to $\ucutd$, the solution of:
\begin{equation}
\label{eq:navier_stokes_intro_cut_lim}
\partial_t \ucutd +\phi_{\X}^R (\cdot,\ucutd)  \nabla\cdot (\ucutd \otimes \ucutd) 
=-\nabla \picutd+ \Big(1 + \frac{3\mu}{5}\Big)\Delta \ucutd ,
\end{equation}
together with the conditions $\nabla \cdot \ucutd=0$ and $\ucutd(0)=u_0$. The above is typically referred to as the \emph{scaling limit} due to its connection to the small-scale interpretation of transport noise; see Subsection \ref{ss:physical_motivation}.
The emergence of this additional dissipation is, to some extent, consistent with the Boussinesq hypothesis \cite{Fgeophysical,FL_boussinsesq}.
We stress that the enhanced viscosity $(3\mu)/5> 0$ appearing in the limit \eqref{eq:navier_stokes_intro_cut_lim} is not present in the original system \eqref{eq:navier_stokes_intro}, which satisfies the pathwise energy balance  \eqref{eq:pathwise_preservation}.

\subsubsection*{$L^p(L^q)$-estimates}
Provided that $\mu\gg 1$ and the space $\X$ has a space-time Sobolev index $\geq -1$, the deterministic limit \eqref{eq:navier_stokes_intro_cut_lim} is globally well-posed even without the cutoff. In this regime, one can select $R\gg 1$ large enough (depending on $u_0$ and $\mu$) so that $\phi_{\X}^R (t,\ucutd(t))=1$ for all $t>0$. Consequently, $\ucutd$ becomes a \emph{true} global solution to the deterministic 3D NSEs with enhanced viscosity.

Therefore, if the convergence $\ucut^N\to \ucutd$ can be established in a space $\Y=\X$ with a space-time Sobolev index $\geq -1$, then this immediately yields a global smooth solution to the \emph{original} stochastic 3D NSEs \eqref{eq:navier_stokes_intro} with high probability, thereby establishing Theorem \ref{t:intro}. 
Thus, by stochastic compactness, it suffices to prove a uniform-in-$N$ bound for the sequence $(\ucut^N)_{N\geq 1}$ in, e.g., 
\begin{equation}
\label{eq:convergence_space}
L^1(\O;\W) \qquad \text{ where }\qquad
\W\stackrel{{\rm c}}{\embed} \X.
\end{equation}
However, deriving such estimates for a sufficiently smooth space $\Y$ presents fundamental challenges due to the enhanced dissipation effect arising in the limiting system \eqref{eq:navier_stokes_intro_cut_lim}. Indeed, as follows from the results in \cite{A24_anomalous} (cf.\ Remark 3.4 therein), for any $t>0$:
\begin{equation*}
\int_0^t \int_{\T^3} |\nabla \ucut^N|^2 \to 
\Big(1+\frac{3}{5}\mu\Big) \int_0^t \int_{\T^3} |\nabla \ucutd|^2\ \  \text{ in probability}.
\end{equation*}
In particular, if $\mu>0$ and $u_0$ is not constant, then for all $t>0$,
\begin{equation}
\label{eq:convergence_lack_gradients}
\nabla \ucut^N\nlongto 
\nabla \ucutd \text{ in probability in }L^2((0,t)\times \T^3).
\end{equation}
Indeed, the limiting procedure \eqref{eq:scaling_limit}-\eqref{eq:navier_stokes_intro_cut_lim} generates large gradients, precluding the strong convergence of $\nabla \ucut^N$ towards $\nabla \ucutd$.
Analytically, this behavior stems from the \emph{lack} of uniform-in-$N$ regularity of the noise coefficients along the sequence as in \eqref{eq:scaling_limit}. More precisely, for any $\gamma>0$, the sequence of noise coefficients satisfies (see e.g., \cite[Proposition 4.1]{Arole}):
\begin{equation}
\label{eq:irregularity_of_coefficients}
\sup_{N} \|(\theta^N_k \sigma_{k,\alpha})_{k,\alpha}\|_{L^\infty(\T^3;\ell^2)}<\infty \quad \text{ and }\quad 
\sup_{N} \|(\theta^N_k \sigma_{k,\alpha})_{k,\alpha}\|_{H^\gamma(\T^3;\ell^2)}=\infty.
\end{equation} 
In practice, the noise coefficient sequence lacks uniform smoothness due to the highly oscillatory nature of the coefficients, since $\supp\theta^N\sim N$. 
As is well known (see, e.g., \cite{FGL21}), standard energy estimates only provide uniform-in-$N$ bounds in the path space
$$
L^\infty(0,T;L^2(\T^3;\R^3))\cap L^2(0,T;H^1(\T^3;\R^3))\cap C^{\alpha_0}(0,T;H^{-\alpha_1}(\T^3;\R^3)) 
$$
for some $\alpha_0>0$ and $\alpha_1\gg 1$. The Sobolev index of the above is at most $-3/2$, with a substantial gap to the critical threshold $-1$. This limitation mirrors the well-known failure of energy methods to rule out blow-up for the deterministic 3D NSEs.

Therefore, as observed in \cite[Section 1]{A24_global_small}, obtaining the necessary uniform-in-$N$ estimates requires a space $\W$ satisfying \eqref{eq:convergence_space} that has spatial regularity strictly less than $1$---to account for the loss of gradient convergence \eqref{eq:convergence_lack_gradients}---but possesses a space-time Sobolev index strictly greater than $-1$.

The space \eqref{eq:space_chosen_scaling_limit} serves as a prototype example for this purpose. The derivation of these uniform-in-$N$ estimates for $\ucut^N$ is outlined in the subsequent subsections. Due to the presence of the cutoff function $\phi_{\X}^R(\cdot,\ucut^N)$, deriving these uniform-in-$N$ estimates reduces to proving a linear bound for the corresponding Stokes system with \emph{highly oscillating coefficients} (in principle, only bounded and measurable) within a solution space having a space-time Sobolev index $>-1$. The rest of this section is devoted to establishing this property.

\subsection{Almost self-similarity, microscopic scale and supremum moment bounds}
\label{ss:almost_self}
Motivated by the previous arguments, we now seek uniform-in-$N$ estimates for the stochastic (linear) Stokes system with \emph{highly oscillating coefficients}:
\begin{equation}
\label{eq:turbulent_Stokes_microscopic_intro}
\left\{
\begin{aligned}
\partial_t v^N &=-\nabla \pi^N+ (1+\mu)\Delta v^N+ \nabla \cdot F - \frac{3\mu}{2}\sum_{k,\alpha} \theta^N_k \nabla \cdot (\nabla \wt{\pi}_{k,\alpha}^N\otimes \sigma_{-k,\alpha})\\ 
&  +\sqrt{\frac{3\mu}{2}} \sum_{k,\alpha} [-\nabla \wt{\pi}^{N}_{k,\alpha} +\theta^{N}_{k}(\sigma_{k,\alpha}\cdot\nabla) v^N+\theta^{N}_{k}\sigma_{k,\alpha}\cdot G]\,\dot{W}^{k,\alpha}_t,\\
\nabla \cdot v^N&=0, \qquad v^N(0)=0,
\end{aligned}
\right.
\end{equation}
where $\theta^N$ are as in \eqref{eq:scaling_limit} and $F$ and $G$ are progressively measurable forcings from the regularity class 
\begin{equation}
\label{eq:forcing_assumption_intro}
L^r(\O;L^p(0,T;L^q(\T^3;\R^{3\times 3}))) \ \  \text{ where } \ p,q\in (2,\infty)\text{ and }r\in (1,\infty).
\end{equation}
The stochastic forcing $G$ is used to treat non-zero initial data (see the proof of Theorem \ref{t:suboptimal_Lq_finite_time} for details). In the above, the coefficients $p,q\in (2,\infty)$ can be chosen to be equal at the expense of a less general regularity class in Theorem \ref{t:global_NSE}. However, it will be essential to choose $r\gg 1$ differently from $p,q$, see \eqref{eq:boundedness_Lr_quenched_supremum_intro} below. 

Compared with \eqref{eq:navier_stokes_intro_cut}, the stochastic Stokes system \eqref{eq:turbulent_Stokes_microscopic_intro} is in the It\^o form. The additional terms $\mu\Delta v^N$ and $ \frac{3\mu}{2}\sum_{k,\alpha} \theta^N_k \nabla \cdot (\nabla \wt{\pi}_{k,\alpha}^N\otimes \sigma_{-k,\alpha})$ appearing in the deterministic part of \eqref{eq:turbulent_Stokes_microscopic_intro} are the so-called It\^o-Stratonovich correctors, and as is well-known, arise when reformulating Stratonovich noise into its It\^o counterpart (Subsection \ref{ss:ito_reformulation}). Crucially, the term $\mu\Delta v^N$ does not provide an additional dissipation; it only compensates for the energy creation of the It\^o noise.  

\smallskip

The deterministic and stochastic pressures contribute non-trivially to \eqref{eq:turbulent_Stokes_microscopic_intro}, from both dynamical and mathematical points of view. For clarity of exposition, we postpone comments on their analysis to Subsection \ref{ss:role_pressure_intro}.

\subsubsection*{Almost self-similarity and microscopic scale}
Unlike periodic and stochastic homogenization \cite{SKM19_book,S18_book}, the system \eqref{eq:turbulent_Stokes_microscopic_intro} does not have a clear self-similar structure, not even for the standard choice (used e.g., in \cite{FL19}):
\begin{equation}
\label{eq:choice_thetaN_intro}
\theta^N_k =\frac{\Theta^N_k}{\|\Theta^N\|_{\ell^2}}\qquad \text{ where }\qquad \Theta_k^N =\frac{1}{|k|^{a}} \one_{\{N\leq |k|\leq 2N\}},
\end{equation}
for some fixed $a>0$. 
However, from a regularity perspective, exact global scaling is not essential; it suffices to identify a characteristic spatial scale at which the system maintains uniform smoothness (or oscillations).
To illustrate this, let us note that the covariance associated with the driving noise in \eqref{eq:turbulent_Stokes_microscopic_intro} is given, for $x,z\in \T^3$, by (cf.\ \cite[Remark 1.8]{FGL21_quantitative})
\begin{equation}
\label{eq:covariance_intro}
\Cov^N (z)=\frac{3\mu}{2} \sum_{k,\alpha}(\theta_k^N)^2 \sigma_{k,\alpha}(x+z)\otimes \sigma_{-k,\alpha}(x)
=\frac{3\mu}{2C_N}\sum_{N\leq |k|\leq 2N} \frac{e^{2\pi \i z\cdot k}}{|k|^{2a}} \Big(\mathrm{Id}-\frac{k\otimes k}{|k|^2}\Big)
\end{equation}
where $C_N=\sum_{N\leq |k|\leq 2N} |k|^{-2a}$. The irregularity of the noise coefficients \eqref{eq:irregularity_of_coefficients} can be read off the covariance by noting that $\Cov^N$ is uniformly-in-$N$ \emph{pointwise} bounded on $\T^3$, but its $C^\gamma(\T^3)$-norm diverges as $N\to \infty$ for any $\gamma>0$. This is due to the highly oscillatory nature of the coefficients, which are concentrated at frequencies $|k|\sim N$.

However, zooming into the spatial domain by a factor $N^{-1}$, one sees roughly the same amount of oscillations (see Figure \ref{fig:microscopic_scale}): For any $z\in\T^3$,
\begin{align*}
\Cov^N (N^{-1}z)
\to \frac{3\mu}{2C_{a}}
\int_{1\leq |\xi|\leq 2} \frac{e^{2\pi \i z\cdot \xi}}{|\xi|^{2a}} \Big(\mathrm{Id}-\frac{\xi \otimes \xi }{|\xi |^2}\Big)\,\dd \xi,
\end{align*}
where $C_{a}=\int_{1\leq |\xi|\leq 2}|\xi|^{-2a}\,\dd \xi$. In particular, the rescaled covariance $\Cov^N(N^{-1}\cdot)$ converges to a smooth covariance kernel, together with its higher-order derivatives. This remains valid at the level of the SPDE, where the transport noise converges (formally) to another transport-type noise with \emph{regular} (but possibly different) coefficients. In this sense, the noise in \eqref{eq:turbulent_Stokes_microscopic_intro} is \emph{almost self-similar}.
 
Analytically, at the scale $N^{-1}$, the rescaled coefficients satisfy (see Lemma \ref{l:uniform_smoothness_microscopic_scale})
\begin{equation}
\label{eq:regularity_of_coefficients_rescaled}
\sup_{N} \|(\theta^N_k \sigma_{k,\alpha}(N^{-1}\cdot ))_{k,\alpha}\|_{C^\gamma(B;\ell^2)}<\infty\  \text{ for all }\gamma>0 ,
\end{equation}
where $B\subseteq \R^d$ is a ball of finite radius, cf.\ \eqref{eq:irregularity_of_coefficients}.
For this reason, and in parallel with homogenization theory, we say that $N^{-1}$ is the \emph{microscopic scale} for the oscillating stochastic Stokes system \eqref{eq:turbulent_Stokes_microscopic_intro}.

\begin{figure}[htbp]
    \centering
    \begin{tikzpicture}[
        wave/.style={thick, smooth, samples=200},
        scale=0.8,
        transform shape
    ]
        
        \draw[wave, domain=0:4] plot (\x, {1.5*sin(270*\x)});

        \draw[<->, thick] (1, -1.8) -- (2.333, -1.8) node[midway, below=2pt] {$N_0^{-1}$};

        \draw[wave, domain=6:10] plot (\x, {1.5*sin(720*(\x-6))});
        
        \draw[<->, thick] (7.375, -1.8) -- (7.875, -1.8) node[midway, below=2pt] {$N_1^{-1}$};

    \end{tikzpicture}
    \caption{Uniform smoothness at the microscopic scale, where $N_1\gg N_0$.}
    \label{fig:microscopic_scale}
\end{figure}

\subsubsection*{Localized stochastic maximal $L^p$-regularity and supremum moment bounds}
Let $Q$ be a parabolic cylinder with center $(t_0,x_0)$. The just-discussed almost self-similarity implies that the coefficients of the stochastic Stokes system \eqref{eq:turbulent_Stokes_microscopic_intro} at the microscopic scale $N^{-1}Q$ are uniformly-in-$N$ bounded, for example, in H\"older continuous norms, see \eqref{eq:regularity_of_coefficients_rescaled}.
In the latter situation, parabolic smoothing in the form of stochastic maximal $L^p$-regularity methods applies (Section \ref{s:localized_smr}). By localization, it follows that, for any \emph{local solution} $v^N$ to \eqref{eq:turbulent_Stokes_microscopic_intro} on $N^{-1} Q$, the following uniform-in-$N$ pointwise estimate holds (see Corollary \ref{cor:SMR_microscopic_estimate}):
\begin{equation}
\begin{aligned}
\label{eq:corollary_vN_intro}
\big(\E |v^N(t_0,x_0)|^r\big)^{1/r}
&\lesssim \big(\E\|F\|^r_{L^p(I;L^q(B))}\big)^{1/r}+\big(\E\|G\|^r_{L^{p}(I;L^q(B))}\big)^{1/r}\\
& +\big(\E\|v^N\|_{\underline{L}^2(N^{-1}Q)}^r\big)^{1/r}+ \mathscr{C}_Q[\pi^N,(\wt{\pi}^N_{k,\alpha})_{k,\alpha}]
\end{aligned}
\end{equation}
where $\mathscr{C}_Q[\pi^N,(\wt{\pi}^N_{k,\alpha})_{k,\alpha}]$ collects the pressure contributions, $\underline{L}^2$ denotes the averaged Lebesgue spaces 
$\|v^N\|_{\underline{L}^2(N^{-1}Q)}=\big(\fint_{N^{-1}Q}|v^N|^2\big)^{1/2}$, $r\in (1,\infty)$, and 
\begin{equation}
\label{eq:parameters_condition_Linfty_bounds}
\frac{2}{p}+\frac{3}{q}<1.
\end{equation}
As is well-known, the previous condition is necessary even in the absence of noise for the pointwise bound \eqref{eq:corollary_vN_intro} to hold. Targeting the pointwise bound \eqref{eq:corollary_vN_intro} under the assumption \eqref{eq:parameters_condition_Linfty_bounds} is a known strategy in homogenization (cf.\ \cite[Section 4]{S18_book}). This approach exploits the `rigidity' of the $L^\infty$-norm on continuous functions: because pointwise evaluations are well defined, the left-hand side of \eqref{eq:corollary_vN_intro} can be made independent of the microscopic scale $N^{-1}$.

Local smoothing estimates such as \eqref{eq:corollary_vN_intro} are well-known in the context of PDEs, see e.g., \cite{GM12_appunti,S18_book}. 
Note that, to `follow' the scale $N^{-1}Q$, we must work with local solutions to the oscillating Stokes system \eqref{eq:turbulent_Stokes_microscopic_intro}. This creates several difficulties in handling the pressures, see Subsection \ref{ss:role_pressure_intro}.

\smallskip

The control of the seemingly innocent lower-order term
\begin{equation}
\label{eq:energy_microscopic_intro}
\big(\E\|v^N\|_{\underline{L}^2(N^{-1}Q)}^r\big)^{1/r}\quad \text{ (energy at the microscopic scale $N^{-1}Q$)}
\end{equation}  
appearing on the right-hand side of \eqref{eq:corollary_vN_intro} is highly involved and will occupy the rest of the discussion, see Subsections \ref{ss:control_energy_intro} and \ref{ss:caccioppoli_intro}, while the non-trivial pressure contribution is discussed in Subsection \ref{ss:role_pressure_intro}.
To see the difficulty in controlling the energy at microscopic scales, note that the norm $\underline{L}^2(N^{-1}Q)$ as $N\to \infty$ approximates the pointwise evaluation at the center of the cylinder $(t_0,x_0)$ (Lebesgue's differentiation).

\smallskip

For the moment, we first discuss how \eqref{eq:corollary_vN_intro}, estimating the energy at the microscopic scales, actually leads to a key bound for the scaling limit argument in Subsection \ref{ss:scaling_intro}.
More precisely, the former estimate and a standard covering argument imply (see Theorem \ref{t:almost_Linfty} and Figure \ref{fig:cubes_nested}) a corresponding bound for the quantity:
\begin{equation}
\label{eq:supremum_bound_quenched_intro}
\sup_{(t_0,x_0)\in I\times \T^3} \big(\E |v^N(t_0,x_0)|^r\big)^{1/r}\quad \text{ (Supremum moment norm)}
\end{equation}
where $I$ is an interval. 
The above supremum term does not perfectly align with the scaling limit argument of Subsection \ref{ss:scaling_intro}, due to the order of expectation with respect to the supremum over $I\times \T^3$.
To compensate for this, one uses the fact that, trivially, 
\begin{equation}
\label{eq:boundedness_Lr_quenched_supremum_intro}
\big(\E \|v^N\|_{L^r(I\times \T^3)}^r\big)^{1/r} \leq |I|^{1/r} 
\sup_{(t_0,x_0)\in I\times \T^3} \big(\E |v^N(t_0,x_0)|^r\big)^{1/r}.
\end{equation}
The advantage is that the expectation on the right-hand side of \eqref{eq:boundedness_Lr_quenched_supremum_intro} now appears in the right order. However, combining \eqref{eq:boundedness_Lr_quenched_supremum_intro} with \eqref{eq:corollary_vN_intro} comes with two expenses:
\begin{itemize}
\item \emph{Lack of sharpness}. The space-time Sobolev index of the norm on the left-hand side of \eqref{eq:boundedness_Lr_quenched_supremum_intro} is $-\frac{5}{r}<0$ while the optimal one for the solution to \eqref{eq:turbulent_Stokes_microscopic_intro} with forcing in \eqref{eq:forcing_assumption_intro} satisfying \eqref{eq:parameters_condition_Linfty_bounds} is $1-\frac{2}{p}-\frac{3}{q}>0$. 
\item \emph{Need for high moments $r\gg 1$}. The sharpness of the bound obtained by chaining \eqref{eq:corollary_vN_intro} and \eqref{eq:boundedness_Lr_quenched_supremum_intro} increases as $r\to \infty$. Thus, one needs moments for Lebesgue-type norms of $v^N$ that are much higher than the spatial integrabilities.
\end{itemize}
Fortunately, the lack of sharpness does not influence the scaling limit as a loss of Sobolev index is already present due to the use of a compact embedding in \eqref{eq:convergence_space}.

\smallskip

Finally, we remark that, once a (sub-optimal) estimate is established under the conditions \eqref{eq:parameters_condition_Linfty_bounds}, analogous bounds can be deduced for other ranges by interpolation with the energy inequality (see Figure \ref{fig:scheme} and Section \ref{s:uniform_Lp_estimates}).
In this way, one recovers $L^p(L^q)$-estimates with parameters as in the second condition of \eqref{eq:Besov_regularity_data_intro}.
 
\subsection{Large-scale regularity, blow-up and cumulative frequency} 
\label{ss:control_energy_intro}
This subsection outlines the core ideas behind the uniform-in-$N$ energy bounds at the microscopic scale \eqref{eq:energy_microscopic_intro}. We revisit, in this context, the arguments of Avellaneda and Lin \cite{AL87_compactness} used to prove optimal regularity in the homogenization of elliptic PDEs, which are rooted in the study of minimal surfaces \cite{G83_annals}. The reader is referred to \cite[Subsection 4.1]{S18_book} for a streamlined exposition. 
A discussion of the breakdown of qualitative methods and difficulties related to the pressures in \eqref{eq:turbulent_Stokes_microscopic_intro} is postponed to Subsections \ref{ss:caccioppoli_intro} and \ref{ss:role_pressure_intro}, respectively.

\subsubsection*{Homogenization and large-scale regularity}
The core intuition driving this energy control at the cutoff scale $N^{-1}Q$ is as follows.
As $N\to \infty$, the averaged norm $\underline{L}^2(N^{-1}Q)$ shrinks to a pointwise evaluation. Simultaneously, following the discussion in Subsection \ref{ss:scaling_intro}, the solution $v^N$ \emph{homogenizes} to the solution to
\begin{align}
\label{eq:homogenized_stokes_intro}
\partial_t \vh=-\nabla \ph+ \big(1+\tfrac{3}{5} \mu\big) \Delta \vh+  \nabla \cdot \big(F-\tfrac{2}{5} \mu\, G^\top\big)   \quad \text{ on }Q.
\end{align}
From parabolic regularity, and under suitable assumptions on $\ph$ (see Subsection \ref{ss:role_pressure_intro}), $\vh$ is \emph{smooth} on the concentric parabolic cylinder with half the scale $(1/2)Q$. Consequently, pointwise evaluation at the center $(t_0,x_0)$ is well defined.

Therefore, one might hope for a delicate compensation between the emerging smoothness of $v^N$ and the singular nature of the norm $\underline{L}^2(N^{-1}Q)$. This is obtained below by following the ideas of \cite{AL87_compactness}.

\smallskip

From this perspective, achieving uniform control of the energy \eqref{eq:energy_microscopic_intro} down to the microscopic scale naturally translates into a \emph{large-scale regularity} result for the highly oscillatory system \eqref{eq:turbulent_Stokes_microscopic_intro}, in the sense that one can ``access'' the energy of the solution $\big(\E\|v^N\|_{\underline{L}^2(\varrho Q)}^r\big)^{1/r}$ uniformly in $N$ only at a (large) side length $\varrho\gtrsim N^{-1}$.

\subsubsection*{One-step improvement, iteration and blow-up}
Here, we describe how to control the energy at the microscopic scale. 
Recall that $F$ and $G$ are as in \eqref{eq:forcing_assumption_intro} with parameters satisfying \eqref{eq:parameters_condition_Linfty_bounds} and $r\gg 1$. 
Letting  $Q=I\times B$, we decompose \eqref{eq:energy_microscopic_intro} as
\begin{align}
\label{eq:average_decomposition_oscillation_intro}
 &\big(\E\|v^N\|_{\underline{L}^2(N^{-1}Q)}^r\big)^{1/r}\\
 \nonumber
 &\leq 
 \underbrace{\Big(\E\Big\|v^N-\fint_{N^{-1}B} v^N\Big\|_{\underline{L}^2(N^{-1}Q)}^r \Big)^{1/r}}_{\text{Oscillations}}
+ \underbrace{\Big(\E\Big\|\fint_{N^{-1}B} v^N\Big\|_{\underline{L}^2(N^{-1}I)}^r \Big)^{1/r}}_{\text{Average}}.
\end{align}  
The idea behind the above decomposition is as follows. 
Although the condition $\frac{2}{p}+\frac{3}{q}<1$ is necessary for the $L^\infty$-bounds  \eqref{eq:supremum_bound_quenched_intro}, it (typically) also implies H\"older continuity of the solutions (equivalent to decay of averaged $L^2$-oscillations from the local mean by Campanato's lemma). In particular, the second term on the right-hand side of \eqref{eq:average_decomposition_oscillation_intro} can be bounded by the oscillation and a telescoping argument. The latter is non-immediate and requires the stochastic Caccioppoli inequality discussed below (see Step 1 in the proof of Theorem \ref{t:control_mesoscopic_energy} in Subsection \ref{ss:control_mesoscopic_energy_proof}).
However, in the following, we only comment on the bounds of the oscillations. 

We also mention that in the decomposition \eqref{eq:average_decomposition_oscillation_intro}, we used spatial averages rather than space-time averages (as is more common in parabolic PDEs), not only for adaptedness issues, but fundamentally due to the lack of temporal regularity of the homogenized pressure $\ph$ in \eqref{eq:homogenized_stokes_intro}, which limits the regularity of $\vh$ (see Lemma \ref{l:local_smoothing_homogenized}).

\smallskip

A direct bound on the oscillation at the microscopic scale (i.e., the first term on the right-hand side of  \eqref{eq:average_decomposition_oscillation_intro}) does not seem possible. Following Avellaneda and Lin \cite{AL87_compactness}, the argument relies on two main pillars. The first is the \emph{one-step improvement}:

\begin{lemma}[One-step improvement -- Informal version of Lemma \ref{l:one_step_improvement}]
There exists $\g_0>0$, a spatial scale $\sm\in (0,1/4]$ and a frequency $L_0\geq 1$ such that for every $N\geq L_0$, it holds that
\begin{align}
\label{eq:one_step_improvement_intro}
\Big(\E\Big\|v^N-\fint_{\sm B} v^N\Big\|_{\underline{L}^2(\sm Q)}^r \Big)^{1/r}
&
\leq \eta^{\g_0}\Big(\E\Big\|v^N-\fint_{B} v^N\Big\|_{\underline{L}^2( Q)}^r \Big)^{1/r}\\ 
\nonumber
&+ \mathscr{C}_Q[\pi^N,(\wt{\pi}^N_{k,\alpha})_{k,\alpha},F,G],
\end{align}
where the last term collects the contributions of pressures and forcing at the scale $Q$ (see Subsection \ref{ss:role_pressure_intro} for details on the former).
\end{lemma}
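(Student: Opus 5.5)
The plan is to follow the compactness-free, quantitative version of the Avellaneda--Lin argument: compare $v^N$ on $Q$ with the solution $\vh$ of the homogenized Stokes system \eqref{eq:homogenized_stokes_intro} on $\tfrac12 Q$, use the smoothness of $\vh$ to get decay of its oscillations, and absorb the comparison error by taking $N\ge L_0$ large. By the scaling invariance \eqref{eq:NS_scaling_intro} and translation invariance we normalize $Q$ to the unit cylinder $(-1,0)\times B_1$. At this scale the noise has frequency $N$ relative to $Q$, so $L_0$ can be chosen independently of the position and size of $Q$ as long as the scale of $Q$ is $\gtrsim N^{-1}$ times a fixed large factor; this is what later makes the one-step improvement iterable. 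By linearity we also subtract the constant $\fint_B v^N(t)$. Because this constant depends on time, the subtraction has to be done carefully with spatial averages, and its time derivative is absorbed into the pressure gradient, which is a harmless modification.

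\emph{Step 1: localization and energy control.} We apply the stochastic Caccioppoli inequality (Theorem \ref{t:caccioppoli}) to $v^N-\fint_B v^N$. This bounds $\nabla v^N$ in $L^r(\O;L^2(\tfrac34 Q))$ and the $L^\infty_t L^2_x$ norm by $\big(\E\|v^N-\fint_B v^N\|^r_{\underline{L}^2(Q)}\big)^{1/r}$ plus pressure and forcing terms collected into $\mathscr{C}_Q$. The pressures are handled through the local decomposition described in Subsection \ref{ss:role_pressure_intro}: a harmonic part, which is smooth in $x$, and a part controlled by $F$, $G$ and the noise.

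\emph{Step 2: quantitative homogenization.} We let $\vh$ solve \eqref{eq:homogenized_stokes_intro} on $\tfrac34 Q$ with boundary and initial data given by $v^N$ (or a mollified version of it). The spatially localized scaling limit, Theorem \ref{t:universal_scaling_limit}, should then give
\[
\Big(\E\|v^N-\vh\|^r_{\underline{L}^2(\frac12 Q)}\Big)^{1/r}\le \omega(N)\,\Big(\E\Big\|v^N-\fint_B v^N\Big\|^r_{\underline{L}^2(Q)}\Big)^{1/r}+\mathscr{C}_Q,
\]
with $\omega(N)\to0$, using the energy bound from Step 1.

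\emph{Step 3: smoothing of the limit.} The local smoothing lemma for the homogenized Stokes system, Lemma \ref{l:local_smoothing_homogenized}, gives spatial H\"older or Lipschitz regularity of $\vh$ on $\tfrac12 Q$, with spatial averages only since $\ph$ is rough in time. Consequently
\[
\Big\|\vh-\fint_{\sm B}\vh\Big\|_{\underline{L}^2(\sm Q)}\le C\sm^{\g_1}\Big\|\vh-\fint_{B}\vh\Big\|_{\underline{L}^2(\frac12 Q)}+\text{(forcing terms)}
\]
for some $\g_1>\g_0$. The condition \eqref{eq:parameters_condition_Linfty_bounds} ensures that $F$ and $G$ contribute at the right scaling.

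\emph{Step 4: combination.} The triangle inequality gives
\[
\text{LHS of \eqref{eq:one_step_improvement_intro}}\le C\sm^{\g_1}X+C\sm^{-5/2}\omega(N)X+\mathscr{C}_Q,
\]
where $X=\big(\E\|v^N-\fint_B v^N\|^r_{\underline{L}^2(Q)}\big)^{1/r}$. We first fix $\sm$ so that $C\sm^{\g_1}\le\tfrac12\sm^{\g_0}$, and then choose $L_0$ so that $C\sm^{-5/2}\omega(N)\le\tfrac12\sm^{\g_0}$ for all $N\ge L_0$.

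The main obstacle is Step 2, namely obtaining a quantitative rate $\omega(N)$ with $r$-th moments and no loss of integrability, while the pressures are nonlocal. The first thing I would try is to compare $v^N$ and $\vh$ through a cutoff-weighted energy identity for the difference. The oscillating It\^o corrector terms would be handled by a corrector-type expansion exploiting that $\Cov^N$ averages to $\tfrac{3\mu}{5}\mathrm{Id}$ at scales $\gg N^{-1}$, and the stochastic pressures would be controlled by their harmonic approximations.
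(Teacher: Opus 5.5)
Your architecture matches the paper's treatment of the velocity in Lemma \ref{l:one_step_improvement}. You split off the error $w^{N,\delta}=\uh^{N,\delta}-(v^{N,\delta}-\fint v^{N,\delta})$, fix $\sm$ using local smoothing of the homogenized solution, and then fix $L_0$ using the rate $(\delta N)^{-\g}$ of Theorem \ref{t:universal_scaling_limit}.

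However, the inequality in your Step 3, which is the only source of the factor $\sm^{\g_1}$, cannot hold as a statement about the homogenized Stokes system. Take $\vh=a(t)\nabla h(x)$ and $\ph=-a'(t)h(x)$, with $h(x)=(x-x_0)_1^2-(x-x_0)_2^2$ and $a\in L^2$ supported in $\sm I$. This solves the system with $F=G=0$, and all the relevant spatial averages vanish. A direct computation shows that the two norms in your inequality coincide, which would force $C\sm^{\g_1}\geq 1$. The time average over the short interval $\sm I$ can concentrate, and spatial smoothness alone does not prevent this.

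What Lemma \ref{l:local_smoothing_homogenized} actually gives is $L^p$-in-time control with $p>2$: $(\E\|\uh^{N,\delta}\|^r_{L^p(\frac14 I;W^{1,q}(\frac14 B))})^{1/r}\le C_0\En_Q^{\#}$. This yields the decay $\sm^{1-2/p-d/q}$, which is where $\g_0<1-\frac2p-\frac dq$ comes from. Its right-hand side is the sharp energy of the \emph{oscillating} solution, pressure oscillations included. The reason is that the bound $\ph^{N,\delta}\in H^{-1,p}(I;H^{-1})$ used there is derived from an $L^\infty(I;L^2)$ bound on $\uh^{N,\delta}=(v^{N,\delta}-\fint v^{N,\delta})+w^{N,\delta}$. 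That bound comes from Proposition \ref{prop:local_well_posedness_homogenized_SPDE} and Theorem \ref{t:caccioppoli} applied to the oscillating system; your counterexample's pressure fails exactly this $H^{-1,p}$ bound. For the informal statement you may put these extra terms into $\mathscr{C}_Q$, so the argument survives once Step 3 is replaced by this bound.

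Second, the cumulative-frequency normalization is only half of what makes the lemma iterable. If the pressure contribution is merely additive, iterating over the $m\sim\log N$ scales of Lemma \ref{l:control_mesoscopic} requires scale-invariant bounds on the pressure oscillations at every scale $\sm^\ell Q$. Nothing in your argument provides these, and applying Theorem \ref{t:caccioppoli} scale by scale loses a constant at each step. This is why the paper proves \eqref{eq:one_step_improvement2}--\eqref{eq:one_step_improvement3} together with \eqref{eq:one_step_improvement1}, with $\sm^{\g_0}$ in front of the whole $\En_Q^{\#}$.

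To do so, the paper splits $\pi^{N,\delta}=[\Pi^0+(\pi^{N,\delta}-\Pi^{N,\delta})]+(\Pi^{N,\delta}-\Pi^0)$ and $\wt{\pi}^{N,\delta}_{k,\alpha}=(\wt{\pi}^{N,\delta}_{k,\alpha}-\wt{\Pi}^{N,\delta}_{k,\alpha})+\wt{\Pi}^{N,\delta}_{k,\alpha}$.
\begin{itemize}
\item The differences $\pi^{N,\delta}-\Pi^{N,\delta}$ and $\wt{\pi}^{N,\delta}_{k,\alpha}-\wt{\Pi}^{N,\delta}_{k,\alpha}$ are harmonic near the center, hence Lipschitz. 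They contract by $\sm$ and $\sm^{1-2/p_0}$ respectively; the latter is why the stochastic pressure is measured in $\underline{L}^{p_0}(\underline{L}^2)$ with $p_0>2$.
\item $\Pi^0$ is controlled by the forcing.
\item The remaining pieces are small by the pressure rates in Theorem \ref{t:universal_scaling_limit}.
\end{itemize}

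Finally, your Step 2 is exactly Theorem \ref{t:universal_scaling_limit}, which needs $\mathrm{Tr}\, G=0$ and $\nabla\cdot G=0$. The energy identity for the difference that you suggest would not produce a rate. Its It\^o correction is of order $\mu\|\nabla v^N\|_{L^2}^2$, while $\nabla v^N\not\to\nabla\vh$ (cf.\ \eqref{eq:convergence_lack_gradients}). This is why the paper measures the errors in the extrapolated spaces $\X^{\Dir}_{-\sigma}(B)$, where Lemma \ref{l:Bessel_property} gives decay in $\delta N$. Also, $\Cov^N$ does not average to $\frac35\mu\,\mathrm{Id}$: it decays off the diagonal (Lemma \ref{l:convergence_covariance}). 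The coefficient arises as $\frac35\mu=\mu-\frac25\mu$, where $\frac25\mu\Delta$ is the local limit of the nonlocal pressure corrector.
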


For $\sm\in (0,1)$ and $\g_0\in (0,1-\frac{2}{p}-\frac{3}{q})$, the above follows from the convergence $v^N\to \vh$ in the (relatively weak) norm $L^r(\O;L^2((1/4)Q))$ and the fact that one expects
\begin{equation}
\label{eq:regularity_homogenized_intro}
\vh\in L^r(\O;L^p((1/4)I;C^{1-3/q}((1/4)B;\R^3)))
\end{equation} 
via local regularity of the \emph{constant} coefficient PDE \eqref{eq:homogenized_stokes_intro} due to the forcings $F$ and $G$ (see Lemma \ref{l:local_smoothing_homogenized}).
As \eqref{eq:regularity_homogenized_intro} shows, the assertion \eqref{eq:one_step_improvement_intro} is highly suboptimal when $N\gg 1$; in this regime, $v^N\approx \vh$, and the homogenized solution possesses significantly higher regularity than the bound implies. In other words, the estimate \eqref{eq:one_step_improvement_intro} yields non-trivial information only if $N\approx L_0\eqsim 1$. 

\smallskip

These observations naturally lead to the second pillar, namely the \emph{iterative step}:

\begin{lemma}[Iteration via blow-up -- Informal version of Lemma \ref{l:control_mesoscopic}]
Let $L_0$ and $\sm$ be as in the one-step improvement.
Let $m\in \N_{\geq 1}$ be such that 
$$
N^{-1}\eqsim \sm^{m-1} L_0^{-1} \qquad \text{(more precisely,\  $\sm^{m}L_0^{-1}<N^{-1}\leq \sm^{m-1}L_0^{-1}$)}.
$$
Then for all $\ell\in \{1,\dots,m\}$, it holds that:
\begin{align}
\label{eq:iteration_intro}
\Big(\E\Big\|v^N-\fint_{\sm^{\ell} B} v^N\Big\|_{\underline{L}^2(\sm^{\ell} Q)}^r \Big)^{1/r}
&
\leq \eta^{\ell\g_0}\Big(\E\Big\|v^N-\fint_{B} v^N\Big\|_{\underline{L}^2( Q)}^r \Big)^{1/r}\\ 
\nonumber
&+ \mathscr{C}_Q[\pi^N,(\wt{\pi}^N_{k,\alpha})_{k,\alpha},F,G]
\end{align}
where the last term collects the contributions of pressures and forcing at the scale $Q$.
\end{lemma}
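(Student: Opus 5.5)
The plan is induction on $\ell$, combining the one-step improvement \eqref{eq:one_step_improvement_intro} with a blow-up (rescaling) argument that uses the scaling invariance \eqref{eq:NS_scaling_intro} and the almost self-similarity of the noise. Assume \eqref{eq:iteration_intro} holds at level $\ell-1<m$. Rescale the cylinder $\sm^{\ell-1}Q$ to unit size by $w(t,x)=v^N(t_0+\sm^{2(\ell-1)}t,\,x_0+\sm^{\ell-1}x)$. One first checks that $w$ is again a local solution of an oscillating Stokes system of the form \eqref{eq:turbulent_Stokes_microscopic_intro} on $Q$. It has rescaled pressures and forcings $\sm^{\ell-1}F(\cdots)$, $\sm^{\ell-1}G(\cdots)$, Brownian motions rescaled by $\sm^{-(\ell-1)}$, and noise coefficients $\theta^N_k\sigma_{k,\alpha}(x_0+\sm^{\ell-1}\cdot)$. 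These coefficients have frequencies of order $N\sm^{\ell-1}$. The constraint $\sm^{m}L_0^{-1}<N^{-1}$ gives $N\sm^{\ell-1}\ge N\sm^{m-1}> L_0$, so the rescaled system has effective frequency at least $L_0$.

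The key point is that the one-step improvement must be stated, and proved, uniformly over this whole class of rescaled systems. That is, it should be uniform over the \emph{cumulative frequency} $N\sm^{\ell-1}\ge L_0$ and over all centers $x_0$, not just over the specific family $\theta^N$. I would therefore formulate Lemma \ref{l:one_step_improvement} for a class of coefficients. The class consists of $\ell^2$-valued fields whose covariance has homogenized limit $\tfrac35\mu\Delta$ (with the corrector terms) and whose convergence rate to the homogenized problem depends only on the effective frequency. The translation invariance of the covariance \eqref{eq:covariance_intro} gives uniformity in $x_0$.

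With this uniformity, apply the one-step improvement to $w$ on $Q$ and scale back. This gives
\[
\mathrm{Osc}(\sm^{\ell}Q)\le \sm^{\g_0}\,\mathrm{Osc}(\sm^{\ell-1}Q)+\mathscr{C}_{\sm^{\ell-1}Q}[\ldots],
\]
where $\mathrm{Osc}(Q')$ is shorthand for the left-hand side of \eqref{eq:iteration_intro} with $Q$ replaced by $Q'$. Iterating yields $\sm^{\ell\g_0}\mathrm{Osc}(Q)+\sum_{j<\ell}\sm^{(\ell-1-j)\g_0}\mathscr{C}_{\sm^{j}Q}$. It remains to bound each $\mathscr{C}_{\sm^jQ}$ by $\mathscr{C}_Q$ with a geometric factor. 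For the forcings this follows from scaling. Because $\tfrac2p+\tfrac3q<1$, the normalized norm of $\sm^{j}F$ on the rescaled cylinder equals $\sm^{j(1-2/p-3/q)}\|F\|_{L^p(L^q)}$ on $\sm^jQ$, which is at most $\sm^{j(1-2/p-3/q)}\|F\|_{L^p(L^q)(Q)}$. Choosing $\g_0<1-\tfrac2p-\tfrac3q$ makes the resulting sum geometric and bounded by a constant times $\mathscr{C}_Q$. This is the standard Campanato-type summation.

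I expect the main obstacle to be the pressure terms inside $\mathscr{C}$. Since they are nonlocal, their contribution at scale $\sm^jQ$ is not simply monotone in the cylinder. Following the Caffarelli--Kohn--Nirenberg strategy, I would decompose the pressures on each cylinder into a part that is harmonic in space and a part that is controlled locally by $F$, $G$ and $v^N$, using a local pressure projection. The harmonic part then satisfies a decay estimate that can be absorbed into the oscillation, at the cost of a slightly smaller $\g_0$. A secondary difficulty is the uniformity of the quantitative homogenization error in the rescaled class with only $L^r(\O)$ moments. This is why qualitative compactness must be avoided and the rates from Theorem \ref{t:universal_scaling_limit} must be used.
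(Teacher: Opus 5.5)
Your skeleton is the paper's: induction on $\ell$, blow-up with $\delta=\sm^{\ell-1}$, a one-step improvement uniform in the cumulative frequency $\delta N\ge L_0$, and the gain $\sm^{j(1-2/p-3/q)}$ for the rescaled forcings. The paper proves that uniformity for the concrete family $\theta^N_k\sigma^\delta_{k,\alpha}$, with constants independent of $x_0$, from the rates of Theorem~\ref{t:universal_scaling_limit}.

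A small slip first. The bound $\delta N\ge L_0$ comes from the upper half $N^{-1}\le\sm^{m-1}L_0^{-1}$, i.e.\ $N\sm^{m-1}\ge L_0$. The lower half $\sm^mL_0^{-1}<N^{-1}$ only gives $N\sm^m<L_0$; it is used later, in Theorem~\ref{t:control_mesoscopic_energy}, to compare $N^{-1}Q$ with $\sm^{m-1}Q$.

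The genuine gap is the pressure. You iterate only the velocity oscillation and let the pressures re-enter at every intermediate scale through $\mathscr{C}_{\sm^jQ}$, to be bounded by $\mathscr{C}_Q$ times a geometric factor. The forcings gain $\sm^{j(1-2/p-3/q)}$ from scaling, but the pressures do not. In the norms forced by the rescaling, namely $\underline{L}^2(\underline{H}^{-1})$ for $\pi^N$ (which rescales as $\sm^\ell\pi^N(\cdots)$) and $\underline{L}^{p_0}(\underline{L}^2)$ for $\wt{\pi}^N$, they are scale invariant and gain no power of $\sm^j$. They are of the same order as the velocity oscillation.

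Your per-scale CKN splitting does not create the missing decay. The harmonic part does contract, but the non-harmonic part is not merely `controlled by $F$, $G$, $v^N$'. The local approximation $\Pi^{N,\delta}$ contains $(1-D_d)\mu\nabla v^{N,\delta}$ and the It\^o--Stratonovich term in $\nabla\wt{\pi}^{N,\delta}$. Both $\Pi^{N,\delta}$ and $\wt{\Pi}^{N,\delta}_{k,\alpha}$ are bounded, via the Caccioppoli inequality, by the velocity-and-pressure oscillation energy at a neighbouring scale, but only with an $O(1)$ constant. Inserted into your recurrence, this gives error terms of the same size as the quantity being iterated, with a non-small constant. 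The scheme then neither contracts nor sums geometrically, and it is circular in the pressures.

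The missing idea is that smallness of these $v$-dependent local pressures is itself a homogenization effect, and it must be built into the one-step improvement. By Theorem~\ref{t:universal_scaling_limit}, $\nabla(\Pi^{N,\delta}-\Pi^0)$ and $\nabla\wt{\Pi}^{N,\delta}_{k,\alpha}$ are $O((\delta N)^{-\g})$ relative to the energy. Here $\Pi^0$ depends only on $F,G$ and decays by subcritical scaling. Meanwhile $\pi^{N,\delta}-\Pi^{N,\delta}$ and $\wt{\pi}^{N,\delta}_{k,\alpha}-\wt{\Pi}^{N,\delta}_{k,\alpha}$ are harmonic and contract. The stochastic one contracts by $\sm^{1-2/p_0}$, which is why $p_0>2$ is needed in the absence of time regularity.

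This is how Lemma~\ref{l:one_step_improvement} contracts the pressure oscillations \eqref{eq:one_step_improvement2}--\eqref{eq:one_step_improvement3} at scale $\sm Q$ together with the velocity. The contraction is against the max-type energy $\En^\#_Q$, which contains all three oscillations and the forcings. Once the pressures are part of the iterated quantity, Lemma~\ref{l:control_mesoscopic} closes in one line. After blow-up with $\delta=\sm^\ell$, every entry of the rescaled energy is at most $\sm^{\ell\g_0}\En^\#_Q$: the oscillations by the induction hypothesis and scale invariance of the averaged norms, the forcings by \eqref{eq:F_is_decreasing_eta0} since $\g_0<1-\frac2p-\frac dq$. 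Hence level $\ell+1$ is bounded by $\sm^{(\ell+1)\g_0}\En^\#_Q$, with no Campanato summation and no separate per-scale pressure estimate.
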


The key advantage is that the oscillation term on the right-hand side of \eqref{eq:iteration_intro} for $\ell=m$ is exactly at the microscopic scale $ N^{-1}\eqsim \sm^{m} $. This, together with \eqref{eq:average_decomposition_oscillation_intro}, gives the control of the energy at the microscopic scale \eqref{eq:energy_microscopic_intro}.

\smallskip

The proof of \eqref{eq:iteration_intro} is based on a standard blow-up argument and the one-step improvement. Indeed, by induction, assume that \eqref{eq:iteration_intro} holds for some $\ell\in \{1,\dots,m-1\}$. Thus, if $v^N$ solves \eqref{eq:turbulent_Stokes_microscopic_intro} on $\sm^\ell Q$, then
\begin{equation}
\label{eq:iteration_intro_blow_up}
v^N_{\ell}(t,x)\stackrel{{\rm def}}{=} v^N (\sm^{2\ell} t,\sm^\ell x) \ \text{ for } \ (t,x)\in Q,
\end{equation}
is a solution to the Stokes problem \eqref{eq:turbulent_Stokes_microscopic_intro} on $Q$ (with appropriate rescaled coefficients and forcing) to which we can apply the one-step improvement \eqref{eq:one_step_improvement_intro}. 
The latter and
\begin{equation}
\label{eq:scaling_invariance_smell}
\Big\|v^N_\ell-\fint_{\sm B} v^N_\ell\Big\|_{\underline{L}^2(\sm Q)}
=\Big\|v^N-\fint_{\sm^{1+\ell} B} v^N\Big\|_{\underline{L}^2(\sm^{1+\ell} Q)},
\end{equation}
immediately leads to \eqref{eq:iteration_intro} for $\ell+1$ by the induction assumption.

\subsubsection*{Rescaled coefficients and cumulative oscillation frequency}
The blow-up argument \eqref{eq:iteration_intro_blow_up} behind the iterative claim \eqref{eq:iteration_intro} reveals that in the one-step improvement, one needs to consider the oscillating stochastic Stokes system \eqref{eq:turbulent_Stokes_microscopic_intro} with \emph{rescaled coefficients}:
\begin{equation}
\label{eq:turbulent_Stokes_microscopic_rescaled_intro}
\left\{
\begin{aligned}
\partial_t v^{N,\delta} &=-\nabla \pi^{N,\delta}+ (1+\mu)\Delta v^{N,\delta}+ \nabla \cdot F - \frac{3\mu}{2}\sum_{k,\alpha} \theta^N_k \nabla \cdot (\nabla \wt{\pi}_{k,\alpha}^{N,\delta}\otimes \sigma_{-k,\alpha}^\delta)\\ 
&  +\sqrt{\frac{3\mu}{2}} \sum_{k,\alpha} [-\nabla \wt{\pi}^{N,\delta}_{k,\alpha} +\theta^{N}_{k}(\sigma^{\delta}_{k,\alpha}\cdot\nabla) v^{N,\delta}+\theta^{N}_{k}\sigma_{k,\alpha}^\delta\cdot G]\,\dot{W}^{k,\alpha}_t,\\
\nabla \cdot v^{N,\delta}&=0, \qquad v^{N,\delta}(0)=0,
\end{aligned}
\right.
\end{equation}
where $\sigma_{k,\alpha}^\delta(x)=\sigma_{k,\alpha}(x_0+\delta (x-x_0)) =a_{k,\alpha}e^{2\pi \i x_0\cdot k} e^{2\pi \i \delta (x-x_0)\cdot k}$ for $x\in \R^d$ and $\delta\in (0,1]$ are the rescaled coefficients around the spatial center of $Q$.
Indeed, the process $v^N_{\ell}(t,x)$ in \eqref{eq:iteration_intro_blow_up} solves locally \eqref{eq:turbulent_Stokes_microscopic_rescaled_intro} with $\delta=\sm^\ell$. 
As the blow-up argument leading to \eqref{eq:iteration_intro} shows, the idea is to ``slow down'' the homogenization procedure $N\to \infty$ by spatially zooming out by a factor $\delta$, so that the rescaled process $v^N_\ell$ in \eqref{eq:iteration_intro_blow_up} sees an amount of oscillation $\eqsim L_0\eqsim 1$ on the (large) scale $Q$.

\smallskip

Thus, to obtain a one-step improvement which can be iterated, the homogenization of \eqref{eq:turbulent_Stokes_microscopic_rescaled_intro} must depend jointly on $\delta$ and $N$, and only on the truly present oscillation in \eqref{eq:turbulent_Stokes_microscopic_rescaled_intro}. 
Intuitively, since $\supp\theta^N\sim N$ and $\sigma_{k,\alpha}^\delta$ has frequency $\delta |k|\sim \delta N$, one infers that the oscillations of the rescaled system \eqref{eq:turbulent_Stokes_microscopic_rescaled_intro} are of the order: 
$$
L=\delta N. \qquad \text{(Cumulative oscillation frequency)}
$$ 
From a mathematical point of view, the above scale arises naturally, see Lemma \ref{l:Bessel_property} and Subsection \ref{ss:caccioppoli_intro} below. 

\smallskip

Before diving into the homogenization of \eqref{eq:turbulent_Stokes_microscopic_rescaled_intro}, we turn to the one-step iteration in the above setting. In contrast to \cite{AL87_compactness}, where \emph{strong} convergence in $L^2(Q)$ is proven by the Caccioppoli inequality (see Subsection \ref{ss:caccioppoli_intro} below), strong convergence in $L^r(\O;L^2(Q))$ via compactness arguments leads to a loss of integrability in the probability space. This loss can become arbitrarily high along the iteration \eqref{eq:iteration_intro}. We emphasize that spatial compactness can nonetheless be established via a stochastic version of the Caccioppoli inequality (Subsection \ref{ss:caccioppoli_intro}).

To exploit the expected regularity when increasing the cumulative oscillation frequency in \eqref{eq:turbulent_Stokes_microscopic_rescaled_intro}, we must prove a rate of convergence in $L^r(\O;L^2(Q))$ between $v^{N,\delta}$ and $\vh^{N,\delta}$ with a rate depending on $N,\delta$ only via the cumulative frequency $L=\delta N$, and where $\vh^{N,\delta}$  solves (in the Lions-Magenes transposition sense \cite{LM1_book,LM2_book}; see Subsection \ref{ss:local_well_posedness_homogenized_SPDE}):
\begin{equation}
\label{eq:homogenized_PDE_linear_intro_deltaN}
\left\{
\begin{aligned}
\partial_t \vh^{N,\delta}&=-\nabla \ph^{N,\delta}+ \big(1+\tfrac{3}{5} \mu\big) \Delta \vh^{N,\delta}+  \nabla \cdot \big(F-\tfrac{2}{5} \mu\, G^\top\big)  &  \text{ on }&(1/2)Q,\\
 \nabla \cdot \vh^{N,\delta}&=0 & \text{ on }&(1/2)Q, \\
\vh^{N,\delta}&= v^{N,\delta} -\fint_{(1/2)B} v^{N,\delta}& \text{ on }&\partial_{\pb}(1/2) Q.
\end{aligned}
\right.
\end{equation}  
In the above, $\partial_{\pb}$ denotes the parabolic boundary of the cylinder $(1/2)Q$.
This is proven in Section \ref{s:quantitative_loc_scaling} and outlined in Subsection \ref{ss:caccioppoli_intro}. 

\smallskip

The one-step improvement of \eqref{eq:one_step_improvement_intro} can now be established by writing:
\begin{equation}
\label{eq:error_splitting_vhom_intro}
v^{N,\delta}= \underbrace{\Big(\vh^{N,\delta} +\fint_{(1/2)B} v^{N,\delta}\Big)}_{\text{Locally smooth}}
+  \underbrace{\Big(v^{N,\delta}- \vh^{N,\delta} -\fint_{(1/2)B} v^{N,\delta}\Big),}_{\text{Error}}
\end{equation}
and proceeding as follows (see Lemma \ref{l:one_step_improvement}):
\begin{itemize}
\item First, the parameter $\eta>0$ is fixed by exploiting the local smoothness of $\vh^{N,\delta}$, as guaranteed by \eqref{eq:regularity_homogenized_intro} (see Lemma \ref{l:local_smoothing_homogenized});
\smallskip
\item Second, the parameter $L_0>0$ is chosen via quantitative bounds at the macroscopic scale $Q$, which depend exclusively on the cumulative frequency $L=\delta N$:
\begin{align}
\label{eq:choice_L0_intro}
&\big(\E\|w^{N,\delta}\|_{\underline{L}^2(\sm Q)}^r \big)^{1/r}
 \leq C_\sm
\big(\E\|w^{N,\delta}\|_{\underline{L}^2((1/2)Q)}^r \big)^{1/r}\\
\nonumber
&\quad\leq \frac{C_\sm}{(\delta N)^\g} \Big(\E\Big\|v^N-\fint_{B} v^N\Big\|_{\underline{L}^2( Q)}^r \Big)^{1/r}
+ \mathscr{C}_Q[\pi^N,(\wt{\pi}^N_{k,\alpha})_{k,\alpha},F,G], 
\end{align}
where $
w^{N,\delta}
= v^{N,\delta}- \vh^{N,\delta} -\fint_{(1/2)B} v^{N,\delta}
$
denotes the error term in \eqref{eq:error_splitting_vhom_intro}, and $\mathscr{C}_Q$ collects the contributions of pressures and forcing at the scale $Q$.
\end{itemize}
In the following section, we describe quantitative error estimates in the homogenization of $v^{N,\delta}$ towards $\vh^{N,\delta}$ and the emergence of the cumulative frequency $L=\delta N$.

\subsection{Caccioppoli inequalities and quantitative localized scaling limits} 
\label{ss:caccioppoli_intro}
Closing the one-step improvement \eqref{eq:one_step_improvement_intro} relies on two fundamental analytical ingredients: stochastic Caccioppoli inequalities and quantitative local scaling limits.

\subsubsection*{Stochastic Caccioppoli inequalities}
Broadly speaking, stochastic Caccioppoli estimates are ``reverse Poincar\'e'' inequalities with nested spatial support, and they hold for elliptic and parabolic problems \emph{without any} regularity assumptions beyond boundedness and measurability. For these reasons, they are an indispensable tool in homogenization theory \cite{S18_book}.
For the problem \eqref{eq:turbulent_Stokes_microscopic_rescaled_intro}, the stochastic Caccioppoli inequality takes the form (see Section \ref{s:caccioppoli}): 
\begin{align}
\label{eq:caccioppoli_intro}
\big(\E\|\nabla v^{N,\delta}\|_{\underline{L}^2((1/2) Q)}^r \big)^{1/r}
&\leq C \Big(\E\Big\|v^{N,\delta}-\fint_{B} v^{N,\delta}\Big\|_{\underline{L}^2( Q)}^r \Big)^{1/r} \\
\nonumber
&+ \mathscr{C}_Q[\pi^{N,\delta},(\wt{\pi}^{N,\delta}_{k,\alpha})_{k,\alpha},F,G],
\end{align}
where $\mathscr{C}_Q[\pi^{N,\delta},(\wt{\pi}^{N,\delta}_{k,\alpha})_{k,\alpha},F,G]$ collects the pressure and forcing contributions.
The key point in the above estimate is that the constant $C$ does not depend on the oscillation parameters $N$ and $\delta$. This is essential to bound the error \eqref{eq:choice_L0_intro}. As shown in \eqref{eq:def_error_det_intro}-\eqref{eq:def_error_sto_intro} below, the error explicitly depends on the macroscopic gradient $\nabla v^{N,\delta}$ on $(1/2)Q$. At this large scale, the system remains highly oscillatory, precluding any uniform smoothness (see Figure \ref{fig:microscopic_scale}). Conversely, the one-step improvement \eqref{eq:one_step_improvement_intro} requires an estimate formulated only in terms of local oscillations. 
The Caccioppoli inequality \eqref{eq:caccioppoli_intro} thus serves as a fundamental bridge: it bounds the gradients required by the error estimates entirely in terms of the $L^2$-oscillations demanded by the one-step improvement.

The proof of \eqref{eq:caccioppoli_intro} relies on local energy estimates in the spirit of Scheffer \cite{Scheffer_partial_regularity} and Caffarelli, Kohn, and Nirenberg \cite{CKN82}. Notably, closing these estimates appears intrinsically tied to the Stratonovich formulation of the noise, where essential cancellations occur (see Remark \ref{r:necessity_strotonovich}).

\subsubsection*{Quantitative localized scaling limits in extrapolated spaces}
Recall that $Q=I\times B$ denotes a parabolic cylinder.
Formally, the error $w^{N,\delta}= v^{N,\delta}- \vh^{N,\delta} -\fint_{(1/2)B} v^{N,\delta}$ is a solution to the following SPDE (see Subsection \ref{ss:local_well_posedness_homogenized_SPDE}):
\begin{equation}
\label{eq:turbulent_Stokes_scaling_quantitative_vdifference_formal_intro}
\left\{
\begin{aligned}
\partial_t w^{N,\delta} &=-\nabla R^{N,\delta}+ \Big(1+\mu \frac{3}{5}\Big) \Delta w^{N,\delta}+ \Errd\\ 
&\qquad\ \  + \sqrt{\frac{3\mu}{2}}\sum_{k,\alpha} [-\nabla \wt{R}_{k,\alpha}^{N,\delta}+\Errs_{k,\alpha}]\,\dot{W}^{k,\alpha}_t&\text{ on }&(1/2)Q,\\
 \nabla \cdot w^{N,\delta}&=0&\text{ on }&(1/2)Q,\\
 w^{N,\delta} &=0 &\text{ on }\partial_{\pb }&(1/2)Q,
 \end{aligned}
\right.
\end{equation}
where 
\begin{align}
\label{eq:def_error_det_intro}
\Errd&
=-\frac{2\mu}{5}(\Delta v^{N,\delta}+\nabla \cdot G^\top) + \frac{3\mu}{2} \sum_{k,\alpha} \theta_k^{N}\nabla\cdot  (\nabla \wt{\pi}^{N,\delta}_{k,\alpha}\otimes \sigma^\delta_{-k,\alpha}), \\
\label{eq:def_error_sto_intro}
\Errs_{k,\alpha}
&=
-\theta^{N}_{k}\big[(\sigma^\delta_{k,\alpha}\cdot\nabla) v^{N,\delta}+\sigma^\delta_{k,\alpha}\cdot G\big].
\end{align}
Inspired by the periodic setting \cite{FGL21,G20}, we exploit the oscillations of $(\sigma^\delta_{k,\alpha})_{k,\alpha}$ via a \emph{Bessel-type inequality} (Lemma \ref{l:Bessel_property}): For any ball $B \subset \R^3$ and $f\in L^2(B;\R^3)$,
\begin{equation*}
\sum_{k,\alpha} \Big|\int_{B} \sigma^\delta_{k,\alpha}\cdot f \Big|^2
\leq \frac{1}{\delta^3} \|f\|_{L^2(B)}^2.
\end{equation*}
Note that, for $\theta^N$ as in \eqref{eq:choice_thetaN_intro}, one has $\|\theta^N\|_{\ell^\infty(\Z^3_0)}\eqsim N^{-3/2}$ and thus the above yields
\begin{equation}
\label{eq:Parseval_combined_with_decay_intro}
\sum_{k,\alpha}(\theta_k^N)^2 \Big|\int_{(1/2)B} 
\varphi\cdot[  (\sigma^\delta_{k,\alpha}\cdot\nabla) v^{N,\delta} ] \Big|^2
\leq \frac{C}{(\delta N)^3 }  \|\nabla v^{N,\delta}\|_{L^2((1/2)B)}^2\|\varphi\|_{L^\infty((1/2)B)}^2.
\end{equation}
Crucially, the above decay rate depends only on the cumulative frequency $L=\delta N$.

\smallskip

Building on \eqref{eq:Parseval_combined_with_decay_intro}, we derive a \emph{local mixing estimate} on $(1/2)Q$ for the noise using the \emph{extrapolated spaces} $(\X_{-\sigma}^{\Dir}(B))_{\sigma>0}$ associated with the Dirichlet Laplacian on $B$. These spaces, introduced in a general framework by Amann \cite[Chapter 5]{Am} and recalled in Subsection \ref{ss:Helmholtz_domains}, intuitively act as the counterparts to negative Sobolev spaces on the torus. Crucially, they provide the right mathematical setting in which extensions of the Dirichlet Laplacian remain well-behaved (e.g., continue to generate analytic semigroups).
Letting $(\lambda_j)_{j}$ and $(\varphi_j)_{j}$ denote the eigenvalues and eigenfunctions of the Dirichlet Laplacian on $(1/2)B\subset \R^3$, the estimate \eqref{eq:Parseval_combined_with_decay_intro} implies the mixing-type estimate:
\begin{align*}
\sum_{k,\alpha}(\theta^N_k)^2 \|(\sigma^{\delta}_{k,\alpha}\cdot \nabla )v^{N,\delta}\|_{\X_{-\g}^{\Dir}((1/2)B)}^2
&\eqsim \sum_{j,k,\alpha} \lambda_j^{-2\g} (\theta^N_k)^2
 \Big|\int_{(1/2)B} 
\varphi_j\cdot[  (\sigma^\delta_{k,\alpha}\cdot\nabla) v^{N,\delta} ] \Big|^2\\
&\leq \frac{C}{(\delta N)^3 }  \|\nabla v^{N,\delta}\|_{L^2((1/2)B)}^2,
\end{align*}
where $\g\gg 1$ is chosen so that $\sum_{j} \lambda_j^{-2\g}\|\varphi_j\|_{L^\infty((1/2)B)}^2<\infty$ (which is possible by Weyl's law, see Proposition \ref{prop:error_estimates_operators}).

Together, these extrapolated space techniques and the Caccioppoli inequality \eqref{eq:caccioppoli_intro} provide a robust framework required to bound the stochastic error \eqref{eq:def_error_sto_intro}.

\smallskip

In Subsections \ref{ss:almost_self}-\ref{ss:caccioppoli_intro}, we discussed the strategy to derive a bound on the velocity field $v^N$ solving \eqref{eq:turbulent_Stokes_microscopic_intro}. In the following subsection, we analyze the role of the deterministic and stochastic pressures in the previously discussed argument.

\subsection{Scaling invariance of the pressures and local approximations}
\label{ss:role_pressure_intro}
In this subsection, we address the obstructions introduced by the deterministic and stochastic pressures $\pi^{N,\delta}$ and $\wt{\pi}_{k,\alpha}^{N,\delta}$ within the previously outlined program (see Figure \ref{fig:scheme}).
First, we recall a well-known pathology inherently tied to \emph{local} solutions of incompressible fluid equations, namely the poor temporal regularity of the pressure, as illustrated by Serrin's classical counterexample, cf.\ \cite[Subsection 13.1]{LePi}.
 This obstruction is naturally present when studying the local regularity of the homogenized Stokes system \eqref{eq:homogenized_PDE_linear_intro_deltaN}.
Consequently, we cannot expect any further time regularity in \eqref{eq:regularity_homogenized_intro} for the velocity $\vh^{N,\delta}$.
This deficit directly dictates our strict use of \emph{spatial averages}---rather than standard space-time averages---in the oscillation decomposition \eqref{eq:average_decomposition_oscillation_intro}.

\smallskip

However, even more severe complications arise from the contribution of the global pressures to the iterative scheme \eqref{eq:one_step_improvement_intro}-\eqref{eq:iteration_intro} and the deterministic error \eqref{eq:def_error_det_intro}.
We overcome these difficulties by employing local approximations in the spirit of \cite{CKN82}.
Before detailing this, however, we focus on the scaling invariance, which plays a central role in controlling the energy at the microscopic scale (Subsection \ref{ss:control_energy_intro}).

\subsubsection*{Scaling invariance of the pressures}
Due to the incompressibility $\nabla \cdot v^{N}=0$ in \eqref{eq:turbulent_Stokes_microscopic_intro}, the pressures satisfy the following elliptic problems:
\begin{align}
\label{eq:pressure_PDE_intro1}
\Delta \pi^{N}&= \nabla^2 :F - \frac{3\mu}{2} \,\nabla \cdot \sum_{k,\alpha}\theta_k^N \nabla \cdot(\nabla \wt{\pi}_{k,\alpha}^{N}\otimes \sigma_{-k,\alpha} ) ,\\
\label{eq:pressure_PDE_intro2}
\Delta \wt{\pi}_{k,\alpha}^{N}
&= \theta_k^N\nabla \cdot \big[(\sigma_{k,\alpha}\cdot \nabla)v^{N}+\sigma_{k,\alpha}\cdot G \big].
\end{align}
Observe that the parabolic scaling $v^{N}_\ell (t,x)= v^N (\sm^{2\ell } t,\sm^{\ell} x)$ appearing in the iteration argument \eqref{eq:iteration_intro_blow_up} dictates the corresponding dilation of the pressures:
\begin{align*}
\pi^{N}_\ell (t,x)=\sm^{\ell} \pi^{N}( \sm^{2\ell} t,\sm^{\ell} x)\qquad\text{ and }\qquad 
\wt{\pi}^{N}_{k,\alpha,\ell}(t,x)=\wt{\pi}_{k,\alpha}^{N}( \sm^{2\ell} t,\sm^{\ell} x).
\end{align*}
Moreover, parallel to \eqref{eq:scaling_invariance_smell}, we have 
\begin{align}
\label{eq:scaling_pressure_intro1}
\|\pi^{N}_{\ell}\|_{\underline{L}^2( \sm  I;\underline{H}^{-1}(\sm B))}
&\eqsim \|\pi^{N}\|_{\underline{L}^2(\sm^{1+\ell} I;\underline{H}^{-1}(\sm^{1+\ell} B))}\\
\label{eq:scaling_pressure_intro2}
\|\wt{\pi}_{k,\alpha,\ell}^{N}\|_{\underline{L}^2(Q)}
&\eqsim\|\wt{\pi}_{k,\alpha}^{N}\|_{\underline{L}^2(\sm^{1+\ell}Q)},
\end{align}
where $\underline{H}^{-1}(B)$ denotes the negative Sobolev space endowed with the averaged norm (see \eqref{eq:negative_sob_space_averaged} below).
A similar rescaling also holds for the associated oscillations.

\smallskip

The above scale-invariance precisely determines how the pressures enter both the stochastic Caccioppoli inequality and the large-scale regularity estimates \eqref{eq:one_step_improvement_intro}-\eqref{eq:iteration_intro}. Consequently, controlling the pressure oscillations measured in the norms of \eqref{eq:scaling_pressure_intro1}-\eqref{eq:scaling_pressure_intro2} becomes an intrinsic part of the iterative scheme (see Section \ref{s:energy_control_microscopic}).
Finally, we remark that in the one-step improvement, due to the lack of temporal smoothness, we measure the stochastic pressure in the space $\underline{L}^{p_0}(I;\underline{L}^{2}(B))$ for some $p_0>2$ to ensure the required contraction at the scale $\sm$ analogous to \eqref{eq:one_step_improvement_intro}. In contrast, for the deterministic pressure, the decay of the $\underline{H}^{-1}(\sm B)$-norm as $\sm\to 0$ compensates for the lack of time regularity, thus avoiding additional integrability conditions.

\subsubsection*{Local approximation and deterministic error in the local scaling limit}
Finally, we outline the strategy to bound the deterministic error \eqref{eq:def_error_det_intro} for local solutions to the rescaled system \eqref{eq:turbulent_Stokes_microscopic_rescaled_intro}. Because the global pressures $\pi^{N,\delta}$ and $\wt{\pi}^{N,\delta}_{k,\alpha}$ of the rescaled problem \eqref{eq:turbulent_Stokes_microscopic_rescaled_intro} are inherently non-local, their direct use is precluded. However, the elliptic equations governing these pressures naturally suggest the use of \emph{local approximations} to capture their interior smoothness \cite{CKN82}. For the sake of exposition, we focus here on the stochastic pressure; the deterministic counterpart relies on a similar conceptual localization but is analytically more involved.

A local approximation of the stochastic pressure on $(1/2)Q$ is given by
$$
\wt{\Pi}_{k,\alpha}^{N,\delta}
=\theta^N_k \qq_{\T^3_{B}} \Big[\chi\big((\sigma^{\delta}_{k,\alpha}\cdot \nabla)v^{N,\delta} + \sigma^{\delta}_{k,\alpha} \cdot G\big)\Big],
$$
where $\chi\in C^{\infty}_{{\rm c}}(B)$ is a cutoff function satisfying $\chi|_{(1/2)B}=1$, and $\qq_{\T^3_B}$ is the operator associated with the Helmholtz decomposition on the localized torus $\T^3_B = x_0 + [-1/2, 1/2)^3$ (see Subsection \ref{ss:Helmh_div_free}). In particular, 
$$
\Delta \wt{\Pi}_{k,\alpha}^{N,\delta}= \theta_k^N\nabla \cdot \big[\chi(\sigma_{k,\alpha}^\delta\cdot \nabla)v^{N,\delta}+\chi\,\sigma_{k,\alpha}^{\delta}\cdot G \big],
$$
and from the choice of the cutoff function, it follows that $\wt{\pi}_{k,\alpha}^{N,\delta}-\wt{\Pi}_{k,\alpha}^{N,\delta}$ is \emph{harmonic} on $(1/2)B$, and thus $C^\infty$ on the latter domain.
To handle the deterministic error, we decompose it as $\Errd = \Errdloc+ \Errdglo$, where
\begin{align}
\label{eq:error_loc_intro}
\Errdloc &=-\frac{2\mu}{5} (\Delta v^{N,\delta} +\nabla \cdot G^\top)+\frac{3\mu}{2}\sum_{k,\alpha} \theta_k^N\nabla\cdot  (\nabla \wt{\Pi}^{N,\delta}_{k,\alpha}\otimes \sigma^\delta_{-k,\alpha}),\\
\label{eq:error_glo_intro}
\Errdglo &=\frac{3\mu}{2} \sum_{k,\alpha} \theta_k^N\nabla\cdot  (\nabla[ \wt{\pi}^{N,\delta}_{k,\alpha}-\wt{\Pi}^{N,\delta}_{k,\alpha}]\otimes \sigma^\delta_{-k,\alpha}),
\end{align}
denote the \emph{local} and \emph{local-to-global} errors, respectively.

\smallskip

The mechanism driving the decay of the local-to-global error $\Errdglo$---established rigorously in Proposition \ref{prop:error_estimates_operators}---relies on the interior regularity of the difference $\wt{\pi}_{k,\alpha}^{N,\delta}-\wt{\Pi}_{k,\alpha}^{N,\delta}$. Indeed, as the cumulative frequency $L=\delta N \to \infty$, the highly oscillatory coefficients $\sigma^\delta_{-k,\alpha}$ converge weakly to zero, while $\nabla[\wt{\pi}^{N,\delta}_{k,\alpha}-\wt{\Pi}^{N,\delta}_{k,\alpha}]$ remains smooth and uniformly bounded in $N$ and $\delta$ at large scales.

\smallskip

The local error $\Errdloc$ poses a significantly more difficult challenge.
In the fully periodic setting, this term was treated by Flandoli and Luo \cite{FL19} (see also \cite{L23_enhanced}) relying on Fourier-analytic techniques. 
To accommodate our local setting, we instead develop a purely spatial approach. This framework provides an explicit explanation for a seemingly surprising phenomenon: the inherently non-local It\^o-Stratonovich corrector becomes \emph{local} in the scaling limit, a feature that is already visible in the structure of $\Errdloc$ in \eqref{eq:error_loc_intro}.

As in Subsection \ref{ss:caccioppoli_intro}, we employ extrapolated spaces to capture the error decay as $L=\delta N\to \infty$.
To this end, it suffices to study the bilinear pairing 
$$
(\nabla v^{N,\delta}+G,\Phi)\mapsto \langle \Errdloc, \Phi\rangle
$$ for a smooth test vector field $\Phi$. 
Through standard manipulations, one can isolate the covariance function $\Cov^{N,\delta}$ of the rescaled problem \eqref{eq:turbulent_Stokes_microscopic_rescaled_intro} (defined analogously to \eqref{eq:covariance_intro}) and express the pairing as a local term and a nonlocal term given by:
\begin{equation}
\label{eq:representation_Ito_Strat_intro}
\sum_{1\leq i,j,n,m\leq 3}
{\normalfont{\text{P.V.}}}
\int_{B\times 2B} \Gamma_{i,j}(x-y) \Cov^{N,\delta}_{m,n}(x-y) \chi(y)H_{m,j}(y)\partial_n \Phi_i \,\dd x\,\dd y,
\end{equation}
where $H=[\nabla v^{N,\delta}]^\top+G$, and $\Gamma_{i,j}=\mathcal{F}^{-1}_{\R^3}\big(\xi \mapsto \frac{\xi_i\xi_j}{|\xi|^2}-\frac{\delta_{i,j}}{d}\big)$ is a Calder\'on-Zygmund-type kernel, which roughly corresponds to the second derivatives of the Green's function on $\R^3$. Details can be found in Appendix \ref{app:cancellation_ito_stratonovich}, see \eqref{eq:definition_A_bilinear_appendix} and \eqref{eq:expression_via_covariance_ito_strat}.
Crucially, the representation involving \eqref{eq:representation_Ito_Strat_intro} isolates the role of the covariance operator associated with $\Cov^{N,\delta}$, which exhibits a quantitative decay---governed solely by the cumulative frequency $L=\delta N$---away from the diagonal $x=y$ (see Subsection \ref{ss:decay_covariance}).
As detailed in Appendix \ref{app:cancellation_ito_stratonovich}, this off-diagonal decay not only rigorously justifies the locality of the limiting It\^o-Stratonovich correction but also provides the quantitative rate of convergence necessary to close the one-step improvement scheme introduced in \eqref{eq:choice_L0_intro}.

\subsection{Notation}
\label{ss:notation}
Here, we collect the basic notation used in the manuscript.  
$C$ denotes a constant that might vary from line to line.
For given parameters $p_1,\dots,p_n$ and two quantities $x$ and $y$, we write $x\lesssim y$, if there exists a constant $C$ such that $x\le Cy$. We write $x\eqsim_{p_1,\dots,p_n} y$, whenever $x\lesssim_{p_1,\dots,p_n} y$ and $y\lesssim_{p_1,\dots,p_n}x$. 

The transposition of a matrix is denoted by $\top$. For two matrices $A=(A_{i,j})_{i,j=1}^d$ and $B=(B_{i,j})_{i,j=1}^d$, we set $A:B =\sum_{i,j=1}^d A_{i,j}B_{i,j}$. 
For two vectors $x,y$ we let $x\otimes y=(x_iy_j)_{i,j=1}^d$.
The symbol $\nabla^2 f$ denotes the Hessian of the function $f$ on a $d$-dimensional set, i.e., $\nabla^2 f=(\partial_{i,j} f)_{i,j=1}^d$. Therefore, $\nabla^2 : F = \sum_{i,j=1}^d \partial_{i,j} F_{i,j}$ for a matrix field $F=(F_{i,j})_{i,j=1}^d$.

Let $A\subseteq \R^d$ be a set. We denote by $\Borel(A)$ the associated Borel $\sigma$-algebra. 
For a Banach space $X$ and $q\in (1,\infty)$, $L^q(S;X)$ is the Bochner space of strongly measurable, $q$-integrable $X$-valued functions, see e.g., \cite[Section 1.2b]{Analysis1}. As usual $W^{1,q}(S;X)$ denotes the set of all $f\in L^q(S;X)$ such that $\nabla f\in L^q(S;X)$ endowed with the natural norm. 
We also employ vector-valued Bessel potential spaces 
\begin{equation}
\label{eq:Sobolev_spaces_by_restrictions}
H^{\sigma,q}(S;X)=\big\{f\in \D'(S;X)\,:\, \exists \, F\in H^{\sigma,q}(\R^d;X) \text{ such that }F|_{S}=f\big\},
\end{equation} 
where $\sigma\in \R$. The above space is endowed with the natural induced norm. For details on the space $H^{\sigma,q}(\R^d;X)$, the reader is referred to e.g., \cite[Chapter 14]{Analysis3}. As usual, we let $H^{\sigma}(S;X)=H^{\sigma,2}(S;X)$, and we use that well-known identity $H^{1,q}(S;X)=W^{1,q}(S;X)$ for $q\in (1,\infty)$ if $X$ is a Hilbert space.

If $X=\R^M$ for some $M\geq 1$, we often write $L^q(S)$  instead of $L^q(S;\R^M)$ if no confusion seems likely. 
Finally, $\one_{A_0}$ denotes the indicator function of $A_0\subseteq A$.

\subsubsection*{Geometric set-up and averaged function spaces.} $Q_{\varrho}(t_0,x_0)$ denotes the parabolic cylinder with side length $\varrho>0$ and center $(t_0,x_0)\in \R\times \R^d$ defined as
$$
Q_{\varrho}(t_0,x_0)
\stackrel{{\rm def}}{=}(t_0-\varrho^2,t_0]\times B_\varrho(x_0),
$$
where $I_\varrho(t_0)=(t_0-\varrho^2,t_0]$ and $B_\varrho(x_0)$ is the ball of radius $\varrho$ and center $x_0$.
Often, for notational convenience, we write $Q=Q_{\varrho}(t_0,x_0)$ and $Q=I\times B$ in the case $\varrho>0$ and $(t_0,x_0)\in \R_+\times \R^d$ are clear from the context.
As usual, we denote by $\partial_\pb Q$ the parabolic boundary of $Q$:
$$
\partial_\pb  Q_{\varrho}(t_0,x_0)
\stackrel{{\rm def}}{=} \{t_0-\varrho^2\}\times B_\varrho(x_0)\cup (t_0-\varrho^2,t_0]\times \partial B_\varrho(x_0).
$$
For $\lambda>0$ and a parabolic cylinder $Q=Q_{\varrho}(t_0,x_0)$, we let $\lambda Q$ be the \emph{rescaled} cylinder around the center $(t_0,x_0)$ of $Q$: 
$$
\lambda Q = Q_{\lambda\varrho}(t_0,x_0).
$$
Next, we define the associated averaged function spaces. For $Q=Q_\varrho(t_0,x_0)$, we let 
$$
\fint_{Q}f=\frac{1}{|Q|} \int_{Q}f= \frac{1}{|B_1|\varrho^{d+2}} \int_Qf,
$$ 
where we do not display the variable of integration and the Lebesgue measure $\dd x$ if no confusion seems likely.
A similar notation is employed for balls or intervals.
For a parabolic cylinder $Q$, we let 
\begin{equation*}
\|f\|_{\underline{L}^q(Q)}
=\Big(\fint_{Q}|f|^q \Big)^{1/q},
\end{equation*}
and similarly for balls or intervals. For a ball $B\subseteq \R^d$, for $f\in H^1_0(B)$, we let  
$
\|f\|_{\underline{H}^{1}_0(B)}
\stackrel{{\rm def}}{=}\|\nabla f\|_{\underline{L}^{2}(B)}.
$
As usual, we define $\underline{H}^{-1}(B)=(\underline{H}^{1}_0(B))^*$, and endow it with the norm:
\begin{equation}
\label{eq:negative_sob_space_averaged}
\|f\|_{\underline{H}^{-1}(B)}
=\sup\Big\{\fint_{B}f g\,:\, 
\|g\|_{\underline{H}^{1}_0(B)}\leq 1\Big\},
\end{equation}
where $\fint_{B}f g$ is understood in the distributional sense.

The following scaling properties will often be employed in the manuscript:
\begin{equation}
\label{eq:scaling_negative_sob_space_intro}
\|f(\lambda\cdot)\|_{\underline{H}^{1}_0(B)}
= \lambda
\|f\|_{\underline{H}^{1}_0(\lambda B)}
\quad \text{ and }\quad 
\|f(\lambda\cdot)\|_{\underline{H}^{-1}(B)}
= \lambda^{-1}
\|f\|_{\underline{H}^{-1}(\lambda B)}.
\end{equation}

\subsubsection*{Function spaces}
Standard Sobolev spaces are denoted by $W^{k,q}$ with $k\in \N_{>0}$ and $q\in [1,\infty]$.
Bessel-potential spaces are indicated by $H^{s,q}(\T^d)$ where $s\in \R$ and $q\in (1,\infty)$. It is well known that $H^{k,q}(\T^d)=W^{k,q}(\T^d)$ for $k\in \N_{>0}$.
We also use the usual short-hand notation $H^{s}(\T^d)=H^{s,2}(\T^d)$. Besov spaces are denoted by $B^{s}_{q,p}(\T^d)$ and can be defined as the real interpolation space $(H^{-k,q}(\T^d),H^{k,q}(\T^d))_{(s+k)/(2k),p}$ for $s\in \R$, $\N\ni k>|s|$ and $1<q,p<\infty$, see \cite{BeLo,Analysis1} for details on interpolation and see \cite[Section 3.5.4]{schmeisser1987topics} for details on function spaces over $\T^d$. 

For all $\vartheta_0,\vartheta_1>0$ and $t>0$, we let 
$$
C^{\vartheta_0,\vartheta_1}((0,t)\times \T^d)\stackrel{{\rm def}}{=}C^{\vartheta_0}(0,t;C(\T^d))\cap C([0,t];C^{\vartheta_1}(\T^d))
$$ 
and $C^{\vartheta_0,\vartheta_1}_{\loc}((0,t)\times \T^d)\stackrel{{\rm def}}{=}\cap_{0<\varepsilon<t/2}C^{\vartheta_0,\vartheta_1}((\varepsilon,t-\varepsilon)\times \T^d)$.
Finally, we let $\mathcal{A}(\T^d;\R^k)\stackrel{{\rm def}}{=}(\mathcal{A}(\T^d))^k$ for $\mathcal{A}\in \{L^q,H^{s,q},B^{s}_{q,p},C^{\vartheta_0,\vartheta_1}\}$ and $k\in \N$.

The above spaces can be defined on an arbitrary domain of $\R^d$ by restrictions of corresponding maps as in \eqref{eq:Sobolev_spaces_by_restrictions} endowed with the standard quotient norm.

\subsubsection*{Probabilistic set-up}
We denote by $(\O, \mathscr{A},(\mathscr{F}_t)_{t\geq 0}, \P)$ a filtered probability space carrying a sequence of independent standard Brownian motions which might change depending on the SPDE under consideration. Moreover, $\E[\cdot]=\int_{\O} \cdot \,\dd \P$ for the associated expected value. 
A process $\phi:[0,\infty)\times \O\to X$ is progressively measurable if $\phi|_{[0,t]\times \O}$ is $\Borel([0,t])\otimes \F_t$ measurable for all $t\geq 0$, where $\Borel$ is the Borel $\sigma$-algebra on $[0,t]$ and $X$ is a Banach space. Moreover, a stopping time $\tau$ is a measurable map $\tau:\O\to [0,\infty]$ such that $\{\tau\leq t\}\in \F_t$ for all $t\geq 0$. Finally, a stochastic process $\phi:[0,\tau)\times \O\to X$ is progressively measurable if $\one_{[0,\tau)\times \O}\,\phi$ is progressively measurable where $
[0,\tau)\times \O\stackrel{{\rm def}}{=}\{(t,\om)\in[0,\infty)\times \O\,:\,\,0\leq t<\tau(\om)\}$
and $\one_{[0,\tau)\times \O}$ (or simply $\one_{[0,\tau)}$) stands for the extension by zero outside $[0,\tau)\times \O$. The definition of the stochastic intervals $(0,\tau)\times \O$ and $[0,\tau]\times \O$ is similar.

\section{Preliminaries}
\label{s:preliminaries}

\subsection{The structure of the noise}
\label{ss:probabilistic}
While the main theorems are stated for the physical dimension $d=3$, all the results of this manuscript hold for general dimensions $d\geq 2$ with appropriate modifications, see Remark \ref{r:high_dimensions}. Therefore, we introduce the noise structure directly in this general setting.

\smallskip

Let $\Z_0^d=\Z^d\setminus\{0\}$. We say that the noise coefficient $\theta=(\theta_k)_{k\in \Z^d_0}\in \ell^2$ is \emph{normalized} if $\|\theta\|_{\ell^2(\Z^d_0)}=1$ and \emph{radially symmetric} if 
\begin{equation*}
 \theta_{j}=\theta_k \  \text{ for all $j,k\in\Z_0^d$ \  such that }|j|=|k|.
\end{equation*}
For some fixed $a>0$, a typical example of normalized radially symmetric coefficients is given by the elements of the following sequence:
\begin{equation}
\label{eq:choice_thetaN}
\theta^N_k =\frac{\Theta^N_k}{\|\Theta^N\|_{\ell^2}}\qquad \text{ where }\qquad \Theta_k^N =\frac{1}{|k|^{a}} \one_{\{N\leq |k|\leq 2N\}}
\end{equation}
which was already mentioned in Subsection \ref{ss:almost_self}.

\smallskip

Let $\Z_{+}^d$ and $\Z_-^d$ be a  partition of $\Z_0^d$ such that $-\Z_+^d=\Z_-^d$. For any $k\in \Z_+^d$, let $\{a_{k,1},\dots,a_{k,d-1}\}$ be a complete orthonormal basis of the hyperplane $k^{\bot}=\{k'\in \R^d\,:\, k\cdot k'=0\}$, 
and set $a_{k,\alpha}\stackrel{{\rm def}}{=}a_{-k,\alpha} $ for $k\in \Z^d_-$.
We set 
$$
\sigma_{k,\alpha}(x)=a_{k,\alpha}e^{2\pi \i x\cdot k} \quad x\in \T^d. 
$$ 
Clearly, the vector fields $\sigma_{k,\alpha}$ are smooth and divergence-free for all $k,\alpha$.

\smallskip

Next, we introduce the family of complex Brownian motions $(W^{k,\alpha})_{k,\alpha}$. 
Let $(B^{k,\alpha})_{k,\alpha}$ be a family of independent standard (real) Brownian motions on the above-mentioned filtered probability space $(\O,\A,(\F_t)_{t\geq 0},\P)$. Then, we set
\begin{equation}
\label{eq:complex_BM}
W^{k,\alpha}\stackrel{{\rm def}}{=}
\left\{
\begin{aligned}
&B^{k,\alpha}+ \i \,B^{-k,\alpha},& \qquad& k\in \Z^d_+,\\
&B^{-k,\alpha}- \i \, B^{k,\alpha}, &\qquad& k\in \Z^d_-.
\end{aligned}
\right.
\end{equation}
In particular, $\overline{W^{k,\alpha}}= W^{-k,\alpha}$ for all $k,\alpha$. 
The above conditions can be summarized in the following form
\begin{equation}
\label{eq:decorrelation_complex_BM}
[W^{k,\alpha},W^{h,\beta}]_t= 2t \delta_{k,-h}\delta_{\alpha,\beta} \ \text{ for all }h,k\in \Z^d_0, \ \alpha,\beta \in\{1,\dots,d-1\}.
\end{equation}
For later use, we note that, on a suitable extension of the probability space and filtration, one can construct a family of independent standard Brownian motions $(\wt{B}^{k,\alpha})_{k,\alpha}$ on $[-1/4,\infty)$ (i.e. starting from zero at $t=-1/4$), in such a way that their increments after time $t=0$ coincide with those of the original Brownian motions $(B^{k,\alpha})_{k,\alpha}$.
For instance, this can be obtained by taking a product with an additional probability space carrying another sequence of complex-valued Brownian motions. With a slight abuse of notation, we do not distinguish between the original Brownian motions and their extension above.

\subsection{Helmholtz projection and spaces of divergence-free vector fields} 
\label{ss:Helmh_div_free}
In this subsection, we introduce the Helmholtz projection $\p$. For an $\R^d$-valued distribution $F=(F^j)_{j=1}^{d}\in \D'(\T^d;\R^d)$ on $\T^d$, let $\widehat{F^j}(k) =\langle F^j, e_k \rangle $ be the $k$-th Fourier coefficient, where $j\in \{1,\dots,d\}$, $k=(k_j)_{j=1}^d\in \Z^d$ and $e_k(x)=e^{2\pi \i k\cdot x}$. 

For $F\in \D'(\T^d;\R^d)$, we let $\qq F$ be given by 
\begin{equation*}
(\widehat{ \qq F})(k)\stackrel{{\rm def}}{=}  -\frac{\i}{2\pi} \sum_{1\leq j\leq d} \frac{ k_j }{|k|^2} \widehat{F^j}(k), \qquad 
(\widehat{ \qq F}) (0)\stackrel{{\rm def}}{=}0,
\end{equation*}   
or in other words, $\qq F =\Delta^{-1}(\nabla \cdot F)$. 

The Helmholtz projection $\p$ and its complementary projection $\q$ are given by 
\begin{equation}
\label{eq:def_helmholtz_projection_revised}
\q F \stackrel{{\rm def}}{=} \nabla \qq F \qquad \text{ and }\qquad 
\p F\stackrel{{\rm def}}{=}F- \q F,
\end{equation}
respectively. From standard Fourier analysis, it follows that $\q$ and $\p$ restrict to bounded linear operators on $H^{s,q}(\T^d;\R^d)$ and $B^s_{q,p}(\T^d;\R^d)$ for $s\in \R$ and $q,p\in (1,\infty)$.

To connect the above construction with the corresponding one on domains, note that $\psi_F = \qq F$ solves the following elliptic problem:
$$
\Delta \psi_F = \nabla \cdot F \qquad \text{and} \qquad \langle \psi_F, 1 \rangle = 0.
$$
Let $B = B_\varrho(x_0) \subset \R^d$ be a ball of radius $\varrho \leq 1/2$ centered at $x_0 \in \R^d$. It is often convenient to identify the torus $\T^d$ with the periodic domain $\T^d_B = x_0 + [-1/2, 1/2)^d$, which naturally contains $B$. Under this identification, we denote the corresponding Helmholtz projection by $\p_{\T^d_B}$, and similarly for $\q_{\T^d_B}$ and $\qq_{\T^d_B}$.

\smallskip

Finally, for $s\in \R$ and $q\in (1,\infty)$, we can introduce function spaces of divergence-free vector fields with Bessel-potential regularity: 
\begin{align*}
\Hs^{s,q}(\T^d)\stackrel{{\rm def}}{=}\big\{ F\in H^{s,q}(\T^d;\R^d) \,:\, \nabla \cdot F=0\text{ in }\D'(\T^d)\big\}
\end{align*}
endowed with the induced norms. Similar definitions hold for Lebesgue $\Ls^q(\T^d)$ and Besov $\Bs^{s}_{q,p}(\T^d)$  spaces of divergence-free vector fields, where $p\in (1,\infty)$. 
Often we write $\Hs^{s,q}$ instead of $\Hs^{s,q}(\T^d)$ if no confusion seems likely.

\subsection{It\^o-reformulation}
\label{ss:ito_reformulation}
In this subsection, we formally recast the Stratonovich SPDE \eqref{eq:navier_stokes_intro} into an It\^o stochastic evolution equation. This formulation provides the setting for defining solutions to the stochastic 3D NSEs.

Assuming $\nabla \cdot u_0=0$, and applying the Helmholtz projection $\p$ to the first equation in \eqref{eq:navier_stokes_intro}, we formally obtain 
\begin{equation}
\label{eq:navier_stokes_helmholtz}
\left\{
\begin{aligned}
\partial_t u +\p [\nabla \cdot (u\otimes u)]&=\Delta u +\sqrt{\frac{3\mu}{2}}\sum_{k,\alpha}\theta_k\p\big[ (\sigma_{k,\alpha}\cdot\nabla) u\big]\circ \dot{W}^{k,\alpha}_t,\\
 u(0,\cdot)&=u_0.
 \end{aligned}
\right.
\end{equation} 
Here, we express the nonlinearity in its standard conservative form, which more naturally accommodates the weak setting.
Consequently, the divergence-free condition of $u_0$ is formally preserved along the flow of \eqref{eq:navier_stokes_helmholtz}.
The deterministic and stochastic pressures can be recovered from the velocity field via the identities
\begin{align}
\label{eq:definition_pressures_PwtP_local}
P =- \qq [\nabla \cdot (u\otimes u)] \qquad \text{ and }\qquad
\wt{P}_{k,\alpha} =\theta_k\qq [ (\sigma_{k,\alpha}\cdot\nabla) u].
\end{align}
As in \cite[Section 2.3]{FL19}, due to \eqref{eq:decorrelation_complex_BM} and the divergence-free property of $\sigma_{k,\alpha}$, at least formally, for all normalized radially symmetric $\theta\in \ell^2$, one has
\begin{align*}
\sqrt{\frac{3\mu}{2}}
\sum_{k,\alpha}\theta_k\, \p [ (\sigma_{k,\alpha}\cdot \nabla) u]\circ \dot{W}_t^{k,\alpha}
&=
\LL u+\sqrt{\frac{3\mu}{2}}
\sum_{k,\alpha}\theta_k \,\p [ (\sigma_{k,\alpha}\cdot \nabla) u]\, \dot{W}_t^{k,\alpha},
\end{align*}
where 
\begin{align*}
\LL u
&=
\frac{3\mu}{2}
\sum_{k,\alpha}\theta_k^2 \,\p \big[ \nabla \cdot ( \p[(\sigma_{k,\alpha}\cdot \nabla)u]\otimes \sigma_{-k,\alpha}\big)\big]=\mu \Delta u -\mathcal{P} u
\end{align*}
and
\begin{equation}
\label{eq:Ito_stratonovich_correction_pressure}
\mathcal{P} u \stackrel{{\rm def}}{=} 
\frac{3\mu}{2}\sum_{k,\alpha}\theta_k \,\p \big[ \nabla \cdot ( \nabla \wt{P}_{k,\alpha}\otimes \sigma_{-k,\alpha}\big)\big] 
\end{equation}
where $\wt{P}_{k,\alpha}$ is as in \eqref{eq:definition_pressures_PwtP_local}. In the above, $(\nabla \cdot A)_i= \sum_{1\leq j\leq 3}\partial_j A_{i,j}$ for a matrix valued map $A=(A_{i,j})_{i,j=1}^3$ and we used the well-known fact that for all radially symmetric coefficients $\theta$, it holds that (see e.g., \cite[Section 2.3]{G20} or \cite[Section 2]{FL19})
\begin{equation}
\label{eq:symmetric_equal_diagonal_matrix}
\sum_{k,\alpha} \theta_k^2 \sigma_{k,\alpha}\otimes \sigma_{-k,\alpha}
=
\sum_{k,\alpha} \theta_k^2 a_{k,\alpha}\otimes a_{-k,\alpha}= \frac{2}{3}\, \mathrm{Id}.
\end{equation}
An integration by parts shows that $\LL $ is a negative operator for all $\theta\in \ell^2$, i.e., $\langle \LL v,v \rangle_{H^{-1},H^1}\leq 0$ for all $v\in \Hs^1(\T^3)$. However, in accordance with the pathwise energy inequality \eqref{eq:pathwise_preservation}, the operator $\LL$ does not introduce any additional dissipation; it only balances the energy creation of the transport noise in It\^o-form. 

\smallskip

In light of the previous observations, \eqref{eq:navier_stokes_helmholtz} is formally equivalent to 
\begin{equation}
\label{eq:navier_stokes_helmholtz_Ito}
\left\{
\begin{aligned}
\partial_t u +\p [\nabla \cdot (u\otimes u)]
&=(1+\mu)\Delta u -
\mathcal{P}u \\
&+\sqrt{\frac{3\mu}{2}}\sum_{k,\alpha}\theta_k\p\big[ (\sigma_{k,\alpha}\cdot\nabla) u\big]\, \dot{ W}^{k,\alpha}_t,\\
 u(0,\cdot)&=u_0,
 \end{aligned}
\right.
\end{equation} 
It is now possible to give a precise meaning to solutions of \eqref{eq:navier_stokes_helmholtz_Ito} via vector-valued It\^o-calculus. 
Before doing so, let us emphasize that the above reformulation and the definition in the following section also extend to all dimensions $d\geq 2$ with
$$
\sqrt{\frac{3\mu}{2}}\ \text{ replaced by } \ \sqrt{c_d \mu} \ \text{ where }\  c_d\stackrel{{\rm def}}{=}\frac{d}{d-1}.
$$
This will be used throughout the manuscript without further mention.

\subsection{Solutions to stochastic 3D NSEs}
\label{ss:pq_solution}
Here, we define solutions to the stochastic 3D NSEs \eqref{eq:navier_stokes_intro}. We begin by recalling that the sequence of complex Brownian motions $(W^{k,\alpha})_{k,\alpha}$ satisfying \eqref{eq:decorrelation_complex_BM} induces an $\ell^2$-cylindrical (conjugate-symmetric complex) Brownian motion $\mathcal{W}_{\ell^2}$ via the formula
\begin{equation}
\label{eq:def_cylindrical_noise}
\mathcal{W}_{\ell^2}(f)=\sum_{k,\alpha}\int_{\R_+} f_{k,\alpha}(t)\,\dd W^{k,\alpha}_t 
\ \ \  \text{ for } \ \ \
f=(f_{k,\alpha})_{k,\alpha}\in L^2(\R_+;\ell^2(\R)).
\end{equation}
Note that $\mathcal{W}_{\ell^2}(f)$ is real-valued if $\overline{f_{k,\alpha}}=f_{-k,\alpha}$ as $\overline{W^{k,\alpha}}= W^{-k,\alpha}$ for all $k,\alpha$. Using this and the symmetry of $\sigma_{k,\alpha}$ under the reflection $k\mapsto -k$, one can check that the stochastic perturbation in \eqref{eq:navier_stokes_intro} can be rewritten only using real-valued coefficients and real-valued Brownian motions $(B^{k,\alpha})_{k,\alpha}$ in \eqref{eq:complex_BM}. Indeed,
\begin{align}
\label{eq:equivalence_complex_real_noise}
\sum_{k,\alpha} \theta_k\p [(\sigma_{k,\alpha}\cdot\nabla) u]\, \dot{W}^{k,\alpha}
&= \sum_{k\in \Z^3_+,\alpha\in \{1,2\}} 2\,\theta_k \p[(\Re \sigma_{k,\alpha}\cdot\nabla) u]\, \dot{B}^{k,\alpha}\\
\nonumber
&+ \sum_{k\in \Z^3_-,\alpha\in \{1,2\}} 2\,\theta_k\p[(\Im \sigma_{k,\alpha}\cdot\nabla) u] \, \dot{B}^{k,\alpha}.
\end{align}
Hence, solutions to the stochastic 3D NSEs \eqref{eq:navier_stokes_intro} are naturally real-valued. 
However, the complex formulation of the noise is more convenient for computations and has been widely employed in related works (e.g., \cite{A24_global_small,FGL21,FL19}).

\begin{definition}[Local, unique and maximal $(p,q)$-solutions to stochastic 3D NSEs]
\label{def:p_solution} 
Fix $p,q\in (2,\infty)$, and assume that $u_0\in \Bs^{1-2/p}_{q,p}(\T^3)$ and that $\theta=(\theta_k)_{k\in \Z^3_0}\in \ell^2$ is  normalized and radially symmetric. Let $\tau:\O\to [0,\infty]$ and $u:[0,\tau)\times \O\to \Hs^{1,q}(\T^3) $ 
be a stopping time and a progressively measurable process, respectively.

\begin{itemize}
\item We say that $(u,\tau)$ is a \emph{local $(p,q)$-solution} to \eqref{eq:navier_stokes_intro} if there exists a sequence of stopping times $(\tau_n)_{n\geq 1}$ such that $ \tau_n\uparrow \tau$ a.s., for which the following are satisfied for every $n\geq 1$:
\begin{enumerate}[{\rm(a)}]
\vspace{0.1cm}
\item\label{it:integrability_1} $u\in L^p(0,\tau_n;\Hs^{1,q}(\T^3))$ a.s.;
\vspace{0.1cm}
\item\label{it:integrability_2} $u\otimes u\in L^p(0,\tau_n;L^{q}(\T^3;\R^{3\times 3}))$ a.s.;
\vspace{0.1cm}
\item\label{it:integrability_3} a.s.\ for all $t\in [0,\tau_n]$ it holds that 
\begin{align*}
u(t)-u_0
= &\int_0^t \Big((1+\mu)\Delta u(s) -\mathcal{P}u(s) -\p\big[\nabla \cdot (u(s)\otimes u(s))\big]\Big) \,\dd s\\
+ &\, \sqrt{\frac{3\mu}{2}}
\int_0^t \one_{[0,\tau_n]}\Big(\p[(\theta_k\,\sigma_{k,\alpha}\cdot\nabla)u(s)\big]\Big)_ {k,\alpha}\,\dd 
\mathcal{W}_{\ell^2}.
\end{align*}
\end{enumerate}
\item A local $(p,q)$-solution $(u,\tau)$ to \eqref{eq:navier_stokes_intro} is said to be a \emph{unique (pathwise) local $(p,q)$-solution} to \eqref{eq:navier_stokes_intro} if for any local solution $(u',\tau')$ we have $u'=u$ a.e.\ on $[0,\tau'\wedge \tau)\times \O$. 
\item A unique local $(p,q)$-solution $(u,\tau)$ to \eqref{eq:navier_stokes_intro} is said to be a \emph{unique maximal $(p,q)$-solution} to \eqref{eq:navier_stokes_intro} if for any local solution $(u',\tau')$ we have $\tau'\leq \tau$ a.s.\ and $u'=u$ a.e.\ on $[0,\tau')\times \O$. 
\end{itemize}
\end{definition}

Due to \eqref{it:integrability_1}-\eqref{it:integrability_2} and the fact that $\theta\in \ell^2$, the deterministic and stochastic integrals in \eqref{it:integrability_3} are well-defined as an $\Hs^{-1,q}(\T^3)$-valued Bochner integral and an $\Ls^q(\T^3)$-valued It\^o integral, respectively (see e.g., \cite[Chapter 1]{Analysis1} and \cite[Theorem 5.5]{NVW13}). Thus, the equality in \eqref{it:integrability_3} is understood as an identity in $\Hs^{-1,q}(\T^3)$.
Existence and instantaneous regularization results for $(p,q)$-solutions under the subcriticality assumption $\frac{2}{p}+\frac{3}{q} < 2$ follow from the framework developed in \cite[Section 2]{AV21_NS}.

\section{Statement of the main results}
\label{s:statements}
The main result of this manuscript reads as follows (see Theorem \ref{t:intro} for an informal version). Recall that $(p,q)$-solutions to the stochastic 3D NSEs \eqref{eq:navier_stokes_intro} are defined in Definition \ref{def:p_solution}.

\begin{theorem}[Global smooth solutions to the 3D NSEs by transport noise]
\label{t:global_NSE}
Suppose that $q,p\in (2,\infty)$ satisfy
\begin{equation}
\label{eq:condition_pq_subcritical_scaling_statement}
\frac{2}{p}+\frac{3}{q}<2, \quad {{\normalfont\text{(subcriticality)}}}
\end{equation}
and fix 
\begin{equation*}
M \geq 1  \qquad \text{ and }\qquad \varepsilon\in (0,1).
\end{equation*}
Then there exist $\mu>0$ and $\theta\in \ell^2$ satisfying $\#\{k\,:\, \theta_k\neq 0\}<\infty$ for which the following assertion holds. For any
\begin{equation}
\label{eq:global_NSE_initial_data}
u_0\in \Bs^{1-2/p}_{q,p}(\T^3)\qquad \text{ such that }\qquad \|u_0\|_{B^{1-2/p}_{q,p}(\T^3;\R^3)}\leq M,
\end{equation}
the unique maximal $(p,q)$-solution $(u,\tau)$ to the stochastic {\normalfont{3D NSEs}} \eqref{eq:navier_stokes_intro} is global in time with high probability:
\begin{equation}
\label{eq:global_NSE}
\P(\tau=\infty)>1-\varepsilon.
\end{equation}
Moreover, $(u,\tau)$ has the following regularity properties:
\begin{align}
\label{eq:regularity_NSEs_2}
u&\in L^p_{\loc}([0,\tau);H^{1,q}(\T^3;\R^3))\cap C([0,\tau);B^{1-2/p}_{q,p}(\T^3;\R^3)) \ \text{ a.s.\ }\\
\label{eq:regularity_NSEs_3}
u&\in C^{\vartheta_0,\vartheta_1}_{\loc}((0,\tau)\times \T^3;\R^3)\ \text{ a.s.\ for all }\vartheta_0<1/2,  \ \vartheta_1<\infty.
\end{align}
\end{theorem}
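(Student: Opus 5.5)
The plan follows the scaling-limit scheme of Subsection \ref{ss:scaling_intro}. I take $\theta=\theta^N$ as in \eqref{eq:choice_thetaN} and argue by contradiction together with stochastic compactness. First I reduce to the case where $(p,q)$ satisfies $\frac{2}{p}+\frac{3}{q}<2$ and work with the cutoff system \eqref{eq:navier_stokes_intro_cut}. The cutoff space is $\X(t)=L^{2p'}(0,t;L^{2q'}(\T^3;\R^3))$, with auxiliary exponents $(p',q')$ chosen so that $\frac{2}{p'}+\frac{3}{q'}<2$ still holds and so that the solution of \eqref{eq:navier_stokes_intro_cut} is global by \cite[Section 2]{AV21_NS}. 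The point of the cutoff is that, by the energy balance \eqref{eq:pathwise_preservation} and the cutoff $\phi^R_\X$, the nonlinearity $\phi^R_\X\,\ucut\otimes\ucut$ is bounded in $L^{p'}(L^{q'})$ by $4R^2$. This lets me treat it as a forcing $F$ in the linear oscillating Stokes system \eqref{eq:turbulent_Stokes_microscopic_intro}. The initial datum $u_0\in B^{1-2/p}_{q,p}$ is absorbed through the stochastic forcing $G$ after subtracting a caloric extension.

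The first step is a uniform-in-$N$ bound in a space $\W\stackrel{{\rm c}}{\embed}\X$. This is the $L^p(L^q)$-estimate for oscillating Stokes stated as Theorem \ref{t:suboptimal_Lq_Rpositive}, which in turn rests on the supremum moment bounds of Theorem \ref{t:almost_Linfty} via \eqref{eq:boundedness_Lr_quenched_supremum_intro}, interpolated with the global energy bound. Combining it with a time-H\"older bound in negative Sobolev norms gives tightness of the laws of $\ucut^N$ in $\X$.

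The second step identifies the limit. Tightness, Skorokhod/Jakubowski, and the mixing estimate \eqref{eq:Parseval_combined_with_decay_intro} (with $\|\theta^N\|_{\ell^\infty}\to0$) make the martingale term vanish. The It\^o--Stratonovich corrector $\mu\Delta-\mathcal P$ converges to $\frac{3\mu}{5}\Delta$, as in Appendix \ref{app:cancellation_ito_stratonovich} and \cite{FL19}. Every limit point is therefore the unique deterministic solution $\ucutd$ of \eqref{eq:navier_stokes_intro_cut_lim}, and uniqueness of the limit upgrades the convergence to convergence in probability in $\X$.

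The third step chooses the parameters. For $\mu\gg1$ depending on $M$, the deterministic NSE with viscosity $1+\frac{3\mu}{5}$ is globally well-posed uniformly for $\|u_0\|_{B^{1-2/p}_{q,p}}\le M$, by small-data theory after rescaling time by the viscosity. Its $\X(\infty)$-norm is then bounded by some $R_0(M,\mu)$. I take $R=2R_0$, so the cutoff is inactive on the limit. Convergence in probability gives $\P(\|\ucut^N\|_{\X(T)}\le R)>1-\varepsilon$ for $N$ large. On this event $\ucut^N$ solves \eqref{eq:navier_stokes_intro}, and by maximality $\tau\ge T$ there.

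There are two remaining issues. The first is passing from a finite $T$ to $\tau=\infty$. For this I use the fact that $\ucutd$ decays, so on $[T,\infty)$ the norm $\|u(T)\|_{B^{1-2/p}_{q,p}}$ is small with high probability. I then want a small-data global result for the stochastic equation, which needs care because it must hold uniformly in $N$. I would instead run the argument on a large but finite $T$ and use the uniform-in-$N$ small-data stability that comes from the same linear estimates. The second is that the estimates must be uniform over the data ball. I handle this by arguing by contradiction with sequences $u_0^n$, which are compact in a slightly weaker Besov space after using the slack in subcriticality.

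Finite support of $\theta$ is automatic from \eqref{eq:choice_thetaN}, and the regularity claims \eqref{eq:regularity_NSEs_2}--\eqref{eq:regularity_NSEs_3} follow from \cite{AV21_NS}. The main obstacle is the uniform estimate in the first step, but that is exactly Theorem \ref{t:suboptimal_Lq_Rpositive}. Here the remaining delicate point is the uniformity in $u_0$ together with the global-in-time extension.
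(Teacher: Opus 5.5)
Your proposal has a genuine gap at the step from a finite horizon to $\tau=\infty$. As set up, your cutoff acts on the full velocity $\ucut$, whose spatial mean $\overline{u}_0$ is conserved (transport noise, nonlinearity, $\p$ and the correctors all preserve the zero mode). Hence $\|\ucut\|_{\X(t)}\geq t^{1/(2p')}|\overline{u}_0|\to\infty$, so the cutoff is eventually active almost surely whenever $\overline{u}_0\neq 0$. For the same reason there is no $R_0(M,\mu)$ bounding $\|\ucutd\|_{\X(\infty)}$, and $\ucutd$ does not decay. Your argument therefore yields only $\P(\tau\geq T)>1-\varepsilon$ with $N_0=N_0(T)$, and working on ``a large but finite $T$'' never produces $\tau=\infty$.

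The proposed restart at time $T$ is not justified either. Convergence in probability in $\X(T)=L^{2p'}(0,T;L^{2q'})$ carries no pointwise-in-time information and no spatial smoothness. It cannot be upgraded to norms involving derivatives, cf.\ \eqref{eq:convergence_lack_gradients}. So the decay of $\ucutd-\overline{u}_0$ gives you no smallness of $\|u(T)-\overline{u}_0\|_{B^{1-2/p}_{q,p}}$. Moreover, no uniform-in-$N$ small-data global result for \eqref{eq:navier_stokes_intro} is available. Theorem \ref{t:suboptimal_Lq_finite_time} has a $T$-dependent constant. Theorem \ref{t:suboptimal_Lq_Rpositive} requires mean-zero data and carries the lower-order term $\|w^N\|_{L^p(I;H^{-1,q})}$ on its right-hand side, which your outline never controls.

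The missing idea is to remove the mean and run the entire scaling limit on $\R_+$. The paper works with $\vcut^N=u-\overline{u}_0$, which solves \eqref{eq:navier_stokes_cutoff} with the extra drift $(\overline{u}_0\cdot\nabla)\vcut^N$ and a cutoff in $L^{2p}(0,t;L^{2q})$. The mean-zero property, the pathwise energy balance and Poincar\'e's inequality give the exponential decay \eqref{eq:decay_energy_vcut}. Interpolating with this decaying $L^2$-norm absorbs the lower-order term in Theorem \ref{t:suboptimal_Lq_Rpositive}, giving a bound in $L^{p_0}(\R_+;L^{q_0})$ with $p_0>2p$, $q_0>2q$ that is uniform in $N$ and $T$ (Lemma \ref{l:uniform_N_estimate_LpLq}). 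Together with the exponentially weighted compactness of Lemma \ref{l:compactness}, this yields Theorem \ref{t:scaling_limit_cutoff}: convergence in probability in $L^{2p}(\R_+;L^{2q})$, uniformly over the data ball. Lemma \ref{l:NS_increased_viscosity} bounds $\|\ud-\overline{u}_0\|_{L^{2p}(\R_+;L^{2q})}\leq R_0$. Choosing $R=R_0+1$ then makes the cutoff inactive for all times on an event of probability $>1-\varepsilon$, and maximality gives $\tau=\infty$ there directly, with no restart. Also note that Theorem \ref{t:suboptimal_Lq_Rpositive} requires \eqref{eq:assumption_suboptimal_Lq_finite_time1}. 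You must therefore first reduce, via an embedding of the data space, to $1<\frac{2}{p}+\frac{3}{q}<2$ and $p<\frac{q}{3-q}$ when $q<3$, as the paper does.
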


The key point in the above result is the assertion \eqref{eq:global_NSE}, which ensures that strong solutions to the stochastic 3D NSEs \eqref{eq:navier_stokes_intro} are global-in-time with high probability. Existence and regularity up to a positive lifetime $\tau$ are established in \cite[Section 2]{AV21_NS}. 
An outline of the proof strategy of Theorem \ref{t:global_NSE} can be found in Section \ref{s:proof_strategy} (see also Figure \ref{fig:scheme}). The complete proof is given in Subsection \ref{ss:global_proof}.

In the deterministic setting ($\mu=0$), the above result remains unknown and constitutes one of the Millennium Prize Problems \cite{F00_NSproblem}. 

\smallskip

Theorem \ref{t:global_NSE} is complemented by a suitable bound on the solution $(u,\tau)$. More precisely, if $p,q$ additionally satisfy $\frac{2}{p}+\frac{3}{q}>1$ and also $p<\frac{q}{3-q}$ in the case $q<3$, then its proof establishes the existence of $p_0>2p$, $q_0>2q$, and $R>0$---depending only on $M$ and the choice of $p,q$---such that
\begin{equation}
\label{eq:estimate_subcritical_norms_3DNSEs}
\Big\|u-\int_{\T^3} u_0\Big\|_{L^{p_0}(0,\tau_0;L^{q_0}(\T^3))}\leq R,
\end{equation}
where $\tau_0\in (0,\tau]$ is a stopping time satisfying 
$$
\P(\tau_0=\infty)>1-\varepsilon.
$$
Explicit values for $(p_0,q_0)$ can be extracted from the proof of Lemma \ref{l:uniform_N_estimate_LpLq}.
Note that the estimate \eqref{eq:estimate_subcritical_norms_3DNSEs} holds in a space with space-time Sobolev index given by
$$
-\frac{2}{p_0}-\frac{3}{q_0}>
-1,
$$
and this is central to establishing \eqref{eq:global_NSE}. We collect some further observations below.

\begin{remark}\
\begin{itemize}
\item {\rm (Subcriticality of initial data)} The class of initial data considered in Theorem \ref{t:global_NSE} is subcritical as the Sobolev index of $B^{1-2/p}_{q,p}(\T^3)$ is given by
$
1-2/p-3/q>-1.
$
The reader is referred to Subsection \ref{ss:scaling_intro} or \cite[Subsection 1.1]{AV21_NS} for details on scaling of \eqref{eq:navier_stokes_intro}. 
Due to the elementary embedding $B^{\varepsilon}_{3,\infty}(\T^3)\embed B^{1-2/p}_{3,p}(\T^3)$ for $\varepsilon\in (0,1)$ and $p\in (2,2/(1-\varepsilon))$, Theorem \ref{t:global_NSE} applies to initial data in the subcritical space 
$
\Bs^{\varepsilon}_{3,\infty}(\T^3)$ for any $\varepsilon\in (0,1).
$

\smallskip

\item {\rm (Independence of $\mu$ from $\varepsilon$)} The proofs of Theorem \ref{t:global_NSE} and Corollary \ref{cor:enhanced_dissipation} given in Section \ref{s:proof_main_results} show that in the latter results and for the bound \eqref{eq:estimate_subcritical_norms_3DNSEs}, the only object depending on $\varepsilon$ is the sequence of noise coefficients $\theta=(\theta_k)_{k\in \Z^3_0}$.

\smallskip
\item {\rm (Flexibility in the choice of the parameters)}
The choice of $\mu$ and $\theta$ in Theorem \ref{t:global_NSE} is not unique. Indeed, letting $\theta^N$ be as in \eqref{eq:choice_thetaN} for some fixed $a>0$, it follows from its proof that there exist $\mu_0$ and $N_0$ depending only on $p,q,\varepsilon,M$ and $a$ such that the $(p,q)$-solution $(u,\tau)$ to the stochastic 3D NSEs \eqref{eq:navier_stokes_intro} with $\theta=\theta^{N}$ satisfies \eqref{eq:global_NSE}-\eqref{eq:regularity_NSEs_3} provided $\mu\geq \mu_0$ and $N\geq N_0$. As in the above item, $\mu_0$ can be chosen independently of $\varepsilon$.
\end{itemize}
\end{remark}

As discussed below Theorem \ref{t:intro}, the key to understanding this regularizing phenomenon lies in the enhanced dissipation effect of transport noise on the Navier-Stokes dynamics. Crucially, solutions to the stochastic 3D NSEs \eqref{eq:navier_stokes_intro} satisfy the same pathwise energy balance \eqref{eq:pathwise_preservation} as their deterministic counterparts, regardless of the noise parameters $\mu$ and $\theta$. This preservation of energy extends to the spatial deviation from the average:
\begin{equation}
\label{eq:energy_difference_average}
\frac{1}{2}\int_{\T^3 }\Big|u(t,\cdot)-\int_{\T^3}u_0 \Big|^2 +\int_0^t\int_{\T^3}| \nabla u(s,x)|^2 \,\dd x \,\dd s =
\frac{1}{2}\int_{\T^3 }\Big|u_0-\int_{\T^3}u_0\Big|^2 ,
\end{equation}
a.s.\ for all $t<\tau$. 
However, dynamically, transport noise generates large gradients. These gradients force the kinetic energy of the velocity deviation (i.e., $\int_{\T^3}\big|u(t,\cdot)-\int_{\T^3}u_0\big|^2$) to decay much faster than standard bounds derived from the energy equality and Poincaré's inequality would suggest. This effect is rigorously captured in the following result, whose proof is postponed to Subsection \ref{ss:enhanced_proof}.

\begin{corollary}[Enhanced dissipation for 3D NSEs by transport noise]
\label{cor:enhanced_dissipation}
Suppose that $q,p\in (2,\infty)$ satisfy $1<\frac{2}{p}+\frac{3}{q}<2$. Assume further $p<\frac{q}{3-q}$ in case $q<3$. Fix 
\begin{equation*}
M \geq 1 ,  \qquad  \varepsilon\in (0,1), \qquad \lambda\in (0,\infty) \ \quad \text{ and }\ \quad b\in (1,\infty). 
\end{equation*}
Then there exist $\mu>0$ and $\theta\in \ell^2$ such that $\#\{k\,:\, \theta_k\neq 0\}<\infty$ for which the following assertion holds. For all initial data
\begin{equation*}
u_0\in \Bs^{1-2/p}_{q,p}(\T^3)\qquad \text{ such that }\qquad \|u_0\|_{B^{1-2/p}_{q,p}(\T^3;\R^3)}\leq M,
\end{equation*}
let $(u,\tau)$ be the unique maximal $(p,q)$-solution to the stochastic {\normalfont{3D NSEs}} \eqref{eq:navier_stokes_intro}. Then there exists a stopping time $\tau_0\in (0,\tau]$ and a random constant $K\in [1,\infty)$ such that 
\begin{equation}
\label{eq:enhanced_dissipation1}
\P(\tau_0=\infty)>1-\varepsilon 
\qquad \text{ and }\qquad
\E \const^b\lesssim_{p,q,M,\varepsilon,\lambda,b}1,
\end{equation}
for which we have the following enhanced dissipation estimate a.s.\
\begin{align}
\label{eq:enhanced_dissipation2}
\sup_{t<\tau_0}\Big(e^{\lambda t}\Big\|u(t,\cdot)-\int_{\T^3} u_0 \Big\|_{L^2(\T^3)}\Big)
&\leq \const \Big\|u_0-\int_{\T^3}u_0\Big\|_{L^2(\T^3)},\\
\label{eq:enhanced_dissipation3}
\Big\|t\mapsto e^{\lambda t}\Big( u(t,\cdot)-\int_{\T^3} u_0\Big)\Big\|_{L^{2p}(0,\tau_0;L^{2q}(\T^3))}
&\leq \const.
\end{align}
\end{corollary}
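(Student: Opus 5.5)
The plan is to obtain the corollary from the scaling-limit argument behind Theorem \ref{t:global_NSE}, applied not to $u$ itself but to an exponentially weighted version of the deviation $v=u-\int_{\T^3}u_0$. Since $\int_{\T^3}u(t)$ is conserved, $v$ is mean-zero, and the energy identity \eqref{eq:energy_difference_average} holds for it. Set $w(t)=e^{\lambda t}v(t)$. Formally, $w$ solves the stochastic NSEs with transport noise plus the zeroth-order term $\lambda w$. The nonlinearity becomes $e^{-\lambda t}\,\p[\nabla\cdot(w\otimes w)]$, and there are extra transport terms coming from the constant mean $\bar u_0=\int_{\T^3}u_0$, namely $(\bar u_0\cdot\nabla)w$. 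This drift is divergence-free and constant, so it is harmless both for the energy structure and for the $L^p(L^q)$-estimates.

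The first step is the limiting problem. For the cutoff version of the $w$-equation, with cutoff $\phi^R_{\X}$ in $\X=L^{2p}(L^{2q})$ as in \eqref{eq:navier_stokes_intro_cut}, the scaling limit is the deterministic NSE with viscosity $1+\frac{3\mu}{5}$ and the additional term $\lambda\bar w$. On mean-zero fields, Poincar\'e gives $(1+\frac{3\mu}{5})\Delta+\lambda\le-(1+\frac{3\mu}{5})4\pi^2+\lambda<0$ once $\mu\gg\lambda$. Subcritical theory with a large viscosity then makes the limit globally well-posed, and its $\X$-norm on $(0,\infty)$ is bounded by a constant $R$ depending only on $M,p,q,\lambda$. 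Choosing $\mu$ large (depending on $\lambda$ but not on $\varepsilon$) and then fixing $R$ ensures the cutoff is inactive in the limit.

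The second step runs the machinery leading to Theorem \ref{t:scaling_limit_cutoff}, via the uniform-in-$N$ $L^p(L^q)$-bounds of Theorem \ref{t:suboptimal_Lq_Rpositive}, for $\theta=\theta^N$. The $\lambda w$ term is a lower-order perturbation. The extra growth factor $e^{\lambda t}$ on a finite horizon, together with exponential decay in the limit, is handled by working on $[0,T]$ with $T$ large and using the decay of the limit for the tail. This yields convergence in probability in $\X$ of $w^N_{\rm cut}$ to the limit. In particular, the probability that $\|w^N_{\rm cut}\|_{\X}\le R$ tends to $1$. The stopping time $\tau_0$ is the first time $\|w\|_{\X(t)}$ reaches $R$; it satisfies $\P(\tau_0=\infty)>1-\varepsilon$ for $N\ge N_0(\varepsilon)$. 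On $[0,\tau_0)$, $u$ coincides with the cutoff solution. This gives \eqref{eq:enhanced_dissipation3}, with $K\ge R$ deterministic on that part.

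The third step proves \eqref{eq:enhanced_dissipation2} via a pathwise energy argument. The energy identity for $w$ reads
\begin{equation*}
\tfrac12\|w(t)\|_{L^2}^2+\int_0^t e^{2\lambda s}\|\nabla v\|_{L^2}^2\,\dd s=\tfrac12\|v_0\|_{L^2}^2+\lambda\int_0^t\|w\|_{L^2}^2\,\dd s .
\end{equation*}
The last integral is not small a priori. Since $L^{2q}\subset L^2$ and $L^{2p}_t\subset L^2_t$ only locally in time, I instead bound $\int_0^{\tau_0}\|w\|_{L^2}^2$ by $\lambda^{-1}$ times an analogous quantity with weight $e^{2\lambda' t}$, $\lambda'>\lambda$. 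This requires running step two with $\lambda'=2\lambda$ and then using H\"older in time: $\int_0^t e^{2\lambda s}\|v\|_{L^2}^2\le\|e^{2\lambda s}v\|_{L^{2p}L^{2q}}^{2}\,\|e^{-2\lambda s}\|_{L^{p'}}$-type bounds. However, this gives an estimate in terms of $R$, not in terms of $\|v_0\|_{L^2}$. To obtain linear dependence on $\|v_0\|_{L^2}$, I linearize: on $[0,\tau_0)$, $w$ solves the linear oscillating Stokes system with drift $e^{-\lambda t}u$, whose $\X$-norm is at most $R$. Linear energy-type bounds for this system, obtained via the same uniform estimates but with $L^2$ data, then give $\sup_t\|w\|_{L^2}\le K\|v_0\|_{L^2}$ with $K$ random. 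The moment bound $\E K^b\lesssim1$ comes from the uniform-in-$N$ moment bounds in $L^r(\O)$ with $r\ge b$.

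The main obstacle is this last step: the linear $L^2$-to-$L^2$ bound for the weighted linearized system, uniformly in $N$, with moments. I would treat it by a Gronwall argument on $\|w\|^2_{L^2}$ using the enhanced-dissipation part of the scaling limit for linear equations, applied on consecutive time intervals of fixed length, and multiplying the random contraction factors. The factors' moments are controlled by independence across intervals and by the strong Markov property.
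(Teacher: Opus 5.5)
Your Step 2 has a genuine gap, and Steps 1 and 3 inherit it. Transport noise gives no pathwise dissipation. For $w=e^{\lambda t}v$ the only a priori pathwise information is $\tfrac{\dd}{\dd t}\|w\|_{L^2}^2=-2\|\nabla w\|_{L^2}^2+2\lambda\|w\|_{L^2}^2$, which allows growth like $e^{(\lambda-4\pi^2)t}$ when $\lambda>4\pi^2$. Once the cutoff switches off, your cutoff $w$-equation is linear with this anti-dissipative zeroth-order term, so on the half-line $\lambda w$ is not a lower-order perturbation.
\begin{itemize}
\item In Theorem \ref{t:suboptimal_Lq_Rpositive} the term $\lambda w$ can only be placed in $F$. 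This produces $\lambda\|w^N\|_{L^p(I;H^{-1,q})}$, the same kind of lower-order term that Lemma \ref{l:uniform_N_estimate_LpLq} absorbs only via the pathwise decay \eqref{eq:decay_energy_vcut}.
\item Lemma \ref{l:compactness} needs a uniform bound in $C_\eta([0,\infty);L^2)$, i.e.\ on $\sup_t e^{(\eta+\lambda)t}\|v^N(t)\|_{L^2}$.
\item ``The decay of the limit'' says nothing about $w^N$ on $[T,\infty)$. A uniform-in-$N$ bound on $\|e^{\lambda t}v^N\|_{L^{2p}(T,\infty;L^{2q})}$ is exactly enhanced dissipation at a rate larger than $\lambda$.
\end{itemize}
So your stopping time with $\P(\tau_0=\infty)>1-\varepsilon$, and your control of $\lambda\int_0^t\|w\|_{L^2}^2$ via the $2\lambda$-weighted norm, presuppose the conclusion.

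The fix is to reverse the order. Prove \eqref{eq:enhanced_dissipation2} first, and keep the stopping time $\tau_0$ from the proof of Theorem \ref{t:global_NSE}, on which $u-\int_{\T^3}u_0=\vcut^N$, the cutoff solution of \eqref{eq:navier_stokes_cutoff}. Then obtain \eqref{eq:enhanced_dissipation3} by interpolating, pointwise in time, the unweighted bound \eqref{eq:estimate_subcritical_norms_3DNSEs} in $L^{p_0}(0,\tau_0;L^{q_0})$ (with $p_0>2p$, $q_0>2q$) against \eqref{eq:enhanced_dissipation2} for a larger rate $\lambda'$. Finish with H\"older in time against the leftover exponential weight; this is where the arbitrariness of $\lambda$ is used.

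For \eqref{eq:enhanced_dissipation2}, your final idea of iterating a contraction over unit time intervals is the right one, but the contraction itself is the missing lemma. None of your proposed mechanisms delivers it:
\begin{itemize}
\item the scaling limit is only qualitative;
\item Theorem \ref{t:suboptimal_Lq_Rpositive} needs Besov data and yields no decay;
\item a pathwise Gronwall on $\|w\|_{L^2}^2$ only gives growth;
\item the factors on successive intervals are neither independent nor Markov, since the cutoff and the drift $e^{-\lambda t}u$ depend on the whole past.
\end{itemize}

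What is needed is Lemma \ref{l:iteration_enhanced}: $\E\|\vcut^N(n+1)\|_{L^2}^2\le\eta\,\E\|\vcut^N(n)\|_{L^2}^2$, with $\eta$ as small as desired for $\mu\ge\mu_0$ and $N\ge N_0$. It requires no linearization. Energy monotonicity gives $\|\vcut^N(n+1)\|_{L^2}^2\le\int_n^{n+1}\|\vcut^N\|_{L^2}^2$. Write $\vcut^N$ on $[n,n+1]$ in mild form with the semigroup $e^{(1+\frac35\mu)(t-n)\Delta}$ and bound each piece in $\E\|\cdot\|_{L^2(n,n+1;L^2)}^2$:
\begin{itemize}
\item the free and constant-drift parts are $O(\mu^{-1})\,\E\|\vcut^N(n)\|_{L^2}^2$;
\item the nonlinear part is controlled by energy alone, via $H^1\embed L^6$ and zero mean: $\mu^{-1}\E\big(\int_n^{n+1}\|\nabla\vcut^N\|_{L^2}^2\big)^2\lesssim\mu^{-1}M^2\,\E\|\vcut^N(n)\|_{L^2}^2$;
\item the corrector mismatch $\frac25\mu\Delta-\mathcal{P}^N$ and the stochastic convolution are $O(N^{-\gamma_0})\,\E\|\vcut^N(n)\|_{L^2}^2$, by the mixing estimates and the energy balance.
\end{itemize}
Iterating gives $\E\|\vcut^N(n)\|_{L^2}^2\le\eta^n\|v_0\|_{L^2}^2$, where $v_0=u_0-\int_{\T^3}u_0$. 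Pathwise, $\|\vcut^N(t)\|_{L^2}\le\|\vcut^N(s)\|_{L^2}\le\|v_0\|_{L^2}$ for $t\ge s$. Hence, for $b\ge2$, $\E\sup_n\big(e^{\lambda n}\|\vcut^N(n)\|_{L^2}/\|v_0\|_{L^2}\big)^b\le\sum_n e^{b\lambda n}\eta^n<\infty$ once $\eta e^{b\lambda}<1$. This defines $K$ with the required moments, using no independence or Markov property.
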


The estimate \eqref{eq:enhanced_dissipation2} shows that transport noise induces an exponential decay of the kinetic energy of the velocity fluctuation, with an \emph{a priori} prescribable rate $\lambda$.
Furthermore, assertion \eqref{eq:enhanced_dissipation3} guarantees that this decay persists globally in certain subcritical norms for the 3D NSEs, noting that $-2/(2p)-3/(2q) > -1$. As with the bound \eqref{eq:estimate_subcritical_norms_3DNSEs}, the integrability space $L^{2p}(0,\tau_0;L^{2q})$ can be replaced by $L^{p_0}(0,\tau_0;L^{q_0})$ for some $p_0>2p$ and $q_0>2q$ without affecting the result.

\smallskip

We conclude this section with some additional remarks.

\begin{remark}\
\label{r:high_dimensions}
\begin{itemize}
\item {\rm (Non-trivial deterministic forcing)}
The above results naturally extend to the following forced stochastic 3D NSEs:
\begin{align*}
\partial_t u +(u\cdot \nabla)u&=-\nabla P+ \Delta u +f+\sqrt{\frac{3\mu}{2}}\sum_{k,\alpha}\, [-\nabla \wt{P}_{k,\alpha} +\theta_k (\sigma_{k,\alpha}\cdot\nabla) u]\circ \dot{W}^{k,\alpha}_t,
\end{align*} 
which is complemented by the incompressibility constraint $\nabla\cdot u=0$ and the initial condition $u(0,\cdot)=u_0$.
Fix $\alpha_0>0$. Under the assumptions of Theorem \ref{t:global_NSE} and the additional requirements $\frac{2}{p}+\frac{3}{q}>1$ and $[p<\frac{q}{3-q} \text{ if }q<3]$, the conclusions \eqref{eq:global_NSE} and \eqref{eq:regularity_NSEs_2} hold for the unique maximal $(p,q)$-solution to the above forced stochastic 3D NSEs with noise coefficients $(\mu,\theta)$ depending on $f$ only through $M$ and $\alpha_0$ provided the deterministic forcing $f = f_0 + \nabla \cdot F$ satisfies $\|f_0\|_{ L^1(\R_+;\R^3)}\leq M$,
$$
\| F\|_{ L^p(\R_+;L^{q}(\T^3;\R^{3\times 3}))}\leq M \  \text{ and } \  \| t \mapsto e^{\alpha_0 t}\|F(t,\cdot)\|_{L^2(\T^3;\R^{3\times 3})}\|_{ L^2(\R_+)}\leq M.
$$
Clearly, \eqref{eq:regularity_NSEs_3} is also satisfied provided $f$ is smooth, with spatial derivatives bounded uniformly in time (more flexible results can be obtained via \cite[Theorems 2.4 and 2.7]{AV21_NS}).
To prove the above extension, note that the above conditions on $f$ imply the exponential convergence to the mean spatial velocity used in Theorem \ref{t:scaling_limit_cutoff}; see \eqref{eq:decay_energy_vcut}. The rest of the proof is unchanged. 
A version of Corollary \ref{cor:enhanced_dissipation} also holds for the above forced system. We leave the details to the interested reader.

\smallskip

\item {\rm (Anomalous dissipation by transport noise)}
Combining the techniques used to prove Theorem \ref{t:global_NSE} with those in \cite{A24_anomalous,HPZZ23}, one can also obtain anomalous dissipation of energy induced by transport noise.

\smallskip

\item {\rm (Higher dimensions)}
The results in Theorem \ref{t:global_NSE} and Corollary \ref{cor:enhanced_dissipation} also hold in dimensions $d>3$, under the following additional assumptions on $p,q\in (2,\infty)$:
$$
\frac{2}{p}+\frac{d(d-4)}{2q(d-2)}<1\quad \text{ and }\quad \Big[  p<\frac{q(d-2)}{d-q} \text{ in case }q<d\Big].
$$ 
Note that the above holds if $q$ is large and $p\approx 2$. 
These conditions arise by modifying the proof of Lemma \ref{l:uniform_N_estimate_LpLq} to obtain an oscillation-independent estimate for a suitable cutoff version of the stochastic NSEs \eqref{eq:navier_stokes_intro} in dimension $d$. We refer to Subsection \ref{ss:scaling_intro} for the underlying motivations, and to Theorem \ref{t:scaling_limit_cutoff} for the details.
\end{itemize}

\end{remark}

\section{Stochastic Caccioppoli inequalities via localized energy}
\label{s:caccioppoli}
Let $Q=I\times B$ be a parabolic cylinder, i.e., $Q=Q_\varrho(t_0,x_0)$ for some $\varrho>0$. 
In this section, we prove stochastic Caccioppoli inequalities for local solutions to the stochastic Stokes system on $Q$ (see Subsection \ref{ss:caccioppoli_intro}), 
\begin{equation}
\label{eq:turbulent_stokes_caccioppoli}
\left\{
\begin{aligned}
\partial_t v &=-\nabla \pi+\nu \Delta v+ \nabla \cdot F-\frac{1}{2} \sum_{n} \nabla \cdot (\nabla \wt{\pi}_{n}\otimes \zeta_n) \\ 
&\qquad\qquad\qquad  +\sum_{n\geq 1}\, [-\nabla \wt{\pi}_{n} +(\zeta_{n}\cdot\nabla) v+G_n]\,\dot{W}^{n}_t,\\
 \nabla \cdot v&=0,\\
\Delta \pi
&= \nabla^2 :F - \frac{1}{2} \nabla \cdot \sum_{n} (\zeta_{n}\cdot\nabla)[\nabla \wt{\pi}_{n}] ,\\
\Delta \wt{\pi}_n
&= \nabla \cdot [(\zeta_{n}\cdot \nabla)v]+\nabla \cdot G_n,
 \end{aligned}
\right.
\end{equation} 
where $(W^n)_n$ is a sequence of standard independent Brownian motions starting at $t_0-\varrho^2$ on a filtered probability space $(\O,\F,(\F_t)_t,\P)$, and $(\zeta_n)_n$ is a sequence of $\Progress\otimes \Borel(Q)$-measurable vector fields $\zeta_n:\R_+\times \O\times \T^d\to \R^d$ such that there exist constants $\nu_0<\nu$ and $M>0$ for which the following conditions hold a.e.\ on $I\times \O$,
\begin{align}
\label{eq:ass_caccioppoli1}
\|(\zeta_n)_n\|_{L^\infty(Q;\ell^2)}&\leq M, & &\text{(Boundedness)}\\
\label{eq:ass_caccioppoli2}
\frac{1}{2}\sum_{n} |\zeta_n \cdot \xi|^2 &\leq \nu_0 |\xi|^2\text{ for all } \xi\in \R^d, & &\text{(Parabolicity)}\\
\label{eq:ass_caccioppoli3}
\nabla\cdot \zeta_n &=0 \text{ in }\D'(B) \ \text{ for all }n.   & &\text{(Noise incompressibility)}
\end{align}
Finally, we assume that $F:I\times \O\times B\to \R^{d\times d}$ and $(G_n)_n:I \times \O\times B\to \ell^2(\R^d)$ are progressively measurable processes such that 
\begin{equation}
\label{eq:ass_caccioppoli4}
F, \|(G_n)_n\|_{\ell^2}\in L^2(Q) \text{ a.s. }
\end{equation}

Under the above assumptions, we can define local solutions to \eqref{eq:turbulent_stokes_caccioppoli}.

\begin{definition}[Local weak solutions]
\label{def:local_solutions_NSE}
Suppose that \eqref{eq:ass_caccioppoli1} and \eqref{eq:ass_caccioppoli4} are satisfied.
Let $Q=I\times B$ be a parabolic cylinder with center $(t_0,x_0)$ and radius $\varrho>0$, i.e., 
$$
Q_{\varrho}(t_0,x_0)=(t_0-\varrho^2,t_0)\times B_{\varrho}(x_0).
$$ 
We say that $(v,\pi,(\wt{\pi}_n)_n)$ is a local solution to \eqref{eq:turbulent_stokes_caccioppoli} if the following are satisfied.
\begin{enumerate}[{\rm(1)}]
\item\label{it:local_solutions_NSE1} The processes 
$$
v:I \times \O\to H^1(B;\R^d), \quad \pi:I \times \O\to L^2(B) \ \ \text{ and }\ \  (\wt{\pi}_n)_n:I \times \O\to H^1(B;\ell^2), 
$$
are progressively measurable, and a.s.,
\begin{align*}
v&\in C(\overline{I};L^2(B;\R^d))\cap L^2(I;H^1(B;\R^d)) \\
\pi&\in L^2(Q),\quad \text{ and }\quad  (\wt{\pi}_n)_n \in L^2(I;H^1(B;\ell^2)).
\end{align*}
\item The following conditions hold.
\begin{itemize}
\item a.s.\ for all $t\in I$, and all $\phi\in C^\infty_{{\rm c}}(B;\R^d)$:
\begin{align*}
\int_{B} v(t)\cdot \phi
&=
\int_{B} v(t_0-\varrho^2)\cdot \phi\\
&+\int_{t_0-\varrho^2}^t  \int_{B} \Big[ 
\pi\,\nabla \cdot \phi+ \Big(-F -\nu \nabla v+\frac{1}{2} \sum_{n} (\nabla \wt{\pi}_{n}\otimes \zeta_n)\Big):\nabla \phi\Big]\\
&+\int_{t_0-\varrho^2}^t \Big(\int_{B} \big[(-\nabla \wt{\pi}_n + (\zeta_n\cdot \nabla) v) +G_n\big]\cdot \phi \Big)_{n} \, \dd \mathcal{W}_{\ell^2},
\end{align*}
\item a.e.\ on $I\times \O$, 
in $\D'(B)$,
\begin{align*}
\nabla \cdot v&=0,\\
\Delta \pi
&= \nabla^2 :F - \frac{1}{2} \nabla \cdot \sum_{n}\nabla \cdot(\nabla \wt{\pi}_{n}\otimes \zeta_n )  ,\\
\Delta \wt{\pi}_n
&= \nabla \cdot [(\zeta_{n}\cdot \nabla)v]+\nabla \cdot G_n.
\end{align*}
\end{itemize}
\end{enumerate} 
\end{definition}

For later use, let us note that the evolution equation for the velocity field $v$ can be equivalently formulated as an identity on $H^{-1}(B)$. More precisely, if $(v,\pi,(\wt{\pi}_n)_n)$ is a local solution to \eqref{eq:turbulent_stokes_caccioppoli}, then a.s.\ for all $t\in I$,
\begin{align}
\label{eq:integral_identity_v_Caccioppoli}
v(t)
&= v(t_0-\varrho^2)+ \int_{t_0-\varrho^2}^t \Big(- \nabla \pi + \nu \Delta v+ \nabla \cdot F-\frac{1}{2} \sum_{n} \nabla \cdot ( \nabla \wt{\pi}_{n}\otimes \zeta_n)\Big)\,\dd s\\
\nonumber
&+ \int_{t_0-\varrho^2}^t \big( -\nabla\wt{\pi}_n + (\zeta_n \cdot \nabla) v+G_n \big)_n\,\dd \mathcal{W}_{\ell^2}\quad \text{ in }H^{-1}(B),
\end{align}
where $\mathcal{W}_{\ell^2}$ is the $\ell^2$-cylindrical Brownian motion associated with the sequence of standard independent Brownian motions $(W^n)_n$, cf.\ \eqref{eq:def_cylindrical_noise}. As explained below Definition \ref{def:p_solution}, due to the assumptions in Definition \ref{def:local_solutions_NSE}, the deterministic and stochastic integrals are well-defined in the Bochner and It\^o sense, taking values in $H^{-1}(B)$ and $L^2(B)$, respectively.

\smallskip

Under the above assumptions, we can prove the following local smoothing effect for local solutions to the stochastic Stokes system \eqref{eq:turbulent_stokes_caccioppoli}.

\begin{theorem}[Stochastic Caccioppoli inequality -- Stochastic Stokes system]
\label{t:caccioppoli}
Let $Q=I \times B$ be a parabolic cylinder with side length $\varrho\in (0,1/2]$. Fix $r\in (1,\infty)$ and $\sm\in (0,1)$.
Suppose that \eqref{eq:ass_caccioppoli1}-\eqref{eq:ass_caccioppoli4} hold. Then there is a constant $C>0$ depending only on $d,M,\varrho,\nu,\nu_0,r$ and $\sm$ for which the following estimates hold for all local solutions $(v,\pi,(\wt{\pi}_n)_n)$ to the stochastic Stokes system \eqref{eq:turbulent_stokes_caccioppoli} (see Definition \ref{def:local_solutions_NSE}):
\begin{align*}
\Big(\E\sup_{\sm I}\Big\|v- \fint_{\sm B}v\Big\|_{L^2(\sm B)}^r\Big)^{1/r}+
\big(\E\|\nabla v\|_{L^2(\sm Q)}^r\big)^{1/r}&\leq C\Eno_{  Q}^{\#}, \\
\big(\E\| \pi\|_{L^{2}(\sm Q)}^r\big)^{1/r}&\leq C\Eno_{ Q}^{\#},\\
\big(\E\|( \nabla\wt{\pi}_n)_n\|_{L^{2}( \sm Q;\ell^2)}^r\big)^{1/r}&\leq C\Eno_{ Q}^{\#},
\end{align*} 
where $\Eno^{\#}_{ Q}$ is the sharp energy of the local solution $(v,\pi,(\wt{\pi}_n)_n)$ in $ Q$, i.e., 
\begin{align}
\nonumber
\Eno_{ Q}^{\#}
=\Big( \max\Big\{
\E \Big\|v-\fint_B v \Big\|_{L^2( Q)}^r
,\,
\label{eq:def_En_Q_caccioppoli}
\E \|\pi\|_{L^2(I;H^{-1}( B))}^r,\, 
\E \|( \wt{\pi}_{n} )_{n}\|_{L^{2}( Q;\ell^2)}^r,&\\
\E \|F \|_{L^2( Q)}^r,\,
\E \|(G_n)_n \|_{L^2( Q;\ell^2)}^r & \Big\}\Big)^{1/r}.
\end{align}
\end{theorem}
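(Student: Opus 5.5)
We plan to prove the Caccioppoli inequality by a localized It\^o energy estimate for $w=v-c$, with $c=\fint_{B'} v(t)$ suitably chosen, tested against $\phi^2 w$ where $\phi$ is a space-time cutoff. The pressures are then estimated afterwards from their elliptic equations by interior estimates. Since constants are not divergence-free obstructions and $\nabla c=0$, subtracting a spatial constant does not change the equation except that $c$ becomes a semimartingale in time. To avoid this, we work with $v$ itself and a cutoff $\phi\in C^\infty_{\rm c}$ equal to $1$ on $\sm Q$, and only at the end pass to oscillations through the identity $\|v-\fint v\|_{L^2}\le \|v-a\|_{L^2}$ for any constant $a$. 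Equivalently, we replace $v$ by $v-\fint_B v(t_0-\varrho^2)$, a fixed random $\F_{t_0-\varrho^2}$-measurable constant. This is legitimate since all terms of the equation involve only $\nabla v$, except the energy term, which we control by $\|v-\fint_B v\|_{L^2(Q)}$ after averaging the initial time over a subinterval (choose a random starting time $s\in I$ and average over $s$, as in the usual Caccioppoli trick).

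\emph{Step 1 (local It\^o formula).} Apply It\^o's formula to $\int_B \phi^2|w|^2$ using \eqref{eq:integral_identity_v_Caccioppoli}. The drift gives $-2\nu\int\phi^2|\nabla w|^2$ plus cutoff terms of the form $\int |\nabla\phi||\phi| |w||\nabla w|$, the pressure term $2\int \pi\,\nabla\cdot(\phi^2 w)=4\int\pi\phi\nabla\phi\cdot w$, and the $F$-term. The It\^o correction is $\sum_n\int\phi^2|-\nabla\wt\pi_n+(\zeta_n\cdot\nabla)w+G_n|^2$. The key cancellation, where the Stratonovich structure enters, is between this correction and the corrector $-\frac12\sum_n\nabla\cdot(\nabla\wt\pi_n\otimes\zeta_n)$. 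Expanding the square, $\sum_n\int\phi^2|(\zeta_n\cdot\nabla)w|^2\le 2\nu_0\int\phi^2|\nabla w|^2$ by \eqref{eq:ass_caccioppoli2}, which is absorbed since $\nu_0<\nu$. The cross terms $\int\phi^2\nabla\wt\pi_n\cdot(\zeta_n\cdot\nabla)w$ combine with the corrector tested against $\phi^2 w$: by incompressibility \eqref{eq:ass_caccioppoli3} and the equation for $\wt\pi_n$, they reduce to $\int\phi^2|\nabla\wt\pi_n|^2$ plus terms carrying $\nabla\phi$ and $G_n$. I expect this algebra to be the main obstacle: one must verify that the residual $\sum_n\int\phi^2|\nabla\wt\pi_n|^2$ comes with the right sign, or else is controlled by $\|\phi\nabla w\|$ with a constant below $\nu-\nu_0$. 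Failing that, one handles $\phi\nabla\wt\pi_n$ as in Step 3 via a local approximation $\qq[\chi(\zeta_n\cdot\nabla)w]$ plus a harmonic remainder bounded by $\|\wt\pi_n\|_{L^2(Q)}$.

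\emph{Step 2 (closing).} Terms with $\nabla\phi$ are handled by Young's inequality, leaving $\|w\|_{L^2(Q)}^2$, $\|F\|^2$, $\|G\|^2$, $\|\wt\pi\|_{L^2}^2$ (after integrating $\nabla\wt\pi_n$ by parts against $\nabla\phi$ terms), and $\int\pi\phi\nabla\phi\cdot w$. For the last term we use $\|\pi\|_{H^{-1}}$: this requires $\nabla(\phi\nabla\phi\cdot w)$, i.e.\ $\nabla w$ on the support, which is controlled by using a slightly larger cutoff and iterating over a chain of nested cylinders (a hole-filling/Giaquinta iteration lemma absorbs $\epsilon\|\nabla w\|_{L^2(\text{larger})}$). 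The martingale term is treated by the Burkholder--Davis--Gundy inequality in $L^{r/2}(\O)$ together with Young's inequality, giving the $\E\sup$ bound. Taking $r/2$ moments, valid for any $r>1$ through the stochastic Gronwall/Lenglart lemma when $r<2$, yields the first estimate. The supremum of the oscillation follows because $\fint_{\sm B}$ minimizes the distance to constants.

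\emph{Step 3 (pressures).} For $\wt\pi_n$, use interior $H^1$ estimates for $\Delta\wt\pi_n=\nabla\cdot[(\zeta_n\cdot\nabla)v+G_n]$ on nested balls. This gives $\|\nabla\wt\pi\|_{L^2(\sm B;\ell^2)}\lesssim M\|\nabla v\|_{L^2(\sm' B)}+\|G\|+\|\wt\pi\|_{L^2(B)}$, using \eqref{eq:ass_caccioppoli1}. For $\pi$, interior estimates for $\Delta\pi=\nabla^2:F-\frac12\nabla\cdot\nabla\cdot(\nabla\wt\pi\otimes\zeta)$ give $\|\pi\|_{L^2(\sm B)}\lesssim\|F\|+M\|\nabla\wt\pi\|_{L^2(\sm'B)}+\|\pi\|_{H^{-1}(B)}$, the last term by duality and harmonic interior regularity. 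We then integrate in time and take $r$-th moments.
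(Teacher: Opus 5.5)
The gap is in how you handle the mean. The first entry of $\Eno^{\#}_Q$ is $\int_I\|v(t)-\fint_B v(t)\|_{L^2(B)}^2\,\dd t$, which uses the \emph{time-dependent} spatial average. You correctly noticed that subtracting this average turns it into a semimartingale, but your way around that loses exactly what the theorem asserts. If you subtract a fixed constant $a$ (whether $\fint_B v(t_0-\varrho^2)$, an average over starting times, or the optimal $\fint_Q v$), the right-hand side of your energy inequality contains $\int_Q|v-a|^2$.

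This exceeds $\|v-\fint_B v\|_{L^2(Q)}^2$ by $|B|\int_I|\fint_B v(t)-a|^2\,\dd t$, the temporal fluctuation of the spatial mean. Averaging over initial times $s$ does not remove it, because $\fint_I\|v(t)-\fint_B v(s)\|_{L^2(B)}^2\,\dd s$ still contains $|\fint_B v(t)-\fint_B v(s)|^2$. The fluctuation is genuinely present and is driven by the pressures. For instance, $v=W^1_t e_1$, $\wt{\pi}_1=-x_1$, $\wt{\pi}_n=0$ for $n\geq 2$, $\pi=0$, $F=G=0$ is a local solution with vanishing spatial oscillation, yet $\int_Q|v-a|^2\neq 0$. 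Nothing in your argument bounds this term by $\Eno^{\#}_Q$, so as written you prove only the weaker inequality with $\|v-\fint_Q v\|_{L^2(Q)}$. The difference matters: the iteration in Section~\ref{s:energy_control_microscopic} uses spatial averages precisely because the pressures have no time regularity.

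The missing idea is to subtract the time-dependent \emph{weighted} mean $\overline{v}_\phi(t)=\int_B\phi^2 v(t)$, with $\int_B\phi^2=1$, and apply It\^o's formula to $\int_B\phi^2|v-\overline{v}_\phi|^2=\int_B|\phi v|^2-|\overline{v}_\phi|^2$. The weight is essential. The drift of $\overline{v}_\phi$ contains $\langle \pi,\nabla(\phi^2)\rangle$, which makes sense for $\pi\in L^2(I;H^{-1}(B))$. By contrast, $\fint_B v$ is not an admissible test in the weak formulation, and its drift would involve boundary traces of $\pi$ and $\nabla v$.

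In this computation, the It\^o correction of $|\overline{v}_\phi|^2$ enters with a minus sign and can simply be dropped. Since $\overline{v}_\phi$ is constant in space, all remaining drift and martingale terms recombine into expressions in $\wt{v}_\phi=v-\overline{v}_\phi$. At the end, use $\|f-\int_B\phi^2 f\|_{L^2(B)}\leq C\|f-\fint_B f\|_{L^2(B)}$ pointwise in time. You could instead control the fluctuation of $\overline{v}_\phi$ through its own SDE, but its corrector term requires $\nabla\wt{\pi}_n$ near $\supp\nabla\phi$, and hence an extra absorption argument that your plan does not contain.

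Two further remarks. First, the cancellation in Step~1 is exact, so no residual remains. Set $X_n=\int_B\phi^2\nabla\wt{\pi}_n\cdot(\zeta_n\cdot\nabla)w$. Up to terms carrying $\nabla(\phi^2)$ or $G_n$:
\begin{itemize}
\item the square contributes $\int_B\phi^2|\nabla\wt{\pi}_n|^2-2X_n+\int_B\phi^2|(\zeta_n\cdot\nabla)w|^2$;
\item the corrector contributes $+X_n$;
\item testing the equation for $\wt{\pi}_n$ with $\phi^2\wt{\pi}_n$ gives $\int_B\phi^2|\nabla\wt{\pi}_n|^2=X_n$.
\end{itemize}
Hence only $\sum_n\int_B\phi^2|(\zeta_n\cdot\nabla)w|^2\leq 2\nu_0\int_B\phi^2|\nabla w|^2$ survives. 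The lower-order terms that still contain $\nabla\wt{\pi}_n$ are absorbed via the same tested identity. Your fallback would not have rescued a genuine leftover: such a term is in general only bounded by about $2\nu_0\int|\nabla w|^2$, which closes only when $\nu_0<\nu/2$.

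Second, the pressure term needs no nested cylinders. The bound $|\langle(\nabla\phi)\pi,\phi w\rangle|\leq\|(\nabla\phi)\pi\|_{H^{-1}(B)}\big(\|\phi\nabla w\|_{L^2(B)}+\|w\nabla\phi\|_{L^2(B)}\big)$ can be absorbed with the same cutoff. Also, BDG holds for every exponent $r/2>0$, so Lenglart is unnecessary. Your Step~3 agrees with the paper.
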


As discussed in Subsection \ref{ss:caccioppoli_intro}, stochastic Caccioppoli estimates encode a local smoothing effect due to parabolicity and can be thought of as ``reverse Poincar\'e'' estimates. The choice of the spaces for the pressure is consistent with the scaling of the stochastic Stokes system \eqref{eq:turbulent_stokes_caccioppoli}, see Subsection \ref{ss:role_pressure_intro}. 

Clearly, by splitting the complex noise into its real and imaginary parts as in \eqref{eq:equivalence_complex_real_noise}, the real-valued abstract system \eqref{eq:turbulent_stokes_caccioppoli} exactly recovers the oscillating Stokes systems discussed in Section \ref{s:proof_strategy}. Thus, the above stochastic Caccioppoli inequality applies with constants uniform in the parameters determining the oscillations for the stochastic Stokes systems in \eqref{eq:turbulent_Stokes_microscopic_intro} and \eqref{eq:turbulent_Stokes_microscopic_rescaled_intro}.

\smallskip

The estimates in Theorem \ref{t:caccioppoli} hold true with $(v,\pi,(\wt{\pi}_n)_n)$ replaced by 
\begin{equation}
\label{eq:replacying_average_Caccioppoli}
\Big(v,\, \pi-\fint_{B} \pi, \, \Big(\wt{\pi}_n-\fint_{B} \wt{\pi}_n \Big)_n\Big).
\end{equation}
The above follows from the invariance of the local solutions under the addition of time-dependent constants to the pressures. Of course, this is not the case for the velocity field, and some care is needed. Indeed, as shown in Lemma \ref{l:localized_energy_inequality} below, we do not apply It\^o's formula to the kinetic energy $\int_{B}|v|^2$ itself, but rather to 
$$
v\mapsto \int_{B} \phi^2 \Big|v-\int_B \phi^2 v\Big|^2
$$
where $\phi\in C^{\infty}_{{\rm c}}(B)$ is a fixed cutoff function such that $\int_B \phi^2=1$ and $\phi|_{\sm B}=1$. In particular, $\int_B \phi^2 v$ mirrors the average $\fint_B v$ while keeping the functional evaluation $v\mapsto \int_B \phi^2 v$ well-defined for weak solutions.

\smallskip

This section is organized as follows. In Subsection \ref{ss:local_energy_inequality}, we prove local energy inequalities for the velocity in the spirit of Scheffer \cite{Scheffer_partial_regularity} and Caffarelli, Kohn, and Nirenberg \cite{CKN82}. Then, in Subsection \ref{ss:caccioppoli_proof}, we prove Theorem \ref{t:caccioppoli} by combining such local energy inequalities for the velocity with corresponding ones for the pressures. 

As will be clear from the proofs in Subsection \ref{ss:local_energy_inequality}, the stochastic Caccioppoli inequality of Theorem \ref{t:caccioppoli} crucially uses the presence of the It\^o-Stratonovich correction $-\frac{1}{2} \sum_{n} \nabla \cdot ( \nabla \wt{\pi}_{n}\otimes \zeta_n)$ in \eqref{eq:turbulent_stokes_caccioppoli} as it allows for a fundamental cancellation, see Remark \ref{r:necessity_strotonovich}. 
Finally, the argument in this section also offers a framework to investigate \emph{suitable} weak solutions (see e.g., \cite[Definition 6.9]{LePi} for the deterministic case) to stochastic 3D NSEs \eqref{eq:navier_stokes_intro}.

\subsection{Stochastic local energy inequality}
\label{ss:local_energy_inequality}
The aim of this section is to prove the following result, where for $\phi\in C^\infty_{{\rm c}}(B)$ with $\int_B\phi^2=1$, we set
$$
\wt{v}_\phi = v-\overline{v}_\phi \qquad \text{ and }\qquad 
\overline{v}_\phi 
\stackrel{{\rm def}}{=}\int_B \phi^2 v.
$$

\begin{proposition}[Local energy inequality -- Stochastic Stokes system]
\label{prop:local_energy}
Let $Q=I \times B$ be a parabolic cylinder with side length $\varrho\in (0,1/2]$, and fix $r\in (1,\infty)$.
Suppose that \eqref{eq:ass_caccioppoli1}-\eqref{eq:ass_caccioppoli4} holds. Then there is a constant $C>0$ depending only on $d,M,\varrho,\nu,\nu_0$ and $r$ such that, for all $\phi\in C^\infty_{{\rm c}}(B)$ such that $\int_B \phi^2=1$, all $\psi\in C^\infty_{{\rm c}}((t_0-\varrho^2,t_0])$, and all local solutions $(v,\pi,\wt{\pi}_n)$ to the stochastic Stokes system \eqref{eq:turbulent_stokes_caccioppoli}, the following localized energy inequality holds a.s.\
\begin{align}
\nonumber
&\sup_{I}\int_{B} \psi^2\phi^2|\wt{v}_\phi|^2 
+\int_{Q}\psi^2\phi^2  \, |\nabla v|^2 
\leq 
C \int_Q \psi^2 |\nabla \phi|^2 \|(\wt{\pi}_n)_n\|_{\ell^2}^2
\\
\nonumber
&\ \   
+  C \int_{Q} |\wt{v}_\phi|^2 \big( \phi^2 |\partial_t\psi^2 | +\psi^2 |  \nabla  \phi|^2 \big) + C \int_I \psi^2  \|(\nabla \phi) \pi\|_{H^{-1}(B)}^2\\
\nonumber
&  \ \  
+ C \int_{Q}\psi^2  \big( |\nabla \phi|^2 |F|^2+ |\phi|^2 \|(G_n)_n\|_{\ell^2}^2\big) 
\\
&\ \   
+C\sup_I \Big|\sum_{n} \int_{t_0-\varrho^2}^\cdot \psi^2 \Big[\int_B \big( \wt{\pi}_n \wt{v}_\phi-\zeta_n \frac{|\wt{v}_\phi|^2}{2}\big) \cdot \nabla (\phi^2) +(G_n\cdot \wt{v}_\phi) \phi^2\Big]\,\dd W^{n}\Big|.
\label{eq:local_energy_claim}
\end{align}
\end{proposition}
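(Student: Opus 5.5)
We would apply Itô's formula, in the Hilbert space $L^2(B)$ with the weight $\psi^2\phi^2$, to the functional
$$
\mathcal{E}_\phi(t)=\psi^2(t)\int_B \phi^2 |\wt{v}_\phi(t)|^2 .
$$
The integral identity \eqref{eq:integral_identity_v_Caccioppoli} expresses $v$ as an $H^{-1}(B)$-valued Itô process. Its drift lies in $L^2(I;H^{-1}(B))$ and its noise in $L^2(I;L^2(B;\ell^2))$, while $\phi^2 v\in L^2(I;H^1_0(B))$.

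To justify Itô's formula rigorously, we would first mollify in space, for instance by convolution restricted to $\supp\phi$. We then apply the standard Itô formula for the square of the $L^2$-norm in the Gelfand triple $H^1_0\subset L^2\subset H^{-1}$ to $\phi\,\wt{v}_\phi$, and pass to the limit. We would first observe that $\overline{v}_\phi=\int_B\phi^2 v$ is a real-valued semimartingale. Since $\int_B\phi^2\wt v_\phi=0$, the cross terms coming from $\dd\overline{v}_\phi$ vanish. Hence the computation reduces to testing the equation with $\psi^2\phi^2\wt v_\phi$, plus the Itô correction of the averaged part. That correction enters with a favorable (negative) sign, since
$$
\int_B\phi^2|\wt v_\phi|^2=\int_B\phi^2|v|^2-|\overline v_\phi|^2 .
$$

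Next, we treat the drift terms one by one. The viscous term gives $-\nu\int\psi^2\phi^2|\nabla v|^2$ plus a cross term $\int\psi^2\nabla v:\wt v_\phi\otimes\nabla\phi^2$, which is absorbed by Young's inequality into $C\int\psi^2|\nabla\phi|^2|\wt v_\phi|^2$. The time derivative of $\psi^2$ produces the $\phi^2|\partial_t\psi^2||\wt v_\phi|^2$ term. For the pressure, $\nabla\cdot v=0$ gives
$$
\int \pi\,\nabla\cdot(\phi^2\wt v_\phi)=\int \pi\,\wt v_\phi\cdot\nabla\phi^2 .
$$
We would bound this by $\|(\nabla\phi)\pi\|_{H^{-1}}\|\phi\wt v_\phi\|_{H^1_0}$ after writing $\nabla\phi^2=2\phi\nabla\phi$. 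The $H^1$-norm of $\phi\wt v_\phi$ is then absorbed into the dissipation plus lower-order terms. The forcing $F$ is handled in the same way via Young's inequality.

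The key step is the noise contribution, which we expect to be the main obstacle. The Itô correction is
$$
\tfrac12\sum_n\int\psi^2\phi^2|-\nabla\wt\pi_n+(\zeta_n\cdot\nabla)v+G_n|^2 ,
$$
and we combine it with the Itô–Stratonovich corrector tested against $\phi^2\wt v_\phi$, namely $\frac12\sum_n\int(\nabla\wt\pi_n\otimes\zeta_n):\nabla(\phi^2\wt v_\phi)$. Expanding both, we aim to show the following four facts. The $|(\zeta_n\cdot\nabla)v|^2$ term is bounded by $\nu_0\int\psi^2\phi^2|\nabla v|^2$ via \eqref{eq:ass_caccioppoli2}, and this is strictly less than the viscous dissipation because $\nu_0<\nu$. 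The terms $|\nabla\wt\pi_n|^2$ and $-\nabla\wt\pi_n\cdot(\zeta_n\cdot\nabla)v$ cancel against the corrector after integrating by parts in the direction $\zeta_n$ (using $\nabla\cdot\zeta_n=0$) and using the pressure equation $\Delta\wt\pi_n=\nabla\cdot[(\zeta_n\cdot\nabla)v+G_n]$, tested against $\phi^2\wt\pi_n$. This is the cancellation of Remark \ref{r:necessity_strotonovich}. Every leftover term carries a derivative of $\phi$ and yields $|\nabla\phi|^2\|(\wt\pi_n)\|_{\ell^2}^2$, $|\nabla\phi|^2|\wt v_\phi|^2$, or $\phi^2\|G\|_{\ell^2}^2$ contributions via Young's inequality. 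The delicate point is tracking the exact algebra so that no term $\phi^2|\nabla\wt\pi_n|^2$ survives uncontrolled. If a residual of this form does survive, we would estimate it by the local elliptic estimate
$$
\|\phi\nabla\wt\pi_n\|_{L^2}\lesssim\|\phi(\zeta_n\cdot\nabla)v+\phi G_n\|_{L^2}+\|\nabla\phi\,\wt\pi_n\|_{L^2}
$$
and absorb it.

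Finally, the martingale term equals
$$
\sum_n\int\psi^2\phi^2\wt v_\phi\cdot[-\nabla\wt\pi_n+(\zeta_n\cdot\nabla)v+G_n]\,\dd W^n .
$$
Integrating by parts, with $\nabla\cdot\wt v_\phi=0$ and $\nabla\cdot\zeta_n=0$, turns its integrand into
$$
\int_B\Big(\wt\pi_n\wt v_\phi-\zeta_n\tfrac{|\wt v_\phi|^2}{2}\Big)\cdot\nabla\phi^2+(G_n\cdot\wt v_\phi)\phi^2 ,
$$
where we write $(\zeta_n\cdot\nabla)v=(\zeta_n\cdot\nabla)\wt v_\phi$ because $\overline{v}_\phi$ is constant in space. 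We keep this term in sup form, as in \eqref{eq:local_energy_claim}. Taking the supremum over $I$ of the resulting pathwise inequality, after absorbing the dissipation, gives the claim.
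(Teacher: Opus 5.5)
Your proposal is correct and follows the paper's proof essentially step by step. The ingredients are the same: It\^o's formula for the weighted fluctuation, with the It\^o correction of the average discarded by sign (you work with $\phi\wt v_\phi$ directly, the paper with $\|\phi v\|_{L^2}^2$ and $|\overline v_\phi|^2$ separately, which is the same computation); the exact cancellation of $\phi^2\nabla\wt\pi_n\cdot(\zeta_n\cdot\nabla)v$ between the It\^o correction, the It\^o--Stratonovich corrector and the $\wt\pi_n$-equation tested with $\phi^2\wt\pi_n$; and absorption of the leftover $\varepsilon\int_Q\psi^2\phi^2\|(\nabla\wt\pi_n)_n\|_{\ell^2}^2$ by exactly the local elliptic estimate you wrote, which is the paper's \eqref{eq:local_smoothing_turbulent_pressure}. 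That leftover does remain after Young's inequality, for instance from $\phi^2 G_n\cdot\nabla\wt\pi_n$ and from the terms pairing $\nabla\wt\pi_n$ with $\nabla(\phi^2)$, so your fallback is in fact required.

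One caveat on the forcing. The term $-\int_Q\psi^2\phi^2F:\nabla v$ can only be controlled by $\psi^2\phi^2|F|^2$, so the weight $|\nabla\phi|^2|F|^2$ printed in the statement (and in the corresponding line of the proof of Lemma~\ref{l:local_energy}) is a misprint. To see that it fails, take $\zeta_n=G_n=0$, $\pi=\wt\pi_n=0$, $v=\eta(t)\nabla\cdot\Omega(x)$ and $F=\eta'\Omega-\nu\eta\Delta\Omega$, with $\Omega$ antisymmetric and everything supported in a ball times an interval on which $\phi$ and $\psi$ are nonzero constants. Then the right-hand side vanishes while the left-hand side does not. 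Your Young's-inequality treatment gives the correct version, with $\phi^2|F|^2$.
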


The proof of Proposition \ref{prop:local_energy} will be given at the end of this subsection. As a first preparatory step, we derive a first local balance.

\begin{lemma}
\label{l:localized_energy_inequality}
In the setting of Proposition \ref{prop:local_energy}, for all $\phi\in C^{\infty}_{{\rm c}}(B)$ with $\int_B \phi^2=1$ and $\psi\in C^{\infty}_{{\rm c}}((t_0-\varrho^2,t_0])$, it holds that
\begin{align}
\nonumber
\psi^2(t)\int_{B}  \phi^2 |\wt{v}_\phi(t)|^2 
&\leq
  \int_{t_0-\varrho^2}^t\partial_t (\psi^2)\int_{B} \phi^2 |\wt{v}_\phi|^2 \\
  \nonumber
&+ 2\int_{t_0-\varrho^2}^t \psi^2\int_B \pi \nabla\cdot (\phi^2 \wt{v}_\phi)-\nu \nabla \wt{v}_\phi : \nabla (\phi^2  \wt{v}_\phi) - F: \nabla (\phi^2 \wt{v}_\phi)\\
\nonumber
& + \sum_{n}\int_{t_0-\varrho^2}^t\psi^2\int_B\nabla \wt{\pi}_n \cdot (\zeta_n\cdot \nabla) (\phi^2 \wt{v}_\phi)\\ 
\nonumber
&+\sum_{n}\int_{t_0-\varrho^2}^t\psi^2 \int_B\phi^2 |-\nabla \wt{\pi}_n + (\zeta_n\cdot \nabla) \wt{v}_\phi + G_n|^2\\
&+2 \sum_n \int_{t_0-\varrho^2}^t\psi^2 \Big(\int_B [-\nabla\wt{\pi}_n + (\zeta_n \cdot \nabla) \wt{v}_\phi+G_n ]\cdot( \phi^2 \wt{v}_\phi)\Big)\,\dd W^n,
\label{eq:localized_energy_psiphi}
\end{align} 
 a.s.\ for all $t\in I=(t_0-\varrho^2,t_0)$.
\end{lemma}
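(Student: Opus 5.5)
The plan is to apply It\^o's formula to the functional $\Phi(v)=\int_B\phi^2|v-\int_B\phi^2 v|^2$, then multiply by $\psi^2$ and use the product rule for the semimartingale $\psi^2(t)\Phi(v(t))$. Since $\psi$ is compactly supported in $(t_0-\varrho^2,t_0]$, the initial term vanishes. The factor $\partial_t(\psi^2)\int\phi^2|\wt v_\phi|^2$ comes from differentiating $\psi^2$.

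\emph{Structure of the functional.} Writing $\langle f,g\rangle_\phi=\int_B\phi^2 f\cdot g$, we have $\Phi(v)=\langle v,v\rangle_\phi-|\overline v_\phi|^2$. This is a quadratic form in $v$, and in fact $\Phi(v)=\|(I-P_\phi)v\|^2_{L^2(\phi^2)}$, where $P_\phi$ is the $L^2(\phi^2)$-orthogonal projection onto constants.

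\emph{First derivative.} The Fr\'echet derivative is $D\Phi(v)[h]=2\int_B\phi^2\wt v_\phi\cdot h$. The cross term involving $\overline v_\phi$ drops because $\int\phi^2\wt v_\phi=0$. Pairing this with the drift in \eqref{eq:integral_identity_v_Caccioppoli} and integrating by parts against the test function $\phi^2\wt v_\phi\in H^1_0(B)$ gives the following terms:
\begin{itemize}
\item the pressure term $2\int\pi\nabla\cdot(\phi^2\wt v_\phi)$;
\item the viscous term $-2\nu\int\nabla\wt v_\phi:\nabla(\phi^2\wt v_\phi)$, using $\nabla v=\nabla\wt v_\phi$;
\item the forcing term $-2\int F:\nabla(\phi^2\wt v_\phi)$;
\item the corrector term $\sum_n\int(\nabla\wt\pi_n\otimes\zeta_n):\nabla(\phi^2\wt v_\phi)=\sum_n\int\nabla\wt\pi_n\cdot(\zeta_n\cdot\nabla)(\phi^2\wt v_\phi)$.
\end{itemize}
The pairing with the noise coefficient gives the martingale term, where again $(\zeta_n\cdot\nabla)v=(\zeta_n\cdot\nabla)\wt v_\phi$.

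\emph{Second derivative.} The It\^o correction is $\frac12 D^2\Phi[h_n,h_n]=\int\phi^2|h_n|^2-|\int\phi^2h_n|^2$. Dropping the nonpositive term $-|\int\phi^2 h_n|^2$ gives an upper bound and produces the inequality, with $h_n=-\nabla\wt\pi_n+(\zeta_n\cdot\nabla)\wt v_\phi+G_n$. The sum over $n$ converges in $L^1(I)$ a.s.\ by \eqref{eq:ass_caccioppoli1}, \eqref{eq:ass_caccioppoli4} and $(\wt\pi_n)_n\in L^2(I;H^1(B;\ell^2))$.

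\emph{Main obstacle: justifying It\^o's formula.} The equation holds only in $H^{-1}(B)$, so the drift is merely $L^2(I;H^{-1})$. The plan is to use the variational (Krylov / Pardoux) It\^o formula for the Gelfand triple $H^1_0\subset L^2\subset H^{-1}$. Because $v$ itself is not in $H^1_0$, I would instead work with $\phi v$, or equivalently use the weighted pairing: $\phi^2 h\in H^1_0$ for $h\in H^1$.

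Concretely, I would mollify in space, $v_\varepsilon=v*\rho_\varepsilon$, on a slightly smaller ball containing $\supp\phi$. Then $v_\varepsilon$ satisfies a finite-dimensional-type It\^o equation pointwise, the classical It\^o formula applies to $\Phi(v_\varepsilon)$, and one passes to the limit $\varepsilon\to0$. The limit uses:
\begin{itemize}
\item $v_\varepsilon\to v$ in $L^2(I;H^1)\cap C(\overline{I};L^2)$ on $\supp\phi$;
\item convergence of the stochastic integrals in probability, by the BDG / dominated convergence;
\item convergence of the corrector, since only $\nabla\wt\pi_n\in L^2(\ell^2)$ and $\zeta_n\in L^\infty(\ell^2)$ are needed.
\end{itemize}

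The remaining step is a finite-variation product rule with $\psi^2(t)$, which is routine.
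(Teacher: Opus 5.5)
Your proposal is correct and follows essentially the paper's argument: the paper also writes $\mathcal{E}_\phi(v)=\int_B|\phi v|^2-\bigl|\int_B\phi^2 v\bigr|^2$. It applies the variational It\^o formula for $H^1_0(B)\embed L^2(B)\embed H^{-1}(B)$ to $\phi v$, applies the scalar It\^o formula to $\int_B\phi^2 v$, and drops the nonnegative correction $\sum_n\bigl|\int_B\phi^2 h_n\bigr|^2$, which is exactly your second-derivative computation. The spatial mollification you sketch is not needed: $\phi v\in L^2(I;H^1_0(B))\cap C(\overline{I};L^2(B))$ has $L^2(I;H^{-1}(B))$ drift, so the Gelfand-triple It\^o formula applies to it directly, as in the paper.
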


\begin{proof}
To prove the claim, we apply the It\^o formula to 
$$
(t,v)\mapsto \psi(t)^2 \mathcal{E}_\phi(v)\quad \text{ where }\quad \mathcal{E}_\phi(v) \stackrel{{\rm def}}{=}\int_B  \phi^2 | \wt{v}_\phi|^2. 
$$
Since
\begin{equation}
\label{eq:Leibnitz_energy_phi}
\dd\big( \psi(t)^2 \mathcal{E}_\phi(v(t))\big)=\mathcal{E}_\phi(v(t)) \partial_t (\psi^2(t))\,\dd t+ \psi^2(t)\, \dd \mathcal{E}_\phi(v(t)) ,
\end{equation}
it suffices to compute $\dd \mathcal{E}_\phi(v(t))$. To this end, note that, as $\int_B\phi^2=1$,  
\begin{equation}
\label{eq:identity_with_weighted_average}
\mathcal{E}_\phi(v)
= \int_{B} |\phi v|^2 - \Big(\int_{B} \phi^2 v\Big)^2 .
\end{equation}
From the smoothness of $\phi$, it follows that 
\begin{align}
\label{eq:integral_identity_w_Caccioppoli}
\dd (\phi v)(t)
&= \phi\Big(- \nabla \pi + \nu \Delta v+ \nabla \cdot F-\frac{1}{2} \sum_{n} \nabla \cdot ( \nabla \wt{\pi}_{n}\otimes \zeta_n)\Big) \,\dd t \\
\nonumber
&+\sum_n \big( \phi\,[-\nabla\wt{\pi}_n + (\zeta_n \cdot \nabla) v+G_n ]\big)\,\dd W^n.
\end{align}
As in Definition \ref{def:local_solutions_NSE}, the above identity in $H^{-1}(B)$ is understood in its natural sense, a.s.\ for all $t\in I$. Now, as $(v,\pi,(\wt{\pi}_n)_n)$ is a local weak solution to \eqref{eq:turbulent_stokes_caccioppoli}, we have 
$$
 \phi v\in L^2(I;H^{1}_0(B;\R^d))\cap C(\overline{I};L^2(B;\R^d)) \ \text{ a.s.\ }
$$
Thus, we can apply the It\^o formula \cite[Theorem 4.2.5]{LR15} to the standard Gelfand triple
$$
H^{1}_0(B;\R^d)\embed L^2(B;\R^d)\embed H^{-1}(B;\R^d),
$$
and thereby obtain
\begin{align}
\label{eq:Ito_formula_L2_localized}
\dd \int_{B} |\phi v(t)|^2  
&= \Big( 2\int_{B}\big[ \pi\, \nabla (\phi^2)\cdot v-\nu \nabla v : \nabla (\phi^2  v) - F: \nabla (\phi^2 v)\big]\\
\nonumber
& + \sum_{n} \int_B \nabla \wt{\pi}_n \cdot (\zeta_n\cdot \nabla) (\phi^2 v)\Big)\,\dd t \\ 
\nonumber
&+\sum_{n} \int_B\phi^2 |-\nabla \wt{\pi}_n + (\zeta_n\cdot \nabla) v + G_n|^2\,\dd t \\
\nonumber
&+2 \sum_n \Big(\int_B [-\nabla\wt{\pi}_n + (\zeta_n \cdot \nabla) v+G_n ]\cdot (\phi^2 v)\Big)\,\dd W^n,
\end{align} 
a.s.\ for all $t\in I$. In the above, we used that $\nabla \cdot v=0$ implies
$$
-\int_{B}\nabla \pi \cdot (\phi^2 v)=
\int_{B} \pi\, \nabla \cdot(\phi^2 v)=
\int_{B} \pi \,\nabla(\phi^2 ) \cdot v.
$$
As for the differential of the second term in \eqref{eq:identity_with_weighted_average}, note that, for $i\in \{1,\dots,d\}$,
\begin{align*}
\dd \int_{B} \phi^2 v_i(t)
&= \Big( \int_B  \pi\partial_i (\phi^2) - \nu \nabla v_i\cdot \nabla (\phi^2) -  F_i \cdot \nabla (\phi^2) 
+\frac{1}{2} \sum_{n}  \partial_i \wt{\pi}_{n} (\zeta_n \cdot \nabla) (\phi^2)\Big)\,\dd t \\
\nonumber
&+\sum_n  \int_B \phi^2\big[ -\partial_i\wt{\pi}_n  + (\zeta_n \cdot \nabla) v_i+G_{n,i}\big] \,\dd W^n,
\end{align*}
where $F_i =(F_{i,j})_{j=1}^d$ and $G_n=(G_{n,i})_{i=1}^d$. 
By It\^o's formula, using the nonnegativity of It\^o's correction and that $\overline{v}_\phi$ is spatially constant,
\begin{align}
\label{eq:Ito_formula_L2_localized1}
\dd \Big|\int_{B} \phi^2 v(t)\Big|^2
&\geq \Big(2\int_B \big[ \pi\,\nabla ( \phi^2)\cdot \overline{v}_\phi - \nu \nabla v : \nabla (\phi^2 \overline{v}_\phi) -  F : \nabla (\phi^2 \overline{v}_\phi) \big] \\
\nonumber
&+\sum_{n}  \int_B \nabla \wt{\pi}_{n}\cdot (\zeta_n \cdot \nabla) (\phi^2\overline{v}_\phi)\Big)\,\dd t  \\
\nonumber
&+2\sum_n  \int_B   \big[-\nabla \wt{\pi}_n + (\zeta_n \cdot \nabla) v+G_n\big]\cdot \phi^2\overline{v}_\phi\,\dd W^n.
\end{align}
Combining \eqref{eq:Ito_formula_L2_localized} with \eqref{eq:Ito_formula_L2_localized1}, and using $\nabla \wt{v}_\phi=\nabla v$, 
\begin{align}
\label{eq:Ito_formula_L2_localized2}
\dd \mathcal{E}_\phi(v(t))  
&\leq \Big( 2\int_{B} \big[\pi \nabla (\phi^2)\cdot \wt{v}_\phi-\nu \nabla \wt{v}_\phi : \nabla (\phi^2  \wt{v}_\phi) - F: \nabla (\phi^2 \wt{v}_\phi)\big]\\
\nonumber
& + \sum_{n} \int_B \nabla \wt{\pi}_n \cdot (\zeta_n\cdot \nabla) (\phi^2 \wt{v}_\phi )\Big)\,\dd t \\ 
\nonumber
&+\sum_{n} \int_B\phi^2 |-\nabla \wt{\pi}_n + (\zeta_n\cdot \nabla) \wt{v}_\phi  + G_n|^2\,\dd t \\
\nonumber
&+2 \sum_n \Big(\int_B [-\nabla\wt{\pi}_n + (\zeta_n \cdot \nabla) \wt{v}_\phi +G_n ]\cdot (\phi^2 \wt{v}_\phi) \Big)\,\dd W^n.
\end{align} 
The estimate \eqref{eq:localized_energy_psiphi} follows by \eqref{eq:Leibnitz_energy_phi}, \eqref{eq:Ito_formula_L2_localized2} and the fact that 
\begin{align*}
\int_B [-\nabla\wt{\pi}_n + (\zeta_n \cdot \nabla)\wt{v}_\phi+G_n ]\cdot (\phi^2 \wt{v}_\phi)
  = 
\int_B \Big( \wt{\pi}_n \wt{v}_\phi-\zeta_n \frac{|\wt{v}_\phi|^2}{2}\Big) \cdot \nabla (\phi^2) +G_n\cdot \wt{v}_\phi \phi^2,
\end{align*}
due to the divergence-free property of $\zeta_n$ and $\wt{v}_\phi$.
\end{proof}

The following is the key auxiliary result to prove Proposition \ref{prop:local_energy}.

\begin{lemma}
\label{l:local_energy}
Under the assumptions of Proposition \ref{prop:local_energy}, for each $\varepsilon\in (0,1)$, there exists a constant $C$ depending only on $d,M,\varrho,\nu,\nu_0,r$ and $\varepsilon$ such that, for all $\phi\in C^{\infty}_{{\rm c}}(B)$ with $\int_B \phi^2=1$ and $\psi\in C^{\infty}_{{\rm c}}((t_0-\varrho^2,t_0])$, the following local energy inequality holds a.s.\
\begin{align}
\nonumber
&\sup_{I}\big( \psi^2 \| \phi \, \wt{v}_\phi\|_{L^2(B)}^2\big)
+\int_{Q}\psi^2\phi^2  \, |\nabla v|^2 
\leq \varepsilon \int_{Q} \psi^2\phi^2 \|(\nabla \wt{\pi}_n)_n\|_{\ell^2}^2\\
\nonumber
&\ \  +C \int_Q \psi^2 |\nabla \phi|^2 \|(\wt{\pi}_n)_n\|_{\ell^2}^2+  C \int_{Q} |\wt{v}_\phi|^2 \big(\phi^2 |\partial_t\psi^2| +\psi^2|  \nabla  \phi|^2 \big)\\
\nonumber
& \  \  + C \int_I \psi^2\|(\nabla \phi) \pi\|_{H^{-1}(B)}^2
+ C \int_{Q} \psi^2\big( |\nabla \phi|^2 |F|^2+ (|\phi|^2+|\nabla \phi|^2) \|(G_n)_n\|_{\ell^2}^2\big) 
\\
& \ \ 
+C\sup_I\Big|\sum_{n} \int_{t_0-\varrho^2}^\cdot \psi^2\Big[\int_B \big( \wt{\pi}_n \wt{v}_\phi-\zeta_n \frac{|\wt{v}_\phi |^2}{2}\big) \cdot \nabla (\phi^2) +G_n\cdot \wt{v}_\phi \phi^2\Big]\,\dd W^{n}\Big|.
\label{eq:local_energy_inequality_intermediate}
\end{align}
\end{lemma}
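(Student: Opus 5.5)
The plan is to start from the pathwise inequality \eqref{eq:localized_energy_psiphi} of Lemma \ref{l:localized_energy_inequality}. I will expand the It\^o correction term, combine it with the It\^o--Stratonovich corrector, and use the elliptic equation for $\wt{\pi}_n$ to obtain a cancellation of the top-order pressure contribution. What remains is absorbed by the viscosity, using $\nu_0<\nu$.

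\emph{Step 1: the top-order terms.} Expanding the square gives
\[
\phi^2\big[\|(\nabla\wt{\pi}_n)_n\|_{\ell^2}^2-2\textstyle\sum_n\nabla\wt{\pi}_n\cdot(\zeta_n\cdot\nabla)\wt{v}_\phi+\sum_n|(\zeta_n\cdot\nabla)\wt{v}_\phi|^2\big]
\]
plus terms containing $G_n$. The corrector term splits, by the Leibniz rule, as
\[
\nabla\wt{\pi}_n\cdot(\zeta_n\cdot\nabla)(\phi^2\wt{v}_\phi)=\phi^2\nabla\wt{\pi}_n\cdot(\zeta_n\cdot\nabla)\wt{v}_\phi+(\zeta_n\cdot\nabla\phi^2)\,\nabla\wt{\pi}_n\cdot\wt{v}_\phi .
\]
Hence the net cross term is $-\phi^2\nabla\wt{\pi}_n\cdot(\zeta_n\cdot\nabla)\wt{v}_\phi$. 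Next I test the equation $\Delta\wt{\pi}_n=\nabla\cdot[(\zeta_n\cdot\nabla)v+G_n]$ against $\phi^2\wt{\pi}_n\in H^1_0(B)$ and sum over $n$. This yields
\[
\int_B\phi^2|\nabla\wt{\pi}_n|^2=\int_B\phi^2\nabla\wt{\pi}_n\cdot(\zeta_n\cdot\nabla)v+\int_B\big[(\zeta_n\cdot\nabla)v+G_n\big]\cdot\wt{\pi}_n\nabla\phi^2+\int_B\phi^2 G_n\cdot\nabla\wt{\pi}_n-\int_B\wt{\pi}_n\nabla\wt{\pi}_n\cdot\nabla\phi^2 .
\]
I will treat this as the \textbf{key step}, since it is where the Stratonovich structure is used; the signs have to be checked carefully. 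After this substitution the terms $\phi^2|\nabla\wt{\pi}_n|^2$ and $-\phi^2\nabla\wt{\pi}_n\cdot(\zeta_n\cdot\nabla)v$ cancel exactly, using $\nabla\wt{v}_\phi=\nabla v$. Only terms carrying $\nabla\phi$ or $G_n$ survive.

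\emph{Step 2: absorption.} The remaining top-order term is $\sum_n\phi^2|(\zeta_n\cdot\nabla)v|^2$. By parabolicity \eqref{eq:ass_caccioppoli2}, applied row by row, it is bounded by $2\nu_0\phi^2|\nabla v|^2$. The dissipation $-2\nu\nabla\wt{v}_\phi:\nabla(\phi^2\wt{v}_\phi)$ equals $-2\nu\phi^2|\nabla v|^2$ plus a term bounded by $C|\nabla\phi|^2|\wt{v}_\phi|^2$ plus a small multiple of $\phi^2|\nabla v|^2$. Since $\nu_0<\nu$, a fixed fraction $(\nu-\nu_0)\phi^2|\nabla v|^2$ remains on the left.

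\emph{Step 3: lower-order terms.} All leftover terms are handled by Young's inequality with a small parameter, using the boundedness \eqref{eq:ass_caccioppoli1}:
\begin{itemize}
\item $(\zeta_n\cdot\nabla\phi^2)\nabla\wt{\pi}_n\cdot\wt{v}_\phi$ is bounded by $\varepsilon\phi^2|\nabla\wt{\pi}|^2+C|\nabla\phi|^2|\wt{v}_\phi|^2$;
\item $\wt{\pi}_n\nabla\wt{\pi}_n\cdot\nabla\phi^2$ is bounded by $\varepsilon\phi^2|\nabla\wt{\pi}|^2+C|\nabla\phi|^2|\wt{\pi}|^2$;
\item $(\zeta_n\cdot\nabla)v\cdot\wt{\pi}_n\nabla\phi^2$ is bounded by $\varepsilon\phi^2|\nabla v|^2+C|\nabla\phi|^2|\wt{\pi}|^2$;
\item the terms containing $G_n$ produce $\phi^2|G|^2$ and $|\nabla\phi|^2|G|^2$.
\end{itemize}
For the pressure term I write $\int_B\pi\nabla\phi^2\cdot\wt{v}_\phi=2\langle(\nabla\phi)\pi,\phi\wt{v}_\phi\rangle$. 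This is bounded by $\|(\nabla\phi)\pi\|_{H^{-1}}\big(\|\phi\nabla v\|_{L^2}+\|\wt{v}_\phi\nabla\phi\|_{L^2}\big)$, to which Young's inequality applies. The $F$-term is treated in the same way after expanding $\nabla(\phi^2\wt{v}_\phi)$. The stochastic integral is rewritten using the identity at the end of the proof of Lemma \ref{l:localized_energy_inequality}.

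\emph{Step 4: conclusion.} The right-hand side of the resulting inequality is increasing in $t$ apart from the martingale term. I therefore first evaluate at each $t$ and take $\sup_I$, which controls $\sup_I\psi^2\|\phi\wt{v}_\phi\|_{L^2(B)}^2$. Separately I take $t=t_0$, which controls $\int_Q\psi^2\phi^2|\nabla v|^2$. Adding the two bounds and bounding the martingale term by its supremum in modulus gives \eqref{eq:local_energy_inequality_intermediate}, with $C$ depending on $\varepsilon,\nu-\nu_0,M,d$.
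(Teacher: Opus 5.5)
Your argument is the paper's: the top-order cancellation is obtained exactly as in \eqref{eq:Ito_stratonovich_correction_compensation}, by testing the equation for $\wt{\pi}_n$ with $\phi^2\wt{\pi}_n$, and your signs are right. The leftover $\sum_n\phi^2|(\zeta_n\cdot\nabla)v|^2\le 2\nu_0\phi^2|\nabla v|^2$ is then absorbed using $\nu_0<\nu$. (The small constant in front of $\phi^2|\nabla v|^2$ in your third bullet must be a separate parameter tied to $\nu-\nu_0$, not the $\varepsilon$ of the statement; Young's inequality allows this.)

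One step does not give what you claim, and the paper's proof has the same slip. Expanding $\nabla(\phi^2\wt{v}_\phi)$ in the $F$-term produces $-2\int_B\phi^2F:\nabla v$. This can only be absorbed at the price of $C\int_Q\psi^2\phi^2|F|^2$, which is not controlled by the term $C\int_Q\psi^2|\nabla\phi|^2|F|^2$ in \eqref{eq:local_energy_inequality_intermediate}.

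No proof can avoid this, because the inequality as written is false. Consider the following data:
\begin{itemize}
\item $\zeta_n=G_n=0$ and $\wt{\pi}_n=\pi=0$;
\item an antisymmetric $\Omega\in C^\infty_{{\rm c}}(B;\R^{d\times d})$ with $\nabla\cdot\Omega\not\equiv 0$, and $\phi\equiv c>0$ on an open set containing the support of $\Omega$;
\item $\psi=1$ on $[t_1,t_0]$, and $a\in C^\infty(\R)$ with $a=0$ on $(-\infty,t_1]$ and $a(t_0)=1$;
\item $v=a(t)\,\nabla\cdot\Omega$ and $F=a'\Omega-\nu a\Delta\Omega$.
\end{itemize}
By antisymmetry, $\nabla\cdot v=0$ and $\nabla^2:F=0$, so this is a local solution. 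Moreover $\overline{v}_\phi=0$, so $\wt{v}_\phi=v$. Every term on the right-hand side vanishes: $v$ and $F$ are supported where $\nabla\phi=0$, and $v=0$ wherever $\partial_t\psi\neq0$. Yet the left-hand side is at least $c^2\|\nabla\cdot\Omega\|_{L^2(B)}^2>0$.

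What you (and the paper) actually prove is \eqref{eq:local_energy_inequality_intermediate} with $|\nabla\phi|^2|F|^2$ replaced by $(|\phi|^2+|\nabla\phi|^2)|F|^2$, and you should state it that way. The same correction is needed in Proposition \ref{prop:local_energy}. It is harmless for Theorem \ref{t:caccioppoli}, whose right-hand side $\Eno_Q^{\#}$ already contains $\E\|F\|_{L^2(Q)}^r$.
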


\begin{proof}
As $\nabla v =\nabla \wt{v}_\phi$, it suffices to prove that \eqref{eq:localized_energy_psiphi} implies \eqref{eq:local_energy_inequality_intermediate} with $v$ replaced by $\wt{v}_\phi$. 
From \eqref{eq:localized_energy_psiphi} in Lemma \ref{l:localized_energy_inequality}, it remains to bound the deterministic integral on the right-hand side of \eqref{eq:localized_energy_psiphi}. 
Let $\delta\in (0,1)$ be fixed later.
First, 
\begin{align*}
\Big|\int_{B} \pi\, \nabla (\phi^2)\cdot \wt{v}_\phi\Big|
&\leq \|(\nabla \phi)\pi \|_{H^{-1}(B)}\|\phi \wt{v}_\phi\|_{H^1_0(B)}\\
&\leq C  \|(\nabla \phi)\pi \|_{H^{-1}(B)} (\|\phi \nabla \wt{v}_\phi\|_{L^2(B)}+ \| (\nabla \phi) \wt{v}_\phi\|_{L^2(B)})\\
&\leq \delta \int_B | \phi|^2 |\nabla \wt{v}_\phi|^2+  C_\delta  \|(\nabla \phi)\pi \|_{H^{-1}(B)}^2 +C_\delta \int_B |\nabla \phi|^2  |\wt{v}_\phi|^2, 
\end{align*}
where we used that $\phi|_{\partial B}=0$ and the Poincar\'e inequality. Second, note that 
\begin{align*}
&-\int_B\big( \nu \nabla \wt{v}_\phi : \nabla (\phi^2  \wt{v}_\phi) + F: \nabla (\phi^2 \wt{v}_\phi)\big)\\
&\leq - \Big( \nu-\frac{\delta}{2}\Big) \int_B |\phi|^2 |\nabla \wt{v}_\phi|^2 + C_\delta  \int_B |\nabla \phi|^2 (|\wt{v}_\phi|^2+|F|^2).
\end{align*}
Third, we rewrite It\^o's correction on the right-hand side of \eqref{eq:Ito_formula_L2_localized}. To begin, let us note that
\begin{align*}
&\int_{B}\phi^2 |-\nabla \wt{\pi}_n + (\zeta_n\cdot \nabla) \wt{v}_\phi + G_n|^2\\
&= \int_{B}\phi^2 |-\nabla \wt{\pi}_n + (\zeta_n\cdot \nabla) \wt{v}_\phi|^2 +2\int_B\phi^2  [-\nabla \wt{\pi}_n + (\zeta_n\cdot \nabla) \wt{v}_\phi]\cdot  G_n + \int_B\phi^2 |G_n|^2,
\end{align*}
and the first term can be rewritten as 
\begin{align}
\nonumber
&\int_{B}\phi^2 |-\nabla \wt{\pi}_n + (\zeta_n\cdot \nabla) \wt{v}_\phi|^2 \\
&\qquad \qquad \qquad 
\label{eq:expansion_pressure_product_rule}
= \int_{B}\phi^2 |\nabla \wt{\pi}_n|^2 - 2\int_B\phi^2 
 \nabla \wt{\pi}_n\cdot (\zeta_n\cdot \nabla) \wt{v}_\phi +\int_B \phi^2 |(\zeta_n\cdot \nabla) \wt{v}_\phi|^2 
\end{align}
Now, we rewrite the term $\int_B\phi^2 |\nabla \wt{\pi}_n|^2$. From the regularity of $v$ and $(\wt{\pi}_n)_n$ in Definition \ref{def:local_solutions_NSE}, it holds that 
\begin{equation}
\label{eq:weak_formulation_pressure}
\int_{B} \nabla \wt{\pi}_n \cdot \nabla \psi= \int_{B} (\zeta_n \cdot \nabla )\wt{v}_\phi \cdot \nabla \psi  +\int_{B} G_n \cdot \nabla \psi
\end{equation}
a.e.\ on $I\times \O$, and for all $\psi\in H^{1}_0(B)$. Using \eqref{eq:weak_formulation_pressure} with $\psi= \phi^2 \wt{\pi}_n $, it holds that 
\begin{align}
\nonumber
\int_{B} \phi^2 |\nabla \wt{\pi}_n|^2 
&=
\int_{B} \nabla \wt{\pi}_n \cdot \nabla (\phi^2 \wt{\pi}_n)
- 
\int_{B} \wt{\pi}_n \nabla \wt{\pi}_n\cdot \nabla (\phi^2)\\
\nonumber
&=
\int_{B}  (\zeta_n \cdot \nabla) \wt{v}_\phi \cdot \nabla (\phi^2 \wt{\pi}_n)+\int_{B} G_n\cdot \nabla (\phi^2\wt{\pi}_n)
- 
\int_{B} \wt{\pi}_n \nabla \wt{\pi}_n\cdot \nabla (\phi^2)\\
\nonumber
&=
\int_{B} \phi^2 (\zeta_n \cdot \nabla) \wt{v}_\phi \cdot \nabla \wt{\pi}_n+\int_{B}  \wt{\pi}_n (\zeta_n \cdot \nabla) \wt{v}_\phi \cdot \nabla (\phi^2 )\\
\label{eq:localized_energy_turbulent_pressure}
&+\int_{B} G_n\cdot \nabla (\phi^2\wt{\pi}_n)
- \int_{B} \wt{\pi}_n \nabla \wt{\pi}_n\cdot \nabla (\phi^2)
\end{align}
Finally, we combine this identity with the It\^o-Stratonovich corrector contribution in \eqref{eq:localized_energy_psiphi}, namely 
$$\int_B\nabla \wt{\pi}_n \cdot (\zeta_n\cdot \nabla) (\phi^2 \wt{v}_\phi)=\int_B (\nabla\wt{\pi}_n \cdot \wt{v}_\phi )(\zeta_n\cdot \nabla) (\phi^2 )+\int_B\phi^2 \, \nabla\wt{\pi}_n \cdot (\zeta_n\cdot \nabla)  \wt{v}_\phi,
$$ 
and we obtain
\begin{align}
\label{eq:Ito_stratonovich_correction_compensation}
&\sum_{n}\int_B \nabla \wt{\pi}_n \cdot (\zeta_n\cdot \nabla) (\phi^2 \wt{v}_\phi)
+\sum_{n}\int_{B}\phi^2 |-\nabla \wt{\pi}_n + (\zeta_n\cdot \nabla) \wt{v}_\phi+ G_n|^2\\
\nonumber
&= \sum_{n}\Big(\int_B \wt{v}_\phi\cdot \nabla \wt{\pi}_n  (\zeta_n\cdot \nabla) (\phi^2) 
+2\int_{B}\phi^2  [-\nabla \wt{\pi}_n + (\zeta_n\cdot \nabla) \wt{v}_\phi]\cdot  G_n \\
&
\nonumber
+ \int_B\phi^2 |G_n|^2+\int_{B} G_n\cdot \nabla (\phi^2\wt{\pi}_n) + \int_B \phi^2 |(\zeta_n\cdot \nabla) \wt{v}_\phi|^2 \\
&
\nonumber
+\int_{B}  \wt{\pi}_n (\zeta_n \cdot \nabla) \wt{v}_\phi \cdot \nabla (\phi^2 )
- \int_{B} \wt{\pi}_n \nabla \wt{\pi}_n\cdot \nabla (\phi^2)\Big).
\end{align}
Remarkably, the higher-order term $\int_{B}\phi^2 
 \nabla \wt{\pi}_n\cdot (\zeta_n\cdot \nabla) v$ containing both the $\nabla \wt{\pi}_n$ and $\nabla v$ is balanced by the It\^o-Stratonovich corrector in \eqref{eq:Ito_formula_L2_localized} and the terms in the identities  \eqref{eq:expansion_pressure_product_rule} and \eqref{eq:localized_energy_turbulent_pressure}.
Finally, from \eqref{eq:ass_caccioppoli1}-\eqref{eq:ass_caccioppoli2} and the Cauchy-Schwarz inequality, it follows that the right-hand side of the previous equality can be estimated by
\begin{align*}
\varepsilon \int_{Q} \phi^2 \|(\nabla \wt{\pi}_n)_n\|_{\ell^2}^2
&+ (2\nu_0+\delta) \int_{Q} \phi^2 |\nabla \wt{v}_\phi|^2\\
&+C_{\delta,\varepsilon} \int_{Q}\big( |\nabla \phi|^2 ( | \wt{v}_\phi|^2+ \|(\wt{\pi}_n)_n\|_{\ell^2}^2) +( |\phi|^2+|\nabla \phi|^2) \|(G_n)_n\|_{\ell^2}^2\big).
\end{align*}
The conclusion follows by collecting the previous estimates and choosing $\delta$ such that $(2\nu_0+\delta)+ \delta <2\nu-\delta$.  
\end{proof}

\begin{remark}[On the It\^o-formulation of transport noise]
\label{r:necessity_strotonovich}
The cancellation in \eqref{eq:Ito_stratonovich_correction_compensation} is solely due to the presence of the Itô-Stratonovich corrector. This corrector yields the integral $\int_B \nabla \wt{\pi}_n \cdot (\zeta_n\cdot \nabla) (\phi^2 \wt{v}_\phi)$ in the energy balance, which perfectly cancels the cross-term $\int_{B}\phi^2 \nabla \wt{\pi}_n\cdot (\zeta_n\cdot \nabla) v$ that has no clear sign and is of high order in both pressure and velocity. 
\end{remark}

We are now ready to prove Proposition \ref{prop:local_energy}.

\begin{proof}[Proof of Proposition \ref{prop:local_energy}]
It remains to bound the stochastic pressure. By \eqref{eq:ass_caccioppoli1}, \eqref{eq:localized_energy_turbulent_pressure} and $\nabla v =\nabla \wt{v}_\phi$, one can readily check that
\begin{align}
\label{eq:local_smoothing_turbulent_pressure}
\int_B|\phi|^2 \|(\nabla \wt{\pi}_n)_n\|_{\ell^2}^2
&\leq C_0\int_{B } \Big[ |\phi|^2 |\nabla \wt{v}_\phi|^2 + (|\nabla \phi|^2+|\phi|^2)\|(G_n)_n\|_{\ell^2}^2\Big]\\
&\nonumber
+C_0\int_{B } |\nabla \phi|^2 \|(\wt{\pi}_n)_n\|_{\ell^2}^2+
\frac{1}{2}
\int_B|\phi|^2 \|(\nabla \wt{\pi}_n)_n\|_{\ell^2}^2 ,
\end{align}
where $C_0$ depends only on $M$ in \eqref{eq:ass_caccioppoli1}. Absorbing the last term on the right-hand side of \eqref{eq:local_smoothing_turbulent_pressure}, the conclusion of Proposition \ref{prop:local_energy} follows from Lemma \ref{l:local_energy} by using the above estimate with the choice $\varepsilon=1/(4C_0)$.
\end{proof}

\subsection{Stochastic Caccioppoli inequality -- Proof of Theorem \ref{t:caccioppoli}}
\label{ss:caccioppoli_proof}
We begin by proving an intermediate estimate involving only the velocity field and the stochastic pressure.

\begin{lemma}[Intermediate estimate on the velocity and stochastic pressure]
\label{l:intermediate_estimate_pressure_deterministic0}
Under the assumptions of Theorem \ref{t:caccioppoli}, for all $\alpha\in (0,1)$,  it holds that  
\begin{align*}
\Big(\E\sup_{\alpha I}\Big\|v-\fint_{\alpha B} v\Big\|_{L^2(\alpha B)}^r\Big)^{1/r}
&+
\big(\E\|\nabla v\|_{L^2(\alpha Q)}^r\big)^{1/r}\\
&+
\big(\E\|(\nabla \wt{\pi}_n)_n\|_{L^{2}(\alpha Q;\ell^2)}^r\big)^{1/r}
\leq C_0 \Eno_{Q}^{\#} ,
\end{align*}
where $C_0$ depends only on $d,M,\varrho,\nu,\nu_0,r$ and $\alpha$ and $\Eno_Q^{\#}$ is as in \eqref{eq:def_En_Q_caccioppoli}.
\end{lemma}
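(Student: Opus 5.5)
We will derive the lemma from the local energy inequality of Proposition \ref{prop:local_energy} together with the pressure bound \eqref{eq:local_smoothing_turbulent_pressure}, and then take $r$-th moments.

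\emph{Choice of cutoffs.} Fix cutoffs $\phi\in C^\infty_{{\rm c}}(B)$ with $\phi\geq c_\alpha>0$ on $\alpha B$, normalized so that $\int_B\phi^2=1$, and $\psi\in C^\infty_{{\rm c}}((t_0-\varrho^2,t_0])$ with $\psi=1$ on $\alpha I$. Derivatives of both are bounded by constants depending on $\alpha,\varrho,d$. Then the left-hand side of \eqref{eq:local_energy_claim} controls $\sup_{\alpha I}\|\wt v_\phi\|^2_{L^2(\alpha B)}$ and $\|\nabla v\|^2_{L^2(\alpha Q)}$. Since $\fint_{\alpha B} v$ minimizes $c\mapsto\|v-c\|_{L^2(\alpha B)}$, the quantity $\|v-\fint_{\alpha B}v\|_{L^2(\alpha B)}$ is at most $\|\wt v_\phi\|_{L^2(\alpha B)}$.

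\emph{Right-hand side of \eqref{eq:local_energy_claim}.} We handle the terms one by one.
\begin{itemize}
\item The $\pi$ term is bounded by $C\|\pi\|_{L^2(I;H^{-1}(B))}^2$, because multiplication by $\nabla\phi$ is bounded on $H^{-1}(B)$ (by duality with $H^1_0$).
\item The $\wt\pi_n$, $F$ and $G$ terms are directly bounded by their $L^2(Q)$ norms.
\item The term $\int_Q|\wt v_\phi|^2$ is bounded by $C\|v-\fint_B v\|^2_{L^2(Q)}$. Indeed, $\wt v_\phi=(v-\fint_Bv)-\int_B\phi^2(v-\fint_Bv)$, and the second piece is bounded in $L^2$ by Cauchy--Schwarz.
\end{itemize}

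\emph{Stochastic term.} We take the $r/2$-th moment and apply the Burkholder--Davis--Gundy inequality. The quadratic variation is bounded by
\[
\int_I\psi^4\sum_n\Big|\int_B\big(\wt\pi_n\wt v_\phi-\zeta_n\tfrac{|\wt v_\phi|^2}{2}\big)\cdot\nabla(\phi^2)+G_n\cdot\wt v_\phi\phi^2\Big|^2 .
\]
The $\wt\pi_n$ and $G_n$ pieces are at most $\sup_I\|\wt v_\phi\|^2_{L^2(\supp\phi)}$ times $\|(\wt\pi_n)\|^2_{L^2(B;\ell^2)}$, respectively $\|(G_n)\|^2_{L^2(B;\ell^2)}$. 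We then use Young's inequality $ab\le\varepsilon a^2+C_\varepsilon b^2$.

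\emph{Main obstacle.} The difficulty is that the supremum on the right is taken over the support of $\phi$, which is larger than the region controlled on the left. It also involves the weight $\phi^2$ only partially: the $\zeta_n|\wt v_\phi|^2\nabla\phi^2$ piece is cubic-like, $\sup\|\wt v\|^2_{L^2}\cdot\|\wt v\|_{L^2}$-type, after using $\|(\zeta_n)\|_{L^\infty(\ell^2)}\le M$.

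To close this, we would first use a nested family of cutoffs with the supremum on the right weighted by $\psi^2\phi^2$. For this we choose $\nabla(\phi^2)$ dominated by $C\phi$, i.e.\ $\phi=\eta^k$ for a power $k\ge2$, so that $|\nabla\phi^2|\le C\phi^{2-1/k}$. We also take $\psi=\chi^k$ similarly. Then $|\wt v_\phi|^2|\nabla\phi^2|$ can be split as $(\psi\phi|\wt v_\phi|)\cdot(|\wt v_\phi|\phi^{1-1/k})$, which gives
\[
\sup_I\big(\psi^2\|\phi\wt v_\phi\|^2_{L^2}\big)^{1/2}\cdot\|\wt v_\phi\|_{L^2(I;L^2(\supp \phi))}
\]
up to constants, with the time integral handled by $\psi^4\le\psi^2$. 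After Young's inequality, the first factor is absorbed into the left-hand side and the second gives the energy term. Here $\sup_I\|\wt v_\phi\|^2$ is bounded by the sup-weighted quantity plus $\int_I$-terms via $|\partial_t\psi|$. If this absorption fails to be clean, the fallback is a standard iteration lemma over radii $\alpha\le s<t\le1$, in the style of Giaquinta. That lemma absorbs $\tfrac12\,\E\sup\|\cdot\|_{L^2(tQ)}^r$ into the left at the cost of $(t-s)^{-N}\Eno_Q^{\#}$, provided the quantity $\E\sup_I\|v-\fint_B v\|^r_{L^2(B)}$ is finite, which we would ensure by stopping-time localization.

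\emph{Stochastic pressure.} Finally, the $\nabla\wt\pi_n$ bound on $\alpha Q$ follows by integrating \eqref{eq:local_smoothing_turbulent_pressure} in time, with $\phi$ replaced by a cutoff equal to $1$ on $\alpha B$ and supported where $\nabla v$ is already controlled, i.e.\ on an intermediate radius between $\alpha$ and $1$. This shows the right-hand side is $\lesssim\|\nabla v\|^2_{L^2(\alpha'Q)}$ plus $\wt\pi$ and $G$ terms.
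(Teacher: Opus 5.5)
Your proposal is correct and is essentially the paper's argument. You combine the local energy inequality of Proposition \ref{prop:local_energy} with Burkholder--Davis--Gundy and Young's inequality, absorb $\tfrac14\E\sup_I(\psi\|\phi\wt v_\phi\|_{L^2(B)})^r$, and then use \eqref{eq:local_smoothing_turbulent_pressure} for $(\nabla\wt\pi_n)_n$; the paper keeps the same cutoffs there instead of an intermediate radius. Your ``main obstacle'' disappears at once because $\nabla(\phi^2)=2\phi\nabla\phi$, so Cauchy--Schwarz gives $\int_B|\phi||\nabla\phi||\wt v_\phi|(\|(\wt\pi_n)_n\|_{\ell^2}+|\wt v_\phi|)\le\|\phi\wt v_\phi\|_{L^2(B)}\|(\nabla\phi)(\|(\wt\pi_n)_n\|_{\ell^2}+|\wt v_\phi|)\|_{L^2(B)}$, and the $G_n$ piece already carries $\phi^2$. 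Hence you can drop the powers $\eta^k$, the Giaquinta fallback, and the lossy unweighted $\sup_I\|\wt v_\phi\|_{L^2(\supp\phi)}$, which the weighted supremum does not control; your stopping-time remark justifies the absorption, a step the paper performs without comment.
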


\begin{proof}
Let $\psi\in C^\infty_{{\rm c}}(I)$ and $\phi\in C^\infty_{{\rm c}}(B)$ be such that $\psi|_{\alpha I}=1$ and $\phi|_{\alpha B}=1$. Without loss of generality, we may assume $\| \psi\|_{W^{1,\infty}(I)}, \| \phi\|_{W^{1,\infty}(B)}\lesssim_{\alpha,\varrho} 1$. 
We begin by estimating the stochastic integral term on the right-hand side of \eqref{eq:local_energy_claim} with the previous choice of $(\phi,\psi)$. 
From the Burkholder-Davis-Gundy inequality, we have, for all $r\in (1,\infty)$,
\begin{align*}
&\E\Big[\sup_{I}\Big|\sum_{n} \int_{t_0-\varrho^2}^\cdot \psi^2\int_B \nabla (\phi^2)\cdot \big(\wt{\pi}_n \wt{v}_\phi 
- \zeta_n\frac{|\wt{v}_\phi|^2}{2}\big)\,\dd W^{n}\Big|^{r/2}\Big]\\
&
\leq C_r \E\Big[\int_{I}\psi^4\Big\| \Big(\int_B\phi \nabla \phi\cdot \big(\wt{\pi}_n \wt{v}_\phi - \zeta_n\frac{|\wt{v}_\phi|^2}{2}\big)\Big)_n\Big\|_{\ell^2}^2\Big]^{r/4}\\
&\leq  C_r\E\Big[\int_{I} \psi^4\Big(\int_B|\phi| |\nabla \phi| 
\Big\|\Big( \big(\wt{\pi}_n \wt{v}_\phi -\zeta_n\frac{|\wt{v}_\phi|^2}{2}\big)\Big)_n\Big\|_{\ell^2}\Big)^2\Big]^{r/4}\\
&\leq C_r\E\Big[\int_{I}  \psi^4\Big(\int_B|\phi| |\nabla \phi| 
 |\wt{v}_\phi|\big(\|(\wt{\pi}_n)_n\|_{\ell^2}  + |\wt{v}_\phi|\big)\Big)^2\Big]^{r/4}\\
&\leq C_r\E\Big[\int_{I}\psi^4 \|\phi \wt{v}_\phi\|_{L^2(B)}^2 \big(\|((\nabla \phi) \wt{\pi}_n)_n\|_{L^2(B;\ell^2)}^2 +\|(\nabla \phi)\wt{v}_\phi\|_{L^2(B)}^2\big)\Big]^{r/4}\\
&\leq C_r\E\Big[\big[\sup_{I} \big(\psi^2\|\phi \wt{v}_\phi\|_{L^2(B)}^2\big) \big] 
\Big(\int_{I} \psi^2\big(\|((\nabla \phi)\wt{\pi}_n)_n\|_{L^2(B;\ell^2)}^2 +\|(\nabla \phi) \wt{v}_\phi\|_{L^2(B)}^2\big)\Big)\Big]^{r/4}\\
&\leq \frac{1}{4}\, \E\Big[ \sup_{I}\big( \psi\|\phi \wt{v}_\phi\|_{L^2(B)} \big)^{r}\Big]\\
&+ C_{r} \E \Big[ \Big(\int_{I}\psi^2 \big(\|((\nabla \phi)\wt{\pi}_n)_n\|_{L^2(B;\ell^2)}^2+\|(\nabla \phi)\wt{v}_\phi\|_{L^2(B)}^2\big)\Big)^{r/2}\Big],
\end{align*}
as well as 
\begin{align*}
\E\Big[\sup_I \Big|\sum_{n} \int_{t_0-\varrho^2}^\cdot \psi^2 \int_B (G_n\cdot \wt{v}_\phi) \phi^2\,\dd W^{n}\Big|^{r/2}\Big]
&\leq
\frac{1}{4}\, \E\Big[ \sup_{I}\big( \psi\|\phi \wt{v}_\phi\|_{L^2(B)} \big)^{r}\Big] \\
& +C_r \E\Big[\Big(\int_{Q} \psi^2 \phi^2 \|(G_n)_n\|^2_{\ell^2}\Big)^{r/2}\Big].
\end{align*}
Next, taking the $(r/2)$-th moment in \eqref{eq:local_energy_claim}, using the above inequality, and absorbing on the left-hand side the term $\frac{1}{2} \E \sup_{I}\|\phi \wt{v}_\phi\|_{L^2(B)}^{r}$, we obtain
\begin{align}
\label{eq:caccioppoli_1_intermediate}
\Big(\E\Big[ \sup_{I}\big( \psi\|\phi \wt{v}_\phi\|_{L^2(B)} \big)^{r}\Big]\Big)^{1/r}+
\big(\E\|\psi \phi (\nabla v)\|^{r}_{L^2( Q)}\big)^{1/r}
\leq C_0 \Eno_{ Q}^{\#} ,
\end{align}
where we used the choice of $(\phi,\psi)$ and 
\begin{align*}
\Big\|f-\Big(\int_{ B} \phi^2 f\Big) \Big\|_{L^2( B)}
&\leq \Big\|f-\fint_{  B} f \Big\|_{L^2( B)}
+C\Big| \int_{ B} \phi^2\Big(f-\fint_{ B}f\Big)\Big|\\
&\leq C_\alpha\Big\|f-\fint_{B}f  \Big\|_{L^2(B)},
\end{align*}
for all $f\in L^2(B)$ as $\int_B\phi^2=1$.
Next, we estimate the stochastic pressure. From \eqref{eq:local_smoothing_turbulent_pressure} and the above, it holds that 
\begin{equation}
\label{eq:caccioppoli_2_intermediate}
\Big(\E\Big[\int_Q|\psi|^2 |\phi|^2 \|(\nabla \wt{\pi}_n)_n\|_{\ell^2}^2\Big]^{r/2}\Big)^{1/r}\leq C_0 \Eno_{ Q}^{\#} .
\end{equation}
To conclude, it remains to use the support properties of $\psi$ and $\phi$ as well as 
$$
\Big\|f-\fint_{\alpha B} f \Big\|_{L^2(\alpha B)}\leq \Big\|f-\Big(\int_{B} \phi^2 f\Big) \Big\|_{L^2(\alpha B)}
\leq\Big\|\phi^2 \Big[f-\Big(\int_{B} \phi^2 f\Big)\Big] \Big\|_{L^2( B)}
$$
for all $f\in L^2(B)$.
\end{proof}

As a final ingredient in the proof of Theorem \ref{t:caccioppoli}, we prove an intermediate estimate for the deterministic pressure.

\begin{lemma}[Intermediate estimate on the deterministic pressure]
\label{l:intermediate_estimate_pressure_deterministic}
Under the assumptions of Theorem \ref{t:caccioppoli}, for all $\alpha\in (0,1)$ and $\beta\in (\alpha,1]$, it holds that, a.e.\ on $\beta I\times \O$, 
\begin{align}
\label{eq:caccioppoli_pressure1}
\| \pi\|_{L^{2}(\alpha B)}
\leq C_0 \|F\|_{L^2(\beta  B)}
+ C_0\|(\nabla \wt{\pi}_n)_n\|_{L^2(\beta  B;\ell^2)} +C_0 \|\pi\|_{H^{-1}(\beta B)},
\end{align} 
where $C_0$ depends only on $d,M,\varrho,\nu,\nu_0,r,\alpha$ and $\beta$.
\end{lemma}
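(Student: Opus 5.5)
This is a purely elliptic, time-pointwise estimate. Fix $t\in\beta I$ outside a null set, where the pressure equation of Definition \ref{def:local_solutions_NSE} holds in $\D'(B)$:
\[
\Delta\pi=\nabla\cdot\Big(\nabla\cdot F-\tfrac12\textstyle\sum_n\nabla\cdot(\nabla\wt\pi_n\otimes\zeta_n)\Big).
\]
Write it as $\Delta\pi=\nabla^2:H$ with
\[
H=F-\tfrac12\textstyle\sum_n \nabla\wt\pi_n\otimes\zeta_n .
\]
By \eqref{eq:ass_caccioppoli1} and Cauchy--Schwarz in $n$, we have $\|H\|_{L^2(\beta B)}\le \|F\|_{L^2(\beta B)}+\tfrac{M}{2}\|(\nabla\wt\pi_n)_n\|_{L^2(\beta B;\ell^2)}$. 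So the claim reduces to a deterministic interior estimate: if $\Delta\pi=\nabla^2:H$ in $\D'(\beta B)$, then
\[
\|\pi\|_{L^2(\alpha B)}\lesssim \|H\|_{L^2(\beta B)}+\|\pi\|_{H^{-1}(\beta B)}.
\]

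To prove it, I pick an intermediate radius $\gamma\in(\alpha,\beta)$ and a cutoff $\chi\in C^\infty_{\rm c}(\beta B)$ with $\chi=1$ on $\gamma B$. I split $\pi=\pi_1+\pi_2$, where $\pi_1=\Delta^{-1}\nabla^2:(\chi H)$ is taken on $\R^d$ (or on the torus $\T^d_B$, which contains $B$ since $\varrho\le1/2$). By Calder\'on--Zygmund (here just Plancherel, since we are in $L^2$), $\|\pi_1\|_{L^2}\lesssim\|H\|_{L^2(\beta B)}$. The remainder $\pi_2=\pi-\pi_1$ is harmonic on $\gamma B$ in the distributional sense, hence smooth there.

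The key step is to control the harmonic part by a negative norm. For harmonic $h$ on $\gamma B$, interior estimates (e.g.\ the mean value property applied to $h*\rho_\epsilon$, or duality with a test function solving a Dirichlet problem) give
\[
\|h\|_{L^2(\alpha B)}\lesssim\|h\|_{H^{-1}(\gamma B)} .
\]
A clean route: for $g\in L^2(\alpha B)$, let $w\in H^1_0(\gamma B)$ solve $\Delta^2$-type problems, or more simply use that harmonic functions satisfy $h=h*\eta_s$ for a radial mollifier $\eta_s$ with $s=(\gamma-\alpha)/2$. Then
\[
\int h\,g=\langle h,\eta_s*g\rangle,
\]
and $\eta_s*g\in H^1_0(\gamma B)$ with $\|\eta_s*g\|_{H^1}\lesssim_s\|g\|_{L^2}$. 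This yields $\|\pi_2\|_{L^2(\alpha B)}\lesssim\|\pi_2\|_{H^{-1}(\gamma B)}\le \|\pi\|_{H^{-1}(\beta B)}+\|\pi_1\|_{H^{-1}(\gamma B)}$, and the last term is bounded by $\|\pi_1\|_{L^2}\lesssim\|H\|_{L^2(\beta B)}$. Combining the pieces gives \eqref{eq:caccioppoli_pressure1} with $C_0$ depending on $d,M,\varrho,\alpha,\beta$.

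The main obstacle is justifying measurability and the a.e.-in-time statement. This causes no real trouble: all objects are defined pointwise in $(t,\omega)$ by linear bounded operations, and the required regularity $\pi(t)\in L^2(B)$, $\nabla\wt\pi_n(t)\in L^2(B;\ell^2)$ holds a.e.\ by Definition \ref{def:local_solutions_NSE}. Since $\pi\in L^2$, the constant ambiguity in $\pi_2$ is harmless, because the estimate is linear and uses only the $H^{-1}$ norm.
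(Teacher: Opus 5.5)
Your proof is correct, but it is organized differently from the paper's.

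The paper localizes the pressure itself. With $\phi\in C^\infty_{\rm c}(\beta B)$ and $\phi|_{\alpha B}=1$, it writes $\Delta(\phi\pi)=\phi\nabla^2:F-\tfrac12\phi\,\nabla\cdot\sum_n\nabla\cdot(\nabla\wt{\pi}_n\otimes\zeta_n)+2\nabla\phi\cdot\nabla\pi+\pi\Delta\phi$. It then periodizes and inverts $\mathrm{Id}-\Delta$ on the torus, giving $\|\phi\pi\|_{L^2}\lesssim\|\wt F\|_{H^{-2}}+\|\phi\pi\|_{H^{-2}}$. The commutator terms $2\nabla\phi\cdot\nabla\pi+\pi\Delta\phi$, and $\phi\pi$ itself, are absorbed into $\|\pi\|_{H^{-1}(\beta B)}$.

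You instead localize the \emph{source} $\chi H$ and solve on the whole space with the bounded multiplier $\xi_i\xi_j/|\xi|^2$. You then estimate the harmonic remainder $\pi_2$ in $L^2(\alpha B)$ by its $H^{-1}(\gamma B)$ norm, via the mean-value property and mollifier duality. This is the Caffarelli--Kohn--Nirenberg-type ``local pressure plus harmonic remainder'' splitting, which the paper uses only later (e.g.\ in Lemma \ref{l:one_step_improvement}, through \eqref{eq:Caccioppoli_harmonic_functions}).

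The paper's version is a bit more economical: one cutoff, no intermediate radius, no mean-value property, only the periodic resolvent. It is also the same template reused for the $L^q$ bootstrap in Lemmas \ref{l:local_smoothing_iteration_pressure_turbulent}--\ref{l:local_smoothing_iteration_pressure}. Your version gives more structural information. It shows that on $\alpha B$ the only non-smooth part of $\pi$ is controlled by $H$ alone, while $\pi_2$ is smooth there with every derivative bounded by $\|\pi\|_{H^{-1}(\beta B)}$.

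One cosmetic slip: because $B$ has radius $\varrho$, take $s=(\gamma-\alpha)\varrho/2$ rather than $(\gamma-\alpha)/2$. This ensures $B_s(x)\subset\gamma B$ for all $x\in\alpha B$.
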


\begin{proof}
The proof here follows again by a localization argument. 
Fix $\phi\in C^\infty_{{\rm c}}(\beta B)$ such that $\phi|_{\alpha B}=1$ and $\max_{0\leq k\leq 2}\|\nabla^k \phi\|_{L^\infty(\R^d)}\leq C_{\alpha,\beta,\varrho}$. 
Letting $\Pi= \phi \pi$, it follows from Definition \ref{def:local_solutions_NSE} that 
\begin{align*}
\Delta \Pi 
&= \phi \Delta \pi + 2\nabla \phi \cdot \nabla \pi + \pi \Delta \phi\\
&= \phi \nabla^2 : F -\phi \frac{1}{2} \nabla \cdot \sum_{n}\nabla \cdot(\nabla \wt{\pi}_{n}\otimes \zeta_n ) + 2\nabla \phi \cdot \nabla \pi + \pi \Delta \phi.
\end{align*}
Now, we can clearly extend by periodicity $\Pi$ outside $\T^d_{\beta} = x_0 + (-\beta,\beta]^d$. 
Let $\wt{F}$ be the right-hand side of the above identity. The above identity yields $\Pi =(\mathrm{Id}-\Delta_{\T^d_\beta})^{-1}(\Pi-\wt{F})$, and therefore
\begin{align*}
\|\pi\|_{L^2(\alpha B)}&\stackrel{(i)}{\leq}
\|\Pi \|_{L^2(\T^d_\beta)}\lesssim_\beta \|\wt{F}\|_{H^{-2}(\T^d_\beta)}+ \|\Pi\|_{H^{-2}(\T^d_\beta)}\\
&\stackrel{(ii)}{\lesssim}_{\alpha,\beta,\varrho} 
 \|F\|_{L^2( \beta B)}+ \Big\| \sum_{n}\nabla \wt{\pi}_{n}\otimes \zeta_n \Big\|_{L^2(\beta B)}+ \|\pi\|_{H^{-1}(\beta B)}\\
&\stackrel{(iii)}{\lesssim}_{\alpha,\beta,\varrho,M}
 \|F\|_{L^2(\beta B)}+ \| ( \nabla \wt{\pi}_{n})_n  \|_{L^2(\beta B;\ell^2)}+ \|\pi\|_{H^{-1}( \beta B)}
\end{align*}
where in $(i)$ we used $\phi|_{\alpha B}=1$, in $(ii)$ the bounds assumed on $\phi$, and in $(iii)$ the boundedness assumption on the noise coefficients \eqref{eq:ass_caccioppoli1}. 
\end{proof}

\begin{proof}[Proof of Theorem \ref{t:caccioppoli}]
From Lemma \ref{l:intermediate_estimate_pressure_deterministic0} with $\alpha=\sm$, it remains to prove that 
$$
\big(\E\| \pi\|_{L^{2}(\sm Q)}^r\big)^{1/r}\leq C\Eno_{ Q}^{\#}.
$$
The above follows immediately from chaining Lemma \ref{l:intermediate_estimate_pressure_deterministic} with $\alpha=\sm$ and $\beta=(1+\sm)/2$ with Lemma \ref{l:intermediate_estimate_pressure_deterministic0} applied with $\alpha=(1+\sm)/2$.
\end{proof}

We conclude this section with a possible improvement of Theorem \ref{t:caccioppoli}.

\begin{remark}[Improved stochastic Caccioppoli estimates for the stochastic pressure]
\label{r:improved_Hminus1_turbulent_pressure}
The estimate in Theorem \ref{t:caccioppoli} can be further improved in terms of the local smoothing of the stochastic pressure by using the argument in Lemma \ref{l:intermediate_estimate_pressure_deterministic}. Indeed, by using the divergence-free property of $\zeta_n$ and Definition \ref{def:local_solutions_NSE}, it holds that 
$$
\Delta \wt{\pi}_{n}=\nabla \cdot \big[(\zeta_n\cdot \nabla) \wt{v}+  G_n\big]= \nabla \cdot \big[\nabla \cdot (\wt{v}\otimes \zeta_n)+  G_n\big] \ \text{ in } \ \D'(B)
$$
a.e.\ on $I\times \O$, where $\wt{v}=v-\fint_{\beta B} v$ and $\nabla \wt{v} =\nabla v$.
More precisely, by following the arguments in Lemma \ref{l:intermediate_estimate_pressure_deterministic}, one can prove that, for all $0<\alpha<\beta\leq 1$,
$$
\|(\wt{\pi}_n)_n \|_{L^2(\alpha B;\ell^2)}
\leq C \|(\wt{\pi}_n)_n \|_{\underline{H}^{-1}(\beta B;\ell^2)}
+C \Big\|v-\fint_{\beta B} v \Big\|_{L^2(\beta  B)}+C \|(G_n)_n\|_{L^2(\beta B;\ell^2)}
$$
where $C$ depends only on $M,\alpha$ and $\beta$, see \eqref{eq:ass_caccioppoli1}. 
The above can be combined with the current statement of Theorem \ref{t:caccioppoli} to obtain further local smoothing of the stochastic pressure. However, this will not be used here. 
\end{remark}

\section{Localized stochastic maximal $L^p$-regularity}
\label{s:localized_smr}
Here, we consider the stochastic Stokes system on a parabolic cylinder $Q$:
\begin{equation}
\label{eq:turbulent_Stokes_xi_linear}
\left\{
\begin{aligned}
\partial_t v &=-\nabla \pi+ \nu\Delta v+ \nabla \cdot F-\frac{1}{2} \sum_{n\geq 1} \nabla \cdot (\nabla \wt{\pi}_{n}\otimes \zeta_{n})\\ 
&\qquad\qquad\qquad  + \sum_{n\geq 1} [-\nabla \wt{\pi}_n +( \zeta_n\cdot\nabla) v+G_n]\,\dot{W}^n_t,\\
\Delta \pi
&= \nabla^2 :F -\frac{1}{2}  \nabla \cdot\Big( \sum_{n\geq 1} \nabla \cdot \big(\nabla \wt{\pi}_{n} \otimes \zeta_{n}\big) \Big),\\
\Delta \wt{\pi}_{n}
&= \nabla \cdot [(\zeta_n\cdot \nabla)v]+\nabla \cdot G_n,
\end{aligned}
\right.
\end{equation}
and $(W^n)_n$ is a family of standard independent Brownian motions starting at $t_0-1$ on a filtered probability space $(\O,\F,(\F_t)_t,\P)$. As in Section \ref{s:caccioppoli}, in the above, $v:I\times \O\times B\to \R^d$ and $\pi,\wt{\pi}_n:I\times \O\times B \to \R$ denote the unknown velocity field and pressures, respectively. 

\smallskip

The main result of this section reads as follows, where, for notational convenience, we write $\ell^2$ instead of $\ell^2(\N_{\geq 1};\R^d)$ if no confusion seems likely.

\begin{theorem}[Localized stochastic maximal $L^p$-regularity]
\label{t:localized_SMR}
Fix $\nu>0$ and let $Q=I\times B$ be a parabolic cylinder of side length $\varrho\in (0,1/2]$. Suppose that $(\zeta_n)_n : I\times \O\times B \to \ell^2$ is progressively measurable, and that there exist $M>0$ and $\nu_0\in (0,\nu)$ such that 
\begin{align}
\label{eq:parabolicity_localized_SMR}
&\frac{1}{2}\sum_{n\geq 1} |\zeta_n\cdot \xi |^2\leq \nu_0 |\xi|^2 \ \text{ for all }\xi\in\R^d,  & &({{\normalfont{\text{Parabolicity}}}})\\
\label{eq:smoothness_coefficients_localized_SMR}
&\sup_I \|(\zeta_n)_n\|_{W^{1,\infty}(B;\ell^2)}\leq M. & &({{\normalfont{\text{Regularity}}}})\\
\label{eq:incompressibility_coefficients_localized_SMR}
&\nabla\cdot \zeta_n =0 \text{ in }\D'(B) \ \text{ for all }n.   & &({{\normalfont\text{Noise incompressibility}}})
\end{align}
Fix $r\in (1,\infty)$, $p,q\in (2,\infty)$ and $\sm\in (0,1)$. Then there exists a constant $C_0>0$ depending only on $p,q,r,M,\varrho,d,\nu,\nu_0$ and $\sm$ for which the following estimate holds  for any local solution $(v,\pi,(\wt{\pi}_n)_n)$ to the stochastic Stokes system \eqref{eq:turbulent_Stokes_xi_linear} on $Q$ (see Definition \ref{def:local_solutions_NSE}):
\begin{align}
\nonumber
\Big(\E\sup_{\sm I}\|v\|_{B^{1-2/p}_{q,p}(\sm  B)}^r\Big)^{1/r}
&+ \big(\E\|\nabla v\|_{L^p( \sm I;L^q(\sm B))}^r\big)^{1/r}
\leq C_0
\big( \E\| v\|_{L^2(Q)}^r \big)^{1/r}\\
\nonumber
&+C_0 \big(\E\|\pi\|_{L^2(I;H^{-1}(B))}^r\big)^{1/r}
+C_0\big(\E\|(\wt{\pi}_n)_{n}\|_{L^2(Q;\ell^2)}^r\big)^{1/r}
\\ 
& + C_0\big(\E\|F\|_{L^p(I;L^q(B))}^r\big)^{1/r}+ C_0\big(\E\|(G_n)_n\|_{L^p(I;L^q(B;\ell^2))}^r\big)^{1/r},
\label{eq:localized_SMR0}
\end{align}
whenever the right-hand side is finite. 
\end{theorem}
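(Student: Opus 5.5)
We localize the problem to the torus and apply global stochastic maximal $L^p$-regularity there; the resulting lower-order commutator terms are closed by a bootstrap over nested cylinders. The starting point is the stochastic Caccioppoli inequality (Theorem \ref{t:caccioppoli}), which already applies under \eqref{eq:parabolicity_localized_SMR}--\eqref{eq:incompressibility_coefficients_localized_SMR}. For every intermediate cylinder $\sm Q\subset \alpha Q\subset Q$ it bounds $\nabla v$, $\pi$ and $(\nabla\wt{\pi}_n)_n$ in $L^r(\O;L^2(\alpha Q))$ by the right-hand side of \eqref{eq:localized_SMR0}, using $p,q>2$ and Hölder. Because the Caccioppoli statement involves $v-\fint_B v$, we also use $\|v\|_{L^2(Q)}$ to control the average.

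Next fix cutoffs $\psi\in C^\infty_{\rm c}((t_0-\alpha^2\varrho^2,t_0])$ and $\phi\in C^\infty_{\rm c}(\alpha B)$, equal to $1$ on the smaller cylinder, and set $w=\psi\phi v$, extended periodically to $\T^d_B$. We then compute the equation for $w$. Its principal part is
\[
\dd w=[\nu\Delta w-\tfrac12\textstyle\sum_n\nabla\cdot(\nabla\wt{\pi}_n\otimes\zeta_n)\psi\phi+\dots]\,\dd t+\sum_n[(\zeta_n\cdot\nabla)w+\dots]\,\dd W^n.
\]
The commutators are of lower order: terms like $\nabla\phi\cdot\nabla v$, $v\Delta\phi$, $v\partial_t\psi$, $\pi\nabla\phi$, $\phi\nabla\pi$, $(\zeta_n\cdot\nabla\phi)v$ and $\phi\nabla\wt{\pi}_n$. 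Here $w$ is not divergence-free, so we avoid the Helmholtz projection and treat the pressure gradients $\phi\nabla\pi$ and $\phi\nabla\wt{\pi}_n$ as forcing. The deterministic pressure is handled as in Lemma \ref{l:intermediate_estimate_pressure_deterministic}. Since $\zeta_n$ is divergence-free, $\phi\pi$ solves an elliptic problem on the torus whose data are $\phi\nabla^2:F$, $\nabla\cdot\nabla\cdot(\nabla\wt{\pi}_n\otimes\zeta_n)$ and lower-order terms. Calderón--Zygmund estimates then give $\|\phi\pi\|_{L^q}\lesssim \|F\|_{L^q}+\|(\nabla\wt{\pi}_n)_n\|_{L^q(\ell^2)}+\text{l.o.t.}$ Similarly, $\Delta(\phi\wt{\pi}_n)=\nabla\cdot(\phi(\zeta_n\cdot\nabla) v)+\dots$, so $\|(\phi\nabla\wt{\pi}_n)_n\|_{L^q(\ell^2)}\lesssim M\|\phi\nabla v\|_{L^q}+\|(G_n)_n\|_{L^q(\ell^2)}+\text{l.o.t.}$, with constants uniform in the cutoff scale and the $W^{1,\infty}$ bound on $\zeta$.

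For $w$ we then apply stochastic maximal $L^p$-regularity on $\T^d$ for the operator $\nu\Delta$ with transport noise $(\zeta_n\cdot\nabla)$, with gradient-type noise coefficients and parabolicity $\nu_0<\nu$. This is the Krylov / van Neerven--Veraar--Weis theory in $H^{-1,q}$, which requires $W^{1,\infty}$ coefficients. It yields
\[
\|w\|_{L^r(\O;L^p(H^{1,q}))}+\|w\|_{L^r(\O;C(B^{1-2/p}_{q,p}))}\lesssim \|\text{forcing}\|_{L^r(\O;L^p(H^{-1,q}))}+\|\text{noise forcing}\|_{L^r(\O;L^p(L^q(\ell^2)))}.
\]
The pressure terms contain $\phi\nabla\wt{\pi}_n$, which is of the same order as the noise coefficient $(\zeta_n\cdot\nabla)w$. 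This is the main obstacle, since it cannot be absorbed by smallness. We resolve it by combining the two: $-\nabla\wt{\pi}_n+(\zeta_n\cdot\nabla)v$ equals $\q$ applied to $(\zeta_n\cdot\nabla)v$ subtracted from itself, up to localization. So the noise coefficient is $\p_{\T^d_B}[(\zeta_n\cdot\nabla)w]$ plus lower-order terms. The Itô correction $-\tfrac12\sum\nabla\cdot(\nabla\wt{\pi}_n\otimes\zeta_n)$ compensates exactly as in Remark \ref{r:necessity_strotonovich}, so the parabolicity condition $\nu_0<\nu$ is still sufficient for the projected transport noise. Accordingly, we apply SMR to the projected system for $\p_{\T^d_B} w$ and recover $\q w$ from the divergence identity $\nabla\cdot w=\nabla(\psi\phi)\cdot v$, which is of lower order. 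The result is an estimate of $w$ in $L^p(H^{1,q})$ by $F$, $G$, and $v,\nabla v,\pi,\wt{\pi}$ measured in lower norms on $\alpha Q$: for $v$ this means $L^p(L^q)$ and $L^p(H^{-1,q})$-type norms, and for the pressures it means $L^p(H^{-1,q})$ and $L^p(L^{q})$ respectively.

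Finally, we bootstrap over a finite chain of cylinders $\sm Q=Q_K\subset\dots\subset Q_0=(1+\sm)Q/2$. At each step Sobolev embedding gains integrability: the space-time index increases by a fixed amount, from the $L^2$-Caccioppoli start to $L^{p_j}(H^{1,q_j})$. After finitely many steps, depending only on $p,q,d$, the lower-order terms are controlled by the previous step. Interpolation and Young's inequality absorb the remaining same-scale terms, and the base step is given by Theorem \ref{t:caccioppoli}. The resulting bound on $\sup_{\sm I}\|v\|_{B^{1-2/p}_{q,p}(\sm B)}$ and on $\nabla v$ in $L^p(\sm I;L^q(\sm B))$ is \eqref{eq:localized_SMR0}.
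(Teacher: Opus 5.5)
Most of your architecture matches the paper's: Caccioppoli as the base step, localization to $\T^d_B$, the projected system for $\p_{\T^d_B}w$ (the setting of \cite[Theorem 3.2]{AV21_NS}), recovering $\q[\phi v]$ by divergence-free smoothing, and a bootstrap over nested cylinders. The gap is your claim that the lower-order terms involve the pressures in $L^p$-in-time norms, and that the bootstrap raises the time integrability of every quantity. The pressures have no time regularization. At each fixed time they solve elliptic problems, so their local approximations inherit the time integrability of $\nabla v$, $F$, $G$. Their harmonic remainders, however, keep only what is assumed, namely $L^2$ in time, and nothing in the system improves this.

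Here is a concrete example. Take $\zeta_n\equiv 0$, $F=G=0$, $\pi=0$, $\wt{\pi}_1(t,x)=-b(t)x_1$, $\wt{\pi}_n=0$ for $n\geq 2$, and $v(t)=e_1\int_{t_0-\varrho^2}^t b(s)\,\dd W^1_s$, with $b(t)=|t-t_0|^{-1/p}\in L^2(I)\setminus L^p(I)$. This is a local solution, the right-hand side of \eqref{eq:localized_SMR0} is finite, and the conclusion holds. Yet the noise commutator $\wt{\pi}_1\nabla(\psi\phi)$ is not in $L^p(I;L^q)$. Likewise $\pi\nabla(\psi\phi)$, and the It\^o-correction term built from $\phi\wt{\pi}_n-\qq_{\T^d_B}[(\zeta_n\cdot\nabla)w]$, stay only $L^2$ in time at every stage. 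These are exactly the ``lower-order terms'' hidden in your identity $-\nabla\wt{\pi}_n+(\zeta_n\cdot\nabla)v=\p_{\T^d_B}[(\zeta_n\cdot\nabla)w]+\dots$. Global SMR in $L^r(\O;L^p(I;\cdot))$ cannot be applied with them as forcing, so your iteration stalls at the first step. Interpolation and Young's inequality do not help, because the obstruction is temporal, not spatial.

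The missing idea is the trade-off of spatial regularity for temporal integrability in Step 1 of Lemma \ref{l:local_smoothing_iteration_velocity}. Every term that is only $L^2$ in time carries one more spatial derivative than $L^p$-SMR needs. Together they form a deterministic forcing $f_0\in L^2(I;L^{q_0})$, rather than $L^p(I;H^{-1,q_1})$, and a noise $g_0\in L^2(I;H^{1,q_0}(\ell^2))$, rather than $L^p(I;L^{q_1}(\ell^2))$. For the It\^o-correction piece this uses that $\phi\wt{\pi}_n-\wt{\Pi}_n-\wt{R}_n$ solves an elliptic equation with lower-order data, hence lies in $L^2(I;H^{2,q_0})$.

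One lifts $(f_0,g_0)$ via $\partial_t w_0=\nu\Delta w_0+f_0+\sum_n g_{0,n}\dot W^n$ on $\T^d$. This gives $w_0\in C(\overline{I};H^{1,q_0})\cap L^2(I;H^{2-\varepsilon,q_0})$, hence by interpolation $w_0\in L^p(I;H^{1+1/p,q_0})\embed L^p(I;H^{1,q_1})$, exactly when $q_1-q_0\le 1/(dp)$. Only $w-w_0$, whose data are genuinely $L^p$ in time, is fed into \cite[Theorem 3.2]{AV21_NS}. Correspondingly, Lemmas \ref{l:local_smoothing_iteration_pressure_turbulent} and \ref{l:local_smoothing_iteration_pressure} improve only the spatial integrability of the pressures, keeping $L^2$ in time throughout. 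Your proof needs this mechanism, or an equivalent one.
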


The estimate encodes well-known local smoothing estimates, see, e.g., \cite{GM12_appunti} for the case of elliptic PDEs. 
Let us point out that, unlike the Caccioppoli inequality of Theorem \ref{t:caccioppoli}, the above result relies on the smoothness assumption on the noise coefficients $(\zeta_n)_n$ in \eqref{eq:smoothness_coefficients_localized_SMR}.

Theorem \ref{t:localized_SMR} is proved in Subsection \ref{sss:proof_localized_smr} and is a localized version of \cite[Theorem 3.2]{AV21_NS}. While the first condition \eqref{eq:parabolicity_localized_SMR} is necessary for  \eqref{eq:localized_SMR0} to hold, we expect that \eqref{eq:smoothness_coefficients_localized_SMR} can be weakened to $(\zeta_n)_n\in C^{\delta}(Q;\ell^2)$ for some $\delta>0$ as for the global-in-space stochastic maximal $L^p$-regularity estimates in \cite[Section 3]{AV21_NS}. 
In the proof of Theorem \ref{t:localized_SMR}, we also obtain local regularity estimates for the pressures $\pi$ and $\wt{\pi}_n$. While local smoothing for the pressures is central in obtaining the bound \eqref{eq:localized_SMR0}, these estimates will not be used in the following sections, and therefore we do not state them explicitly. 
The interested reader is referred to Lemmas \ref{l:local_smoothing_iteration_pressure_turbulent} and \ref{l:local_smoothing_iteration_pressure} below.

\smallskip

Before going into the proof of Theorem \ref{t:localized_SMR}, we state a consequence for local solutions $(v^N,\pi^N,(\wt{\pi}_{k,\alpha}^N)_{k,\alpha})$ of the oscillating stochastic Stokes system at the \emph{microscopic scale} $N^{-1}Q$ (see Subsections \ref{ss:almost_self} and \ref{ss:control_energy_intro}):
\begin{equation}
\label{eq:turbulent_Stokes_microscopic}
\left\{
\begin{aligned}
\partial_t v^N &=-\nabla \pi^N+ (1+\mu)\Delta v^N+ \nabla \cdot F - c_d \mu\sum_{k,\alpha} \theta^N_k \nabla \cdot (\nabla \wt{\pi}_{k,\alpha}^N\otimes \sigma_{-k,\alpha})\\ 
&\qquad\ \  +\sqrt{c_d \mu} \sum_{k,\alpha} [-\nabla \wt{\pi}^{N}_{k,\alpha} +\theta^{N}_{k}(\sigma_{k,\alpha}\cdot\nabla) v^N+\theta^{N}_{k}\sigma_{k,\alpha}\cdot G]\,\dot{W}^{k,\alpha}_t,\\
 \nabla \cdot v^N&=0,\\
 \Delta \pi^N
&= \nabla^2 : F - c_d \mu \, \nabla \cdot\Big( \sum_{k,\alpha} \theta^N_k\nabla \cdot (\nabla \wt{\pi}_{k,\alpha}^N\otimes \sigma_{-k,\alpha})\Big),\\
\Delta \wt{\pi}_{k,\alpha}^N
&= \theta_k^N\nabla \cdot \big[(\sigma_{k,\alpha}\cdot \nabla)v^N+\sigma_{k,\alpha}\cdot G\big],
\end{aligned}
\right.
\end{equation}
where $\sigma_{k,\alpha}$ and $\theta^N$ are as in Subsection \ref{ss:probabilistic}.

\begin{corollary}[Localized $L^\infty$-bounds at microscopic scale -- Oscillating stochastic Stokes]
\label{cor:SMR_microscopic_estimate}
Let $Q=Q_{1/2}(t_0,x_0)$ be a parabolic cylinder with center $(t_0,x_0)\in \R_+\times \R^d$.
Fix $\mu>0$, $r\in (1,\infty)$ and $p,q\in (2,\infty)$ such that 
\begin{equation}
\label{eq:parameters_for_pointwise_evaluation_smr_localized_N}
\frac{2}{p}+\frac{d}{q}<1.
\end{equation}
Then there exists a constant $C_0>0$ depending only on $p,q,r$ and $\mu$ such that for any local solution $(v^N,\pi^N,(\wt{\pi}_{k,\alpha}^N)_{k,\alpha})$ to the oscillating stochastic Stokes system \eqref{eq:turbulent_Stokes_microscopic} at the microscopic scale $N^{-1}Q$, it holds that 
\begin{align}
\label{eq:Linfty_bounds_t0x0_smr}
\big(\E|v^N(t_0,x_0)|^r\big)^{1/r}
&\leq C_0
 \big(\E\| v^N\|_{\underline{L}^2(N^{-1}Q)}^r\big)^{1/r}\\
 \nonumber
 &+C_0  \big(\E\|\pi^N\|_{\underline{L}^2(N^{-1}I;\underline{H}^{-1}(N^{-1}B))}^r\big)^{1/r} \\
\nonumber 
 &+C_0\big(\E\|(\wt{\pi}^N_{k,\alpha})_{k,\alpha}\|_{\underline{L}^2(N^{-1}Q;\ell^2)}^r \big)^{1/r}\\
 \nonumber
& +C_0 \big(\E\|F\|_{\underline{L}^p(I;\underline{L}^q(B))}^r\big)^{1/r}+C_0 \big(\E\|G\|^r_{\underline{L}^p(I;\underline{L}^q(B;\ell^2))}\big)^{1/r}.
\end{align}
\end{corollary}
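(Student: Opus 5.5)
The plan is to reduce the corollary to Theorem \ref{t:localized_SMR} by a parabolic blow-up at the microscopic scale, and then to apply the Sobolev embedding available under \eqref{eq:parameters_for_pointwise_evaluation_smr_localized_N}. Define the rescaled unknowns
\[
v(t,x)=v^N(t_0+N^{-2}(t-t_0),x_0+N^{-1}(x-x_0)),\qquad \pi(t,x)=N^{-1}\pi^N(\cdots),\qquad \wt{\pi}_{k,\alpha}(t,x)=\wt{\pi}^N_{k,\alpha}(\cdots),
\]
all evaluated at the same rescaled point, together with the Brownian motions $N W(t_0+N^{-2}(\cdot-t_0))$. With these choices $(v,\pi,(\wt{\pi}_{k,\alpha}))$ is a local solution on $Q$ of a system of the form \eqref{eq:turbulent_Stokes_xi_linear}. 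The rescaled data are:
\begin{itemize}
\item $\nu=1+\mu$;
\item noise coefficients $\zeta_{k,\alpha}(x)=\sqrt{2c_d\mu}\,\theta^N_k\sigma_{k,\alpha}(x_0+N^{-1}(x-x_0))$, after splitting into real and imaginary parts as in \eqref{eq:equivalence_complex_real_noise};
\item forcings $F_N=N^{-1}F(\cdots)$ and $G_N=N^{-1}(\theta^N_k\sigma_{k,\alpha}\cdot G)(\cdots)$.
\end{itemize}
The factors of $N$ should be tracked exactly as in the scaling discussion of Subsection \ref{ss:role_pressure_intro}. The forcing terms are interpreted as $F,G$ evaluated on the corresponding microscopic cylinder; if the statement's right-hand side refers to $F,G$ on $I\times B$ of the rescaled problem, the rescaling is simply absorbed in the notation.

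Next, I would verify the hypotheses of Theorem \ref{t:localized_SMR} uniformly in $N$.
\begin{itemize}
\item Noise incompressibility holds because each $\sigma_{k,\alpha}$ is divergence-free.
\item For parabolicity, the identity \eqref{eq:symmetric_equal_diagonal_matrix} gives $\frac12\sum|\zeta\cdot\xi|^2 = c_d\mu\cdot\frac{d-1}{d}|\xi|^2=\mu|\xi|^2$, so one may take $\nu_0=\mu<1+\mu=\nu$.
\item Regularity is exactly the almost self-similarity \eqref{eq:regularity_of_coefficients_rescaled}, i.e. Lemma \ref{l:uniform_smoothness_microscopic_scale}. The gradient of the rescaled coefficients is $N^{-1}2\pi k\,\theta^N_k\sigma_{k,\alpha}$ with $|k|\le 2N$, hence bounded in $\ell^2$ uniformly in $N$ since $\|\theta^N\|_{\ell^2}=1$.
\end{itemize}
Thus $C_0$ from Theorem \ref{t:localized_SMR} depends only on $p,q,r,\mu,d$ (with $\varrho=1/2$ and, say, $\sm=1/2$). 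I must also check that the pressure equations in \eqref{eq:turbulent_Stokes_microscopic} transform into those of \eqref{eq:turbulent_Stokes_xi_linear}: the coefficient $c_d\mu\theta_k\ldots$ matches $\frac12\sum\nabla\cdot(\nabla\wt\pi_n\otimes\zeta_n)$ once $\wt\pi_n$ is renormalized by the factor $\sqrt{c_d\mu}$, and the complex pairing $k\leftrightarrow -k$ is handled by the real reformulation.

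Applying \eqref{eq:localized_SMR0} controls $\E\sup_{\sm I}\|v\|^r_{B^{1-2/p}_{q,p}(\sm B)}$. Since $1-2/p-d/q>0$, the embedding $B^{1-2/p}_{q,p}(\sm B)\embed C(\overline{\sm B})$ bounds $|v(t_0,x_0)|=|v^N(t_0,x_0)|$, using that $t_0$ is the right endpoint of $\sm I$. Finally, I would undo the scaling on the right-hand side.
\begin{itemize}
\item $\|v\|_{L^2(Q)}\eqsim\|v^N\|_{\underline{L}^2(N^{-1}Q)}$.
\item $\|\pi\|_{L^2(I;H^{-1}(B))}\eqsim\|\pi^N\|_{\underline L^2(N^{-1}I;\underline H^{-1}(N^{-1}B))}$ by \eqref{eq:scaling_negative_sob_space_intro}; here the factor $N^{-1}$ in the definition of $\pi$ cancels the $N$ coming from the $H^{-1}$ scaling.
\item The stochastic pressure term is scale invariant in averaged $L^2$.
\item The forcing norms produce factors $N^{-1}\cdot N^{2/p+d/q}\le 1$, which are harmless precisely because of \eqref{eq:parameters_for_pointwise_evaluation_smr_localized_N}.
\end{itemize}

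I expect the main obstacle to be bookkeeping rather than any new idea: getting the scaling exponents of the pressures and forcings right, and confirming that the uniform $W^{1,\infty}$ bound on the rescaled coefficients holds on a ball of radius of order one around $x_0$ independently of $N$ and $x_0$. For the latter I would first try the crude estimate $\|\nabla\zeta\|_{\ell^2}\le N^{-1}\sup_{|k|\le 2N}|k|\,\|\theta^N\|_{\ell^2}$, which is already sufficient.
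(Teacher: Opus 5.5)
Your proposal is correct and matches the paper's proof step by step. The shared steps are:
\begin{itemize}
\item the parabolic blow-up around $(t_0,x_0)$, with $\Pi^N=N^{-1}\pi^N(\cdots)$ and rescaled Brownian motions;
\item uniform-in-$N$ $W^{1,\infty}(B;\ell^2)$ bounds on the rescaled coefficients, which the paper gets from Lemma \ref{l:uniform_smoothness_microscopic_scale};
\item Theorem \ref{t:localized_SMR} with $\sm=1/2$, combined with $B^{1-2/p}_{q,p}\embed L^\infty$;
\item undoing the scaling, where the forcings pick up the factor $N^{-1+2/p+d/q}\le 1$.
\end{itemize}
Your explicit parabolicity check $\nu_0=\mu<1+\mu$ spells out a hypothesis the paper uses without stating it.
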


Local solutions to \eqref{eq:turbulent_Stokes_microscopic} can be defined as in Definition \ref{def:local_solutions_NSE} by using the decomposition \eqref{eq:equivalence_complex_real_noise}, or by allowing $\wt{\pi}_{k,\alpha}$ to be complex valued with the additional symmetry $\overline{\wt{\pi}_{k,\alpha}^N}=\wt{\pi}_{-k,\alpha}^N$. This will be used below without further mention.

\smallskip 

The proof of the above result is given in Subsection \ref{ss:proof_corollary_SMR_microscopic_estimate}, and it is based on a blow-up technique and exploits the `rigidity' of the $L^\infty(B)$-norm on continuous functions, which allows for pointwise evaluations (see Subsection \ref{ss:almost_self}). 
For comments on the role of Corollary \ref{cor:SMR_microscopic_estimate} in the proof of Theorem \ref{t:global_NSE}, the reader is referred to Subsection \ref{ss:almost_self}.
As in Theorem \ref{t:caccioppoli}, one can apply the above result with $(v^N,\pi^N,(\wt{\pi}^N_{k,\alpha})_{k,\alpha})$ replaced by 
\begin{equation*}
\Big(v^N,\, \pi^N-\fint_{N^{-1}B} \pi^N, \, \Big(\wt{\pi}^N_{k,\alpha}-\fint_{N^{-1}B} \wt{\pi}^N_{k,\alpha} \Big)_{k,\alpha}\Big), 
\end{equation*}
and a similar comment holds for Theorem \ref{t:localized_SMR}. This observation will be used in the proof of  
Theorems \ref{t:suboptimal_Lq_finite_time} and \ref{t:suboptimal_Lq_Rpositive}, for which we only need to control the oscillations of the pressures and not the full norms, cf.\ Theorem \ref{t:control_mesoscopic_energy} below.

\subsection{Proof of Corollary \ref{cor:SMR_microscopic_estimate}}
\label{ss:proof_corollary_SMR_microscopic_estimate}
The main idea is to derive Corollary \ref{cor:SMR_microscopic_estimate} by combining Theorem \ref{t:localized_SMR} and a blow-up argument, together with the embedding
\begin{equation}
\label{eq:Besov_embedding_Linfty}
B^{1-2/p}_{q,p}( B)\embed L^\infty(  B)
\end{equation}
which holds due to the assumption \eqref{eq:parameters_for_pointwise_evaluation_smr_localized_N} for each ball $B\subseteq \R^d$.

Next, we collect some useful facts. 
Let $(v^N,\pi^N,(\wt{\pi}_{k,\alpha}^N)_{k,\alpha})$ be a local solution to \eqref{eq:turbulent_Stokes_microscopic} on $N^{-1}Q$.
Denote by $(V^N,\Pi^N,(\wt{\Pi}_{k,\alpha}^N)_{k,\alpha})$ its blow-up around $(t_0,x_0)$:
\begin{align*}
V^N(t,x)&= v^N (t_0+ N^{-2}(t-t_0), x_0+ N^{-1}(x-x_0)),\\
\Pi^N(t,x)&=N^{-1}\pi^N (t_0+ N^{-2}(t-t_0), x_0+ N^{-1}(x-x_0)),\\
\wt{\Pi}_{k,\alpha}^N(t,x)&=\wt{\pi}_{k,\alpha}^N(t_0+ N^{-2}(t-t_0), x_0+ N^{-1}(x-x_0)).
\end{align*}
One can readily check that $(V^N,\Pi^N,(\wt{\Pi}^N_{k,\alpha})_{k,\alpha})$ is a local solution to
\begin{equation*}
\left\{
\begin{aligned}
\partial_t V^N &=-\nabla \Pi^N+  (1+\mu)\Delta V^N+ \nabla \cdot F^N - c_d \mu\sum_{k,\alpha} \theta^N_k\nabla \cdot\big( \nabla \wt{\Pi}_{k,\alpha}^N\otimes \sigma_{-k,\alpha}^{N^{-1}}\big)\\ 
&  + \sqrt{c_d\mu}\sum_{k,\alpha} [-\nabla \wt{\Pi}_{k,\alpha}^N +\theta^{N}_{k}(\sigma_{k,\alpha}^{N^{-1}}\cdot\nabla) V^N+\theta^{N}_{k}\sigma_{k,\alpha}^{N^{-1}} \cdot G^N]\,\dot{\Xi}^{k,\alpha}_t,\\
 \nabla \cdot V^N&=0,\\
  \Delta \Pi^N
&= \nabla^2 : F^N - c_d \mu \, \nabla \cdot \sum_{k,\alpha} 
\theta^N_k\nabla \cdot\big( \nabla \wt{\Pi}_{k,\alpha}^N\otimes \sigma_{-k,\alpha}^{N^{-1}}\big),\\
\Delta \wt{\Pi}_{k,\alpha}^N
&= \theta_k^N\nabla \cdot \big[(\sigma_{k,\alpha}^{N^{-1}}\cdot \nabla)V^N+\sigma_{k,\alpha}^{N^{-1}}\cdot G^N\big],
\end{aligned}
\right.
\end{equation*}
on the parabolic cylinder $Q=Q_{1/2}(t_0,x_0)$, where 
\begin{align*}
\Xi^{k,\alpha}_t &=N \big(W^{k,\alpha}_{t_0+ N^{-2}(t-t_0)}-W^{k,\alpha}_{t_0-N^{-2}/4}\big),\\
\sigma_{k,\alpha}^{N^{-1}}(x) &=\sigma_{k,\alpha}(x_0+ N^{-1}(x-x_0)),\\ 
F^N(t,x) &= N^{-1} F(t_0+ N^{-2}(t-t_0), x_0+ N^{-1}(x-x_0)),\\
G^N(t,x) &= N^{-1} G(t_0+ N^{-2}(t-t_0), x_0+ N^{-1}(x-x_0)),
\end{align*}
and filtration $(\F_{t_0+ N^{-2}(t-t_0)})_{t\geq t_0-1/4}$. In particular, the above rescaled problem is of the form \eqref{eq:turbulent_Stokes_xi_linear} by replacing 
$(\zeta_n)_{n}$ with a suitable rearrangement of $(\sigma_{k,\alpha}^{N^{-1}})_{k,\alpha}$, see \eqref{eq:equivalence_complex_real_noise}. 
Next, we show that the rescaled noise coefficients are uniformly smooth on the large scale $Q=I\times B$ (i.e., the original coefficients are uniformly smooth on $N^{-1}Q$).

\begin{lemma}[Uniform smoothness of noise coefficients at microscopic scale]
\label{l:uniform_smoothness_microscopic_scale}
Let $B=B_{1/2}(x_0)$ be a ball.  
For all $\g>0$, there exists $C>0$ independent of $x_0$ and $N$ such that 
$$
\sum_{k,\alpha} (\theta_k^{N})^2 \|\sigma_{k,\alpha}^{N^{-1}}\|_{C^\g(B)}^2\leq C.
$$
In particular, $(\theta_k^{N}\sigma_{k,\alpha}^{N^{-1}})_{k,\alpha}\in C^\g(B;\ell^2)$ uniformly in $N\geq 1$ for all $\g>0$.
\end{lemma}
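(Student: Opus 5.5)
The proof is a direct computation that uses only the explicit form of $\sigma_{k,\alpha}$ and the support and normalization of $\theta^N$ in \eqref{eq:choice_thetaN}. For $x\in B=B_{1/2}(x_0)$ we write
$$
\sigma_{k,\alpha}^{N^{-1}}(x)=a_{k,\alpha}\,e^{2\pi \i x_0\cdot k}\,e^{2\pi\i N^{-1}(x-x_0)\cdot k}.
$$
Only indices with $\theta^N_k\neq 0$ contribute, and for those $N\leq |k|\leq 2N$. Hence the effective frequency $N^{-1}k$ of the rescaled field satisfies $|N^{-1}k|\leq 2$.

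First, I bound each field in $C^m(B)$ for an integer $m\geq \g$. Since $|a_{k,\alpha}|=1$ and $|e^{2\pi\i x_0\cdot k}|=1$, differentiating $j$ times gives
$$
\|\nabla^j\sigma_{k,\alpha}^{N^{-1}}\|_{L^\infty(B)}\leq (2\pi|k|/N)^j\leq (4\pi)^j ,\qquad 0\leq j\leq m,
$$
for all $k\in\supp\theta^N$, uniformly in $x_0$ and $N$. Because $B$ is a ball of fixed radius $1/2$, the mean value theorem gives $C^{m}(B)\embed C^{\g}(B)$ with a constant depending only on $\g$ and $d$. Therefore $\|\sigma^{N^{-1}}_{k,\alpha}\|_{C^\g(B)}\leq C_\g$ for every such $k$ and every $\alpha$.

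Second, I sum using $\|\theta^N\|_{\ell^2}=1$:
$$
\sum_{k,\alpha}(\theta^N_k)^2\|\sigma^{N^{-1}}_{k,\alpha}\|^2_{C^\g(B)}\leq C_\g^2\sum_{k,\alpha}(\theta^N_k)^2=(d-1)\,C_\g^2 .
$$
This is the first assertion, with a constant independent of $x_0$ and $N$.

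For the ``in particular'' part, I pass from the scalar Hölder norms to the $\ell^2$-valued one. For any sequence $(f_n)_n$, the reverse triangle inequality in $\ell^2$ followed by summation gives, for derivatives of order up to $\lfloor\g\rfloor$,
$$
\big\|(f_n(x))_n-(f_n(y))_n\big\|_{\ell^2}\leq \Big(\sum_n [f_n]_{C^{\g-\lfloor \g\rfloor}}^2\Big)^{1/2}|x-y|^{\g-\lfloor\g\rfloor},
$$
and similarly for the sup-norm parts. Hence $\|(f_n)_n\|_{C^\g(B;\ell^2)}\lesssim(\sum_n\|f_n\|_{C^\g(B)}^2)^{1/2}$. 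Applying this with $f_{k,\alpha}=\theta^N_k\sigma^{N^{-1}}_{k,\alpha}$ gives the uniform bound. There is no real obstacle here. The only point to record is that the $\ell^\infty$ smallness of $\theta^N$ is not used; the argument relies solely on the frequency localization $|k|\leq 2N$, which makes all derivatives of order at most one after rescaling, and on the normalization $\|\theta^N\|_{\ell^2}=1$.
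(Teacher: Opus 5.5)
Your proposal is correct and is essentially the paper's proof: the support condition $N\leq |k|\leq 2N$ bounds every derivative of order at most $m$ of $\sigma_{k,\alpha}(x_0+N^{-1}(\cdot-x_0))$ by a constant independent of $N$ and $x_0$, and the normalization $\|\theta^N\|_{\ell^2}=1$ then bounds the sum. You additionally spell out two steps the paper leaves implicit, namely the embedding $C^m(B)\hookrightarrow C^{\gamma}(B)$ and the passage to the $\ell^2$-valued H\"older norm.
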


\begin{proof}
Without loss of generality, we assume $x_0=0$. Since $\supp\theta^N= \{N\leq |k|\leq 2N\}$, it is enough to show that for all integers $m\geq 1$ and $\alpha\in \{1,\dots,d-1\}$, 
$$
\sup_{N}\sum_{N\leq |k|\leq 2N} (\theta_k^{N})^2 \|\sigma_{k,\alpha}(N^{-1}\cdot)\|_{C^m(B_{1})}^2<\infty.
$$
Clearly, for all $x\in B$, $k\in \supp\theta^N$, integers $m\geq 1$, and all multi-index $\beta\in \N^d$ such that $\sum_{1\leq j\leq d}\beta_j\leq m$, 
$$
|\partial_x^{\beta} [\sigma_{k,\alpha}(N^{-1}x)]|
\leq C \big|\textstyle{\prod_{1\leq j\leq d}}(2\pi \i N^{-1} k_j)^{\beta_j} e^{2\pi \i N^{-1}k\cdot x}\big| \leq C (N^{-1} |k|)^{m} \leq C,
$$
where $C$ is independent of $N$. 
Hence, 
$$
\sum_{N\leq |k|\leq 2N} (\theta_k^{N})^2 \|\sigma_{k,\alpha}(N^{-1}\cdot)\|_{C^m(B_{1})}^2\leq C^2
$$
where we used $\|\theta^N\|_{\ell^2}=1$.
\end{proof}

With Lemma \ref{l:uniform_smoothness_microscopic_scale} at our disposal, we can prove Corollary \ref{cor:SMR_microscopic_estimate}.

\begin{proof}[Proof of Corollary \ref{cor:SMR_microscopic_estimate}]
Since $V^N(t_0,x_0)=v^N(t_0,x_0)$, it follows from \eqref{eq:Besov_embedding_Linfty} that
$$
|v^N (t_0,x_0)|^r
\leq
\sup_{(1/2) I}\|V^N\|_{B^{1-2/p}_{q,p}((1/2)  B)}^r.
$$
Moreover, note that 
\begin{align*}
\| V^N\|_{L^2(Q)}
&\eqsim_d 
\| v^N\|_{\underline{L}^2(N^{-1}Q)},\\
\|\Pi^N\|_{L^2(I;H^{-1}(B))}
&\eqsim_d
\|\pi^N\|_{\underline{L}^2(N^{-1}I;\underline{H}^{-1}(N^{-1}B))},\\
\|(\wt{\Pi}^N_{k,\alpha})_{k,\alpha}\|_{L^2(Q;\ell^2)}&\eqsim_d
\|(\wt{\pi}^N_{k,\alpha})_{k,\alpha}\|_{\underline{L}^2(N^{-1}Q;\ell^2)},
\end{align*}
as well as 
\begin{align*}
\|F^N\|_{L^p(I;L^q(B))}
=N^{-1}\|F\|_{\underline{L}^p(N^{-1}I;\underline{L}^q (N^{-1}B))}
&\leq N^{-1+(2/p+d/q)}\|F\|_{\underline{L}^p(I;\underline{L}^{q}(B))}\\
&\leq \|F\|_{\underline{L}^p(I;\underline{L}^{q}(B))},
\end{align*}
where we used $\frac{2}{p}+\frac{d}{q}<1$. A similar bound holds for $G^N$. 
The estimate \eqref{eq:Linfty_bounds_t0x0_smr} follows from Lemma \ref{l:uniform_smoothness_microscopic_scale} and Theorem \ref{t:localized_SMR} applied to $(V^N, \Pi^N,(\wt{\Pi}^N_{k,\alpha})_{k,\alpha})$ and $\sm=1/2$.
\end{proof}

\subsection{Proof of Theorem \ref{t:localized_SMR}}
\label{ss:localized_SMR_proof}
The proof of the localized stochastic maximal $L^p$-estimate \eqref{eq:localized_SMR0} for local solutions of the stochastic Stokes system \eqref{eq:turbulent_Stokes_xi_linear} relies on an iterative argument, in which at each step we gain a small amount of regularity by shrinking the domain. The iterative regularity gain is divided into three steps:
\begin{itemize}
\item Local gain of smoothness for the velocity field -- Lemma \ref{l:local_smoothing_iteration_velocity}.
\item Local gain of smoothness for the stochastic pressure -- Lemma \ref{l:local_smoothing_iteration_pressure_turbulent}.
\item Local gain of smoothness for the deterministic pressure -- Lemma \ref{l:local_smoothing_iteration_pressure}.
\end{itemize}
This division is dictated by the intrinsic structure of the stochastic Stokes system \eqref{eq:turbulent_Stokes_xi_linear}.
We emphasize that this strategy differs from the one used to prove Caccioppoli inequalities in Section \ref{s:caccioppoli}, as it relies crucially on the additional regularity assumption \eqref{eq:smoothness_coefficients_localized_SMR} enforced in this section.

\subsubsection{Local smoothing for the velocity field}
\label{sss:local_smr_velocity}
In this subsection, we discuss local smoothing effects for the velocity field $v$ in \eqref{eq:turbulent_Stokes_xi_linear}.
The following can be seen as a local version of \cite[Theorem 3.2]{AV21_NS}.

\begin{lemma}[Local smoothing for velocity field -- One step improvement]
\label{l:local_smoothing_iteration_velocity}
Let $Q=Q_{\varrho}(t_0,x_0)$ be a parabolic cylinder of side $\varrho\in (0,1]$ and center $(t_0,x_0)\in \R\times \R^d$. 
Suppose that \eqref{eq:parabolicity_localized_SMR} and \eqref{eq:smoothness_coefficients_localized_SMR} hold.
Fix $r\in (1,\infty)$, $p\in (2,\infty)$ and $\sm\in (0,1)$. Then for all $q_0\in [2,\infty)$ and $q_1\in (q_0,\infty)$ such that
\begin{equation}
\label{eq:improvement_q0_q1_p_local_SMR}
q_1-q_0\leq 1/(dp),
\end{equation}
and all local solutions $(v,\pi,(\wt{\pi}_n)_n)$ to the stochastic Stokes system \eqref{eq:turbulent_Stokes_xi_linear} in $Q$ for which 
\begin{align}
\label{eq:local_smoothing_iteration_velocity1}
F,\, \|(G_n)_n\|_{\ell^2} &\in L^r_\Progress(\O;L^{p}(I;L^{q_1}(B))),\\
\label{eq:local_smoothing_iteration_velocity2}
v&\in L^r(\O;L^{2}(I;H^{1,q_0}(B)))\cap L^r(\O;C(\overline{I};L^{q_0}(B))),\\
\label{eq:local_smoothing_iteration_velocity3}
\pi&\in L^r(\O;L^{2}(I;L^{q_0}(B))),\\
\label{eq:local_smoothing_iteration_velocity4}
(\wt{\pi}_n)_n&\in L^r(\O;L^{2}(I;H^{1,q_0}(B;\ell^2))),
\end{align}
one has 
\begin{equation}
\label{eq:local_smoothing_iteration_velocity_conclusion}
v\in L^r(\O;L^{p}(\sm  I;H^{1,q_1}(\sm B)))\cap L^r(\O;C(\sm\overline{I};B^{1-2/p}_{q_1,p}(\sm B))),
\end{equation} 
with a corresponding estimate depending only on the norms of $F$, $\|(G_n)_n\|_{\ell^2}$, $v$, $\pi$ and $(\wt{\pi}_n)_n$ appearing in \eqref{eq:local_smoothing_iteration_velocity1}-\eqref{eq:local_smoothing_iteration_velocity4}. 
\end{lemma}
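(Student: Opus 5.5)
We localize with a cutoff and then apply the global-in-space stochastic maximal $L^p$-regularity on a torus, as in \cite[Theorem 3.2]{AV21_NS}. Fix $\chi\in C^\infty_{\rm c}(B)$ with $\chi|_{\sm B}=1$ and $\psi\in C^\infty_{\rm c}((t_0-\varrho^2,t_0])$ with $\psi|_{\sm I}=1$. Set $u=\psi\chi v$, and regard $u$ as a periodic field on $\T^d_B$. Because of the nested support, it is convenient to work with intermediate cylinders $\sm Q\subset \sm' Q\subset Q$ and to reapply the cutoff argument there; this costs only constants.

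Step 1 is to compute the equation for $u$. Using \eqref{eq:integral_identity_v_Caccioppoli}, one gets
\begin{align*}
\dd u &= \big[\nu\Delta u + \tfrac12\textstyle\sum_n \nabla\cdot((\zeta_n\cdot\nabla)u\otimes\zeta_n) + f\big]\,\dd t
 + \textstyle\sum_n\big[(\zeta_n\cdot\nabla)u + g_n\big]\,\dd W^n .
\end{align*}
Here the lower-order terms $f$, $g_n$ collect several contributions:
\begin{itemize}
\item commutators such as $\nu(2\nabla\chi\cdot\nabla v+v\Delta\chi)\psi$, $(\partial_t\psi)\chi v$ and $\psi v(\zeta_n\cdot\nabla\chi)$;
\item the localized pressure terms $-\psi\chi\nabla\pi$ and $-\psi\chi\nabla\wt{\pi}_n$;
\item the It\^o--Stratonovich corrector written through the elliptic equation for $\wt{\pi}_n$;
\item the forcings $\psi\chi\nabla\cdot F$ and $\psi\chi G_n$.
\end{itemize}
The field $u$ is not divergence free, since $\nabla\cdot u=\psi v\cdot\nabla\chi$. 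One therefore either projects with $\p_{\T^d_B}$ or treats the gradient part as a lower-order forcing, as in the proof of Lemma \ref{l:intermediate_estimate_pressure_deterministic}.

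Step 2 is to apply the stochastic maximal $L^p$-regularity estimate for the operator $\nu\Delta+\frac12\sum_n\nabla\cdot((\zeta_n\cdot\nabla)\cdot\otimes\zeta_n)$ with transport noise $(\zeta_n\cdot\nabla)$ on $\T^d_B$ in $L^{q_1}$. This requires extending $\zeta_n$ from $B$ to $\T^d$ while preserving \eqref{eq:parabolicity_localized_SMR} and the $W^{1,\infty}$ bound \eqref{eq:smoothness_coefficients_localized_SMR}; on $\supp\chi$ only the original values matter, so the extension can be cut down away from $\supp\chi$. The estimate gives
$$u\in L^r(\O;L^p(H^{1,q_1}))\cap L^r(\O;C(B^{1-2/p}_{q_1,p})),$$
provided $f\in L^p(H^{-1,q_1})$ and $(g_n)_n\in L^p(L^{q_1}(\ell^2))$. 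The conclusion \eqref{eq:local_smoothing_iteration_velocity_conclusion} then follows from $u=v$ on $\sm Q$.

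Step 3 is to check the integrability of the forcing, and this is where \eqref{eq:improvement_q0_q1_p_local_SMR} enters. The lower-order terms only involve $v$, $\nabla v$, $\pi$ and $(\wt{\pi}_n,\nabla\wt{\pi}_n)$ multiplied by derivatives of the cutoffs, in $H^{-1,q_1}$ or $L^{q_1}$. Sobolev embedding gives $H^{1,q_0}\embed L^{q_1}$, and $L^{q_0}\embed H^{-1,q_1}$ with room to spare since $q_1-q_0\le 1/(dp)$ is tiny. Time integrability, however, is only $L^2$ by \eqref{eq:local_smoothing_iteration_velocity2}--\eqref{eq:local_smoothing_iteration_velocity4}, while $L^p$ is needed. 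To close this gap I would interpolate between $C(\overline I;L^{q_0})$ and $L^2(H^{1,q_0})$: this gives $v\in L^{p}(H^{2/p,q_0})$, hence $v\in L^p(L^{q_1})$ when $2/p\ge d(1/q_0-1/q_1)$, which the smallness of $q_1-q_0$ guarantees. The pressure and $\nabla v$ terms enter only as $H^{-1,q_1}$-forcings, for instance $\nabla\chi\cdot\nabla v=\nabla\cdot(v\otimes\nabla\chi)-v\Delta\chi$ and $\pi\nabla\chi=\nabla(\chi\pi)-\chi\nabla\pi$. For the terms that still carry only $L^2$ in time, I would run the maximal-regularity estimate first with the time exponent $2$ and then bootstrap in time with a second cutoff.

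The main obstacle is the top-order pressure terms $\psi\chi\nabla\pi$ and $\psi\chi\nabla\wt{\pi}_n$. They are not lower order, and the noise term $\chi\nabla\wt{\pi}_n$ is needed in $L^{q_1}$ with $L^p$ in time, whereas only $H^{1,q_0}$ with $L^2$ in time is assumed. I would remove these terms by applying $\p_{\T^d_B}$. This converts them into commutators $[\p,\chi]\nabla\pi$ and $[\p,\chi]\nabla\wt{\pi}_n$, which are of order zero and hence lower order, so the problem is reduced to the divergence-free setting of \cite[Theorem 3.2]{AV21_NS}. The price is that the projected noise is $\p[(\zeta_n\cdot\nabla)u]$, so the stochastic maximal regularity on $\Ls^{q_1}$ must hold with this projected transport noise. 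That is exactly what \cite{AV21_NS} provides under the parabolicity condition \eqref{eq:parabolicity_localized_SMR} and the $W^{1,\infty}$ regularity \eqref{eq:smoothness_coefficients_localized_SMR}.
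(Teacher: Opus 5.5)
Your architecture (cutoff, Helmholtz projection on $\T^d_B$, then \cite[Theorem 3.2]{AV21_NS}) is the right one. However, the step that actually makes the lemma work is missing, and the substitute you propose fails.

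\textbf{The pressures are only $L^2$ in time.} After projection they enter as $-\psi\,\p[\pi\nabla\chi]\in L^2(I;L^{q_0})$ in the drift and as $-\psi\,\p[\wt{\pi}_n\nabla\chi]\in L^2(I;H^{1,q_0}(\T^d;\ell^2))$ in the noise. These terms are of order zero, but they never gain time integrability. The pressures solve elliptic problems at each fixed time, and their harmonic parts carry no more time integrability than \eqref{eq:local_smoothing_iteration_velocity3}--\eqref{eq:local_smoothing_iteration_velocity4} provide. So ``bootstrapping in time with a second cutoff'' cannot improve them. ``Running the estimate with time exponent $2$'' is also unavailable for the transport-noise system in $L^{q_1}$-based spaces with $q_1>2$, since stochastic maximal $L^2$-regularity fails there (cf.\ \cite[Sections 6 and 7]{MaximalLpregularity}).

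The missing idea is to convert the extra spatial smoothness of these terms into time integrability:
\begin{enumerate}
\item Collect every $L^2$-in-time contribution into $f_0\in L^2(I;L^{q_0})$ and $g_{0,n}\in L^2(I;H^{1,q_0})$.
\item Solve the \emph{additive} stochastic heat equation $\partial_t w_0=\nu\Delta w_0+f_0+\sum_n g_{0,n}\dot W^n$ on $\T^d$. Deterministic maximal $L^2$-regularity and stochastic convolution bounds (with an $\varepsilon$-loss) give $w_0\in C(\overline I;H^{1,q_0})\cap L^2(I;H^{2-\varepsilon,q_0})$.
\item By interpolation, $w_0\in L^p(I;H^{1+1/p,q_0})\embed L^p(I;H^{1,q_1})$. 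This is where \eqref{eq:improvement_q0_q1_p_local_SMR} is genuinely needed.
\item Only then apply \cite[Theorem 3.2]{AV21_NS} to $w-w_0$, where $\p[(\zeta_n\cdot\nabla)w_0]$ is now an admissible $L^p(L^{q_1})$ forcing.
\end{enumerate}

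\textbf{The corrector is not lower order, and your drift is not that of the system.} The system \eqref{eq:turbulent_Stokes_xi_linear} has drift $\nu\Delta v-\frac12\sum_n\nabla\cdot(\nabla\wt{\pi}_n\otimes\zeta_n)$. It contains no term $\frac12\sum_n\nabla\cdot((\zeta_n\cdot\nabla)v\otimes\zeta_n)$, and subtracting one would create a top-order forcing. Up to smoother terms, $\nabla\wt{\pi}_n$ equals $\q[(\zeta_n\cdot\nabla)v]$, so the corrector is second order in $v$. It must therefore be kept as the operator $-\frac12\sum_n\p[\nabla\cdot(\q[(\zeta_n\cdot\nabla)w]\otimes\zeta_n)]$ of the projected problem for $w=\p[\phi v]$.

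Getting there requires the splitting $\phi\wt{\pi}_n=\qq[(\zeta_n\cdot\nabla)w]+\wt R_n+A_n$:
\begin{itemize}
\item $\wt R_n$ solves $\Delta\wt R_n=\phi\nabla\cdot G_n$ minus its mean, and its corrector contribution is an $L^p$-in-time forcing.
\item $A_n\in L^2(I;H^{2,q_0}(\T^d;\ell^2))$ by elliptic regularity on the torus. Its corrector contribution therefore lies in $L^2(I;L^{q_0})$ and is absorbed into the lifted part $f_0$.
\end{itemize}
This splitting, and also the replacement of the noise $\p[(\zeta_n\cdot\nabla)(\phi v)]$ by $\p[(\zeta_n\cdot\nabla)w]$, rest on the smoothing bound $\|\q[\phi v]\|_{H^{1+\sigma,t}(\T^d)}\lesssim\|v\|_{H^{\sigma,t}(B)}$ for $v$ divergence free on $\supp\phi$. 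The same bound is what recovers $\phi v=w+\q[\phi v]$ in the target space at the end.
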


Before going into the proof of the above result, we emphasize that, compared to the definition of local solutions (Definition \ref{def:local_solutions_NSE}), no additional time regularity is assumed in \eqref{eq:local_smoothing_iteration_velocity2}-\eqref{eq:local_smoothing_iteration_velocity3}. Indeed, as discussed in Subsection \ref{ss:role_pressure_intro}, the pressure satisfies an elliptic PDE and therefore no time regularization is expected (see also Lemmas \ref{l:local_smoothing_iteration_pressure_turbulent} and \ref{l:local_smoothing_iteration_pressure} below).
However, as we show below, $L^2$-time regularity for the pressures suffices to prove $L^p$-time regularity for the velocity field \eqref{eq:local_smoothing_iteration_velocity_conclusion}.

\begin{proof}
The idea is to use a localization argument together with the stochastic maximal $L^p$-regularity in \cite[Theorem 3.2]{AV21_NS}. 
Without loss of generality, we assume $x_0=(1/2,\dots,1/2)$ and hence $B=B_{\varrho}(x_0)\subseteq \T^d$. 
Let $\phi\in C^{\infty} (Q)$ be such that $\phi|_{\partial_{\pb} Q}=0$ and $\phi|_{\sm Q}=1$, e.g., $\phi=\psi \xi$ where $\psi\in C^{\infty}_{{\rm c}}((t_0-\varrho^2,t_0])$ and $\xi\in C^\infty_{{\rm c}}(B)$ with $\psi|_{(t_0-\eta \varrho^2,t_0]}=\xi|_{\eta B}=1$.
First, from It\^o's formula, the process $\phi v$ solves the SPDE:
\begin{align*}
\partial_t (\phi v) 
&=v (\partial_t \phi)-\nabla (\pi\phi)+\pi\nabla \phi+ \nu\Delta (\phi v)
-2 \nu \nabla \phi\cdot \nabla v -\nu v\Delta \phi+\phi \nabla \cdot F\\
&-\frac{1}{2} \sum_{n} \big[ \nabla \cdot (\nabla(\phi \wt{\pi}_{n})\otimes \zeta_{n})- (\nabla \wt{\pi}_n\otimes \zeta_n)\cdot \nabla\phi  - \nabla\cdot (\nabla \phi \otimes \zeta_n \wt{\pi}_n)\big]\\
&  + \sum_{n} [-\nabla (\phi \wt{\pi}_n)+\wt{\pi}_n\nabla \phi +( \zeta_n\cdot\nabla) (\phi v)- v( \zeta_n\cdot\nabla)\phi+\phi G_n]\,\dot{W}^n_t,
\end{align*}
where the above is understood in its natural integral form on $H^{-1}(B)$, cf.\ \eqref{eq:integral_identity_v_Caccioppoli}. 
Now, since $\supp(\phi(\cdot,t))\subseteq B$ for all $t\in (t_0-\varrho^2/2,t_0]$, all processes appearing in the above identity can be understood as processes with values in distributions on $\T^d$. 
With a slight abuse of notation, in the following, we still denote by $(\zeta_n)_n$ the extension of the coefficients on $\T^d$ which still satisfy \eqref{eq:parabolicity_localized_SMR}-\eqref{eq:smoothness_coefficients_localized_SMR} for some $M>0$ and $\nu_0<\nu$, depending only on $\sm$ and $d$. To construct such an extension, one can proceed as follows. First, consider $\om \zeta_n$ where $\om=1$ on $\supp \xi$ and $\xi$ is as at the beginning of the proof. Then, one can trivially extend $\om\zeta_n$ by periodicity outside $B$. Note that this construction does not preserve the incompressibility of the noise coefficients \eqref{eq:incompressibility_coefficients_localized_SMR}. However, this is not needed to apply the results in \cite[Section 3]{AV21_NS}, where only smoothness and parabolicity are required.  For simplicity, we assume $\varrho=1$ below. The general case is analogous.

\smallskip

Set $w=\p [\phi v]$, where $\p=\p_{\T^d}$ is the Helmholtz projection on $\T^d$ and let $\q=\mathrm{Id}-\p$ be its complementary projection. Recall that $\q=\nabla \qq$ where $\qq$ is as in Subsection \ref{ss:Helmh_div_free}. From the above, $w$ solves the SPDE:
\begin{align}
\label{eq:abstract_spde_localization_w}
\partial_t w 
&= \nu\Delta w+ f_0+ f_1-\frac{1}{2} \sum_{n} \p\big[ \nabla \cdot (\nabla \wt{\Pi}_{n}\otimes \zeta_{n})\big]\\
\nonumber
&  + \sum_{n} \big(\p[( \zeta_n\cdot\nabla) w]+g_{0,n}+g_{1,n}\big)\,\dot{W}^n,
\end{align}
together with the initial condition $w(t_0-1)=0$, where
\begin{align*} 
f_0&=   \p \big[v (\partial_t \phi)+\pi\nabla \phi
-2 \nu \nabla \phi\cdot \nabla v -\nu v\Delta \phi\big]\\
&\qquad-\frac{1}{2} \sum_{n} \p\Big[ \nabla \cdot (\nabla[(\phi \wt{\pi}_{n})-\wt{\Pi}_n- \wt{R}_n]\otimes \zeta_{n})\Big]\\
&\qquad\qquad +\frac{1}{2} \sum_{n} \p\big[ (\nabla \wt{\pi}_n\otimes \zeta_n)\cdot \nabla\phi  + \nabla\cdot (\nabla \phi \otimes \zeta_n \wt{\pi}_n)\big],\\
\wt{\Pi}_n&= \qq [(\zeta_n\cdot \nabla) w],\\
f_1&=  \p \big[\phi \nabla \cdot F\big]
-\frac{1}{2} \sum_{n} \p\big[ \nabla \cdot (\nabla  \wt{R}_n\otimes \zeta_{n})\big],\\
g_{0,n}
&= \p\big[\wt{\pi}_n\nabla \phi +( \zeta_n\cdot\nabla) (\q [\phi v])- v( \zeta_n\cdot\nabla)\phi\big], \qquad
g_{1,n} 
= \p\big[\phi G_n\big],
\end{align*}
and $\wt{R}_n$ is the unique solution to the PDE:
$$
\Delta \wt{R}_n =\phi \nabla\cdot G_n-\textstyle{\int_{\T^d} } \phi \nabla\cdot G_n
\qquad \text{ and }\qquad \textstyle{\int_{\T^d} }\wt{R}_n=0.
$$
The idea behind the above decomposition is as follows. In light of \eqref{eq:local_smoothing_iteration_velocity1}, the forcing terms $(f_0,g_0)$ contain all the contributions to the SPDE \eqref{eq:abstract_spde_localization_w} with time integrability lower than $p$, while all the remaining ones are collected in $(f_1,g_1)$. 
However, all the contributions in $(f_0,g_0)$ are spatially smoother than the ones in $(f_1,g_1)$. In Step 1, we will exploit this fact to obtain a suitable `lifting' of $(f_0,g_0)$ via solving a stochastic heat equation on $\T^d$ having $L^p$-regularity in time, and not only $L^2$ as the pressures \eqref{eq:local_smoothing_iteration_velocity3}-\eqref{eq:local_smoothing_iteration_velocity4}. In Step 2, we conclude the proof of Lemma \ref{l:local_smoothing_iteration_velocity} by applying the global stochastic maximal $L^p$-regularity result in \cite[Theorem 3.2]{AV21_NS}.

\smallskip

\emph{Step 1: Let $w_0$ be the solution to the following stochastic heat equation
$$
\partial_t w_0 = \nu \Delta w_0 +f_0+ \sum_n g_{0,n} \,\dot{W}^n , \qquad w_0(t_0-1)=0,
$$
both on $\T^d$. Then, $w_0\in L^r(\O;L^p(I;H^{1,q_1}(\T^d)))\cap L^r(\O;C(\overline{I};B^{1-2/p}_{q_1,p}(\T^d)))$ with a corresponding estimate provided \eqref{eq:improvement_q0_q1_p_local_SMR} holds.}
We claim that 
\begin{align}
\label{eq:integrability_f0}
f_0&\in L^r(\O;L^2(I;L^{q_0}(\T^d))),\\  
\label{eq:integrability_g0}
(g_{0,n})_n&\in  L^r(\O;L^{2}(I;H^{1,q_0}(\T^d))),
\end{align}
together with an estimate depending only on the norm of $(F,\|(G_n)_n\|_{\ell^2},v,\pi,\wt{\pi}_n)$ in \eqref{eq:local_smoothing_iteration_velocity1}-\eqref{eq:local_smoothing_iteration_velocity4}. 
Before proving \eqref{eq:integrability_f0}-\eqref{eq:integrability_g0}, we show that it yields the claim of Step 1.
Clearly, for $t\in I=(t_0-1,t_0)$, we have $w_0(t)=w_{0,f} (t)+ w_{0,g}(t)$ where
\begin{align*}
w_{0,f} (t)
\stackrel{{\rm def}}{=} \int_{t_0-1}^t e^{\nu\Delta (t-s)} f_0(s)\,\dd s  \quad \text{ and }\quad  w_{0,g}(t) \stackrel{{\rm def}}{=} \int_{t_0-1}^t (e^{\nu\Delta (t-s)}g_{0,n}(s))_n \,\dd \mathcal{W}_{\ell^2}.
\end{align*}
To estimate the above convolutions, let us recall that the operator $-\Delta: H^{\sigma+2,t}(\T^d)\subseteq H^{\sigma,t}(\T^d)\to H^{\sigma,t}(\T^d)$ has a bounded $H^\infty$-calculus of angle $0$ for all $\sigma\in \R$ and $t\in (1,\infty)$ as follows from the periodic version of \cite[Theorem 10.2.25]{Analysis2}. For details on the $H^\infty$-calculus and its role in deterministic and stochastic maximal regularity, the reader is referred to \cite[Chapter 17]{Analysis3} and \cite{MaximalLpregularity}, respectively.
From the deterministic maximal $L^2$-regularity of the Laplace operator on $L^{q_0}(\T^d)$ (see either \cite[Corollary 17.3.6]{Analysis3} or \cite[Theorem 4.4.5]{pruss2016moving}),
\begin{align}
\label{eq:regularity_wg_localized_SMR0}
w_{0,f}
 \in L^r(\O;C(\overline{I};B^{1}_{q_0,2}(\T^d))\cap L^{2}(I;H^{2,q_0}(\T^d))).
\end{align}
Next, recall that $(g_{0,n})_n$ is merely $L^2$-integrable in time, and sharp maximal $L^2$-time regularity estimates are not valid when the process takes values in a Lebesgue space, see \cite[Sections 6 and 7]{MaximalLpregularity}. However, for all $\varepsilon\in (0,1)$, we have 
\begin{align}
\label{eq:regularity_wg_localized_SMR1}
w_{0,g}&\in L^r(\O;C(\overline{I};H^{1,q_0}(\T^d))),\\
\label{eq:regularity_wg_localized_SMR2}
w_{0,g}&\in  L^r(\O;L^{2}(I;H^{2-\varepsilon,q_0}(\T^d))).
\end{align}
Here \eqref{eq:regularity_wg_localized_SMR1} follows from \cite[Theorem 1.1]{VW11_notes}, Young's inequality for convolutions, and the above mentioned $H^{\infty}$-calculus of $-\Delta$ on $L^{q_0}(\T^d)$. Meanwhile, \eqref{eq:regularity_wg_localized_SMR2} follows from \cite[Theorem 4.7]{NVW13} and the well-known smoothing property of the heat operator
$
\|e^{t\Delta}\|_{H^{1,q_0}\to H^{2-\varepsilon,q_0}}\lesssim t^{-(1-\varepsilon)/2}
$ 
for all $t\in (0,1]$. 
In both cases, we used that 
\begin{equation}
\label{eq:gamma_fubini_Bessel}
H^{\sigma,t}(\T^d;\ell^2)=\g(\ell^2,H^{\sigma,t}(\T^d)), \ \  \text{ (equivalent norms)}
\end{equation}
where the space on the right-hand side is the space of $\g$-radonifying operators, see \cite[Chapter 9]{Analysis2}. The identification \eqref{eq:gamma_fubini_Bessel} is a consequence of the $\g$-Fubini theorem, see e.g., \cite[Theorem 9.4.8]{Analysis2} or \cite[Proposition A.2]{AgrSau}.
By \eqref{eq:regularity_wg_localized_SMR0} and \eqref{eq:regularity_wg_localized_SMR1}-\eqref{eq:regularity_wg_localized_SMR2} with $\varepsilon=1/4$, interpolation, and Sobolev embeddings, we obtain 
\begin{equation}
\label{eq:regularity_w01}
w_0 \in L^r(\O;L^{p}(I;H^{1+1/p,q_0}(\T^d)))\embed L^r(\O;L^{p}(I;H^{1,q_1}(\T^d)))
\end{equation}
with a corresponding estimate. Note that the application of the Sobolev embedding is valid provided $1/p-d/q_0\geq -d/q_1$, which holds due to \eqref{eq:improvement_q0_q1_p_local_SMR}. 
Moreover, combining the latter fact, $q_0\geq 2$ and Sobolev embeddings for Besov spaces (see e.g., \cite[Theorem 14.4.19]{Analysis3}), we have
\begin{equation}
\label{eq:regularity_w02}
B^{1}_{q_0,2}(\T^d)\embed H^{1,q_0}(\T^d)\embed B^{1-2/p}_{q_1,p}(\T^d).
\end{equation}
Hence, $w_0 \in L^r(\O;C(\overline{I};B^{1-2/p}_{q_1,p}(\T^d)))$ follows from \eqref{eq:regularity_wg_localized_SMR0} and \eqref{eq:regularity_wg_localized_SMR1}.

\smallskip

The rest of this step is devoted to the proof of \eqref{eq:integrability_f0}-\eqref{eq:integrability_g0}. We begin by proving  \eqref{eq:integrability_g0}.
From \eqref{eq:local_smoothing_iteration_velocity2} and \eqref{eq:local_smoothing_iteration_velocity4} as well as the regularity of the noise coefficients \eqref{eq:smoothness_coefficients_localized_SMR}, it suffices to prove that 
\begin{equation}
\label{eq:claim_integrability_g0_proof}
\q [\phi v]\in L^r(\O;L^2(I;H^{2,q_0}(\T^d))),
\end{equation}
together with a corresponding estimate. It is well-known that $v\mapsto \q[\phi v]$ is a smoothing operator in the case where $v$ is divergence-free on the support of the cutoff function $\phi$ (see \cite[Lemma 3.4 and (3.26)]{AV21_NS} for details):
\begin{equation}
\label{eq:smoothing_property_cutoff_div_free}
\| \q[\phi v]\|_{H^{1+\sigma,t}(\T^d)}\lesssim \|v\|_{H^{\sigma,t}(B)},
\end{equation}
for all $\sigma\in \R$ and $t\in (1,\infty)$ provided $\supp\phi\subseteq B$ and  $\nabla \cdot v=0$ in $\D'(B)$. Thus, \eqref{eq:claim_integrability_g0_proof} is a straightforward consequence of \eqref{eq:local_smoothing_iteration_velocity2} and \eqref{eq:smoothing_property_cutoff_div_free}.

Finally, we prove \eqref{eq:integrability_f0}. 
Using \eqref{eq:local_smoothing_iteration_velocity2}-\eqref{eq:local_smoothing_iteration_velocity4} and \eqref{eq:smoothness_coefficients_localized_SMR}, it suffices to check  
\begin{align}
\label{eq:claim_integrability_f0_proof}
(\phi \wt{\pi}_{n})-\wt{\Pi}_n-\wt{R}_n\in L^r(\O;L^2(I;H^{2,q_0}(\T^d;\ell^2))).
\end{align}
To this end, note that  
$
\Delta \wt{\Pi}_n =\nabla \cdot [(\zeta_n\cdot\nabla )w] 
$
in $\D'(\T^d)$, and from the definition of local solutions to \eqref{eq:turbulent_Stokes_xi_linear} in Definition \ref{def:local_solutions_NSE}, one has 
\begin{align}
\label{eq:PDE_for_difference_turbulent_pressure}
\Delta (\phi \wt{\pi}_n -\wt{\Pi}_n-\wt{R}_n)&=2\nabla \phi \cdot \nabla \wt{\pi}_n + \wt{\pi}_n\Delta \phi
+ \nabla \cdot ((\zeta_n \cdot\nabla) \q[\phi v])\\
\nonumber
&+\textstyle{\int_{\T^d} } \phi \nabla\cdot G_n- \nabla \cdot (v (\zeta_n\cdot \nabla)\phi)- (\nabla \phi )\cdot [(\zeta_n \cdot \nabla) v]  ,
\end{align}
in $\D'(\T^d)$. Let $A_n\stackrel{{\rm def}}{=}\phi \wt{\pi}_n -\wt{\Pi}_n-\wt{R}_n$ and $Z_n$ the right-hand side of the above equality. From the above, it follows that 
$$
A_n= (\mathrm{Id}-\Delta_{\T^d})^{-1}\big(  A_n-Z_n \big).
$$
From \eqref{eq:gamma_fubini_Bessel} and the ideal property of $\g$-radonifying operators (see e.g., \cite[Theorem 9.1.10]{Analysis2}), the above identity implies 
\begin{align*}
&\|(A_n)_n\|_{H^{2,q_0}(\T^d;\ell^2)}
\lesssim \|(Z_n)_n\|_{L^{q_0}(\T^d;\ell^2)}
+\|( A_n )_n\|_{L^{q_0}(\T^d;\ell^2)}\\
&\quad \lesssim \|(Z_n)_n\|_{L^{q_0}(\T^d;\ell^2)}+  \|(\wt{\pi}_n)_n\|_{L^{q_0}(B;\ell^2)}+\| v\|_{H^{1,q_0}(B)}+\|(G_n)_n\|_{L^{q_0}(B;\ell^2)},
\end{align*}
where we also used the definitions of $\widetilde{\Pi}_n$ and $\widetilde R_n$, see the formulas below \eqref{eq:abstract_spde_localization_w}. 

As $Z_n$ denotes the right-hand side of \eqref{eq:PDE_for_difference_turbulent_pressure}, the claim \eqref{eq:claim_integrability_f0_proof} now follows from  \eqref{eq:smoothing_property_cutoff_div_free} and the assumed regularity in \eqref{eq:local_smoothing_iteration_velocity2}-\eqref{eq:local_smoothing_iteration_velocity4}.

\smallskip

\emph{Step 2: It holds that 
$$
w\in L^r(\O;L^p(I;H^{1,q_1}(\T^d)))\cap L^r(\O;C(\overline{I};L^{q_1}(\T^d)))
$$
with a corresponding estimate.} 
Note that $w_1=w-w_0$ solves the following SPDE:
\begin{align*}
\partial_t w_1 
&= \nu \Delta w_1 -\frac{1}{2} \sum_{n} \p\big[ \nabla \cdot (\q[(\zeta_n \cdot \nabla) w_1]\otimes \zeta_{n})\big]+f_2\\
  &+ \sum_{n} \big(\p[( \zeta_n\cdot\nabla) w_1]+g_{2,n}\big)\,\dot{W}^n,
\end{align*}
together with the initial condition $w_1(t_0-1)=0$, and where
\begin{align*}
f_2&=f_1-\frac{1}{2} \sum_{n} \p\big[ \nabla \cdot (\q[(\zeta_n \cdot \nabla) w_0]\otimes \zeta_{n})\big],\\
g_{2,n}&=g_{1,n}+\p[( \zeta_n\cdot\nabla)w_0 ].
\end{align*}
Now, from Step 1 and \eqref{eq:local_smoothing_iteration_velocity1}, it readily follows that 
\begin{align*}
f_2\in L^r(\O;L^{p}(I;\Hs^{-1,q_1}(\T^d))), \qquad   
(g_{2,n})_n\in  L^r(\O;L^{p}(I;\Ls^{q_1}(\T^d))).
\end{align*}
Now, the above and \cite[Theorem 3.2]{AV21_NS} imply
$$
w_1\in L^r(\O;L^p(I;H^{1,q_1}(\T^d)))\cap L^r(\O;C(\overline{I};B^{1-2/p}_{q_1,p}(\T^d)))
$$
with a corresponding estimate. The claim of Step 2 now follows by combining the above and Step 1. 

We point out that \cite[Theorem 3.2]{AV21_NS} is formulated only with $p$-th moments. However, its proof readily extends to the general case by modifying the proof of Step 1 in \cite[Lemma 3.7]{AV21_NS} by allowing for the $r$-th moment instead. The latter case follows by combining the arguments in \cite[Subsection 3.5]{VP18} and \cite[Corollary 7.4]{NVW13}.

\smallskip

\emph{Step 3: Conclusion.} Recall that $\phi|_{\sm Q}=1$ and
\begin{equation}
\label{eq:decomposition_phiv_w_proof}
\phi v = w + \q [\phi v].
\end{equation}
From Step 2, it remains to note that \eqref{eq:smoothing_property_cutoff_div_free}, by interpolation (see e.g., \cite[Theorem 6.4.5(4)]{BeLo} or \cite[Theorem 14.4.31]{Analysis3}), also implies 
\begin{equation*}
\| \q[\phi v]\|_{B^{1+\sigma}_{t,s}(\T^d)}\lesssim \|v\|_{B^{\sigma}_{t,s}(B)} \ \text{
for all $\sigma\in \R$ and $t,s\in (1,\infty)$}.
\end{equation*}
Hence, recalling \eqref{eq:regularity_w02} and combining \eqref{eq:local_smoothing_iteration_velocity2} with the above and  \eqref{eq:smoothing_property_cutoff_div_free}, we obtain 
\begin{align*}
\q [\phi v]
&\in L^r(\O;L^{2}(I;H^{2,q_0}(\T^d)))\cap L^r(\O;C(\overline{I};H^{1,q_0}(\T^d))),\\
&\stackrel{(i)}{\embed} L^r(\O;L^{p}(I;H^{1+2/p,q_0}(\T^d)))\cap L^r(\O;C(\overline{I};B^{1-2/p}_{q_1,p}(\T^d)))\\
&\stackrel{(ii)}{\embed} L^r(\O;L^{p}(I;H^{1,q_1}(\T^d)))\cap L^r(\O;C(\overline{I};B^{1-2/p}_{q_1,p}(\T^d))),
\end{align*}
where $(i)$ follows by interpolation, and $(ii)$ by Sobolev embeddings using \eqref{eq:improvement_q0_q1_p_local_SMR}, once more, see \eqref{eq:regularity_w01} and \eqref{eq:regularity_w02} for a similar argument. The conclusion now follows by combining \eqref{eq:decomposition_phiv_w_proof}, the above, and the result of Step 2.
\end{proof}

\subsubsection{Local smoothing for the deterministic and stochastic pressures}
As outlined at the beginning of this section, we first prove local smoothing for the pressures, beginning with the stochastic pressure $(\wt{\pi}_{n})_n$. 

\begin{lemma}[Local smoothing for the stochastic pressures -- One step improvement]
\label{l:local_smoothing_iteration_pressure_turbulent}
Let $Q=Q_{\varrho}(t_0,x_0)$ be a parabolic cylinder of side $\varrho\in (0,1]$ and center $(t_0,x_0)\in \R\times \R^d$. 
Suppose that \eqref{eq:smoothness_coefficients_localized_SMR} holds. Fix $r\in (1,\infty)$ and $\sm\in (0,1)$.
Let $q_0\in [2,\infty)$ and $q_1\in (q_0,\infty)$ be such that $q_1-q_0\leq 1/d$. Then, for all local solutions $(v,\pi,(\wt{\pi}_n)_n)$ to the stochastic Stokes system \eqref{eq:turbulent_Stokes_xi_linear} in $Q$ satisfying 
\begin{align}
\label{eq:local_smoothing_iteration_pressuret1}
\|(G_n)_n\|_{\ell^2} &\in L^r(\O;L^{2}(I;L^{q_1}(B))),\\
\label{eq:local_smoothing_iteration_pressuret2}
v&\in L^r(\O;L^{2}(I;H^{1,q_1}(B))),\\
\label{eq:local_smoothing_iteration_pressuret3}
(\wt{\pi}_n)_n&\in L^r(\O;L^{2}(I;H^{1,q_0}(B;\ell^2))),
\end{align}
it holds that
$$
(\wt{\pi}_n)_n\in L^r(\O;L^{2}(I;H^{1,q_1}(\sm B;\ell^2)))
$$
with a corresponding estimate depending only on the norms of $\|(G_n)_n\|_{\ell^2}$, $v$ and $(\wt{\pi}_n)_n$ appearing in \eqref{eq:local_smoothing_iteration_pressuret1}-\eqref{eq:local_smoothing_iteration_pressuret3}. 
\end{lemma}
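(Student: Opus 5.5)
The plan is to localize the elliptic equation for $\wt{\pi}_n$ with a cutoff and then use elliptic regularity on the torus, pointwise in $(t,\omega)$, as in the proofs of Lemma \ref{l:intermediate_estimate_pressure_deterministic} and of \eqref{eq:claim_integrability_f0_proof}. Assume without loss of generality that $x_0=(1/2,\dots,1/2)$, so that $B\subseteq\T^d$. Fix $\xi\in C^\infty_{{\rm c}}(B)$ with $\xi|_{\sm B}=1$ and set $A_n=\xi\wt{\pi}_n$. By Definition \ref{def:local_solutions_NSE}, a.e.\ on $I\times\O$ and in $\D'(\T^d)$,
\begin{equation*}
\Delta A_n=\nabla\cdot\big[\xi(\zeta_n\cdot\nabla)v+\xi G_n\big]-\nabla\xi\cdot\big[(\zeta_n\cdot\nabla)v+G_n\big]+2\nabla\cdot(\wt{\pi}_n\nabla\xi)-\wt{\pi}_n\Delta\xi .
\end{equation*}
Write $Z_n$ for the right-hand side. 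Then $A_n=(\mathrm{Id}-\Delta_{\T^d})^{-1}(A_n-Z_n)$.

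The estimate follows from $H^{-1,q_1}\to H^{1,q_1}$ boundedness of $(\mathrm{Id}-\Delta)^{-1}$ on $\ell^2$-valued spaces, which holds via \eqref{eq:gamma_fubini_Bessel} and the ideal property of $\g$-radonifying operators. One obtains
\begin{equation*}
\|(A_n)_n\|_{H^{1,q_1}(\T^d;\ell^2)}\lesssim \|(A_n)_n\|_{H^{-1,q_1}(\T^d;\ell^2)}+\|(Z_n)_n\|_{H^{-1,q_1}(\T^d;\ell^2)} .
\end{equation*}
The divergence-form terms of $Z_n$ are bounded in $H^{-1,q_1}$ by $\|\xi(\zeta_n\cdot\nabla)v\|_{L^{q_1}(\ell^2)}+\|\xi G_n\|_{L^{q_1}(\ell^2)}+\|\wt{\pi}_n\nabla\xi\|_{L^{q_1}(\ell^2)}$. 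The zero-order terms are bounded via $L^{q_1}\hookrightarrow H^{-1,q_1}$. By \eqref{eq:smoothness_coefficients_localized_SMR}, specifically the bound $\sup\|(\zeta_n)_n\|_{L^\infty(\ell^2)}\le M$, the velocity contributions are controlled by $\|\nabla v\|_{L^{q_1}(B)}$, and the $G$ contributions by $\|(G_n)_n\|_{L^{q_1}(B;\ell^2)}$.

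The only genuine issue is the lower-order terms containing $\wt{\pi}_n$ itself, namely $\wt{\pi}_n\nabla\xi$, $\wt{\pi}_n\Delta\xi$ and $A_n$. These must be placed in $L^{q_1}$, or in $H^{-1,q_1}$, while \eqref{eq:local_smoothing_iteration_pressuret3} only gives $\wt{\pi}_n\in H^{1,q_0}(B;\ell^2)$. This is exactly where the restriction $q_1-q_0\le 1/d$ enters. Since $q_0\ge2$, one has $1/q_0-1/q_1=(q_1-q_0)/(q_0q_1)\le 1/(4d)<1/d$. The Sobolev embedding $H^{1,q_0}(B)\hookrightarrow L^{q_1}(B)$ therefore holds on the (Lipschitz) ball, vector-valued in $\ell^2$ by extension. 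Hence $\|(\wt{\pi}_n\nabla\xi)_n\|_{L^{q_1}(\ell^2)}\lesssim\|(\wt{\pi}_n)_n\|_{H^{1,q_0}(B;\ell^2)}$, and similarly for $A_n$ and $\wt{\pi}_n\Delta\xi$. If one prefers to avoid the embedding for $A_n$, it can instead be handled by the same bound.

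Restricting to $\sm B$, where $\xi=1$, gives, a.e.\ in $t$,
\begin{equation*}
\|(\wt{\pi}_n)_n\|_{H^{1,q_1}(\sm B;\ell^2)}\lesssim \|v\|_{H^{1,q_1}(B)}+\|(G_n)_n\|_{L^{q_1}(B;\ell^2)}+\|(\wt{\pi}_n)_n\|_{H^{1,q_0}(B;\ell^2)} .
\end{equation*}
Taking $L^2(I)$ norms and then $L^r(\O)$ moments yields the claim with the stated dependence.

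Progressive measurability is preserved because the map is linear and deterministic, acting pointwise in time. No time regularity is needed, consistent with the elliptic nature of the pressure. The main point of care is the $\ell^2$-valued elliptic estimate, which \eqref{eq:gamma_fubini_Bessel} handles, together with checking that no derivative of $\zeta_n$ is needed beyond $L^\infty$ boundedness.
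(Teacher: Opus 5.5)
Your proposal is correct and follows essentially the same route as the paper: localize $\wt{\pi}_n$ with a cutoff, write the localized pressure as $(\mathrm{Id}-\Delta_{\T^d})^{-1}$ applied to itself minus its Laplacian, and use the $\ell^2$-valued elliptic estimate from $H^{-1,q_1}$ to $H^{1,q_1}$, needing only the $L^\infty$-bound on $(\zeta_n)_n$. The only cosmetic difference is the commutator term $2\nabla\xi\cdot\nabla\wt{\pi}_n$: you put it in divergence form and use $H^{1,q_0}(B)\hookrightarrow L^{q_1}(B)$, whereas the paper keeps it as is and uses $L^{q_0}(B)\hookrightarrow H^{-1,q_1}(B)$; both embeddings rest on the same condition $q_1-q_0\le 1/d$.
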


\begin{proof}
Recall from Definition \ref{def:local_solutions_NSE} that, a.e.\ on $I\times \O$, 
$$
\Delta \wt{\pi}_n = \nabla \cdot [(\zeta_n \cdot \nabla)v]+ \nabla \cdot G_n \  \ \text{ in }\ \D'(B).
$$
Let $\phi\in C^{\infty}_{{\rm c}}(B)$ be such that $\phi=1$ on $\sm B$ and $\|\phi\|_{W^{2,\infty}(B)}\lesssim_\sm 1$. Then
$$
\Delta(\phi \wt{\pi}_n )=\phi \nabla \cdot [(\zeta_n \cdot \nabla)v] +\phi \nabla \cdot G_n+ 2\nabla \phi \cdot \nabla \wt{\pi}_n + \wt{\pi}_n \Delta \phi.
$$
Arguing as below \eqref{eq:PDE_for_difference_turbulent_pressure}, it holds, a.e.\ on $I\times \O$, that
\begin{align*}
&\|(\phi \wt{\pi}_n)_n\|_{H^{1,q_1}(\T^d;\ell^2)}\\
&\lesssim \|(\phi \nabla \cdot [(\zeta_n \cdot \nabla)v] )_n\|_{H^{-1,q_1}(\T^d;\ell^2)}+\|(\phi \nabla \cdot G_n)_n\|
_{H^{-1,q_1}(\T^d;\ell^2)}+\|(\wt{\pi}_n)_n\|_{L^{q_1}(B;\ell^2)}\\
&\lesssim \|v \|_{H^{1,q_1}(B)}+\|(G_n)_n\|_{L^{q_1}(B;\ell^2)}
+\|(\wt{\pi}_n)_n\|_{H^{1,q_0}(B;\ell^2)},
\end{align*}
where in the last estimate we used \eqref{eq:smoothness_coefficients_localized_SMR} and the Sobolev embedding $L^{q_0}(B;\ell^2)\embed H^{-1,q_1}(B;\ell^2)$ as $q_1-q_0\leq 1/d$. The conclusion follows by taking the $L^r(\O;L^2(\sm I))$-norm in the above and using that $\phi=1$ on $\sm B$.
\end{proof}

The last ingredient in the proof of Theorem \ref{t:localized_SMR} is the following local smoothing result for the deterministic pressure.

\begin{lemma}[Local smoothing for the deterministic pressure --  One step improvement]
\label{l:local_smoothing_iteration_pressure}
Let $Q=Q_{\varrho}(t_0,x_0)$ be a parabolic cylinder of side $\varrho\in (0,1]$ and center $(t_0,x_0)\in \R\times \R^d$. 
Suppose that \eqref{eq:smoothness_coefficients_localized_SMR} holds. Fix $r\in (1,\infty)$ and $\sm\in (0,1)$.
Let $q_0\in [2,\infty)$ and $q_1\in (q_0,\infty)$ be such that $q_1-q_0\leq 1/d$. Then, for all local solutions $(v,\pi,(\wt{\pi}_n)_n)$ to the stochastic Stokes system \eqref{eq:turbulent_Stokes_xi_linear} in $Q$ satisfying 
\begin{align}
\label{eq:local_smoothing_iteration_pressure1}
F &\in L^r(\O;L^{2}(I;L^{q_1}(B))),\\
\label{eq:local_smoothing_iteration_pressure2}
(\wt{\pi}_n)_n&\in L^r(\O;L^{2}(I;H^{1,q_1}(B;\ell^2))),\\
\label{eq:local_smoothing_iteration_pressure3}
\pi &\in L^r(\O;L^{2}(I;L^{q_0}(B))),
\end{align}
it holds that
$$
\pi \in L^r(\O;L^{2}(I;L^{q_1}(\sm B)))
$$
with a corresponding estimate depending only on the norms of $F$, $\pi$ and $(\wt{\pi}_n)_n$ appearing in \eqref{eq:local_smoothing_iteration_pressure1}-\eqref{eq:local_smoothing_iteration_pressure3}. 
\end{lemma}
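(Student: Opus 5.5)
The argument mirrors Lemma \ref{l:local_smoothing_iteration_pressure_turbulent} and the proof of Lemma \ref{l:intermediate_estimate_pressure_deterministic}: localize the elliptic equation for $\pi$ with a cutoff, periodize, and use the resolvent identity on the torus to gain integrability. Fix $\phi\in C^\infty_{{\rm c}}(B)$ with $\phi|_{\sm B}=1$ and $\|\phi\|_{W^{2,\infty}}\lesssim_\sm 1$. Set $\Pi=\phi\pi$ and view it as a distribution on $\T^d_B$. By Definition \ref{def:local_solutions_NSE}, a.e.\ on $I\times\O$,
\begin{equation*}
\Delta \Pi = \phi\,\nabla^2:F-\frac12\phi\,\nabla\cdot\sum_n\nabla\cdot(\nabla\wt{\pi}_n\otimes\zeta_n)+2\nabla\phi\cdot\nabla\pi+\pi\Delta\phi=:Z .
\end{equation*}
Hence $\Pi=(\mathrm{Id}-\Delta_{\T^d})^{-1}(\Pi-Z)$, so it suffices to bound $\Pi-Z$ in $H^{-2,q_1}(\T^d)$.

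The terms are estimated one at a time, commuting $\phi$ through the derivatives. For the $F$ term, write $\phi\nabla^2:F=\nabla^2:(\phi F)-2\nabla\cdot(F\nabla\phi)+F:\nabla^2\phi$. Each piece is controlled in $H^{-2,q_1}$ by $\|F\|_{L^{q_1}(B)}$. The stochastic-pressure term is treated the same way, with $\phi F$ replaced by $\phi\sum_n\nabla\wt{\pi}_n\otimes\zeta_n$. Using \eqref{eq:smoothness_coefficients_localized_SMR}, i.e.\ $(\zeta_n)_n\in W^{1,\infty}(B;\ell^2)$, and Cauchy–Schwarz in $n$, this is bounded by $\|(\wt{\pi}_n)_n\|_{H^{1,q_1}(B;\ell^2)}$. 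The lower-order terms $2\nabla\phi\cdot\nabla\pi=2\nabla\cdot(\pi\nabla\phi)-2\pi\Delta\phi$, $\pi\Delta\phi$ and $\Pi$ itself only involve $\pi$ in $H^{-1,q_1}$ or $H^{-2,q_1}$. Since $q_1-q_0\le 1/d$, the Sobolev embedding $L^{q_0}\embed H^{-1,q_1}$ (used exactly as in Lemma \ref{l:local_smoothing_iteration_pressure_turbulent}) bounds them by $\|\pi\|_{L^{q_0}(B)}$.

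Combining these estimates gives, pointwise a.e.\ in $(t,\om)$,
\begin{equation*}
\|\pi\|_{L^{q_1}(\sm B)}\le\|\Pi\|_{L^{q_1}(\T^d)}\lesssim \|F\|_{L^{q_1}(B)}+\|(\wt{\pi}_n)_n\|_{H^{1,q_1}(B;\ell^2)}+\|\pi\|_{L^{q_0}(B)}.
\end{equation*}
Taking the $L^r(\O;L^2(I))$-norm then concludes via \eqref{eq:local_smoothing_iteration_pressure1}–\eqref{eq:local_smoothing_iteration_pressure3}. The only delicate point is bookkeeping the divergence-form commutators so that $\wt{\pi}_n$ enters through one derivative only, and not through $\nabla^2\wt{\pi}_n$; the $W^{1,\infty}$ regularity of $\zeta_n$ handles this. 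No time regularity is needed, since the estimate is pointwise in time.
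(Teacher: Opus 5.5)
Your proposal is correct and follows essentially the same route as the paper's proof: localize $\pi$ with a cutoff equal to one on $\sm B$, periodize, invert $\mathrm{Id}-\Delta$ on the torus to bound $\|\phi\pi\|_{L^{q_1}}$ by $\|F\|_{L^{q_1}(B)}$, $\|\sum_n\nabla\wt{\pi}_n\otimes\zeta_n\|_{L^{q_1}(B)}$ and a negative-order norm of $\pi$, then use $L^{q_0}\embed H^{-1,q_1}$ (valid since $q_1-q_0\le 1/d$) and take the $L^r(\O;L^2(I))$-norm. Two small corrections to your bookkeeping, neither affecting the argument: the stochastic-pressure term is already in double-divergence form and the commutators fall only on $\phi$, so only the bound $\|(\zeta_n)_n\|_{L^\infty(B;\ell^2)}\le M$ is used, not the $W^{1,\infty}$ regularity; and for non-symmetric $F$ the middle commutator is $-\nabla\cdot(F\nabla\phi)-\nabla\cdot(F^\top\nabla\phi)$ rather than $-2\nabla\cdot(F\nabla\phi)$.
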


\begin{proof}
Fix $\phi\in C^{\infty}_{{\rm c}}(B)$ such that $\phi=1$ on $\sm B$. Since (see Definition \ref{def:local_solutions_NSE})  
$$
\Delta \pi =\nabla^2 : F - \frac{1}{2} \nabla\cdot \sum_{n} \nabla \cdot (\nabla \wt{\pi}_n \otimes \zeta_n).
$$
a.e.\ on $I\times \O$ in $\D'(B)$, we have 
\begin{align*}
\Delta(\phi \pi )
=\phi\nabla^2 : F - \frac{1}{2} \phi\nabla \cdot \sum_{n} \nabla \cdot (\nabla \wt{\pi}_n \otimes \zeta_n)
+
 2\nabla \phi \cdot \nabla \pi + \pi \Delta \phi.
\end{align*}
As in Lemma \ref{l:intermediate_estimate_pressure_deterministic} and in \eqref{eq:PDE_for_difference_turbulent_pressure}, by the maximal $L^{q_1}$-regularity of the Laplacian on the torus, it holds that 
\begin{align*}
\| \pi\|_{L^{q_1}(\sm B)}
\leq 
\|\phi \pi \|_{L^{q_1}(\T^d)}
&\lesssim \|F\|_{L^{q_1}(B)}+ \|\pi\|_{H^{-1,q_1}(B)}+ \Big\|\sum_{n\geq 1} \nabla \wt{\pi}_n \otimes \zeta_n\Big\|_{L^{q_1}(B)},
\end{align*}
a.e.\ on $I\times \O$.
Now, as in the proof of Lemma \ref{l:local_smoothing_iteration_pressure_turbulent}, the embedding $L^{q_0}(B)\embed H^{-1,q_1}(B)$ and the assumption \eqref{eq:smoothness_coefficients_localized_SMR} imply
\begin{align*}
\| \pi \|_{L^{q_1}(\sm B)}
&\lesssim \|F\|_{L^{q_1}(B)}+ \|\pi\|_{H^{-1,q_1}(B)}+ \|(\nabla \wt{\pi}_n )_n\|_{L^{q_1}(B;\ell^2)}\\
&\lesssim \|F\|_{L^{q_1}(B)}+ \|\pi\|_{L^{q_0}(B)}+ \|( \wt{\pi}_n)_n \|_{H^{1,q_1}(B;\ell^2)}
\end{align*}
a.e.\ on $I\times \O$. 
The claim follows by taking $L^r(\O;L^2(I))$-norms in the above.
\end{proof}

\subsubsection{Proof of Theorem \ref{t:localized_SMR}}
\label{sss:proof_localized_smr}
Given Lemmas \ref{l:local_smoothing_iteration_velocity}--\ref{l:local_smoothing_iteration_pressure}, Theorem \ref{t:localized_SMR} follows by a standard iteration argument, exploiting the local smoothing effects previously established on shrinking parabolic cylinders.

\begin{proof}[Proof of Theorem \ref{t:localized_SMR}]
We begin by recalling that, by assumption
\begin{equation}
\label{eq:assumption_iteration_proof}
F,\, \|(G_n)_n\|_{\ell^2} \in L^r_{\Progress}(\O;L^{p}(I;L^{q}(B))).
\end{equation}
Moreover, letting $\sm_0\stackrel{{\rm def}}{=}\frac{1}{2}(1+\sm)>\sm$, Theorem \ref{t:caccioppoli} ensures 
\begin{align}
\label{eq:caccioppoli_sm1}
\left\{
\begin{aligned}
v&\in L^r(\O;L^{2}(\sm_0 I;H^{1}(\sm_0 B)))\cap L^r(\O;C(\sm_0 \overline{I};L^2(\sm_0  B))),\\
\pi&\in L^r(\O;L^{2}(\sm_0 I;L^{2}(\sm_0 B))),\\
(\wt{\pi}_n)_n&\in L^r(\O;L^{2}(\sm_0 I;H^{1}(\sm_0 B;\ell^2))),
\end{aligned}
\right.
\end{align}
with corresponding bounds in terms of the norms
\begin{align}
\label{eq:caccioppoli_sm10}
\max \Big\{ \big(\E \|v\|_{L^2( Q)}^r\big)^{1/r}, & \ 
\big(\E \|\pi\|_{L^2(I;H^{-1}( B))}^r\big)^{1/r},\ 
\big(\E \|( \wt{\pi}_{n} )_{n}\|_{L^{2}( Q;\ell^2)}^r \big)^{1/r}, \ \\
\nonumber
&\ \big(\E \|F \|_{L^p( I;L^q(B))}^r\big)^{1/r},\
\big(\E \|(G_n)_n \|_{L^p(I;L^q( B;\ell^2))}^r \big)^{1/r} \Big\},
\end{align}
as $p,q\in (2,\infty)$.
We now bootstrap further regularity by means of Lemmas \ref{l:local_smoothing_iteration_velocity}-\ref{l:local_smoothing_iteration_pressure} on shrinking parabolic cylinders. To this end, let $K\geq 1$ be an integer such that $2+ \frac{K}{dp}>q$, and fix a collection $(\sm_k)_{k=1}^K$ of numbers satisfying 
$$
\sm=\sm_K<\sm_{K-1}<\dots <\sm_0=\tfrac{1}{2}(1+\sm),
$$ 
e.g., $\sm_k =\sm+ \frac{K-k}{2K}(1-\sm)$. 
Moreover, let $q_0=2$ and $q_k = (q_{k-1}+ \frac{1}{dp})\wedge q$ for $k\in \{1,\dots,K\}$. 
Note that the assumption \eqref{eq:assumption_iteration_proof} and Lemmas \ref{l:local_smoothing_iteration_velocity}-\ref{l:local_smoothing_iteration_pressure} show that 
\begin{align}
\label{eq:assumption_iteration_sm1}
\left\{
\begin{aligned}
v&\in L^r(\O;L^{2}(\sm_{k-1}I;H^{1,q_{k-1}}(\sm_{k-1} B)))\\
&\qquad \qquad \qquad\qquad\cap L^r(\O;C(\sm_{k-1}\overline{I};L^{q_{k-1}}(\sm_{k-1} B))),\\
\pi&\in L^r(\O;L^{2}(\sm_{k-1}I;L^{q_{k-1}}(\sm_{k-1} B))),\\
(\wt{\pi}_n)_n&\in L^r(\O;L^{2}(\sm_{k-1}I;H^{1,q_{k-1}}(\sm_{k-1} B;\ell^2))),
\end{aligned}
\right.
\end{align}
implies 
\begin{align}
\label{eq:assumption_iteration_sm11}
\left\{
\begin{aligned}
v&\in L^r(\O;L^{p}(\sm_{k}I;H^{1,q_{k}}(\sm_k B)))\cap L^r(\O;C(\sm_k\overline{I};B^{1-2/p}_{q_{k},p}(\sm_k B))),\\
\pi&\in L^r(\O;L^{2}(\sm_{k}I;L^{q_{k}}(\sm_k B))),\\
(\wt{\pi}_n)_n&\in L^r(\O;L^{2}(\sm_{k}I;H^{1,q_{k}}(\sm_k B;\ell^2))).
\end{aligned}
\right.
\end{align}
Clearly, \eqref{eq:caccioppoli_sm1} implies \eqref{eq:assumption_iteration_sm1} for $k=1$. 
Now, from the elementary embedding 
$$
B^{1-2/\wh{p}}_{\wh{q},\wh{p}}(B)\embed L^{\wh{q}}(B),
$$ 
which holds for all $\wh{q}\in [2,\infty)$, $\wh{p}\in (2,\infty)$ and balls $B\subseteq \R^d$, the conclusion
\eqref{eq:assumption_iteration_sm11} at step $k$ provides the assumptions in \eqref{eq:assumption_iteration_sm1} at step $k+1$. Hence, by induction, \eqref{eq:assumption_iteration_sm11} holds for every $k\in\{1,\dots,K\}$. Note that the case $k=K$ of \eqref{eq:assumption_iteration_sm11} provides the desired result in Theorem \ref{t:localized_SMR}. Keeping track of the constants in the above iteration and the fact that the initial bound depends only on $(v,\pi, (\wt{\pi}_n)_n,F,G)$ via the norms in \eqref{eq:caccioppoli_sm10}, this completes the proof of \eqref{eq:localized_SMR0}.
\end{proof}

\section{Quantitative localized scaling limits with rescaled coefficients}
\label{s:quantitative_loc_scaling}
In this section, we establish the convergence of local solutions to the stochastic Stokes system with oscillating rescaled coefficients on a parabolic cylinder $2Q$,
\begin{equation}
\label{eq:turbulent_Stokes_scaling_quantitative}
\left\{
\begin{aligned}
\partial_t v^{N,\delta} &=-\nabla \pi^{N,\delta}+ (1+\mu) \Delta v^{N,\delta}+ \nabla \cdot F- c_d \mu \sum_{k,\alpha} \theta_k^N\nabla\cdot  (\nabla \wt{\pi}^{N,\delta}_{k,\alpha}\otimes \sigma^\delta_{-k,\alpha})\\ 
&\qquad\ \  + \sqrt{c_d \mu}\sum_{k,\alpha} [-\nabla \wt{\pi}_{k,\alpha}^{N,\delta} +\theta^{N}_{k}(\sigma^\delta_{k,\alpha}\cdot\nabla) v^{N,\delta}+\theta^{N}_{k}\sigma^\delta_{k,\alpha}\cdot G]\,\dot{W}^{k,\alpha}_t,\\
 \nabla \cdot v^{N,\delta}&=0,\\
\Delta \pi^{N,\delta}
&= \nabla^2 : F -  c_d \mu \nabla \cdot \Big[\sum_{k,\alpha}
\theta_k^N \nabla \cdot (\nabla \wt{\pi}_{k,\alpha}^{N,\delta}\otimes \sigma^\delta_{-k,\alpha})\Big] ,\\
\Delta \wt{\pi}_{k,\alpha}^{N,\delta}
&=\theta^{N}_{k} \nabla \cdot \big[(\sigma^\delta_{k,\alpha}\cdot \nabla)v^{N,\delta}+\sigma^\delta_{k,\alpha}\cdot G\big].
 \end{aligned}
\right.
\end{equation}
where $\theta^N=(\theta_k^N)_{k}$ is as in \eqref{eq:choice_thetaN}, and $\sigma^{\delta}_{k,\alpha}$ denotes the rescaled coefficients around the spatial center of $Q$, i.e.,
$$
\sigma^\delta_{k,\alpha}(x)= \sigma_{k,\alpha}(x_0+ \delta (x-x_0)) \quad \text{ for } \delta\in (0,1] \ \text{ and }\ x\in 2B;
$$ 
towards its \emph{homogenized counterpart} $(\uh^{N,\delta},\ph^{N,\delta})$ on the smaller cylinder $Q=I\times B$,
\begin{equation}
\label{eq:homogenized_PDE_linear}
\left\{
\begin{aligned}
\partial_t \uh^{N,\delta}&=-\nabla \ph^{N,\delta}+ (1+D_d \mu) \Delta \uh^{N,\delta}\\
&\qquad\qquad\qquad+  \nabla \cdot (F-(1-D_d) \mu\, G^\top)  & \quad \text{ on }&Q,\\
 \nabla \cdot \uh^{N,\delta}&=0 & \text{ on }&Q, \\
\uh^{N,\delta}&= v^{N,\delta} -\fint_{B} v^{N,\delta}& \text{ on }&\partial_{\pb} Q, \\
\int_{B} \ph^{N,\delta} \varphi
&=0& \text{ on }&I.
\end{aligned}
\right.
\end{equation}
In the above, $\partial_{\pb}$ denotes the parabolic boundary of $Q$, $\varphi\in C_{{\rm c}}^\infty(B)$ is a given function such that $\int_{B} \varphi=1$, and 
$$
D_d \stackrel{{\rm def}}{=}\frac{d^2-3}{(d-1)(d+2)}\in (0,1).
$$
The condition $\int_{B}\ph^{N,\delta}\varphi=0$ on $I$ will be understood in a distributional sense, and is a standard normalization condition to ensure uniqueness for the pressure.
Note that $D_3=\frac{3}{5}$ and $D_2 =\frac{1}{4}$ correspond to the values already obtained in \cite{FL19,L23_enhanced}.

\smallskip

Before stating the main result of this section, we first discuss the well-posedness of \eqref{eq:homogenized_PDE_linear}. 
Since $(v^{N,\delta},\pi^{N,\delta},(\wt{\pi}^{N,\delta}_{k,\alpha})_{k,\alpha})$ is a local solution to the oscillating stochastic Stokes system \eqref{eq:turbulent_Stokes_scaling_quantitative}, it has limited time regularity. Thus, the problem cannot be recast in the usual variational setting (see e.g., \cite[Theorem 10.9]{B11}). However, recalling  
\begin{equation}
\label{eq:interpolation_and_boundary_uNdelta}
v^{N,\delta} \in  L^2(I;H^1(B)),
\end{equation}
and the boundedness of the trace operator $H^{1}(B) \ni f \mapsto f|_{\partial B}\in L^2(\partial B)$ (see e.g., \cite[Proposition 4.5, Chapter 4]{TayPDE1}), solutions to \eqref{eq:homogenized_PDE_linear} can be defined via the transposition method of Lions-Magenes \cite{LM1_book,LM2_book}, see Definition \ref{def:solution_homogenized} and Remark \ref{r:definition_homogenized}.
From the latter notion, we obtain the following well-posedness result for \eqref{eq:homogenized_PDE_linear}.

\begin{proposition}[Well-posedness of the homogenized Stokes system]
\label{prop:local_well_posedness_homogenized_SPDE}
Let $\mu>0$ and $r\in (1,\infty)$. Let $Q$ be a parabolic cylinder with side length $\varrho\in (0,1/4]$ and center $(t_0,x_0)\in \R_+\times \R^d$.
Assume that 
\begin{equation}
\label{eq:assumption_basic_wellposedness_homogenized}
F,G \in L^r_{\Progress}(\O;L^2(2Q;\R^{d\times d})).
\end{equation}
Let $(v^{N,\delta},\pi^{N,\delta},(\wt{\pi}_{k,\alpha}^{N,\delta})_{k,\alpha})$ be a local solution to the oscillating stochastic Stokes system \eqref{eq:turbulent_Stokes_scaling_quantitative} on $2Q$ for some $N\geq 1$ and $\delta\in (0,1]$ (see Definition \ref{def:local_solutions_NSE}). 
Then there exists a unique solution $(\uh^{N,\delta},\ph^{N,\delta})$ to the homogenized Stokes system \eqref{eq:homogenized_PDE_linear} in the transposition sense (see Definition \ref{def:solution_homogenized} below). Moreover 
\begin{align}
\nonumber
&\big(\E\|\uh^{N,\delta}\|_{L^\infty(I;L^2(B))}^r \big)^{1/r}
+
\big(\E\|\nabla \uh^{N,\delta}\|_{L^2(Q)}^r\big)^{1/r}+ \big(\E\|\ph^{N,\delta}\|_{H^{-1}(I;H^{-1}(B))}^r\big)^{1/r}\\
\nonumber
&\qquad\qquad \lesssim
\Big(\E\Big\|v^{N,\delta}-\fint_{B} v^{N,\delta}\Big\|_{L^\infty(I;L^2(B))}^r\Big)^{1/r} +
\big(\E\|\nabla v^{N,\delta}\|_{L^2(Q)}^r\big)^{1/r}\\ 
&\qquad\qquad
+\big(\E \|(\nabla \wt{\pi}_{k,\alpha}^{N,\delta})_{k,\alpha}\|_{L^2(Q;\ell^2)}^r\big)^{1/r}+ \big(\E\|F\|_{L^2(Q)}^r\big)^{1/r}+\big(\E\|G\|_{L^2(Q)}^r\big)^{1/r}, 
\label{eq:local_well_posedness_homogenized_SPDE_estimate}
\end{align}
where the implicit constant depends only on $\mu,r,\varrho$ and $d$. If, in addition to the above assumptions, it holds that $F,G \in L^r(\O;L^p(2I;L^2(2B)))$ for some $p\in (2,\infty)$, then 
\begin{align}
\nonumber
\big(\E\|\ph^{N,\delta}\|_{H^{-1,p}(I;H^{-1}(B))}^r\big)^{1/r}
 \lesssim
\Big(\E\Big\|v^{N,\delta}-\fint_{B} v^{N,\delta}\Big\|_{L^\infty(I;L^2(B))}^r\Big)^{1/r} +
\big(\E\|\nabla v^{N,\delta}\|_{L^2(Q)}^r\big)^{1/r}&\\ 
+\big(\E \|(\nabla \wt{\pi}_{k,\alpha}^{N,\delta})_{k,\alpha}\|_{L^2(Q;\ell^2)}^r\big)^{1/r}+ \big(\E\|F\|_{L^p(I;L^2(B))}^r\big)^{1/r}+\big(\E\|G\|_{L^p(I;L^2(B))}^r\big)^{1/r}&, 
\label{eq:local_well_posedness_homogenized_SPDE_estimate2}
\end{align}
where the implicit constant depends only on $\mu,p,r,\varrho$ and $d$. 
\end{proposition}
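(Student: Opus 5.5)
The plan is to reduce \eqref{eq:homogenized_PDE_linear} to a Stokes problem with homogeneous parabolic boundary data by subtracting a lifting built from $v^{N,\delta}$ itself. The pathwise regularity of $v^{N,\delta}$ is too weak for a variational lifting, so the reduced problem will be solved, and its pressure recovered, by duality, as in the transposition method of Lions--Magenes.

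\emph{Step 1: lifting.} Set $h=v^{N,\delta}-\fint_B v^{N,\delta}$. By Definition \ref{def:local_solutions_NSE}, $h\in C(\overline I;L^2(B))\cap L^2(I;H^1(B))$ and $\nabla\cdot h=0$. By \eqref{eq:integral_identity_v_Caccioppoli}, subtracting the time-dependent spatial constant $\fint_B v^{N,\delta}$,
\[
h(t)=h(t_0-\varrho^2)+\int_{t_0-\varrho^2}^t D(s)\,\dd s+M(t)-c(t),
\]
where the pieces are as follows.
\begin{itemize}
\item $D\in L^2(I;H^{-1}(B))$ collects $-\nabla\pi^{N,\delta}$, $(1+\mu)\Delta v^{N,\delta}$, $\nabla\cdot F$ and the It\^o--Stratonovich corrector.
\item $M$ is the $L^2(B)$-valued stochastic integral.
\item $c(t)$ is a spatial constant.
\end{itemize}
Gradient terms in $D$ (among them $-\nabla\pi^{N,\delta}$) are invisible against divergence-free test fields, and constants are invisible against test fields $\psi$ with zero mean. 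By Burkholder--Davis--Gundy and \eqref{eq:ass_caccioppoli1},
\[
\Big(\E\sup_I\|M\|_{L^2(B)}^r\Big)^{1/r}\lesssim \Big(\E\|\nabla v^{N,\delta}\|_{L^2(Q)}^r\Big)^{1/r}+\Big(\E\|(\nabla\wt\pi^{N,\delta}_{k,\alpha})_{k,\alpha}\|_{L^2(Q;\ell^2)}^r\Big)^{1/r}+\Big(\E\|G\|_{L^2(Q)}^r\Big)^{1/r}.
\]
These are exactly the quantities on the right-hand side of \eqref{eq:local_well_posedness_homogenized_SPDE_estimate}. The point is that $\partial_t h$ is not in $L^2(I;H^{-1})$, only in $L^2(I;H^{-1})+H^{-1}(I;L^2)$, since the martingale part is merely continuous in time.

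\emph{Step 2: transposition formulation and existence.} For $g\in C^\infty_{\rm c}(Q;\R^d)$, let $(\psi,\kappa)$ solve the backward Stokes problem
\[
-\partial_t\psi-(1+D_d\mu)\Delta\psi+\nabla\kappa=g,\qquad \nabla\cdot\psi=0,
\]
with $\psi=0$ on $\partial B$ and at the final time $t_0$. Maximal $L^2$-regularity for the Stokes operator on the ball gives
\[
\psi\in H^1(I;L^2(B))\cap L^2(I;H^2(B)),\qquad \kappa\in L^2(I;H^1(B)),
\]
and the trace bound $\|\partial_\nu\psi\|_{L^2(I;L^2(\partial B))}\lesssim\|g\|_{L^2(Q)}$, which is paired with the boundary trace of $h$ (in $L^2(I;L^2(\partial B))$ by \eqref{eq:interpolation_and_boundary_uNdelta}). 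I then define $\uh^{N,\delta}$ by the identity that Green's formula yields for smooth solutions: $\int_Q\uh^{N,\delta}\cdot g$ equals
\begin{itemize}
\item the boundary pairings of $h$ with $(1+D_d\mu)\partial_\nu\psi-\kappa\nu$ on $I\times\partial B$,
\item plus the pairing of the initial trace $h(t_0-\varrho^2)$ with $\psi(t_0-\varrho^2)$,
\item plus the pairing of $\nabla\cdot(F-(1-D_d)\mu G^\top)$ with $\psi$.
\end{itemize}
Each term is bounded, pathwise, by the right-hand side of \eqref{eq:local_well_posedness_homogenized_SPDE_estimate} times $\|g\|$. Taking $g\in L^2(I;H^{-1}(B))$, or $g=\partial_t\cdot$ of an $L^1(I;L^2(B))$ field, yields $\uh^{N,\delta}\in L^2(I;H^1(B))\cap L^\infty(I;L^2(B))$. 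For the $L^\infty(I;L^2(B))$ bound I would write $\uh^{N,\delta}=h+z$ and estimate $z$ by the energy method applied to the equation with the time-regular part $D$ treated variationally, while the martingale part $M$ is handled through the $H^1(I;L^2)$ regularity of $\psi$ in the duality. Uniqueness is immediate from density of the admissible $g$. Progressive measurability is inherited because all maps involved are deterministic and linear in the data.

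\emph{Step 3: pressure.} Once $\uh^{N,\delta}$ is known, the distribution
\[
\partial_t\uh^{N,\delta}-(1+D_d\mu)\Delta\uh^{N,\delta}-\nabla\cdot\big(F-(1-D_d)\mu G^\top\big)
\]
annihilates divergence-free test fields. By de Rham's theorem in the form $H^{-1}(B)\ni f\perp\{\text{div-free}\}\Rightarrow f=\nabla\ph$ with $\ph\in L^2(B)$, applied after integrating in time once, it is $-\nabla\ph^{N,\delta}$ for some $\ph^{N,\delta}$ normalised by $\int_B\ph^{N,\delta}\varphi=0$. Because $\partial_t\uh^{N,\delta}$ is only in $H^{-1}(I;H^{-1}(B))$ (from $\uh^{N,\delta}\in L^\infty(I;L^2(B))$), the pressure lies in $H^{-1}(I;H^{-1}(B))$ with the stated bound. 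If in addition $F,G\in L^p(I;L^2(B))$, the same primitive-in-time argument, namely writing $\ph=\partial_t\Phi$ with $\Phi\in L^p(I;\cdot)$, gives $H^{-1,p}(I;H^{-1}(B))$ and hence \eqref{eq:local_well_posedness_homogenized_SPDE_estimate2}. Finally, I take $r$-th moments of the pathwise bounds and use the BDG bound of Step 1.

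The main obstacle is Step 2. One has to make the transposition identity consistent with the genuinely stochastic, non-$L^2$-in-time behaviour of the boundary datum $h$, and show that the resulting $\uh^{N,\delta}$ is bounded in $L^\infty(I;L^2(B))$ rather than only in a negative space. I would first try to treat the martingale $M$ as an $L^2(B)$-valued continuous path: it enters the duality only through $\int_I\langle M,\partial_t\psi\rangle$, which is controlled by $\sup_I\|M\|_{L^2}\,\|\partial_t\psi\|_{L^1(I;L^2)}$.
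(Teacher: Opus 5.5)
There is a genuine gap in Step 2, at the point where you upgrade $\uh^{N,\delta}$ from $L^2(Q)$ to $L^\infty(I;L^2(B))\cap L^2(I;H^1(B))$.

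\textbf{The transposition identity cannot be tested with the classes you name.} For $g\in L^2(I;H^{-1}(B))$ the backward Stokes solution $\psi$ is only in $L^2(I;H^1_0(B))$. For $g\in L^1(I;L^2(B))$ it is only in $C(\overline I;L^2(B))$. In neither case do $\partial_\nu\psi$ and $\kappa$ have a trace in $L^2(I\times\partial B)$, so the boundary pairing with $h$ is not even defined. This is structural, not technical. A transposition solution with rough boundary data gains only about half a derivative. The trace $h|_{I\times\partial B}$ is better in space, but it lacks the time regularity of the parabolic trace space, because $\partial_t h$ contains $\dot M$. So any gain must use the equation solved by $h$, i.e.\ it must be proved for $z=\uh^{N,\delta}-h$, which has zero parabolic boundary data.

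\textbf{Your pathwise treatment of the martingale in the equation for $z$ does not close.} The bound $|\int_I\langle M,\partial_t\psi\rangle|\le\sup_I\|M\|_{L^2}\|\partial_t\psi\|_{L^1(I;L^2)}$ is usable only for $g\in L^2(Q)$, and hence yields only $z\in L^2(Q)$. The predual of $L^\infty(I;L^2)$ is $L^1(I;L^2)$, and for such $g$ there is no estimate $\|\partial_t\psi\|_{L^1(I;L^2)}\lesssim\|g\|_{L^1(I;L^2)}$, since maximal $L^1$-regularity fails. For $g\in L^2(I;H^{-1})$ one only has $\partial_t\psi\in L^2(I;H^{-1})$. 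But $M$ is merely in $C(\overline I;L^2(B))$ and does not vanish on $\partial B$, so the pairing is undefined. A stochastic convolution with integrand in $L^2(Q;\ell^2)$ does lie in $L^\infty(I;L^2)\cap L^2(I;H^1_0)$, but this is a probabilistic effect (It\^o isometry, stochastic maximal $L^2$-regularity). It holds in $L^r(\O)$, not path by path. It cannot be recovered by taking a supremum over $g$ inside a pathwise duality.

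\textbf{The missing idea is to construct $z$ directly as an SPDE solution and only afterwards identify it.}
\begin{enumerate}
\item The paper writes the equation for $\uh^{N,\delta}-h$ with zero boundary data. It has a deterministic error $\Errd\in L^2(I;H^{-1}(B))$ and a stochastic error $\Errs\in L^2(Q;\ell^2)$, both controlled by $\nabla v^{N,\delta}$, $(\nabla\wt\pi^{N,\delta}_{k,\alpha})_{k,\alpha}$ and $G$ (Lemma \ref{l:error_uniform_bound}).
\item It removes the pressures with the extrapolated Helmholtz projection $\p_{-1/2,B}$ and defines $z^{N,\delta}$ as the stochastic convolution \eqref{eq:definition_vN_rates} of the Stokes semigroup on $\X^A_{-1/2}(B)$.
\item Stochastic maximal regularity then gives the $L^r(\O)$ bounds of Lemma \ref{l:regularity_v_u}.
\item The identification $z^{N,\delta}=\uh^{N,\delta}-h$ is proved by applying It\^o's formula to $t\mapsto\langle z^{N,\delta},\psi(t)\rangle$ and $t\mapsto\langle v^{N,\delta},\psi(t)\rangle$. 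Here $\Phi\in C^\infty_{\rm c}(I;L^2(B))$, so that $\psi\in C^\infty(\overline I;\Do(A))$. One adds the identity for the spatial mean and compares with the transposition formula.
\end{enumerate}
It\^o's formula cannot be applied to $\|z\|_{L^2}^2$ directly, since a priori $z$ is only in $L^2(Q)$.

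Your Steps 1 and 3 fit this scheme. Once $\uh^{N,\delta}\in L^\infty(I;L^2(B))$ is known, the pressure bounds follow as you describe; the paper uses the Bogovskii operator in place of de Rham.
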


From the stochastic Caccioppoli inequality of Theorem \ref{t:caccioppoli}, the right-hand side of \eqref{eq:local_well_posedness_homogenized_SPDE_estimate} can be further estimated by the sharp energy $\Eno^{\#}_{2Q}$ of the local solution $(v^{N,\delta},\pi^{N,\delta},(\wt{\pi}^{N,\delta}_{k,\alpha})_{k,\alpha})$ on $2Q$ defined in \eqref{eq:def_En_Q_caccioppoli}, which contains lower-order norms of this local solution than those appearing in \eqref{eq:local_well_posedness_homogenized_SPDE_estimate}. A similar remark holds for \eqref{eq:local_well_posedness_homogenized_SPDE_estimate2}.

\smallskip

The following is the main result of this section.
Recall that, for a ball $B$ with center $x_0\in \R^d$, we set $\T^d_{B}= x_0+[-1/2,1/2)^d$ with periodic boundary conditions.

\begin{theorem}[Quantitative localized scaling limit]
\label{t:universal_scaling_limit}
Let $Q=I\times B$ be a parabolic cylinder with side length $\varrho\leq 1/4$ and center $(t_0,x_0)\in \R_+\times \R^d$. Suppose that \eqref{eq:assumption_basic_wellposedness_homogenized} holds and that, a.e.\ on $I\times \O$,
\begin{equation}
\label{eq:assumption_G_trace_zero_quantiative}
\Tr(G)=0\text{ on }2B, \quad \text{ and }\quad \nabla \cdot G =0\text{ in }\D'(2B).
\end{equation}
Fix $p_0,p_1,r\in [2,\infty)$ and $ \sigma>0$. Let $p_2\in [2,\infty)$ satisfy either $p_2>p_1$ or $p_2=p_1=2$. 
Let  $\chi\in C^{\infty}_{{\rm c}}(2B)$ be a cutoff function such that $\chi=1$ on $\kappa B$ for some $\kappa\in (1,2)$.
Then there exist constants $C,\g>0$ depending only on $\varrho, p_0,p_1,p_2,r,\sigma,d,\mu$ and $\chi$ such that for any $N\geq 1$ and $\delta\in (0,1]$ satisfying $\delta N\geq 1$, the following estimates hold: 
\begin{align*}
\Big(\E\Big\| \uh^{N,\delta}-\Big(v^{N,\delta}-\fint_B v^{N,\delta}\Big)\Big\|_{L^{p_0}(I;L^2(B))}^r \Big)^{1/r}
&\leq \frac{C}{(\delta N)^{\g}} \wt{\Eno}^{\#}_{Q},\\
\big(\E\|\nabla \Pi^{N,\delta}-\nabla \Pi^0\|_{L^2(I;H^{-1-\sigma}(\T^d_B))}^r\big)^{1/r} 
&\leq \frac{C}{(\delta N)^{\g}} \wt{\Eno}^{\#}_{Q},\\
\big(\E\|(\nabla\wt{\Pi}_{k,\alpha}^{N,\delta})_{k,\alpha}\|_{L^{p_1}(I;H^{-1}(\T^d_B;\ell^2))}^r\big)^{1/r} 
&\leq \frac{C}{(\delta N)^{\g}} \wt{\Eno}^{\#}_{Q},
\end{align*}
where $(v^{N,\delta},\pi^{N,\delta},(\wt{\pi}^{N,\delta}_{k,\alpha})_{k,\alpha})$ and $(\uh^{N,\delta},\ph^{N,\delta})$ are the local solutions to the stochastic and homogenized Stokes systems \eqref{eq:turbulent_Stokes_scaling_quantitative} and \eqref{eq:homogenized_PDE_linear}, respectively, and $\wt{\Eno}^{\#}_{Q}$ is the sharp modified energy of the local solution $(v^{N,\delta},\pi^{N,\delta},(\wt{\pi}^{N,\delta}_{k,\alpha})_{k,\alpha})$ on $2Q$ given by
\begin{align}
\label{eq:modified_sharp_energy_convergence_rate}
\wt{\Eno}^{\#}_{Q}&\stackrel{{\rm def}}{=}
\Big(\E\Big\|v^{N,\delta}-\fint_{2B}v^{N,\delta}\Big\|_{L^2(2Q)}^r \Big)^{1/r}
+ \big(\E \| \pi^{N,\delta}\|_{L^2(2I;H^{-1}(2B))}^r\big)^{1/r}\\
\nonumber
 &+ \big(\E \|(\wt{\pi}_{k,\alpha}^{N,\delta})_{k,\alpha}\|_{L^2(2Q;\ell^2)}^r\big)^{1/r}+\big(\E\|F\|_{L^2(2Q)}^r\big)^{1/r}
+\big(\E\|G\|_{L^{p_2}(2I;L^2(2B))}^r\big)^{1/r},
\end{align}
and $\Pi^{N,\delta}$, $\wt{\Pi}^{N,\delta}_{k,\alpha}$, and $\Pi^0$ denote the local approximation of the deterministic, stochastic local, and homogenized pressures, respectively, i.e.,
\begin{align}
\label{eq:approximated_pressure_1_quantitative_convergence1}
\Pi^{N,\delta}&\stackrel{{\rm def}}{=}\qq_{\T^d_B} \Big[\chi\big[ \nabla \cdot (F+(1-D_d)\mu \nabla v^{N,\delta})\\
&\qquad \qquad \qquad \qquad 
\nonumber
- c_d \mu \sum_{k,\alpha} \theta_k^N\nabla\cdot  (\nabla \wt{\pi}^{N,\delta}_{k,\alpha}\otimes \sigma^\delta_{-k,\alpha} \big)\big]\Big],\\
\label{eq:approximated_pressure_1_quantitative_convergence2}
\wt{\Pi}_{k,\alpha}^{N,\delta}
&\stackrel{{\rm def}}{=}\theta^N_k \qq_{\T^d_B} \Big[\chi\big((\sigma^{\delta}_{k,\alpha}\cdot \nabla)v^{N,\delta} + \sigma^{\delta}_{k,\alpha} \cdot G\big)\Big],\\
\label{eq:approximated_pressure_1_quantitative_convergence3}
\Pi^0&\stackrel{{\rm def}}{=} \qq_{\T^d_B} \Big[\chi\big( \nabla \cdot ( F-(1-D_d)\mu \,G^\top)\big)\Big],
\end{align}
where $\qq_{\T^d_B}$ is as in Subsection \ref{ss:Helmh_div_free}.
\end{theorem}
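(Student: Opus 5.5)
We prove the three estimates in reverse order, because the stochastic pressure estimate feeds the others. Throughout, the right-hand sides will be controlled by $\wt{\Eno}^{\#}_Q$ through the stochastic Caccioppoli inequality (Theorem \ref{t:caccioppoli}, applied on $2Q$ with $\sm=\kappa/2$). This gives uniform-in-$(N,\delta)$ control of
\begin{equation*}
\sup_{\kappa I}\Big\|v^{N,\delta}-\fint v^{N,\delta}\Big\|_{L^2},\qquad \nabla v^{N,\delta},\qquad \nabla\wt{\pi}^{N,\delta}_{k,\alpha},\qquad \pi^{N,\delta}
\end{equation*}
on $\kappa Q$, with no dependence on the oscillation parameters. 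The only source of decay is the Bessel-type inequality (Lemma \ref{l:Bessel_property}). Combined with $\|\theta^N\|_{\ell^\infty}\lesssim N^{-d/2}$ for \eqref{eq:choice_thetaN}, it gives, for any $f\in L^2$ supported in $2B$,
\begin{equation*}
\sum_{k,\alpha}(\theta^N_k)^2\Big|\int \sigma^\delta_{k,\alpha}\cdot f\Big|^2\lesssim (\delta N)^{-d}\|f\|_{L^2}^2.
\end{equation*}

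\emph{Stochastic pressure.} Testing $\nabla\wt{\Pi}^{N,\delta}_{k,\alpha}=\theta^N_k\q_{\T^d_B}[\chi(\ldots)]$ against the Fourier/eigenbasis of $\T^d_B$ and weighting by $\lambda_j^{-1}$, the $H^{-1}$ norm becomes $\sum_j\lambda_j^{-1}\sum_{k,\alpha}(\theta^N_k)^2|\int\sigma^\delta_{k,\alpha}\cdot f_j|^2$. Here $f_j$ is built from $\chi\nabla v^{N,\delta}$ (respectively $\chi G$) and the $j$-th basis function. We split into low modes $\lambda_j\le \Lambda$ and high modes. The low modes use the Bessel bound, costing a polynomial in $\Lambda$ times $(\delta N)^{-d}$. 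The high modes use only $\ell^2$-boundedness and give $\Lambda^{-1/2}$. Optimizing $\Lambda$ as a power of $\delta N$ yields a rate $(\delta N)^{-\g}$ in $L^2(I)$. For $p_1>2$ we interpolate with an $L^{\infty}(I)$-type bound, which is available since $\chi v^{N,\delta}\in C(L^2)$ and $G\in L^{p_2}$ with $p_2>p_1$. Since $\nabla\cdot G=0$, the $G$-term is handled identically.

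\emph{Deterministic pressure.} Expand $\nabla\Pi^{N,\delta}-\nabla\Pi^0$. Substituting $\wt{\pi}=\wt{\Pi}+(\wt{\pi}-\wt{\Pi})$, the difference is harmonic on $\kappa B$. Its contribution $\sum\theta^N_k\nabla\cdot(\nabla[\wt{\pi}-\wt{\Pi}]\otimes\sigma^\delta_{-k,\alpha})$ is then a smooth field paired with oscillating coefficients. It decays in $H^{-1-\sigma}$ by the same low/high mode splitting. Interior derivative bounds for the harmonic difference come from Caccioppoli.

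The genuinely hard part is the local term. It is a double sum $\sum_{k,\alpha}\theta^N_k\,\sigma^\delta_{-k,\alpha}\otimes\nabla\q[\chi\theta^N_k\sigma^\delta_{k,\alpha}\cdot H]$, with $H=[\nabla v^{N,\delta}]^\top+G$. We must show it equals $(1-D_d)\mu$ times the local operator $\nabla\cdot(\nabla v^{N,\delta}-G^\top)$, up to an error that decays in $L^2(I;H^{-1-\sigma})$. Following the appendix representation \eqref{eq:representation_Ito_Strat_intro}, we write the pairing with a test field $\Phi$ as a principal-value integral of the Calder\'on--Zygmund kernel $\Gamma$ against the rescaled covariance $\Cov^{N,\delta}(x-y)$. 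The near-diagonal part, after the change of variables $x-y=z/(\delta N)$, converges to the constant tensor $\int\Gamma(z)\,\widehat{\Cov}(z)$. Here $\widehat{\Cov}(z)$ is the limiting profile of the rescaled covariance $\Cov^{N,\delta}(z/(\delta N))$. Evaluating this tensor via isotropy and \eqref{eq:symmetric_equal_diagonal_matrix} gives $D_d$. The trace-free condition \eqref{eq:assumption_G_trace_zero_quantiative} removes the $\delta_{ij}/d$ part. The off-diagonal part uses the decay of $\Cov^{N,\delta}$ away from $x=y$ (Subsection \ref{ss:decay_covariance}), which is quantitative in $\delta N$. We expect this step to be the main obstacle. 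It needs a careful truncation of the singular kernel at a scale $(\delta N)^{-\beta}$, with $\beta<1$ tuned to balance the kernel singularity against the covariance decay. The test functions are taken in $H^{1+\sigma}$ so that $\Phi$ is H\"older and the near-diagonal freezing costs a power of the truncation scale.

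\emph{Velocity.} Set $w=\uh^{N,\delta}-(v^{N,\delta}-\fint_B v^{N,\delta})$ on $Q$. It solves \eqref{eq:turbulent_Stokes_scaling_quantitative_vdifference_formal_intro} with zero parabolic boundary data in the transposition sense. Its drift is $\Errd$, which the pressure steps above show decays in $L^2(I;H^{-1-\sigma})$ after subtracting gradients. Its noise is $\Errs$, which the Bessel computation shows decays in $L^2(I;\X^{\Dir}_{-\g})$. Dirichlet Stokes maximal regularity in the extrapolated scale (Proposition \ref{prop:error_estimates_operators}), together with BDG for the stochastic convolution, gives $\|w\|_{L^{p_0}(I;\X^{\Dir}_{-\g'})}\lesssim(\delta N)^{-\g}\wt{\Eno}^{\#}_Q$. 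We then interpolate with the uniform bounds $w\in L^\infty(I;L^2)\cap L^2(I;H^1_0)$, which follow from Proposition \ref{prop:local_well_posedness_homogenized_SPDE} and Caccioppoli. This trades the negative regularity for $L^{p_0}(I;L^2(B))$ at a smaller positive rate $\g$.
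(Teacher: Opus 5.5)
The skeleton matches the paper's proof. You use stochastic Caccioppoli on $2Q$ for all uniform bounds and Lemma \ref{l:Bessel_property} as the source of decay for the stochastic terms. You split $\wt{\pi}^{N,\delta}_{k,\alpha}=\wt{\Pi}^{N,\delta}_{k,\alpha}+(\wt{\pi}^{N,\delta}_{k,\alpha}-\wt{\Pi}^{N,\delta}_{k,\alpha})$ into a local It\^o--Stratonovich corrector plus a harmonic remainder, represent the former via kernel and covariance, and finish with extrapolated maximal regularity for $w$ and interpolation against uniform bounds.

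Your Fourier low/high splitting on $\T^d_B$ for $\nabla\wt{\Pi}^{N,\delta}_{k,\alpha}$ is a legitimate substitute for the Dirichlet-eigenfunction argument behind Proposition \ref{prop:error_estimates_operators}\eqref{it:error_estimates_operators1}. It works precisely because the tested function $\chi e_{-j}\nabla v^{N,\delta}_i$ (resp.\ $\chi e_{-j}G$) does not depend on $(k,\alpha)$. The condition $\nabla\cdot G=0$ plays no role there. The $L^{p_1}(I)$ upgrade uses $(\sigma^\delta_{k,\alpha}\cdot\nabla)v^{N,\delta}=\nabla\cdot((v^{N,\delta}-c)\otimes\sigma^\delta_{k,\alpha})$, as in \eqref{eq:trick_adding_average_local_pressure}.

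However, the two steps carrying the real content are not right as written.

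First, the local-to-global term. The $(k,\alpha)$-independence is exactly what fails there, so ``the same low/high splitting'' gives nothing. Pairing $\mathfrak{G}^{N,\delta}(h)$ with $\varphi$ produces $\sum_{k,\alpha}\theta^N_k\int\sigma^\delta_{-k,\alpha}\cdot F_{k,\alpha}$ with $F_{k,\alpha}=[\nabla\varphi]^\top\nabla h_{k,\alpha}$ depending on $k$, and Bessel does not apply. The decay must come from three ingredients:
\begin{itemize}
\item the smoothness of $h_{k,\alpha}$: via \eqref{eq:Caccioppoli_harmonic_functions}, $\sum_{k,\alpha}\|F_{k,\alpha}\|_{H^1(B)}^2\lesssim\|\varphi\|_{C^2}^2\|(\nabla h_{k,\alpha})_{k,\alpha}\|_{L^2(\kappa B;\ell^2)}^2$;
\item the bound $\|e^{2\pi\i\delta k\cdot x}\|_{H^{-\g}(B)}\lesssim(\delta|k|)^{-\g}$ for $\g<1/2$;
\item Cauchy--Schwarz in $k$ with $\|\theta^N\|_{\ell^2}=1$.
\end{itemize}
Also, $\wt{\pi}^{N,\delta}_{k,\alpha}-\wt{\Pi}^{N,\delta}_{k,\alpha}$ is harmonic only where $\chi=1$, while $\nabla\Pi^{N,\delta}-\nabla\Pi^0$ carries the factor $\chi$ and lives on $\supp\chi$. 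For the deterministic pressure, do the splitting with an auxiliary cutoff equal to one on a neighbourhood of $\supp\chi$. This is legitimate, since $\Pi^{N,\delta}$ involves only the true $\wt{\pi}^{N,\delta}_{k,\alpha}$. Likewise, apply Caccioppoli with $2\sm B\supseteq\supp\chi$ rather than $\sm=\kappa/2$.

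Second, the local corrector, which you rightly call the crux, is not set up so that it closes.
\begin{itemize}
\item Test functions in $H^{1+\sigma}$ with small $\sigma$ give no H\"older (not even $L^\infty$) control of $\nabla\Phi$. That is what freezing $\partial_n\Phi_i(x)\to\partial_n\Phi_i(y)$ against the $|x-y|^{-d}$ kernel requires.
\item For the off-diagonal part, Lemma \ref{l:convergence_covariance} needs an $x$-derivative of the integrand, i.e.\ $\nabla^2\Phi$.
\end{itemize}
The paper proves the bilinear bound only for $\Phi\in W^{2,\infty}(B)\cap H^1_0(B)$ (Lemma \ref{l:local_ito_stratonovich_corrector}), with errors $\varepsilon$, $\varepsilon^{-d/2-1}(\delta N)^{-\g}$, $\sqrt{\varepsilon}$, $(\varepsilon\delta N)^{-1}$ and $N^{-1}$. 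It sums this over Dirichlet eigenfunctions with Weyl's law to get decay in $\X^{\Dir}_{-\sigma_0}$ for $\sigma_0\gg1$, then interpolates with the trivial uniform $H^{-1}$ bound. Correspondingly, the theorem is first reduced to $p_0=p_1=2$, $\sigma=1$ via \eqref{eq:claim_uniform_bound_convergence_zero1}--\eqref{eq:claim_uniform_bound_convergence_zero3}.

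The $\sqrt{\varepsilon}$ term is a boundary layer your sketch ignores. $\Phi$ vanishes on $\partial B$ but $\nabla\Phi$ does not, so the frozen near-diagonal integral over $B-y$ is not symmetric for $y$ near $\partial B$.

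The constant is also misidentified. The annulus average gives the isotropic tensor $A^{i,j}_{n,m}$ of \eqref{eq:explicit_expression_A_local_Ito}. In it, $\Tr(H)=0$ kills $\int\Tr(H)\,\nabla\cdot\Phi$ and $\nabla\cdot H=0$ kills $\int H_{m,j}\partial_j\Phi_m$. The $-\delta_{ij}/d$ part of $\Gamma$ is not removed; it cancels the local $\frac1d\,\mathrm{Id}$ part of $\q_{\T^d_B}$, which is absent from your sketch, leaving $D_d'$. The target operator is $(1-D_d)\nabla\cdot(\nabla v^{N,\delta}+G^\top)$, not the version with $-G^\top$.

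Your rescaling $x-y=z/(\delta N)$ is fine in principle. Extracting a rate from it amounts to the paper's frequency-by-frequency identity $\wh{\mathcal{G}_{i,j}\om_\varepsilon}(-\delta k)=\wh{\mathcal{G}_{i,j}}(k)+O((\varepsilon\delta|k|)^{-1})$ plus a Riemann sum.

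Finally, for the velocity, keep the Dirichlet-scale decay at levels $\ge-1$ before projecting. $\p_B$ extends only to $\X^{\Dir}_{-s}$ with $s\le1$ (Lemma \ref{l:boundedness_p_extrapolated_spaces}). This is why the paper estimates $\|w\|_{L^2(Q)}$ through $\Errd\in L^2(I;\X^{\Dir}_{-1})$ and $\Errs\in L^2(I;\ell^2(\X^{\Dir}_{-1/2}))$, using stochastic maximal regularity of $A_{-1}$ on $\X^{\Sto}_{-1}(B)$. Proposition \ref{prop:error_estimates_operators} supplies the decay, not the maximal regularity.
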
 

The key point in the above is that the rate of convergence depends only on the cumulative frequency of oscillation $L=\delta N$ (see Subsection \ref{ss:control_energy_intro}). 
Recall from the comments at the beginning of Section \ref{s:proof_strategy} and in the last part of Subsection \ref{ss:control_energy_intro} that for our purposes, it is crucial to have an explicit rate of convergence, as the qualitative arguments in \cite{AL87_compactness} do not extend in our setting. 

\smallskip

Before proceeding to the proofs, we observe that since $\chi|_{\kappa B}=1$ and $\nabla \cdot v^{N,\delta}=0$, the local approximations $\Pi^{N,\delta}$, $(\wt{\Pi}^{N,\delta}_{k,\alpha})_{k,\alpha}$, and $\Pi^0$ defined in \eqref{eq:approximated_pressure_1_quantitative_convergence1}-\eqref{eq:approximated_pressure_1_quantitative_convergence3} satisfy the same equations on $\kappa B$ as the true pressures $\pi^{N,\delta}$, $(\wt{\pi}_{k,\alpha}^{N,\delta})_{k,\alpha}$, and $\ph^{N,\delta}$. Namely, in $\D'(\kappa B)$ and a.e.\ on $I\times \O$, it holds that:
\begin{align}
\label{eq:approximated_pressure_1_quantitative_convergence1PDE}
\Delta \Pi^{N,\delta} 
&=\nabla^2: F- c_d \mu \nabla \cdot \Big[\sum_{k,\alpha} \theta_k^N\nabla\cdot  (\nabla \wt{\pi}^{N,\delta}_{k,\alpha}\otimes \sigma^\delta_{-k,\alpha} \big)\Big],\\
\label{eq:approximated_pressure_1_quantitative_convergence2PDE}
\Delta \wt{\Pi}_{k,\alpha}^{N,\delta}
&=\theta^{N}_{k} \nabla \cdot \Big[(\sigma^\delta_{k,\alpha}\cdot \nabla)v^{N,\delta} +\sigma^\delta_{k,\alpha}\cdot G\Big],\\
\label{eq:approximated_pressure_1_quantitative_convergence3PDE}
\Delta \Pi^0
&=\nabla^2:  \big[F-(1-D_d)\mu \,G^\top\big].
\end{align}
As discussed in Subsection \ref{ss:role_pressure_intro}, this fact will be of central use in Section \ref{s:energy_control_microscopic} below; see the proof of the one-step improvement in Lemma \ref{l:one_step_improvement}.

\subsection{The Stokes system with rough boundary data}
\label{ss:linear_setup}
In this subsection, we prove well-posedness for the Stokes system with rough boundary data:
\begin{equation}
\label{eq:homogenized_PDE_linear_model_problem}
\left\{
\begin{aligned}
\partial_t v &=-\nabla \Pi+\kappa \Delta v+  \nabla \cdot R & \text{ on }&Q,\\
 \nabla \cdot v&=0 & \text{ on }&Q, \\
v&= \xi & \text{ on }&\partial_{\pb}Q, \\
\int_{B} \Pi \varphi
&=0& \text{ on }&I.
\end{aligned}
\right.
\end{equation}
The aim of this section is to set the stage for the proof of Proposition \ref{prop:local_well_posedness_homogenized_SPDE}, which is given in Subsection \ref{ss:local_well_posedness_homogenized_SPDE} below. As in \eqref{eq:homogenized_PDE_linear}, $\varphi$ is a given test function, and $\int_{B} \Pi \varphi=0$ on $I$ is a normalization condition.

Throughout this subsection, without loss of generality, we assume $Q=Q_1(1,x_0)$ for some $x_0\in \R^d$. In particular, $I=(0,1)$, $B=B_1(x_0)$ and $\partial_\pb Q= \{0\}\times B \cup I\times \partial B$. On the boundary data $\xi$ and the forcing $R$, we enforce the following 

\begin{assumption}
\label{ass:homogenized_PDE_linear_model_problem}
The function
$\xi(0)=\xi|_{\{0\}\times B}\in L^2(B;\R^d)$ satisfies $\nabla \cdot \xi(0)=0$ in $\D'(B)$. Moreover
$$
R \in L^2(Q)\qquad \text{ and }\qquad \xi|_{I \times \partial B}\in L^2(I\times \partial B).
$$ 
Finally, $\xi$ satisfies the following compatibility condition 
\begin{equation}
\label{eq:condition_div_free_integral}
\int_{\partial B} \xi \cdot n =0 \text{ for a.e. }t\in I,
\end{equation} 
where $n$ is the exterior normal field on $\partial B$.
\end{assumption}

The condition \eqref{eq:condition_div_free_integral} is a natural compatibility condition to allow for the divergence-free nature of $v$. Indeed, if $\wh{\xi}=\wh{v}|_{\partial B}$ for some divergence-free vector field $\wh{v}$ in $H^1(B;\R^d)$, then a formal integration by parts yields $\int_{\partial B} \wh{\xi}\cdot n = \int_{B} \nabla\cdot \wh{v} =0$.
As follows from our analysis, Assumption \ref{ass:homogenized_PDE_linear_model_problem} can be weakened, still keeping the well-posedness result of Proposition \ref{prop:stokes_rough} valid. For simplicity, we do not pursue this here.

\smallskip

To define solutions to \eqref{eq:homogenized_PDE_linear_model_problem} under the Assumption \ref{ass:homogenized_PDE_linear_model_problem}, we follow the transposition method of Lions and Magenes \cite{LM1_book,LM2_book}. To motivate such a notion of solutions, we argue formally. 
First, we introduce the Helmholtz projection $\p_B$ on a domain $B$.
For $F\in L^2(B;\R^d)$, define $\qq_B F=\psi_F\in H^1(B)$, where $\psi_F$ is the unique solution to the elliptic PDE
\begin{equation}
\label{eq:elliptic_PDE_helmholtz}
\Delta \psi_F =\nabla \cdot F \text{ in }B, \quad \text{ and } \quad\partial_n \psi_F=F\cdot n \text{ on }\partial B ,
\end{equation}
together with the normalization condition $
\int_{B} \psi_F=0.
$
In the above, $\partial_n=n\cdot \nabla $ denotes the exterior normal derivative. 
Clearly, the elliptic problem \eqref{eq:elliptic_PDE_helmholtz} is understood in its natural variational formulation: 
$$
\int_{B} \nabla \psi_F\cdot \nabla \varphi = \int_{B} F\cdot \nabla \varphi \quad \text{ for all } \varphi\in H^1(B)\text{ with }\int_{B}\varphi =0.
$$
The Helmholtz projection on $B$ is then given by
\begin{equation}
\label{eq:def_Helmholtz}
\p_B F= F- \q_B F \quad \text{ where }\quad \q_B F=\nabla \qq_B F.
\end{equation}
One can readily check that $\p_B$ is an orthogonal projection on $L^2(B;\R^d)$, and we let 
$$
\Ls^2(B)=\p_B(L^2(B;\R^d)).
$$
It is well-known that $\Ls^2(B)$ coincides with the closure in $L^2(B;\R^d)$ of divergence-free vector fields in $C^{\infty}_{{\rm c}}(B;\R^d)$, see e.g., \cite{Galdi_book}. 
Second, let $A$ be the (minus) Stokes operator on  $\Ls^2(B)$:
\begin{equation}
\begin{aligned}
\label{eq:stokes_operator}
A: \Do(A)&= H^{2}(B;\R^d)\cap H^1_0(B;\R^d)\cap \Ls^2(B)\to \Ls^2(B),\\
 A u&= -\p_B \Delta u.
\end{aligned}
\end{equation}
It is well-known that $A$ is a positive, invertible, and self-adjoint operator on $\Ls^2(B)$. Thus, it follows from \cite[Corollary 17.3.8]{Analysis3} that $A$ has maximal $L^{2}$-regularity. In particular, for any $\Phi\in L^{2}(Q;\R^d)$, there exists a unique 
strong solution    
\begin{equation}
\label{eq:regularity_backward_stokes_problem}
\psi\in L^{2}(I;\Do(A))\cap H^{1}(I;\Ls^2(B)), \qquad \nabla q\in L^{2}(Q)
\end{equation} 
to the backward Stokes problem
\begin{equation}
\label{eq:Phi_psi_problem}
\left\{
\begin{aligned}
-\partial_t \psi -\kappa\Delta \psi &= -\nabla q + \Phi &\text{ on }&Q,\\
\nabla \cdot \psi&=0  &\text{ on }&Q,\\
\psi&=0  &\text{ on }&I\times \partial B,\\
\psi(1)&=0  &\text{ on }&B.
\end{aligned}
\right.
\end{equation}
In the above, we used that $Q=I\times B$ and $I=(0,1)$ by assumption.
Now, multiplying formally \eqref{eq:homogenized_PDE_linear_model_problem} by $\psi$, an integration by parts readily yields
\begin{align*}
\int_{Q} [(\partial_t-\kappa\Delta) v ]\cdot \psi
&= -\int_{B} \psi(0) \cdot \xi(0)+\kappa\int_{I\times \partial B} \xi\cdot  \partial_n \psi -
\int_{Q} v \cdot [(\partial_t+\kappa\Delta) \psi]\\
&= -\int_{B} \psi(0)\cdot \xi(0)+\kappa\int_{I\times \partial B} \xi\cdot \partial_n \psi+
\int_{Q} v \cdot \Phi -\int_{Q} v\cdot  \nabla q \\
&=- \int_{B} \psi(0)\cdot \xi(0)+\kappa\int_{I\times \partial B} \xi\cdot  \partial_n \psi +
\int_{Q} v \cdot \Phi -\int_{I\times \partial B} \xi\cdot n q.
\end{align*}
On the other hand, $[(\partial_t-\kappa\Delta) v ]=-\nabla \Pi + \nabla \cdot R$, and 
\begin{align*}
\int_Q [(\partial_t-\kappa\Delta) v ]\cdot \psi
=\int_Q (-\nabla \Pi + \nabla \cdot R) \cdot \psi
=-\int_Q   R :\nabla \psi.
\end{align*}

Hence, the above argument motivates the following definition.

\begin{definition}[Transposition solutions to the Stokes system with rough boundary data]
\label{def:solution_homogenized}
Let Assumption \ref{ass:homogenized_PDE_linear_model_problem} be satisfied. We say that $v$ is a solution to \eqref{eq:homogenized_PDE_linear_model_problem} (in the transposition sense) if $v\in L^2(Q)$, and 
for all $\Phi\in L^{2}(Q;\R^d)$, 
$$
\int_{Q} v \cdot \Phi 
=\int_{B} \psi(0)\cdot \xi(0)+\int_{I\times \partial B} \xi\cdot \big( n\,q-\kappa \,\partial_n \psi\big) -\int_Q   R :\nabla \psi,
$$ 
where $(\psi,q)$ is a solution to \eqref{eq:Phi_psi_problem} with regularity as in \eqref{eq:regularity_backward_stokes_problem}. 
\end{definition}

Note that the problem \eqref{eq:Phi_psi_problem} is invariant under the transformation $(\psi,q)\mapsto (\psi,q+C)$ for any $C\in \R$. This is compatible with the above definition due to the condition \eqref{eq:condition_div_free_integral}  in Assumption \ref{ass:homogenized_PDE_linear_model_problem}.

\smallskip

The main result of this subsection reads as follows.

\begin{proposition}[Well-posedness of the Stokes system with rough boundary data]
\label{prop:stokes_rough}
Suppose that Assumption \ref{ass:homogenized_PDE_linear_model_problem} holds. Fix $\varphi\in C^{\infty}_{{\rm c}}(B)$ such that $\int_B \varphi =1$. Then there exists a unique solution 
$v$ 
to \eqref{eq:homogenized_PDE_linear_model_problem} in the transposition sense satisfying 
$$
\nabla \cdot v=0 \text{ in }\D'(B) \text{  a.e.\ on } I,
$$ 
and a unique
$\Pi\in H^{-1}(I;H^{-1}(B))$ such that 
\begin{align*}
&\partial_t v =-\nabla \Pi +\kappa \Delta v+\nabla \cdot R \  \text{ in }\ \D'(Q),\\
&\langle \Pi,\eta\otimes \varphi \rangle =0 \ \text{ for all } \ \eta\in C^{\infty}_{{\rm c}}(I).
\end{align*}
Finally, there exists a constant $C>0$ depending only on $d$ and $\kappa$ such that 
\begin{align}
\label{eq:stokes_rough_bound_claim}
\|v\|_{L^{2}(Q)}
+\|\Pi\|_{H^{-1}(I;H^{-1}(B))}
\leq C\big( \|\xi(0)\|_{L^2(B)}+ \|\xi\|_{L^2(I\times \partial B)}+ \|R\|_{L^2(Q)}\big).
\end{align}
\end{proposition}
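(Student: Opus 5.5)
Since the problem is linear, I plan to build $v$ by duality from the backward Stokes problem \eqref{eq:Phi_psi_problem}, and then recover the pressure by a de Rham argument.

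\emph{Existence and the $L^2$ bound for $v$.} Define the linear functional
\[
\Lambda(\Phi)=\int_B \psi(0)\cdot\xi(0)+\int_{I\times\partial B}\xi\cdot(n\,q-\kappa\,\partial_n\psi)-\int_Q R:\nabla\psi ,\qquad \Phi\in L^2(Q;\R^d),
\]
where $(\psi,q)$ solves \eqref{eq:Phi_psi_problem}. The function $q$ is fixed only up to a time-dependent constant, but \eqref{eq:condition_div_free_integral} makes $\Lambda$ independent of that choice. To bound $\Lambda$ I use the maximal $L^2$-regularity of the Stokes operator $A$ from \eqref{eq:stokes_operator} together with the $H^2$-regularity of the stationary Stokes problem on the smooth domain $B$. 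These give $\|\psi\|_{L^2(I;H^2)}+\|\psi\|_{H^1(I;L^2)}+\|\nabla q\|_{L^2(Q)}\lesssim\|\Phi\|_{L^2(Q)}$, and also $\psi\in C(\overline I;H^1_0)$, so that $\|\psi(0)\|_{L^2}\lesssim\|\Phi\|_{L^2}$.

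After normalizing $\int_B q=0$, Poincar\'e's inequality gives $q\in L^2(I;H^1(B))$. The trace inequality $H^1(B)\to L^2(\partial B)$ then controls $\|q\|_{L^2(I\times\partial B)}$ and $\|\partial_n\psi\|_{L^2(I\times\partial B)}$ by $\|\Phi\|_{L^2(Q)}$. Hence $|\Lambda(\Phi)|\lesssim(\|\xi(0)\|_{L^2}+\|\xi\|_{L^2(I\times\partial B)}+\|R\|_{L^2})\|\Phi\|_{L^2}$. The Riesz representation theorem produces a unique $v\in L^2(Q)$ with $\int_Q v\cdot\Phi=\Lambda(\Phi)$ for every $\Phi$, and this also gives uniqueness of transposition solutions.

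\emph{Divergence-free property and the equation in $\D'$.} Take $\Phi=\nabla(\eta\otimes g)$ with $g\in C^\infty_c(B)$. Then $(\psi,q)=(0,-\eta g)$ solves \eqref{eq:Phi_psi_problem}, and $q$ vanishes on $\partial B$, so $\Lambda(\Phi)=0$ and $\nabla\cdot v=0$ in $\D'$. Next take $\Phi\in C^\infty_c(Q;\R^d)$ with $\nabla\cdot\Phi=0$, of the form $\Phi=-\partial_t\Theta-\kappa\Delta\Theta$ with $\Theta\in C^\infty_c(Q)$ divergence free. Then $(\psi,q)=(\Theta,0)$, and the identity reduces to $\langle v,-\partial_t\Theta-\kappa\Delta\Theta\rangle=-\int R:\nabla\Theta$. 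So $T\stackrel{\rm def}{=}\partial_tv-\kappa\Delta v-\nabla\cdot R$ annihilates all divergence-free test fields.

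\emph{Pressure.} I expect this step to be the main obstacle, since $T$ has poor time regularity. I would first integrate in time: set $V(t)=\int_0^t(v-\xi(0))\,\dd s$ and $\mathcal R=\int_0^t R$ (tested against $\eta$ with $\int\eta=0$, or handled through $H^{-1}$ in time). Then $V-\kappa\int_0^t\Delta v-\nabla\cdot\mathcal R$ is, for a.e.\ $t$, an element of $H^{-2}(B)$ that annihilates divergence-free fields. The de Rham/Ne\v{c}as lemma on the ball produces $P(t)\in H^{-1}(B)$, normalized by $\langle P(t),\varphi\rangle=0$, with $\|P\|_{L^2(I;H^{-1})}\lesssim\|v\|_{L^2(Q)}+\|R\|_{L^2(Q)}$. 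This uses the $H^{-1}\to H^{-2}$ version of Ne\v{c}as's inequality, and measurability in $t$ follows from the linearity of the construction. Setting $\Pi=\partial_tP\in H^{-1}(I;H^{-1}(B))$ then gives the equation in $\D'(Q)$, the normalization $\langle\Pi,\eta\otimes\varphi\rangle=0$, and \eqref{eq:stokes_rough_bound_claim}.

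Uniqueness of $\Pi$: if $\nabla\Pi=0$, then $\Pi$ is a distribution in $t$ only, and pairing with $\eta\otimes\varphi$ forces it to vanish.
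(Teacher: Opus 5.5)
Your existence and uniqueness step coincides with the paper's argument: Riesz representation of the transposition functional, with the constant in $q$ removed via \eqref{eq:condition_div_free_integral} and the boundary terms controlled by the trace theorem. So do your proof of $\nabla\cdot v=0$ and your check that $T=\partial_t v-\kappa\Delta v-\nabla\cdot R$ annihilates divergence-free test fields.

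The genuine difference is in the pressure step. The paper never integrates in time. It keeps $P=\kappa\Delta v+\nabla\cdot R-\partial_t v\in H^{-1}(I;H^{-2}(B))$ as a space--time distribution and defines $\Pi$ in one stroke by $\langle \Pi,\tilde\phi\rangle=-\langle P,\mathcal B_B(\tilde\phi-\varphi\int_B\tilde\phi)\rangle$ for $\tilde\phi\in\D(Q)$. Because the Bogovskii operator acts only in $x$, it commutes with $\partial_t$. The bound then follows at once from the duality $H^{-1}(I;H^{-1}(B))=(H^1_0(I;H^1_0(B)))^*$ together with $\mathcal B_B:H^1_0(B)\to H^2_0(B)$.

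Your route instead builds a time primitive in $L^2(I;H^{-2}(B))$. It then applies the same Bogovskii estimate in its dual (Ne\v{c}as) form at each fixed time, and finally differentiates in $t$. This costs extra bookkeeping: null sets in $t$, measurability, and the constant of integration. In exchange it gives the explicit representation $\Pi=\partial_t P$ with $P\in L^2(I;H^{-1}(B))$. The paper has to recover this kind of representation abstractly later, via the characterization of $H^{-1,p}(I;X)$ as time derivatives of $L^p(I;X)$-functions, when it constructs $\mathcal H$ in the proof of Lemma \ref{l:local_smoothing_homogenized}.

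Two points in your pressure step need repair, though neither affects the strategy.

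\emph{The primitive.} As written, $V(t)=\int_0^t(v-\xi(0))\,\dd s$ is integrated once too often, so $V-\kappa\int_0^t\Delta v-\nabla\cdot\mathcal R$ is not a primitive of $T$. The right object is $U=(v-\xi(0))-\kappa\int_0^t\Delta v\,\dd s-\nabla\cdot\mathcal R\in L^2(I;H^{-2}(B))$, which satisfies $\partial_t U=T$ in $\D'(Q)$.

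\emph{Annihilation of divergence-free fields.} From $\langle T,\eta\otimes\theta\rangle=0$ you only get that $t\mapsto\langle U(t),\theta\rangle$ is a.e.\ constant for each divergence-free $\theta\in C^\infty_{\rm c}(B;\R^d)$; you do not get that it vanishes. There are two easy fixes:
\begin{itemize}
\item replace $U$ by $U-\fint_I U$, which also removes $\xi(0)$ and yields your bound $\lesssim\|v\|_{L^2(Q)}+\|R\|_{L^2(Q)}$; or
\item show the constant is zero by testing the transposition identity with $(\psi,q)=(\eta\theta,0)$, where $\eta(0)=1$ and $\eta(1)=0$.
\end{itemize}
In either case, a countable family of $\theta$, dense in the $H^2$-norm, gives a common null set in $t$.

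Finally, a harmless sign slip: $\Phi=\eta\nabla g$ corresponds to $q=\eta g$, not $-\eta g$. This does not matter, since $q$ vanishes on $\partial B$ either way.
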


In the above, we employ the notation $(\eta\otimes \varphi)(t,x)=\eta(t)\varphi(x)$. As one may anticipate, in the proof of Proposition \ref{prop:local_well_posedness_homogenized_SPDE}, we apply the previous pointwise in $\O$. However, the corresponding solution has lower regularity than the one stated in Proposition \ref{prop:local_well_posedness_homogenized_SPDE}. To obtain the stronger regularity, in Subsection \ref{ss:local_well_posedness_homogenized_SPDE}, we further exploit the fact that $(v^{N,\delta},\pi^{N,\delta},(\wt{\pi}^{N,\delta}_{k,\alpha})_{k,\alpha})$ is a local solution to the SPDE system \eqref{eq:turbulent_Stokes_scaling_quantitative}.

\begin{proof}
We divide the proof into three steps.

\smallskip

\emph{Step 1: Existence and uniqueness of a solution $v\in  L^2(Q)$ to \eqref{eq:homogenized_PDE_linear_model_problem} in the sense of Definition \ref{def:solution_homogenized}.} 
Uniqueness is an immediate consequence of the arbitrariness of $\Phi\in L^2(Q)$. 
As for the existence, consider the linear mapping $L:L^{2}(Q;\R^d)\to \R$ given by
$$
L(\Phi)=
\int_{B} \psi(0) \cdot \xi(0)+\int_{I \times \partial B} \xi\cdot (n\, q-\kappa \partial_n \psi)  -\int_Q   R :\nabla \psi,
$$
where $(\psi,q)$ is the unique solution to \eqref{eq:Phi_psi_problem} such that $\int_{B}q(t) =0$ for a.e.\ $t\in I$. 
Now, from the boundedness of the trace operator (see e.g., \cite[Proposition 4.5, Chapter 4]{TayPDE1})
$H^{1}(B) \ni f \mapsto f|_{\partial B}\in L^2(\partial B)$ and $\nabla \psi\in L^2(I;H^1(B;\R^d))$ by \eqref{eq:regularity_backward_stokes_problem}, 
we have 
$$
\nabla \psi|_{\partial B}\in L^2(I\times \partial B). 
$$
Similarly, we can write $\int_{I \times \partial B} \xi\cdot n q=\int_{I \times \partial B} \xi\cdot n \big(q-\fint_{B} q\big)$ due to \eqref{eq:condition_div_free_integral}, and therefore $q-\fint_{B}q \in L^2(I;H^1(B) )$ by \eqref{eq:regularity_backward_stokes_problem} and Poincar\'e inequality. Hence, 
$$
\Big(q-\fint_{B}q \Big)\Big|_{\partial B}\in L^2(I\times \partial B).
$$
Moreover, as
$\psi\in H^{1}(I;\Ls^2(B))\subset C(\overline{I};\Ls^2(B))$,  
$$
[\Phi \mapsto L(\Phi)]\in (L^2(Q;\R^d))^*.
$$ 
Hence, Riesz' representation theorem implies the existence of $v\in L^2(Q;\R^d)$ such that 
$$
L(\Phi)= \int_{Q} v \cdot \Phi,
$$
therefore proving the existence of a solution $v$ of \eqref{eq:homogenized_PDE_linear_model_problem} in the transposition sense.

\smallskip

\emph{Step 2: $\nabla \cdot v=0$ in $\D'(B)$ a.e.\ on $I$.}    
It suffices to prove that for all $\eta\in \D(I)$ and $\phi\in \D(B)$, it holds that 
$$
\int_{Q} \eta [v\cdot \nabla \phi]=0. 
$$
The above clearly follows as $(\psi,q)=(0,\eta \phi)$ is a solution to \eqref{eq:Phi_psi_problem} with $\Phi=\eta\nabla \phi$.

\smallskip

\emph{Step 3: Estimate for the pressure $\Pi$ and proof of $\partial_t v =-\nabla \Pi +\kappa \Delta v+\nabla \cdot R \text{ in }\D'(Q)$.}
We begin by letting 
\begin{equation}
\label{eq:definition_ansatz_pressure_stokes_homogenized}
P=\kappa\Delta v+\nabla \cdot R-\partial_t v.
\end{equation}
From Step 1 and Assumption \ref{ass:homogenized_PDE_linear_model_problem}, it readily follows that $P\in H^{-1}(I;H^{-2}(B))$. It remains to show the existence of $\Pi\in H^{-1}(I;H^{-1}(B))$ such that 
\begin{equation}
\label{eq:claim_Step_3_proof_pressure_existence}
\nabla \Pi= P \text{ in }\D'(Q),
\end{equation} 
and additionally satisfies the bound in \eqref{eq:stokes_rough_bound_claim}.
The uniqueness of $\Pi$ such that $\langle \Pi,\eta\otimes \varphi\rangle=0$ for all $\eta\in C^{\infty}_{{\rm c}}(I)$ is standard. As for the existence, we claim that 
\begin{equation}
\label{eq:claim_bogosvkii_pressure_construction}
\langle P, w\rangle =0 \text{ for all }w\in C^{\infty}_{{\rm c}}(Q;\R^d)\  \text{ satisfying }\ \nabla \cdot w=0.
\end{equation}
We first derive the existence of $\Pi$ satisfying \eqref{eq:claim_Step_3_proof_pressure_existence} with the required regularity, and afterwards we prove the claim \eqref{eq:claim_bogosvkii_pressure_construction}. 
Let $\mathcal{B}_B$ be the Bogovskii operator on the ball $B$ (see e.g., \cite[eq.\ (III.3.9)]{Galdi_book}), and set
\begin{equation}
\label{eq:formula_bogovskii_pressure_how_to_find_it}
\langle \Pi, \wt{\phi}\rangle = -
\Big\langle P, \mathcal{B}_B\Big(\wt{\phi}- \varphi \int_{B}\wt{\phi}\Big)\Big\rangle \  \text{ for all }\ \wt{\phi}\in \D(Q).
\end{equation}
Note that to define $\Pi$ we used $\mathcal{B}_B:C^{\infty}_{{\rm c}}(B)\to C_{{\rm c}}^\infty(B;\R^d)$ continuously (see \cite[Lemma III.3.1]{Galdi_book}). Moreover, since $\int_B \varphi=1$ by assumption, for all $\eta\in C^{\infty}_{{\rm c}}(I)$, it holds that $
\langle \Pi,\eta\otimes \varphi\rangle=0.
$ 
From \eqref{eq:claim_bogosvkii_pressure_construction}, for all $\phi\in C^{\infty}_{{\rm c}}(Q;\R^d)$  it follows that
\begin{align*}
\langle P-\nabla \Pi, \phi\rangle
&=\langle P, \phi\rangle
+\langle \Pi, \nabla \cdot \phi\rangle\\
&\stackrel{(i)}{=}\langle P, \phi\rangle
-\Big\langle P, \mathcal{B}_B\Big(\nabla \cdot \phi- \varphi \int_{B}\nabla \cdot \phi\Big)\Big\rangle\\
&\stackrel{(ii)}{=}\langle P, \phi-\mathcal{B}_B(\nabla \cdot \phi)\rangle\stackrel{(iii)}{=}0
\end{align*}
where in $(i)$ we used \eqref{eq:formula_bogovskii_pressure_how_to_find_it}, in $(ii)$ that $\int_{B} \nabla \cdot \phi=0$, and in $(iii)$ that $\nabla \cdot [\phi-\mathcal{B}_B(\nabla \cdot \phi)]=0$ as $\mathcal{B}_B$ is a right inverse of the divergence operator (see \cite[Subsection III.3]{Galdi_book}). 
Finally, the estimate for $\Pi$ in \eqref{eq:stokes_rough_bound_claim} follows from \eqref{eq:definition_ansatz_pressure_stokes_homogenized}, \eqref{eq:formula_bogovskii_pressure_how_to_find_it}, the well-known duality $H^{-1}(I;H^{-1}(B))=(H^1_0(I;H_0^1(B)))^*$ and the mapping property of the Bogovskii operator $\mathcal{B}_B: H^{1}_0(B)\to H^2_0(B)$ (see e.g., \cite[Remark III.3.2]{Galdi_book}).

To conclude, it remains to prove \eqref{eq:claim_bogosvkii_pressure_construction}. Note that, by the definition of $P$ in \eqref{eq:definition_ansatz_pressure_stokes_homogenized}, for all divergence-free $w\in C^\infty_{{\rm c}}(Q;\R^d)$, it holds that
\begin{align*}
\langle P, w\rangle= \int_{Q} v\cdot (\partial_t w + \kappa\Delta w)- R: \nabla w=0,
\end{align*}
where in the last step we used that $(\psi,q)=(w,0)$ is a solution to \eqref{eq:Phi_psi_problem} with $\Phi=-(\partial_t w +\kappa \Delta w)$. Thus Definition \ref{def:solution_homogenized} yields the desired equality. 
\end{proof}

\subsection{Extrapolated spaces and Helmholtz projection on domains}
\label{ss:Helmholtz_domains}
In this subsection, we provide a brief introduction to extrapolation spaces for sectorial operators. A complete treatment can be found in either \cite[Chapter V.1]{Am} or \cite[Subsection 6.3]{Haase:1}. In a nutshell, extrapolation spaces for an operator $A$ (e.g., the Laplacian with Dirichlet boundary conditions on $L^2(B)$) can be thought of as the analogue of negative Sobolev spaces on domains, with the additional requirement that $A$ still preserves the spectral properties on such extrapolated spaces (e.g., still generates an analytic semigroup, possesses maximal $L^p$-regularity, etc.). It is well-known that, in the case where $A$ is the Dirichlet Laplacian on $H^{-2}(B)$, $A$ does \emph{not} generate a semigroup and, in a very weak sense, boundary conditions have to be taken into account. Moreover, as commented in Subsection \ref{ss:caccioppoli_intro}, they provide the right framework to show \emph{local mixing estimates} for the noise in \eqref{eq:turbulent_Stokes_scaling_quantitative}.

\smallskip

We divide this subsection into two parts. In the first, we provide a brief introduction to the extrapolated scale in the context of self-adjoint operators on Hilbert spaces (for this case, see also \cite[Chapter 5, Appendix A]{TayPDE1}), together with some basic properties. In the second, we discuss the important relation between the extrapolated spaces associated with the Dirichlet Laplacian and the Stokes operator, as well as consequences for the Helmholtz projection.

\subsubsection{Extrapolated spaces}
\label{sss:extrapolated_spaces}
Let $A: \Do(A)\subseteq X\to X$ be a positive, invertible, and self-adjoint operator on a Hilbert space $X$ with domain $\Do(A)$. 
For any $\sigma<0$, we define the extrapolated scale $\X^{A}_{\sigma}$ as 
\begin{equation}
\label{eq:extrapolated_space1}
(\X^{A}_{\sigma},\|\cdot\|_{\X^{A}_{\sigma}})\stackrel{{\rm def}}{=}
(X, \|A^{\sigma} \cdot\|_{X})^\sim
\end{equation}
where $\sim$ denotes the completion of the normed space and $A^{\sigma}$ denotes the fractional power of the operator $A$, see e.g., \cite[Subsection 15.2]{Analysis3}. For the case of self-adjoint operators, to define fractional powers, one can equivalently use the spectral theorem \cite[Chapter 8, Section 2]{TayPDE2}. 
From the invertibility assumption of $A$, it follows that $A^{\sigma}$ is injective, and therefore $\X^A_\sigma$ is well-defined. 

One can also complete the scale for non-negative smoothness by letting 
\begin{equation}
\label{eq:extrapolated_space2}
\X^A_0\stackrel{{\rm def}}{=}X \quad \text{ and }\quad \X^A_{\sigma}\stackrel{{\rm def}}{=}\Do(A^\sigma) \ \text{ for } \ \sigma>0,
\end{equation}
endowed with the natural norms. Clearly, for all $\sigma\in \R$, the space $\X^A_\sigma$ is Hilbert.

From the invertibility of $A$ and the fact that $\Do(A)\stackrel{{\rm d}}{\embed} X$, the above definitions readily imply 
\begin{equation}
\label{eq:embedding_extrapolated}
\X^A_{\sigma_0}\stackrel{{\rm d}}{\embed} \X^A_{\sigma_1} \ \text{ for all }\  \sigma_0>\sigma_1.
\end{equation}

In the following, we gather some useful properties. 

\begin{lemma}
\label{l:properties_extrapolated_spaces}
Let $A$ be a positive, invertible, and self-adjoint operator on a Hilbert space $X$. Let $(\X^A_\sigma)_{\sigma\in\R}$ be the scale of Hilbert spaces defined in \eqref{eq:extrapolated_space1}-\eqref{eq:extrapolated_space2}. 
Then the following assertions hold:
\begin{itemize}
\item {\rm (Extrapolated operator)}
For each $\sigma <0$ (resp.\ $\sigma>0$), the operator $A$ uniquely extends (resp.\ restricts) to a positive, invertible, and self-adjoint operator $A_\sigma$ on $\X^A_{\sigma}$ with domain $\Do(A_\sigma)=\X_{1+\sigma}^A$. 
\item {\rm (Interpolation)}
For $\sigma_0,\sigma_1\in \R$ and $\theta\in (0,1)$, 
$$
\X_{\sigma}^A = [\X_{\sigma_0}^A ,\X_{\sigma_1}^A]_{\theta} \quad \text{ where }\quad \sigma=(1-\theta)\sigma_0+\theta\sigma_1.
$$
\item {\rm (Duality)} For each $\sigma \geq 0$, it holds that $
 \X_{-\sigma}^{A}
 =(\X_{\sigma}^{A})^*
$ and
$$
\langle x^*,x\rangle_{-\sigma,\sigma}^{A}
= ((A^{-\sigma})_{-\sigma} x^*,A^{\sigma}x)_{X} \ \text{ for all }\ x^*\in \X^A_{-\sigma}, \ x\in \X^A_{\sigma}.
$$
\end{itemize}
\end{lemma}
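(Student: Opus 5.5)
The natural route is through the spectral theorem: all three assertions reduce to statements about multiplication operators. Since $A$ is positive, invertible and self-adjoint, there is a measure space $(S,\mu)$, a unitary $U:X\to L^2(S,\mu)$, and a measurable function $a:S\to[c,\infty)$ with $c>0$ such that $UAU^{-1}$ is multiplication by $a$. Under $U$, $A^{\sigma}$ becomes multiplication by $a^{\sigma}$ for every $\sigma\in\R$.

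\textbf{Step 1: identify the scale.} The first step is to show that
\[
\X^A_\sigma \cong L^2(S,a^{2\sigma}\mu)
\]
isometrically for every $\sigma\in\R$.
\begin{itemize}
\item For $\sigma\ge 0$ this is just the description of $\Do(A^\sigma)$ with the norm $\|A^\sigma\cdot\|_X$. Since $a\ge c$, this norm is equivalent to the graph norm.
\item For $\sigma<0$, the weight $a^{2\sigma}$ is bounded, so $L^2(\mu)\subset L^2(a^{2\sigma}\mu)$. This inclusion is dense, because truncations $\one_{\{|f|\le n\}}f$ converge in the weighted norm.
\item Consequently the completion of $(X,\|A^\sigma\cdot\|_X)$ is canonically identified with the weighted space $L^2(S,a^{2\sigma}\mu)$, which is complete.
\end{itemize}
This identification makes \eqref{eq:embedding_extrapolated} and the Hilbert structure transparent.

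\textbf{Step 2: the extrapolated operator.} I define $A_\sigma$ as multiplication by $a$ on $L^2(a^{2\sigma}\mu)$, with domain $\{f:\ af\in L^2(a^{2\sigma}\mu)\}=L^2(a^{2\sigma+2}\mu)=\X^A_{1+\sigma}$.
\begin{itemize}
\item Multiplication by a real function bounded below by $c>0$ is self-adjoint, positive and invertible on any weighted $L^2$ space.
\item It agrees with $A$ on $\Do(A)$, so it extends $A$ when $\sigma<0$ and restricts $A$ when $\sigma>0$.
\item For uniqueness when $\sigma<0$: any continuous extension $\X^A_{1+\sigma}\to\X^A_\sigma$ is determined by its values on the dense subspace $\Do(A)\subset\X^A_{1+\sigma}$, and $A$ is bounded from $\|A^{1+\sigma}\cdot\|$ to $\|A^\sigma\cdot\|$.
\end{itemize}
The main subtlety I expect is pinning down what ``uniquely extends'' means. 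I would make it precise as uniqueness among operators that are bounded from $\X^A_{1+\sigma}$ to $\X^A_\sigma$, since an arbitrary unbounded extension need not be unique.

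\textbf{Step 3: interpolation.} This reduces to Stein--Weiss complex interpolation of weighted $L^2$ spaces:
\[
[L^2(w_0\mu),L^2(w_1\mu)]_\theta=L^2\big(w_0^{1-\theta}w_1^{\theta}\mu\big)
\]
isometrically. With $w_j=a^{2\sigma_j}$ this gives exactly $L^2(a^{2\sigma}\mu)$ for $\sigma=(1-\theta)\sigma_0+\theta\sigma_1$. If one prefers to avoid quoting Stein--Weiss, the analytic family $z\mapsto a^{(\sigma_0-\sigma)+z(\sigma_1-\sigma_0)}f$ can be used directly in the three-lines argument; this is the standard proof.

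\textbf{Step 4: duality.} For $\sigma\ge0$ I use the $X$-pairing, which under $U$ is $\int f\bar g\,\dd\mu$. It identifies $L^2(a^{-2\sigma}\mu)$ with the dual of $L^2(a^{2\sigma}\mu)$, via
\[
\int f\bar g\,\dd\mu=\int (a^{-\sigma}f)\,\overline{(a^{\sigma}g)}\,\dd\mu,
\]
which is the Riesz identification of weighted $L^2$ spaces. This reads precisely as $\langle x^*,x\rangle=((A^{-\sigma})_{-\sigma}x^*,A^\sigma x)_X$, where $(A^{-\sigma})_{-\sigma}$ is the continuous extension of $A^{-\sigma}$ to an isometry $\X^A_{-\sigma}\to X$.
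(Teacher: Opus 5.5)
Your proof is correct, and it follows a more concrete route than the paper, which does not spell out a proof. The paper states that the extrapolated-operator part follows from the definitions. It obtains the interpolation identity from the bounded $H^\infty$-calculus of self-adjoint operators through the abstract results \cite[Proposition 10.2.23]{Analysis2} and \cite[Theorem 15.3.9]{Analysis3}, and it quotes the duality from Amann's theory of extrapolation scales \cite[Chapter V, Corollary 1.5.13]{Am}.

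You instead use the multiplication-operator form of the spectral theorem to identify every $\X^A_\sigma$, $\sigma\in\R$, isometrically with $L^2(S,a^{2\sigma}\mu)$. The three assertions then reduce to elementary facts:
\begin{itemize}
\item multiplication by a function $a\geq c>0$ on weighted $L^2$ spaces;
\item Stein--Weiss interpolation of weighted $L^2$ spaces;
\item Riesz duality between $L^2(a^{2\sigma}\mu)$ and $L^2(a^{-2\sigma}\mu)$.
\end{itemize}
Your route is self-contained, makes every space explicit, and gives all identities isometrically for the norms $\|A^\sigma\cdot\|_X$. The cited route is more general: the $H^\infty$-calculus and Amann arguments work for sectorial operators on Banach spaces (e.g.\ $L^q$-scales), where no spectral theorem is available. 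The paper, however, only uses the Hilbert-space self-adjoint case.

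Three small points need fixing.
\begin{itemize}
\item The truncations $\one_{\{|f|\le n\}}f$ lie in $L^2(\mu)$ only if $\mu$ is finite, which you may assume for separable $X$. In general, truncate on $\{a\le n\}$ instead: for $\sigma<0$ one has $\int_{\{a\le n\}}|f|^2\,\dd\mu\le n^{-2\sigma}\|f\|^2_{L^2(a^{2\sigma}\mu)}$.
\item For the inclusion $L^2(a^{2\sigma}\mu)\subseteq[\X^A_{\sigma_0},\X^A_{\sigma_1}]_\theta$, the analytic family should be $z\mapsto a^{(\sigma-\sigma_0)-z(\sigma_1-\sigma_0)}f$. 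The family you wrote has the sign of the exponent reversed; it is the one adapted to the dual couple $(L^2(a^{-2\sigma_0}\mu),L^2(a^{-2\sigma_1}\mu))$.
\item The uniqueness subtlety you flag disappears. Let $T$ be a closed (e.g.\ self-adjoint) operator on $\X^A_\sigma$ with domain $\X^A_{1+\sigma}$. Since $\X^A_{1+\sigma}\embed\X^A_\sigma$, its graph is closed in $\X^A_{1+\sigma}\times\X^A_\sigma$. By the closed graph theorem, $T$ is bounded from $\X^A_{1+\sigma}$ to $\X^A_\sigma$, so your density argument gives uniqueness among self-adjoint extensions with the prescribed domain.
\end{itemize}
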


The above result follows from the definition of the extrapolated spaces and the fact that self-adjoint operators have a bounded $H^\infty$-calculus, see e.g., \cite[Proposition 10.2.23]{Analysis2} and \cite[Theorem 15.3.9]{Analysis3}. The duality part can be found in \cite[Chapter V, Corollary 1.5.13]{Am}.

\subsubsection{Mapping properties of the Helmholtz projection on extrapolated spaces}
In this subsection, we mainly consider balls $B$ in $\R^d$, although the results below also hold for more general bounded smooth domains. The construction in Subsection \ref{sss:extrapolated_spaces} can be applied to either (minus) the Stokes operator $A$ as defined in \eqref{eq:stokes_operator} on $B$, or to the (minus) Dirichlet Laplacian 
\begin{equation}
\begin{aligned}
\label{eq:dirichlet_oper}
A_{\Dir}: \Do(A_{\Dir})&= H^{2}(B)\cap H^1_0(B)\to L^2(B),\\
 A_{\Dir} u&= - \Delta u.
\end{aligned}
\end{equation}
Correspondingly, we can define the two families of extrapolated spaces
\begin{equation}
\label{eq:extrapolated_spaces_sto_dir}
(\X^{\Sto}_\sigma(B))_{\sigma\in \R}\quad \text{ and }\quad 
(\X^{\Dir}_\sigma(B))_{\sigma\in \R},
\end{equation}
respectively.
To connect the above with the notion of weak solutions in the PDE sense, it is worth mentioning that, in the case $\sigma=-1/2$, the extrapolated Stokes operator $A_{-1/2}$, provided by Lemma \ref{l:properties_extrapolated_spaces}, satisfies
\begin{equation}
\label{eq:weak_formulation_stokes}
\langle A_{-1/2} v,\varphi\rangle = \int_{B} \nabla v : \nabla \varphi  \ \  \text{ for all } \ \varphi\in H^{1}_0(B;\R^d)\cap \Ls^2(B).
\end{equation}
This is a standard consequence of \eqref{eq:embedding_extrapolated}, an integration by parts, and the well-known identification $\Do(A^{1/2})=H_0^{1}(B;\R^d)\cap \Ls^2(B)$. A similar identity holds for the extrapolated Dirichlet Laplacian $(A_{\Dir})_{-1/2}$.

\smallskip

We conclude this subsection with the following important result on the extension of the Helmholtz projection $\p_B$ defined in \eqref{eq:def_Helmholtz} on the above-defined extrapolated spaces.
For notational convenience, we also set 
\begin{equation}
\label{eq:extrapolated_spaces_sto_dirRd}
\X^{\Dir}_{\sigma}(B;\R^d)\stackrel{{\rm def}}{=}(\X_{\sigma}^{\Dir}(B))^d \text{ for }\sigma\in\R.
\end{equation} 

\begin{lemma}[Extrapolated Helmholtz projection]
\label{l:boundedness_p_extrapolated_spaces}
Let $B$ be a ball in $\R^d$. The Helmholtz projection $\p_{B}$ can be uniquely extended as a bounded linear operator $\p_{-1,B}$ from $\X_{-1}^{\Dir}(B;\R^d)$ into $\X^{\Sto}_{-1}(B)$. In particular,
$$
\p_{-\sigma,B}\stackrel{{\rm def}}{=}
\p_{-1,B}|_{\X_{-\sigma}^{\Dir}(B;\R^d)} : \X_{-\sigma}^{\Dir}(B;\R^d)\to \X_{-\sigma}^{\Sto}(B) 
$$
boundedly for all $ \sigma \in [0,1]$.
\end{lemma}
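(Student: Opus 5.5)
We argue by duality, since $\p_B$ is self-adjoint on $L^2(B;\R^d)$ and the relevant negative spaces are duals of positive ones. By Lemma \ref{l:properties_extrapolated_spaces} (Duality), $\X^{\Dir}_{-1}(B;\R^d)=(\X^{\Dir}_1(B;\R^d))^*$ with $\X^{\Dir}_1(B;\R^d)=H^2(B;\R^d)\cap H^1_0(B;\R^d)$. Similarly, $\X^{\Sto}_{-1}(B)=(\X^{\Sto}_1(B))^*$ with $\X^{\Sto}_1(B)=\Do(A)=H^2\cap H^1_0\cap \Ls^2(B)$, both pairings extending the $L^2$ inner product. For $f\in L^2(B;\R^d)$ and $g\in \Do(A)$ we have $\p_B g=g$. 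Since $\p_B$ is the orthogonal projection, this gives
$$
(\p_B f, g)_{L^2}=(f,\p_B g)_{L^2}=(f,g)_{L^2},
$$
and therefore
$$
|(\p_B f,g)|\le \|f\|_{\X^{\Dir}_{-1}}\,\|g\|_{\X^{\Dir}_1}\lesssim \|f\|_{\X^{\Dir}_{-1}}\,\|g\|_{\X^{\Sto}_1}.
$$
The last step uses that the inclusion $\Do(A)\embed \Do(A_{\Dir})^d$ is continuous. This is clear because both graph norms are equivalent to the $H^2$-norm on $H^2\cap H^1_0$, using elliptic $H^2$-regularity for the Dirichlet Laplacian and the Cattabriga/Stokes $H^2$-estimate $\|u\|_{H^2}\lesssim \|Au\|_{L^2}$ on the ball.

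Taking the supremum over $g$ yields $\|\p_B f\|_{\X^{\Sto}_{-1}}\lesssim \|f\|_{\X^{\Dir}_{-1}}$ for all $f\in L^2(B;\R^d)$. By the density $L^2\stackrel{{\rm d}}{\embed}\X^{\Dir}_{-1}$ from \eqref{eq:embedding_extrapolated}, $\p_B$ extends uniquely to a bounded operator $\p_{-1,B}$. For the second claim, $\sigma=0$ is the $L^2$-boundedness of $\p_B$ (with $\X^{\Sto}_0=\Ls^2(B)$), and $\sigma=1$ is what we just proved. For $\sigma\in(0,1)$ we use complex interpolation, Lemma \ref{l:properties_extrapolated_spaces} (Interpolation): $[\X_0,\X_{-1}]_\sigma=\X_{-\sigma}$ for both scales. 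Since $\p_{-1,B}$ and $\p_B$ are compatible on $L^2$, $\p_{-1,B}$ maps $\X^{\Dir}_{-\sigma}(B;\R^d)$ boundedly into $\X^{\Sto}_{-\sigma}(B)$. The restriction is well-defined because $\X^{\Dir}_{-\sigma}\embed \X^{\Dir}_{-1}$.

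The main obstacle is justifying that the duality pairings are compatible, namely that the pairing $\langle\cdot,\cdot\rangle^{A}_{-1,1}$ restricted to $L^2\times \X_1$ is the $L^2$ inner product for both operators. This follows from the formula $((A^{-1})_{-1}x^*,Ax)_X$ in Lemma \ref{l:properties_extrapolated_spaces}, which for $x^*\in X$ reduces to $(x^*,x)_X$ by self-adjointness. The other step needing care is the $H^2$-equivalence for the Stokes domain, which is standard regularity on smooth bounded domains.
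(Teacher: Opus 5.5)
Your proposal is correct and follows essentially the same route as the paper. Both arguments bound $\|\p_B f\|_{\X^{\Sto}_{-1}(B)}$ by duality against $g\in\Do(A)$, using self-adjointness of $\p_B$, the identity $\p_B g=g$, and the Stokes $H^2$-estimate $\|g\|_{H^2}\lesssim\|Ag\|_{L^2}$ (which places $g$ continuously in $\X^{\Dir}_1(B;\R^d)$); they then extend by density of $L^2(B;\R^d)$ in $\X^{\Dir}_{-1}(B;\R^d)$ and obtain the cases $\sigma\in(0,1)$ by complex interpolation.
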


The above is proven in \cite[Proposition 9.14]{KKW}, where it is also shown that $\p_{-1,B}$ is surjective. For the reader's convenience, we include a short proof of Lemma \ref{l:boundedness_p_extrapolated_spaces}. 

\begin{proof}
Recall that $A$ is (minus) the Stokes operator as defined in \eqref{eq:stokes_operator} and $\X_{1}^\Sto(B)=\Do(A)$.
By complex interpolation and Lemma \ref{l:properties_extrapolated_spaces}, it suffices to consider $\sigma=1$.  
In the latter case, for any $f\in L^2(B;\R^d)$, the duality in Lemma \ref{l:properties_extrapolated_spaces} yields
\begin{align*}
\|\p_{-1,B} f\|_{\X_{-1}^{\Sto}(B)}
&=\sup_{\|g\|_{\Do(A)}\leq 1} |\langle \p f, g\rangle_{-1,1}^{\Sto}| \\
&\stackrel{(i)}{=}\sup_{\|g\|_{\Do(A)}\leq 1} \Big|\int_{B}\p f\cdot g\Big| \stackrel{(ii)}{=} \sup_{\|g\|_{\Do(A)}\leq 1}\Big|\int_{B} f\cdot g\Big|\\
&\stackrel{(iii)}{\leq }\|f\|_{\X_{-1}^{\Dir}(B;\R^d)}\Big( \sup_{\|g\|_{\Do(A)}\leq 1}\|g\|_{H^{2}(B;\R^d)}\Big) 
\stackrel{(iv)}{\leq} C \|f\|_{\X_{-1}^{\Dir}(B;\R^d)},
\end{align*}
where in $(i)$ we used the compatibility of the duality pairing (see the last item in Lemma \ref{l:properties_extrapolated_spaces}), in $(ii)$ the self-adjointness of $\p$ and $\p g =g$ as $g\in \Do(A)$, in $(iii)$ that $g|_{\partial B}=0$ if $g\in \Do(A)$ and in $(iv)$ the estimate $\|g\|_{H^{2}(B;\R^d)}\leq C \|A g\|_{L^2(B;\R^d)}$ for $g\in \Do(A)$ (i.e., elliptic regularity for the Stokes operator, or the invertibility of $A$). 

The conclusion follows from $\X_{0}^{\Dir}(B;\R^d)=L^2(B;\R^d)$ and density, see \eqref{eq:embedding_extrapolated}.
\end{proof}

To conclude, it is worth noticing that the target space of $\p_{-1,B}$ cannot be replaced by $\X_{-1}^{\Dir}(B;\R^d)$. Indeed, by duality and Lemma \ref{l:properties_extrapolated_spaces}, the boundedness on $\X_{-1}^{\Dir}(B;\R^d)$ implies that the usual Helmholtz operator restricts as a bounded linear operator on $\X_{1}^{\Dir}(B)=H^{2}(B;\R^d)\cap H^1_0(B;\R^d)$, which is notoriously false due to the Dirichlet boundary conditions.

\subsection{Well-posedness of the homogenized SPDE -- Proof of Proposition 
\ref{prop:local_well_posedness_homogenized_SPDE}}
\label{ss:local_well_posedness_homogenized_SPDE}
We begin by collecting some observations. As usual, for simplicity, we assume $\varrho=1$.
As noticed below \eqref{eq:interpolation_and_boundary_uNdelta}, from the regularity of local solutions $(v^{N,\delta},\pi^{N,\delta}, (\wt{\pi}_{k,\alpha}^{N,\delta})_{k,\alpha})$ to \eqref{eq:turbulent_Stokes_scaling_quantitative} (see Definition \ref{def:local_solutions_NSE}), it follows that 
$$
v^{N,\delta}|_{I\times \partial B}\in L^2(I\times \partial B).
$$
Moreover, as $v^{N,\delta}\in L^2(I;H^1(B;\R^d))$ a.s.\ and $\nabla \cdot v^{N,\delta}=0$ in $\D'(Q)$ a.e.\ on $I\times \O$, we have (cf.\ the comments below Assumption \ref{ass:homogenized_PDE_linear_model_problem})
$$
\int_{\partial B} \Big(v^{N,\delta} - \fint_{B} v^{N,\delta}\Big)\cdot n = 0\ \  \text{ a.e.\ on }I\times \O.
$$
Hence, it follows from Proposition \ref{prop:stokes_rough} that there exists a unique solution $(\uh^{N,\delta},\ph^{N,\delta})$ to \eqref{eq:homogenized_PDE_linear} in the transposition sense (see Definition \ref{def:solution_homogenized}) such that   
\begin{equation}
\label{eq:solution_homogenized_with_low_integrability}
\uh^{N,\delta}\in L^{2}(Q)\quad \text{ and }\quad \ph^{N,\delta} \in H^{-1}(I;H^{-1}(B)), 
\end{equation} 
with corresponding bounds. 
In particular, $\uh^{N,\delta}$ satisfies, a.s.,
\begin{align}
\nonumber
\int_{Q} \uh^{N,\delta} \cdot \Phi 
&=\int_{B} \psi(t_1)\cdot v^{N,\delta} (t_1)+\int_{I\times \partial B} \Big(v^{N,\delta}-\fint_B v^{N,\delta}\Big)\cdot \big(n q-  (1+D_d \mu)\partial_n \psi \big)\\
\label{eq:homogenized_PDE_weak_formulation}
& -\int_Q   (F-(1-D_d) \mu\, G^\top) :\nabla \psi,
\end{align}
for all $\Phi\in L^{2}(Q)$, where $\psi$ is the solution to the Stokes backward problem \eqref{eq:Phi_psi_problem} with $\kappa=1+D_d \mu$, and $t_1=t_0-1$. In the above, we used that $\int_{B} \psi(t_1) \cdot \fint_B v^{N,\delta}(t_1)=0$ which follows from the fact that 
\begin{equation}
\label{eq:phi_Ls2_null_average}
\int_{B} \phi=0 \   \text{ for all } \ \phi\in \Ls^2(B).
\end{equation}
The latter is a consequence of a standard density argument, and that $\int_B \phi_i =\int_B  \phi \cdot \nabla x_i=0$ for all divergence-free vector fields $\phi\in C^{\infty}_{{\rm c }}(B;\R^d)$.

\smallskip

To prove Proposition \ref{prop:local_well_posedness_homogenized_SPDE} and Theorem \ref{t:universal_scaling_limit}, let us consider the difference process
\begin{equation}
\label{eq:difference_process_vN}
w^{N,\delta}\stackrel{{\rm def}}{=}\uh^{N,\delta}-\Big(v^{N,\delta}- \fint_B v^{N,\delta}\Big).
\end{equation} 
In the following, we show that $w^{N,\delta}$ has more regularity than what can be obtained from the regularity of $v^{N,\delta}$ and $\uh^{N,\delta}$, see Definition \ref{def:local_solutions_NSE} and \eqref{eq:solution_homogenized_with_low_integrability}, respectively. 
To this end, we first formally deduce an SPDE for the process $w^{N,\delta}$. 
First, note that, by formally taking the spatial average in the SPDE \eqref{eq:turbulent_Stokes_scaling_quantitative}, the process $\fint_B v^{N,\delta}$ solves 
\begin{align}
\label{eq:identity_pressure_mean_formal_derivation_equation_w}
\partial_t \fint_B v^{N,\delta} 
&= P^{N,\delta} + \sqrt{c_d \mu}\sum_{k,\alpha} \wt{P}^{N,\delta}_{k,\alpha} \dot{W}^{k,\alpha} \\
\nonumber
&= -\nabla (p^{N,\delta} \cdot x )-
\sqrt{c_d \mu} \sum_{k,\alpha}\nabla (\wt{p}^{N,\delta}_{k,\alpha} \cdot x) \dot{W}^{k,\alpha} ,
\end{align}
for some random constant-in-space vectors $P^{N,\delta}$, $\wt{P}^{N,\delta}_{k,\alpha}$, $p^{N,\delta} $ and $\wt{p}^{N,\delta}_{k,\alpha}$. In particular, at least formally, they only contribute to the dynamics via pressure terms
(as in Section \ref{s:caccioppoli}, the above reasoning can be made rigorous if instead of $\fint_B v^{N,\delta}$ we consider $\int_B \phi^2 v^{N,\delta}$ for some test function $\phi$ with $\int_B \phi^2=1$).

\smallskip

Taking the difference between \eqref{eq:turbulent_Stokes_scaling_quantitative} and \eqref{eq:homogenized_PDE_linear}, and using the identity \eqref{eq:identity_pressure_mean_formal_derivation_equation_w}, the process $w^{N,\delta}$ formally solves the following SPDE: 
\begin{equation}
\label{eq:turbulent_Stokes_scaling_quantitative_vdifference_formal}
\left\{
\begin{aligned}
\partial_t w^{N,\delta} &=-\nabla R^{N,\delta}+ (1+\mu D_d) \Delta w^{N,\delta}+ \Errd\\ 
&\qquad\ \  + \sum_{k,\alpha} [-\nabla \wt{R}_{k,\alpha}^{N,\delta}+\Errs_{k,\alpha}]\,\dot{W}^{k,\alpha}_t&\text{ on }&Q,\\
 \nabla \cdot w^{N,\delta}&=0&\text{ on }&Q,\\
 w^{N,\delta} &=0 &\text{ on }&\partial_{\pb }Q,
 \end{aligned}
\right.
\end{equation}
where 
$$
R^{N,\delta} = \ph^{N,\delta} -\pi^{N,\delta} + p^{N,\delta}\cdot x, \quad \text{ and }\quad 
\wt{R}_{k,\alpha}^{N,\delta} =\sqrt{c_d \mu} \big(-\wt{\pi}_{k,\alpha}^{N,\delta} +\wt{p}_{k,\alpha}^{N,\delta}\cdot x\big),  
$$
while $\Errd$ and $\Errs=(\Errs_{k,\alpha})_{k,\alpha}$ denote the deterministic and stochastic errors between the local solution $(v^{N,\delta},\pi^{N,\delta},(\wt{\pi}^{N,\delta}_{k,\alpha})_{k,\alpha})$ and the homogenized one $(\uh^{N,\delta},\ph^{N,\delta})$: 
\begin{align}
\label{eq:def_error_det}
\Errd&\stackrel{{\rm def}}{=}
-(1-D_d) \mu(\Delta v^{N,\delta}+\nabla \cdot G^\top) + c_d \mu \sum_{k,\alpha} \theta_k^{N}\nabla\cdot  (\nabla \wt{\pi}^{N,\delta}_{k,\alpha}\otimes \sigma^\delta_{-k,\alpha}), \\
\label{eq:def_error_sto}
\Errs_{k,\alpha}
&\stackrel{{\rm def}}{=}
-\sqrt{c_d \mu}\,\theta^{N}_{k}\big[(\sigma^\delta_{k,\alpha}\cdot\nabla) v^{N,\delta}+\sigma^\delta_{k,\alpha}\cdot G\big].
\end{align}
The convergence of the above errors to zero is more subtle and is proven in Subsections \ref{ss:quantitative_scaling_limits} and \ref{ss:preliminary_results_convergence} below. 
The following uniform-in-$(N,\delta)$ estimates suffice to prove Proposition \ref{prop:local_well_posedness_homogenized_SPDE}.

\begin{lemma}[Uniform bound for errors -- Weak setting]
\label{l:error_uniform_bound}
Let $Q$ be a parabolic cylinder with side length $\varrho\in (0,1/2]$ and center $(t_0,x_0)\in \R_+\times \R^d$.
For all $r\in (1,\infty)$, there exists a constant $C>0$ depending only on $\mu,r,\varrho$ and $d$ such that, for all $N\geq 1$, $\delta\in (0,1]$ and any local solution $(v^{N,\delta},\pi^{N,\delta},(\wt{\pi}^{N,\delta}_{k,\alpha})_{k,\alpha})$ to \eqref{eq:turbulent_Stokes_scaling_quantitative}, it holds that 
\begin{align*}
\big(\E \|\Errd\|_{L^2(I;H^{-1}(B))}^r \big)^{1/r}
+ \big(\E\|(\Errs_{k,\alpha})_{k,\alpha}\|_{L^2(Q;\ell^2)}^r\big)^{1/r}
\leq C  
\big(\E\|G\|_{L^2(Q)}^r\big)^{1/r}\qquad &\\+
C\big(\E\|\nabla v^{N,\delta}\|_{L^2(Q)}^r \big)^{1/r}
+C\big(\E \|(\nabla \wt{\pi}_{k,\alpha}^{N,\delta})_{k,\alpha}\|_{L^2(Q;\ell^2)}^r\big)^{1/r}.&
\end{align*}
\end{lemma}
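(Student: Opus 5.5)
We bound each error term separately, using only the boundedness of the rescaled coefficients, $|\sigma^\delta_{k,\alpha}|\le 1$ pointwise, together with $\|\theta^N\|_{\ell^2}=1$. No oscillation or cancellation is needed at this level, which is why the constant is uniform in $(N,\delta)$.

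\emph{Stochastic error.} By the definition \eqref{eq:def_error_sto}, pointwise on $Q$ we have
\[
\sum_{k,\alpha}|\Errs_{k,\alpha}|^2\le 2c_d\mu\sum_{k,\alpha}(\theta_k^N)^2\big(|\sigma^\delta_{k,\alpha}|^2|\nabla v^{N,\delta}|^2+|\sigma^\delta_{k,\alpha}|^2|G|^2\big)\le C(|\nabla v^{N,\delta}|^2+|G|^2).
\]
This uses $|a_{k,\alpha}|=1$ and $\sum_{k,\alpha}(\theta^N_k)^2=(d-1)\|\theta^N\|_{\ell^2}^2$. Integrating over $Q$ and taking $L^r(\O)$-norms gives the bound for $\|(\Errs_{k,\alpha})_{k,\alpha}\|_{L^2(Q;\ell^2)}$.

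\emph{Deterministic error.} The terms $(1-D_d)\mu(\Delta v^{N,\delta}+\nabla\cdot G^\top)$ are divergences of $L^2$ fields. Testing against $\phi\in H^1_0(B)$ and integrating by parts gives
\[
\|\Delta v^{N,\delta}+\nabla\cdot G^\top\|_{H^{-1}(B)}\le \|\nabla v^{N,\delta}\|_{L^2(B)}+\|G\|_{L^2(B)}
\]
a.e.\ in time. The corrector term is also in divergence form, $\nabla\cdot\big(\sum_{k,\alpha}\theta^N_k\nabla\wt{\pi}^{N,\delta}_{k,\alpha}\otimes\sigma^\delta_{-k,\alpha}\big)$. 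Its $H^{-1}(B)$ norm is therefore at most the $L^2(B)$ norm of the matrix field, and by Cauchy--Schwarz in $(k,\alpha)$,
\[
\Big|\sum_{k,\alpha}\theta^N_k\nabla\wt{\pi}^{N,\delta}_{k,\alpha}\otimes\sigma^\delta_{-k,\alpha}\Big|\le\Big(\sum_{k,\alpha}(\theta^N_k)^2|\sigma^\delta_{-k,\alpha}|^2\Big)^{1/2}\|(\nabla\wt{\pi}^{N,\delta}_{k,\alpha})_{k,\alpha}\|_{\ell^2}\le C\|(\nabla\wt{\pi}^{N,\delta}_{k,\alpha})_{k,\alpha}\|_{\ell^2}.
\]
Squaring, integrating over $I$, and taking $r$-th moments then yields the estimate for $\|\Errd\|_{L^2(I;H^{-1}(B))}$.

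The only point requiring care is the complex formulation. The fields $\wt{\pi}_{k,\alpha}$ may be complex valued with $\overline{\wt{\pi}_{k,\alpha}}=\wt{\pi}_{-k,\alpha}$, so the sums are real and the Cauchy--Schwarz step applies verbatim with moduli. Equivalently, one can pass to the real decomposition \eqref{eq:equivalence_complex_real_noise}. Nothing here is a genuine obstacle; the lemma is a direct consequence of the bounds $\|(\theta_k^N\sigma^\delta_{k,\alpha})_{k,\alpha}\|_{L^\infty(\ell^2)}\lesssim 1$.
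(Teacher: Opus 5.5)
Your proof is correct and follows the paper's route: the paper calls the lemma an immediate consequence of the definitions \eqref{eq:def_error_det}--\eqref{eq:def_error_sto} together with $\|\theta^N\|_{\ell^2}=1$. Your argument supplies exactly those details, namely the pointwise bound via $|\sigma^\delta_{k,\alpha}|=1$ for $\Errs$, and the divergence form of every term in $\Errd$ combined with Cauchy--Schwarz in $(k,\alpha)$ for the corrector.
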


The previous result is an immediate consequence of the definitions \eqref{eq:def_error_det}-\eqref{eq:def_error_sto} and $\|\theta^N\|_{\ell^2}=1$ for all $N\geq 1$.
Due to Lemma \ref{l:error_uniform_bound} and 
$$
\X^{\Dir}_{-1/2}=(H^1_0(B))^*=H^{-1}(B),
$$ 
(see Lemma \ref{l:properties_extrapolated_spaces} for the last equality), we may formally apply the extrapolated Helmholtz projection $\p_{-1/2,B}$ to \eqref{eq:turbulent_Stokes_scaling_quantitative_vdifference_formal} (see Lemma \ref{l:boundedness_p_extrapolated_spaces}), and obtain the following SPDE written in its natural integrated form: 
\begin{align}
\label{eq:turbulent_Stokes_scaling_quantitative_vdifference_true}
z^{N,\delta}(t) 
&= \int_{t_0-1}^t \big(-(1+D_d \mu) A_{-1/2} z^{N,\delta} + \p_{-1/2,B} \Errd \big) \,\dd s\\
\nonumber
&+ \sum_{k,\alpha} \int_{t_0-1}^t \p_{B} \Errs_{k,\alpha} \,\dd W^{k,\alpha}_s
\end{align}
a.s.\ for all $t\in I$,
where we used $\p_{-1/2,B}=\p_B$ on $L^2(B;\R^d)$. As above, for simplicity, we consider only the case $\varrho=1$.
Here, we formally used that $\p_{-1/2, B}$ annihilates gradients, and we denote the solution to the above SPDE by $z^{N,\delta}$ instead of $w^{N,\delta}$ as, at the moment, we only know that $z^{N,\delta}$ is defined as in \eqref{eq:turbulent_Stokes_scaling_quantitative_vdifference_true}.
The proof that $w^{N,\delta}$ and $z^{N,\delta}$ coincide is part of the proof of Proposition \ref{prop:local_well_posedness_homogenized_SPDE} below.

By the semigroup approach to \cite[Chapter 5]{DPZ}, the stochastic evolution equation \eqref{eq:turbulent_Stokes_scaling_quantitative_vdifference_true} admits a unique solution given by the stochastic convolution:
\begin{align}
\label{eq:definition_vN_rates}
z^{N,\delta}(t) &\stackrel{{\rm def}}{=} \int_{t_0-1}^t e^{-(1+D_d\mu)A_{-1/2}(t-s)} \p_{-1/2,B} \Errd(s)\,\dd s\\
\nonumber
& + \int_{t_0-1}^t \big(e^{-(1+D_d\mu)A_{-1/2}(t-s)}\p_{B} \Errs_{k,\alpha}(s)\big)_{k,\alpha}\,\dd \mathcal{W}_s.
\end{align}
Indeed, in the above, $\mathcal{W}$ is the cylindrical Brownian motion associated with $(W^{k,\alpha})_{k,\alpha}$ (see \eqref{eq:def_cylindrical_noise}), and we used that $A_{-1/2}$ is a positive, invertible, and self-adjoint operator on $\X^A_{-1/2}(B)$, and therefore generates an analytic semigroup $(e^{-(1+D_d\mu)A_{-1/2}t})_{t\geq 0}$.

From Lemma \ref{l:error_uniform_bound}, it readily follows that the right-hand side of \eqref{eq:definition_vN_rates} is well-defined and that the following result holds:

\begin{lemma}
\label{l:regularity_v_u}
Let $Q$ be a parabolic cylinder with side length $\varrho\in (0,1/2]$ and center $(t_0,x_0)\in \R_+\times \R^d$. Fix $r\in (1,\infty)$, $N\geq 1$ and $\delta\in (0,1]$. 
Let $(v^{N,\delta},\pi^{N,\delta},(\wt{\pi}_{k,\alpha}^{N,\delta})_{k,\alpha})$ be a local solution to \eqref{eq:turbulent_Stokes_scaling_quantitative}. Let $\Errd$ and $\Errs$ be as in \eqref{eq:def_error_det} and \eqref{eq:def_error_sto}, respectively. Then, the process $z^{N,\delta}$ is well-defined with paths
$$
z^{N,\delta} \in L^2(I;H^{1}_0(B;\R^d))\cap L^\infty(I;\Ls^2(B)) \ \text{a.s.,}
$$
and moreover,
\begin{align*}
\big(\E\|z^{N,\delta}\|_{L^\infty(I;L^2(B))}^r \big)^{1/r}
+
\big(\E\|z^{N,\delta}\|_{L^2(I;H^1(B))}^r\big)^{1/r}
\lesssim\big( \E\|G\|_{L^2(Q)}^r\big)^{1/r}\qquad \qquad\qquad&\\
+\big(\E\|\nabla v^{N,\delta}\|_{L^2(Q)}^r\big)^{1/r}
 +\big(\E \|(\nabla \wt{\pi}^{N,\delta}_{k,\alpha})_{k,\alpha}\|_{L^2(Q;\ell^2)}^r\big)^{1/r},&
\end{align*}
where the implicit constant depends only on $\mu,r,\varrho$ and $d$.
\end{lemma}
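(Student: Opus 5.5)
We plan to treat $z^{N,\delta}$ in \eqref{eq:definition_vN_rates} as the mild solution of a linear abstract stochastic evolution equation on the Gelfand triple $\X^{\Sto}_{1/2}(B)\embed \X^{\Sto}_0(B)=\Ls^2(B)\embed \X^{\Sto}_{-1/2}(B)$, with operator $(1+D_d\mu)A_{-1/2}$, deterministic forcing $\p_{-1/2,B}\Errd$ and stochastic forcing $(\p_B\Errs_{k,\alpha})_{k,\alpha}$. Recall $\X^{\Sto}_{1/2}(B)=\Do(A^{1/2})=H^1_0(B;\R^d)\cap\Ls^2(B)$, whose norm is equivalent to $\|\nabla\cdot\|_{L^2(B)}$, and that $A_{-1/2}$ is positive, invertible and self-adjoint on $\X^{\Sto}_{-1/2}(B)$ by Lemma \ref{l:properties_extrapolated_spaces}.

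The first step checks that the data have the right integrability. Since $\X^{\Dir}_{-1/2}(B)=H^{-1}(B)$, Lemma \ref{l:boundedness_p_extrapolated_spaces} with $\sigma=1/2$ gives $\|\p_{-1/2,B}\Errd\|_{\X^{\Sto}_{-1/2}}\lesssim\|\Errd\|_{H^{-1}(B)}$. Also $\p_B$ is an $L^2$-contraction, so $\|(\p_B\Errs_{k,\alpha})_{k,\alpha}\|_{L^2(B;\ell^2)}\le\|(\Errs_{k,\alpha})_{k,\alpha}\|_{L^2(B;\ell^2)}$ (apply $\p_B$ componentwise and use the $\gamma$-Fubini identification, which is trivial in Hilbert spaces). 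By Lemma \ref{l:error_uniform_bound},
\begin{equation*}
\p_{-1/2,B}\Errd\in L^r(\O;L^2(I;\X^{\Sto}_{-1/2}(B))),\qquad (\p_B\Errs_{k,\alpha})_{k,\alpha}\in L^r(\O;L^2(I;\ell^2(\Ls^2(B)))),
\end{equation*}
and both norms are bounded by the right-hand side of the claim. Progressive measurability follows from that of $v^{N,\delta}$, $\wt{\pi}^{N,\delta}_{k,\alpha}$ and $G$.

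The second step obtains the bounds. A self-adjoint positive operator has maximal $L^2$-regularity and stochastic maximal $L^2$-regularity on the triple above; this is the classical variational theory, or Krylov's $L^2$ theory. Hence the deterministic convolution lies in $L^2(I;\X^{\Sto}_{1/2})\cap C(\overline{I};\Ls^2)$, with norm controlled by $\|\p_{-1/2,B}\Errd\|_{L^2(I;\X^{\Sto}_{-1/2})}$ pathwise, and we then take $r$-th moments. For the stochastic convolution we apply It\^o's formula for $\|z\|^2_{L^2}$ (\cite[Theorem 4.2.5]{LR15}), which holds for variational solutions and hence for the mild one, whose uniqueness coincides. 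This gives
\begin{equation*}
\|z(t)\|^2_{L^2}+2(1+D_d\mu)\int_{t_0-1}^t\|\nabla z\|^2_{L^2}\le 2\int_{t_0-1}^t\langle \p\Errd,z\rangle+\int_{t_0-1}^t\|\p\Errs\|^2_{L^2(B;\ell^2)}+M_t,
\end{equation*}
where $M$ is the martingale term. The main technical point is taking $r/2$-th moments of $\sup_t$ for general $r\in(1,\infty)$. We bound $\E\sup|M|^{r/2}$ by Burkholder--Davis--Gundy, which yields $\E[\sup\|z\|^2_{L^2}\int\|\p\Errs\|^2]^{r/4}$, and absorb it via Young's inequality exactly as in the proof of Lemma \ref{l:intermediate_estimate_pressure_deterministic0}. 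We absorb the deterministic pairing using $\langle\p\Errd,z\rangle\le\varepsilon\|\nabla z\|^2+C_\varepsilon\|\Errd\|^2_{H^{-1}}$, using Poincar\'e on $H^1_0$ so that the full $H^1$-norm is controlled. For $r<2$, BDG with exponent $r/2<1$ is still valid for continuous local martingales, so no case split is needed.

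Finally, the resulting path regularity $z^{N,\delta}\in L^2(I;H^1_0(B;\R^d))\cap L^\infty(I;\Ls^2(B))$ a.s. follows from finiteness of these moments, or alternatively from the pathwise a.s. statement via localization with stopping times. The constant depends only on $\mu,r,d$ and, through Poincar\'e and the Helmholtz bound, on $\varrho$. The well-definedness of the stochastic integral is a byproduct of the first step.
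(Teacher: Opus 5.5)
Your proof is correct, but it reaches the estimate by a more hands-on route than the paper.

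\textbf{The paper's route.} It performs exactly your first step: via Lemmas \ref{l:boundedness_p_extrapolated_spaces} and \ref{l:error_uniform_bound}, $\Errd\in L^r(\O;L^2(I;\X^{\Dir}_{-1/2}(B;\R^d)))$ and $\Errs\in L^r(\O;L^2(I;\ell^2(L^2(B;\R^d))))$. It then simply invokes abstract deterministic and stochastic maximal $L^2$-regularity for the positive self-adjoint operator $A_{-1/2}$ on the Hilbert space $\X^A_{-1/2}(B)$. This is combined with the maximal inequality for stochastic convolutions (\cite[Corollary 7.4]{NVW13}, \cite{VW11_notes}) and the identity $\X^A_{1/2}(B)=\Hs^1_0(B)$. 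No It\^o formula is used. Since $z^{N,\delta}$ is \emph{defined} as the stochastic convolution \eqref{eq:definition_vN_rates}, this applies directly to the object at hand and avoids identifying mild and variational solutions. It is also the same machinery the paper reuses for $A_{-1}$ on $\X^A_{-1}(B)$ in the proof of Theorem \ref{t:universal_scaling_limit}.

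\textbf{Your route.} You apply It\^o's formula for $\|z\|_{L^2}^2$ on the triple $\X^{\Sto}_{1/2}\embed\Ls^2\embed\X^{\Sto}_{-1/2}$, using $\langle A_{-1/2}z,z\rangle=\|\nabla z\|_{L^2}^2$, and then BDG with Young absorption as in Lemma \ref{l:intermediate_estimate_pressure_deterministic0}. This is more elementary and self-contained. It gives pathwise continuity in $\Ls^2(B)$ directly from the It\^o formula, and BDG for continuous local martingales treats every $r\in(1,\infty)$ at once.

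\textbf{What your route costs.} The price is bookkeeping you only sketch.
\begin{itemize}
\item The variational theory of \cite{LR15} is formulated for square-integrable data. For $r<2$, existence of the variational solution and its coincidence with $z^{N,\delta}$ therefore require truncating the data at stopping times.
\item Absorbing the BDG term needs the a priori finiteness of $\E\sup_{I}\|z\|_{L^2}^r$. This is again obtained by stopping, and should be stated before the absorption rather than only at the end.
\end{itemize}
Finally, your separate maximal-regularity treatment of the deterministic convolution is redundant, since your It\^o identity already contains the $\Errd$ pairing.
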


\begin{proof}
From Lemmas \ref{l:boundedness_p_extrapolated_spaces} and \ref{l:error_uniform_bound}, it follows that 
\begin{align*}
\Errd &\in L^r(\O;L^2(I;\X^{\Dir}_{-1/2}(B;\R^d))),\\
\Errs &\in L^r(\O;L^2(I;\ell^2(L^{2}(B;\R^d)))),
\end{align*}
with corresponding bounds depending only on $\mu,r,\varrho$ and $d$.
Thus, the claim follows from the self-adjointness of $A_{-1/2}$ as well as the well-known identity $\X_{1/2}^{A}(B)=\Hs^1_0(B)$ (see \cite[Corollary 7.4]{NVW13} and \cite{VW11_notes}).
\end{proof}

With the above results at our disposal, we are ready to prove Proposition \ref{prop:local_well_posedness_homogenized_SPDE}.

\begin{proof}[Proof of Proposition \ref{prop:local_well_posedness_homogenized_SPDE}]
We begin by recalling that, as discussed at the beginning of this subsection, the homogenized Stokes problem \eqref{eq:homogenized_PDE_linear} admits a unique solution in the sense of Definition \ref{def:solution_homogenized} and is $L^{2}$-integrable, see \eqref{eq:solution_homogenized_with_low_integrability}. 
In the next two steps, we overcome this apparent limitation.

\smallskip

\emph{Step 1: Bound on $\uh^{N,\delta}$.}
From Lemma \ref{l:regularity_v_u}, it suffices to show  
\begin{equation}
\label{eq:identity_uh_uN_wN}
w^{N,\delta}=z^{N,\delta} \ \text{ a.e.\ on }I\times \O,
\end{equation}
where $w^{N,\delta}$ is the difference process defined in \eqref{eq:difference_process_vN}. 
To prove the previous, from the separability of $L^2(Q)$ and density of smooth functions in the latter, it suffices to prove that, for all $\Phi\in C^\infty_{{\rm c}}(I;L^2(B))$,
\begin{equation}
\label{eq:claim_uniqueness_uuhv}
\int_{Q} \Big(\uh^{N,\delta} - v^{N,\delta} +\fint_B v^{N,\delta}- z^{N,\delta}\Big)\cdot \Phi=0 \text{ a.s. }
\end{equation}
Fix $\Phi\in C^\infty_{{\rm c}}(I;L^2(B))$, and let $(\psi,q)$ be the unique solution to the backwards Stokes system \eqref{eq:Phi_psi_problem} with $\kappa=1+D_d \mu$.
To show \eqref{eq:claim_uniqueness_uuhv}, we begin by proving that, a.s., 
\begin{align}
\label{eq:identity_average_Phi}
\int_Q \Big(\fint_B v^{N,\delta} \Big)\cdot \Phi
=
& \int_{I\times \partial B}  \Big(\fint_B v^{N,\delta} \Big)\cdot \Big( n q - (1+D_d \mu)\partial_n \psi\Big).
\end{align}
Note that, letting $\kappa=1+D_d \mu$,
\begin{align*}
&\int_Q \Big(\fint_B v^{N,\delta} \Big)\cdot \Phi
= \int_Q \Big(\fint_B v^{N,\delta} \Big)\cdot \Big(
-\partial_t \psi -\kappa\Delta \psi +\nabla q\Big)\\
&\quad =-\int_I \Big(\fint_B v^{N,\delta} \Big) \cdot \partial_t \Big(\int_B \psi\Big)
-\kappa \int_{I\times \partial B}  \Big(\fint_B v^{N,\delta} \Big)\cdot \partial_n \psi  +\int_{I\times \partial B} \Big(\fint_B v^{N,\delta} \Big)\cdot n q\\
&\quad=
\int_{I\times \partial B}  \Big(\fint_B v^{N,\delta} \Big)\cdot \big( n q - \kappa\,\partial_n \psi\big),
\end{align*}
where in the last equality we used \eqref{eq:phi_Ls2_null_average}.
Next, we rewrite the term $\int_{Q} z^{N,\delta}\cdot \Phi$ more conveniently. Due to the regularity of $\psi$ in \eqref{eq:regularity_backward_stokes_problem}, we can apply It\^o's formula to the functional 
$$
(t,z^{N,\delta})\mapsto  \langle z^{N,\delta}, \psi(t)\rangle_{-1/2,1/2}^A, 
$$
and using the terminal condition $\psi(t_0)=0$ in \eqref{eq:Phi_psi_problem}, it follows from \eqref{eq:turbulent_Stokes_scaling_quantitative_vdifference_true} that, a.s.,
\begin{align}
\label{eq:vN_ito_formula}
0&=\int_{t_1}^{t_0} \big(\langle z^{N,\delta}(s), \partial_t \psi(s)\rangle_{-1/2,1/2}^A- (1+D_d \mu)\langle A_{-1/2} z^{N,\delta}(s),  \psi(s)\rangle_{-1/2,1/2}^A\big) \,\dd s \\
\nonumber
&+\int_{t_1}^{t_0}\langle \p_{-1/2,B}\Errd(s) , \psi (s)\rangle_{-1/2,1/2}^A\,\dd s \\
\nonumber
&+\sum_{k,\alpha}\int_{t_1}^{t_0} \langle \p_B \Errs_{k,\alpha}(s),\psi(s)\rangle_{-1/2,1/2}^A \,\dd W^{k,\alpha}_s,
\end{align}
where $t_1=t_0-1$.
Let us point out that in the above application of the It\^o formula, we used that $ \psi\in C^{\infty}(\overline{I};\Do(A))$ since $\Phi\in C^{\infty}_{{\rm c}}(I;L^2(B))$ by assumption. 

Next, we rewrite the terms on the right-hand side of \eqref{eq:vN_ito_formula}. From the regularity of $\psi$ and the identity in \eqref{eq:weak_formulation_stokes} for $A_{-1/2}$, a.s.\ on $I$, it holds that  
\begin{align*}
\langle z^{N,\delta}, \partial_t \psi \rangle_{-1/2,1/2}^A
&-(1+D_d \mu)\langle A_{-1/2} z^{N,\delta},  \psi \rangle_{-1/2,1/2}^A\\
&= \int_{B} z^{N,\delta} \cdot \partial_t \psi -(1+D_d \mu)\int_{B} \nabla z^{N,\delta} :\nabla \psi  \\
&\stackrel{(i)}{=} \int_{B} z^{N,\delta} \cdot (\partial_t \psi +(1+D_d \mu)  \Delta \psi)  \\
&\stackrel{(ii)}{=} \int_{B} z^{N,\delta} \cdot (\nabla q - \Phi)\\
&\stackrel{(iii)}{=} -\int_{B} z^{N,\delta} \cdot \Phi,
\end{align*}
where in $(i)$ and $(iii)$ we used $z^{N,\delta}\in H^1_0(B;\R^d)$ and $\nabla \cdot z^{N,\delta}=0$ a.s.\ on $I$, respectively; and in $(ii)$ that $\psi$ solves the backward Stokes system  \eqref{eq:Phi_psi_problem} with $\kappa=1+D_d \mu$. 
To rewrite the remaining terms on the left-hand side of \eqref{eq:vN_ito_formula}, let us note that, for all $f\in \X_{-1/2}^{{\Dir}}$ and $\psi\in \Do(A)$,
\begin{equation}
\label{eq:useful_identity_extrapolated_spaces}
\langle \p_{-1/2,B}f , \psi \rangle_{-1/2,1/2}^A
= \langle f , \psi\rangle_{-1/2,1/2}^{\Dir},
\end{equation}
where $\langle\cdot,\cdot \rangle_{-1/2,1/2}^{\Dir}$ denotes the duality pairing between $\X^{\Dir}_{-1/2}(B;\R^d)$ and 
$\X^{\Dir}_{1/2}(B;\R^d)$, see \eqref{eq:extrapolated_spaces_sto_dirRd} and Lemma \ref{l:properties_extrapolated_spaces}. 
To see \eqref{eq:useful_identity_extrapolated_spaces}, it is enough to use the density of the embeddings \eqref{eq:embedding_extrapolated}, and the self-adjointness of $\p$,
\begin{align*}
\langle \p_{-1/2,B}f , \psi \rangle_{-1/2,1/2}^A
&= \lim_{n\to \infty} \langle \p_{-1/2,B}f_n , \psi \rangle_{-1/2,1/2}^A\\
&= \lim_{n\to \infty} \int_{B}f_n\cdot  \psi = \langle f , \psi\rangle_{-1/2,1/2}^{{\Dir}},
\end{align*}
where $(f_n)_n \subseteq L^2(B;\R^d)$ is a sequence such that $f_n\to f $ in $\X_{-1/2}^{{\Dir}}$.

From \eqref{eq:useful_identity_extrapolated_spaces}, $\X_{1/2}^{\Dir}(B;\R^d)=H^{1}_0(B;\R^d)$ and an integration by parts, we infer
\begin{align*}
\langle \p_{-1/2,B}\Errd , \psi \rangle_{-1/2,1/2}^A
&= \langle \Errd, \psi \rangle_{-1/2,1/2}^{{\Dir}}\\
&=
(1-D_d) \mu \int_B (\nabla v^{N,\delta} +  G^\top): \nabla \psi \\
&- c_d \mu \sum_{k,\alpha} \theta_k^N \int_B [\nabla \wt{\pi}^{N,\delta}_{k,\alpha}\otimes \sigma^\delta_{-k,\alpha}]: \nabla \psi,
\end{align*}
as well as 
\begin{align*}
\langle \p_B \Errs_{k,\alpha}(s),\psi(s)\rangle_{-1/2,1/2}^A
=-\sqrt{c_d\mu}\theta^{N}_{k} \int_B [(\sigma^\delta_{k,\alpha}\cdot\nabla) v^{N,\delta}+\sigma^\delta_{k,\alpha}\cdot G]\cdot \psi .
\end{align*}
Putting together the previous identities and using \eqref{eq:vN_ito_formula}, we have obtained, a.s., 
\begin{align}
\label{eq:vN_identity_equality_proof}
 &\int_{Q} z^{N,\delta} \cdot \Phi
 =- \sqrt{c_d\mu} \sum_{k,\alpha} \theta^{N}_{k} \int_I  \int_B [(\sigma^\delta_{k,\alpha}\cdot\nabla) v^{N,\delta}+\sigma^\delta_{k,\alpha}\cdot G]\cdot \psi\,\dd W^{k,\alpha} \\
\nonumber
 &\ \ \  +(1-D_d) \mu \int_{Q} (\nabla v^{N,\delta} +  G^\top): \nabla \psi - c_d \mu \sum_{k,\alpha} \theta_k^N \int_{Q} [\nabla \wt{\pi}^{N,\delta}_{k,\alpha}\otimes \sigma^\delta_{-k,\alpha}]: \nabla \psi.
\end{align}
Next, consider the local solution $(v^{N,\delta},\pi^{N,\delta},(\wt{\pi}^{N,\delta}_{k,\alpha})_{k,\alpha})$ to \eqref{eq:turbulent_Stokes_scaling_quantitative} (see Definition \ref{def:local_solutions_NSE}). As in the previous argument, by It\^o's formula applied to 
$$
(t,v^{N,\delta})\mapsto \langle v^{N,\delta},\psi(t)\rangle_{-1/2,1/2}^{{\Dir}},
$$
we obtain 
\begin{align}
\nonumber
\int_{Q} v^{N,\delta} \cdot \Phi &= \int_{B} \psi(t_1) \cdot v^{N,\delta}(t_1)+\int_{I \times \partial B}v^{N,\delta}\cdot [ n q-(1+D_d \mu) \partial_n \psi] 
\\
\nonumber
&  -\int_Q   F :\nabla \psi +\sqrt{c_d\mu} \sum_{k,\alpha} \theta^{N}_{k} \int_I  \int_B [(\sigma^\delta_{k,\alpha}\cdot\nabla) v^{N,\delta}+\sigma^\delta_{k,\alpha}\cdot G]\cdot \psi\,\dd W^{k,\alpha} \\
&- (1-D_d) \mu \int_{Q} \nabla v^{N,\delta}: \nabla \psi + c_d \mu \sum_{k,\alpha} \theta_k^N \int_{Q} [\nabla \wt{\pi}^{N,\delta}_{k,\alpha}\otimes \sigma^\delta_{-k,\alpha}]: \nabla \psi.
\label{eq:uN_identity_equality_proof}
\end{align}
From \eqref{eq:homogenized_PDE_weak_formulation} and the identities \eqref{eq:identity_average_Phi}, \eqref{eq:vN_identity_equality_proof} and \eqref{eq:uN_identity_equality_proof}, it readily follows that \eqref{eq:claim_uniqueness_uuhv} holds. This proves $\uh^{N,\delta}= z^{N,\delta}+v^{N,\delta}-\fint_B v^{N,\delta}$, i.e.,  \eqref{eq:identity_uh_uN_wN}
as desired.

\smallskip

\emph{Step 2: Bound on $\ph^{N,\delta}$.} Here, we prove the estimate in \eqref{eq:local_well_posedness_homogenized_SPDE_estimate} for $\ph^{N,\delta}$ and \eqref{eq:local_well_posedness_homogenized_SPDE_estimate2}. 
In particular, we assume that $F,G\in L^r_{\Progress}(\O;L^p(I;L^2(B)))$ for some $p\in [2,\infty)$.
Arguing as in Step 3 of the proof of Proposition \ref{prop:stokes_rough} (see \eqref{eq:claim_Step_3_proof_pressure_existence} and \eqref{eq:formula_bogovskii_pressure_how_to_find_it}), it holds that 
$$
\nabla \ph^{N,\delta}  = -\partial_t \uh^{N,\delta} +(1+D_d \mu)\Delta \uh^{N,\delta} + \nabla \cdot (F - (1-D_d)\mu G^\top)\text{ in }\D'(Q).
$$
Now, from Step 1 of the current proof and Lemma \ref{l:regularity_v_u}, it holds that 
$$
\uh^{N,\delta} = \Big(v^{N,\delta}- \fint_B v^{N,\delta}\Big)+ w^{N,\delta}\in L^\infty(I;L^2(B;\R^d)),
$$ 
together with a corresponding estimate. Thus,  
\begin{align*}
\partial_t \uh^{N,\delta}&\in H^{-1,p_0}(I;L^2(B;\R^d)) \ \ \text{ for all }p_0<\infty,\\
 \Delta \uh^{N,\delta}&\in L^{p_0}(I;H^{-2}(B;\R^d))\  \ \ \ \text{ for all }p_0<\infty,
\end{align*}
and 
\begin{align*}
&\big(\E\|\Delta \uh^{N,\delta}\|_{L^{p_0}(I;H^{-2}(B))}^r\big)^{1/r}+
\big(\E\|\partial_t \uh^{N,\delta}\|_{H^{-1,p_0}(I;L^2(B))}^r\big)^{1/r}\\
&\qquad\qquad  \lesssim \Big(\E\Big\|v^{N,\delta} -\fint_B v^{N,\delta}\Big\|_{L^{p_0}(I;L^2(B))}^r\Big)^{1/r}
+\big(\E\|w^{N,\delta}\|_{L^{p_0}(I;L^2(B))}^r\big)^{1/r}\\
&\qquad \qquad \lesssim \Big(\E\Big\|v^{N,\delta} -\fint_B v^{N,\delta}\Big\|_{L^{\infty}(I;L^2(B))}^r\Big)^{1/r}
+\big( \E\|G\|_{L^2(Q)}^r\big)^{1/r}\\
&\qquad \qquad +\big(\E\|\nabla v^{N,\delta}\|_{L^2(Q)}^r\big)^{1/r}
 +\big(\E \|(\nabla \wt{\pi}^{N,\delta}_{k,\alpha})_{k,\alpha}\|_{L^2(Q;\ell^2)}^r\big)^{1/r}.
\end{align*}
Since $ \nabla \cdot (F - (1-D_d)\mu G^\top)\in L^p(I;H^{-1}(B;\R^d))$ a.s.\ by assumption, it follows that 
$$
\nabla \ph^{N,\delta}\in H^{-1,p}(I;H^{-2}(B;\R^d)) \text{ a.s.\ }
$$
and thus $\ph^{N,\delta}\in H^{-1,p}(I;H^{-1}(B))$ a.s.\ follows from the formula \eqref{eq:formula_bogovskii_pressure_how_to_find_it} and the mapping properties of the Bogovskii operator $\mathcal{B}_B$ on a ball $B$, see \cite[Remark III.3.2]{Galdi_book} and the duality $H^{-1,p}(I;H^{-1}(B))=(H^{1,p'}_0(I;H^1_0(B)))^*$.
Note that the same argument also implies the required estimates for $(\E\|\ph^{N,\delta}\|_{H^{-1,p}(I;H^{-1}(B))}^r)^{1/r}$ in \eqref{eq:local_well_posedness_homogenized_SPDE_estimate} and \eqref{eq:local_well_posedness_homogenized_SPDE_estimate2} depending on the value of $p\in [2,\infty)$. 
This concludes the proof of Proposition \ref{prop:local_well_posedness_homogenized_SPDE}.
\end{proof}

\begin{remark}[On the definition of solutions to the homogenized Stokes system]
\label{r:definition_homogenized}
One is tempted to define a solution to \eqref{eq:homogenized_PDE_linear} as $\uh^{N,\delta} = v^{N,\delta}-\fint_B v^{N,\delta}+ z^{N,\delta}$, where $z^{N,\delta}$ solves \eqref{eq:definition_vN_rates}. However, as the latter SPDE is only defined on the extrapolated space $\X_{-1/2}^{A}(B)$, with this approach, it seems challenging to obtain the bound on the pressure $\ph^{N,\delta}$ as stated in \eqref{eq:local_well_posedness_homogenized_SPDE_estimate}. However, estimates for the homogenized pressure $\ph^{N,\delta}$ will be of central importance to prove \emph{local smoothing} for the homogenized Stokes system, see Lemma \ref{l:local_smoothing_homogenized} and Figure \ref{fig:scheme}.
\end{remark}

\subsection{Spatially localized scaling limits -- Proof of Theorem \ref{t:universal_scaling_limit}}
\label{ss:quantitative_scaling_limits}
In this section, we finally prove Theorem \ref{t:universal_scaling_limit}. Here, to exploit the mixing property of the transport noise in \eqref{eq:turbulent_Stokes_scaling_quantitative}, we prove quantitative error estimates in extrapolated spaces of negative smoothness, see Proposition \ref{prop:error_estimates_operators} below. 

We begin by collecting some useful facts. Recall from \eqref{eq:identity_uh_uN_wN} in Step 1 of Proposition \ref{prop:local_well_posedness_homogenized_SPDE} that 
\begin{equation}
\label{eq:equality_wNuNdifference_recall}
w^{N,\delta}= \uh^{N,\delta}- v^{N,\delta}+\fint_{B} v^{N,\delta} \text{ a.e.\ on }I\times \O,
\end{equation}
where $w^{N,\delta}=z^{N,\delta}$ is defined in \eqref{eq:definition_vN_rates} (see Lemma \ref{l:regularity_v_u}) and solves also \eqref{eq:turbulent_Stokes_scaling_quantitative_vdifference_true}. Moreover, the errors $\Errd$ and $\Errs_{k,\alpha}$ are defined in \eqref{eq:def_error_det} and \eqref{eq:def_error_sto}, respectively. To estimate the deterministic error, it is convenient to decompose it into two parts:
\begin{equation*}
\Errd = \Errdloc+ \Errdglo
\end{equation*}
where $\Errdloc$ and $\Errdglo$ are the \emph{local} and \emph{local to global} errors, respectively:
\begin{align}
\label{eq:errore_locale}
\Errdloc&=
-(1-D_d) \mu (\Delta v^{N,\delta} +\nabla \cdot G^\top)+c_d \mu\sum_{k,\alpha} \theta_k^N\nabla\cdot  (\nabla \wt{\Pi}^{N,\delta}_{k,\alpha}\otimes \sigma^\delta_{-k,\alpha})\\
\label{eq:errore_globale}
\Errdglo &= 
c_d \mu \sum_{k,\alpha} \theta_k^N\nabla\cdot  (\nabla[ \wt{\pi}^{N,\delta}_{k,\alpha}-\wt{\Pi}^{N,\delta}_{k,\alpha}]\otimes \sigma^\delta_{-k,\alpha}),
\end{align}
where $\wt{\Pi}^{N,\delta}_{k,\alpha}$ is as in Theorem \ref{t:universal_scaling_limit}, i.e.,
\begin{equation}
\label{eq:wtPi_def_quantitative_lim}
\wt{\Pi}_{k,\alpha}^{N,\delta} 
=\theta^N_k \qq_{\T^d_B} \Big[\chi\Big((\sigma^{\delta}_{k,\alpha}\cdot \nabla)v^{N,\delta} + \sigma^{\delta}_{k,\alpha} \cdot G\Big)\Big],
\end{equation}
for $\chi\in C^{\infty}_{{\rm c}}(2B)$ a fixed cutoff function satisfying $\chi|_{B}=1$, and $\T^d_B=x_0 + [-1/2,1/2)^d$ with periodic boundary conditions.

\smallskip

In light of the splitting of the deterministic errors  \eqref{eq:errore_locale}-\eqref{eq:errore_globale}, it is natural to consider the \emph{local It\^o-Stratonovich corrector}:
\begin{align}
\label{eq:local_stratonovich_corrector}
\mathfrak{L}^{N,\delta}(H)=
(1-D_d)  \nabla \cdot H^\top -c_d\sum_{k,\alpha} (\theta^N_k)^2\nabla\cdot  (\q_{\T^d_B} \big[\chi(\sigma^{\delta}_{k,\alpha}\cdot H)\big]\otimes \sigma^\delta_{-k,\alpha}),
\end{align}
where $H\in L^2(2B;\R^{d\times d})$, and to define the \emph{local to global It\^o-Stratonovich corrector}:
\begin{equation}
\label{eq:local_to_global_stratonovich_corrector}
 \mathfrak{G}^{N,\delta} (h)
=c_d \sum_{k,\alpha} \theta_k^N\nabla\cdot  (\nabla h_{k,\alpha}\otimes \sigma^\delta_{-k,\alpha}).
\end{equation}
The main ingredient in the proof of Theorem \ref{t:universal_scaling_limit} is given by the following result, which provides a quantitative rate of convergence of the deterministic and stochastic errors in the extrapolated spaces 
$
\X_{-\beta}^{{\Dir}}(B;\R^d)
$
defined in \eqref{eq:extrapolated_spaces_sto_dirRd}.
Recall that such spaces are the natural replacement of Sobolev spaces $H^{-2\sigma}(B)$ on which the Dirichlet Laplacian still generates an analytic semigroup (see Subsection \ref{ss:Helmholtz_domains} for details).

\begin{proposition}[Quantitative error estimates in extrapolated spaces]
\label{prop:error_estimates_operators}
Let $B$ be a ball of radius $\varrho\in (0,1/4]$. 
For all $\sigma_0>0$, $\kappa\in (1,2]$ and a cutoff function $\chi$ such that $\chi|_{B}=1$ and $\supp\chi\subseteq \kappa B$, there exist constants $C_0,\g_0>0$ for which the following holds for all $N\geq 1$ and $\delta\in(0,1]$ such that $\delta N\geq 1$.
\begin{enumerate}[{\rm(1)}]
\item\label{it:error_estimates_operators1} 
For all $H\in L^2(B;\R^{d})$,
$$
\Big(
\sum_{k,\alpha}(\theta^N_k)^2 \|\sigma^{\delta}_{k,\alpha}\cdot H\|_{\X_{-\sigma_0}^{\Dir}(B)}^2\Big)^{1/2}
\leq \frac{C_0}{(\delta N)^{\g_0}} \|H\|_{L^2(B)}.
$$
\item\label{it:error_estimates_operators2} 
For all $H\in L^2(\kappa B;\R^{d\times d})$ such that $\Tr(H)=0$ on $\kappa B$ and $\nabla \cdot H=0$ in $\D'(\kappa   B)$, 
$$
 \|\mathfrak{L}^{N,\delta}(H)\|_{\X_{-1/2-\sigma_0}^{{\Dir}}(B;\R^d)}
 \leq \frac{C_0}{(\delta N)^{\g_0}}\|H\|_{L^2(\kappa B)},
$$
\item\label{it:error_estimates_operators3} 
For all $h=(h_{k,\alpha})_{k,\alpha}\in L^2(\kappa  B;\ell^2)$ such that $\Delta h_{k,\alpha}=0$ in $\D'(\kappa B)$,
$$
\| \mathfrak{G}^{N,\delta} (h)\|_{\X_{-1/2-\sigma_0}^{\Dir}(B;\R^d)}
\leq \frac{C_0}{(\delta N)^{\g_0}}\|(\nabla h_{k,\alpha})_{k,\alpha}\|_{L^2(\kappa  B;\ell^2)}.
$$
\end{enumerate}
\end{proposition}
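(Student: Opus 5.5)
Throughout, the estimates are obtained by duality against the eigenbasis $(\lambda_j,\varphi_j)_j$ of the Dirichlet Laplacian on $B$, combined with the Bessel-type inequality announced in Subsection \ref{ss:caccioppoli_intro}. By Lemma \ref{l:properties_extrapolated_spaces},
\[
\|f\|_{\X^{\Dir}_{-\sigma}(B)}^2\eqsim\sum_j\lambda_j^{-2\sigma}\Big|\int_B f\varphi_j\Big|^2 .
\]
So item \eqref{it:error_estimates_operators1} reduces to bounding $\sum_{k,\alpha}(\theta_k^N)^2|\int_B \sigma^\delta_{k,\alpha}\cdot H\varphi_j|^2$ for each $j$.

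For item \eqref{it:error_estimates_operators1}, I first prove the Bessel inequality $\sum_{k}|\int_B e^{2\pi\i\delta(x-x_0)\cdot k} f|^2\le \delta^{-d}\|f\|_{L^2(B)}^2$. Rescale $y=\delta(x-x_0)$, so that $f$ is supported in a ball of radius $\delta\varrho\le1/4$, which sits inside one period cell. Then Parseval on $\T^d$ applies. Combining this with $\|\theta^N\|_{\ell^\infty}^2\lesssim N^{-d}$ (valid for \eqref{eq:choice_thetaN}) gives, for each $j$,
\[
\sum_{k,\alpha}(\theta_k^N)^2\Big|\int_B \sigma^\delta_{k,\alpha}\cdot H\varphi_j\Big|^2\lesssim(\delta N)^{-d}\|H\|_{L^2}^2\|\varphi_j\|_{L^\infty}^2 .
\]
This bound alone diverges when summed over $j$. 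I will therefore interpolate it with the trivial bound $\le\|H\|_{L^2}^2\|\varphi_j\|_{L^2}^2=\|H\|_{L^2}^2$, which follows from $\|\theta^N\|_{\ell^2}=1$ and boundedness of $\sigma$. Taking the minimum with exponent $\epsilon$ on the first bound gives a summand $\lambda_j^{-2\sigma_0}(\delta N)^{-d\epsilon}\|\varphi_j\|_{L^\infty}^{2\epsilon}$. By Weyl's law $\lambda_j\eqsim j^{2/d}$, and by standard sup bounds $\|\varphi_j\|_{L^\infty}\lesssim\lambda_j^{(d-1)/4}$, this is summable for small $\epsilon=\epsilon(\sigma_0,d)$. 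This yields $\g_0=d\epsilon/2$.

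Item \eqref{it:error_estimates_operators3} uses harmonicity. Since $h_{k,\alpha}$ is harmonic on $\kappa B$, interior estimates give $\|\nabla^2 h_{k,\alpha}\|_{L^\infty(B')}\lesssim\|\nabla h_{k,\alpha}\|_{L^2(\kappa B)}$ on an intermediate ball $B'$ containing $\overline B$. This holds with $\ell^2$-valued constants because the estimate is linear. Pairing $\mathfrak G^{N,\delta}(h)$ with $\varphi_j$ and integrating by parts leaves $\sum_k\theta_k^N\int_B(\nabla h_{k,\alpha}\cdot\nabla\varphi_j)\sigma^\delta_{-k,\alpha}$; the boundary terms vanish since $\varphi_j|_{\partial B}=0$. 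I then apply Cauchy–Schwarz in $k$ together with the Bessel decay from item \eqref{it:error_estimates_operators1}, taking $f=\nabla h_{k,\alpha}\,\partial\varphi_j$ with $k$-dependent $f$. To handle the $k$-dependence, I use $\sum_k|\int f_k e_k|^2\le\sum_k\|f_k\|^2_{L^1}$, which is too weak. Instead I write $\theta_k^N\sigma_{-k}$ against a smooth family and use that $\sum_k|\int g\,e_{-k}\theta_k|^2$ is small for the fixed smooth $g=\nabla h\cdot\nabla\varphi_j$, once $\sum_k$ is taken with Cauchy–Schwarz against $\|(\nabla h_k)_k\|_{\ell^2}$ pointwise. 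This is the delicate point. Alternatively I integrate by parts in $x$, using $\sigma^\delta_{-k}=(2\pi\i\delta|k|^2)^{-1}k\cdot\nabla\sigma^\delta_{-k}$, to gain a factor $(\delta N)^{-1}$. This moves a derivative onto the smooth pieces $\nabla^2h\,\nabla\varphi_j+\nabla h\,\nabla^2\varphi_j$. Combined with the $\ell^2$ bound and $\|\varphi_j\|_{C^2}\lesssim\lambda_j^{c}$, it gives a rate $(\delta N)^{-1}\lambda_j^{c}$, which is then interpolated and summed as before. I prefer this route.

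Item \eqref{it:error_estimates_operators2} is the main obstacle: the locality of the Itô–Stratonovich corrector. Pair with $\Phi=\varphi_j$ and expand $\q_{\T^d_B}$ as a Calderón–Zygmund operator. This produces the representation \eqref{eq:representation_Ito_Strat_intro}, in which the covariance $\Cov^{N,\delta}(z)=c_d\sum(\theta_k^N)^2\sigma^\delta_{k}(x+z)\otimes\sigma^\delta_{-k}(x)$ is translation-invariant. Using radial symmetry \eqref{eq:symmetric_equal_diagonal_matrix}, I split $\Cov^{N,\delta}$ into its value at the diagonal plus the off-diagonal part. The diagonal contribution, together with the angular average of the kernel $\Gamma$ over the sphere, should reproduce exactly $(1-D_d)\nabla\cdot H^\top$. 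This is where $\Tr H=0$ and $\nabla\cdot H=0$ kill the leftover terms, and where $D_d$ is determined. The remainder is bounded by the decay of $\Cov^{N,\delta}(z)$ for $|z|\gg(\delta N)^{-1}$. That decay follows from stationary phase for the radial Fourier sum, giving $|\Cov^{N,\delta}(z)|\lesssim(1+\delta N|z|)^{-(d-1)/2}$, with a corresponding averaged version. Near the diagonal I use the crude $L^\infty$ bound on a region of size $(\delta N)^{-1}$, giving a gain $(\delta N)^{-\g}$ after integrating $|\Gamma|\sim|x-y|^{-d}$ against the smooth $\nabla\varphi_j$ (principal value via a one-derivative cancellation). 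Finally I sum over $j$ with weights $\lambda_j^{-1-2\sigma_0}$ exactly as in item \eqref{it:error_estimates_operators1}, which closes the estimate.
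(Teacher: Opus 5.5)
Your plan for item (2) has a genuine gap: it treats as an error exactly the part of the integral that produces the local limit. Write $\q_{\T^d_B}=\frac1d\mathrm{Id}+\mathrm{P.V.}\,\Gamma$ as in \eqref{eq:expression_via_covariance_ito_strat}. The $\frac1d$-part together with $\Cov^{N,\delta}(0)=\frac{d-1}{d}\mathrm{Id}$ only gives $F(H,\Phi)=\frac{d-1}{d^2}\int_B H^\top:\nabla\Phi$. But $D_d'=\frac{d-1}{d^2}+\frac{2}{d^2(d+2)}$, and the missing $\frac{2}{d^2(d+2)}\int_B H^\top:\nabla\Phi$ is the limit of the principal-value term $E^{N,\delta}(H,\Phi)$, generated in the region $|x-y|\sim(\delta N)^{-1}$. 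So your bookkeeping cannot close. Near the diagonal, the principal-value cancellation with $|\nabla\Cov^{N,\delta}|\lesssim\delta N$ only gives $\int_{|z|\le(\delta N)^{-1}}|z|^{1-d}\,\delta N\,\dd z\lesssim1$, with no decay; without the cancellation the integral diverges logarithmically. Away from it, your claimed decay $(1+\delta N|z|)^{-(d-1)/2}$ integrated against $|z|^{-d}$ over $(\delta N)^{-1}\le|z|\le1$ is again of order one. Subtracting $\Cov^{N,\delta}(0)$ does not help either: it creates a nonlocal Calder\'on--Zygmund term of order one, and $\Cov^{N,\delta}(z)-\Cov^{N,\delta}(0)$ does not decay. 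If your bounds held, the principal-value term would vanish in the limit, which is false. The constant also does not come from the angular average of $\Gamma$, which vanishes (that is why the principal value exists). It comes from the oscillations of $\Cov^{N,\delta}$ at scale $(\delta N)^{-1}$, most transparently via $\mathrm{P.V.}\int_{\R^d}\mathcal G_{i,j}\om_\varepsilon e^{2\pi\i\delta k\cdot z}\,\dd z=\wh{\mathcal G_{i,j}}(k)+O((\varepsilon\delta|k|)^{-1})$. This turns the inner region into $\sum_k(\theta^N_k)^2(\delta_{m,n}-\frac{k_mk_n}{|k|^2})(\frac{k_ik_j}{|k|^2}-\frac{\delta_{i,j}}{d})\to A^{i,j}_{n,m}$ of \eqref{eq:explicit_expression_A_local_Ito}, a frequency-sphere average. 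Only at this point do $\Tr H=0$ and $\nabla\cdot H=0$ reduce $A^{i,j}_{n,m}\int_BH_{m,j}\partial_n\Phi_i$ to $\frac{2}{d^2(d+2)}\int_BH^\top:\nabla\Phi$. The paper makes this quantitative with an intermediate scale $(\delta N)^{-1}\ll\varepsilon\ll1$. Freezing $\nabla\Phi$ costs $\varepsilon$, truncating the kernel costs $(\varepsilon\delta N)^{-1}$, and the boundary of $B-y$ costs $\sqrt\varepsilon$. The outer region is bounded not by pointwise decay of the lattice sum $\Cov^{N,\delta}$, which is not a stationary-phase statement for a sharply truncated annulus. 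Instead it uses the tested estimate of Lemma \ref{l:convergence_covariance} (Bessel inequality plus an eigenfunction splitting), at cost $\varepsilon^{-d/2-1}(\delta N)^{-\g}$, and then optimizes in $\varepsilon$.

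A second, more easily repaired gap affects all three items: your summation over $j$ does not converge for small $\sigma_0$. The per-$j$ trivial bound you interpolate with is not summable. In item (1), since $\|\varphi_j\|_{L^\infty}\ge|B|^{-1/2}$ and $\lambda_j\eqsim j^{2/d}$, the series $\sum_j\lambda_j^{-2\sigma_0}(\delta N)^{-d\epsilon}\|\varphi_j\|_{L^\infty}^{2\epsilon}$ diverges for every $\epsilon$ once $\sigma_0\le d/4$. The same obstruction appears in items (2)--(3), where the per-$j$ trivial bound is of size $\lambda_j^{1/2}$. The statement is for all $\sigma_0>0$, and it is used with $\sigma_0=1/2$ in $d=3$ (Step 2 of the proof of Theorem \ref{t:universal_scaling_limit}). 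The fix is to use the trivial bound in summed form: uniformly in $N,\delta$ in $\X^{\Dir}_0(B)=L^2(B)$ for item (1), and in $\X^{\Dir}_{-1/2}(B)=H^{-1}(B)$ for items (2)--(3). Then interpolate between it and the large-$\sigma_0$ estimate via Lemma \ref{l:properties_extrapolated_spaces}, as the paper does; a splitting $j\le R$, $j>R$ with $R$ optimized also works. With this repair your item (1) coincides with the paper's argument. Your integration-by-parts route for item (3) is a valid alternative to the paper's bound $\|e^{-2\pi\i\delta k\cdot x}\|_{H^{-\g}(B)}\lesssim(\delta|k|)^{-\g}$, $\g<1/2$, and even gives the rate $(\delta N)^{-1}$. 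However, the second integration by parts leaves a boundary term, because $\nabla\varphi_j\neq0$ on $\partial B$. It still carries the factor $|k\cdot n|/(\delta|k|^2)\le(\delta|k|)^{-1}$ and is controlled by interior harmonic estimates on $\overline B\subset\kappa B$, so it is harmless, but it must be accounted for.
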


The proof of the above result is postponed to Subsection \ref{ss:preliminary_results_convergence}. The key is that the rate of convergence only depends on the cumulative frequency $L=\delta N$ (see Subsections \ref{ss:control_energy_intro}, \ref{ss:caccioppoli_intro} and \ref{ss:role_pressure_intro} for additional comments).

\smallskip

Let us anticipate that, although Proposition \ref{prop:error_estimates_operators} is formulated for possibly small $\sigma_0>0$, the proof follows by first obtaining a rate for $\sigma_0\gg 1$ and then recovering the case of possibly small $\sigma_0>0$ by interpolation. For $\sigma_0\gg 1$, the emergence of the cumulative frequency is due to a Parseval-type identity (see Lemma \ref{l:Bessel_property} below), which naturally extends the argument with full periodic boundary conditions, see e.g., \cite[Lemma 6.3]{A22} and \cite[Proposition 3.6, Step 1]{FGL21}. 
Finally, let us point out that the estimate of the local to global  It\^o-Stratonovich corrector $ \mathfrak{G}^{N,\delta}(h)
$ does not depend on the harmonicity of $h_{k,\alpha}$, but only on their local smoothing improvement, see \eqref{eq:Caccioppoli_harmonic_functions} below. Indeed, because of the latter, in the products $\nabla h_{k,\alpha}\otimes \sigma^{\delta}_{-k,\alpha}$, only the second component is highly oscillating as $\delta N\to \infty$, and therefore we obtain a standard averaging effect. The appearance of the gradient on the right-hand side in Proposition \ref{prop:error_estimates_operators}\eqref{it:error_estimates_operators3} is due to a local smoothing estimate. The reader is referred to Subsection \ref{sss:proof_error_estimates_operators3} for the proof.

\smallskip

Before going into the proof of Theorem \ref{t:universal_scaling_limit}, we need a preliminary result. Let $\chi$ be a smooth cutoff function such that $\supp\chi\subseteq B$, where $B$ is a ball with radius $\leq 1$. 
One can check that the multiplication operator by $\chi$ is continuous on $\X_{-\sigma}^{{\Dir}}(B)$ with values in $H^{-2\sigma}(\T^d_B)
$ for all $\sigma>0$:
\begin{equation}
\label{eq:multiplication_operator_bounded_extrapolated}
f\mapsto \chi f, \qquad 
\X_{-\sigma}^{{\Dir}}(B) \to H^{-2\sigma}(\T^d_B).
\end{equation} 
Indeed, to prove the latter, fix an integer $k\geq 2\sigma$. By density \eqref{eq:embedding_extrapolated} and duality (see Lemma \ref{l:properties_extrapolated_spaces}), it suffices to note that, for all $f\in L^2(B)\stackrel{{\rm d}}{\embed}\X_{-k}^{{\Dir}}(B) $ ,
\begin{align*}
\Big|\int_{B} \chi f \varphi\Big|
=|\langle  f, \chi\varphi \rangle_{-k,k}^{{\Dir}}|
\leq \|f\|_{\X^{\Dir}_{-k}(B)} \|\chi\varphi\|_{H^{2k}_0(B)}
\lesssim \|f\|_{\X^{\Dir}_{-k}(B)} \|\varphi\|_{H^{2k}(\T^d_B)},
\end{align*}
and hence \eqref{eq:multiplication_operator_bounded_extrapolated} holds with $\sigma=k$. The general case follows by interpolation with the trivial case $\sigma=0$.

\begin{proof}[Proof of Theorem \ref{t:universal_scaling_limit}]
We divide the proof into several steps. For notational convenience, throughout this proof, we assume that
\begin{equation}
\label{eq:chi_has_small_support}
\chi=1 \text{ on }(5/4)B \qquad \text{ and }\qquad 
\supp\chi \subseteq (3/2)B.
\end{equation}
The general case follows analogously.

\smallskip

\emph{Step 1: Reduction to the case $p_0=p_1=2$ and $\sigma=1$.} 
If Theorem \ref{t:universal_scaling_limit} holds in the case $p_0=p_1=2$ and $\sigma=1$, then by interpolation, it suffices to prove the following uniform bound:
\begin{align}
\label{eq:claim_uniform_bound_convergence_zero1}
\Big(\E\Big\| \uh^{N,\delta}-\Big(v^{N,\delta} -\fint_B v^{N,\delta}\Big)\Big\|_{L^{\infty}(I;L^2(B))}^r \Big)^{1/r}&\leq C_0 \wt{\Eno}_Q^{\#},\\
\label{eq:claim_uniform_bound_convergence_zero2}
\big(\E\|\nabla \Pi^{N,\delta}-\nabla \Pi^0\|_{L^2(I;H^{-1}(\T^d_B))}^r\big)^{1/r}&\leq C_0 \wt{\Eno}_Q^{\#}, \\
\label{eq:claim_uniform_bound_convergence_zero3}
\big(\E\|(\nabla\wt{\Pi}_{k,\alpha}^{N,\delta})_{k,\alpha}\|_{L^{p_2}(I;H^{-1}(\T^d_B;\ell^2))}^r\big)^{1/r} &\leq C_0 \wt{\Eno}_Q^{\#},
\end{align}
where $\wt{\Eno}_{Q}^{\#}$ is the modified sharp energy as in \eqref{eq:modified_sharp_energy_convergence_rate}.
The bound \eqref{eq:claim_uniform_bound_convergence_zero1} follows immediately from the stochastic Caccioppoli inequality of Theorem \ref{t:caccioppoli} and Proposition \ref{prop:local_well_posedness_homogenized_SPDE}. Moreover, from \eqref{eq:approximated_pressure_1_quantitative_convergence1} and \eqref{eq:approximated_pressure_1_quantitative_convergence3}, we have 
\begin{align}
\label{eq:difference_approximation_deterministic_pressure_proof}
&\nabla [\Pi^{N,\delta}- \Pi^0]\\
\nonumber
&=\mu\,\q_{\T^d_{B}} \Big[\chi\Big( (1-D_d)\nabla \cdot (G^\top+\nabla v^{N,\delta})- c_d  \sum_{k,\alpha} \theta_k^N\nabla\cdot  (\nabla \wt{\pi}^{N,\delta}_{k,\alpha}\otimes \sigma^\delta_{-k,\alpha} \big)\Big)\Big].
\end{align} 
From \eqref{eq:chi_has_small_support} and the continuity of $\q_{\T^d_B}: H^{-1}(\T^d_B)\to H^{-1}(\T^d_B)$, it follows that 
$$
\|\nabla [\Pi^{N,\delta}- \Pi^0]\|_{H^{-1}(\T^d_B)}
\lesssim \|G\|_{L^2((3/2)B)}+\|\nabla v^{N,\delta}\|_{L^2((3/2)B)}
+\|(\nabla \wt{\pi}_{k,\alpha})_{k,\alpha}^{N,\delta}\|_{L^2((3/2)B;\ell^2)}.
$$
Hence, the estimate \eqref{eq:claim_uniform_bound_convergence_zero2} follows from the above and Theorem \ref{t:caccioppoli}.
Finally, to prove the uniform bound on the stochastic pressures, note that 
\begin{equation}
\begin{aligned}
\label{eq:trick_adding_average_local_pressure}
&\wt{\Pi}_{k,\alpha}^{N,\delta}
= \theta^N_k \qq_{\T^d_B} \big[\chi\big( \nabla \cdot (\wt{v}^{N,\delta} \otimes \sigma^\delta_{k,\alpha})+ \sigma_{k,\alpha}^{\delta}\cdot G\big)\big]  \\ 
& \text{ where } \ \ \ \wt{v}^{N,\delta}= v^{N,\delta} -\fint_{(3/2)B}v^{N,\delta},
\end{aligned}
\end{equation}
as $\nabla \cdot \sigma^{\delta}_{k,\alpha}=0$.
Therefore, 
\begin{align*}
\|(\nabla\wt{\Pi}^{N,\delta}_{k,\alpha})_{k,\alpha}\|_{L^{p_2}(I;H^{-1}(\T^d_B;\ell^2))}
&\leq C \|\wt{v}^{N,\delta}\|_{L^{\infty}(I;L^2((3/2)B))}+ C\|G\|_{L^{p_2}(I;L^2(2B))}.
\end{align*}
Taking the $L^r(\O)$-norm, the uniform bound in \eqref{eq:claim_uniform_bound_convergence_zero3} follows from Theorem \ref{t:caccioppoli}.

\smallskip

In light of Step 1, without further mention, in the rest of the proof we assume $p_0=p_1=p_2=2$ and $\sigma=1$.

\smallskip

\emph{Step 2: Quantitative convergence rate for the velocity difference $\uh^{N,\delta} -\big(v^{N,\delta} -\fint_B v^{N,\delta}\big)$.}
Recall the identity \eqref{eq:equality_wNuNdifference_recall},
where $w^{N,\delta}$ is given in \eqref{eq:definition_vN_rates}, and $\Errd$ and $\Errs$ are as in \eqref{eq:def_error_det} and \eqref{eq:def_error_sto}, respectively.
The decomposition in \eqref{eq:errore_locale} and the definitions \eqref{eq:local_stratonovich_corrector}-\eqref{eq:local_to_global_stratonovich_corrector} yield
\begin{align}
\label{eq:identification_error_loc}
\Errdloc &= -\mu\,\mathfrak{L}^{N,\delta} \big((\nabla v^{N,\delta})^\top + G\big),\\
\label{eq:identification_error_glo}
\Errdglo &= \mu\,\mathfrak{G}^{N,\delta} (\wt{\pi}_{k,\alpha}^{N,\delta}-\wt{\Pi}_{k,\alpha}^{N,\delta}).
\end{align}
Next, by consistency of the family $(A_{-\sigma})_{\sigma\in\R}$, $w^{N,\delta}=z^{N,\delta}$ and \eqref{eq:definition_vN_rates}, it holds that 
\begin{align*}
w^{N,\delta}(t) 
&= \int_{t_0-1}^t e^{-(1+D_d\mu)A_{-1}(t-s)} \p_{-1,B}\Errd(s)\,\dd s\\
\nonumber
 &+ \sum_{k,\alpha}\int_{t_0-1}^t e^{-(1+D_d\mu)A_{-1}(t-s)}\p_{-1/2,B} \Errs_{k,\alpha}(s)\,\dd W^{k,\alpha}_s.
\end{align*}
Note that by the properties of extrapolated operators (see Lemma \ref{l:properties_extrapolated_spaces}) and the self-adjointness of $A$, it follows that 
$$A_{-1}:\Do(A_{-1})\subseteq \X_{-1}^A (B)\to \X_{-1}^A (B)
$$ 
with domain $\Do(A_{-1})= \Ls^2(B)$, has bounded $H^{\infty}$-calculus of angle $0$ (see \cite[Proposition 10.2.23]{Analysis2}). Thus, by stochastic maximal regularity 
(see e.g., \cite[Corollary 7.4]{NVW13}) and Lemma \ref{l:boundedness_p_extrapolated_spaces}, it holds that 
\begin{align*}
&\big(\E \|w^{N,\delta}\|_{L^2(Q)}^r\big)^{1/r}\\
&\lesssim \big(\E\|\Errd\|_{L^2(I;\X_{-1}^{{\Dir}}(B;\R^d))}^r\big)^{1/r} +\big(\E\|\Errs\|_{L^2(I;\ell^2(\X_{-1/2}^{{\Dir}}(B;\R^d)))}^r\big)^{1/r} \\
&\lesssim \big(\E\|\Errdloc\|_{L^2(I;\X_{-1}^{{\Dir}}(B;\R^d))}^r\big)^{1/r}
+\big(\E\|\Errdglo\|_{L^2(I;\X_{-1}^{{\Dir}}(B;\R^d))}^r\big)^{1/r}\\
&+\big(\E\|\Errs\|_{L^2(I;\ell^2(\X_{-1/2}^{{\Dir}}(B;\R^d)))}^r\big)^{1/r} \\
&\lesssim \frac{1}{(\delta N)^{\g}} \Big[\big(\E \|\nabla v^{N,\delta}\|_{L^2((3/2)Q)}^r\big)^{1/r}+\big(\E \|(\nabla \wt{\pi}^{N,\delta}_{k,\alpha})_{k,\alpha}\|_{L^2((3/2)Q;\ell^2)}^r\big)^{1/r} \\
&+ \big(\E \|(\nabla \wt{\Pi}^{N,\delta}_{k,\alpha})_{k,\alpha}\|_{L^2((3/2)Q;\ell^2)}^r\big)^{1/r}+\big(\E \|G\|_{L^2((3/2)Q)}^r\big)^{1/r}\Big],
\end{align*}
where the last step follows from Proposition \ref{prop:error_estimates_operators} and \eqref{eq:identification_error_loc}-\eqref{eq:identification_error_glo}.
Since
$$
 \|(\nabla \wt{\Pi}^{N,\delta}_{k,\alpha})_{k,\alpha}\|_{L^2((3/2)B;\ell^2)}
\lesssim  \|\nabla v^{N,\delta}\|_{L^2((3/2)B)}+ \|G\|_{L^2((3/2)B)}
$$
a.e.\ on $I\times \O$, the conclusion of Step 2 follows from the previous estimates and the stochastic Caccioppoli inequality.

\smallskip

\emph{Step 3: Convergence rate for the local pressures $\nabla [\Pi^{N,\delta} -\Pi^0]$ and $(\nabla \wt{\Pi}_{k,\alpha}^{N,\delta})_{k,\alpha}$, where $\Pi^{N,\delta}$, $\wt{\Pi}_{k,\alpha}^{N,\delta}$ and $\Pi^0$ are as in Theorem \ref{t:universal_scaling_limit}.}
We begin by discussing the convergence of $(\nabla \wt{\Pi}_{k,\alpha}^{N,\delta})_{k,\alpha}$. 
Note that, by Fubini's theorem,
\begin{align*}
&\big(\E \|(\nabla\wt{\Pi}_{k,\alpha}^{N,\delta})_{k,\alpha}\|_{L^2(I;H^{-1}(\T^d_B;\ell^2))}^r\big)^{1/r}\\
&\qquad \qquad 
\lesssim
\big(\E \|(\nabla\wt{\Pi}_{k,\alpha}^{N,\delta})_{k,\alpha}\|_{L^2(I;\ell^2(H^{-1}(\T^d_B)))}^r\big)^{1/r}\\
&\qquad \qquad 
\stackrel{(i)}{\lesssim}
\big(\E \|\theta^N_k\chi[(\sigma^{\delta}_{k,\alpha}\cdot \nabla)v^{N,\delta} + \sigma^{\delta}_{k,\alpha} \cdot G]\|_{L^2(I;\ell^2(H^{-1}(\T^d_B)))}^r\big)^{1/r}\\
&\qquad \qquad 
\stackrel{(ii)}{\lesssim }
\big(\E \|(\theta^N_k(\sigma^{\delta}_{k,\alpha}\cdot \nabla)v^{N,\delta} )_{k,\alpha}\|_{L^2(I;\ell^2(\X^{\Dir}_{-1/2}((3/2)B)))}^r\big)^{1/r}\\
&\qquad \qquad +\big(\E \|(\theta^N_k(\sigma^{\delta}_{k,\alpha} \cdot G ))_{k,\alpha}\|_{L^2(I;\ell^2(\X^{\Dir}_{-1/2}((3/2)B)))}^r\big)^{1/r}
\end{align*}
where in $(i)$ we used the boundedness of $\q_{\T^d_B}$ and in $(ii)$  \eqref{eq:multiplication_operator_bounded_extrapolated}. The claimed convergence follows from Proposition \ref{prop:error_estimates_operators}\eqref{it:error_estimates_operators1} and the stochastic Caccioppoli inequality.

\smallskip

To prove the claimed convergence for the deterministic local pressures, note that, as in \eqref{eq:identification_error_loc}-\eqref{eq:identification_error_glo}, from \eqref{eq:difference_approximation_deterministic_pressure_proof} and \eqref{eq:local_stratonovich_corrector}-\eqref{eq:local_to_global_stratonovich_corrector}, we have 
\begin{align*}
\nabla [\Pi^{N,\delta}- \Pi^0]
=\mu\q_{\T^d_B} \Big[\chi\Big( \mathfrak{L}^{N,\delta} ((\nabla v^{N,\delta} )^\top+ G)+\mathfrak{G}^{N,\delta} ( \wt{\Pi}_{k,\alpha}^{N,\delta}-\wt{\pi}_{k,\alpha}^{N,\delta})\Big)\Big].
\end{align*}
Thus, the claimed convergence for $\nabla [\Pi^{N,\delta}- \Pi^0]$ readily follows from \eqref{eq:multiplication_operator_bounded_extrapolated} and Proposition \ref{prop:error_estimates_operators}\eqref{it:error_estimates_operators2}-\eqref{it:error_estimates_operators3}, and a.e.\ on $I\times \O$, 
\begin{align*}
\Tr((\nabla v^{N,\delta})^\top+ G)=
\nabla \cdot v^{N,\delta}+ \Tr(G)&=0 \quad \text{ on }\ 2B,\\
\nabla \cdot [(\nabla v^{N,\delta})^\top+ G]= \nabla \cdot G&=0 \quad \text{ in }\ \D'(2B),
\end{align*}
where we used $\nabla \cdot v^{N,\delta}=0$ and the assumption \eqref{eq:assumption_G_trace_zero_quantiative} in Theorem \ref{t:universal_scaling_limit}. 
\end{proof}

\subsection{Error estimates in extrapolated spaces -- Proof of Proposition \ref{prop:error_estimates_operators}}
\label{ss:preliminary_results_convergence}
This subsection is devoted to the proof of Proposition \ref{prop:error_estimates_operators}. The key result is the following, where the cumulative frequency 
$
L=\delta N
$ 
for the oscillating rescaled stochastic Stokes system \eqref{eq:turbulent_Stokes_scaling_quantitative} arises.

\begin{lemma}[Emergence of the cumulative frequency for rescaled coefficients]
\label{l:Bessel_property}
Let $B$ be a ball of radius $\varrho\leq 1/2$. Then, for all $f\in L^2(B;\R^d)$ and $\delta\in (0,1]$, 
\begin{equation}
\label{eq:Bessel_property1}
\sum_{k,\alpha} \Big|\int_{B} 
 \sigma^{\delta}_{k,\alpha} \cdot f   \Big|^2
\leq \delta^{-d} \|f\|_{L^2(B)}^2.
\end{equation}
In particular, there exists a constant $C$ such that, for all $N\geq1$, $\delta\in (0,1]$, balls $B$ of radius at most $1/2$, and $f\in L^2(B;\R^d)$, 
\begin{equation}
\label{eq:Parseval_combined_with_decay}
\sum_{k,\alpha}(\theta_k^N)^2 \Big|\int_{B} 
 \sigma^{\delta}_{k,\alpha} \cdot f  \Big|^2
\leq \frac{C}{(\delta N)^d }  \|f\|_{L^2(B)}^2.
\end{equation}
\end{lemma}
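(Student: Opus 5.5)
The plan is to reduce \eqref{eq:Bessel_property1} to Bessel's inequality on a torus of side length $\delta^{-1}$. Extend $f$ by zero outside $B$ and change variables $y = x_0 + \delta(x-x_0)$. Since $\sigma^\delta_{k,\alpha}(x) = a_{k,\alpha} e^{2\pi\i x_0\cdot k} e^{2\pi \i\delta (x-x_0)\cdot k}$, we get
\[
\int_B \sigma^\delta_{k,\alpha}\cdot f\,\dd x = \delta^{-d} \int_{\delta (B-x_0)+x_0} a_{k,\alpha}e^{2\pi\i y\cdot k}\cdot f\big(x_0+\delta^{-1}(y-x_0)\big)\,\dd y .
\]
The rescaled ball $\delta(B-x_0)+x_0$ has radius $\delta\varrho\le 1/2$, so it embeds into a fundamental domain of $\T^d$. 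Setting $g(y)=f(x_0+\delta^{-1}(y-x_0))$, extended by zero and then periodically, we obtain $g\in L^2(\T^d;\R^d)$ with $\|g\|_{L^2(\T^d)}^2=\delta^{d}\|f\|_{L^2(B)}^2$.

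Next, note that the summand is $\delta^{-2d}|a_{k,\alpha}\cdot\widehat g(-k)|^2$, up to complex conjugation conventions. For fixed $k$, the vectors $\{a_{k,\alpha}\}_\alpha$ form an orthonormal basis of $k^\perp$ (real unit vectors, with $a_{-k,\alpha}=a_{k,\alpha}$). Hence $\sum_\alpha |a_{k,\alpha}\cdot \widehat g(-k)|^2\le |\widehat g(-k)|^2$. Summing over $k$ and applying Parseval on $\T^d$ then gives
\[
\sum_{k,\alpha}\Big|\int_B\sigma^\delta_{k,\alpha}\cdot f\Big|^2\le \delta^{-2d}\|g\|_{L^2(\T^d)}^2=\delta^{-d}\|f\|_{L^2(B)}^2 .
\]
The only point requiring care is that the support of the rescaled function lies inside a single period cell. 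Otherwise, periodizing would create overlaps and Parseval would no longer give an equality with $\|g\|^2$. This is exactly where the assumptions $\varrho\le1/2$ and $\delta\le1$ are used.

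For \eqref{eq:Parseval_combined_with_decay}, use $(\theta^N_k)^2\le \|\theta^N\|_{\ell^\infty}^2$ and then \eqref{eq:Bessel_property1}. It remains to show $\|\theta^N\|_{\ell^\infty}^2\lesssim N^{-d}$ for the choice \eqref{eq:choice_thetaN}. On the support $N\le|k|\le 2N$, the weights $|k|^{-a}$ are comparable to $N^{-a}$ up to the constant $2^a$. The lattice-point count $\#\{N\le|k|\le 2N\}\eqsim N^d$ for $N\ge1$ then gives $\|\Theta^N\|_{\ell^2}^2\eqsim N^{d-2a}$, and therefore $(\theta^N_k)^2\lesssim N^{-2a}/N^{d-2a}=N^{-d}$. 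The constant depends only on $d$ and $a$. Combining this with \eqref{eq:Bessel_property1} yields the bound $C(\delta N)^{-d}\|f\|^2_{L^2(B)}$. I expect no serious obstacle here; the lattice-count lower bound is a standard volume comparison.
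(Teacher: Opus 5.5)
Your proposal is correct and is essentially the paper's argument. The paper stays in the original variables and applies Bessel's inequality to the orthonormal system $\delta^{d/2}e^{-2\pi\mathrm{i}\delta x\cdot k}a_{k,\alpha}$ on the cube of side $\delta^{-1}$ containing $B$; this is exactly your unit-torus Parseval argument after the change of variables $y=x_0+\delta(x-x_0)$. Your lattice-point count also correctly supplies the bound $\|\theta^N\|_{\ell^\infty}\lesssim N^{-d/2}$, which the paper only asserts.
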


The key point in \eqref{eq:Parseval_combined_with_decay} is that the rate depends exclusively on the cumulative frequency $L=\delta N$.

\begin{proof}
Clearly, \eqref{eq:Parseval_combined_with_decay} follows from \eqref{eq:Bessel_property1} and $\|\theta^N\|_{\ell^\infty}\leq C N^{-d/2}$, where the latter is a consequence of the choice of $\theta^N$, see \eqref{eq:choice_thetaN}.
The proof of \eqref{eq:Bessel_property1} is elementary. For the reader's convenience, we include some details. Without loss of generality, we may assume that the ball $B$ is centered at $0$. For $f\in L^2(B;\R^d)$, let $f_B\stackrel{{\rm def}}{=} \one_B f$. Note that $(\wt{\sigma}^\delta_{k,\alpha})_{k,\alpha}$ where 
$$
\wt{\sigma}^\delta_{k,\alpha}(x)\stackrel{{\rm def}}{=}\delta^{d/2}e^{-2\pi \im \delta x\cdot k} a_{k,\alpha}
$$ 
is an incomplete orthonormal system of $L^2((1/2)[-1/\delta,1/\delta)^d;\C^d)$, and hence
\begin{align*}
\sum_{k,\alpha}\Big|\int_{B} 
 \sigma^{\delta}_{k,\alpha} \cdot f   \Big|^2
&=\sum_{k,\alpha} \Big|\int_{(1/2)[-1/\delta,1/\delta)^d}  \sigma^{\delta}_{k,\alpha}\cdot f_B \Big|^2\\
&=\delta^{-d}\sum_{k,\alpha} \Big|\int_{(1/2)[-1/\delta,1/\delta)^d}   \wt{\sigma}^{\delta}_{k,\alpha}\cdot f_B\Big|^2
\leq \delta^{-d}\|f_B\|_{L^2((1/2)[-1/\delta,1/\delta)^d)}^2,
\end{align*}
where we used $\delta\leq 1$, and hence $B\subseteq (1/2)[-1/\delta,1/\delta)^d$.
Since $\|f_B\|_{L^2((1/2)[-1/\delta,1/\delta)^d)}=\|f\|_{L^2(B)}$, the above gives the desired inequality.
\end{proof}

With the above at our disposal, we can immediately prove Proposition \ref{prop:error_estimates_operators}\eqref{it:error_estimates_operators1}.

\begin{proof}[Proof of Proposition \ref{prop:error_estimates_operators}\eqref{it:error_estimates_operators1}]
Since $\|\theta^N\|_{\ell^2}=1$, it follows that 
$$
\Big(
\sum_{k,\alpha}(\theta^N_k)^2 \|\sigma^{\delta}_{k,\alpha}\cdot H\|_{L^2(B)}^2\Big)^{1/2}
\leq C \|H\|_{L^2(B;\R^d)},
$$
where $C$ depends only on $d$. Since $\X^{\Dir}_0(B)=L^2(B)$ and in light of the interpolation result in Lemma \ref{l:properties_extrapolated_spaces} applied to the extrapolated scale associated with the Dirichlet Laplacian \eqref{eq:extrapolated_spaces_sto_dir}, it is enough to prove the estimate in \eqref{it:error_estimates_operators1} for $\sigma_0>1$ sufficiently large. 

Let $(\lambda_j)_{j\geq 1}$ and $(\varphi_j)_{j\geq 1}$ be the eigenvalues and eigenfunctions of (minus) the Dirichlet Laplacian, see \eqref{eq:dirichlet_oper}, respectively. It follows from the definition of the extrapolated spaces \eqref{eq:extrapolated_space1} that 
\begin{align}
\label{eq:estimate_martingale_part_zero}
\sum_{k,\alpha}(\theta^N_k)^2 \|\sigma^{\delta}_{k,\alpha}\cdot H\|_{\X_{-\sigma_0}^{\Dir}(B)}^2
&=\sum_{k,\alpha}\sum_{j\geq 1}(\theta^N_k)^2 \lambda_j^{-2\sigma_0}|\langle \sigma^\delta_{k,\alpha} \cdot H, \varphi_j \rangle^{\Dir}_{-\sigma_0,\sigma_0} |^2\\
\nonumber
&=\sum_{j\geq 1}\lambda_j^{-2\sigma_0} \sum_{k,\alpha}(\theta^N_k)^2 
\Big|\int_{B} \sigma^\delta_{k,\alpha}\cdot (H \varphi_j) \,\dd x \Big|^2\\
\nonumber
&\stackrel{(i)}{\leq} \frac{C}{(\delta N)^d}\sum_{j} \lambda_j^{-2\sigma_0}
\int_B |H \varphi_j|^2\,\dd x \\
\nonumber
&\leq  \frac{C}{(\delta N)^d}
\|H\|_{L^2(B)}^2\Big(\sum_{j} \lambda_j^{-2\sigma_0}
\|  \varphi_j \|_{L^\infty(B)}^2\Big),
\end{align}
where in $(i)$ we applied \eqref{eq:Parseval_combined_with_decay} and the compatibility of the inner products (see Lemma \ref{l:properties_extrapolated_spaces}).
If $\sigma_0\gg 1$ (depending only on $d$), the last sum converges by Weyl's law and the regularity of the eigenfunctions of the Dirichlet Laplacian together with Sobolev embeddings.
\end{proof}

The proofs of Proposition \ref{prop:error_estimates_operators}\eqref{it:error_estimates_operators2} and \eqref{it:error_estimates_operators3} are more elaborate than \eqref{it:error_estimates_operators1}, and will be given in Subsections \ref{sss:proof_error_estimates_operators2} and \ref{sss:proof_error_estimates_operators3} below.

\subsubsection{Local It\^o-Stratonovich corrector error estimate -- Proof of Proposition \ref{prop:error_estimates_operators}\eqref{it:error_estimates_operators2}}
\label{sss:proof_error_estimates_operators2}
Here, we prove quantitative convergence rates for \eqref{eq:local_stratonovich_corrector}, which involves the local It\^o-Stratonovich corrector. As above, the emergence of the cumulative frequency $L=\delta N $ is due to Lemma \ref{l:Bessel_property}.
For notational convenience, we assume $\kappa=2$. The general case is analogous.
To prepare for the proof of Proposition \ref{prop:error_estimates_operators}\eqref{it:error_estimates_operators2}, for $\Phi\in H^1_0(B;\R^d)$ and $H\in L^2(2B;\R^{d\times d})$, we denote by $A^{N,\delta}$ the associated bilinear form (up to a constant):
\begin{align*}
A^{N,\delta}(H,\Phi)
= c_d^{-1}
\langle\mathfrak{L}^{N,\delta}(H),\Phi\rangle_{-1/2,1/2}^{\Dir},
\end{align*}
which is given by
\begin{align}
\label{eq:definition_A_bilinear}
&A^{N,\delta}(H,\Phi) \\
\nonumber
&=\int_{B} \Big( - D_d' H^\top : \nabla \Phi + \sum_{k,\alpha}  (\theta^N_k)^2 \q_{\T^d_B} [\chi(\sigma^{\delta}_{k,\alpha}\cdot H)] \cdot [ (\sigma^{\delta}_{-k,\alpha} \cdot  \nabla) \Phi]\Big)\,\dd x
\end{align} 
where $D_d'=(1-D_d)/c_d=\frac{ (d+1)}{d(d+2)}$ and $\chi\in C^\infty_{{\rm c}}(2B)$ satisfies $\chi|_{B}=1$.

\smallskip

The following is a consequence of Lemma \ref{l:Bessel_property}. For convenience of exposition, we postpone the proof to Appendix \ref{app:cancellation_ito_stratonovich}, see Subsection \ref{ss:local_ito_stratonovich_corrector_proof}.

\begin{lemma}[Bilinear error estimate for local It\^o-Stratonovich corrector]
\label{l:local_ito_stratonovich_corrector}
Let $B$ be a ball of radius $\varrho\in (0,1/4]$. 
Let $\chi$ be a cutoff function such that $\chi|_{B}=1$ and $\supp\chi\subseteq 2 B$. Then there exist constants $C_0,\beta>0$ for which the following holds. For all $N\geq 1$, $\delta \in (0,1]$ such that $\delta N\geq 1$, $\Phi\in  W^{2,\infty}(B;\R^d)\cap H^1_0(B;\R^d)$, and $H\in L^2(2B;\R^{d\times d})$ satisfying 
\begin{equation*}
\Tr (H)=0 \text{ a.e.\ on }2 B, \qquad \text{ and }\qquad 
\nabla \cdot H=0 \ \text{ in }\D'(2 B),
\end{equation*}
it holds that 
\begin{equation}
\label{eq:bilinearity_correction_claim}
|A^{N,\delta}(H,\Phi) |\leq \frac{C_0}{(\delta N)^{\beta}}\|H\|_{L^2(2B)}
\|\Phi\|_{W^{2,\infty}(B)}.
\end{equation}
\end{lemma}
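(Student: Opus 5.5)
The plan is to make the second term of $A^{N,\delta}(H,\Phi)$ explicit in physical space and show that it equals $D_d'\int_B H^\top:\nabla\Phi$ up to an error controlled by the off-diagonal decay of the rescaled covariance. Since $\q_{\T^d_B}=\nabla\Delta^{-1}\nabla\cdot$ on the torus $\T^d_B$, with kernel given by second derivatives of the periodic Green's function, we can write
\[
\q_{\T^d_B}[\chi(\sigma^\delta_{k,\alpha}\cdot H)]_i(x)=\int \Gamma^{\T}_{i,j}(x-y)\,\chi(y)\,(\sigma^\delta_{k,\alpha}\cdot H)_j(y)\,\dd y .
\]
Here $\Gamma^{\T}_{i,j}$ is the principal value kernel $\Gamma_{i,j}$ of \eqref{eq:representation_Ito_Strat_intro} plus the local constant $\frac{\delta_{ij}}{d}\delta_0$, plus a smooth periodic remainder. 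Summing over $k,\alpha$ produces the rescaled covariance
\[
\Cov^{N,\delta}(x-y)=\sum_{k,\alpha}(\theta^N_k)^2\,\sigma^\delta_{k,\alpha}(y)\otimes\sigma^\delta_{-k,\alpha}(x),
\]
which depends only on $x-y$. The second term becomes a sum of three pieces: the P.V. integral \eqref{eq:representation_Ito_Strat_intro}, a diagonal piece $\frac{1}{d}\int\Cov^{N,\delta}(0)\ldots$, and a smooth-kernel piece.

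For the diagonal piece, I would use \eqref{eq:symmetric_equal_diagonal_matrix} with normalization $\frac{d-1}{d}\mathrm{Id}$ in dimension $d$. This gives a term $\frac{d-1}{d^2}\int H:\nabla\Phi$-type contribution. The smooth remainder piece is handled by Lemma \ref{l:Bessel_property}: pairing $\sigma^\delta_{k,\alpha}$ against smooth functions of $y$ and $x$ gives $O((\delta N)^{-d/2})\|H\|_{L^2}\|\Phi\|_{W^{1,\infty}}$, in the same way as the proof of Proposition \ref{prop:error_estimates_operators}\eqref{it:error_estimates_operators1}.

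The main work is the Calder\'on--Zygmund piece. I would split $|x-y|<\epsilon$ from $|x-y|\ge\epsilon$. In the far region, $\Gamma$ is smooth with bounds $\epsilon^{-d-m}$. The covariance $\Cov^{N,\delta}(z)$ is a normalized spherical-shell Fourier sum at frequency $\sim\delta N$, so by stationary phase it decays like $(\delta N|z|)^{-(d-1)/2}$, at least in an averaged sense. Alternatively, one can again pair $\sigma^\delta_{-k,\alpha}(x)$ against the smooth function $x\mapsto\Gamma(x-y)\partial\Phi(x)$ and use Bessel's inequality to get a factor $(\delta N)^{-d/2}\epsilon^{-C}$.

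In the near region, I would Taylor-expand $\partial_n\Phi_i(x)=\partial_n\Phi_i(y)+O(\epsilon\|\Phi\|_{W^{2,\infty}})$. The zeroth-order term reduces to a homogeneous integral: $\Gamma_{ij}(z)\Cov^{N,\delta}_{mn}(z)$ integrated over $|z|<\epsilon$, paired with $H_{mj}(y)\partial_n\Phi_i(y)$. Write $\Cov^{N,\delta}(z)$ as an average over the shell of $e^{2\pi\i\delta z\cdot k}(\mathrm{Id}-\hat k\otimes\hat k)$. Integrating against the degree $-d$ kernel $\Gamma$ then gives, up to $(\delta N\epsilon)^{-1}$ corrections from truncation, the Fourier multiplier value $\hat\Gamma(\hat k)=\hat k\otimes\hat k-\frac{1}{d}\mathrm{Id}$ averaged over the sphere. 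This is exactly the mechanism behind \eqref{eq:symmetric_equal_diagonal_matrix}. The resulting spherical average of $\hat k_i\hat k_j(\delta_{mn}-\hat k_m\hat k_n)$ is an isotropic rank-4 tensor, a combination of $\delta_{ij}\delta_{mn}$, $\delta_{im}\delta_{jn}$ and $\delta_{in}\delta_{jm}$.

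Contracting this tensor with $H_{mj}\partial_n\Phi_i$ kills the $\delta_{ij}\delta_{mn}$-type terms, using $\nabla\cdot H=0$ (after integrating by parts in $y$) and $\Tr H=0$. What survives is a multiple of $H^\top:\nabla\Phi$, and the constant should match $D_d'$ exactly, cancelling the first term of \eqref{eq:definition_A_bilinear}. Checking this exact constant is the main obstacle, as is controlling the P.V. truncation uniformly.

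Finally, I would choose $\epsilon=(\delta N)^{-1/2}$ to balance the errors $\epsilon\|\Phi\|_{W^{2,\infty}}$ and $(\delta N\epsilon)^{-1}$ against $(\delta N)^{-d/2}\epsilon^{-C}$, which gives some $\beta>0$.
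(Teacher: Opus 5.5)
Your architecture is the one used in the paper: split $\q_{\T^d_B}$ into $\frac1d\mathrm{Id}$, a principal-value Calder\'on--Zygmund part and a smooth remainder. Then extract the diagonal piece $\frac{d-1}{d^2}\int_BH^\top:\nabla\Phi$ from $\Cov^{N,\delta}(0)=\frac{d-1}{d}\mathrm{Id}$, cut off at scale $\epsilon$ around the diagonal, Taylor-expand $\nabla\Phi$, and compare the truncated multiplier with the homogeneous one up to $(\epsilon\delta N)^{-1}$. Your treatment of the far region and of the smooth remainder by Bessel's inequality in $x$ at fixed $y$ is fine, and somewhat leaner than the paper's Lemma~\ref{l:convergence_covariance}. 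Cauchy--Schwarz in $(k,\alpha)$ together with $\sum_{k,\alpha}(\theta^N_k)^2|\sigma^\delta_{k,\alpha}(y)|^2=d-1$ gives $(\delta N)^{-d/2}\epsilon^{-d/2}\|H\|_{L^1(2B)}\|\Phi\|_{W^{1,\infty}(B)}$, with no regularity needed in $x$.

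\textbf{The main gap.} It lies in the step that carries the content of the lemma. As written, the cancellation is misidentified. With the conventions fixed by $\mathfrak L^{N,\delta}$, namely $(\nabla\cdot H)_m=\partial_jH_{m,j}$ and $H^\top:\nabla\Phi=H_{m,j}\partial_m\Phi_j$, the $\delta_{ij}\delta_{mn}$ contraction of $H_{m,j}\partial_n\Phi_i$ is exactly $H^\top:\nabla\Phi$, and it is the term that \emph{survives}. It is $\delta_{in}\delta_{jm}$ that gives $\Tr H\,\nabla\cdot\Phi=0$, and $\delta_{im}\delta_{jn}$ that gives $\int_BH_{i,j}\partial_j\Phi_i=-\int_B(\nabla\cdot H)\cdot\Phi=0$ (using $\Phi\in H^1_0(B)$).

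The coefficient must then be computed, and your sketch is inconsistent here. The tensor you write, the spherical average of $\hat k_i\hat k_j(\delta_{mn}-\hat k_m\hat k_n)$, drops the $-\frac{\delta_{ij}}{d}$ you had just put into $\hat\Gamma$; combined with your separate diagonal piece, this double counts. Using $\fint_{\mathbb S^{d-1}}\sigma_i\sigma_j\sigma_m\sigma_n=\frac{1}{d(d+2)}(\delta_{ij}\delta_{mn}+\delta_{im}\delta_{jn}+\delta_{in}\delta_{jm})$, the $\delta_{ij}\delta_{mn}$-coefficient of $\fint(\sigma_i\sigma_j-\frac{\delta_{ij}}{d})(\delta_{mn}-\sigma_m\sigma_n)$ is $\frac1{d^2}-\frac{1}{d(d+2)}=\frac{2}{d^2(d+2)}$. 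Adding the diagonal piece gives $\frac{d-1}{d^2}+\frac{2}{d^2(d+2)}=\frac{d+1}{d(d+2)}=D_d'$. This identity is not a formality. Without it you only know $A^{N,\delta}(H,\Phi)\approx c\int_BH^\top:\nabla\Phi$ for some constant $c$, and the lemma is precisely the statement $c=0$.

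\textbf{Two further missing steps.} First, at finite $N$ the lattice sum $\sum_k(\theta^N_k)^2\hat k_i\hat k_j\hat k_m\hat k_n$ is only invariant under the symmetries of the cube, not isotropic. It carries a component along the tensor equal to $1$ if $i=j=m=n$ and $0$ otherwise. Its contraction $\sum_i\int_BH_{i,i}\partial_i\Phi_i$ is killed neither by $\Tr H=0$ nor by $\nabla\cdot H=0$. So replacing the shell sum by the sphere average is a Riemann-sum estimate with error $O(N^{-1})\le O((\delta N)^{-1})$, as in Step~1 of the paper's proof, not an identity.

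Second, in the near-diagonal term $x$ ranges over $B$ but $y$ ranges over $2B$. The Taylor step for $y\notin B$ requires an extension $\Psi\in W^{2,\infty}(2B;\R^d)$ of $\Phi$. Moreover, for $y$ within $2\epsilon$ of $\partial B$ the truncated kernel is not integrated over a full ball, so the $y$-integral you obtain is not $\int_BH_{m,j}\partial_n\Phi_i$. This boundary layer is the ``P.V.\ truncation'' issue you flag but do not resolve. The paper controls it by writing the main term as $\mathrm{P.V.}\int\mathcal G_{i,j}(z)\om_\epsilon(z)e^{2\pi\i\delta k\cdot z}\big(\int g(y)\one_B(y+z)\,\dd y\big)\dd z$, with $\mathcal G$ the Euclidean kernel you call $\Gamma$. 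It then uses $\|\one_B(\cdot+z)-\one_B\|_{L^2}\lesssim|z|^{1/2}$, which costs $O(\sqrt\epsilon)\,\|H\|_{L^2(2B)}\|\Phi\|_{W^{2,\infty}(B)}$.

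With these $O(\sqrt\epsilon)$ and $O((\delta N)^{-1})$ errors included, your choice $\epsilon=(\delta N)^{-1/2}$ still closes and gives $\beta=\frac14$.
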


As in Lemma \ref{l:Bessel_property}, the rate depends only on $L=\delta N$. The key point in \eqref{eq:bilinearity_correction_claim} is that no assumption on $H$ is needed besides incompressibility and trace-freeness. This was crucial in the proof of Theorem \ref{t:universal_scaling_limit} in Subsection \ref{ss:quantitative_scaling_limits}.

\begin{proof}[Proof of Proposition \ref{prop:error_estimates_operators}\eqref{it:error_estimates_operators2}]
Clearly, from $\|\theta^N\|_{\ell^2}=1$ and \eqref{eq:local_stratonovich_corrector}, it follows that
$$
 \|\mathfrak{L}^{N,\delta}(H)\|_{H^{-1}(B)}
 \leq C\|H\|_{L^2(\kappa B)},
$$
for some $C>0$ depending on $d$. Thus, as in the proof of Proposition \ref{prop:error_estimates_operators}\eqref{it:error_estimates_operators1}, due to the identity $\X_{-1/2}^{{\Dir}}(B;\R^d)=H^{-1}(B;\R^d)$, it suffices to prove the claim for $\sigma_0\gg 1$ depending only on $d$. 
Let $(e_h)$ be the standard basis in $\R^d$. As in \eqref{eq:estimate_martingale_part_zero}, letting $(\lambda_j)_{j\geq 1}$ and $(\varphi_j)_{j\geq 1}$ be the eigenvalues and eigenfunctions of (minus) the Dirichlet Laplacian, see \eqref{eq:dirichlet_oper}, respectively, we have
\begin{align*}
\|\mathfrak{L}^{N,\delta}(H)\|_{\X_{-\sigma_0}^{{\Dir}}(B;\R^d)}
&\eqsim_d \Big(\sum_{1\leq h\leq d}\sum_{j\geq 1} \lambda^{-2\sigma_0}_j|\langle \mathfrak{L}^{N,\delta}(H),\varphi_j e_h \rangle^{\Dir}_{-\sigma_0,\sigma_0} |^2\Big)^{1/2}\\
&\stackrel{(i)}{\leq}   \frac{C}{(N\delta)^\beta}\|H\|_{L^2(2B)} 
\Big(\sum_{j\geq 1} \lambda^{-2\sigma_0}_j\|\varphi_j\|_{W^{2,\infty}(B)}^2\Big)^{1/2}\\
&\stackrel{(ii)}{\lesssim}   \frac{C}{(N\delta)^\beta}\|H\|_{L^2(2B)}
\end{align*}
where in $(i)$ we used Lemma \ref{l:local_ito_stratonovich_corrector} and 
$$
\langle \mathfrak{L}^{N,\delta}(H),\varphi_j e_h \rangle_{-\sigma_0,\sigma_0}^{\Dir}
=
\langle \mathfrak{L}^{N,\delta}(H),\varphi_j e_h \rangle_{-1/2,1/2}^{\Dir}=A^{N,\delta}(H,\varphi_j e_h)
$$ 
due to the compatibility of the inner products (see Lemma \ref{l:properties_extrapolated_spaces}), and in $(ii)$ the known bounds on the eigenvalues of the Laplacian and $\sigma_0\gg 1$ depending only on $d$.
\end{proof}

\subsubsection{Local to global It\^o-Stratonovich corrector error estimate -- Proof of Proposition \ref{prop:error_estimates_operators}\eqref{it:error_estimates_operators3}}
\label{sss:proof_error_estimates_operators3}
Before going into the proof of Proposition \ref{prop:error_estimates_operators}\eqref{it:error_estimates_operators3}, we begin by collecting some useful facts. 
First, for a function $h\in L^2(\kappa B)$ that is harmonic on $\kappa B$ for some $\kappa>1$, by a Caccioppoli-type argument, one has
\begin{equation}
\label{eq:Caccioppoli_harmonic_functions}
[h]_{{\rm Lip}(B)}\leq C \|h\|_{H^{-1}(\kappa B)}
\end{equation}
with an implicit constant independent of $h$. 
The above can also be proved as a consequence of the mean property of harmonic functions. Indeed, if $h$ is harmonic on $B$, then $h(x)=\int_{\kappa B} \phi(x-y)h(y)\,\dd y$ for all $x\in B$, where $\phi$ is a radial smooth function satisfying $\supp\phi\subseteq (\kappa-1)B$ and $\int_{(\kappa-1)B} \phi=1$. Thus, on $B$,
$$
|\nabla h |\leq\Big|\int_{\kappa B} \nabla \phi(\cdot -y) h(y)\,\dd y \Big|\leq \|\nabla \phi\|_{H^{1}_0(\kappa B)}\|h\|_{H^{-1}(\kappa B)}\ \text{ on }B,
$$
and hence \eqref{eq:Caccioppoli_harmonic_functions} follows. Second, for all $\xi\in \R^d$, it holds that
\begin{equation}
\label{eq:exponential_rate_Hg}
\|x\mapsto e^{i\xi \cdot x}\|_{H^{-\g}(B)}\lesssim |\xi|^{-\g}.
\end{equation}
To see the above, note that, for all $\phi\in C^\infty_{{\rm c}}(\R^d)$ such that $\phi|_{B}=1$,
\begin{align}
\label{eq:exponential_negative_smoothness_scaling}
\|x\mapsto e^{i\xi \cdot x}\|_{H^{-\g}(B)}^2
&\leq C \|x\mapsto  \phi(x)e^{i\xi\cdot x}\|_{H^{-\g}(\R^d)}^2\\
\nonumber
&= \int_{\R^d} (1+|\eta+\xi|^2)^{-\g} |\wh{\phi}(\eta)|^2\,\dd \eta\\
\nonumber
&\leq C_\g (1+|\xi|^2)^{-\g} \int_{\R^d} (1+|\eta|^2)^{\g} |\wh{\phi}(\eta)|^2\,\dd \eta,
\end{align}
where we used Peetre's inequality $(1+|\eta+\xi|^2)^{-\g}\leq C_\g (1+|\xi|^2)^{-\g}(1+|\eta|^2)^{\g} $. 
We are in a position to prove Proposition \ref{prop:error_estimates_operators}\eqref{it:error_estimates_operators3}.

\begin{proof}[Proof of Proposition \ref{prop:error_estimates_operators}\eqref{it:error_estimates_operators3}]
As above, for simplicity, we assume $\kappa=2$. The general case is analogous. 
As in the proofs of Proposition \ref{prop:error_estimates_operators}\eqref{it:error_estimates_operators1}-\eqref{it:error_estimates_operators2}, using the identity $\X_{-1/2}^{{\Dir}}(B)=H^{-1}(B)$ and $\|\theta^N\|_{\ell^2}=1$ for all $N\geq 1$, one has
\begin{equation}
\label{eq:uniform_bound_global_to_local}
\|\mathfrak{G}^{N,\delta} (h)\|_{\X_{-1/2}^{{\Dir}}(B;\R^d)}
\lesssim \|(\nabla h_{k,\alpha})_{k,\alpha}\|_{L^2(B;\ell^2)}.
\end{equation}
Hence, by complex interpolation (see Lemma \ref{l:properties_extrapolated_spaces}), it remains to prove that the estimate in \eqref{it:error_estimates_operators3} holds for $\sigma_0$ large, depending only on $d$.
To this end, integrating by parts, for all $\varphi\in \X^{{\Dir}}_{1/2}(B;\R^d)$,
\begin{align*}
\langle  \mathfrak{G}^{N,\delta} (h), \varphi\rangle_{-1/2,1/2}^{\Dir}
=-c_d  \sum_{k,\alpha}\theta_k^N \int_{B}\nabla h_{k,\alpha}\cdot  \big[ (\sigma^\delta_{-k,\alpha}\cdot \nabla) \varphi\big]=- c_d \sum_{k,\alpha} \theta^N_k \int_{B}  \sigma_{-k,\alpha}^\delta \cdot F_{k,\alpha}
\end{align*}
where 
\begin{equation}
\label{eq:definition_Fkappaalpha}
F_{k,\alpha}=[\nabla \varphi]^{\top}\cdot  \nabla h_{k,\alpha}. 
\end{equation}
Since $H^{-\g}(B)=(H^{\g}(B))^*$ and $H^{\g}(B)=H^{\g}_0(B)$ for $\g<1/2$, it holds that
\begin{align*}
\Big| \sum_{k,\alpha}\theta_k^N  \int_{B} \sigma^\delta_{-k,\alpha}\cdot F_{k,\alpha} \Big|
&=\Big| \sum_{k,\alpha}\theta_k^N  \int_{B} e^{-2\pi \i \delta k\cdot x} a_{k,\alpha}\cdot F_{k,\alpha}(x)\,\dd x \Big|\\
&\leq  \sum_{k,\alpha}\Big(|\theta_k^N| \big\|x\mapsto e^{-2\pi \i \delta k\cdot x}\big\|_{H^{-\g}(B)} \| F_{k,\alpha} \|_{H^{\g}(B;\R^d)}\Big)\\
&\leq \Big(\sum_{k,\alpha} (\theta^N_k)^2 \big\|x\mapsto e^{-2\pi \i \delta k\cdot x}\big\|_{H^{-\g}(B)}^2 \Big)^{1/2}
\Big(\sum_{k,\alpha}  \|F_{k,\alpha}\|_{H^{\g}(B;\R^d)}^2 \Big)^{1/2}\\
&\stackrel{(i)}{\leq} C 
\Big(\sum_{k} (\theta^N_k)^2 (\delta |k|)^{-2\g} \Big)^{1/2}
\Big(\sum_{k,\alpha}  \|F_{k,\alpha}\|_{H^{\g}(B;\R^d)}^2 \Big)^{1/2}\\
&\stackrel{(ii)}{\leq} 
\frac{C}{(\delta N)^{\g}}
\Big(\sum_{k,\alpha}  \|F_{k,\alpha}\|_{H^{\g}(B;\R^d)}^2 \Big)^{1/2},
\end{align*}
where in $(i)$ we used \eqref{eq:exponential_negative_smoothness_scaling}, and in $(ii)$ that $\supp\theta^N =\{k\,:\, N\leq |k|\leq 2N\}$ and $\|\theta^N\|_{\ell^2}=1$ by construction.
To conclude, as $\g<1/2$ and $H^1(B)\embed H^\g(B)$, from \eqref{eq:definition_Fkappaalpha}, it follows that 
\begin{align*}
\Big(\sum_{k,\alpha}  \|F_{k,\alpha}\|_{H^{1}(B;\R^d)}^2\Big)^{1/2}
&\lesssim \|\varphi\|_{C^{2}(\overline{B})}\|(\nabla h_{k,\alpha})_{k,\alpha}\|_{H^1(B;\ell^2)}\\
&\lesssim \|\varphi\|_{C^{2}(\overline{B})}\|(\nabla h_{k,\alpha})_{k,\alpha}\|_{L^2(2B;\ell^2)},
\end{align*}
where the last inequality follows from \eqref{eq:Caccioppoli_harmonic_functions} and Fubini's theorem.

Arguing as in the proof of Proposition \ref{prop:error_estimates_operators}\eqref{it:error_estimates_operators1}, by Weyl's law, the above proves the estimate in Proposition \ref{prop:error_estimates_operators}\eqref{it:error_estimates_operators3} for $\sigma_0$ sufficiently large, depending only on $d$. As mentioned above, the conclusion follows from complex interpolation and the uniform bound \eqref{eq:uniform_bound_global_to_local}.
\end{proof}

\section{Control of energy at microscopic scales}
\label{s:energy_control_microscopic}
In this section, we prove the control of the energy at the microscopic scale as outlined in Subsection \ref{ss:control_energy_intro} by following the argument of Avellaneda and Lin \cite{AL87_compactness}.
In particular, we consider the oscillating stochastic Stokes system:
\begin{equation}
\label{eq:turbulent_Stokes_scaling_original_iteration}
\left\{
\begin{aligned}
\partial_t v^N &=-\nabla \pi^N+ (1+\mu) \Delta v^N+ \nabla \cdot F- c_d \mu \sum_{k,\alpha} \theta_k^N\nabla\cdot  (\nabla \wt{\pi}^N_{k,\alpha}\otimes \sigma_{-k,\alpha})\\ 
&\qquad\ \  + \sqrt{c_d \mu}\sum_{k,\alpha} [-\nabla \wt{\pi}_{k,\alpha}^N +\theta^{N}_{k}(\sigma_{k,\alpha}\cdot\nabla) v^N+\theta^{N}_{k}\sigma_{k,\alpha}\cdot G]\,\dot{W}^{k,\alpha}_t,\\
 \nabla \cdot v^N&=0,\\
\Delta \pi^N
&= \nabla^2 : F -  c_d \mu \nabla \cdot \Big[\sum_{k,\alpha}
\theta_k^N \nabla \cdot (\nabla \wt{\pi}_{k,\alpha}^N\otimes \sigma_{-k,\alpha})\Big] ,\\
\Delta \wt{\pi}_{k,\alpha}^N
&=\theta^{N}_{k} \nabla \cdot\big[(\sigma_{k,\alpha}\cdot \nabla)v^N+\sigma_{k,\alpha}\cdot G\big].
 \end{aligned}
\right.
\end{equation}
where $\theta^N=(\theta_k^N)_{k}$ is as in \eqref{eq:choice_thetaN}.
Below, we employ the following 

\begin{assumption}[Forcing regularity]
\label{ass:forcing_regularity_iteration}
Let $Q$ be a parabolic cylinder with side length $1/2$ and center $(t_0,x_0)\in \R_+\times \R^d$, and $r\in (1,\infty)$.
Let $p,q\in (2,\infty)$ be such that
$$
\frac{2}{p}+\frac{d}{q}<1.
$$ 
Suppose that $F$ and $G$ are progressively measurable processes such that 
$$
F,G \in L^r(\O;L^p(I;L^q(B;\R^{d\times d}))),
$$
and a.e.\ on $I\times \O$, $\Tr(G)=0$ on $B$ as well as $\nabla \cdot G=0$ in $\D'(B)$.
\end{assumption}

The main result of this section reads as follows.

\begin{theorem}[Control of the energy at microscopic scale]
\label{t:control_mesoscopic_energy}
Suppose that Assumption \ref{ass:forcing_regularity_iteration} holds. Let $\mu>0$, $r\in (1,\infty)$ and $p_0\in (2,p)$.
Then there exists a constant $R_0>0$ depending only on $\mu,p,q,r,p_0$ and $d$ such that the following assertion holds. For all local solutions $(v^N,\pi^N,(\wt{\pi}^N_{k,\alpha})_{k,\alpha})$ to the stochastic Stokes system \eqref{eq:turbulent_Stokes_scaling_original_iteration} on $Q$, the following uniform control of the energy at the microscopic scales holds
\begin{align*}
\big(\E \|v^N \|_{\underline{L}^2(N^{-1}Q)}^r\big)^{1/r}
&\leq  R_0\Big(\E\sup_{I} \|v^{N}\|_{L^2(B)}^{r}\Big)^{1/r}+R_0 \En_{Q},
\end{align*}
as well as a uniform control of the oscillation of the deterministic and stochastic pressures at the microscopic scales
\begin{align*}
\Big(\E \Big\|\pi^N-\fint_{N^{-1}B}\pi^N \Big\|_{\underline{L}^2(N^{-1}I;\underline{H}^{-1}(N^{-1} B))}^r\Big)^{1/r}
&\leq R_0 \En_{Q},\\
\Big(\E \Big\|\Big( \wt{\pi}_{k,\alpha}^N-\fint_{N^{-1}B}\wt{\pi}_{k,\alpha}^N\Big)_{k,\alpha}\Big\|_{\underline{L}^{p_0}(N^{-1}I;\underline{L}^2(N^{-1}B;\ell^2))}^r\Big)^{1/r}
&\leq R_0 \En_{Q},
\end{align*}
where $\En_{Q}$ is the $L^p(L^q)$-content of $(v^N,\pi^N,(\wt{\pi}^N_{k,\alpha})_{k,\alpha})$ on the (large) scale $Q$, i.e.,
\begin{align}
\nonumber
\En_{Q}
=\big( \max\big\{\E \|F \|_{L^p(I;L^q(B))}^r,\,
\E \|G \|_{L^p(I;L^q(B))}^r , \,
\E \|v^N \|_{L^2(Q)}^r
,&\\
\label{eq:def_En_Q_Lebesgue}
\E \|\nabla \pi^N\|_{L^2(I;H^{-1}(B))}^r,\, 
\E \|( \nabla \wt{\pi}_{k,\alpha}^N )_{k,\alpha}\|_{L^{p_0}(I;H^{-1}(B;\ell^2))}^r &\,\big\}\big)^{1/r}.
\end{align}
\end{theorem}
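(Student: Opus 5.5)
We follow the Avellaneda--Lin scheme of Subsection~\ref{ss:control_energy_intro}: a one-step improvement at the macroscopic scale, iterated by blow-up down to the microscopic scale $N^{-1}$, plus a telescoping control of averages.

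\emph{One-step improvement.} Consider the rescaled system \eqref{eq:turbulent_Stokes_scaling_quantitative} on a cylinder $Q'$ of unit-comparable size, with cumulative frequency $L=\delta N$. The quantity to control is the excess
\[
\Phi(Q')=\Big(\E\Big\|v-\fint_{B'}v\Big\|^r_{\underline{L}^2(Q')}\Big)^{1/r}+\text{(pressure oscillations)},
\]
where the pressures are measured as in \eqref{eq:scaling_pressure_intro1}--\eqref{eq:scaling_pressure_intro2}: $\pi$ in $\underline{L}^2\underline{H}^{-1}$ and $\wt\pi$ in $\underline{L}^{p_0}\underline{L}^2$.
\begin{itemize}
\item Split $v=\big(\uh^{N,\delta}+\fint v\big)+w^{N,\delta}$ as in \eqref{eq:error_splitting_vhom_intro}.
\item For $\uh^{N,\delta}$, use interior regularity of the constant-coefficient Stokes system (Lemma~\ref{l:local_smoothing_homogenized}, driven by Proposition~\ref{prop:local_well_posedness_homogenized_SPDE}). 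This gives spatial H\"older continuity, so the $\underline{L}^2(\sm Q')$-oscillation is at most $C\sm^{\g_1}$ times the excess, plus $\sm^{1-2/p-d/q}$ times the forcing. Here $\g_1<1-\frac{2}{p}-\frac dq$.
\item For the error $w^{N,\delta}$, use Theorem~\ref{t:universal_scaling_limit}: it is $\le C_\sm L^{-\g}\wt{\Eno}^{\#}$. The stochastic Caccioppoli inequality (Theorem~\ref{t:caccioppoli}) bounds $\wt\Eno^\#$ by the excess on $2Q'$.
\item For the pressures, write $\pi=\Pi+(\pi-\Pi)$. The local part $\Pi$ is close to $\Pi^0$, with rate $L^{-\g}$ by Theorem~\ref{t:universal_scaling_limit}. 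The remainder $\pi-\Pi$ is harmonic on $\kappa B'$, so its $\underline{H}^{-1}(\sm B')$-oscillation decays like a positive power of $\sm$.
\item Treat $\wt\pi$ the same way. Its local part tends to $0$ by the third bound of Theorem~\ref{t:universal_scaling_limit}, while the harmonic remainder gains smallness in $\underline L^2(\sm B)$. The time integrability $p_0>2$ is needed so that shrinking the time interval produces a factor $\sm^{2/p_1-2/p_0}$, taking $p_1\in(p_0,p)$.
\end{itemize}
Choose $\sm$ so that $C\sm^{\g_1}\le \sm^{\g_0}/2$ with $\g_0<\g_1$. Then choose $L_0$ so that $C_\sm L_0^{-\g}\le \sm^{\g_0}/2$. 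This gives $\Phi(\sm Q')\le \sm^{\g_0}\Phi(Q')+C\,\mathcal{F}(Q')$ for $\delta N\ge L_0$, where $\mathcal F$ is the forcing contribution.

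\emph{Iteration.} Apply this to $v^N_\ell(t,x)=v^N(\sm^{2\ell}t,\sm^\ell x)$, which solves the system with $\delta=\sm^\ell$, for $\ell\le m$ with $\sm^{m-1}N\ge L_0$. The rescalings \eqref{eq:scaling_invariance_smell}, \eqref{eq:scaling_pressure_intro1}--\eqref{eq:scaling_pressure_intro2} and the rescaled forcings $\sm^\ell F(\sm^{2\ell}\cdot,\sm^\ell\cdot)$ transfer the bound across scales. Since $\frac2p+\frac dq<1$, the forcing contributions at scale $\sm^\ell$ are bounded by $\sm^{\ell(1-2/p-d/q)}\En_Q$. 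The resulting geometric series converges after choosing $\g_0<1-2/p-d/q$. This yields $\Phi(\sm^\ell Q)\le C\En_Q$ uniformly in $\ell$. Between $\sm^mQ$ and $N^{-1}Q$ only a bounded ratio $\lesssim L_0/\sm$ remains, costing a constant. Note $\Phi(Q)\lesssim \En_Q$, using the Caccioppoli inequality to pass from the $\nabla\pi$ and $\nabla\wt\pi$ norms in $\En_Q$ to oscillation norms. This proves the two pressure bounds.

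\emph{Averages.} Write $\fint_{N^{-1}B}v^N$ as $\fint_Bv^N$ plus the telescoping sum $\sum_\ell\big(\fint_{\sm^{\ell+1}B}v^N-\fint_{\sm^\ell B}v^N\big)$. Each increment is bounded by $\sm^{-d/2}$ times the oscillation at scale $\sm^\ell$. To get the oscillation decaying in $\ell$ and the sum finite, run the iteration with the sharper bound $\Phi(\sm^\ell Q)\le C\sm^{\ell\g_0}\En_Q$, which holds by the choice of $\g_0$. Time-uniformity of the spatial averages uses the $\sup_I$ part of the Caccioppoli inequality. The starting term $\fint_Bv^N$ is bounded by $\E\sup_I\|v^N\|_{L^2(B)}^r$.

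The main obstacle is the pressure part of the one-step improvement. The excess must be closed in exactly the scale-invariant pressure norms, despite the lack of time regularity and despite having only the $H^{-1}$-type convergence of Theorem~\ref{t:universal_scaling_limit}. I would handle this through the harmonic decomposition together with interpolation in time between the $L^{p_1}$ control and the $L^2$ convergence rate.
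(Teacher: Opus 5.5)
Your overall scheme matches the paper's. It uses the homogenized splitting together with Theorem~\ref{t:universal_scaling_limit} and harmonic decompositions of the pressures for the one-step improvement, blow-up with $\delta=\sm^\ell$, and a Caccioppoli-based $\sup$-in-time telescoping for the averages. Keeping the forcing as an additive term rather than inside the maximum is harmless, since $\g_0<1-\frac2p-\frac dq$.

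\emph{The gap.} The step that fails as written is your claim that going from the last iteration scale to $N^{-1}Q$ ``costs a constant''. This transfer is unavoidable: every one-step improvement needs $\delta N\ge L_0$, so the iteration stops at radii of order $L_0N^{-1}$. For $v^N$ and $\wt{\pi}^N$, which are measured in $\underline{L}^2$ in space, the transfer is indeed free, because $\|f-\fint_{B'}f\|_{L^2(B')}\le\|f-\fint_{B}f\|_{L^2(B')}$ for $B'\subset B$. For $\pi^N$, however, you need $\|\pi^N-\fint_{N^{-1}B}\pi^N\|_{\underline{H}^{-1}(N^{-1}B)}$. Changing the subtracted mean introduces the constant $\fint_{N^{-1}B}(\pi^N-\fint_{\sm^{m-1}B}\pi^N)$, and no $H^{-1}(\sm^{m-1}B)$-norm controls it. The reason is that $\one_{N^{-1}B}$ is not an admissible test function; indeed, a thin dipole layer along $\partial(N^{-1}B)$ has arbitrarily small $H^{-1}(\sm^{m-1}B)$-norm while its mean over $N^{-1}B$ stays fixed. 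The paper closes this step by using the equation once more. Applying Theorem~\ref{t:caccioppoli} to the blown-up solution gives the scale-invariant $L^2$ bound $\sm^{\ell+1}\|\pi^N-\fint_{\sm^{\ell+1}B}\pi^N\|_{\underline{L}^2(\sm^{\ell+1}Q)}\lesssim\En^\#_Q$ (Corollary~\ref{cor:caccioppoli_combined_decay}). This $L^2$ quantity transfers to $N^{-1}Q$ at bounded cost, and one then returns to $\underline{H}^{-1}$ via $\|f\|_{\underline{H}^{-1}(\rho B)}\lesssim\rho\|f\|_{\underline{L}^2(\rho B)}$, cf.\ \eqref{eq:scaling_negative_LH1}.

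\emph{Two further corrections.} First, the Caccioppoli inequality cannot give $\Phi(Q)\lesssim\En_Q$. It bounds gradients on a smaller cylinder by lower-order norms on a larger one, and for the pressures only in $L^2$ in time. So it cannot produce $\|\wt{\pi}^N-\fint_B\wt{\pi}^N\|_{\underline{L}^{p_0}(I;\underline{L}^2(B))}$ from $\|\nabla\wt{\pi}^N\|_{L^{p_0}(I;H^{-1}(B))}$ on the same cylinder. What you need is the Ne\v{c}as-type inequality $\|f-\fint_Bf\|_{L^2(B)}\lesssim\|\nabla f\|_{H^{-1}(B)}$, applied for a.e.\ $t$ (Lemma~\ref{l:sharp_poincare} with $\sigma=0$). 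The same lemma with $\sigma\in(0,1/2)$ turns the $L^2(I;H^{-1-\sigma})$-rate for $\nabla(\Pi^{N,\delta}-\Pi^0)$ into an $\underline{H}^{-1}(\sm B)$-oscillation bound; this resolves the obstacle you flag at the end.

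Second, your mechanism for $p_0>2$ gives no contraction, since $\sm^{2/p_1-2/p_0}>1$ when $p_1>p_0$. The correct factor is $\sm^{1-2/p_0}$. It comes from the Lipschitz gain $\sm$ of the harmonic part $\wt{\pi}^{N,\delta}-\wt{\Pi}^{N,\delta}$, whose Lipschitz norm is controlled by the excess and by the $\sup_t$ velocity oscillation from Caccioppoli, combined with the loss $\sm^{-2/p_0}$ from shrinking the time interval in $\underline{L}^{p_0}$. The condition $p_0<p$ enters only through the third bound of Theorem~\ref{t:universal_scaling_limit} with $p_1=p_0$ and $p_2=p$. That bound already provides the $L^{p_0}$-in-time rate, so no additional time interpolation is needed.
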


The key point in the above estimate is the uniformity of the constant $R_0$ with respect to the oscillation parameter $N$. Note that the condition $2/p+d/q<1$ is optimal for the above result to hold; see Subsection \ref{ss:almost_self}.
Due to the lack of time regularity of the pressures, we can only control the pressure oscillations (cf.\ Corollary \ref{cor:caccioppoli_combined_decay}). 
However, this is sufficient for our purposes; see Theorem \ref{t:almost_Linfty} below. 
Finally, as commented in Subsection \ref{ss:role_pressure_intro}, the condition $p_0>2$ is needed to compensate for the lack of time regularity of the stochastic pressure, while for the deterministic one, we exploit the scaling properties of the $\underline{H}^{-1}(\sm B)$-norm as $\sm \to 0$, see \eqref{eq:scaling_negative_sob_space_intro}.

\smallskip

The proof of Theorem \ref{t:control_mesoscopic_energy} is deferred to the end of this section (see Subsection \ref{ss:control_mesoscopic_energy_proof}). As outlined in Subsection \ref{ss:control_energy_intro}, following \cite{AL87_compactness}, the argument is divided into two parts: a one-step improvement for a rescaled system and a corresponding iteration for the original system \eqref{eq:turbulent_Stokes_scaling_original_iteration}; see Subsections \ref{ss:one_step_improvement} and \ref{ss:iteration}, respectively.

\subsection{One step improvement -- Rescaled oscillating stochastic Stokes system}
\label{ss:one_step_improvement}
Consider the oscillating stochastic Stokes system with rescaled coefficients:
\begin{equation}
\label{eq:turbulent_Stokes_scaling_quantitative_iteration}
\left\{
\begin{aligned}
\partial_t v^{N,\delta} &=-\nabla \pi^{N,\delta}+ (1+\mu) \Delta v^{N,\delta}+ \nabla \cdot F- c_d \mu \sum_{k,\alpha} \theta_k^N\nabla\cdot  (\nabla \wt{\pi}^{N,\delta}_{k,\alpha}\otimes \sigma^\delta_{-k,\alpha})\\ 
&\ \  + \sqrt{c_d \mu}\sum_{k,\alpha} [-\nabla \wt{\pi}_{k,\alpha}^{N,\delta} +\theta^{N}_{k}(\sigma^\delta_{k,\alpha}\cdot\nabla) v^{N,\delta}+\theta^{N}_{k}\sigma^\delta_{k,\alpha}\cdot G]\,\dot{W}^{k,\alpha}_t,\\
 \nabla \cdot v^{N,\delta}&=0,\\
\Delta \pi^{N,\delta}
&= \nabla^2 : F -  c_d \mu \nabla \cdot \Big[\sum_{k,\alpha}
\theta_k^N \nabla \cdot (\nabla \wt{\pi}_{k,\alpha}^{N,\delta}\otimes \sigma^\delta_{-k,\alpha})\Big] ,\\
\Delta \wt{\pi}_{k,\alpha}^{N,\delta}
&=\theta^{N}_{k} \nabla \cdot [(\sigma^\delta_{k,\alpha}\cdot \nabla)v^{N,\delta}+\sigma^\delta_{k,\alpha}\cdot G],
 \end{aligned}
\right.
\end{equation}
where $\theta^N=(\theta_k^N)_{k}$ is as in \eqref{eq:choice_thetaN}, and for $\delta\in (0,1]$, $\sigma_{k,\alpha}^\delta$ are the rescaled coefficients around the spatial center of $Q$, i.e., 
$
\sigma^{\delta}_{k,\alpha}=\sigma_{k,\alpha}(x_0 +\delta (\cdot-x_0))$. 

\smallskip

The following is the version of the one-step improvement that is convenient for our purposes and is the key to proving Theorem \ref{t:control_mesoscopic_energy}. 

\begin{lemma}[One step improvement]
\label{l:one_step_improvement}
Suppose that Assumption \ref{ass:forcing_regularity_iteration} holds. Let $\mu>0$, $r\in (1,\infty)$, $p_0\in (2,p)$, and fix $0<\g_0< (1-\frac{2}{p}-\frac{d}{q})\wedge (1-\frac{2}{p_0})$. 
Then there exist constants 
$$
L_0\geq 1\qquad \text{ and } \qquad \sm\in (0,1/4],
$$ 
depending only on $\mu,p,q,r,p_0,d$ and $\g_0$ for which the following assertion holds whenever $\delta\in (0,1]$ and $N\geq 1$ satisfy 
$$
(\delta N)^{-1}\leq L_0^{-1}.
$$
For all local solutions $(v^{N,\delta},\pi^{N,\delta},(\wt{\pi}^{N,\delta}_{k,\alpha})_{k,\alpha})$ to the rescaled stochastic Stokes system \eqref{eq:turbulent_Stokes_scaling_quantitative_iteration} on $Q$ (see Definition \ref{def:local_solutions_NSE}), it holds that 
\begin{align}
\label{eq:one_step_improvement1}
\Big(\E \Big\|v^{N,\delta} -\fint_{\sm B} v^{N,\delta}  \Big\|_{\underline{L}^2(\sm Q)}^{r}\Big)^{1/r}
 \leq \sm^{\g_0}
\En_{Q}^{\#},\\
\label{eq:one_step_improvement2}
\Big(\E \Big\|\pi^{N,\delta} -\fint_{\sm B} \pi^{N,\delta}  \Big\|_{\underline{L}^2(\sm I;\underline{H}^{-1}(\sm B))}^{r}\Big)^{1/r}
 \leq \sm^{\g_0}
\En_{Q}^{\#},\\
\label{eq:one_step_improvement3}
\Big(\E \Big\|\Big( \wt{\pi}_{k,\alpha}^{N,\delta} -\fint_{\sm B} \wt{\pi}_{k,\alpha}^{N,\delta}\Big)_{k,\alpha} \Big\|_{\underline{L}^{p_0}(\sm I;\underline{L}^2(\sm B; \ell^2))}^{r}\Big)^{1/r}
 \leq \sm^{\g_0}
 \En_{Q}^{\#},
\end{align}
where $\En_Q^{\#}$ denotes the sharp Lebesgue-energy of the local solution $(v^{N,\delta},\pi^{N,\delta},(\wt{\pi}^{N,\delta}_{k,\alpha})_{k,\alpha})$ at the (large) scale $Q$, i.e.,
\begin{align}
\nonumber
\En_Q^{\#}
=&\Big( \max\Big\{\E \|F \|_{\underline{L}^p(I;\underline{L}^q(B))}^r,\,
\E \|G \|_{\underline{L}^p(I;\underline{L}^q(B))}^r , \,
\E \Big\|v^{N,\delta} -\fint_{B} v^{N,\delta}\Big\|_{\underline{L}^2(Q)}^r
,\\
\label{eq:def_En_Q}
&\E \Big\|\pi^{N,\delta}-\fint_{B} \pi^{N,\delta}\Big\|_{\underline{L}^2(I;\underline{H}^{-1}(B))}^r,\, 
\E \Big\|\Big( \wt{\pi}_{k,\alpha}^{N,\delta}-\fint_{B} \wt{\pi}^{N,\delta}_{k,\alpha} \Big)_{k,\alpha}\Big\|_{\underline{L}^{p_0}(I; \underline{L}^2(B;\ell^2))}^r \Big\}\Big)^{1/r}.
\end{align}
\end{lemma}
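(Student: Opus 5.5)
The plan is to follow the Avellaneda--Lin compactness scheme in its quantitative form. On the cylinder $(1/2)Q$ (after rescaling so that $Q$ has side $1/2$, which is harmless since all norms are averaged), we split $v^{N,\delta}-\fint_{B}v^{N,\delta}=\uh^{N,\delta}-w^{N,\delta}$. Here $\uh^{N,\delta}$ solves the homogenized Stokes system \eqref{eq:homogenized_PDE_linear} on $(1/2)Q$, provided by Proposition \ref{prop:local_well_posedness_homogenized_SPDE}, and $w^{N,\delta}$ is the error. By homogeneity we normalize $\En_Q^{\#}=1$. We then treat each of the three quantities in \eqref{eq:one_step_improvement1}--\eqref{eq:one_step_improvement3} as a ``smooth part'' plus an ``error part''. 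The parameter $\sm$ is chosen first, using the smoothness of the homogenized objects. The parameter $L_0$ is chosen afterwards, using the rate $(\delta N)^{-\g}$ of Theorem \ref{t:universal_scaling_limit}. Throughout, the modified sharp energy $\wt{\Eno}^{\#}$ appearing there is bounded by $C\En_Q^{\#}$ via the stochastic Caccioppoli inequality (Theorem \ref{t:caccioppoli}, applied with the pressures replaced by their oscillations as in \eqref{eq:replacying_average_Caccioppoli}) and Hölder in time.

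\emph{Velocity.} We invoke the local smoothing lemma for the homogenized Stokes system (Lemma \ref{l:local_smoothing_homogenized}, stated later but used as in Figure \ref{fig:scheme}). It gives $\uh^{N,\delta}\in L^r(\O;L^p((1/4)I;C^{1-d/q}((1/4)B)))$ with norm $\lesssim \En^{\#}_Q$, using the bounds on $\ph^{N,\delta}$ from \eqref{eq:local_well_posedness_homogenized_SPDE_estimate2}. Only spatial oscillations are measured, so Campanato combined with Hölder in time (with $p>2$) yields
\[
\Big(\E\Big\|\uh^{N,\delta}-\fint_{\sm B}\uh^{N,\delta}\Big\|^r_{\underline{L}^2(\sm Q)}\Big)^{1/r}\le C_1\sm^{1-2/p-d/q}\En_Q^{\#}.
\]
Since $\g_0<1-2/p-d/q$, we fix $\sm$ so that $C_1\sm^{1-2/p-d/q}\le \tfrac12\sm^{\g_0}$. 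The error term satisfies $\|w\|_{\underline{L}^2(\sm Q)}\le C\sm^{-(d+2)/2}\|w\|_{L^2((1/2)Q)}\le C_\sm(\delta N)^{-\g}\En^{\#}_Q$ by Theorem \ref{t:universal_scaling_limit}. We therefore choose $L_0$ with $C_\sm L_0^{-\g}\le\tfrac12\sm^{\g_0}$.

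\emph{Pressures.} For the stochastic pressure we write $\wt{\pi}^{N,\delta}_{k,\alpha}=\wt{\Pi}^{N,\delta}_{k,\alpha}+h_{k,\alpha}$ with $h_{k,\alpha}$ harmonic on $\kappa B$. The local part is small in $L^{p_0}(H^{-1})$ by Theorem \ref{t:universal_scaling_limit} with $p_1=p_0<p_2=p$. We upgrade this to $L^2$ oscillations on $\sm B$ by interpolating with the $H^1$ Caccioppoli-type bound, at the cost of a power of $(\delta N)^{-\g}$ absorbed into $L_0$. The harmonic part obeys \eqref{eq:Caccioppoli_harmonic_functions}, so its oscillation on $\sm B$ is $\lesssim \sm\,\|h\|$. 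Its $L^{p_0}$-in-time norm on $\sm I$, compared with the average over $I$, produces the factor $\sm^{1-2/p_0}$, which is where the restriction $\g_0<1-2/p_0$ enters. The deterministic pressure is handled the same way: $\pi^{N,\delta}=\Pi^{N,\delta}+(\text{harmonic})$, and $\Pi^{N,\delta}-\Pi^0$ is small by Theorem \ref{t:universal_scaling_limit}. The quantity $\nabla\Pi^0$ is controlled by $F,G\in L^pL^q$ and is small on $\sm B$ by scaling of $\underline{H}^{-1}(\sm B)$ together with $2/p+d/q<1$. The harmonic part gains a full power of $\sm$ from the scaling \eqref{eq:scaling_negative_sob_space_intro}.

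The main obstacle is this pressure step. Converting the negative-norm convergence rates of Theorem \ref{t:universal_scaling_limit} for the local approximations into averaged oscillation norms on $\sm B$ requires a careful interpolation. Controlling the harmonic remainders by $\En_Q^{\#}$ also requires a careful argument, since they are only defined through differences of nonlocal objects; here we would bound them by $\|\pi\|_{H^{-1}}$ plus the local parts on $\kappa B$. A secondary difficulty is that $\sm$ must be fixed before $L_0$, so every constant in the smooth-part estimates must be independent of $N$ and $\delta$. This holds because Theorem \ref{t:caccioppoli} and Lemma \ref{l:local_smoothing_homogenized} see only $\mu$, not the oscillating coefficients.
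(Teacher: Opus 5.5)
Your architecture is the paper's. You use the same splittings of $v^{N,\delta}$, $\pi^{N,\delta}$, $\wt\pi^{N,\delta}_{k,\alpha}$ into a homogenized or harmonic part plus a local-approximation error. You fix $\sm$ first, with the factor $\sm^{1-2/p-d/q}$ coming from Lemma \ref{l:local_smoothing_homogenized} and from $\Pi^0$, the factor $\sm$ from the harmonic part of $\pi^{N,\delta}$, and $\sm^{1-2/p_0}$ from the harmonic part of $\wt\pi^{N,\delta}$. You then fix $L_0$ via Theorem \ref{t:universal_scaling_limit}. The velocity part is correct as written.

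The gap is exactly the step you call the main obstacle and leave to ``a careful interpolation''. Theorem \ref{t:universal_scaling_limit} only controls gradients: $\nabla\Pi^{N,\delta}-\nabla\Pi^0$ in $L^2(I;H^{-1-\sigma}(\T^d_B))$, and $\nabla\wt\Pi^{N,\delta}_{k,\alpha}$ in $L^{p_1}(I;H^{-1}(\T^d_B;\ell^2))$. By contrast, \eqref{eq:one_step_improvement2}--\eqref{eq:one_step_improvement3} concern oscillations of the pressures themselves, and interpolation does not bridge this.
\begin{itemize}
\item For the stochastic pressure, the $H^1$-type bound you want to interpolate with comes from $\nabla v^{N,\delta}$ via Theorem \ref{t:caccioppoli}, and that bound is only $L^2$ in time. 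Interpolating it against the rate with $p_1=p_0$ gives a time exponent strictly below $p_0$, so it misses the $\underline{L}^{p_0}(\sm I)$-norm. You would still have to pass from a gradient bound to a bound on the function.
\item For the deterministic pressure, the available uniform bound on $\nabla(\Pi^{N,\delta}-\Pi^0)$ is in $H^{-1}$. Interpolating it with the $H^{-1-\sigma}$ rate still only gives information on the gradient.
\end{itemize}

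What is missing is a Poincar\'e (Ne\v{c}as) inequality in negative spaces, the paper's Lemma \ref{l:sharp_poincare}: $\|f-\fint_B f\|_{H^{-\sigma}(B)}\le C\|\nabla f\|_{H^{-1-\sigma}(B)}$ for $\sigma\in[0,1/2)$. It is proved by writing $\varphi-\fint_B\varphi=\nabla\cdot v$ with the Bogovskii operator.
\begin{itemize}
\item With $\sigma\in(0,1/2)$ and $H^{-\sigma}(\sm B)\embed H^{-1}(\sm B)$, it turns the rate for $\nabla(\Pi^{N,\delta}-\Pi^0)$ directly into the $\underline{L}^2(\sm I;\underline{H}^{-1}(\sm B))$-oscillation bound.
\item With $\sigma=0$, it turns the $L^{p_0}(I;H^{-1})$ rate for $\nabla\wt\Pi^{N,\delta}$ (Theorem \ref{t:universal_scaling_limit} with $p_1=p_0$, $p_2=p$) into the $\underline{L}^{p_0}(\sm I;\underline{L}^2(\sm B;\ell^2))$-oscillation bound.
\end{itemize}
In both cases there is no interpolation and no loss in the rate or in time integrability.

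The same inequality is what makes your control of the harmonic remainders go through. One has $[h]_{{\rm Lip}((1/4)B)}\lesssim\|h-\fint_B h\|_{\underline{H}^{-1}(B)}$, and the local-approximation contribution to the right-hand side is bounded by $\|\nabla\Pi^{N,\delta}\|_{H^{-1}}$, respectively $\|\nabla\wt\Pi^{N,\delta}\|_{H^{-1}}$. For the stochastic pressure this last bound is needed in $L^{p_0}$ in time, which the Caccioppoli gradient bound does not give. The paper obtains it by writing $(\sigma^\delta_{k,\alpha}\cdot\nabla)v^{N,\delta}=\nabla\cdot(\wt v^{N,\delta}\otimes\sigma^\delta_{k,\alpha})$, with $\wt v^{N,\delta}=v^{N,\delta}-\fint_{(3/4)B}v^{N,\delta}$, and then using the $\sup_t$ part of Theorem \ref{t:caccioppoli}. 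Equivalently, one can use the uniform bound from Theorem \ref{t:universal_scaling_limit}, since $\delta N\ge 1$.

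Because the local approximations are mean-zero periodic functions built from $\qq_{\T^d_B}$, a Fourier argument on $\T^d_B$ would also work for the gradient-to-function step on them.
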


As will become clear from the iteration argument in Subsection \ref{ss:iteration} and the corresponding proof of Theorem \ref{t:control_mesoscopic_energy} in Subsection \ref{ss:control_mesoscopic_energy_proof}, the key point in the above lemma is the joint condition on $N$ and $\delta$ via the cumulative oscillation frequency $L=\delta N$ (see Subsection \ref{ss:control_energy_intro}). The use of averaged Lebesgue spaces in the definition of $\En_Q^{\#}$ in \eqref{eq:def_En_Q} is primarily for convenience, as their scaling properties perfectly accommodate the iteration performed in Subsection \ref{ss:iteration} below.

\smallskip

The proof of Lemma \ref{l:one_step_improvement} is postponed to Subsection \ref{sss:one_step_iteration}. As a fundamental preparatory step, we first establish local smoothness for the homogenized Stokes system \eqref{eq:homogenized_PDE_linear} in Subsection \ref{ss:preparation_one_step}. We emphasize that, in contrast to \cite{AL87_compactness}, our one-step improvement relies crucially on the quantitative error estimates derived in Section \ref{s:quantitative_loc_scaling}.

\subsubsection{Local smoothing for the homogenized Stokes system}
\label{ss:preparation_one_step}
The following result captures the local smoothing of the homogenized Stokes system \eqref{eq:homogenized_PDE_linear}.

\begin{lemma}[Local smoothing for the homogenized Stokes system]
\label{l:local_smoothing_homogenized}
Suppose that Assumption \ref{ass:forcing_regularity_iteration} holds. Fix $\mu>0$, $r\in (1,\infty)$ and $p_0\in (2,p)$. Let $(v^{N,\delta},\pi^{N,\delta},(\wt{\pi}^{N,\delta}_{k,\alpha})_{k,\alpha})$ be a local solution to the stochastic Stokes system \eqref{eq:turbulent_Stokes_scaling_quantitative_iteration} on $Q$ for some $N\geq 1$ and $\delta\in (0,1]$. Let  $(\uh^{N,\delta},\ph^{N,\delta})$ be the solution to the homogenized Stokes system \eqref{eq:homogenized_PDE_linear} on $(1/2)Q$ provided by Proposition \ref{prop:local_well_posedness_homogenized_SPDE}.
Then there exists a constant $C_0>0$ depending only on $\mu,\varrho,p,q,r,p_0$ and $d$ such that
\begin{equation}
\label{eq:local_smoothing_uhN_proof_one_step_improvement}
\big(
\E\|\uh^{N,\delta}\|^r_{L^p((1/4)I;W^{1,q}((1/4)B))}\big)^{1/r}
\leq C_0 \En_Q^{\#},
\end{equation}
where $\En_Q^{\#}$ is as in \eqref{eq:def_En_Q} and $Q=I\times B$.
\end{lemma}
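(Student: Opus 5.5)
Our plan is to decompose $\uh^{N,\delta}$ on $(1/2)Q$ into a part driven by the forcing and a remainder solving the homogeneous constant-coefficient Stokes system; we then apply classical interior regularity to the remainder, using only $L^2$-type information. Fix a cutoff $\chi\in C^\infty_{{\rm c}}((1/2)B)$ with $\chi=1$ on $(3/8)B$. Let $\overline{w}$ be the solution on $\T^d_B$ (or on $\R\times\T^d_B$, zero initial datum at $t_0-1/4$) of the deterministic Stokes system
\[
\partial_t \overline{w}=(1+D_d\mu)\Delta\overline{w}+\p_{\T^d_B}\nabla\cdot\big(\chi(F-(1-D_d)\mu G^\top)\big).
\]
By deterministic maximal $L^p(L^q)$-regularity on the torus, applied pathwise and followed by the $L^r(\O)$-norm, $\overline{w}\in L^r(\O;L^p(I;H^{1,q}))$. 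The bound is by $\|F\|_{L^p(L^q)}+\|G\|_{L^p(L^q)}$, hence by $\En_Q^{\#}$. Also, $\overline{w}$ is controlled in $L^\infty(L^2)\cap L^2(H^1)$ on $(1/2)Q$.

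The difference $h=\uh^{N,\delta}-\overline{w}$ solves $\partial_t h=-\nabla \widehat{\pi}+(1+D_d\mu)\Delta h$ and $\nabla\cdot h=0$ on $(3/8)Q$, in the distributional sense. Here $\widehat\pi$ is a pressure lying in $H^{-1}(I;H^{-1})$ (resp.\ $H^{-1,p}(I;H^{-1})$), by Proposition~\ref{prop:local_well_posedness_homogenized_SPDE}. Interior regularity for the Stokes system is known to hold in space but not in time, as in Serrin's example. To handle this, take the curl, or more robustly test only with divergence-free test functions: $\omega=\nabla\times h$ then solves the heat equation on $(3/8)Q$, so $\omega$ is smooth in space-time there, with bounds by $\|h\|_{L^2((3/8)Q)}$. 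For a.e.\ $t$, $h(t)$ is divergence free with curl $\omega(t)$. Hence $\Delta h(t)=-\nabla\times\omega(t)$, and interior elliptic estimates give
\[
\|h(t)\|_{W^{1,q}((1/4)B)}\lesssim \|\omega(t)\|_{C^1((5/16)B)}+\Big\|h(t)-\fint h(t)\Big\|_{L^2((5/16)B)}.
\]
Taking the $L^p((1/4)I)$-norm requires $\sup_t\|h(t)\|_{L^2}$, which is exactly the $L^\infty(I;L^2)$ bound in \eqref{eq:local_well_posedness_homogenized_SPDE_estimate}. This explains why spatial averages suffice and why no time regularity of $\ph^{N,\delta}$ is needed.

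It then remains to bound $\uh^{N,\delta}$ in $L^\infty((1/2)I;L^2((1/2)B))$ and $\nabla\uh^{N,\delta}$ in $L^2$, in $L^r(\O)$, by $\En_Q^{\#}$. Proposition~\ref{prop:local_well_posedness_homogenized_SPDE} on $(1/2)Q$ bounds these by the sup-oscillation of $v^{N,\delta}$ and by $\nabla v^{N,\delta}$, $\nabla\wt\pi^{N,\delta}$, $F$, $G$ on $(1/2)Q$. The stochastic Caccioppoli inequality (Theorem~\ref{t:caccioppoli} with $\sm=1/2$, and its averaged variant \eqref{eq:replacying_average_Caccioppoli}) bounds these in turn by $\Eno_Q^{\#}$. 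Since $p,q,p_0\ge2$ and averaged norms on a fixed cylinder are comparable to plain ones, $\Eno_Q^{\#}\lesssim\En_Q^{\#}$. Combining this with the bound on $\overline{w}$ gives \eqref{eq:local_smoothing_uhN_proof_one_step_improvement}.

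We expect the main obstacle to be the rigorous justification of the curl/heat-equation step for the transposition solution, which has only distributional time regularity. The plan is to mollify in time: convolution in $t$ commutes with the constant-coefficient operator, and the pressure disappears under the curl. Alternatively, one can use the harmonic-in-space structure of $\widehat\pi(t)$ together with the divergence-free test-function formulation.
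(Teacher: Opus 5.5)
Your argument is correct, and it takes a genuinely different route from the paper's.

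\textbf{How the paper argues.} The paper does not subtract a forced evolution problem. It splits the pressure as $\ph^{N,\delta}=\Pi^0+(\ph^{N,\delta}-\Pi^0)$, where $\Pi^0=\qq_{\T^d_B}[\chi\nabla\cdot(F-(1-D_d)\mu G^\top)]$ is the local pressure generated by the forcing and $\ph^{N,\delta}-\Pi^0$ is harmonic on $(3/8)Q$. It then takes a time antiderivative $\mathcal{H}$ of this harmonic part. As a result, $\uh^{N,\delta}+\nabla\mathcal{H}$ solves a forced heat equation with $L^p(H^{-1,q})$ data, handled by local parabolic smoothing, while $\nabla\mathcal{H}\in L^p(\mathrm{Lip})$ by harmonic smoothing. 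This step needs $\ph^{N,\delta}\in H^{-1,p}(I;H^{-1})$, i.e.\ the pressure bound \eqref{eq:local_well_posedness_homogenized_SPDE_estimate2}. That is why Remark \ref{r:definition_homogenized} stresses pressure estimates.

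\textbf{What your route buys.} You split $\uh^{N,\delta}$ into a torus Stokes solution carrying the cut-off forcing plus a homogeneous local Stokes solution $h$, and then take the curl. This removes the pressure altogether. You therefore only need the energy-class bounds \eqref{eq:local_well_posedness_homogenized_SPDE_estimate} and the existence of some distributional pressure. You even get $L^\infty$-in-time control of $h$ in $W^{1,q}((1/4)B)$.

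\textbf{What it costs.} You need an auxiliary maximal-regularity problem on $\T^d_B$ and a vorticity reconstruction. For general $d$, use $\omega_{ij}=\partial_ih_j-\partial_jh_i$ and $\Delta h_i=\partial_j\omega_{ji}$ for divergence-free $h$. The paper instead stays within the ``local pressure plus harmonic remainder'' framework it reuses in Lemma \ref{l:one_step_improvement}.

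\textbf{Two small points.}
\begin{enumerate}
\item The step you flag as the main obstacle is not one. The equation holds in $\D'((1/2)Q)$ by Step 3 of Proposition \ref{prop:stokes_rough}. Distributional curls commute with $\partial_t$ and $\Delta$, and hypoellipticity of the heat operator gives smoothness of $\omega$ directly, with no mollification. The bound up to the top time is the same standard interior estimate the paper itself invokes.
\item In your elliptic estimate you need the full $\|h(t)\|_{L^2((5/16)B)}$, not only its oscillation, to control the $W^{1,q}$-norm. The $L^\infty(L^2)$ bound you already cite provides this.
\end{enumerate}
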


Comparing \eqref{eq:local_smoothing_uhN_proof_one_step_improvement} with the regularity in Proposition \ref{prop:local_well_posedness_homogenized_SPDE}, it follows that the velocity field $\uh^{N,\delta}$ gains regularity far from the boundary, which is the source of roughness. This is consistent with the well-known fact that the regularity of solutions to elliptic and parabolic PDEs depends only locally on the coefficients and data.

However, due to the limited time regularity of $\ph^{N,\delta}$ (see Proposition \ref{prop:local_well_posedness_homogenized_SPDE}), the above local smoothing is nonstandard, and in particular, we do not expect a control $\partial_t \uh^{N,\delta}$ in $L^{p}((1/4)I;H^{-1,q}((1/4)B))$ that is the maximal time regularity counterpart of \eqref{eq:local_smoothing_uhN_proof_one_step_improvement}. 
This lack of temporal regularity in the homogenized velocity explains why, in the one-step improvement of Lemma \ref{l:one_step_improvement}, we study velocity oscillations within spatial balls rather than the time-space cylinders typically used in parabolic PDEs.

\begin{proof}
Recall from Proposition \ref{prop:local_well_posedness_homogenized_SPDE} that
\begin{equation}
\begin{aligned}
\label{eq:PDE_homogenized_local_smoothing}
\partial_t \uh^{N,\delta} &=-\nabla \ph^{N,\delta}+ (1+D_d \mu) \Delta \uh^{N,\delta}+  \nabla \cdot (F-(1-D_d) \mu\, G^\top), \\
 \nabla \cdot \uh^{N,\delta}&=0 ,
\end{aligned}
\end{equation}
both in $\D'((1/2)Q)$ for some $\ph^{N,\delta} \in H^{-1,p}((1/2)I;H^{-1}((1/2)B))$, with corresponding bounds in terms of $\En_{Q}^{\#}$ due to the estimates \eqref{eq:local_well_posedness_homogenized_SPDE_estimate}-\eqref{eq:local_well_posedness_homogenized_SPDE_estimate2} and the stochastic Caccioppoli inequality applied to $(v^{N,\delta},\pi^{N,\delta}-\fint_B \pi^{N,\delta}, (\wt{\pi}^{N,\delta}_{k,\alpha}-\fint_B \wt{\pi}^{N,\delta}_{k,\alpha})_{k,\alpha})$. 

From the previously displayed formula, we obtain
$$
\Delta \ph^{N,\delta} = \nabla^2 : (F-(1-D_d) \mu\, G^\top)\ \  \text{ in }\ \D'((1/2)Q).
$$
Next, fix a cutoff function $\chi\in C^{\infty}_{{\rm c}}((1/2)Q)$ such that 
\begin{equation*}
\chi=1\ \text{ on }(3/8)Q, \qquad  \text{ and }\qquad\|\nabla^{k}\chi\|_{L^\infty(Q)}\leq C_{k}.
\end{equation*}
As in \eqref{eq:approximated_pressure_1_quantitative_convergence3}, we let $\Pi^0$ be the local pressure on $Q$:
\begin{equation}
\label{eq:regularity_local_pressure_smoothing_homogenized}
\Pi^0=\qq_{\T^d_{B}}[\chi \nabla \cdot (F-(1-D_d)\mu \,G^\top)]\in L^r(\O;L^p(I;L^q(B))).
\end{equation}
Moreover, the choice of the cutoff $\chi$ implies that $\ph^{N,\delta}-\Pi^0$ is harmonic:  
\begin{equation}
\label{eq:harmonicity_difference}
\Delta (\ph^{N,\delta}-\Pi^0)=0 \ \  \text{ in }\ \D'((3/8)Q).
\end{equation} 
Now, we claim that there exists $\mathcal{H}\in L^p((1/2)I;H^{-1}((1/2)B))$ such that 
\begin{equation}
\label{eq:harmonic_distribution_construction1}
\|\mathcal{H}\|_{L^p((1/2)I;H^{-1}((1/2)B))}\leq C\|\ph^{N,\delta}-\Pi^0\|_{H^{-1,p}((1/2)I;H^{-1}((1/2)B))}
\end{equation}
with $C>0$ depending only on $\mu,p$ and $d$, and it satisfies
\begin{equation}
\label{eq:harmonic_distribution_construction2}
\partial_t\mathcal{H}= \ph^{N,\delta}-\Pi^0\ \  \text{ in }\ \D'((1/2)Q), \ \ \ \text{ and }\ \ \  \Delta \mathcal{H}=0\ \  \text{ in }\ \D'((3/8)Q).
\end{equation}
From the above and local smoothing for harmonic functions as in \eqref{eq:Caccioppoli_harmonic_functions}, we obtain 
\begin{align}
\label{eq:smoothing_nabla_H_localized_stokes_smoothness}
\|\nabla \mathcal{H}\|_{L^p((1/4)I;{\rm Lip}((1/4)B))}
&\leq C\| \mathcal{H}\|_{L^p((1/4)I;H^{-1}((3/8)B))}\\
\nonumber
&\leq C \|\ph^{N,\delta}- \Pi^0\|_{H^{-1,p}((1/2)I;H^{-1}((1/2)B))},
\end{align}
where the constant $C>0$ depends only on $p$ and $d$.

The construction of $\mathcal{H}$ is standard (see e.g., \cite[Subsection 8.2]{B11}). We include some details for completeness.
Fix $\varphi_0\in \D((1/2)I)$ such that $\int_{(1/2)I} \varphi_0=1$. Set  
\begin{equation}
\label{eq:explicit_version_harmonic}
\langle \mathcal{H}, \varphi\rangle = -\langle \ph^{N,\delta}-\Pi^0, \phi_\varphi\rangle ,
\end{equation}
where 
$$
\phi_\varphi(t,\cdot)= \int_{(1/2)I \cap (-\infty,t)} \Big( \varphi(s,\cdot) - \varphi_0 (s)\int_{(1/2)I} \varphi(r,\cdot)\,\dd r\Big)\,\dd s .
$$
The first identity in \eqref{eq:harmonic_distribution_construction2} is clear, while the second follows from \eqref{eq:harmonicity_difference}. To check \eqref{eq:harmonic_distribution_construction1}, recall that, arguing as in \cite[Proposition 8.14]{B11} (see \cite[Theorem 1.3.10 and 1.3.21]{Analysis1} for duality of Bochner spaces), the space $H^{-1,p}((1/2)I;H^{-1}((1/2)B))$ can be characterized as the set of derivatives of functions in $L^p((1/2)I;H^{-1}((1/2)B))$. In particular, there exists $R\in L^p((1/2)I;H^{-1}((1/2)B))$ which satisfies the bound \eqref{eq:harmonic_distribution_construction1} with $\mathcal{H}$ replaced by $R$ and $\partial_t R = \ph^{N,\delta} - \Pi^0$. From \eqref{eq:explicit_version_harmonic}, it follows that 
\begin{align*}
\langle \mathcal{H}, \varphi\rangle 
&= -\langle \partial_t R, \phi_\varphi\rangle =\langle  R, \partial_t \phi_\varphi\rangle \\
&=\int_{(1/2)I}   \langle R, \varphi(s,\cdot)\rangle - \varphi_0(s) \Big\langle  R(s),  \int_{(1/2)I} \varphi(r,\cdot)\,\dd r\Big\rangle \,\dd s\\
&=\int_{(1/2)I}   \Big\langle R- \int_{(1/2)I} R(r)\varphi_0 (r)\,\dd r, \varphi(s,\cdot)\Big\rangle\,\dd s ,
\end{align*}
i.e., $\mathcal{H}=R- \int_{(1/2)I} R\varphi_0$ and \eqref{eq:harmonic_distribution_construction1} follows from the corresponding bound for $R$.

\smallskip

Letting $\whom^{N,\delta}=\nabla\mathcal{H}+\uh^{N,\delta} $, the first identity in \eqref{eq:PDE_homogenized_local_smoothing} implies, in $\D'((3/8)Q)$,
\begin{align*}
\partial_t \whom^{N,\delta}
&\stackrel{(i)}{=} \nabla \ph^{N,\delta}-\nabla \Pi^0+ \partial_t \uh^{N,\delta}\\
&=   -\nabla \Pi^0 + (1+D_d \mu)\Delta \uh^{N,\delta}+  \nabla \cdot (F-(1-D_d) \mu\, G^\top)\\
&\stackrel{(ii)}{=}   -\nabla \Pi^0 + (1+D_d \mu) \Delta \whom^{N,\delta}+  \nabla \cdot (F-(1-D_d) \mu\, G^\top)
\end{align*}
where in $(i)$ and $(ii)$, we used the first and the second identities in \eqref{eq:harmonic_distribution_construction2}, respectively. 

Finally, it follows from \eqref{eq:regularity_local_pressure_smoothing_homogenized} that 
$$
-\nabla \Pi^0 + \nabla \cdot (F-(1-D_d)\mu \,G^\top) \in L^r(\O;L^p(I;H^{-1,q}(B)))
$$
with a corresponding bound depending only on the $L^r(\O;L^p(I;L^{q}(B)))$-norms of $F$ and $G$.
The claimed estimate \eqref{eq:local_smoothing_uhN_proof_one_step_improvement} follows from \eqref{eq:smoothing_nabla_H_localized_stokes_smoothness}, standard local smoothing for the heat equation on $ (3/8)Q\supset(1/4)Q$,  and, as mentioned at the beginning of the proof, the fact that the right-hand sides of \eqref{eq:local_well_posedness_homogenized_SPDE_estimate}-\eqref{eq:local_well_posedness_homogenized_SPDE_estimate2} are controlled by $\En_{Q}^{\#}$ thanks to Theorem \ref{t:caccioppoli}.
\end{proof}

\subsubsection{One step improvement -- Proof of Lemma \ref{l:one_step_improvement}}
\label{sss:one_step_iteration}
We begin by proving a `sharp' variant of the Poincar\'e inequality. Below, as usual, we let $H^0(B)=L^2(B)$. 

\begin{lemma}[Sharp Poincar\'e inequality]
\label{l:sharp_poincare}
For all $\sigma\in [0,1/2)$ and balls $B\subseteq \R^d$, there exists a constant $C>0$ independent of the center of $B$ such that, for all $f\in L^2(B)$ and $\nabla f \in H^{-1-\sigma}(B)$, 
\begin{align}
\label{eq:poincare_sharp_lemma}
\Big\|f-\fint_{B} f\Big\|_{H^{-\sigma}(B)}
\leq C \|\nabla f\|_{H^{-1-\sigma}(B)}.
\end{align}
In particular, $
\big\|f-\fint_{B} f\big\|_{H^{-1}(B)}
\leq C \|\nabla f\|_{H^{-1}(B)}$.
\end{lemma}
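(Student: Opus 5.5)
We argue by duality and reduce \eqref{eq:poincare_sharp_lemma} to a solvability statement for the divergence with data of positive smoothness. By translation and dilation invariance of the ball, it suffices to treat one fixed ball $B$; the constant is then independent of the center. Recall that for $\sigma\in[0,1/2)$ we have $H^{\sigma}(B)=H^{\sigma}_0(B)$, so $H^{-\sigma}(B)$ is the dual of $H^{\sigma}_0(B)$ up to equivalent norms, and likewise $H^{-1-\sigma}(B)=(H^{1+\sigma}_0(B))^*$. Since $f-\fint_B f$ has zero mean, its pairing with $g$ equals its pairing with $g-\fint_B g\,\one_B$. Hence
\[
\Big\|f-\fint_B f\Big\|_{H^{-\sigma}(B)}\lesssim \sup\Big\{\Big|\int_B f\,g\Big| : g\in H^{\sigma}_0(B),\ \int_B g=0,\ \|g\|_{H^{\sigma}(B)}\le 1\Big\}.
\]
Here we use that subtracting the mean is bounded on $H^\sigma(B)$ for $\sigma<1/2$, since the constant $\one_B$ lies in $H^\sigma(B)=H^\sigma_0(B)$.

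For each such mean-zero $g$ we invoke the Bogovskii operator $\mathcal{B}_B$, as in the proof of Proposition \ref{prop:stokes_rough}. It gives $u=\mathcal{B}_B g$ with $\nabla\cdot u=g$, and it satisfies the mapping property $\mathcal{B}_B:H^{s}_0(B)\cap\{\int_B=0\}\to H^{s+1}_0(B;\R^d)$ with $\|u\|_{H^{1+\sigma}}\lesssim\|g\|_{H^{\sigma}}$. This is known for integer $s\ge 0$ (\cite[Chapter III]{Galdi_book}), and we obtain $s=\sigma\in[0,1/2)$ by complex interpolation between $s=0$ and $s=1$. Then, by density of smooth compactly supported functions, we have $\int_B f g=\int_B f\,\nabla\cdot u=-\langle \nabla f,u\rangle$. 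This pairing is bounded by $\|\nabla f\|_{H^{-1-\sigma}(B)}\|u\|_{H^{1+\sigma}_0(B)}\lesssim \|\nabla f\|_{H^{-1-\sigma}(B)}$. Taking the supremum gives the claim.

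For the final assertion, with $H^{-1}$ on the left, we use the same scheme with $g\in H^1_0(B)$. We cannot subtract the mean directly, since constants are not in $H^1_0(B)$. Instead we write $g=(g-c\varphi)+c\varphi$ with $c=\int_B g$ and a fixed bump $\varphi\in C^\infty_c(B)$, $\int_B\varphi=1$. We apply Bogovskii to $g-c\varphi\in H^1_0(B)$, which has zero mean, landing in $H^2_0(B)$. The leftover term $c\int_B(f-\fint_B f)\varphi$ is controlled by the first part with $\sigma=0$, since $|c|\lesssim\|g\|_{H^1}$. Both terms are then bounded by $\|\nabla f\|_{H^{-2}}\lesssim\|\nabla f\|_{H^{-1}}$.

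The main obstacle is the fractional-order bound for Bogovskii together with the identification $H^{-\sigma}(B)=(H^\sigma_0(B))^*$ for $\sigma<1/2$. Both rely on the restriction $\sigma<1/2$, and that is why the statement stops there.
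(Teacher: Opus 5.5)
Your duality-plus-Bogovskii argument for \eqref{eq:poincare_sharp_lemma} is exactly the paper's proof. The paper tests against $\varphi\in H^{\sigma}_0(B)$, moves the mean onto the test function, solves $\nabla\cdot v=\varphi-\fint_B\varphi$ with $v\in H^{1+\sigma}_0(B)$, and pairs with $\nabla f$; your interpolation remark supplies the fractional Bogovskii bound that the paper simply cites.

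Your separate bump-function argument for the final assertion is unnecessary, since that assertion follows at once from the case $\sigma=0$ together with the embedding $L^2(B)\hookrightarrow H^{-1}(B)$. In that detour, the leftover term $c\int_B(f-\fint_B f)\varphi$ is controlled by $\|\nabla f\|_{H^{-1}(B)}$ (via the case $\sigma=0$), not by $\|\nabla f\|_{H^{-2}(B)}$ as you state. Your final bound is nonetheless correct.
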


The restriction $\sigma<1/2$ is natural, as $\one_{B}$ is a pointwise multiplier in $H^{\sigma}(B)$ and \eqref{eq:poincare_sharp_lemma} can be extended to any $f\in H^{-\sigma}(B)$ if we interpret $\fint f =|B|^{-1}\langle f,\one_{B}\rangle $. A version of Lemma \ref{l:sharp_poincare} also holds by replacing $\fint_{B}f$ by $\int_B \phi f$ for a fixed $\phi\in C_{{\rm c}}^{\infty}(B)$ such that $\int_B \phi=1$, as used in Section \ref{s:caccioppoli}, but this will not be needed here.
The above might be known to experts. For the reader's convenience, we include a proof.

\begin{proof}
Note that, for all $\varphi\in H^\sigma(B) =H^{\sigma}_0(B)$ (as $\sigma<1/2$),
\begin{align*}
\Big|\int_{B}\Big(f-\fint_{B} f\Big) \varphi \Big|
&=\Big|\int_{B}f \Big(\varphi-\fint_{B} \varphi\Big)  \Big|.
\end{align*}
Now, by using the Bogovskii operator (see e.g., \cite[Subsection III.3]{Galdi_book}), there exists $v\in H^{1+\sigma}_0(B)=H^{1+\sigma}(B)\cap H^1_0(B)$ such that $\|v\|_{H^{1+\sigma}(B)}\leq C \|\varphi\|_{H^{\sigma}(B)}$ and
$$
\left\{
\begin{aligned}
\nabla \cdot v& = \varphi-\fint_{B} \varphi  &\text{ on }& B, \\
v&=0 &\text{ on }&\partial B.
\end{aligned}
\right.
$$
Thus, 
\begin{align*}
\Big|\int_{B}\Big(f-\fint_{B} f\Big) \varphi \Big|
=\Big|\int_{B}f\, \nabla\cdot v\Big|
&=|\langle \nabla f,  v \rangle |\\
&\leq \|\nabla f\|_{H^{-1-\sigma}(B)}\|v\|_{H^{1+\sigma}_0(B)}\\
&\leq C\|\nabla f\|_{H^{-1-\sigma}(B)}\|\varphi\|_{H^{\sigma}_0(B)}.
\end{align*}
The conclusion follows by taking the supremum over $\varphi\in H^\sigma_0(B)$ with unit norm.
\end{proof}

Before going into the proof of Lemma \ref{l:one_step_improvement}, let us recall the elementary inequality: 
\begin{equation}
\label{eq:scaling_negative_LH1}
\|f\|_{\underline{H}^{-1}(\sm B)}\leq C \sm \|f\|_{\underline{L}^{q_0}(\sm B)} \quad \text{ for all }q_0\geq 2.
\end{equation}

\begin{proof}[Proof of Lemma \ref{l:one_step_improvement}]
We begin the proof by recalling that 
\begin{align*}
\Pi^{N,\delta}&=\qq_{\T^d_{B}} \Big[\chi\big[ \nabla \cdot (F+(1-D_d)\mu \nabla v^{N,\delta})- c_d \mu \sum_{k,\alpha} \theta_k^N\nabla\cdot  (\nabla \wt{\pi}^{N,\delta}_{k,\alpha}\otimes \sigma^\delta_{-k,\alpha} \big)\big]\Big],\\
\wt{\Pi}_{k,\alpha}^{N,\delta}
&=\theta^N_k \qq_{\T^d_{B}} \Big[\chi\big((\sigma^{\delta}_{k,\alpha}\cdot \nabla)v^{N,\delta} + \sigma^{\delta}_{k,\alpha} \cdot G\big)\Big],\\
\Pi^0&= \qq_{\T^d_{B}} \Big[\chi\big( \nabla \cdot (F-(1-D_d)\mu\, G^\top)\big)\Big].
\end{align*}
For later use, in the proof below, we use $\chi\in C^{\infty}_{{\rm c}}(B)$ such that 
\begin{equation}
\label{eq:choice_chi_34}
\chi=1 \text{ on }(5/8)B \qquad \text{ and }\qquad 
\supp\chi \subseteq (3/4)B.
\end{equation}
Let $(\uh^{N,\delta},\ph^{N,\delta})$ be the unique solution to the homogenized Stokes system \eqref{eq:homogenized_PDE_linear} provided by Proposition \ref{prop:local_well_posedness_homogenized_SPDE} with $2Q$ replaced by $Q$.
Next, we decompose the local solution $(v^{N,\delta},\pi^{N,\delta},(\wt{\pi}^{N,\delta}_{k,\alpha})_{k,\alpha})$ to the stochastic Stokes system \eqref{eq:turbulent_Stokes_scaling_quantitative} as 
\begin{align}
\label{eq:decomposition_one_step_iteration1}
v^{N,\delta}&= \Big(\uh^{N,\delta}+\fint_{(1/2)B} v^{N,\delta}\Big)+\Big(v^{N,\delta} -\fint_{(1/2)B} v^{N,\delta} -\uh^{N,\delta}\Big) ,\\
\label{eq:decomposition_one_step_iteration2}
\pi^{N,\delta} &=\big[\Pi^0+(\pi^{N,\delta}-\Pi^{N,\delta})\big]+(\Pi^{N,\delta}-\Pi^0 )  ,\\
\label{eq:decomposition_one_step_iteration3}
\wt{\pi}^{N,\delta}_{k,\alpha}&= (\wt{\pi}_{k,\alpha}^{N,\delta} - \wt{\Pi}_{k,\alpha}^{N,\delta})+\wt{\Pi}_{k,\alpha}^{N,\delta}.
\end{align}
The idea behind the above decomposition is that, to obtain the desired contraction at scale $\sm$ in \eqref{eq:one_step_improvement1}-\eqref{eq:one_step_improvement3}, we combine the following two ingredients.
\begin{itemize}
\item  Fix a scale $\sm\in (0,1/4]$ at which such objects are contractive of order $\leq \sm^{\g_0}/2$ by using the smoothness of terms $\uh^{N,\delta}+\fint_{(1/2)B} v^{N,\delta}$, $\Pi^0$, $\pi^{N,\delta}-\Pi^{N,\delta}$ and $\wt{\pi}_{k,\alpha}^{N,\delta} - \wt{\Pi}_{k,\alpha}^{N,\delta}$. 
\item  Fix $L_0\geq 1$ such that the quantities $v^{N,\delta} -\fint_{(1/2)B} v^{N,\delta} -\uh^{N,\delta}$, $\Pi^{N,\delta}-\Pi^0$ and $\wt{\Pi}_{k,\alpha}^{N,\delta}$ are also contractive at scale $\leq \sm^{\g_0}/2$ provided the cumulative frequency $L=\delta N\geq L_0$ is large enough (Theorem \ref{t:universal_scaling_limit}).
\end{itemize}
We now divide the proof into two steps, following the roadmap above.

\smallskip

\emph{Step 1: Choice of $\sm\in (0,1/4]$.} 
We divide this step into four substeps. 

\smallskip

\emph{Substep 1a: Velocity field.}
From Lemma \ref{l:local_smoothing_homogenized}, we know that 
$$
\uh^{N,\delta} \in L^r(\O;L^p((1/4)I;W^{1,q}((1/4)B)))\ \text{ a.s., }
$$ 
with a corresponding estimate on the $r$-th moment depending only on $\En^\#_Q$. 
From $\frac{2}{p}+\frac{d}{q}<1$, it follows that $q>d$, and therefore
$$
W^{1,q}((1/4)B)\embed C^{1-d/q}((1/4)B)
$$
from Sobolev embeddings. In particular, a.s.,
\begin{align*}
\Big\|\uh^{N,\delta}-\fint_{\sm B} \uh^{N,\delta}\Big\|_{\underline{L}^2(\sm Q)}
&\leq \Big\|\uh^{N,\delta}-\fint_{\sm B} \uh^{N,\delta}\Big\|_{\underline{L}^p(\sm I; \underline{L}^2(\sm B))}\\
&\leq C \sm^{1-d/q} 
\|\uh^{N,\delta}\|_{\underline{L}^p(\sm I;C^{1-d/q}( (1/4)B))}\\
&\stackrel{(i)}{\leq} C_0 \sm^{1-d/q-2/p} 
\|\uh^{N,\delta}\|_{L^p( (1/4)I;C^{1-d/q}((1/4) B))}\\
&\leq C \sm^{1-d/q-2/p} 
\|\uh^{N,\delta}\|_{L^p( (1/4)I;W^{1,q}((1/4) B))},
\end{align*}
where in $(i)$ we used $|\sm I|= \sm^2$. Thus, from Lemma \ref{l:local_smoothing_homogenized}, we get
\begin{align}
\nonumber
\Big(\E\Big\|\Big(\uh^{N,\delta}+\fint_{(1/2)B} v^{N,\delta}\Big)-\fint_{\sm B} \Big(\uh^{N,\delta}+\fint_{(1/2)B} v^{N,\delta}\Big)\Big\|_{\underline{L}^2(\sm Q)}^r\Big)^{1/r}  &\\
\label{eq:choice_eta_one_step_1}
 =
\Big(\E\Big\|\uh^{N,\delta}-\fint_{\sm B} \uh^{N,\delta}\Big\|_{\underline{L}^2(\sm Q)}^r\Big)^{1/r}
&\leq C\sm^{1-d/q-2/p} \En_Q^{\#}.
\end{align}

\smallskip

\emph{Substep 1b: Deterministic pressure.}
First, recall that from the Poincar\'e inequality and a scaling argument, it holds that $\|f-\fint_{\lambda B} f\|_{\underline{H}^{-1}(\lambda B)}\lesssim \lambda \|f\|_{\underline{L}^2(\lambda B)}$ for all $\lambda>0$ and $f\in L^2(\lambda B)$. 
Thus,
\begin{align*}
\Big\| \Pi^0- \fint_{\sm B} \Pi^0\Big\|_{\underline{L}^2(\sm I;\underline{H}^{-1}(\sm B))}
&\leq  C\sm \| \Pi^0\|_{\underline{L}^2(\sm Q )}\\
&\leq C\sm \| \Pi^0\|_{\underline{L}^p(\sm I;\underline{L}^{q}(\sm B))}\\
&\leq C\sm^{1-2/p -d/q} \| \Pi^0\|_{\underline{L}^p( I;\underline{L}^{q}( B))}\\
&\leq C  \sm^{1-2/p -d/q} \big(\| F\|_{\underline{L}^p( I;\underline{L}^{q}( B))}+
\| G\|_{\underline{L}^p( I;\underline{L}^{q}( B))}\big),
\end{align*}
where in the last step, we used the choice of the cutoff function $\chi$ and  
\begin{equation}
\label{eq:boundedness_qq_proof_one_step}
\qq_{\T^d_B}: H^{\sigma,q}(\T^d_B)\to H^{1+\sigma,q}(\T^d_B)
\end{equation} 
continuously for all $\sigma \in \R$ and $q\in (1,\infty)$.

Second, we estimate the term 
$$
h^{N,\delta}\stackrel{{\rm def}}{=}\pi^{N,\delta}-\Pi^{N,\delta}. 
$$
From the definition of local solutions in Definition \ref{def:local_solutions_NSE}, 
it follows that $h^{N,\delta}$ is harmonic on $(5/8)B$ by construction as $\nabla \cdot v^{N,\delta}=0$ in $\D'(B)$ a.e.\ on $I\times \O$ and \eqref{eq:choice_chi_34}. Using \eqref{eq:scaling_negative_LH1} and the local smoothing for harmonic functions \eqref{eq:Caccioppoli_harmonic_functions}, for all $\sm\leq 1/2$,
\begin{align*}
\Big\|h^{N,\delta}(t,\cdot)-\fint_{\sm B}h^{N,\delta}\Big\|_{\underline{H}^{-1}(\sm B)}
&\leq \sm \Big\|h^{N,\delta}(t,\cdot)-\fint_{\sm B}
h^{N,\delta}\Big\|_{\underline{L}^2(\sm B)}\\
&\leq C \sm^2 [h^{N,\delta}(t,\cdot)]_{{\rm Lip} ((1/4)B)}\\
&\leq C \sm^2
\Big\|h^{N,\delta}(t,\cdot)-\fint_{B}h^{N,\delta}(t,\cdot)\Big\|_{\underline{H}^{-1}(B)}. 
\end{align*}
Taking the $\underline{L}^2(\sm I)$-norm, we obtain,
\begin{align*}
\Big\|h^{N,\delta}-\fint_{\sm B}h^{N,\delta}\Big\|_{\underline{L}^2(\sm I;\underline{H}^{-1}(\sm B))}
&\leq C \sm^2 
\Big\|h^{N,\delta}-\fint_{ B}h^{N,\delta}\Big\|_{\underline{L}^2(\sm I;\underline{H}^{-1}(B))}\\
&\leq C \sm
\Big\|h^{N,\delta}-\fint_{B}h^{N,\delta}\Big\|_{\underline{L}^2((1/2)I;\underline{H}^{-1}(B))}\\
&\leq C \sm 
\Big\|\pi^{N,\delta}-\fint_{B}\pi^{N,\delta}\Big\|_{\underline{L}^2((1/2)I;\underline{H}^{-1}( B))}\\
&+C \sm \Big\|\Pi^{N,\delta}-\fint_{B}\Pi^{N,\delta}\Big\|_{\underline{L}^{2}((1/2)I;\underline{H}^{-1}( B))}.
\end{align*}
To conclude, it remains to note that, from Lemma \ref{l:sharp_poincare},
\begin{align*}
\nonumber
\Big(\E\Big\|\Pi^{N,\delta}-\fint_{B}\Pi^{N,\delta}\Big\|_{\underline{L}^2((1/2)I;\underline{H}^{-1}( B))}^r \Big)^{1/r}
&\leq 
 C\big(\E\|\nabla \Pi^{N,\delta}\|_{\underline{L}^2((1/2)I;\underline{H}^{-1}(B))}^r \Big)^{1/r}\\
 \nonumber
&\leq C \big(\E\|\nabla \Pi^{N,\delta}\|_{\underline{L}^2((1/2)I;H^{-1}(\T^d_B))}^r \Big)^{1/r}.
\end{align*}
Moreover, from \eqref{eq:boundedness_qq_proof_one_step}, $\|\theta^N\|_{\ell^2}=1$ for all $N\geq1$, the smoothness of $\chi$ and \eqref{eq:choice_chi_34},
\begin{align*}
\big(\E\|\nabla \Pi^{N,\delta}\|_{\underline{L}^2((1/2)I;H^{-1}(\T^d_B))}^r \Big)^{1/r}
&\leq C \big(\E\|F\|_{\underline{L}^2(Q)}^r \big)^{1/r}
+ C \big(\E\|\nabla v^{N,\delta}\|_{\underline{L}^2((3/4)Q)}^r \big)^{1/r}\\
&+ C \big(\E\|(\nabla \wt{\pi}_{k,\alpha}^{N,\delta})_{k,\alpha}\|_{\underline{L}^2((3/4)Q;\ell^2)}^r \big)^{1/r}\\
 &\leq C \En^\#_Q,
\end{align*}
where the last step follows from the stochastic Caccioppoli inequality of Theorem \ref{t:caccioppoli}.

Putting together the above estimates, we obtain that 
\begin{align}
 \label{eq:choice_eta_one_step_2}
\Big(\E\Big\|(\Pi^0+\pi^{N,\delta}-\Pi^{N,\delta})-\fint_{\sm B}(\Pi^0+\pi^{N,\delta}-\Pi^{N,\delta})\Big\|_{\underline{L}^2(\sm I;\underline{H}^{-1}(\sm B))}^r \Big)^{1/r}&\\
\nonumber
\leq C \sm^{1-2/p-d/q} \En^\#_Q&.
\end{align}

\smallskip

\emph{Substep 1c: Stochastic pressures.}
Here, we apply an argument similar to the one used in Substep 1b.
By construction and Definition \ref{def:local_solutions_NSE}, it follows that 
$$
\wt{h}^{N,\delta}_{k,\alpha}\stackrel{{\rm def}}{=}\wt{\pi}^{N,\delta}_{k,\alpha}-\wt{\Pi}^{N,\delta}_{k,\alpha}
$$ is harmonic on $(1/2)B$. Using again the estimate \eqref{eq:Caccioppoli_harmonic_functions} for harmonic functions and the Poincar\'e inequality of Lemma \ref{l:sharp_poincare}, we have, a.s.\ for all $t\in I$,
\begin{align*}
[\wt{h}^{N,\delta}_{k,\alpha}]_{\mathrm{Lip}(B/4)}
&\leq C \Big\|\wt{h}^{N,\delta}_{k,\alpha}-\fint_{B}\wt{h}^{N,\delta}_{k,\alpha}\Big\|_{\underline{H}^{-1}(B)}\\
&\leq C \Big\|\wt{\pi}^{N,\delta}_{k,\alpha}-\fint_{B}\wt{\pi}^{N,\delta}_{k,\alpha}\Big\|_{\underline{H}^{-1}(B)}+ 
C \Big\|\wt{\Pi}^{N,\delta}_{k,\alpha}-\fint_{B}\wt{\Pi}^{N,\delta}_{k,\alpha}\Big\|_{\underline{H}^{-1}(B)}\\
&\leq C \Big\|\wt{\pi}^{N,\delta}_{k,\alpha}-\fint_{B}\wt{\pi}^{N,\delta}_{k,\alpha}\Big\|_{\underline{L}^2(B)}+ 
C \|\nabla \wt{\Pi}^{N,\delta}_{k,\alpha}\|_{H^{-1}(\T^d_B)}.
\end{align*}
Hence, a.s., 
\begin{align*}
\Big\|\Big(\wt{h}^{N,\delta}_{k,\alpha}- \fint_{\sm B}\wt{h}^{N,\delta}_{k,\alpha} \Big)_{k,\alpha}\Big\|_{\underline{L}^2(\sm B;\ell^2)}
\leq C \sm\Big( \sum_{k,\alpha}
[\wt{h}^{N,\delta}_{k,\alpha}]_{\mathrm{Lip}(B/4)}^2\Big)^{1/2}\qquad \qquad &\\
\leq C \sm \Big\|\Big(\wt{\pi}^{N,\delta}_{k,\alpha}-\fint_{B}\wt{\pi}^{N,\delta}_{k,\alpha}\Big)_{k,\alpha}\Big\|_{\underline{L}^2(B;\ell^2)}
+ C\sm   \big\|\big(\nabla \wt{\Pi}^{N,\delta}_{k,\alpha}\big)_{k,\alpha}\big\|_{H^{-1}(\T^d_B;\ell^2)}&.
\end{align*}
Now, by taking the $\underline{L}^{p_0}(\sm I)$-norm, a.s., it holds that    
\begin{align*}
&\Big\|\Big(\wt{h}^{N,\delta}_{k,\alpha}- \fint_{\sm B}\wt{h}^{N,\delta}_{k,\alpha} \Big)_{k,\alpha}\Big\|_{\underline{L}^{p_0}(\sm I;\underline{L}^2(\sm B;\ell^2))}\\
&\leq C\sm^{-2/p_0}
\Big\|\Big(\wt{h}^{N,\delta}_{k,\alpha}- \fint_{\sm B}\wt{h}^{N,\delta}_{k,\alpha} \Big)_{k,\alpha}\Big\|_{\underline{L}^{p_0}((1/2) I;\underline{L}^2(\sm B;\ell^2))}\\
&\leq C\sm^{1-2/p_0}\Big(
\Big\|\Big(\wt{\pi}^{N,\delta}_{k,\alpha}-\fint_{B}\wt{\pi}^{N,\delta}_{k,\alpha}\Big)_{k,\alpha}\Big\|_{\underline{L}^{p_0}( I;\underline{L}^2(B;\ell^2))}
+\big\|\big(\nabla \wt{\Pi}^{N,\delta}_{k,\alpha}\big)_{k,\alpha}\big\|_{\underline{L}^{p_0}((1/2)I;H^{-1}(\T^d_B;\ell^2))}\Big).
\end{align*}
Therefore, it remains to bound the $L^r(\O)$-norm of the last term by $\En_Q^{\#}$.  
To this end, as in \eqref{eq:trick_adding_average_local_pressure}, letting $\wt{v}^{N,\delta}=v^{N,\delta} -\fint_{(3/4)B}v^{N,\delta}$, from $\nabla \cdot \sigma^{\delta}_{k,\alpha}=0$, it follows that 
\begin{align*}
\nabla \wt{\Pi}^{N,\delta}_{k,\alpha}
&= \theta^N_k \q_{\T^d_{B}} \Big[\chi\Big(\nabla \cdot (\wt{v}^{N,\delta}\otimes \sigma^{\delta}_{k,\alpha}) + \sigma^{\delta}_{k,\alpha} \cdot G\Big)\Big].
\end{align*}
Hence, a.s.\ for all $t\in I$, by using $\q_{\T^d_B}=\nabla \qq_{\T^d_B}$ and \eqref{eq:boundedness_qq_proof_one_step} once more,
\begin{align*}
\|\nabla \wt{\Pi}^{N,\delta}_{k,\alpha}\|_{H^{-1}(\T^d_B)}
&\leq C|\theta^N_k|\big[ \|\chi(\nabla \cdot (\wt{v}^{N,\delta}\otimes \sigma^{\delta}_{k,\alpha})) \|_{H^{-1}(\T^d_B)}
+ \|\sigma^{\delta}_{k,\alpha} \cdot G\|_{L^2(B)}\big]\\
&\leq C|\theta_k^N| \big[\|\wt{v}^{N,\delta} \|_{L^{2}((3/4)B)}
+ \|G\|_{L^2(B)}\big].
\end{align*}
Since $\|\theta^N\|_{\ell^2}=1$ for all $N\geq 1$, by Fubini's theorem and the stochastic Caccioppoli inequality in Theorem \ref{t:caccioppoli}, we have  
\begin{align*}
&\big(\E\|(\nabla \wt{\Pi}^{N,\delta}_{k,\alpha})_{k,\alpha}\|_{\underline{L}^{p_0}((1/2)I;H^{-1}(\T^d_B;\ell^2))}^r \big)^{1/r}\\
\nonumber
&\quad 
\leq  C\big(\E\|\wt{v}^{N,\delta} \|_{L^\infty((3/4)I;\underline{L}^{2}((3/4)B))}^r \big)^{1/r}
+C \big(\E\|G\|_{\underline{L}^{p_0}(I;\underline{L}^2(B))}^r \big)^{1/r} \leq  C\En^\#_Q,
\end{align*}
where, in the last inequality, we used $p_0\leq p$.
Therefore, putting together the estimates in Substep 1c, we have
\begin{equation}
\label{eq:choice_eta_one_step_3}
\Big(\E\Big\|\Big((\wt{\pi}^{N,\delta}_{k,\alpha}-\wt{\Pi}^{N,\delta}_{k,\alpha})-\fint_{\sm B}(\wt{\pi}^{N,\delta}_{k,\alpha}-\wt{\Pi}^{N,\delta}_{k,\alpha})\Big)_{k,\alpha}\Big\|_{\underline{L}^{p_0}(\sm I;\underline{L}^{2}(\sm B;\ell^2))}^r \Big)^{1/r}
\leq C \sm^{1-2/p_0} \En^\#_Q.
\end{equation}

\smallskip

\emph{Substep 1d: Choice of $\sm\in (0,1/4]$.} 
Let $\g_0\in (0,(1-\frac{2}{p}-\frac{d}{q})\wedge (1-\frac{2}{p_0}))$ be as fixed in the statement of Lemma \ref{l:one_step_improvement}.
By the estimates \eqref{eq:choice_eta_one_step_1}, \eqref{eq:choice_eta_one_step_2} and \eqref{eq:choice_eta_one_step_3} we can choose $\sm\in (0,1/4]$ 
depending only on $\mu,p,q,r,p_0,d$ and $\g_0$ so small that the oscillation of the processes appearing as the first terms on the right-hand side of \eqref{eq:decomposition_one_step_iteration1}-\eqref{eq:decomposition_one_step_iteration3} is less than 
$$
(\sm^{\g_0}\En_{Q}^{\#})/2.
$$ 

\smallskip
 
\emph{Step 2: Choice of lower bound $L_0$ for the cumulative frequency $\delta N$.}
Let $\sm$ be as in Step 1.
We begin by analyzing the difference in the velocity fields 
$$
w^{N,\delta}=\uh^{N,\delta} - \Big(v^{N,\delta}-\fint_{(1/2)B} v^{N,\delta}\Big). 
$$
Note that, by Theorem \ref{t:universal_scaling_limit} and $\sm\leq 1/2$, 
\begin{align}
\nonumber
\Big(\E \Big\|w^{N,\delta} -\fint_{\sm B} w^{N,\delta}  \Big\|_{\underline{L}^2(\sm Q)}^{r}\Big)^{1/r}
&\leq C \big(\E \|w^{N,\delta} \|_{\underline{L}^2(\sm Q)}^{r}\big)^{1/r}\\
\nonumber
&\leq C_\sm \big(\E \|w^{N,\delta} \|_{\underline{L}^2((1/2)Q)}^{r}\big)^{1/r}\\
\label{eq:eta_choice_convergence_v}
&\leq \frac{C_\sm C}{(\delta N)^\g}\En_Q^{\#},
\end{align}
where $C_\sm$ is a constant depending only on $d$ and $\sm$. Recall that the latter depends only on $\mu,p,q,r,p_0,d$ and $\g_0$ due to Step 1.
 
We apply a similar argument to bound the remaining terms in the deterministic and stochastic pressures. 
Let us first consider $\Pi^{N,\delta}-\Pi^0$. Fix $\sigma\in (0,1/2)$. By the Poincar\'e inequality of Lemma \ref{l:sharp_poincare},
\begin{align*}
\Big\| (\Pi^{N,\delta}-\Pi^0) -\fint_{\sm B} (\Pi^{N,\delta}-\Pi^0)\Big\|_{\underline{H}^{-1}(\sm B)}
&\leq C_\sm \Big\| (\Pi^{N,\delta}-\Pi^0) -\fint_{\sm B} (\Pi^{N,\delta}-\Pi^0)\Big\|_{\underline{H}^{-\sigma}(\sm B)}\\
&\leq C_\sm\| \nabla \Pi^{N,\delta}-\nabla \Pi^0 \|_{\underline{H}^{-1-\sigma}(\sm B)}\\
&\leq C_\sm \|  \nabla\Pi^{N,\delta}-\nabla\Pi^0\|_{\underline{H}^{-1-\sigma}(\T^d_B)},
\end{align*}
where again $C_\sm$ depends only on $\sm,d$ and the choice of $\sigma$. 
Thus, Theorem \ref{t:universal_scaling_limit} ensures that 
\begin{align}
\label{eq:eta_choice_convergence_Pi}
\Big(\E\Big\| (\Pi^{N,\delta}-\Pi^0) -\fint_{\sm B} (\Pi^{N,\delta}-\Pi^0)\Big\|_{\underline{L}^2(\sm I;\underline{H}^{-1}(\sm B))}^r\Big)^{1/r}
\leq \frac{ C C_\sm }{(\delta N)^\g} \En_Q^{\#}.
\end{align}
Similarly, by Lemma \ref{l:sharp_poincare} with $\sigma=0$, for the stochastic pressure, Theorem \ref{t:universal_scaling_limit} applied with $p_1=p_0$ and $p_2=p$ ensures
\begin{align}
\nonumber
\Big(\E\Big\|\Big( \wt{\Pi}_{k,\alpha}^{N,\delta} -\fint_{\sm B} \wt{\Pi}_{k,\alpha}^{N,\delta}\Big)_{k,\alpha}\Big\|_{\underline{L}^{p_0}(\sm I; \underline{L}^2(\sm B;\ell^2))}^r\Big)^{1/r}
&\leq C_\sm \big(\E \|(\nabla \wt{\Pi}_{k,\alpha}^{N,\delta})_{k,\alpha}\|_{\underline{L}^{p_0}(I;\underline{H}^{-1}(\T^d_B;\ell^2))}^r\big)^{1/r}\\
\label{eq:eta_choice_convergence_wtPi}
&\leq \frac{C C_\sm }{(\delta N)^\g}  \En_Q^{\#},
\end{align}
where, as above, $C_\sm$ depends only on $\sm,d$ and the choice of $p_0$.

Now, from \eqref{eq:eta_choice_convergence_v}, \eqref{eq:eta_choice_convergence_Pi} and \eqref{eq:eta_choice_convergence_wtPi}, it follows that we can choose $L_0\geq 1$ 
depending only on $\mu,p,q,r,p_0,d$ and $\g_0$ such that the oscillations in the second terms on the right-hand side of \eqref{eq:decomposition_one_step_iteration1}-\eqref{eq:decomposition_one_step_iteration3} are less than 
$$
(\sm^{\g_0}\En_{Q}^{\#})/2.
$$ 
The above, together with the result of Substep 1d, yields the claim of Lemma \ref{l:one_step_improvement}.
\end{proof}

\subsection{Iteration via blow-up -- Oscillating stochastic Stokes system}
\label{ss:iteration}
As illustrated in Subsection \ref{ss:control_energy_intro}, the one-step improvement of Lemma \ref{l:one_step_improvement} allows us to prove the following result on the original Stokes system \eqref{eq:turbulent_Stokes_scaling_original_iteration}.

\begin{lemma}[Iteration via blow-up]
\label{l:control_mesoscopic}
Suppose that Assumption \ref{ass:forcing_regularity_iteration} holds. Let $\mu>0$, $r\in (1,\infty)$ and $p_0\in (2,p)$.
Fix $0<\g_0< (1-\frac{2}{p}-\frac{d}{q})\wedge (1-\frac{2}{p_0})$.  
Then there exist constants 
$$
L_0\geq 1\qquad \text{ and } \qquad \sm\in (0,1/4],
$$ 
depending only on $\mu,p,q,r,p_0,d$ and $\g_0$, for which the following assertion holds whenever $N\geq 1$ and $m\geq 1$ satisfy 
$$
N^{-1}\leq \sm^{m-1} L_0^{-1}.
$$
For all local solutions $(v^N,\pi^N,(\wt{\pi}^N_{k,\alpha})_{k,\alpha})$ to the stochastic Stokes system \eqref{eq:turbulent_Stokes_scaling_original_iteration} on $Q$, it holds that 
\begin{align*}
\Big(\E \Big\|v^N -\fint_{\sm^\ell B} v^N  \Big\|_{\underline{L}^2(\sm^\ell Q)}^{r}\Big)^{1/r}
 \leq \sm^{\ell \g_0}
\En_{Q}^\#,\\
\Big(\E \Big\|\pi^N -\fint_{\sm^\ell B} \pi^N  \Big\|_{\underline{L}^2(\sm^\ell I;\underline{H}^{-1}(\sm^\ell B))}^{r}\Big)^{1/r}
 \leq \sm^{\ell \g_0}
\En_{Q}^\#,\\
\Big(\E \Big\|\Big( \wt{\pi}_{k,\alpha}^N -\fint_{\sm^\ell B} \wt{\pi}_{k,\alpha}^N\Big)_{k,\alpha} \Big\|_{\underline{L}^{p_0}(\sm^{\ell} I;\underline{L}^2(\sm^\ell B; \ell^2))}^{r}\Big)^{1/r}
 \leq  \sm^{\ell \g_0}
 \En_{Q}^\#,
\end{align*}
for all $\ell\in \{1,\dots,m\}$,
where $\En_{Q}^\#$ is the sharp Lebesgue-energy of $(v^N,\pi^N,(\wt{\pi}^N_{k,\alpha})_{k,\alpha})$ at (large) scale $Q$, i.e.,\begin{align}
\nonumber
\En_Q^{\#}
=&\Big( \max\Big\{\E \|F \|_{\underline{L}^p(I;\underline{L}^q(B))}^r,\,
\E \|G \|_{\underline{L}^p(I;\underline{L}^q(B))}^r , \,
\E \Big\|v^{N} -\fint_{B} v^{N}\Big\|_{\underline{L}^2(Q)}^r
,\\
\label{eq:def_En_Q2}
&\E \Big\|\pi^{N}-\fint_{B} \pi^{N}\Big\|_{\underline{L}^2(I;\underline{H}^{-1}(B))}^r,\, 
\E \Big\|\Big( \wt{\pi}_{k,\alpha}^{N}-\fint_{B} \wt{\pi}^{N}_{k,\alpha} \Big)_{k,\alpha}\Big\|_{\underline{L}^{p_0}(I; \underline{L}^2(B;\ell^2))}^r \Big\}\Big)^{1/r}.
\end{align}
\end{lemma}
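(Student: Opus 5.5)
The plan is to argue by induction on $\ell\in\{1,\dots,m\}$, taking $L_0$ and $\sm$ from Lemma \ref{l:one_step_improvement} with the same $\mu,p,q,r,p_0,d,\g_0$. The key structural fact is that parabolic blow-up around $(t_0,x_0)$ maps local solutions of \eqref{eq:turbulent_Stokes_scaling_original_iteration} on $\sm^\ell Q$ to local solutions of the rescaled system \eqref{eq:turbulent_Stokes_scaling_quantitative_iteration} on $Q$ with $\delta=\sm^\ell$. For $\ell\ge 0$ we set
\[
v_\ell(t,x)=v^N(t_0+\sm^{2\ell}(t-t_0),x_0+\sm^\ell(x-x_0)),\qquad \pi_\ell=\sm^\ell\pi^N(\cdots),\qquad \wt{\pi}_{k,\alpha,\ell}=\wt{\pi}^N_{k,\alpha}(\cdots).
\]
We also rescale the forcings as $F_\ell=\sm^\ell F(\cdots)$ and $G_\ell=\sm^\ell G(\cdots)$, and the Brownian motions as $\sm^{-\ell}(W_{t_0+\sm^{2\ell}(\cdot-t_0)}-W_{t_0-\sm^{2\ell}/4})$, exactly as in the proof of Corollary \ref{cor:SMR_microscopic_estimate}. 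Then $(v_\ell,\pi_\ell,(\wt\pi_{k,\alpha,\ell}))$ solves \eqref{eq:turbulent_Stokes_scaling_quantitative_iteration} on $Q$ with $\delta=\sm^\ell$, and $\Tr G_\ell=0$, $\nabla\cdot G_\ell=0$ persist. The hypothesis $N^{-1}\le \sm^{m-1}L_0^{-1}$ gives $\delta N=\sm^\ell N\ge L_0$ for every $\ell\le m-1$, so Lemma \ref{l:one_step_improvement} applies to the $\ell$-th blow-up for all $\ell\in\{0,\dots,m-1\}$.

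Next we compute the scaling of each term in $\En_Q^\#$ using averaged norms. As in \eqref{eq:scaling_invariance_smell}, the velocity oscillation term of the $\ell$-th blow-up on $Q$ (respectively $\sm Q$) equals that of $v^N$ on $\sm^\ell Q$ (respectively $\sm^{\ell+1}Q$). For the deterministic pressure, the factor $\sm^\ell$ in $\pi_\ell$ together with \eqref{eq:scaling_negative_sob_space_intro} makes $\|\pi_\ell-\fint\pi_\ell\|_{\underline L^2(I;\underline H^{-1}(B))}$ exactly equal to the corresponding quantity for $\pi^N$ on $\sm^\ell Q$; the same holds on $\sm Q$ versus $\sm^{\ell+1}Q$. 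The stochastic pressure term in $\underline L^{p_0}(\underline L^2)$ is scale invariant, since $\wt\pi$ carries no prefactor. For the forcings,
\[
\|F_\ell\|_{\underline L^p(I;\underline L^q(B))}=\sm^\ell\|F\|_{\underline L^p(\sm^\ell I;\underline L^q(\sm^\ell B))}\le \sm^{\ell(1-2/p-d/q)}\|F\|_{\underline L^p(I;\underline L^q(B))}\le \sm^{\ell\g_0}\|F\|_{\underline L^p(I;\underline L^q(B))},
\]
because $\g_0<1-2/p-d/q$; the same bound holds for $G_\ell$.

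Define $a_\ell$ as the maximum of the three oscillation quantities on $\sm^\ell Q$ (velocity, deterministic pressure, stochastic pressure), and $f$ as the forcing part of $\En_Q^\#$. The one-step improvement applied to the $\ell$-th blow-up then reads
\[
a_{\ell+1}\le \sm^{\g_0}\max\{a_\ell,\ \sm^{\ell\g_0}f\}.
\]
Since $a_0,f\le\En_Q^\#$, induction on $\ell$ with hypothesis $a_\ell\le \sm^{\ell\g_0}\En_Q^\#$ gives $a_{\ell+1}\le \sm^{(\ell+1)\g_0}\En_Q^\#$, which is exactly the claim of Lemma \ref{l:control_mesoscopic} for $\ell=1,\dots,m$. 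The one-step bound is stated with a maximum rather than a sum, so no constants accumulate along the iteration.

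The expected obstacle is verifying the rescaling bookkeeping. We must check that the blown-up triple is genuinely a local solution in the sense of Definition \ref{def:local_solutions_NSE} with respect to the time-changed filtration. We must check that the It\^o–Stratonovich corrector and the pressure equations rescale consistently with the chosen powers of $\sm$. Most importantly, the forcing contribution must decay at least like $\sm^{\ell\g_0}$, which is precisely where the restriction $\g_0<1-2/p-d/q$ enters. A second point to verify is that the averages subtracted in $\En_Q^\#$ (over $B$) for the blow-up coincide with the averages over $\sm^\ell B$ for the original solution. This holds because spatial averages commute with the affine change of variables, and pressure averages are removed consistently on both sides.
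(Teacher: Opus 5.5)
Your proposal is correct and follows the paper's proof essentially verbatim. It uses the same parabolic blow-up onto $Q$ with $\delta=\sm^\ell$, the same use of the one-step improvement Lemma~\ref{l:one_step_improvement} via $\sm^\ell N\geq L_0$ for $\ell\leq m-1$, and the same scaling bookkeeping: exact invariance of the three oscillation terms, and decay of the rescaled forcing like $\sm^{\ell\g_0}$ because $\g_0<1-\frac{2}{p}-\frac{d}{q}$. Writing the induction as $a_{\ell+1}\leq \sm^{\g_0}\max\{a_\ell,\sm^{\ell\g_0}f\}$ starting from $\ell=0$ is only a compact restatement of the paper's inductive step.
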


\begin{proof}
The claimed estimates of Lemma \ref{l:control_mesoscopic} follow by a blow-up argument and the one-step improvement. Below, the central importance of the joint condition on the parameters $\delta$ and $N$ in Lemma \ref{l:one_step_improvement} will become transparent.

\smallskip
 
Fix $\g_0\in (0,(1-\frac{2}{p}-\frac{d}{q})\wedge (1-\frac{2}{p_0}))$. Let $\sm$ and $L_0$ be the corresponding values provided by Lemma \ref{l:one_step_improvement}. 
By induction, assume that, for some $1\leq \ell\leq m-1$ with 
\begin{equation}
\label{eq:interpolation_resulting_estimates0}
N^{-1}\leq\sm^{\ell-1}L_0^{-1}, 
\end{equation}
the following estimates hold
\begin{align}
\label{eq:interpolation_resulting_estimates1}
\Big(\E \Big\|v^N -\fint_{\sm^{\ell} B} v^N  \Big\|_{\underline{L}^2(\sm^{\ell} Q)}^{r}\Big)^{1/r}
 \leq  \sm^{\ell \g_0}
\En_{Q}^\#,\\
\label{eq:interpolation_resulting_estimates2}
\Big(\E \Big\|\pi^N -\fint_{\sm^{\ell} B} \pi^N  \Big\|_{\underline{L}^2(\sm^{\ell} I;\underline{H}^{-1}(\sm^{\ell} B))}^{r}\Big)^{1/r}
 \leq \sm^{\ell \g_0}
\En_{Q}^\#,\\
\label{eq:interpolation_resulting_estimates3}
\Big(\E \Big\|\Big( \wt{\pi}_{k,\alpha}^N -\fint_{\sm^{\ell} B} \wt{\pi}_{k,\alpha}^N \Big)_{k,\alpha} \Big\|_{\underline{L}^{p_0}(\sm^\ell I; \underline{L}^2(\sm^{\ell} B; \ell^2))}^{r}\Big)^{1/r}
 \leq  \sm^{\ell \g_0}
 \En_{Q}^\#.
\end{align}
Next, we prove that \eqref{eq:interpolation_resulting_estimates1}-\eqref{eq:interpolation_resulting_estimates3} also hold with $\ell$ replaced by $\ell+1$ (the case $\ell=1$ follows from Lemma \ref{l:one_step_improvement}).

To this end, we define the following process on $Q_0\stackrel{{\rm def}}{=} Q_{1/2}(t_0,x_0)=(0,1/4]\times B_{1/2}(x_0)$, where $t_0=1/4$,
\begin{align}
\label{eq:rescaling1_iteration}
v^{N}_{\ell}(t,x)
&=v^N( t_0+\sm^{2\ell} (t-t_0),x_0+ \sm^\ell (x-x_0)),\\
\label{eq:rescaling2_iteration}
\pi^{N}_\ell(t,x)
&=\sm^\ell \pi^N(t_0+\sm^{2\ell} (t-t_0),x_0+ \sm^\ell (x-x_0)),\\
\label{eq:rescaling3_iteration}
\wt{\pi}^{N}_{k,\alpha,\ell}(t,x)
&=\wt{\pi}^N_{k,\alpha}( t_0+\sm^{2\ell} (t-t_0), x_0+ \sm^\ell (x-x_0)).
\end{align}
Arguing as in the proof of Corollary \ref{cor:SMR_microscopic_estimate} given in Subsection \ref{ss:proof_corollary_SMR_microscopic_estimate}, one can check that 
$$
(v^{N}_\ell,\pi^{N}_\ell,(\wt{\pi}^{N}_{k,\alpha,\ell})_{k,\alpha})
$$ 
is a local solution to the rescaled stochastic Stokes system \eqref{eq:turbulent_Stokes_scaling_quantitative_iteration} on $I_0=(0,1/4)$ (see Definition \ref{def:local_solutions_NSE}) with 
$$
\delta= \sm^\ell
$$
and $(F,G,(W^{k,\alpha})_{k,\alpha})$ replaced by 
\begin{align}
\label{eq:rescaling4_iteration}
F_{\sm,\ell}(t,x) &= \sm^{\ell} F(t_0+\sm^{2\ell} (t-t_0),x_0+ \sm^{\ell} (x-x_0)),\\
\label{eq:rescaling5_iteration}
G_{\sm,\ell}(t,x) &= \sm^{\ell} G(t_0+\sm^{2\ell} (t-t_0),x_0+ \sm^{\ell} (x-x_0)),\\
\label{eq:rescaling6_iteration}
\wt{W}^{k,\alpha}_{t} &=\sm^{-\ell}\big( W^{k,\alpha}_{t_0+\sm^{2\ell}(t-t_0)}-W^{k,\alpha}_{t_0(1-\sm^{2\ell})}).
\end{align}
From \eqref{eq:interpolation_resulting_estimates0} and $\ell\leq m-1$, we have 
$
N^{-1}\leq \sm^{m-1}L_0^{-1}\leq  \sm^{\ell} L_0^{-1} ,
$
and therefore
$$
(\sm^\ell N)^{-1}\leq L_0^{-1}.
$$ 
Hence, we can apply the one-step improvement in Lemma \ref{l:one_step_improvement} with $\delta=\sm^\ell$ and with
$$
(v^{N,\delta},\pi^{N,\delta},(\wt{\pi}_{k,\alpha}^{N,\delta})_{k,\alpha})\ \ \text{ replaced by } \ \ 
\big(v^{N}_\ell, \pi^{N}_\ell,(\wt{\pi}^{N}_{k,\alpha,\ell})_{k,\alpha} \big),
$$
and the above changes of forcings and noise \eqref{eq:rescaling4_iteration}-\eqref{eq:rescaling6_iteration} as well as domain $Q_0=I_0\times B_0$ where $I_0=(0,1/4]$ and $B_0=B_{1/2}(x_0)$.
Applying Lemma \ref{l:one_step_improvement}, we obtain 
\begin{equation}
\label{eq:velocity_rescaling_quantities_proof_iteration}
\Big(\E \Big\|v^{N} -\fint_{\sm^{1+\ell} B} v^N  \Big\|_{\underline{L}^2(\sm^{1+\ell} Q)}^{r}\Big)^{1/r}
=
\Big(\E \Big\|v^{N}_\ell -\fint_{\sm B_0} v^{N}_\ell  \Big\|_{\underline{L}^2(\sm Q_0)}^{r}\Big)^{1/r}
\leq \sm^{\g_0} \wt{\En}_Q^{\#}
\end{equation}
where 
\begin{align}
\nonumber
\wt{\En}_Q^{\#}
&=\Big( \max\Big\{\E \|F_{\sm,\ell} \|_{\underline{L}^p(I_0;\underline{L}^q(B_0))}^r,\,
\E \|G_{\sm,\ell} \|_{\underline{L}^p(I_0;\underline{L}^q(B_0))}^r , \\
\nonumber
&\quad\quad\quad  \E \Big\|v^{N}_\ell  -\fint_{B_0} v^{N}_\ell \Big\|_{\underline{L}^2(Q_0)}^r
,\, \E \Big\|\pi^{N}_\ell-\fint_{B_0} \pi^{N}_\ell\Big\|_{\underline{L}^2(I_0;\underline{H}^{-1}(B_0))}^r,\\
\label{eq:velocity_rescaling_quantities_proof_iteration1}
&\quad \quad\quad \quad \quad \E \Big\|\Big( \wt{\pi}_{k,\alpha,\ell}^{N}-\fint_{B_0} \wt{\pi}^{N}_{k,\alpha,\ell} \Big)_{k,\alpha}\Big\|_{\underline{L}^{p_0}(I_0; \underline{L}^2(B_0;\ell^2))}^r \Big\}\Big)^{1/r}
\leq \sm^{\ell \g_0}\En_Q^{\#},
\end{align}
where the last inequality is a consequence of the inductive assumption  \eqref{eq:interpolation_resulting_estimates1}-\eqref{eq:interpolation_resulting_estimates3} and the scaling properties of the averaged spaces, 
\begin{align}
\label{eq:F_is_decreasing_eta0}
\|F_{\sm,\ell} \|_{\underline{L}^p(I_0;\underline{L}^q(B_0))}
&=\sm^\ell \|F\|_{\underline{L}^p(\sm^\ell I;\underline{L}^q(\sm^\ell B))}\\
\nonumber
&\leq \sm^{\ell(1-2/p-d/q)}  \|F\|_{\underline{L}^p( I;\underline{L}^q( B))}\leq \sm^{\ell\g_0}  \|F\|_{\underline{L}^p( I;\underline{L}^q( B))},
\end{align}
as $\g_0<(1-2/p-d/q)$ and $\sm\leq 1$, and
\begin{align*}
\Big(\E \Big\|\pi^{N} -\fint_{\sm^{1+\ell} B} \pi^N  \Big\|_{\underline{L}^2(\sm^{1+\ell} I;\underline{H}^{-1}(\sm^{1+\ell} B))}^{r}\Big)^{1/r}\qquad\qquad\qquad\qquad &\\
=
\Big(\E \Big\|\pi^{N}_\ell -\fint_{\sm B_0} \pi^{N}_\ell \Big\|_{\underline{L}^2(\sm I_0;\underline{H}^{-1}(\sm B_0))}^{r}\Big)^{1/r},&
\end{align*}
due to \eqref{eq:scaling_negative_sob_space_intro}.
A similar consideration holds for forcing $G_{\sm,\ell}$ and stochastic pressure $(\wt{\pi}^{N}_{k,\alpha,\ell})_{k,\alpha}$.

\smallskip

From \eqref{eq:velocity_rescaling_quantities_proof_iteration} and \eqref{eq:velocity_rescaling_quantities_proof_iteration1}, we infer
$$
\Big(\E \Big\|v^N -\fint_{\sm^{1+\ell} B} v^N  \Big\|_{\underline{L}^2(\sm^{1+\ell} Q)}^{r}\Big)^{1/r}
\leq \sm^{\g_0(1+\ell)}\En_Q^{\#}.
$$
The proof that \eqref{eq:interpolation_resulting_estimates2}-\eqref{eq:interpolation_resulting_estimates3} holds with $\ell$ replaced by $1+\ell$ is analogous.
\end{proof}

\subsection{Proof of Theorem \ref{t:control_mesoscopic_energy}}
\label{ss:control_mesoscopic_energy_proof}
We begin by stating a useful consequence of Lemma \ref{l:control_mesoscopic} and of the stochastic Caccioppoli inequality in Theorem \ref{t:caccioppoli}. 

\begin{corollary}
\label{cor:caccioppoli_combined_decay}
In the setting of Lemma \ref{l:control_mesoscopic}, there exists a constant $C$ 
depending only on $\mu,p,q,r,p_0,d$ and $\g_0$ such that, for any local solution $(v^N,\pi^N,(\wt{\pi}^N_{k,\alpha})_{k,\alpha})$ to \eqref{eq:turbulent_Stokes_scaling_original_iteration}, it holds that
\begin{align}
\label{eq:caccioppoli_combined_decay}
\Big(\E\sup_{ \sm^{\ell+1} I} \Big\|v^N-\fint_{\sm^{\ell+1} B} v^N\Big\|_{\underline{L}^2(\sm^{\ell+1} B)}^r\Big)^{1/r}
&\leq 
C\sm^{\ell \g_0} \En_Q^{\#},\\
\label{eq:caccioppoli_combined_decay_p}
\sm^{\ell+1} \Big( \E\Big\|\pi^N-\fint_{\sm^{\ell+1} B} \pi^N\Big\|_{\underline{L}^2(\sm^{\ell+1} Q)}^r\Big)^{1/r}
&\leq 
C \En_Q^\#,
\end{align}
where $0\leq \ell\leq m$ and $m\geq 1$ is such that $N^{-1}\leq \sm^{m-1} L_0^{-1}$.
\end{corollary}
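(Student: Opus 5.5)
The plan is to apply the stochastic Caccioppoli inequality of Theorem \ref{t:caccioppoli} on the cylinder $\sm^{\ell}Q$ with shrinking parameter $\sm$, so that the left-hand sides live on $\sm^{\ell+1}Q$. The sharp energy $\Eno^{\#}_{\sm^\ell Q}$ is then bounded by the decay estimates of Lemma \ref{l:control_mesoscopic} at level $\ell$; for $\ell=0$ we bound it directly by $\En^\#_Q$. Since Theorem \ref{t:caccioppoli} is stated with non-averaged norms and constants depending on $\varrho$, we do not apply it on $\sm^\ell Q$ directly. Instead we apply it to the blow-up $(v^N_\ell,\pi^N_\ell,(\wt{\pi}^N_{k,\alpha,\ell})_{k,\alpha})$ from \eqref{eq:rescaling1_iteration}--\eqref{eq:rescaling3_iteration}, which is a local solution of the rescaled system \eqref{eq:turbulent_Stokes_scaling_quantitative_iteration} on the fixed cylinder $Q_0$ with $\delta=\sm^\ell$. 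The rescaled coefficients $(\sqrt{c_d\mu}\,\theta^N_k\sigma^{\delta}_{k,\alpha})$ satisfy \eqref{eq:ass_caccioppoli1}--\eqref{eq:ass_caccioppoli3} with $M$ and $\nu_0=\mu<\nu=1+\mu$ independent of $N,\delta$, by \eqref{eq:symmetric_equal_diagonal_matrix} and $\|\theta^N\|_{\ell^2}=1$. Hence the Caccioppoli constant depends only on $d,\mu,r,\sm$, and we also use the version \eqref{eq:replacying_average_Caccioppoli} with pressures replaced by their oscillations.

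For \eqref{eq:caccioppoli_combined_decay}, we invoke the first estimate of Theorem \ref{t:caccioppoli} with parameter $\sm$ on $Q_0$. Scaling back, $\sup_{\sm I_0}\|v^N_\ell-\fint_{\sm B_0}v^N_\ell\|_{L^2(\sm B_0)}$ is comparable, up to constants depending on $\sm,d$, to $\sup_{\sm^{\ell+1}I}\|v^N-\fint_{\sm^{\ell+1}B}v^N\|_{\underline{L}^2(\sm^{\ell+1}B)}$, since averaged norms are scale invariant. The sharp energy $\Eno^{\#}_{Q_0}$ of the blow-up is controlled by averaged quantities on $\sm^\ell Q$. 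The velocity oscillation, the $\underline{L}^2\underline{H}^{-1}$ pressure oscillation and the forcings are handled as in \eqref{eq:velocity_rescaling_quantities_proof_iteration1}. The stochastic pressure oscillation is bounded using $\underline{L}^{2}\le\underline{L}^{p_0}$ in time. The forcing is bounded by \eqref{eq:F_is_decreasing_eta0}, with $\underline{L}^2\le\underline{L}^p\underline{L}^q$, and $G$ likewise. All of these are at most $\sm^{\ell\g_0}\En^\#_Q$ by Lemma \ref{l:control_mesoscopic}, or trivially for $\ell=0$. The case $\ell=m$ is allowed because Lemma \ref{l:control_mesoscopic} gives bounds for all $\ell\le m$; only the Caccioppoli step is needed beyond that, and it is uniform in $N,\delta$.

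For \eqref{eq:caccioppoli_combined_decay_p}, the same blow-up and the second estimate of Theorem \ref{t:caccioppoli} give $\|\pi^N_\ell-\fint\pi^N_\ell\|_{L^2(\sm Q_0)}\lesssim\Eno^\#_{Q_0}\lesssim\sm^{\ell\g_0}\En^\#_Q$. The pressure scaling $\pi^N_\ell=\sm^\ell\pi^N(\cdots)$ then gives $\|\pi^N_\ell-\fint\pi^N_\ell\|_{\underline{L}^2(\sm Q_0)}=\sm^{\ell}\|\pi^N-\fint\pi^N\|_{\underline{L}^2(\sm^{\ell+1}Q)}$. Multiplying out produces the factor $\sm^{\ell+1}$ on the left, at the cost of a constant depending on $\sm$, and we discard $\sm^{\ell\g_0}\le1$. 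The main point requiring care is bookkeeping for the averaged versus non-averaged norms and the $\underline{H}^{-1}$ scaling \eqref{eq:scaling_negative_sob_space_intro}. In particular, the oscillation of $\pi^N_\ell$ in $L^2(I_0;H^{-1}(B_0))$ must match the quantity controlled in Lemma \ref{l:control_mesoscopic}; it does, by the identity already used in its proof. A second point is that the time-shifted Brownian motions \eqref{eq:rescaling6_iteration} are standard, so Theorem \ref{t:caccioppoli} applies.
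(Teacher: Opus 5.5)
Your proposal is correct and follows essentially the same route as the paper. The paper also blows up via \eqref{eq:rescaling1_iteration}--\eqref{eq:rescaling6_iteration}, applies Theorem \ref{t:caccioppoli} on $Q_0$ with parameter $\sm$ to the pressure-oscillation version \eqref{eq:replacying_average_Caccioppoli}, and bounds the resulting sharp energy by $C\sm^{\ell\g_0}\En_Q^{\#}$ using Lemma \ref{l:control_mesoscopic} and \eqref{eq:F_is_decreasing_eta0}; the factor $\sm^{\ell+1}$ in the pressure estimate comes from the rescaling \eqref{eq:rescaling2_iteration}, exactly as you describe.
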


The key point in \eqref{eq:caccioppoli_combined_decay} is the independence of the constant $C$ in \eqref{eq:caccioppoli_combined_decay} and \eqref{eq:caccioppoli_combined_decay_p} from $\ell\in \{0,\dots,m\}$. 
Before going into the proof of the above result, we mention that Theorem \ref{t:caccioppoli} also provides additional information on $\nabla v^N$ and $(\nabla \wt{\pi}_{k,\alpha}^N)_{k,\alpha}$ at the scale $\sm^{\ell+1}Q$. However, this will not be used here. 
Moreover, the different scaling in \eqref{eq:caccioppoli_combined_decay}-\eqref{eq:caccioppoli_combined_decay_p} reflects the invariance under the dilation in \eqref{eq:rescaling1_iteration}-\eqref{eq:rescaling3_iteration}.

\begin{proof}
We again employ a blow-up argument as in the proof of Lemma \ref{l:control_mesoscopic}. 
Let $(v^{N}_\ell,\pi^{N}_\ell, (\wt{\pi}^{N}_{k,\alpha,\ell})_{k,\alpha})$ be as in \eqref{eq:rescaling1_iteration}-\eqref{eq:rescaling6_iteration}. As shown in the proof of Lemma \ref{l:control_mesoscopic}, the latter is a local solution to \eqref{eq:turbulent_Stokes_scaling_quantitative_iteration} on $Q_0=Q_{1/2}(t_0,x_0)=I_0\times B_0$ where $t_0=1/4$ and with data and noise as in \eqref{eq:rescaling4_iteration}.

From the Caccioppoli inequality of Theorem \ref{t:caccioppoli} applied to the local solution (see also \eqref{eq:replacying_average_Caccioppoli} for the pressures)  
$$
\Big(v^{N}_\ell,\pi^{N}_\ell-\fint_{B_0}\pi^{N}_\ell,\Big(\wt{\pi}^{N}_{k,\alpha,\ell}-\fint_{ B_0}\wt{\pi}^{N}_{k,\alpha,\ell}\Big)_{k,\alpha}\Big)
$$ 
and the invariance property of the averaged Lebesgue spaces, it follows that:
\begin{align*}
\Big(\E\sup_{ \sm^{\ell+1}I} \Big\|v^N-\fint_{\sm^{\ell+1} B} v^N\Big\|_{\underline{L}^2(\sm^{\ell+1} B)}^r\Big)^{1/r}
&=
\Big(\E\sup_{ \sm I_0} \Big\|v_\ell^N-\fint_{\sm B_0} v_\ell^N\Big\|_{\underline{L}^2(\sm B_0)}^r\Big)^{1/r}\\
&\leq C_\sm \Eno_{\sm Q,0},
\end{align*}
where 
\begin{align*}
\Eno_{\sm Q,0}
&= \Big( \max\Big\{
\E \Big\|v^{N}_\ell -\fint_{ B_0} v^{N}_\ell  \Big\|_{L^2( Q_0)}^r
,\,
\E \Big\|\pi^{N}_\ell-\fint_{B_0}\pi^{N}_\ell\Big\|_{L^2(I_0;H^{-1}( B_0))}^r,\, \\
&\quad \E \Big\|\Big( \wt{\pi}^{N}_{k,\alpha,\ell} -\fint_{B_0}\wt{\pi}_{k,\alpha,\ell}^{N}\Big)_{k,\alpha}\Big\|_{L^{2}( Q_0;\ell^2)}^r,\,
\E \|F_{\sm,\ell} \|_{L^2( Q_0)}^r,\,
\E \|G_{\sm,\ell} \|_{L^2( Q_0)}^r  \Big\}\Big)^{1/r}\\
&\leq C \sm^{\ell \g_0} \En_Q^{\#},
\end{align*}
and the last inequality follows from Lemma \ref{l:control_mesoscopic} and \eqref{eq:F_is_decreasing_eta0} (again, the same estimate applies also to $G_{\sm,\ell}$).
As above, in the application of the stochastic Caccioppoli inequality, we used the fact that the rescaled noise coefficients $\sqrt{c_d\mu}(\theta_k^N\sigma_{k,\alpha}^\delta)_{k,\alpha}$ satisfy the conditions \eqref{eq:ass_caccioppoli1}-\eqref{eq:ass_caccioppoli3} uniformly in $N$ and $\delta$.

The proof of \eqref{eq:caccioppoli_combined_decay_p} is analogous, with the additional observations that 
$$
\Big\|\pi^N-\fint_{\sm^{\ell+1} B} \pi^N\Big\|_{\underline{L}^2(\sm^{\ell+1} B)}
\leq \Big\|\pi^N-\fint_{\sm^{\ell} B} \pi^N\Big\|_{\underline{L}^2(\sm^{\ell+1}  B)},
$$
and the factor $ \sm^{\ell+1}$ on the left-hand side of \eqref{eq:caccioppoli_combined_decay_p} comes from the rescaling \eqref{eq:rescaling2_iteration}.
\end{proof}

With the above preparation, we are ready to prove Theorem \ref{t:control_mesoscopic_energy}.

\begin{proof}[Proof of Theorem \ref{t:control_mesoscopic_energy}]
We begin by recalling some notation. Recall that $p_0,p,q\in (2,\infty)$ and $r$ are fixed in Theorem \ref{t:control_mesoscopic_energy}. Fix $0<\g_0<(1-\frac{2}{p}-\frac{d}{q})\wedge(1-\frac{2}{p_0})$. Let $L_0>0$ and $\sm\in (0,1)$ be the parameters provided by Lemma \ref{l:control_mesoscopic} with such a choice of $(p_0,p,q,r)$. For $N\geq L_0$, let $m\geq 1$ be such that 
\begin{equation}
\label{eq:choice_N_Lsm}
\sm^{m}L_0^{-1}< N^{-1}\leq \sm^{m-1} L_0^{-1}.
\end{equation}		 
From Poincar\'e's inequality (see Lemma \ref{l:sharp_poincare}) and Fubini's theorem, it holds that 
\begin{align}
\label{eq:Poincare_easy1_proof}
\Big\| \pi^N-\fint_{B} \pi^N\Big\|_{H^{-1}(B)}& \leq
C
\|\nabla \pi^N\|_{H^{-1}(B)}\\
\label{eq:Poincare_easy2_proof}
\Big\| \Big(\wt{\pi}_{k,\alpha}^N-\fint_{B} \wt{\pi}_{k,\alpha}^N\Big)_{k,\alpha}\Big\|_{L^2(B;\ell^2)}
&\leq C
 \|( \nabla \wt{\pi}_{k,\alpha}^N )_{k,\alpha}\|_{H^{-1}(B;\ell^2)}.
\end{align}
Thus, it suffices to prove the estimates in Theorem \ref{t:control_mesoscopic_energy} with $\En_Q$ replaced by $\En_{Q}^\#$ as defined in \eqref{eq:def_En_Q2}.
For convenience, we divide the proof into four steps. 

\smallskip

\emph{Step 1: (Uniform control of the averages along scales). There exists a constant $C_0$ depending only on $\mu,p,q,r,p_0,d$ and $\g_0$ such that, for all local solutions $(v^N,\pi^N,(\wt{\pi}^N_{k,\alpha})_{k,\alpha})$ to \eqref{eq:turbulent_Stokes_scaling_original_iteration} and $\ell\in \{1,\dots, m-1\}$, it holds that}
$$
\Big(\E \Big\|
\fint_{\sm^\ell B} v^{N}\Big\|_{\underline{L}^2(\sm^\ell I)}^{r}\Big)^{1/r}
\leq C_0 \Big(\E\sup_{I} \|v^{N}\|_{\underline{L}^2(B)}^{r}\Big)^{1/r}+ C_0 \En_Q^{\#}.
$$
To prove the claim of this step, we employ a standard telescoping argument. Indeed, we write 
\begin{align*}
\fint_{\sm^\ell B} v^{N}
&=\fint_{\sm B} v^{N}
+ \sum_{j=1}^{\ell -1} \fint_{\sm^{j+1}B}\Big( v^{N}
-\fint_{\sm^jB} v^{N}\Big).
\end{align*}
Then, as $\sm^\ell I \subseteq \sm^{j} I$ for all $j\leq \ell-1$ as $\sm<1$, we obtain
\begin{align*}
&\Big(\E \Big\|
\fint_{\sm^\ell B} v^{N}\Big\|_{\underline{L}^2(\sm^\ell I)}^{r}\Big)^{1/r}\\
&\leq \Big(\E \Big\|\fint_{\sm B } v^{N}\Big\|_{\underline{L}^2(\sm^\ell I)}^{r}\Big)^{1/r}
+ C_\sm \sum_{j=1}^{\ell -1} \Big(\E \Big\| v^{N}-\fint_{\sm^jB} v^{N}\Big\|_{\underline{L}^2(\sm^\ell I;\underline{L}^2(\sm^{j} B))}^r\Big)^{1/r}\\
&\leq \Big(\E\sup_{I} \Big|\fint_{\sm B } v^{N}\Big|^{r}\Big)^{1/r}
+ C_\sm \sum_{k=0}^{\ell -2} \Big(\E \sup_{\sm^{k+1}I}\Big\| v^{N}
-\fint_{\sm^{k+1} B} v^{N}\Big\|_{\underline{L}^2(\sm^{k+1} B)}^r\Big)^{1/r}\\
&\leq C_\sm \Big(\E\sup_{I} \| v^{N}\|_{\underline{L}^2(B)}^{r}\Big)^{1/r}+ C_\sm \En_{Q}^{\#} \sum_{k=0}^\infty \sm^{\g_0 k}
\end{align*}
where the last inequality follows from \eqref{eq:caccioppoli_combined_decay} in Corollary \ref{cor:caccioppoli_combined_decay}.
Since $\sm<1$, the series converges, completing the proof of Step 1.

\smallskip

\emph{Step 2: Uniform control of the energy at the microscopic scale.} 
We begin by noticing that, if $N\leq L_0$, then the uniform bound of Theorem \ref{t:control_mesoscopic_energy} is trivial as 
$$
\big(\E \|v^N \|_{\underline{L}^2(N^{-1}Q)}^r\big)^{1/r}
\leq L_0^{1+d/2} \big(\E \|v^N \|_{\underline{L}^2(Q)}^r\big)^{1/r}.
$$ 
Next, we assume $N\geq L_0$. In particular, there exists $m\geq 1$ for which \eqref{eq:choice_N_Lsm} holds.
It follows from Lemma \ref{l:control_mesoscopic} and Step 1 that there exists a constant $C>0$ depending only on $\mu,p,q,r,p_0,d$ and $\g_0$ such that
\begin{align}
\label{eq:uniform_norm_energy_conclusion_theorem_energy_microscopic}
&\big(\E \|v^N \|_{\underline{L}^2(\sm^{m-1}Q)}^r\big)^{1/r}\\
\nonumber
&\leq \Big(\E \Big\|\fint_{\sm^{m-1}B} v^{N}\Big\|_{\underline{L}^2(\sm^{m-1} I)}^{r}\Big)^{1/r}
+ \Big(\E \Big\|v^N -\fint_{\sm^{m-1} B} v^N\Big\|_{\underline{L}^2(\sm^{m-1} Q)}^r\Big)^{1/r}\\
\nonumber
&\leq  C\Big(\E\sup_{I} \|v^{N}\|_{\underline{L}^2(B)}^{r}\Big)^{1/r}+ C \En_Q^{\#}.
\end{align}
Hence, from \eqref{eq:choice_N_Lsm}, it follows that 
\begin{align*}
\big(\E \|v^N \|_{\underline{L}^2(N^{-1}Q)}^r\big)^{1/r}
\lesssim_{L_0,\sm} \big(\E \|v^N \|_{\underline{L}^2(L_0^{-1}\sm^{m-1}Q)}^r\big)^{1/r}
\lesssim_{L_0} 
\big(\E \|v^N \|_{\underline{L}^2(\sm^{m-1}Q)}^r\big)^{1/r},
\end{align*}
where we used $L_0\geq 1$. The conclusion follows from \eqref{eq:uniform_norm_energy_conclusion_theorem_energy_microscopic}.

\smallskip

\emph{Step 3: Uniform estimate for the oscillation of the stochastic pressure.}
The proof is similar to Step 2. First, in case $N\leq L_0$, one has 
\begin{align*}
&\Big\|\Big(\wt{\pi}_{k,\alpha}^N- \fint_{N^{-1}B} \wt{\pi}_{k,\alpha}^N\Big)_{k,\alpha}\Big\|_{\underline{L}^{2}(N^{-1}B;\ell^2)}\\
&\leq 
\Big\|\Big(\wt{\pi}_{k,\alpha}^N- \fint_{B} \wt{\pi}_{k,\alpha}^N\Big)_{k,\alpha}\Big\|_{\underline{L}^{2}(N^{-1}B;\ell^2)}
+\Big\|\Big(\fint_{N^{-1}B} \Big(\wt{\pi}_{k,\alpha}^N- \fint_{B} \wt{\pi}_{k,\alpha}^N\Big) \Big)_{k,\alpha}\Big\|_{\ell^2}\\
&\leq C_{L_0}
\Big\|\Big(\wt{\pi}_{k,\alpha}^N- \fint_{B} \wt{\pi}_{k,\alpha}^N\Big)_{k,\alpha}\Big\|_{\underline{L}^{2}(B;\ell^2)},
\end{align*}
and thus
$$
\Big\|\Big(\wt{\pi}_{k,\alpha}^N- \fint_{N^{-1}B} \wt{\pi}^N_{k,\alpha}\Big)_{k,\alpha}\Big\|_{\underline{L}^{p_0}(N^{-1}I;\underline{L}^{2}(N^{-1}B;\ell^2))}
\leq C_{L_0}\Big\|\Big(\wt{\pi}_{k,\alpha}^N- \fint_{B} \wt{\pi}^N_{k,\alpha}\Big)_{k,\alpha}\Big\|_{\underline{L}^{p_0}(I;\underline{L}^{2}(B;\ell^2))}.
$$
Similarly, if \eqref{eq:choice_N_Lsm} holds, then one has 
\begin{align*}
&\Big\|\Big(\wt{\pi}_{k,\alpha}^N- \fint_{N^{-1}B} \wt{\pi}_{k,\alpha}^N\Big)_{k,\alpha}\Big\|_{\underline{L}^{2}(N^{-1}B;\ell^2)}\\&\leq 
\Big\|\Big(\wt{\pi}_{k,\alpha}^N- \fint_{\sm^{m-1}B} \wt{\pi}_{k,\alpha}^N\Big)_{k,\alpha}\Big\|_{\underline{L}^{2}(N^{-1}B;\ell^2)}
+\Big\|\Big(\fint_{ N^{-1}B} \Big(\wt{\pi}_{k,\alpha}^N- \fint_{\sm^{m-1}  B} \wt{\pi}_{k,\alpha}^N\Big) \Big)_{k,\alpha}\Big\|_{\ell^2}\\
&\lesssim_{L_0}
\Big\|\Big(\wt{\pi}_{k,\alpha}^N- \fint_{\sm^{m-1} B} \wt{\pi}_{k,\alpha}^N\Big)_{k,\alpha}\Big\|_{\underline{L}^{2}(\sm^{m-1} B;\ell^2)},
\end{align*}
since $L_0\geq 1$, and thus
\begin{align*}
&\Big\|\Big(\wt{\pi}_{k,\alpha}^N- \fint_{N^{-1}B} \wt{\pi}^N_{k,\alpha}\Big)_{k,\alpha}\Big\|_{\underline{L}^{p_0}(N^{-1}I;\underline{L}^{2}(N^{-1}B;\ell^2))}\\
&\leq C_{L_0,\sm,d}
\Big\|\Big(\wt{\pi}_{k,\alpha}^N- \fint_{\sm^{m-1} B} \wt{\pi}^N_{k,\alpha}\Big)_{k,\alpha}\Big\|_{\underline{L}^{p_0}( \sm^{m-1} I;\underline{L}^{2}(\sm^{m-1}B;\ell^2))}.
\end{align*}
The claim of Step 3 now follows from Lemma \ref{l:control_mesoscopic}.

\smallskip

\emph{Step 4: Uniform estimate for the oscillation of the deterministic pressure.} 
Using the argument in Step 3 and \eqref{eq:choice_N_Lsm}, one can readily check that 
\begin{align*}
N^{-1} \Big\|\pi^N- \fint_{N^{-1}B} \pi^N \Big\|_{\underline{L}^2(N^{-1}Q)}
\lesssim_{L_0,\sm,d}
\sm^{m-1} \Big\|\pi^N- \fint_{\sm^{m-1} B} \pi^N \Big\|_{\underline{L}^2( \sm^{m-1}Q)}.
\end{align*}
We emphasize that the implicit constant in the above estimate depends only on $\mu,p,q,r,p_0,d$ and $\g_0$.
Hence, from \eqref{eq:caccioppoli_combined_decay_p} in Corollary \ref{cor:caccioppoli_combined_decay} and the scaling of $\underline{H}^{-1}$ in \eqref{eq:scaling_negative_sob_space_intro}, it follows that 
\begin{align*}
\Big(\E \Big\|\pi^N-\fint_{N^{-1}B}\pi^N \Big\|_{\underline{L}^2(N^{-1}I;\underline{H}^{-1}(N^{-1} B))}^r\Big)^{1/r}  
&\leq N^{-1} \Big( \E \Big\|\pi^N- \fint_{N^{-1}B} \pi^N \Big\|_{\underline{L}^2(N^{-1}Q)}^r \Big)^{1/r} \\
&\leq C_{L_0,\sm,d} C \En_Q^{\#}.
\end{align*}
As commented below \eqref{eq:Poincare_easy1_proof}-\eqref{eq:Poincare_easy2_proof}, the above completes the proof of Theorem \ref{t:control_mesoscopic_energy}.
\end{proof}

\section{$L^p(L^q)$-estimates for the oscillating stochastic Stokes system}
\label{s:uniform_Lp_estimates}
In this section, we prove $L^p(L^q)$-bounds for the oscillating stochastic Stokes system that are uniform in the oscillation parameter $N$ and globally in the spatial domain. More precisely, we consider the following stochastic Stokes system on the $d$-dimensional torus $\T^d$:
\begin{equation}
\label{eq:turbulent_Stokes_scaling_uniform_estimates}
\left\{
\begin{aligned}
\partial_t w^N &=(1+\mu) \Delta w^N+\p[\nabla \cdot F]- c_d \mu\,\p \sum_{k,\alpha} \theta_k^N\nabla\cdot  (\nabla \wt{P}^N_{k,\alpha}\otimes \sigma_{-k,\alpha})\\ 
&\qquad\ \  + \sqrt{c_d \mu}\sum_{k,\alpha} \theta^{N}_{k}\p[(\sigma_{k,\alpha}\cdot\nabla) w^N]\,\dot{W}^{k,\alpha}_t,\\
\nabla \wt{P}^N_{k,\alpha}
&= \theta^{N}_{k}\q[(\sigma_{k,\alpha}\cdot\nabla) w^N],\\
w^N(0)&=w_{0},
 \end{aligned}
\right.
\end{equation}
$\p$ denotes the Helmholtz projection on $\T^d$ and $\theta^N=(\theta_k^N)_{k}$ is as in \eqref{eq:choice_thetaN}. 

Similarly to Subsection \ref{ss:ito_reformulation}, the system \eqref{eq:turbulent_Stokes_scaling_uniform_estimates} formally corresponds to the following stochastic Stokes system with \emph{Stratonovich} noise:
$$
\partial_t w^N = \Delta w^N+\p[\nabla \cdot F]
+ \sqrt{c_d \mu}\sum_{k,\alpha} \theta^{N}_{k}\p[(\sigma_{k,\alpha}\cdot\nabla) w^N]\circ \dot{W}^{k,\alpha}_t.
$$ 
The following result is well-known and provides the basis for our investigation.

\begin{lemma}[Energy solutions to the stochastic Stokes system]
\label{l:global_energy_estimate}
There exists a constant $C>0$ such that, for any $N\geq 1$, any interval $I=(0,T)\subseteq \R_+$ with $T<\infty$,
$$
w_0\in L^0_{\F_{0}}(\O;\Ls^2(\T^d))\qquad \text{ and }\qquad 
F\in L^0_{\Progress}(\O;L^2(I\times \T^d;\R^{d\times d})),
$$
there exists a unique progressively measurable process 
$$
w^N\in C(\overline{I};\Ls^2(\T^d))\cap L^2(I;\Hs^{1}(\T^d))\ \text{ a.s. }
$$ 
which solves the oscillating stochastic Stokes system \eqref{eq:turbulent_Stokes_scaling_uniform_estimates} in its natural integrated form (cf.\ Definition \ref{def:p_solution}), and satisfies the following \emph{pathwise} energy inequality: 
\begin{align}
\label{eq:energy_estimate_pathwise_u0}
\sup_I\|w^N\|_{L^{2} (\T^d)}
+\|\nabla w^N\|_{L^{2}(I\times \T^d)}
\leq C \|w_0\|_{L^{2}(\T^d)} + C  \|F\|_{L^2(I\times \T^d)}\ \text{ a.s. }
\end{align}
\end{lemma}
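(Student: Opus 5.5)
We plan to obtain existence and uniqueness from the variational (monotone) framework for SPDEs on the Gelfand triple $\Hs^1(\T^d)\embed \Ls^2(\T^d)\embed \Hs^{-1}(\T^d)$, e.g.\ \cite[Theorem 4.2.4]{LR15}, and the pathwise energy bound from It\^o's formula combined with the exact cancellation produced by the It\^o--Stratonovich corrector.

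First we rewrite the drift. As in Subsection \ref{ss:ito_reformulation}, for radially symmetric normalized $\theta^N$ one has, with $\LL^N w=c_d\mu\sum_{k,\alpha}(\theta^N_k)^2\,\p[\nabla\cdot(\p[(\sigma_{k,\alpha}\cdot\nabla)w]\otimes\sigma_{-k,\alpha})]$, the identity $(1+\mu)\Delta w - c_d\mu\,\p\sum_{k,\alpha}\theta^N_k\nabla\cdot(\nabla\wt{P}^N_{k,\alpha}\otimes\sigma_{-k,\alpha}) = \Delta w+\LL^N w$; this uses \eqref{eq:symmetric_equal_diagonal_matrix} and $\nabla\wt P^N_{k,\alpha}=\theta_k^N\q[(\sigma_{k,\alpha}\cdot\nabla)w]$. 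The key identity is then
$$
2\langle \LL^N w,w\rangle+\sum_{k,\alpha}c_d\mu(\theta^N_k)^2\|\p[(\sigma_{k,\alpha}\cdot\nabla)w]\|_{L^2}^2=0,
$$
obtained by integration by parts. Indeed, $\langle \nabla\cdot(h\otimes\sigma_{-k,\alpha}),w\rangle=-\int h\cdot(\sigma_{-k,\alpha}\cdot\nabla)w$, and pairing $k$ with $-k$ together with the self-adjointness of $\p$ and $\overline{\sigma_{k,\alpha}}=\sigma_{-k,\alpha}$ turns this into $-\|\p[(\sigma_{k,\alpha}\cdot\nabla)w]\|^2$. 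This is exactly the energy balance \eqref{eq:pathwise_preservation}.

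Next we check the hypotheses of the variational theorem. Hemicontinuity and linearity are trivial. For coercivity, the identity above gives
$$
2\langle \Delta w+\LL^N w,w\rangle+\|(\text{noise})\|_{\ell^2(L^2)}^2=-2\|\nabla w\|^2,
$$
so the coercivity constant is $\nu=1$. Boundedness follows from $\|\theta^N\|_{\ell^2}=1$ and $\|\sigma_{k,\alpha}\|_\infty\le1$. Hence $\sum_{k,\alpha}(\theta^N_k)^2\|\p[(\sigma\cdot\nabla)w]\|^2\lesssim\|\nabla w\|^2$, which makes the noise operator bounded from $\Hs^1$ to $\ell^2(\Ls^2)$ and $\LL^N$ bounded from $\Hs^1$ to $\Hs^{-1}$. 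The forcing $\p\nabla\cdot F$ lies in $L^2(I;\Hs^{-1})$ a.s. To handle data that is only in $L^0$ in $\O$, we localize with stopping times and use $\F_0$-measurable truncations of $w_0$; the monotone framework yields a unique solution on each piece, and linearity lets us glue these together.

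For the pathwise estimate, It\^o's formula \cite[Theorem 4.2.5]{LR15} for $\|w^N\|_{L^2}^2$ combined with the cancellation gives
$$
\dd\|w^N\|^2=\big(-2\|\nabla w^N\|^2-2\langle F,\nabla w^N\rangle\big)\dd t+\dd M_t,
$$
where $M_t=2\sqrt{c_d\mu}\sum_{k,\alpha}\theta^N_k\int\langle \p[(\sigma_{k,\alpha}\cdot\nabla)w^N],w^N\rangle\,\dd W^{k,\alpha}$. This martingale vanishes identically, because $\langle(\sigma\cdot\nabla)w,w\rangle=0$ when $\nabla\cdot\sigma=0$ and $\p w=w$. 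The energy identity is therefore pathwise, and Young's inequality gives \eqref{eq:energy_estimate_pathwise_u0} with $C$ independent of $N$.

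The main obstacle we expect is verifying the cancellation identity rigorously with complex coefficients and the pairing $k\leftrightarrow -k$, in particular making sure that the stochastic pressure term in the drift is exactly the compensator of the $\q$-part, so that no residual $\mu$-dependent term spoils the uniformity in $N$.
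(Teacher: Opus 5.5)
Your proposal is correct and follows the route the paper indicates, namely the variational framework of \cite[Chapter 4]{LR15} (cf.\ \cite[Appendix A]{AV21_NS}) together with It\^o's formula, where the pathwise bound comes from the exact It\^o--Stratonovich cancellation and from the vanishing of the martingale integrand $\langle \p[(\sigma_{k,\alpha}\cdot\nabla)w],w\rangle=0$. One bookkeeping correction: since $[W^{k,\alpha},W^{-k,\alpha}]_t=2t$ by \eqref{eq:decorrelation_complex_BM}, the It\^o correction is $2c_d\mu\sum_{k,\alpha}(\theta^N_k)^2\|\p[(\sigma_{k,\alpha}\cdot\nabla)w]\|_{L^2}^2$, so the second term of your displayed cancellation identity is missing a factor $2$; with that factor, the term $2\langle \LL^N w,w\rangle=-2c_d\mu\sum_{k,\alpha}(\theta^N_k)^2\|\p[(\sigma_{k,\alpha}\cdot\nabla)w]\|_{L^2}^2$ is cancelled exactly and your coercivity identity holds as stated.
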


In the following, solutions provided by Lemma \ref{l:global_energy_estimate} are referred to as \emph{energy solutions} to \eqref{eq:turbulent_Stokes_scaling_uniform_estimates}.
The proof of Lemma \ref{l:global_energy_estimate} is standard and follows directly from the variational approach to SPDEs, see e.g., \cite[Chapter 4]{LR15} and \cite[Appendix A]{AV21_NS}. 

\smallskip

As explained in Subsection \ref{ss:scaling_intro}, the norms in \eqref{eq:energy_estimate_pathwise_u0} are too weak to handle the Navier-Stokes nonlinearity. 
Below, we establish suboptimal $L^p(L^q)$-estimates that are uniform in $N$ and allow for a subcritical Sobolev index $-2/p-d/q>-1$. 

\begin{theorem}[$L^p(L^q)$-estimates uniform in the oscillation -- Finite time horizon]
\label{t:suboptimal_Lq_finite_time}
Fix $r\in (1,\infty)$ and an interval $I=(0,T)$ for some $T<\infty$. Let $p,q\in (2,\infty)$ be such that the Sobolev index $\Sob\stackrel{{\rm def}}{=}1-\frac{2}{p}-\frac{d}{q}$ satisfies
\begin{equation}
\label{eq:assumption_suboptimal_Lq_finite_time1}
-\frac{d}{2}\leq \Sob <0, \qquad \text{ and }\qquad p<\frac{d}{-\Sob},
\end{equation}
Moreover, set $\qmax\stackrel{{\rm def}}{=}-\frac{d^2}{d-2}\frac{1}{\Sob}$, 
with the convention $\qmax=\infty$ if $d=2$.
Then for all $p_0\in (2,\infty)$ and $q_0\in (2,\qmax)$ satisfying 
\begin{equation}
\label{eq:suboptimal_Lq_finite_time_suboptimality_condition}
-\frac{2}{p_0}-\frac{d}{q_0}<\Sob,
\end{equation}
there exists a constant $C_0$ such that, for all $N\geq 1$, 
\begin{equation}
\label{eq:suboptimal_Lq_finite_time_data}
w_0\in L^r_{\F_0}(\O;\Bs^{1-2/p}_{q,p}(\T^d))\qquad \text{ and }\qquad F\in L^r_{\Progress}(\O;L^p(I ;L^q(\T^d;\R^{d\times d}))),
\end{equation}
the unique energy solution $w^N$ to the oscillating stochastic Stokes system \eqref{eq:turbulent_Stokes_scaling_uniform_estimates} provided by Lemma \ref{l:global_energy_estimate} satisfies
\begin{align}
\label{eq:suboptimal_Lq_finite_time1}
\big(\E\|w^N\|_{L^{p_0}(I;L^{q_0} (\T^d))}^{r} \big)^{1/r}
&\leq C_0 \big(\E\|w_0\|_{B^{1-2/p}_{q,p}(\T^d)}^r\big)^{1/r}\\
\nonumber
& + C_0  \big(\E\|F\|_{L^p(I;L^q(\T^d))}^r\big)^{1/r}.
\end{align}
\end{theorem}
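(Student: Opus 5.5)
The plan is to combine the microscopic $L^\infty$-type bound (Corollary \ref{cor:SMR_microscopic_estimate}) with the control of energy at microscopic scales (Theorem \ref{t:control_mesoscopic_energy}). This yields a supremum moment bound $\sup_{(t_0,x_0)}(\E|\cdot|^{r})^{1/r}$ for a high-integrability auxiliary problem. That bound is then interpolated against the pathwise energy inequality \eqref{eq:energy_estimate_pathwise_u0} to reach the target pair $(p_0,q_0)$.

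The first step is to reduce to zero initial data. I would solve the stochastic Stokes system \eqref{eq:turbulent_Stokes_scaling_uniform_estimates} with data $w_0$ and $F=0$ on a short interval. Equivalently, as announced in Subsection \ref{ss:almost_self}, I would write $w^N=e^{t\Delta}w_0+\tilde w^N$, where $\tilde w^N$ has zero initial datum and an additional stochastic forcing $G=\nabla e^{t\Delta}w_0$. This $G$ is trace-free and divergence-free because $w_0$ is divergence-free, and $\|G\|_{L^p(L^q)}\lesssim\|w_0\|_{B^{1-2/p}_{q,p}}$.

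The second step is the main one. I fix auxiliary exponents $(\hat p,\hat q)$ with $\frac{2}{\hat p}+\frac{d}{\hat q}<1$, chosen so that Sobolev/interpolation embeddings map $L^p(L^q)$ forcing into the regime of Theorem \ref{t:control_mesoscopic_energy}. Two routes are possible. One is to lift $F,G$ by the heat semigroup: write $F=\nabla\cdot(\cdot)$ of a smoother object, at the cost of $\Sob<0$ derivatives. The other is to rescale: since $\Sob<0$, the forcing has negative scaling index, and one uses a parabolic cylinder of size $\lambda$ and exploits $\|F\|_{\underline{L}^p(\underline L^q)}$ scaling. I would try the lifting approach. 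I would take $\hat F=(1-\Delta)^{\Sob/2}$-type smoothing that absorbs the deficit, placing $\hat F$ in $L^{\hat p}(L^{\hat q})$; the solution then corresponds to a fractional derivative of $w^N$.

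Then, for each cylinder $Q_{1/2}(t_0,x_0)$, I would apply Corollary \ref{cor:SMR_microscopic_estimate} to $(w^N,$ local pressures minus averages$)$ on $N^{-1}Q$. I would then use Theorem \ref{t:control_mesoscopic_energy} to bound the microscopic energy and pressure oscillations by $\En_Q$ plus $\sup_I\|w^N\|_{L^2}$. The quantities in $\En_Q$ are controlled by the global energy estimate of Lemma \ref{l:global_energy_estimate}, together with $\|\nabla\wt P\|_{L^{p_0}H^{-1}}\lesssim \|w^N\|_{L^{p_0}L^2}$ and $\|\nabla P\|_{L^2H^{-1}}\lesssim\|\nabla w\|_{L^2}+\|F\|$. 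Covering $I\times\T^d$ gives the supremum moment bound, and \eqref{eq:boundedness_Lr_quenched_supremum_intro} gives $L^r(\O;L^{r}(I\times\T^d))$ for $r$ large.

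Finally I would interpolate, pathwise-in-norm via Hölder in $\omega$, between the $L^{r}_{t,x}$ bound with $r\gg1$ and the energy bound $L^\infty(L^2)\cap L^2(H^1)$. This reaches $(p_0,q_0)$ whenever $-\frac{2}{p_0}-\frac{d}{q_0}<\Sob$; the restrictions $q_0<\qmax$ and $p<d/(-\Sob)$ arise from the Sobolev embedding of $H^1$ used in the interpolation and from the lifting step.

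I expect the main obstacle to be the second step. There, the forcing lives only at negative Sobolev index, so Assumption \ref{ass:forcing_regularity_iteration} fails. One must either lift the data with a loss compensated by the strict inequality \eqref{eq:suboptimal_Lq_finite_time_suboptimality_condition}, or find the correct interplay of scales. Ensuring that the lifted $G$ keeps trace-free and divergence-free structure is also delicate.
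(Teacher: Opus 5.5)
Your argument breaks at the second step, and the final step fails with it. The covering argument you describe is sound, provided $v^N,F,G$ are extended by zero to negative times so that cylinders near $t=0$ fit. That argument is Corollary~\ref{cor:SMR_microscopic_estimate} on $N^{-1}Q$, then Theorem~\ref{t:control_mesoscopic_energy}, then energy bounds for the large-scale quantities. But it only applies to forcing $F,G\in L^{\hat p}(I;L^{\hat q})$ with $\frac{2}{\hat p}+\frac{d}{\hat q}<1$. That condition is used in the blow-up, through the factor $N^{-1+2/\hat p+d/\hat q}$ and the embedding \eqref{eq:Besov_embedding_Linfty}.

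Here $\Sob<0$, so $L^p(L^q)$ embeds in no such space, and neither of your fixes works:
\begin{itemize}
\item Lifting by $(1-\Delta)^{\Sob/2}$ is a constant-coefficient device. It does not commute with $\theta^N_k\p[(\sigma_{k,\alpha}\cdot\nabla)\,\cdot\,]$, the stochastic pressures, or the It\^o--Stratonovich corrector. Since $\sigma_{k,\alpha}$ shifts frequencies by $|k|\sim N$, the commutators are not of lower order and are not controlled uniformly in $N$ (cf.\ \eqref{eq:irregularity_of_coefficients}). The lifted function therefore solves no system to which the microscopic estimates apply with $N$-independent constants.
\item Rescaling fails as well, because data of negative index are amplified, not damped, on small cylinders.
\end{itemize}
Consequently, your H\"older interpolation of the norms of the single solution $w^N$ between an $L^{r}_{t,x}$ bound and the energy bound has only one of its two endpoints available.

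The missing idea is to interpolate the linear solution operator $\LinS^N:(F,G)\mapsto v^N$ of \eqref{eq:turbulent_Stokes_scaling_uniform_estimates_modified}, taken after your zero-data reduction (which is correct). The data then need only lie in an intermediate space. The two endpoints are:
\begin{itemize}
\item the energy estimate \eqref{eq:energy_estimate_vN}, from $L^2(I\times\T^d)$ data into $L^\infty(L^2)\cap L^2(H^1)$;
\item Theorem~\ref{t:almost_Linfty} with exponents satisfying $\frac{2}{p_2}+\frac{d}{q_2}=1-\Sob_2<1$, into $L^r(\O;L^{r_0}(I\times\T^d))$ with $r_0>r$ arbitrary. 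Decoupling $r_0$ from the fixed moment $r$ uses Lenglart's domination, not just \eqref{eq:boundedness_Lr_quenched_supremum_intro}.
\end{itemize}
Take $\vartheta=\frac{\Sob+d/2}{\Sob_2+d/2}$ and define $p_2,q_2$ by $\frac1p=\frac{1-\vartheta}{2}+\frac{\vartheta}{p_2}$ and $\frac1q=\frac{1-\vartheta}{2}+\frac{\vartheta}{q_2}$. Solvability with $p_2\in(2,\infty)$ as $\Sob_2\downarrow0$ is exactly $p<\frac{d}{-\Sob}$; this is where that hypothesis comes from, not from any lifting.

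Two technical points are essential:
\begin{itemize}
\item The interpolation identity \eqref{eq:interpolation_Banach_spaces_progress} for progressively measurable spaces is not standard Bochner interpolation. It is proved by a retraction argument using the conditional expectation onto the progressive $\sigma$-algebra.
\item You must precompose with the projection $\mathbb{G}_{\T^d}$ onto trace-free, divergence-free matrix fields, because Theorem~\ref{t:almost_Linfty} requires \eqref{eq:almost_Linfty02}.
\end{itemize}
Interpolating against $L^\infty(L^2)$, and against $L^2(H^1)$ followed by Sobolev embedding, gives the near-endpoints $(p_0,q_0)\approx(\infty,-\frac{d}{\Sob})$ and $(-\frac{d}{\Sob},\qmax)$. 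One further interpolation and H\"older then cover every $(p_0,q_0)$ satisfying \eqref{eq:suboptimal_Lq_finite_time_suboptimality_condition}. Finally, you must also place the free part $e^{(1+\mu)\Delta t}w_0$ in $L^{p_0}(L^{q_0})$, via its maximal-regularity bound and Sobolev embedding.
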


The fundamental feature of Theorem \ref{t:suboptimal_Lq_finite_time} is the independence of the constant $C_0$ from the oscillation parameter $N$ as well as the data $(w_0,F)$.

Before going further, let us discuss the sharpness of the estimate \eqref{eq:suboptimal_Lq_finite_time1} from a scaling point of view. As the notation suggests, $\Sob$ is the Sobolev index or regularity content of the maximal regularity space for the problem \eqref{eq:turbulent_Stokes_scaling_uniform_estimates} with the data (see e.g., \cite[Section 3]{AV25_survey} or \cite[Theorem 3.2]{AV21_NS}), i.e., 
\begin{equation}
\label{eq:maximal_regularity_space_associated_LpLq}
C(\overline{I};\Bs_{q,p}^{1-2/p}(\T^d))\cap L^p(I;\Hs^{1,q}(\T^d)).
\end{equation}
Note that the above space has space-time Sobolev index\footnote{Recall that the (parabolic) space-time Sobolev index of  $L^{p}(I;H^{s,q}(\T^d))$ is $-\frac{2}{p}+s-\frac{d}{q}$.} given by $\Sob$. 
However, the scaling of the norm $L^{p_0}_t(L^{q_0}_x)$ appearing on the left-hand side of \eqref{eq:suboptimal_Lq_finite_time1} has Sobolev index given by 
$$
\Sob_0 \stackrel{{\rm def}}{=}-\frac{2}{p_0}-\frac{d}{q_0},
$$
and the condition \eqref{eq:suboptimal_Lq_finite_time_suboptimality_condition} implies that the regularity content of the space for $w^N$ in \eqref{eq:suboptimal_Lq_finite_time1} is strictly lower, but arbitrarily close to the optimal $\Sob$.
This small loss of sharpness in the above result is a consequence of the loss of scaling in the $L^\infty$-estimate of Theorem \ref{t:almost_Linfty}. The sharp case of Theorem \ref{t:suboptimal_Lq_finite_time} goes beyond the scope of this manuscript. It is worth noticing that, for the sake of the results stated in Section \ref{s:statements}, this sharpness loss is harmless and perfectly matches the loss of regularity in the compactness argument behind the scaling limit described in Subsection \ref{ss:scaling_intro} (see also Section \ref{s:scaling_limit} below).  
 
\smallskip

The limitation on the integrability $\qmax$ is less transparent and follows from our proof strategy. The proof of Theorem \ref{t:suboptimal_Lq_finite_time} is based on an interpolation argument between the two endpoints (cf.\ the condition \eqref{eq:assumption_suboptimal_Lq_finite_time1}):
\begin{itemize}
\item Energy inequality corresponding to the Sobolev index $-\frac{d}{2}$, see Lemma \ref{l:global_energy_estimate}.
\item Almost $L^\infty$-bounds corresponding  to the Sobolev index $1-\frac{2}{p}-\frac{d}{q}\approx 0$, see Theorem \ref{t:almost_Linfty} below.
\end{itemize}
As depicted in Figure \ref{fig:scheme}, the results of Sections \ref{s:localized_smr} and \ref{s:energy_control_microscopic} are central to proving Theorem \ref{t:almost_Linfty} and hence Theorem \ref{t:suboptimal_Lq_finite_time} as well as the infinite-time version below. 

Before going further, let us discuss how the value $\qmax$ appears from the above-mentioned interpolation. We restrict to the case $d\geq 3$, as $d=2$ is similar, see \eqref{eq:endpoint_2_UN_interpolation} below. 
Interpolating the endpoints $L^2(I;H^1(\T^d))\embed L^2(I;L^{2d/(d-2)}(\T^d))$ and $L^\infty(I\times \T^d)$ (corresponding to Lemma \ref{l:global_energy_estimate} and Theorem \ref{t:almost_Linfty}, respectively), and with interpolation parameter $\vartheta=1+\frac{2}{d}\,\Sob $ (see \eqref{eq:vartheta_choice_interpolation} for details), one sees that the resulting spatial integrability is 
$$
\frac{(d-2)(1-\vartheta)}{2d}=\frac{1}{\qmax} \qquad \Longrightarrow \qquad \qmax=-\frac{d^2}{(d-2)}\frac{1}{\Sob}.
$$

Finally, we formulate a version of Theorem \ref{t:suboptimal_Lq_finite_time} on the half-line, which we employ to prove Theorem \ref{t:global_NSE}, and follows from the latter result and a standard localization argument. The reader is referred to Subsection \ref{ss:suboptimal_Lq_Rpositive} for details.

\begin{theorem}[$L^p(L^q)$-estimates uniform in the oscillation -- Infinite time horizon]
\label{t:suboptimal_Lq_Rpositive}
Let $p,q\in (2,\infty)$ be such that $\Sob=1-\frac{2}{p}-\frac{d}{q}$ satisfies  \eqref{eq:assumption_suboptimal_Lq_finite_time1}, and set $\qmax\stackrel{{\rm def}}{=}-\frac{d^2}{d-2}\frac{1}{\Sob}$, 
with the convention $\qmax=\infty$ if $d=2$. Then, for all $p_0\in (p,\infty)$, $r\in (1,p_0]$, and $q_0\in (2,\qmax)$ satisfying \eqref{eq:suboptimal_Lq_finite_time_suboptimality_condition}, there exists a constant $C_0>0$ such that, for all $N\geq 1$, any interval $I=(0,T)\subseteq \R_+$ with $T\in (0,\infty]$, and 
\begin{equation}
\label{eq:suboptimal_Lq_Rpositive_data}
w_0\in L^r_{\F_0}(\O;\Bs^{1-2/p}_{q,p}(\T^d))\qquad \text{ and }\qquad F\in L^r_{\Progress}(\O;L^p(I ;L^q(\T^d;\R^{d\times d}))),
\end{equation}
such that $\int_{\T^d} w_0 =0$ a.s., the unique energy solution $w^N$ to the oscillating stochastic Stokes system \eqref{eq:turbulent_Stokes_scaling_uniform_estimates} provided by Lemma \ref{l:global_energy_estimate} satisfies
\begin{align}
\nonumber
\big(\E\|w^N\|_{L^{p_0}(I;L^{q_0} (\T^d))}^{r} \big)^{1/r}
&
\leq C_0 \big(\E\|w_0\|_{B^{1-2/p}_{q,p}(\T^d)}^r\big)^{1/r} + C_0  \big(\E\|F\|_{L^p(I;L^q(\T^d))}^r\big)^{1/r}\\
\label{eq:suboptimal_Lq_Rpositive_estimate}
 &+ C_0\big(\E\|w^N\|_{L^{p}(I;H^{-1,q} (\T^d))}^r \big)^{1/r}. 
\end{align}
\end{theorem}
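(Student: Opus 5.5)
We deduce Theorem \ref{t:suboptimal_Lq_Rpositive} from Theorem \ref{t:suboptimal_Lq_finite_time} by localizing in time on unit intervals and summing. The extra term $\|w^N\|_{L^p(I;H^{-1,q})}$ on the right-hand side of \eqref{eq:suboptimal_Lq_Rpositive_estimate} is exactly what the commutator with a time cutoff produces. Since the estimate of Theorem \ref{t:suboptimal_Lq_finite_time} is stated for a fixed horizon $T<\infty$, its constant is uniform over intervals of unit length by time-translation invariance of \eqref{eq:turbulent_Stokes_scaling_uniform_estimates}; the filtration is shifted and $(W^{k,\alpha})$ is replaced by its increments. We may also take $T=\infty$ after proving the bound for every finite $T$ with a $T$-independent constant.

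\emph{Step 1 (first unit interval).} On $(0,2)$, Theorem \ref{t:suboptimal_Lq_finite_time} gives directly
$$
\big(\E\|w^N\|^r_{L^{p_0}(0,2;L^{q_0})}\big)^{1/r}\lesssim \big(\E\|w_0\|^r_{B^{1-2/p}_{q,p}}\big)^{1/r}+\big(\E\|F\|^r_{L^p(0,2;L^q)}\big)^{1/r}.
$$

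\emph{Step 2 (later intervals).} For $j\ge1$, fix a smooth $\psi_j$ with $\psi_j=0$ near $t=j-1$, $\psi_j=1$ on $[j,j+1]$, supported in $(j-1,j+1]$, and with $\|\psi_j'\|_\infty\lesssim1$. The process $\psi_j w^N$ solves \eqref{eq:turbulent_Stokes_scaling_uniform_estimates} on $(j-1,j+1)$ with zero initial datum and forcing $\psi_j\p[\nabla\cdot F]+\psi_j' w^N$. The noise and the It\^o--Stratonovich corrector are linear in $w^N$, so they commute with multiplication by a time function.

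To write $\psi_j'w^N$ in divergence form, note that $w^N$ has zero spatial mean. This holds because $\int w_0=0$ and the mean is preserved: every term in \eqref{eq:turbulent_Stokes_scaling_uniform_estimates} is a divergence or lies in the range of $\p$ applied to a divergence-free-compatible term. Hence $\psi_j' w^N=\nabla\cdot F_j$ with $F_j=\nabla\Delta^{-1}(\psi_j'w^N)$ and
$$
\|F_j\|_{L^p(L^q)}\lesssim\|w^N\|_{L^p(j-1,j+1;H^{-1,q})}.
$$
Applying Theorem \ref{t:suboptimal_Lq_finite_time} on the shifted interval then gives
$$
\big(\E\|w^N\|^r_{L^{p_0}(j,j+1;L^{q_0})}\big)^{1/r}\lesssim \big(\E\|F\|^r_{L^p(j-1,j+1;L^q)}\big)^{1/r}+\big(\E\|w^N\|^r_{L^p(j-1,j+1;H^{-1,q})}\big)^{1/r}.
$$

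\emph{Step 3 (summation).} This step is the main obstacle: we must pass from $L^r(\O)$ bounds on each interval to the global $L^{p_0}$ in time norm without losing a factor in the number of intervals. Here we use $r\le p_0$ and $p<p_0$. First apply the finite-time estimate with moment $p_0$ in place of $r$; this is allowed, since Theorem \ref{t:suboptimal_Lq_finite_time} holds for every $r\in(1,\infty)$. Then
$$
\E\|w^N\|^{p_0}_{L^{p_0}(I;L^{q_0})}=\sum_j\E\|w^N\|^{p_0}_{L^{p_0}(j,j+1;L^{q_0})}\lesssim \sum_j\Big(\E\|F\|^{p_0}_{L^p(j-1,j+1;L^q)}+\E\|w^N\|^{p_0}_{L^p(j-1,j+1;H^{-1,q})}\Big)+(\text{Step 1}).
$$
Pathwise, $\sum_j a_j^{p_0}\le(\sum_j a_j^p)^{p_0/p}$ because $p_0>p$, and the intervals $(j-1,j+1)$ overlap at most twice. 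This yields the estimate with moment $r=p_0$.

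For general $r\in(1,p_0]$, we would localize in $\O$ by a stopping-time/extrapolation argument, or apply the above conditionally on $\F_0$ and truncate the data on $\F_0$-measurable sets. This reduces to the case where $r$-moments are replaced by $p_0$-moments and yields \eqref{eq:suboptimal_Lq_Rpositive_estimate}. If that reduction fails, we would instead sum the $L^r(\O)$ bounds using Minkowski's inequality, $\|\cdot\|_{L^r(\O;\ell^{p_0})}\le\|\cdot\|_{\ell^{p_0}(L^r(\O))}$ for $r\le p_0$, together with the reverse inequality for $\ell^{p}\subset\ell^{p_0}$ applied to the data side. This works when the data terms are measured as $(\sum_j(\E a_j^r)^{p_0/r})^{1/p_0}\le(\E(\sum_j a_j^p)^{r/p})^{1/r}$. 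That inequality is where the hypotheses $r\le p_0$ and $p<p_0$ enter, and verifying it carefully is the delicate point.
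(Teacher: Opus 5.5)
Your Steps 1--3 are correct and prove \eqref{eq:suboptimal_Lq_Rpositive_estimate} for $r=p_0$. This is the same time localization the paper uses. The paper subtracts the heat flow of $w_0$, works with the modified system, and uses the continuous family of cutoffs $\zeta(\cdot-t_0)$ with $\int\zeta^{p_0}=1$ together with Young's inequality in $t_0$; your discrete partition with $\ell^{p}\subset\ell^{p_0}$ is an equivalent variant.

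The gap is the passage to $r\in(1,p_0)$. Your fallback needs $\|\cdot\|_{L^r(\O;\ell^{p_0})}\le\|\cdot\|_{\ell^{p_0}(L^r(\O))}$ on the left-hand side, but for $r<p_0$ Minkowski's inequality only gives the reverse bound. For example, take $b_j=\one_{A_j}$, $j\le n$, with disjoint events $A_j$ of probability $1/n$. Then the left side equals $1$ while the right side equals $n^{1/p_0-1/r}\to0$. Your data-side inequality is true, but this half fails, so the fallback works only for $r=p_0$. Conditioning on $\F_0$ does not help either. The forcing $F$ and the lower-order term are only adapted, and conditional Jensen gives $\E[(\E[X\,|\,\F_0])^{r/p_0}]\ge\E[X^{r/p_0}]$, which runs the wrong way.

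The missing ingredient is Lenglart's domination inequality, which is how the paper concludes. You first need the $p_0$-moment bound on random intervals: for every stopping time $\tau$,
\begin{equation*}
\E\|w^N\|^{p_0}_{L^{p_0}(0,\tau;L^{q_0})}\le C\,\E\Big[\|w_0\|^{p_0}_{B^{1-2/p}_{q,p}}+\|F\|^{p_0}_{L^p(0,\tau;L^q)}+\|w^N\|^{p_0}_{L^p(0,\tau;H^{-1,q})}\Big],
\end{equation*}
with $C$ independent of $\tau$.

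Your localization gives this if, in each localized problem, you multiply the forcing terms by $\one_{[0,\tau)}$. This includes the commutator $\psi_j'w^N$, which is just an external forcing there. By uniqueness of energy solutions, the new solution coincides with $\psi_jw^N$ (with $w^N$ on the first interval) on $[0,\tau)$. Theorem \ref{t:suboptimal_Lq_finite_time} with moment $p_0$ then bounds the norm on $(j,j+1)\cap(0,\tau)$ by data restricted to $(0,\tau)$, and your Step 3 summation goes through unchanged.

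Now apply Lenglart's inequality with exponent $r/p_0\in(0,1)$ to $X_t=\|w^N\|^{p_0}_{L^{p_0}(0,t;L^{q_0})}$. It is dominated by the continuous increasing adapted process equal to $C$ times the bracket above with $\tau$ replaced by $t$. This yields the estimate for $r<p_0$, and $r=p_0$ is your Step 3. If the $\F_0$-conditional form of the domination is needed, multiply all data by $\one_E$ with $E\in\F_0$; that is where your truncation on $\F_0$-measurable sets legitimately enters.
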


Comparing the above with Theorem \ref{t:suboptimal_Lq_finite_time}, we are able to prove a similar estimate with a constant that is independent of $N$, $w_0$ and $F$ and the length of the time interval $T$, at the expense of a lower order term on the right-hand side of \eqref{eq:suboptimal_Lq_Rpositive_estimate} and with an additional restriction on the integrability in the probability space $r$. We expect that the latter is not essential for the estimate \eqref{eq:suboptimal_Lq_Rpositive_estimate}. However, \eqref{eq:suboptimal_Lq_Rpositive_estimate} is sufficient for our purposes.
To conclude, let us mention that for certain choices of $(p,q)$ the lower-order term on the right-hand side of \eqref{eq:suboptimal_Lq_Rpositive_estimate} can be estimated via Lemma \ref{l:global_energy_estimate}.
However, we preferred the formulation \eqref{eq:suboptimal_Lq_Rpositive_estimate} as it is easier to apply to prove our main results.

\smallskip

The proofs of Theorems \ref{t:suboptimal_Lq_finite_time} and \ref{t:suboptimal_Lq_Rpositive} are given in Subsections \ref{ss:suboptimal_Lq_finite_time_proof} and \ref{ss:suboptimal_Lq_Rpositive} below.

\subsection{Uniform $L^p(L^q)$-estimates on bounded intervals -- Proof of Theorem \ref{t:suboptimal_Lq_finite_time}}
\label{ss:suboptimal_Lq_finite_time_proof}
As is common in the study of maximal $L^p$-regularity estimates for evolution equations (see e.g., \cite[Subsection 3.3]{AV25_survey}), it is convenient to consider the following variant of the oscillating Stokes system \eqref{eq:turbulent_Stokes_scaling_uniform_estimates} with additive noise on $I=(t_0-T,t_0)$ for some $T<\infty$ such that $t_0\geq T-1/4$:
\begin{equation}
\label{eq:turbulent_Stokes_scaling_uniform_estimates_modified}
\left\{
\begin{aligned}
\partial_t v^N &=(1+\mu) \Delta v^N+\p[\nabla \cdot F]- c_d \mu\,\p \sum_{k,\alpha} \theta_k^N\nabla\cdot  (\nabla \wt{\pi}^N_{k,\alpha}\otimes \sigma_{-k,\alpha})\\ 
&\qquad\ \  + \sqrt{c_d \mu}\sum_{k,\alpha} \theta^{N}_{k}\p[(\sigma_{k,\alpha}\cdot\nabla) v^N+\sigma_{k,\alpha}\cdot G]\,\dot{W}^{k,\alpha}_t,\\
\nabla \wt{\pi}^N_{k,\alpha}
&= \theta^{N}_{k}\q[(\sigma_{k,\alpha}\cdot\nabla) v^N+ \sigma_{k,\alpha}\cdot G],\\
 v^N(t_0-T)&=0.
 \end{aligned}
\right.
\end{equation}
Energy solutions to the above can be introduced similarly to those for the original system  \eqref{eq:turbulent_Stokes_scaling_uniform_estimates}. Moreover, similarly to Lemma \ref{l:global_energy_estimate} (see e.g., \cite[Appendix A]{AV21_NS}), one can check that, for any $r\in (1,\infty)$, there exists a constant $C>0$ independent of $N$, $t_0$, $F$ and $G$ such that  
\begin{align}
\label{eq:energy_estimate_vN}
\big(\E\sup_I\|v^N\|_{L^{2} (\T^d)}^r\big)^{1/r}
&+\big(\E\|\nabla v^N\|^r_{L^{2}(I\times \T^d)}\big)^{1/r}\\
\nonumber
&\leq C  \big(\E\|F\|_{L^2(I\times \T^d)}^r\big)^{1/r}+C  \big(\E\|G\|_{L^2(I\times \T^d)}^r\big)^{1/r} ,
\end{align}
where $v^N$ is the energy solution to \eqref{eq:turbulent_Stokes_scaling_uniform_estimates_modified}.

The reasoning behind the system \eqref{eq:turbulent_Stokes_scaling_uniform_estimates_modified} is that, if $w^N$ is an energy solution to \eqref{eq:turbulent_Stokes_scaling_uniform_estimates}, then $v^N(t)= w^N (t)-z(t)$ where $z(t)=e^{(1+\mu)\Delta t}w_0$ solves \eqref{eq:turbulent_Stokes_scaling_uniform_estimates_modified} with $G=[\nabla z]^\top$ and $t_0=T$. Thus, estimates for solutions to \eqref{eq:turbulent_Stokes_scaling_uniform_estimates_modified} immediately yield corresponding bounds for \eqref{eq:turbulent_Stokes_scaling_uniform_estimates}.

\smallskip

In this subsection, we let $t_0-T\in (-1/4,\infty)$ and therefore the initial time in \eqref{eq:turbulent_Stokes_scaling_uniform_estimates_modified} is possibly negative. Recall that, as mentioned in Subsection \ref{ss:probabilistic}, here we implicitly assume that the complex-valued Brownian motions $(W^{k,\alpha})_{k,\alpha}$ have been extended to times in $(-1/4,\infty)$. This extension will be exploited in the covering argument used in the proof of the following result (see also Figure \ref{fig:cubes_nested}).

\smallskip

The fundamental result for proving Theorems \ref{t:suboptimal_Lq_finite_time} and \ref{t:suboptimal_Lq_Rpositive} is given by the following result, which yields supremum moment estimates for the energy solution to \eqref{eq:turbulent_Stokes_scaling_uniform_estimates_modified} and is a consequence of the localized stochastic maximal $L^p$-regularity in Corollary \ref{cor:SMR_microscopic_estimate}, and the control of the energy at the microscopic scale in Theorem \ref{t:control_mesoscopic_energy}.

\begin{theorem}[Supremum moment bounds on a finite interval]
\label{t:almost_Linfty}
Fix $0<T<\infty$. Let $p,q\in (2,\infty)$ be such that 
\begin{equation}
\label{eq:almost_Linfty0}
\frac{2}{p}+\frac{d}{q}<1.
\end{equation}
Then for all $r\in (1,\infty)$ there exists $C_0>0$ such that for any $N$, any interval $I=(t_0-T,t_0)\subset (-1/4,\infty)$ with length $T<\infty$, and
\begin{equation}
\label{eq:almost_Linfty01}
F,G\in L^r_{\Progress}(\O; L^p(I;L^q(\T^d;\R^{d\times d}))),
\end{equation}
satisfying
\begin{equation}
\label{eq:almost_Linfty02}
\Tr (G)=0 \text{ a.e.\ on }\T^d \qquad \text{ and }\qquad \nabla \cdot G =0\text{ in }\D'(\T^d),
\end{equation} 
a.e.\ on $I\times \O$, it holds that 
\begin{align}
\label{eq:almost_Linfty1}
\sup_{ I\times \T^d}\big( \E|v^N|^r \big)^{1/r}
&\leq C_0  \big(\E\|F\|_{L^p( I;L^q(\T^d))}^r\big)^{1/r}+ C_0 \big(\E\|G\|_{L^p( I;L^q(\T^d))}^r\big)^{1/r},
\end{align}
where $v^N$ is the unique energy solution to \eqref{eq:turbulent_Stokes_scaling_uniform_estimates_modified}.
In particular, for all $r_0\in (1,\infty)$, there exists $C_1>0$ independent of $N,t_0,F$ and $G$ for which  
\begin{align}
\label{eq:almost_Linfty2}
\big( \E\|v^N\|_{L^{r_0}( I\times \T^d)}^{r} \big)^{1/{r}}
\leq C_1  \big(\E\|F\|_{L^p( I;L^q(\T^d))}^{r}\big)^{1/r}+ C_1 \big(\E\|G\|_{L^p( I;L^q(\T^d))}^{r}\big)^{1/{r}}.
\end{align}
\end{theorem}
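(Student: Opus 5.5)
The plan is to prove the pointwise bound \eqref{eq:almost_Linfty1} at an arbitrary fixed point $(s_0,y_0)\in I\times\T^d$ by chaining three results. Corollary~\ref{cor:SMR_microscopic_estimate} is applied on the microscopic cylinder $N^{-1}Q$, where $Q=Q_{1/2}(s_0,y_0)$. Theorem~\ref{t:control_mesoscopic_energy} then controls the microscopic energy and pressure oscillations in terms of $\En_Q$. Finally, the global energy estimate \eqref{eq:energy_estimate_vN} bounds $\En_Q$ by the data. Since the constants in all three results are independent of $N$, of the center, and of the data, taking the supremum over $(s_0,y_0)$ yields \eqref{eq:almost_Linfty1}. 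The bound \eqref{eq:almost_Linfty2} then follows trivially: it suffices to treat $r_0\ge r$, and then Minkowski/Fubini gives
\[
\big(\E\|v^N\|^r_{L^{r_0}(I\times\T^d)}\big)^{1/r}\le \big(\E\|v^N\|^{r_0}_{L^{r_0}}\big)^{1/r_0}\le |I\times\T^d|^{1/r_0}\sup_{I\times \T^d}(\E|v^N|^{r_0})^{1/r_0},
\]
after which we apply \eqref{eq:almost_Linfty1} with $r_0$ in place of $r$. Data moments of order $r_0$ then appear on the right-hand side, so one must either assume them or carry out the argument directly with the larger moment.

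\textbf{Localization.} The energy solution $v^N$ on $\T^d$ is identified, on each cylinder $Q\subset I\times\T^d$ (lifted to a periodic window, with Brownian motions extended to $(-1/4,\infty)$), as a local solution to \eqref{eq:turbulent_Stokes_scaling_original_iteration} in the sense of Definition~\ref{def:local_solutions_NSE}. The pressures are given by $\pi^N=\qq[\nabla\cdot F]-c_d\mu\,\qq\sum\theta^N_k\nabla\cdot(\nabla\wt\pi^N_{k,\alpha}\otimes\sigma_{-k,\alpha})$ and $\wt\pi^N_{k,\alpha}=\theta^N_k\qq[(\sigma_{k,\alpha}\cdot\nabla)v^N+\sigma_{k,\alpha}\cdot G]$, as in Subsection~\ref{ss:ito_reformulation}. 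Assumption~\ref{ass:forcing_regularity_iteration} on $G$ is exactly \eqref{eq:almost_Linfty02}. For points with $s_0-1/4<t_0-T$, the extension of $v^N$ by zero (together with $F=G=0$) before the initial time is still a local solution, thanks to the zero initial datum and the extended Brownian motions; this is the covering device referenced by Figure~\ref{fig:cubes_nested}.

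\textbf{Chaining.} Corollary~\ref{cor:SMR_microscopic_estimate}, applied with averaged pressures (as remarked after it), bounds $(\E|v^N(s_0,y_0)|^r)^{1/r}$ by three contributions: the microscopic energy $\|v^N\|_{\underline L^2(N^{-1}Q)}$, the oscillations of $\pi^N$ in $\underline L^2(\underline H^{-1})$ and of $\wt\pi^N$ in $\underline L^2(\underline L^2)$ at scale $N^{-1}$, and the data in $\underline L^p(\underline L^q)$. Since $p_0>2$, the $\underline L^2$-in-time norm of the stochastic pressure is dominated by its $\underline L^{p_0}$ norm. Theorem~\ref{t:control_mesoscopic_energy}, with some fixed $p_0\in(2,p)$, bounds each of these by $R_0\big((\E\sup_I\|v^N\|_{L^2(B)}^r)^{1/r}+\En_Q\big)$.

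\textbf{Main obstacle.} The main step is to bound $\En_Q$ by the data, uniformly in $N$. The $L^p(L^q)$ norms of $F$ and $G$ already appear as data terms, and $\|v^N\|_{L^2(Q)}$ together with $\sup\|v^N\|_{L^2}$ is controlled by \eqref{eq:energy_estimate_vN}, using $L^p(L^q)\subset L^2(L^2)$ on a finite interval (with constants depending on $T$). For the deterministic pressure, $\|\nabla\pi^N\|_{L^2(H^{-1})}\lesssim\|F\|_{L^2}+\|(\nabla\wt\pi^N)\|_{L^2(L^2;\ell^2)}$ by boundedness of $\q$ and $\|\theta^N\|_{\ell^2}=1$, and the latter is $\lesssim\|\nabla v^N\|_{L^2}+\|G\|_{L^2}$. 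The delicate term is $\|(\nabla\wt\pi^N_{k,\alpha})\|_{L^{p_0}(H^{-1};\ell^2)}$, which needs $p_0>2$ time integrability, whereas energy only gives $L^2$. Here I would use the trick \eqref{eq:trick_adding_average_local_pressure}: since $\nabla\cdot\sigma_{k,\alpha}=0$, we have $\nabla\wt\pi^N_{k,\alpha}=\theta^N_k\q[\nabla\cdot(v^N\otimes\sigma_{k,\alpha})+\sigma_{k,\alpha}\cdot G]$. Its $H^{-1}(\T^d;\ell^2)$ norm is then $\lesssim\|v^N\|_{L^2}+\|G\|_{L^2}$ pointwise in time, so the $L^{p_0}$-in-time norm is controlled by $\sup_I\|v^N\|_{L^2}$ and $\|G\|_{L^p(L^2)}$, and hence by \eqref{eq:energy_estimate_vN}. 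The remaining point is that the resulting constant depends on $T$ but not on $N$, $t_0$, or the center, which is permitted by the statement.
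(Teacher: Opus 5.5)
Your argument for the pointwise bound \eqref{eq:almost_Linfty1} is essentially the paper's, and that part is fine. You extend $v^N,F,G$ by zero before the initial time and identify the extended triple, with pressures given by $\qq$, as a local solution on every cylinder $Q_{1/2}(t_*,x_*)$. You then chain Corollary~\ref{cor:SMR_microscopic_estimate} (with averaged pressures) and Theorem~\ref{t:control_mesoscopic_energy}. Finally, \eqref{eq:energy_estimate_vN} combined with $(\sigma_{k,\alpha}\cdot\nabla)v^N=\nabla\cdot(v^N\otimes\sigma_{k,\alpha})$ gives the $L^{p_0}$-in-time control of $\nabla\wt\pi^N_{k,\alpha}$ in $H^{-1}$.

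The gap is in \eqref{eq:almost_Linfty2} for $r_0>r$, which is the only non-trivial case. Your chain ends with $(\E\|F\|^{r_0}_{L^p(I;L^q)})^{1/r_0}$ on the right-hand side. This dominates the $r$-th moment, so you prove a strictly weaker statement. Neither of your two suggested fixes recovers the claim, in which the data enter only through their $r$-th moments while $r_0$ is arbitrary.

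The missing idea is Lenglart's domination inequality, which the paper uses to decouple the moment from the integrability. Put $X_t=\|v^N\|^{r_0}_{L^{r_0}((t_0-T,t)\times\T^d)}$ and $A_t=C\big(\|F\|^{r_0}_{L^p(t_0-T,t;L^q)}+\|G\|^{r_0}_{L^p(t_0-T,t;L^q)}\big)$ with $C$ depending on $C_0,r_0,T$; both are continuous, adapted and nondecreasing.
\begin{itemize}
\item For a stopping time $\tau$, let $V^N$ be the energy solution with data $(\one_{[t_0-T,\tau]}F,\one_{[t_0-T,\tau]}G)$. These data are still progressively measurable and satisfy \eqref{eq:almost_Linfty02}.
\item Uniqueness of energy solutions gives $V^N=v^N$ on $[t_0-T,\tau)$.
\item Fubini together with \eqref{eq:almost_Linfty1} at exponent $r_0$, applied to $V^N$, gives $\E X_\tau\le\E\|V^N\|^{r_0}_{L^{r_0}(I\times\T^d)}\le|I\times\T^d|\sup_{I\times\T^d}\E|V^N|^{r_0}\le\E A_\tau$. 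This holds trivially if $\E A_\tau=\infty$.
\item Lenglart's inequality then yields $\E X_{t_0}^{\gamma}\le c_\gamma\,\E A_{t_0}^{\gamma}$ for every $\gamma\in(0,1)$, and $\gamma=r/r_0$ is exactly \eqref{eq:almost_Linfty2}.
\end{itemize}
This is not cosmetic. In Proposition~\ref{prop:almost_Linfty} the endpoint \eqref{eq:interpolation_mapping_almost_Linfty} is interpolated against the energy bound at a common moment $r$, with $r_0$ taken arbitrarily large.
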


Recall from the discussion below Theorem \ref{t:suboptimal_Lq_finite_time} that the estimate \eqref{eq:almost_Linfty2} is suboptimal from a scaling point of view compared to the regularity of the space \eqref{eq:maximal_regularity_space_associated_LpLq}.
Indeed, the norm of the left-hand side of \eqref{eq:almost_Linfty2} has Sobolev index $\Sob_0 =-\frac{2+d}{r_0}\approx 0$, while the one of the maximal regularity space \eqref{eq:maximal_regularity_space_associated_LpLq} is $\Sob=1-\frac{2}{p}-\frac{d}{q}>0$ by \eqref{eq:almost_Linfty0}. Thus, Theorem \ref{t:almost_Linfty} is almost optimal. 

\smallskip

We now proceed with the proof. One can easily see that Corollary \ref{cor:SMR_microscopic_estimate} and Theorem \ref{t:control_mesoscopic_energy} imply the pointwise bound 
$$
\sup_{ \T^d}(\E|v^N(t_0,\cdot)|^r)^{1/r}\leq C_0  \big(\E\|F\|_{L^p( I;L^q(\T^d))}^r\big)^{1/r}+ C_0 \big(\E\|G\|_{L^p( I;L^q(\T^d))}^r\big)^{1/r}.
$$
An additional argument is needed to allow the supremum over $I$.

\begin{proof}[Proof of Theorem \ref{t:almost_Linfty}]
For simplicity, we consider the case $I=(t_0-1/4,t_0)$ for some $t_0\geq 0$. The general case is analogous. 
We begin by proving \eqref{eq:almost_Linfty1}. To this end, let 
\begin{equation}
\label{eq:definition_Fhat_Ghat}
\wh{F}\stackrel{{\rm def}}{=}\one_{I} F \qquad \text{ and }\qquad 
\wh{G}\stackrel{{\rm def}}{=}\one_{I} G,
\end{equation}
where $\one_{I}$ denotes the zero extension outside $I$.
Clearly, 
$$
\wh{v}^N\stackrel{{\rm def}}{=}\one_{I}v^N
$$ 
is the unique energy solution to \eqref{eq:turbulent_Stokes_scaling_uniform_estimates_modified} on $ J\times \T^d $ where $J=(-1/4,t_0)$. 
Note that, as $\wh{v}^N$ is a solution to \eqref{eq:turbulent_Stokes_scaling_uniform_estimates_modified}, letting 
\begin{align}
\label{eq:definition_p_almost_Linfty_proof}
\wh{\pi}^N
&\stackrel{{\rm def}}{=}\qq \Big[ \nabla \cdot \wh{F}- c_d \mu\, \sum_{k,\alpha} \theta_k^N\nabla\cdot  (\nabla \wt{\Pi}^N_{k,\alpha}\otimes \sigma_{-k,\alpha})\Big],\\
\label{eq:definition_wtp_almost_Linfty_proof}
 \wt{\Pi}^N_{k,\alpha}
&\stackrel{{\rm def}}{=} \theta^{N}_{k}\qq\big[(\sigma_{k,\alpha}\cdot\nabla) \wh{v}^N+ \sigma_{k,\alpha}\cdot \wh{G}\,\big],
\end{align}
it follows from the definition of $\qq$ that (see Subsection \ref{ss:Helmh_div_free}), a.e.\ on $J\times \O$, 
\begin{align*}
\Delta \wh{\pi}^N
&=\nabla^2 : \wh{F}- c_d \mu\, \nabla \cdot \Big[\sum_{k,\alpha} \theta_k^N\nabla\cdot  (\nabla \wt{\Pi}^N_{k,\alpha}\otimes \sigma_{-k,\alpha})\Big],\\
\Delta \wt{\Pi}^N_{k,\alpha}
&=\theta^{N}_{k}\nabla \cdot \big[ (\sigma_{k,\alpha}\cdot\nabla) \wh{v}^N+ \sigma_{k,\alpha}\cdot \wh{G}\,\big],
\end{align*}
in $\D'(\T^d)$.
In particular, for all parabolic cylinders (see Figure \ref{fig:cubes_nested})
$$
Q_*=I_*\times B_*\stackrel{{\rm def}}{=}Q_{1/2}(t_*,x_*)
$$ 
with $(t_*,x_*)\in I \times \T^d$, the process $(\wh{v}^N,\wh{\pi}^N,(\wt{\Pi}^N_{k,\alpha})_{k,\alpha})$ is a local solution to the oscillating stochastic Stokes system \eqref{eq:turbulent_Stokes_scaling_original_iteration} in the sense of Definition \ref{def:local_solutions_NSE}.

\begin{figure}[htbp]
    \centering
    \begin{tikzpicture}[scale=1.2, >=stealth]

        \def\W{4.5}          
        \def\H{4.0}          
        \def\yline{2.0}      
        
        \def\ytoprect{2.7}   
        \def\ybotrect{0.7}   
        
        \def\xleftrect{1.3}  
        \def\xrightrect{3.1} 

        \draw[thick] (0,0) node[left] {\small $-1$} -- (\W,0);
        \draw[thick] (0,\H) -- (\W,\H);
        
        \draw[dashed] (0,0) -- (0,\H);
        \draw[dashed] (\W,0) -- (\W,\H);
        
        \node[left] at (-0.2, \H) {\small $t_0$};
        
        \draw[thick] (0,\yline) node[left] {\small $t_0-1/4$} -- (\W,\yline);
        
        \draw[thick, pattern=north east lines] (\xleftrect,\ybotrect) rectangle (\xrightrect,\ytoprect);
        
        \node at (\xleftrect - 0.3, 1.2) {\small $Q_*$};
        
        \fill[blue] (2.2, \ytoprect) circle (2pt) node[above=2pt, black] {\small $(t_*,x_*)$};
        
        \draw[<->, thick] (\xrightrect + 0.3, \ybotrect) -- (\xrightrect + 0.3, \ytoprect) node[pos=0.25, right] {\small $I_*$};

        \draw[<->, thick] (\W - 0.6, \yline) -- (\W - 0.6, \H) node[midway, left] {\small $I$};
        
        \draw[<->, thick] (\W + 0.5, 0) -- (\W + 0.5, \H) node[midway, right] {\small $J=(-1/4,t_0)$};
        
        \draw[<->, thick] (\xleftrect, 0.5) -- (\xrightrect, 0.5) node[midway, below] {\small $\mathbb{T}^d$};

    \end{tikzpicture}
    \caption{Construction in the proof of the estimate \eqref{eq:almost_Linfty1} where $I=(t_0-1/4,t_0)$.}
    \label{fig:cubes_nested}
\end{figure}

Applying Corollary \ref{cor:SMR_microscopic_estimate} to the local solution 
$$
\Big( \wh{v}^N ,\, \wh{\pi}^N -\fint_{N^{-1} B_*} \wh{\pi}^N , \,\Big(\wt{\Pi}_{k,\alpha}^N -\fint_{N^{-1} B_*} \wt{\Pi}_{k,\alpha}^N\Big)_{k,\alpha} \Big)
$$
on $N^{-1}Q_*\subseteq Q_*$ yields that 
\begin{align*}
\big(\E|\wh{v}^N(t_*,x_*)|^r\big)^{1/r}
&\leq C_0
 \big(\E\| \wh{v}^N\|_{\underline{L}^2(N^{-1}Q_*)}^r\big)^{1/r}\\
 \nonumber
 &+C_0  \Big(\E\Big\|\wh{\pi}^N -\fint_{N^{-1} B_* } \wh{\pi}^N \Big\|_{\underline{L}^2(N^{-1}I_*;\underline{H}^{-1}(N^{-1}B_* ))}^r\Big)^{1/r} \\
\nonumber 
 &+C_0\Big(\E\Big\|\Big(\wt{\Pi}^N_{k,\alpha}-\fint_{N^{-1} B_* } \wt{\Pi}_{k,\alpha}^N\Big)_{k,\alpha}\Big\|_{\underline{L}^2(N^{-1}Q_* ;\ell^2)}^r \Big)^{1/r}\\
 \nonumber
& +C_0 \big(\E\|F\|_{L^p(I;L^q(\T^d))}^r\big)^{1/r}+C_0 \big(\E\|G\|^r_{L^p(I;L^q(\T^d))}\big)^{1/r},
\end{align*}
where $C_0$ is independent of $N, t_*,x_*,F$ and $G$. 
Combining the above with the control of the energy at the microscopic scale in Theorem \ref{t:control_mesoscopic_energy}, we have  
\begin{align}
\nonumber
\big(\E|\wh{v}^N(t_*,x_*)|^r\big)^{1/r}
&\leq C_1
\Big(\E\sup_{I_* } \|\wh{v}^{N}\|_{L^2(B_*)}^{r}\Big)^{1/r}\\
\nonumber
&+ C_1\big( \E \|\nabla \wh{\pi}^N\|_{L^2(I_*;H^{-1}(B_* ))}^r\big)^{1/r}\\
\nonumber
&+C_1\big(\E \|( \nabla \wt{\Pi}_{k,\alpha}^N )_{k,\alpha}\|_{L^{p}(I_* ; H^{-1}(B_* ;\ell^2))}^r \big)^{1/r}\\
\label{eq:proof_almost_Linfty_bound}
& +C_1 \big(\E\|F\|_{L^p( I;L^q(\T^d))}^r\big)^{1/r}+C_0 \big(\E\|G\|^r_{L^p( I;L^q(\T^d))}\big)^{1/r},
\end{align}
where $C_1$ is independent of $N, t_*,x_*,F$ and $G$. 

The key point in the estimate \eqref{eq:proof_almost_Linfty_bound} is that all the terms on the right-hand side of \eqref{eq:proof_almost_Linfty_bound} are evaluated on the large scale $Q_*$ (of order $1$) and have the regularity content of an energy, see \eqref{eq:energy_estimate_vN}. Thus, at this point, these terms can be estimated directly by using \eqref{eq:energy_estimate_vN}.  Indeed, from the latter, there exists $C_2>0$ independent of $N$, $F$ and $G$ for which  
\begin{align}
\label{eq:energy_estimate_almost_Linfty_proof}
\Big(\E\sup_{J} \|\wh{v}^{N}\|_{L^2(\T^d)}^{r}\Big)^{1/r}
&+
\big(\E \|\nabla \wh{v}^{N}\|_{L^2(J \times \T^d)}^{r}\big)^{1/r}\\
\nonumber
&\leq 
 C_2 \big(\E \|F\|_{L^2(I\times \T^d)}^r \big)^{1/r}
+C_2 \big(\E \|G\|_{L^2(I\times \T^d)}^r \big)^{1/r},
\end{align}
due to \eqref{eq:definition_Fhat_Ghat}.
From \eqref{eq:definition_wtp_almost_Linfty_proof}, the fact that $\nabla \qq:H^{-1}(\T^d;\R^d)\to H^{-1}(\T^d;\R^d)$ is bounded, and the identity $(\sigma_{k,\alpha}\cdot \nabla) \wh{v}^N = \nabla\cdot ( \wh{v}^N\otimes \sigma_{k,\alpha})$, it follows that 
\begin{align}
\nonumber
\big(\E \|( \nabla \wt{\Pi}_{k,\alpha}^N )_{k,\alpha}\|_{L^{p}(I_*; H^{-1}(B_*;\ell^2))}^r \big)^{1/r}
&\lesssim \big(\E \|(\theta^N_k [ \wh{v}^N\otimes \sigma_{k,\alpha}] )_{k,\alpha}\|_{L^{\infty}(I_*;L^2(\T^d;\ell^2))}^r \big)^{1/r}\\
\nonumber
&+\big(\E \|(\theta^N_k \sigma_{k,\alpha} \cdot \wh{G})_{k,\alpha}\|_{L^{p}(I_*;L^2( \T^d;\ell^2))}^r \big)^{1/r}\\
\nonumber
&  
\stackrel{(i)}{\lesssim} \Big(\E\sup_{I_*} \| \wh{v}^N \|_{L^2(\T^d)}^r \Big)^{1/r}
+\big(\E\|G\|^r_{L^p(I;L^q(\T^d))}\big)^{1/r}\\
&
\stackrel{(ii)}{\lesssim} \big(\E\|F\|_{L^p(I;L^q(\T^d))}^r\big)^{1/r}+ \big(\E\|G\|^r_{L^p(I;L^q(\T^d))}\big)^{1/r}
\label{eq:estimate_pressure_almost_Linfty_proof1}
\end{align}
where in $(i)$ we used $\|\theta^N\|_{\ell^2}=1$, and in $(ii)$ the energy estimate \eqref{eq:energy_estimate_almost_Linfty_proof}. Similarly, from the boundedness of $\nabla \qq$ on $L^2(\T^d;\R^d)$ and \eqref{eq:energy_estimate_almost_Linfty_proof}, 
\begin{equation*}
\big(\E \|( \nabla \wt{\Pi}_{k,\alpha}^N )_{k,\alpha}\|_{L^{2}(I_*\times \T^d;\ell^2)}^r \big)^{1/r}
\lesssim \big(\E\|F\|_{L^p(I;L^q(\T^d))}^r\big)^{1/r}+ \big(\E\|G\|^r_{L^p(I;L^q(\T^d))}\big)^{1/r}.
\end{equation*}
Hence, from \eqref{eq:definition_p_almost_Linfty_proof} and the above, we infer that
\begin{align}
\label{eq:estimate_pressure_almost_Linfty_proof3}
&\big( \E \|\nabla \wh{\pi}^N\|_{L^2(I_*;H^{-1}(\T^d))}^r\big)^{1/r}\\
\nonumber
&\lesssim  \big(\E\|F\|_{L^p(I;L^q(\T^d))}^r\big)^{1/r}
+ \Big(\E\Big\|\sum_{k,\alpha}\theta^N_k \, \nabla \wt{\Pi}^N_{k,\alpha}\otimes \sigma_{-k,\alpha} \Big\|_{L^2(I_*\times \T^d)}^r\Big)^{1/r}\\
\nonumber
&\lesssim  \big(\E\|F\|_{L^p(I;L^q(\T^d))}^r\big)^{1/r}+
\big(\E \|( \nabla \wt{\Pi}_{k,\alpha}^N )_{k,\alpha}\|_{L^{2}(I_* \times \T^d;\ell^2)}^r \big)^{1/r}\\
\nonumber
&\lesssim \big(\E\|F\|_{L^p(I;L^q(\T^d))}^r\big)^{1/r}+ \big(\E\|G\|^r_{L^p(I;L^q(\T^d))}\big)^{1/r}.
\end{align} 
Since $B_*\subseteq \T^d$, the estimate \eqref{eq:almost_Linfty1} follows by using in \eqref{eq:proof_almost_Linfty_bound} the bounds \eqref{eq:energy_estimate_almost_Linfty_proof}, \eqref{eq:estimate_pressure_almost_Linfty_proof1}, \eqref{eq:estimate_pressure_almost_Linfty_proof3}, and the uniformity of the constants with respect to $(t_*,x_*)\in I\times \T^d$.

\smallskip

Finally, we prove \eqref{eq:almost_Linfty2}. First, by H\"older's inequality, it suffices to show the claim for $r_0\geq r$. In the latter situation, we employ Lenglart's domination, see e.g., \cite{SharpLeng,Lenglart}. In particular, let $\tau:\O\to [t_0-1/4,t_0]$ be a stopping time, and let $v^N$ be the unique energy solution to \eqref{eq:turbulent_Stokes_scaling_uniform_estimates_modified}. Consider $V^N$, the unique energy solution to \eqref{eq:turbulent_Stokes_scaling_uniform_estimates_modified} on $I\times \T^d$ with $(F,G)$ replaced by $(\one_{[t_0-1/4,\tau]}F,\one_{[t_0-1/4,\tau]}G)$. From the uniqueness of energy solutions, it follows that $v^N=V^N$ on $(t_0-1/4,\tau)\times \O$ and therefore, 
\begin{align}
\label{eq:Lengart_domination_application}
&\E\|v^N\|_{L^{r_0}((t_0-1/4,\tau)\times \T^d)}^{r_0}
\leq \E\|V^N\|_{L^{r_0}(I \times \T^d)}^{r_0}\\
&\nonumber
 \lesssim_T \sup_{I \times \T^d} \E |V^N|^{r_0}
\lesssim \E\big(\|F\|_{L^p( (t_0-1/4,\tau);L^q(\T^d))}^{r_0}+\|G\|_{L^p((t_0-1/4,\tau);L^q(\T^d))}^{r_0}\big),
\end{align}
where we used Fubini's theorem and, in the last step, the just-proved estimate \eqref{eq:almost_Linfty1}.

As the constant in \eqref{eq:Lengart_domination_application} is independent of the stopping time $\tau$, a standard application of Lenglart's domination \cite[Theorem 1.1]{SharpLeng} yields the claim for all $ r\in (1,r_0]$.
\end{proof}

Interpolating Theorem \ref{t:almost_Linfty} with the energy inequality \eqref{eq:energy_estimate_vN}, we obtain the following version of Theorem \ref{t:suboptimal_Lq_Rpositive} for energy solutions to \eqref{eq:turbulent_Stokes_scaling_uniform_estimates_modified}.

\begin{proposition}[$L^p(L^q)$-estimates uniform in the oscillation -- Modified Stokes system]
\label{prop:almost_Linfty}
Let $I\subset (-1/4,\infty)$ be an interval of length $T<\infty$.
Let $p,q\in (2,\infty)$ be such that the Sobolev index $\Sob=1-\frac{2}{p}-\frac{d}{q}$ satisfies  \eqref{eq:assumption_suboptimal_Lq_finite_time1}, and set $\qmax\stackrel{{\rm def}}{=}-\frac{d^2}{d-2}\frac{1}{\Sob}$, 
with the convention $\qmax=\infty$ if $d=2$.
Then, for all $p_0\in (2,\infty)$ and $q_0\in (2,\qmax)$ for which \eqref{eq:suboptimal_Lq_finite_time_suboptimality_condition} holds, and $r\in (1,\infty)$, there exists a constant $C_0>0$ such that, for all $N$, and
progressively measurable processes $F,G$
satisfying \eqref{eq:almost_Linfty01}-\eqref{eq:almost_Linfty02}, the unique energy solution $v^N$ to \eqref{eq:turbulent_Stokes_scaling_uniform_estimates_modified} satisfies
\begin{align}
\label{eq:almost_Linfty_prop1}
\big(\E\|v^N\|_{L^{p_0}(I;L^{q_0} (\T^d))}^{r} \big)^{1/r}
&\leq 
 C_0  \big(\E\|F\|_{L^p(I;L^q(\T^d))}^r\big)^{1/r}\\
 \nonumber
& +C_0 \big(\E\|G\|_{L^p(I;L^q(\T^d))}^r\big)^{1/r}.
\end{align}
\end{proposition}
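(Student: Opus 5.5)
The plan is to interpolate between two endpoint estimates for the same linear map $(F,G)\mapsto v^N$ and read off the exponents. Fix the data class $L^r_{\Progress}(\O;L^p(I;L^q))$ with $\Sob=1-\frac2p-\frac dq\in[-\frac d2,0)$. The first endpoint is the energy bound \eqref{eq:energy_estimate_vN}. Combined with the Sobolev embedding $H^1(\T^d)\embed L^{2d/(d-2)}(\T^d)$ (or every $L^s$, $s<\infty$, if $d=2$), it controls $v^N$ in $L^r(\O;L^\infty(I;L^2)\cap L^2(I;L^{2d/(d-2)}))$ by $\|F\|_{L^2(I\times\T^d)}+\|G\|_{L^2(I\times\T^d)}$. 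Since $I$ is bounded and $p,q>2$, this is in turn bounded by the $L^{p}(I;L^{q})$ norms. The second endpoint is Theorem \ref{t:almost_Linfty}, estimate \eqref{eq:almost_Linfty2}. It requires data in some $L^{\wh p}(I;L^{\wh q})$ with $\frac2{\wh p}+\frac d{\wh q}<1$, and it gives $v^N\in L^r(\O;L^{r_0}(I\times\T^d))$ for any $r_0<\infty$.

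The data space at the second endpoint is not the given one, so I would interpolate both the data and the solution spaces. The map $(F,G)\mapsto v^N$ is linear and $N$-uniformly bounded
\[
L^r(\O;L^{2}(I;L^{2}))\to L^r(\O;L^{2}(I;L^{2d/(d-2)})),\qquad L^r(\O;L^{\wh p}(I;L^{\wh q}))\to L^r(\O;L^{r_0}(I;L^{r_0})).
\]
The constraints \eqref{eq:almost_Linfty02}, namely $\Tr G=0$ and $\nabla\cdot G=0$, define closed complemented subspaces: they are the range of a Fourier multiplier projection that is bounded on all $L^{p}(L^q)$. Hence complex interpolation is still available. I choose $\vartheta=1+\frac2d\Sob\in[0,1)$ together with $(\wh p,\wh q)$ close to the critical line $\frac2{\wh p}+\frac d{\wh q}=1$, so that $[L^2(L^2),L^{\wh p}(L^{\wh q})]_\vartheta=L^p(L^q)$. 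The Sobolev index of the interpolated data space is $(1-\vartheta)(-\frac d2)+\vartheta\cdot\wh{\Sob}$, where $\wh\Sob\downarrow0$; this matches $\Sob-1$ for the data, i.e.\ index $\Sob$ for the solution.

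On the solution side, $[L^2(L^{2d/(d-2)}),L^{r_0}(L^{r_0})]_\vartheta=L^{p_1}(L^{q_1})$ with $\frac1{p_1}=\frac{1-\vartheta}2+\frac\vartheta{r_0}$ and $\frac1{q_1}=\frac{(1-\vartheta)(d-2)}{2d}+\frac\vartheta{r_0}$. As $r_0\to\infty$ and $\wh\Sob\to0$ this gives $-\frac2{p_1}-\frac d{q_1}\uparrow\Sob$ and $q_1\uparrow\qmax$. Given $(p_0,q_0)$ satisfying \eqref{eq:suboptimal_Lq_finite_time_suboptimality_condition} with $q_0<\qmax$, I pick $r_0$ large and $(\wh p,\wh q)$ close enough to criticality that $q_1\ge q_0$ and $-\frac2{p_1}-\frac d{q_1}>\Sob_0$.

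To reach the target $L^{p_0}(L^{q_0})$ I would also use the sup-in-time part of the energy endpoint, interpolating $L^\infty(L^2)$ with $L^{r_0}(L^{r_0})$ and mixing via Hölder, to move time integrability up. Any remaining gap is absorbed by Hölder on the bounded interval $I\times\T^d$, since every slack is strict. The assumption $p<d/(-\Sob)$ is what makes these choices compatible: $p$ itself must lie in the interpolation range between $2$ and $\wh p$ for the prescribed $\vartheta$, i.e.\ $\frac1p=\frac{1-\vartheta}2+\frac\vartheta{\wh p}>\frac{1-\vartheta}{2}=-\frac{\Sob}{d}$.

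The main obstacle is exactly this parameter bookkeeping. With $\vartheta$ fixed by the spatial index, one has to find admissible $(\wh p,\wh q)$ that reproduce both $p$ and $q$ simultaneously while staying in the regime $\frac2{\wh p}+\frac d{\wh q}<1$ of Theorem \ref{t:almost_Linfty}, and check that the resulting time/space exponents dominate $(p_0,q_0)$. The vector-valued interpolation of $L^r(\O;\cdot)$-spaces restricted to progressively measurable and constrained processes is routine: progressive measurability is preserved by the conditional-expectation projection, and the constraint subspaces are complemented as above.
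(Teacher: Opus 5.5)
Your proposal is correct and takes essentially the same route as the paper. You interpolate between the energy bound \eqref{eq:energy_estimate_vN} and Theorem \ref{t:almost_Linfty}, handle the constraints via the progressive conditional expectation and a projection onto trace-free, divergence-free $G$, obtain near-endpoint bounds with exponents close to $(\infty,-d/\Sob)$ and $(-d/\Sob,\qmax)$, and combine them by Hölder; applying $H^1\embed L^{2d/(d-2)}$ before interpolating rather than after gives the same exponents. One small correction: $\vartheta$ must be taken strictly below $1+\frac{2}{d}\Sob$, namely $\vartheta=(\Sob+\frac d2)/(\wh\Sob+\frac d2)$ with $\wh\Sob>0$ small, because at $\vartheta=1+\frac2d\Sob$ exactly the exponents $(\wh p,\wh q)$ lie on the line $\frac{2}{\wh p}+\frac{d}{\wh q}=1$, which Theorem \ref{t:almost_Linfty} excludes; your later limit $\wh\Sob\downarrow0$ indicates this is what you intended.
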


Note that $C_0$ in \eqref{eq:almost_Linfty_prop1} depends on the interval $I$ only through its length $T$.

\begin{proof}
We begin by noticing that the case $\Sob=-\frac{d}{2}$ follows trivially from \eqref{eq:energy_estimate_vN}. Hence, below we assume $\Sob>-\frac{d}{2}$. 
Let $[\cdot,\cdot]_\vartheta$ be the complex interpolation functor, see e.g., \cite{BeLo,InterpolationLunardi} and \cite[Appendix C]{Analysis1}. We recall that, for any couple of compatible Banach spaces $(X_0,X_1)$ (i.e., there exists a Hausdorff topological vector space $V$ such that $X_0\embed V$ and $X_1\embed V$), $r,q_0,q_1\in (1,\infty)$ and $\vartheta\in (0,1)$, it holds that  
\begin{equation}
\label{eq:interpolation_Banach_spaces_progress}
[L^r_{\Progress}(\O;L^{q_0}(I;X_0)),L^r_{\Progress}(\O;L^{q_1}(I;X_1))]_{\vartheta}=
L^r_{\Progress}(\O;L^{q}(I;[X_0,X_1]_\vartheta)),
\end{equation}
where $\frac{1}{q}=\frac{1-\vartheta}{q_0}+\frac{\vartheta}{q_1}$.
It is worth noticing that the identity \eqref{eq:interpolation_Banach_spaces_progress} does not follow from standard interpolation of Bochner spaces (see e.g., \cite[Theorem 2.2.6]{Analysis1}) due to the progressive measurability constraint which mixes the outer and the inner measure spaces in $L^r(\O;L^{q_i}(I;X_i))$.
To show \eqref{eq:interpolation_Banach_spaces_progress}, we employ the retraction-coretraction method, see e.g., either \cite[Subsection I.2.3]{Am} or \cite[Subsection 1.2(b)]{DK13_mixed_order}. To this end, let $\E_\Progress \stackrel{{\rm def}}{=}\E[\cdot| \Progress]$ be the conditional expectation on the progressive $\sigma$-algebra on $I$. 
By \cite[Corollary 2.9]{LvN19}, it follows that 
$$
\E_{\Progress}: L^{r}(\O;L^{q_i}(I;X_i))\to L^{r}_{\Progress}(\O;L^{q_i}(I;X_i))\  \text{ boundedly}, 
$$
and 
$$
\E_{\Progress}(L^{r}(\O;L^{q_i}(I;X_i)))=L^{r}_{\Progress}(\O;L^{q_i}(I;X_i)) \ \ \text{ for all } \ i\in \{0,1\},
$$
cf.\ \cite[Definition 1.48]{DK13_mixed_order}. 
Moreover, for all $i\in \{0,1\}$, $\E_\Progress$ is a retraction (see \cite[Definition 1.47]{DK13_mixed_order}) with coretraction the identity map $e(f)=f$. 
Hence, \eqref{eq:interpolation_Banach_spaces_progress} is a consequence of the above observations, \cite[Lemma 1.51]{DK13_mixed_order} and \cite[Theorem 2.2.6]{Analysis1}.

The proof of Proposition \ref{prop:almost_Linfty} now follows by interpolating the solution operator 
\begin{equation*}
(F,G)\mapsto \LinS^N(F, G)\stackrel{{\rm def}}{=} v^N
\end{equation*} 
associated to \eqref{eq:turbulent_Stokes_scaling_uniform_estimates_modified} where the endpoints are given by the bounds in \eqref{eq:energy_estimate_vN} and the result of Theorem \ref{t:almost_Linfty}. 
However, to ensure the restrictions \eqref{eq:almost_Linfty02} on $G$, we compose the solution operator with the following matrix-valued extension of the Helmholtz projection (see Subsection \ref{ss:Helmh_div_free})
\begin{align*}
\wh{\mathbb{G}_{\T^d}(G)}(k)\stackrel{{\rm def}}{=} \wh{G}(k)P_k-\frac{\Tr(\wh{G}(k)P_k)}{d-1}P_k \ \ \text{ and }
\ \ \wh{\mathbb{G}_{\T^d}(G)}(0)\stackrel{{\rm def}}{=}\wh{G}(0)-\frac{\Tr(\wh{G}(0))}{d} \mathrm{Id},
\end{align*}
where $k\in \Z^d_0$, $P_k=\mathrm{Id}-(k\otimes k)/|k|^2$, $G\in \D'(\T^d;\R^{d\times d})$  and $\widehat{G}(k) =(\langle e_k ,G_{i,j}\rangle)_{i,j=1}^d $. 
From standard Fourier multiplier techniques, $\mathbb G_{\T^d}$ is bounded on $L^{\wh{q}}(\T^d;\R^{d\times d})$ for every $\wh{q}\in(1,\infty)$. Moreover, one can check that $\mathbb G_{\T^d}$ is a projection onto the space of
trace-free and divergence-free matrix-valued fields. In particular,
\begin{equation}
\label{eq:projection_G_equality_proof_interpolation}
\mathbb{G}_{\T^d}(G)=G  \text{ whenever $G$ satisfies \eqref{eq:almost_Linfty02}}.
\end{equation}
Therefore, in the following, we interpolate the following operator
\begin{equation}
\label{eq:solution_operator_UN}
(F,G)\mapsto \Lin^N(F,G) \stackrel{{\rm def}}{=}\LinS^N(F,\mathbb{G}_{\T^d}(G)).
\end{equation}
To this end, recall that $\Sob=1-\frac{2}{p}-\frac{d}{q}\in (-\frac{d}{2},0)$. Let 
$
\Sob_1=-\frac{d}{2}
$
and fix 
$
\Sob_2>0
$.
Clearly, 
\begin{equation}
\label{eq:vartheta_choice_interpolation}
\Sob=(1-\vartheta)\Sob_1 + \vartheta \Sob_2\ \  \text{ where }\ \  
\vartheta=\frac{\Sob-\Sob_1}{\Sob_2-\Sob_1}<1+\frac{2}{d}\,\Sob\in (0,1).
\end{equation}
Moreover, $\vartheta\to 1+\frac{2}{d}\,\Sob$ as $\Sob_2\to 0$.  
Let $p_2,q_2 \in (2,\infty)$ be defined as 
\begin{equation}
\label{eq:choice_p1q1_interpolation_finite_interval}
\frac{1-\vartheta}{2}+\frac{\vartheta}{p_2}=\frac{1}{p} \qquad \text{ and }\qquad 
\frac{1-\vartheta}{2}+\frac{\vartheta}{q_2}=\frac{1}{q}.
\end{equation}
Note that the existence of $p_2,q_2\in (2,\infty)$ for which the above conditions hold is equivalent to $p,q<\frac{d}{-\Sob}$ provided $\Sob_2$ is sufficiently small depending only $p,q$ and $d$. Note that $q<\frac{d}{-\Sob}$ follows automatically from $p>2$.
From the choice of the parameters, it follows that
$$
\frac{2}{p_2}+\frac{d}{q_2}=1-\Sob_2<1.
$$
In particular, we can apply Theorem \ref{t:almost_Linfty} with $(p,q)$ replaced by $(p_2,q_2)$. In particular, the solution operator $\Lin^N$ defined in \eqref{eq:solution_operator_UN} defines a bounded linear operator:
\begin{equation}
\label{eq:interpolation_mapping_almost_Linfty}
[L^{r}_{\Progress}(\O;L^{p_2}(I;L^{q_2}(\T^d;\R^{d\times d})))]^2\to L^{r}_{\Progress}(\O;L^{r_0}(I\times \T^d;\R^d)) \text{ for all }r_0>r.
\end{equation}
Moreover, \eqref{eq:energy_estimate_vN} implies that $\Lin^N$ has the following mapping property: 
\begin{align}
\label{eq:interpolation_mapping_energy}
[L^{r}_{\Progress}(\O;L^{2}(I \times \T^d;\R^{d\times d}))]^2
&\to L^{r} (\O;L^\infty(I;L^{2}(\T^d;\R^d)))\\
\nonumber
&\qquad \cap  L^{r} (\O;L^2(I;H^{1}(\T^d;\R^d))).
\end{align}
Now, interpolating \eqref{eq:interpolation_mapping_almost_Linfty} with the first output space in \eqref{eq:interpolation_mapping_energy}, and using \eqref{eq:interpolation_Banach_spaces_progress} and \eqref{eq:choice_p1q1_interpolation_finite_interval}, we obtain 
\begin{equation}
\label{eq:endpoint_1_UN_interpolation}
\Lin^N :
[L^{r}_{\Progress}(\O;L^{p}(I;L^{q}(\T^d;\R^{d\times d})))]^2
\to 
L^r(\O;L^{\overline{p}_{1}}(I;L^{\overline{q}_{1}}(\T^d;\R^d))),
\end{equation}
with
$$
\frac{\vartheta}{r_0}=\frac{1}{\overline{p}_{1}}\qquad \text{ and }\qquad 
\frac{1-\vartheta}{2}+\frac{\vartheta}{r_0}=\frac{1}{\overline{q}_{1}},
$$
and where $r_0\in (r,\infty)$ and $\vartheta<1+ \frac{2}{d}\,\Sob$ are arbitrary, because $\Sob_2>0$ in the present case. Thus, 
in the range space of $\Lin^N$ in \eqref{eq:endpoint_1_UN_interpolation}, we can choose any 
\begin{equation*}
\overline{p}_{1}<\pmax\stackrel{{\rm def}}{=}\infty
\qquad \text{ and }\qquad \overline{q}_{1}<\qmin\stackrel{{\rm def}}{=}-\frac{d}{\Sob}.
\end{equation*}
Note that $-2/\pmax-d/\qmin=\Sob$, and thus the result  \eqref{eq:endpoint_1_UN_interpolation} is almost optimal.

\smallskip

Next, interpolating \eqref{eq:interpolation_mapping_almost_Linfty} with the second output space in \eqref{eq:interpolation_mapping_energy} and Sobolev embeddings (recall $d\geq 2$), we have 
\begin{align}
\label{eq:endpoint_2_UN_interpolation}
\Lin^N :
[L^{r}_{\Progress}(\O;L^{p}(I;L^{q}(\T^d;\R^{d\times d})))]^2
&\to 
L^r(\O;L^{\overline{p}_{2}}(I;H^{1-\vartheta,\overline{p}_{2}}(\T^d)))\\
\nonumber
&\embed 
L^r(\O;L^{\overline{p}_{2}}(I;L^{\overline{q}_2}(\T^d;\R^d)))
\end{align}
where 
$$
\frac{1-\vartheta}{2}+\frac{\vartheta}{r_0}=\frac{1}{\overline{p}_{2}}\qquad \text{ and }\qquad 
1-\vartheta-\frac{d}{\overline{p}_{2}}=-\frac{d}{\overline{q}_{2}}.
$$
As above, from the arbitrariness of $r_0\in (r,\infty)$ and $\Sob_2>0$, it follows in \eqref{eq:endpoint_2_UN_interpolation} that in the range space of $\Lin^N$, we can choose any 
\begin{equation*}
\overline{p}_{2}<\pmin\stackrel{{\rm def}}{=}-\frac{d}{\Sob}\qquad \text{ and }\qquad 
\overline{q}_{2}<\qmax=-\frac{d^2}{(d-2)\Sob}.
\end{equation*}
Note that $-2/\pmin-d/\qmax=\Sob$, and as for \eqref{eq:endpoint_1_UN_interpolation}, the statement \eqref{eq:endpoint_2_UN_interpolation} with the above choice of the parameters is almost optimal.

\smallskip

Now, one can check that if 
\begin{equation}
\label{eq:p0q0choice_paramter_proof_interpolation_last_step}
p_0\in (\pmin,\pmax) \qquad \text{ and }\qquad q_0\in (\qmin,\qmax)
\end{equation}
satisfy the condition $-\frac{2}{p_0}-\frac{d}{q_0}<\Sob$ as assumed in \eqref{eq:suboptimal_Lq_finite_time_suboptimality_condition}, the claimed estimate in Proposition \ref{prop:almost_Linfty} follows by interpolating once more the (almost) endpoints \eqref{eq:endpoint_1_UN_interpolation} and \eqref{eq:endpoint_2_UN_interpolation}. Moreover, in the cases where $p_0$ and $q_0$ satisfy  $-\frac{2}{p_0}-\frac{d}{q_0}<\Sob$ and either 
$$
\big[ p_0\in (2,\pmin] \ \  \text{ and }\ \  q_0\in (\qmin,\qmax)\big]
\ \ \  \text{ or }\ \  \
\big[ p_0\in (\pmin,\pmax) \ \  \text{ and }\ \  q_0\in (2,\qmin]\big],
$$
then it follows from the case \eqref{eq:p0q0choice_paramter_proof_interpolation_last_step} and H\"older's inequality.
Finally, the case $p_0\leq \pmin$ and $q_0\leq \qmin$ follows trivially from the above cases.

The estimate \eqref{eq:almost_Linfty_prop1} follows by the above interpolation argument and \eqref{eq:projection_G_equality_proof_interpolation}.
\end{proof}

We finally prove Theorem \ref{t:suboptimal_Lq_finite_time}.

\begin{proof}[Proof of Theorem \ref{t:suboptimal_Lq_finite_time}]
Let $w^N$ be the unique energy solution to \eqref{eq:turbulent_Stokes_scaling_uniform_estimates}. 
Let 
$$
z(t,x)\stackrel{{\rm def}}{=}[e^{(1+\mu)\Delta t} w_0](x) \ \ \text{ for }t\in I.
$$
Note that
\begin{equation}
\label{eq:G_smoothness1}
\|z\|_{L^\infty(I;B^{1-2/p}_{q,p}(\T^d))\cap L^p(I;H^{1,q}(\T^d))}
\lesssim_{p,q} \|w_0\|_{B^{1-2/p}_{q,p}(\T^d)}.
\end{equation}
Moreover, it is easy to see that $v^N = w^N- z$ solves \eqref{eq:turbulent_Stokes_scaling_uniform_estimates_modified} with 
\begin{equation}
\label{eq:reduction_to_initial_data_zero}
G(t,x)= [\nabla z(t,x)]^\top \ \text{ for }(t,x)\in I\times \T^d.
\end{equation}
Moreover, as $\nabla \cdot w_0 =0$ in $\D'(\T^d)$ by assumption, it holds that 
$
\nabla \cdot z=0$ on $ I\times\T^d$. From the latter, it readily follows that 
$$
\Tr(G)=0 \text{ a.e.\ on }\T^d, \quad \text{ and }\quad \nabla \cdot G =0 \text{ in }\D'(\T^d).
$$
Hence, the assumptions of Proposition \ref{prop:almost_Linfty} are satisfied, and we have the bound \eqref{eq:almost_Linfty_prop1} for $v^N$. It remains to show that \eqref{eq:G_smoothness1} implies an analogous bound for $z$. To see this, note that, by interpolation,
\begin{align}
\label{eq:G_smoothness2}
L^\infty(I;B^{1-2/p}_{q,p}(\T^d))\cap L^p(I;H^{1,q}(\T^d)) 
\embed 
L^{p_1}(I;H^{s_\vartheta,q}(\T^d)),
\end{align}
where $\vartheta\in (0,1)$, $s_\vartheta=(1-\vartheta)(1-\frac{2}{p})+\vartheta$ and $
\frac{1}{p_1}=\frac{\vartheta}{p}$.
Clearly, the Sobolev index of $L^{p_1}(I;H^{s_\vartheta,q}(\T^d))$ is given by  $-\frac{2}{p_1}+s_\vartheta-\frac{d}{q}=1-\frac{2}{p}-\frac{d}{q}=\Sob$. Thus, it is routine to check that, by Sobolev embeddings, with an appropriate choice of $\vartheta\in (0,1)$, the space on the left-hand side of \eqref{eq:G_smoothness2} embeds in $L^{p_0}(I;L^{q_0}(\T^d))$ provided \eqref{eq:suboptimal_Lq_finite_time_suboptimality_condition} holds.
\end{proof}

\subsection{Uniform $L^p(L^q)$-estimates on the half-line -- Proof of Theorem \ref{t:suboptimal_Lq_Rpositive}}
\label{ss:suboptimal_Lq_Rpositive}
The proof of Theorem \ref{t:suboptimal_Lq_Rpositive} follows from its finite-time version in Theorem \ref{t:suboptimal_Lq_finite_time}  and a standard localization strategy; see, for instance, \cite{Kry_book}.

\begin{proof}[Proof of Theorem \ref{t:suboptimal_Lq_Rpositive}]
Arguing as in the proof of Theorem \ref{t:suboptimal_Lq_finite_time}, it suffices to establish an estimate for the energy solution to \eqref{eq:turbulent_Stokes_scaling_uniform_estimates_modified} with $G$ as in \eqref{eq:reduction_to_initial_data_zero}. Indeed, recall that $\int_{\T^d} w_0=0$ and $w_0\in B^{1-2/p}_{q,p}(\T^d)\embed L^q(\T^d)$ a.s.\ by assumption (as $p>2$). 
The invertibility of the Laplacian on mean-zero functions in $L^m(\T^d)$ (where $1 < m < \infty$) ensures that the bounds and embeddings from \eqref{eq:G_smoothness1} and \eqref{eq:G_smoothness2} remain valid even over the extended interval $I=\R_+$.

\smallskip

Let $v^N$ be the unique energy solution to \eqref{eq:turbulent_Stokes_scaling_uniform_estimates_modified} with $G$ as in \eqref{eq:reduction_to_initial_data_zero} and $t_0=0$. Since    
$
\int_{\T^d} \sigma_{k,\alpha}\cdot G=\int_{\T^d} (\sigma_{k,\alpha}\cdot\nabla) z = 0
$
as $\nabla \cdot \sigma_{k,\alpha}=0$,
we have
\begin{equation}
\label{eq:vN_has_zero_mean_proof}
\int_{\T^d}v^N=0\ \text{ a.e.\ on } \ \R_+\times \O.
\end{equation} 
Next, let $p_0,q_0$ be as in Theorem \ref{t:suboptimal_Lq_Rpositive}. From Lenglart's domination, it suffices to prove that there exists a constant $C_0>0$ such that, for all $N\geq 1$, $F$ and $w_0$ as above, and any stopping time $\tau:\O\to [0,\infty]$, it holds that  
\begin{align}
\nonumber
\big(\E\|v^N\|_{L^{p_0}(0,\tau;L^{q_0} (\T^d))}^{p_0} \big)^{1/p_0}
&
\leq C_0  \big(\E\|F\|_{L^p(0,\tau;L^q(\T^d))}^{p_0}\big)^{1/{p_0}}+ C_0  \big(\E\|G\|_{L^p(0,\tau;L^q(\T^d))}^{p_0}\big)^{1/{p_0}}\\
 &+ C_0\big(\E\|v^N\|_{L^{p}(0,\tau;H^{-1,q} (\T^d))}^{p_0} \big)^{1/{p_0}}. 
 \label{eq:estimate_LqLq_claim_halfline}
\end{align}

In the following, with a slight abuse of notation, as in \eqref{eq:definition_Fhat_Ghat}, we extend by zero the processes $v^N$, $F$, and $G$ on $(-1/4,0)$. 
Let 
$
\zeta\in C^{\infty}_{{\rm c}}(\R) 
$
be such that $\zeta\geq 0$, $\supp\zeta \subseteq (-1/4,1/4)$ and $\int_{\R} \zeta^{p_0}=1$. For all $t_0\in \R$, the process
$$
v^N_{t_0}(t)\stackrel{{\rm def}}{=}\zeta(t-t_0)v^N(t),
$$
solves \eqref{eq:turbulent_Stokes_scaling_uniform_estimates_modified} on $(-1/4,\infty)$ with
$$
(F,G)\quad \text{ replaced by }
\quad
(\partial_t \zeta(\cdot-t_0) [\Delta^{-1}\nabla v^N]+ \zeta(\cdot-t_0)F , \zeta(\cdot-t_0)G ).
$$
Note that in the above, we used that $v^N = \nabla \cdot [\Delta^{-1}\nabla v^N]$ as $v^N$ has zero mean, see \eqref{eq:vN_has_zero_mean_proof}. Similarly to \eqref{eq:Lengart_domination_application}, by uniqueness of energy solutions to \eqref{eq:turbulent_Stokes_scaling_uniform_estimates_modified}, it follows from Proposition \ref{prop:almost_Linfty} that there exists a constant $C_1>0$ independent of $t_0,N,F$ and $G$ such that 
\begin{align}
\nonumber
\E\int_{t_0-1/4}^{t_0+1/4} \one_{[0,\tau)} 
\| v^N_{t_0}\|_{L^{q_0}(\T^d)}^{p_0}
\leq C_1  \E\Big[ \Big(\int_{t_0-1/4}^{t_0+1/4}\one_{[0,\tau)}|\partial_t \zeta(\cdot-t_0)|^p \|v^N\|_{H^{-1,q}(\T^d)}^p \Big)^{p_0/p}\Big]&\\
\label{eq:localized_LpLq_estimate_on_t_0}
+ C_1  \E \Big[\Big(\int_{t_0-1/4}^{t_0+1/4}\one_{[0,\tau)} (\zeta(\cdot-t_0))^p\big(\|F\|_{L^q(\T^d)}^p + \|G\|_{L^q(\T^d)}^p\big) \Big)^{p_0/p}\Big]&.
\end{align}
To prove \eqref{eq:estimate_LqLq_claim_halfline}, it remains to take the integral over $t_0\in (-1/4,\infty)$ on both sides of \eqref{eq:localized_LpLq_estimate_on_t_0}. Indeed,  as $\supp\zeta \subset(-1/4,1/4)$ and $\int_{\R} \zeta^{p_0}=1$,
\begin{align*}
&\E\int_{-1/4}^\infty \int_{t_0-1/4}^{t_0+1/4} \one_{[0,\tau)} 
\| v^N_{t_0}\|_{L^{q_0}(\T^d)}^{p_0}\,\dd t \,\dd t_0\\
&=\E\int_{-1/4}^\infty \int_{\R_+} (\zeta(t-t_0))^{p_0} \one_{[0,\tau)} 
\| v^N\|_{L^{q_0}(\T^d)}^{p_0}\,\dd t \,\dd t_0= 
\E \int_{0}^{\tau} 
\| v^N\|_{L^{q_0}(\T^d)}^{p_0}.
\end{align*}
Moreover, by Young's inequality as well as the assumption $p_0\geq p$, it holds that 
\begin{align*}	
&\E\Big[\int_{-1/4}^\infty \Big(\int_{t_0-1/4}^{t_0+1/4}\one_{[0,\tau)}|\partial_t \zeta(\cdot-t_0)|^p \|v^N(t)\|_{H^{-1,q}(\T^d)}^p \,\dd t \Big)^{p_0/p} \,\dd t_0\Big]\\
&\qquad  \leq \big\||\partial_{t}\zeta|^p\big\|_{L^{p_0/p}(\R)}^{p_0/p}
\E\big[ \big\| \one_{[0,\tau)} \|v^N\|_{H^{-1,q}(\T^d)}^p\big\|_{L^1(\R)}^{p_0/p} \big]\\
&\qquad \lesssim
\E\Big[ \Big(  \int_0^\tau  \|v^N\|_{H^{-1,q}(\T^d)}^p\Big)^{p_0/p} \Big],
\end{align*}
and similarly,
\begin{align*}
&\E\int_{-1/4}^\infty \Big[\Big(\int_{t_0-1/4}^{t_0+1/4}\one_{[0,\tau)} (\zeta(t-t_0))^p\big(\|F(t)\|_{L^q(\T^d)}^p + \|G(t)\|_{L^q(\T^d)}^p \big)\,\dd t \Big)^{p_0/p}\,\dd t_0\Big]\\
&\lesssim 
\E\Big[ \Big(  \int_0^\tau  \|F\|_{L^q(\T^d)}^p\Big)^{p_0/p} \Big]
+
\E\Big[ \Big(  \int_0^\tau  \|G\|_{L^q(\T^d)}^p\Big)^{p_0/p} \Big].
\end{align*}
Therefore, \eqref{eq:estimate_LqLq_claim_halfline} follows from \eqref{eq:localized_LpLq_estimate_on_t_0} and the above findings.
\end{proof}

\begin{remark}[Necessity of interpolation with mixed integrability]
As the proof of Theorem \ref{t:suboptimal_Lq_Rpositive} shows, it was essential to allow $r\neq p$ in \eqref{eq:interpolation_Banach_spaces_progress} as the localization argument used to prove \eqref{eq:estimate_LqLq_claim_halfline} (and thus \eqref{eq:suboptimal_Lq_Rpositive_estimate}) requires $r=p_0>p$.
\end{remark}

\section{Scaling limits for the stochastic 3D NSEs}
\label{s:scaling_limit}
Fix $p,q\in (2,\infty)$ and $R>0$.
In this section, we consider the following stochastic 3D NSEs \eqref{eq:navier_stokes_intro} with cutoff on $\T^3$:
\begin{equation}
\label{eq:navier_stokes_cutoff}
\left\{
\begin{aligned}
\partial_t \vcut^N &+ (\overline{u}_0\cdot \nabla) \vcut^N+\phi^{R}_{p,q}(\cdot,\vcut^N)\,\p [\nabla \cdot (\vcut^N\otimes \vcut^N)]\\
&\qquad  
=\Delta \vcut^N +\sqrt{\frac{3\mu}{2}}\sum_{k,\alpha}\theta_k^N\p[ (\sigma_{k,\alpha}\cdot\nabla) \vcut^N]\circ \dot{ W}^{k,\alpha}_t,\\
 \nabla \cdot \vcut^N&=0,\\
\vcut^N(0,\cdot)&=u_0-\overline{u}_0,
 \end{aligned}
\right.
\end{equation} 
where $u_0\in \Bs^{1-2/p}_{q,p}(\T^3)$ is a given initial datum, $\overline{u}_0=\int_{\T^3} u_0$ denotes its average, $\p$ is the Helmholtz projection on $\T^3$,  
\begin{equation}
\label{eq:def_cutoff_NSE}
\phi^R_{p,q}(t,\vcut^N)
= \phi \big(R^{-1}\|\vcut^N\|_{L^{2p}(0,t;L^{2q}(\T^3;\R^3))}\big)
\end{equation}
and $\phi\in C^\infty_{{\rm c}}([0,\infty))$ is such that $0\leq \phi\leq 1$, $\phi|_{(0,1)}=1$, and $\phi|_{(2,\infty)}=0$. 

\smallskip

As shown in Subsection \ref{ss:global_proof} below, the cutoff system \eqref{eq:navier_stokes_cutoff} has the following fundamental property: If $\phi_{p,q}^R(\cdot,\vcut^N)=1$ on a given stochastic interval, then $u =\vcut^N+\overline{u}_0$ is a solution to the original stochastic 3D NSEs \eqref{eq:navier_stokes_intro} on the same interval. 
The key advantage of having initial data with zero mean is that the Navier-Stokes dynamics preserves such a condition, and exponential decay of solutions to \eqref{eq:navier_stokes_cutoff} can be easily obtained via energy methods, see \eqref{eq:decay_energy_vcut} below.
Finally, the norm in the cutoff \eqref{eq:def_cutoff_NSE} is chosen to obtain a suitable bound on the Navier-Stokes nonlinearity in an $L^p(L^q)$-setting. 
Throughout this section, we assume that 
\begin{equation}
\label{eq:condition_pq_subcritical_scaling_cutoff}
1<\frac{2}{p}+\frac{3}{q}<2 \qquad \text{ and }\qquad \Big[ p<\frac{q}{3-q} \text{ if }q<3\Big].
\end{equation}
The upper bound in the first condition in \eqref{eq:condition_pq_subcritical_scaling_cutoff} coincides with the subcriticality condition \eqref{eq:condition_pq_subcritical_scaling_statement}, and
automatically arises when proving uniform-in-$N$ estimates for the cutoff system \eqref{eq:navier_stokes_cutoff}, see Lemma \ref{l:uniform_N_estimate_LpLq} below. The lower bound in the first condition and the second inequality \eqref{eq:condition_pq_subcritical_scaling_cutoff} are a consequence of the application of Theorem \ref{t:suboptimal_Lq_Rpositive}, see \eqref{eq:assumption_suboptimal_Lq_finite_time1}.

\smallskip

We begin by discussing the global well-posedness of \eqref{eq:navier_stokes_cutoff}, where $(p,q)$-solutions are defined as in Definition \ref{def:p_solution} with trivial modifications.
The proof of the following result is postponed to Subsection \ref{ss:global_cutoff}.

\begin{lemma}[Global well-posedness of stochastic 3D NSEs with cutoff]
\label{l:global_cutoff}
Let $\mu>0$. Let $p,q\in (2,\infty)$ be such that \eqref{eq:condition_pq_subcritical_scaling_cutoff} holds. 
Then, for all $N\geq 1$, $R>0$, and $u_0\in \Bs^{1-2/p}_{q,p}(\T^3)$, there exists a unique \emph{global} $(p,q)$-solution  $\vcut^N$ to \eqref{eq:navier_stokes_cutoff}.
\end{lemma}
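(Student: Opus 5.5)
The proof splits into local well-posedness, a blow-up criterion, and a cutoff-driven a priori bound that rules out blow-up. First, local existence, uniqueness and maximality of $(p,q)$-solutions to \eqref{eq:navier_stokes_cutoff} follow from the critical-space theory of \cite[Section 2]{AV21_NS}. The linear part is the transport-noise Stokes operator of \eqref{eq:navier_stokes_helmholtz_Ito} with the smooth coefficients $(\theta^N_k\sigma_{k,\alpha})_{k,\alpha}$, where $\theta^N$ has finite support. It is complemented by the lower-order drift $(\overline{u}_0\cdot\nabla)\vcut^N$ with a constant coefficient, so it enjoys stochastic maximal $L^p$-regularity on $L^q$ by \cite[Theorem 3.2]{AV21_NS}. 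The nonlinearity $\phi^R_{p,q}(t,v)\,\p[\nabla\cdot(v\otimes v)]$ is locally Lipschitz in the $(p,q)$-setting under the subcriticality condition $\frac{2}{p}+\frac{3}{q}<2$. The cutoff functional depends only on the path up to time $t$, so the usual local Lipschitz estimates with a cutoff (as in \cite{AV21_NS}) go through, since $|\phi(a)-\phi(b)|\le \|\phi'\|_\infty|a-b|$. This yields a unique maximal $(p,q)$-solution $(\vcut^N,\tau)$.

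Second, I would invoke the blow-up criteria of \cite[Theorem 2.10]{AV21_NS} (or their analogue in the present setting). These state that on $\{\tau<\infty\}$ the norm $\|\vcut^N\|_{L^p(0,\tau;H^{1,q})}$, or equivalently a subcritical norm such as $L^{2p}(0,\tau;L^{2q})$, is infinite. Hence it suffices to show that $\|\vcut^N\|_{L^{2p}(0,\tau;L^{2q})}<\infty$ a.s. on $\{\tau<\infty\}$, or more directly that the $H^{1,q}$-maximal regularity norm stays finite.

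Third, the cutoff supplies the a priori control. Define $\tau_R=\inf\{t<\tau:\|\vcut^N\|_{L^{2p}(0,t;L^{2q})}\ge 2R\}$. After $\tau_R$ the nonlinearity vanishes, so on $[\tau_R,\tau)$ the equation is linear and has a global solution by maximal regularity. On $[0,\tau_R]$ Hölder's inequality gives
\[
\|\phi^R_{p,q}\,\vcut^N\otimes\vcut^N\|_{L^p(0,\tau_R;L^q)}\le \|\vcut^N\|^2_{L^{2p}(0,\tau_R;L^{2q})}\le 4R^2 .
\]
The nonlinearity therefore acts as a forcing $\nabla\cdot F$ with $F$ bounded in $L^p(L^q)$, and the linear stochastic maximal regularity estimate bounds $\vcut^N$ in $L^p(0,\tau;H^{1,q})\cap C([0,\tau];B^{1-2/p}_{q,p})$ on every finite horizon. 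Localizing via stopping times and applying the blow-up criterion then gives $\tau=\infty$ a.s.

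The main obstacle is making the third step rigorous: the solution must be patched across $\tau_R$ while the cutoff is random and history-dependent. Concretely, one has to verify that the blow-up criterion is applicable exactly in the norm $L^{2p}(L^{2q})$, or bridge it to the criterion's native norm using the maximal-regularity embeddings. I would handle this by working with the globally defined forcing $\phi^R_{p,q}\,\vcut^N\otimes\vcut^N$, which is bounded in $L^p(0,\tau;L^q)$ by $4R^2$ pathwise. The linear maximal-regularity estimate then gives finiteness of the $L^p(0,\tau;H^{1,q})$ norm on $\{\tau<T\}$ for every $T$, contradicting blow-up.
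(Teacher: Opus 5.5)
Your proposal is correct and follows essentially the same route as the paper. The paper also obtains a unique maximal $(p,q)$-solution and a blow-up criterion from the maximal $L^p$-regularity theory of \cite{AV21_NS}, pointing to \cite[Theorem 4.1]{A22} for the cutoff setting. It then introduces the stopping time at which $\|\vcut^N\|_{L^{2p}(0,t;L^{2q})}$ reaches $2R$ and uses H\"older's inequality to get $\|\phi^R_{p,q}(\cdot,\vcut^N)\,\vcut^N\otimes\vcut^N\|_{L^p(0,\tau;L^q)}\le 4R^2$. The only difference is that the paper states its blow-up criterion directly in terms of this $L^p(L^q)$-norm of the cut-off nonlinearity, so your bridging step through linear stochastic maximal regularity to an $L^p(0,\tau;H^{1,q})$-type criterion is already built into that criterion.
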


We are now ready to formulate the main result of this subsection. Below, for notational convenience, for $M\geq 1$, we set
\begin{equation}
\label{eq:ball_initial_data_cutoff} 
\mathcal{B}_{p,q}(M)\stackrel{{\rm def}}{=}
\big\{u_0\in \Bs^{1-2/p}_{q,p}(\T^3)\,:\, \|u_0\|_{B^{1-2/p}_{q,p}(\T^3;\R^3)}\leq M\big\}.
\end{equation}

\begin{theorem}[Scaling limit for 3D NSEs with cutoff on the half-line]
\label{t:scaling_limit_cutoff}
Let $M\geq 1$, $\mu>0$ and $R>0$. Suppose that $p,q\in (2,\infty)$ satisfy  \eqref{eq:condition_pq_subcritical_scaling_cutoff}.
Then, for all $\varepsilon\in (0,1)$, 
\begin{equation}
\label{eq:scaling_limit_cutoff}
\lim_{N \to \infty}\sup_{u_0\in \mathcal{B}_{p,q}(M)}\P\big(\|\vcut^N - \vcutd\|_{L^{2p}(\R_+;L^{2q}(\T^3;\R^3))}\geq \varepsilon\big)=0,
\end{equation}
where $\vcut^N$ is the global $(p,q)$-solution to \eqref{eq:navier_stokes_cutoff} with initial data $u_0-\overline{u}_0$, and $\vcutd$ is the unique global $(p,q)$-solution to the following deterministic {{\normalfont{3D NSEs}}} with cutoff and \emph{increased viscosity} on $\T^3$:
\begin{equation}
\label{eq:navier_stokes_cutoff_det}
\left\{
\begin{aligned}
\partial_t \vcutd 
&+(\overline{u}_0\cdot \nabla) \vcutd \\
&+\phi_{p,q}^R (\cdot,\vcutd) \,\p\big[ \nabla\cdot (\vcutd \otimes \vcutd)\big] 
= \Big(1 + \frac{3\mu}{5}\Big)\Delta \vcutd, \\
\vcutd(0)&=u_0-\overline{u}_0.
\end{aligned}
\right.
\end{equation}
\end{theorem}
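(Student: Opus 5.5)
The plan is the usual scaling-limit scheme, done uniformly in $u_0\in\mathcal{B}_{p,q}(M)$: uniform-in-$N$ bounds in a space compactly embedded in $L^{2p}(\R_+;L^{2q})$, identification of the limit, and a decay argument for the infinite horizon. I would argue by contradiction: suppose there are $\varepsilon,\eta>0$, a subsequence $N_j\to\infty$ and data $u_0^{j}\in\mathcal{B}_{p,q}(M)$ with $\P(\|\vcut^{N_j}-\vcutd^{j}\|_{L^{2p}(\R_+;L^{2q})}\geq\varepsilon)\geq\eta$. The ball $\mathcal{B}_{p,q}(M)$ is compact in a slightly weaker Besov space. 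Deterministic continuous dependence for \eqref{eq:navier_stokes_cutoff_det}, which is globally well posed thanks to the cutoff, then lets us pass to $u_0^j\to u_0$ in that weaker topology. This reduces the statement to convergence for a fixed limiting datum, once one checks that the stochastic solutions are also stable under this change of data uniformly in $N$.

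\emph{Uniform bounds.} Write $\vcut^N$ in Itô form as in Subsection~\ref{ss:ito_reformulation}, so that it is an energy solution of \eqref{eq:turbulent_Stokes_scaling_uniform_estimates} with
\[
F=-\phi^R_{p,q}(\cdot,\vcut^N)\,\vcut^N\otimes\vcut^N-\overline{u}_0\otimes\vcut^N .
\]
The transport term is a divergence because $\nabla\cdot\vcut^N=0$. The cutoff gives
\[
\|\phi^R_{p,q}\,\vcut^N\otimes\vcut^N\|_{L^p(L^q)}\le\|\vcut^N\|_{L^{2p}(0,\tau;L^{2q})}^2\le 4R^2
\]
pathwise. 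The drift term $\overline{u}_0\otimes\vcut^N$ is controlled by the energy bound of Lemma~\ref{l:global_energy_estimate} together with $|\overline{u}_0|\lesssim M$. Alternatively, one can remove it by the Galilean change of variables $x\mapsto x-\overline{u}_0t$, which maps the noise family $(\sigma_{k,\alpha})$ to itself up to phases and preserves the law. I would try the change of variables first.

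Because the initial data have zero mean, the pathwise energy equality gives exponential decay:
\[
\|\vcut^N(t)\|_{L^2}\le e^{-ct}\|u_0-\overline{u}_0\|_{L^2}.
\]
This controls the lower-order term $\|w^N\|_{L^p(H^{-1,q})}$ in Theorem~\ref{t:suboptimal_Lq_Rpositive}, after interpolating between $L^2$ decay and the cutoff bound on the time interval where $\phi^R_{p,q}\neq0$. Theorem~\ref{t:suboptimal_Lq_Rpositive} then yields uniform-in-$N$ bounds for $\vcut^N$ in $L^r(\O;L^{p_0}(\R_+;L^{q_0}))$, with $-2/p_0-3/q_0$ arbitrarily close to $1-2/p-3/q$. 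Condition \eqref{eq:condition_pq_subcritical_scaling_cutoff} is exactly what makes $(p,q)$ admissible for that theorem and guarantees $-2/p_0-3/q_0>-2/(2p)-3/(2q)$. Combining these bounds with the time regularity in negative Sobolev spaces that follows directly from the equation, and with the $L^2(H^1)$ energy bound, Aubin--Lions compactness on bounded intervals gives tightness in $L^{2p}(0,T;L^{2q})$. The tails on $(T,\infty)$ are uniformly small by interpolating the exponential $L^2$ decay with the $L^{p_0}(L^{q_0})$ bound.

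\emph{Identification.} By Skorokhod--Jakubowski, any subsequential limit is a weak solution of the limit equation, where the Itô corrector combined with the vanishing noise produces the viscosity $1+\frac{3\mu}{5}$. This is the standard argument of \cite{FL19,FGL21}, or it can be obtained from the estimates in Proposition~\ref{prop:error_estimates_operators} on $\T^3$. Convergence of the cutoff nonlinearity follows from strong convergence in $L^{2p}(L^{2q})$ and the continuity of $\phi^R_{p,q}$. The limit is deterministic and unique, so convergence holds in probability along the whole sequence, contradicting the choice of $N_j$.

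\emph{Main obstacle.} I expect the main difficulty to be obtaining the uniform bound of Theorem~\ref{t:suboptimal_Lq_Rpositive} on the half-line, uniformly in $u_0$. This requires controlling the lower-order term and the drift uniformly, and making sure the cutoff does not destroy the progressive measurability needed for the Lenglart argument. I would first try to get all of this from the energy decay combined with the cutoff bound.
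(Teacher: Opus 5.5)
Your overall scheme matches the paper's, and it would work once you drop the reduction step discussed first. The scheme is: argue by contradiction, derive $N$-uniform bounds in $L^{p_0}(\R_+;L^{q_0})$ with $p_0>2p$, $q_0>2q$ from Theorem~\ref{t:suboptimal_Lq_Rpositive} (using the pathwise cutoff bound $4R^2$ and the exponential $L^2$-decay), get tightness in $L^{2p}(\R_+;L^{2q})$ with exponentially small tails, identify limits through the vanishing martingale and the It\^o--Stratonovich corrector, and conclude by uniqueness.

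\textbf{The reduction to a fixed datum is a gap.} In your first paragraph you reduce to a fixed datum ``once one checks that the stochastic solutions are also stable under this change of data uniformly in $N$''. Nothing in the paper gives this. Theorem~\ref{t:suboptimal_Lq_Rpositive} is a bound, not a contraction. The difference of two solutions of \eqref{eq:navier_stokes_cutoff} has a cutoff forcing that is Lipschitz in $L^{2p}(L^{2q})$ only with a constant of order $R$, which is not small. Your data also converge only in a weaker topology. Such a uniform-in-$N$ stability estimate would be a result in its own right.

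The step is also unnecessary. Run tightness and identification directly on $\vcut^{N_j}$ with data $u_0^j$, as the paper does.
\begin{itemize}
\item All your bounds depend on $u_0^j$ only through $M$.
\item The data enter the weak formulation only through $\int_{\T^3}(u_0^j-\overline{u}_0^j)\cdot\varphi$ and the drift coefficient $\overline{u}_0^j$. Both pass to the limit under weak convergence $u_0^j\rightharpoonup u_0$ in the reflexive space $B^{1-2/p}_{q,p}$.
\item Hence every limit point is a weak solution of \eqref{eq:navier_stokes_cutoff_det} with datum $u_0-\overline{u}_0$. Combined with $\vcutd^j\to\vcutd$, this gives the contradiction.
\end{itemize}
Do not merely assert uniqueness of weak solutions in the class $L^{2p}(\R_+;L^{2q})\cap L^2(\R_+;H^1)$. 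The paper proves it via deterministic maximal regularity, since the cutoff forcing lies in $L^p(L^q)$ (Lemma~\ref{l:uniqueness_lemma_weak_sol}).

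\textbf{The drift term.} Your first option fails in part of the range. Controlling $\overline{u}_0\otimes\vcut^N$ in $L^p(\R_+;L^q)$ by the energy alone does not work when $2/p+3/q<3/2$, e.g.\ $(p,q)=(6,3)$, which \eqref{eq:condition_pq_subcritical_scaling_cutoff} allows. The paper instead interpolates $\|\vcut^N\|_{L^q}\le\|\vcut^N\|_{L^2}^{1-\kappa}\|\vcut^N\|_{L^{q_0}}^{\kappa}$, uses the exponential decay, and absorbs the $L^{p_0}(L^{q_0})$ factor by Young's inequality (Lemma~\ref{l:uniform_N_estimate_LpLq}).

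Your Galilean alternative is correct and genuinely simpler.
\begin{itemize}
\item Under $x\mapsto x-\overline{u}_0t$ the Brownian motions become $\int_0^\cdot e^{2\pi\i k\cdot\overline{u}_0 s}\,\dd W^{k,\alpha}_s$. These again satisfy \eqref{eq:decorrelation_complex_BM} and the conjugate symmetry.
\item The Helmholtz projection, the It\^o--Stratonovich corrector and the cutoff are all translation invariant.
\item Only the $L^p(\R_+;H^{-1,q})$ term of Theorem~\ref{t:suboptimal_Lq_Rpositive} then remains, and the energy alone controls it. For $q\le 6$ use $L^2\embed H^{-1,q}$. For $q>6$ interpolate the exponential $L^2$-decay with $L^2(\R_+;H^1)$, which works because $2/p+3/q>1/2$.
\item No absorption is needed, so you also avoid needing a priori finiteness of the left-hand side.
\end{itemize}
The ``cutoff bound on the interval where $\phi^R_{p,q}\neq0$'' plays no role in controlling that lower-order term. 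It could not, since it gives no information once the cutoff has switched off.
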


As above, $(p,q)$-solutions to \eqref{eq:navier_stokes_cutoff_det} can be defined as in Definition \ref{def:p_solution}, and the global well-posedness of this deterministic system follows as in Lemma \ref{l:global_cutoff}.

In contrast to the existing literature on scaling limits (see e.g., \cite{A24_global_small,FGL21,FL19}), Theorem \ref{t:scaling_limit_cutoff} is formulated on the half-line rather than on a finite time interval.
This allows us to give a streamlined proof of global smoothness of solutions to stochastic 3D NSEs (see Theorem \ref{t:global_NSE}) in Subsection \ref{ss:global_proof}. Finally, the proof of Theorem \ref{t:scaling_limit_cutoff} below shows that our arguments also yield \eqref{eq:scaling_limit_cutoff} with $L^{2p}(\R_+;L^{2q})$ replaced by $L^{p_0}(\R_+;L^{q_0})$ for some $p_0\in (2p,\infty)$ and $q_0\in (2q,\infty)$ depending only on $p,q$, see Lemma \ref{l:compactness} below.

\smallskip

The proof of Theorem \ref{t:scaling_limit_cutoff} is given in Subsection \ref{ss:proof_scaling_limit} below.
As outlined in Subsection \ref{ss:scaling_intro}, the proof of Theorem \ref{t:scaling_limit_cutoff} relies on the following two ingredients: uniform-in-$N$ estimates for solutions to the stochastic 3D NSEs with cutoff, and a compactness result on the half-line, see Subsections \ref{ss:uniform_estimates_N_cutoff} and \ref{ss:compactness}, respectively. In the proof of the uniform-in-$N$ estimate of Lemma \ref{l:uniform_N_estimate_LpLq}, we make use of the estimate proven in Theorem \ref{t:suboptimal_Lq_Rpositive}.
However, we first prove Lemma \ref{l:global_cutoff}. 

\subsection{Global well-posedness with cutoff -- Proof of Lemma \ref{l:global_cutoff}}
\label{ss:global_cutoff}
The proof of Lemma \ref{l:global_cutoff} is standard. For the reader's convenience, we include a sketch of the proof.
The reader is referred to \cite[Theorem 4.1]{A22} for a similar situation.

\begin{proof}[Proof of Lemma \ref{l:global_cutoff} -- Sketch]
As in the proof of \cite[Theorem 4.1]{A22}, from the local well-posedness of the stochastic 3D NSEs \eqref{eq:navier_stokes_intro} under the subcriticality assumption \eqref{eq:condition_pq_subcritical_scaling_cutoff} and the maximal $L^p$-regularity estimates of \cite[Section 3]{AV21_NS}, it follows that there exists a unique maximal $(p,q)$-solution $(\vcut^N,\tcut^N)$ to \eqref{eq:navier_stokes_cutoff} satisfying $\tcut^N>0$ a.s. Moreover, the following blow-up criterion holds: 
\begin{equation}
\label{eq:blow_up_criterion_cutoff}
\P\Big(\tcut^N<\infty,\, \|\phi_{p,q}^R(\cdot,\vcut^N) (\vcut^N\otimes \vcut^N)\|_{L^p(0,\tcut^N;L^{q})}<\infty\Big)=0.
\end{equation}
We check the above blow-up criterion exploiting the presence of the cutoff. Let $\chi^{R}_{p,q}$ be the stopping time given by 
\begin{equation*}
\chi^{R}_{p,q}\stackrel{{\rm def}}{=}\inf\{t\in [0,\tcut^N)\,:\, \|\vcut^N\|_{L^{2p}(0,t;L^{2q})}\geq 2R\} \quad \text{ and }\quad \inf\emptyset\stackrel{{\rm def}}{=}\tcut^N.
\end{equation*}
From the definition of the cutoff \eqref{eq:def_cutoff_NSE}, it follows that 
\begin{align}
\label{eq:nonlinearity_cutoff_bounds1}
&\big\|\phi_{p,q}^R (\cdot,\vcut^N) (\vcut^N \otimes \vcut^N) \big\|_{L^p(0,\tcut^N;L^q)}\\
\nonumber
&\qquad\qquad\qquad 
= \big\|\phi_{p,q}^R (\cdot,\vcut^N) (\vcut^N \otimes \vcut^N) \big\|_{L^p(0,\tcut^N\wedge \chi^R_{p,q};L^q)}\\
\nonumber
& \qquad\qquad \qquad
\leq \|\vcut^N \|_{L^{2p}(0,\tcut^N\wedge \chi_{p,q}^R;L^{2q})}^{2}
\leq 4R^2,
\end{align}
where the last inequality is a consequence of the definition of $\chi^{R}_{p,q}$.
In particular, 
$$
\|\phi_{p,q}^R(\cdot,\vcut^N) (\vcut^N\otimes \vcut^N)\|_{L^p(0,\tcut^N;L^{q})}<\infty\ \text{ a.s., }
$$ 
and \eqref{eq:blow_up_criterion_cutoff} yields $\tcut^N=\infty$ a.s., i.e., $\vcut^N$ is global in time.
\end{proof}

\subsection{Estimates uniform in the oscillations -- 3D NSEs with cutoff}
\label{ss:uniform_estimates_N_cutoff}
The essential result in the proof of Theorem \ref{t:scaling_limit_cutoff} is given by the following uniform-in-$N$ estimates for solutions to the stochastic oscillating 3D NSEs with cutoff \eqref{eq:navier_stokes_cutoff}. Recall that $\mathcal{B}_{p,q}(M)$ is defined in \eqref{eq:ball_initial_data_cutoff}.

\begin{lemma}[Uniform-in-$N$ estimates in $L^p(L^q)$ for 3D NSEs with cutoff]
\label{l:uniform_N_estimate_LpLq}
Let $M\geq 1$, $\mu>0$ and $R>0$. Suppose that $p,q\in (2,\infty)$ satisfy \eqref{eq:condition_pq_subcritical_scaling_cutoff}.
There exists a constant $C\geq 1$ and exponents $p_0\in (2p,\infty)$, $q_0\in (2q,\infty)$ independent of $N\geq 1$ and $u_0\in \mathcal{B}_{p,q}(M)$ for which the unique global $(p,q)$-solution to the stochastic oscillating 3D NSEs \eqref{eq:navier_stokes_cutoff} provided by Lemma \ref{l:global_cutoff} satisfies
$$
\E\int_0^{\infty} \|\vcut^N\|_{L^{q_0}(\T^3;\R^3)}^{p_0} \leq C.
$$
\end{lemma}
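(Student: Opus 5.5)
The plan is to regard $\vcut^N$ as an energy solution of the linear oscillating Stokes system \eqref{eq:turbulent_Stokes_scaling_uniform_estimates} on $I=\R_+$, with initial datum $w_0=u_0-\overline{u}_0$ and forcing $F=-\phi^R_{p,q}(\cdot,\vcut^N)\,\vcut^N\otimes\vcut^N-\overline{u}_0\otimes\vcut^N$, and then to apply Theorem \ref{t:suboptimal_Lq_Rpositive}. For the transport term we use $\nabla\cdot(\vcut^N\otimes\overline{u}_0)=(\overline{u}_0\cdot\nabla)\vcut^N$, and $\p$ is absorbed because $\p\nabla\cdot=\p\p\nabla\cdot$. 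We use the It\^o reformulation of Subsection \ref{ss:ito_reformulation}, in which the correction equals $\mu\Delta-\mathcal{P}$; this matches the structure of \eqref{eq:turbulent_Stokes_scaling_uniform_estimates} with $d=3$, $c_3=3/2$. Uniqueness of energy solutions in Lemma \ref{l:global_energy_estimate} identifies $\vcut^N$ with $w^N$, after a localization in time so that the forcing lies in $L^2$.

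The key steps, in order, are as follows.

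(1) \emph{Nonlinear forcing.} As in \eqref{eq:nonlinearity_cutoff_bounds1}, the cutoff gives $\|\phi^R_{p,q}\,\vcut^N\otimes\vcut^N\|_{L^p(\R_+;L^q)}\le 4R^2$ pathwise, uniformly in $N$.

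(2) \emph{Drift term and energy decay.} The term $\overline{u}_0\otimes\vcut^N$ is not small, so we need decay in time. Since $\int\vcut^N=0$, the drift $\overline{u}_0\cdot\nabla$ is skew and the pathwise energy identity \eqref{eq:energy_difference_average} still holds. The cutoff does not destroy this identity, because $\p\nabla\cdot(v\otimes v)$ is orthogonal to $v$. Poincar\'e's inequality then gives
\[
\|\vcut^N(t)\|_{L^2}\le e^{-ct}\|u_0\|_{L^2}
\quad\text{and}\quad
\|\nabla\vcut^N\|_{L^2(\R_+\times\T^3)}\lesssim M .
\]
To put $\overline{u}_0\otimes\vcut^N$ into $L^p(\R_+;L^q)$, I would first try to move the drift out altogether by the Galilean change of variables $x\mapsto x-\overline{u}_0 t$. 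This turns $\sigma_{k,\alpha}$ into time-dependent phases $e^{-2\pi\i k\cdot\overline{u}_0 t}\sigma_{k,\alpha}$, which can be absorbed into the complex Brownian motions without changing their law. If that works, the drift disappears and $F$ reduces to the nonlinear term only. Otherwise, I would bound $\overline{u}_0\otimes\vcut^N$ by interpolating the energy decay with the lower-order term.

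(3) \emph{Choice of exponents.} Condition \eqref{eq:condition_pq_subcritical_scaling_cutoff} gives $\Sob=1-2/p-3/q\in[-3/2,0)$ and $p<3/(-\Sob)$, which is the hypothesis \eqref{eq:assumption_suboptimal_Lq_finite_time1}; the subcritical upper bound gives $\Sob>-1$. The target index $-2/p_0-3/q_0$ must satisfy $-1/p-3/(2q)<-2/p_0-3/q_0<\Sob$, and this window is nonempty exactly because $\Sob>-1$. We also need $q_0>2q$ and $q_0<\qmax=-9/\Sob$. Since $2q<-9/\Sob$ follows from the conditions on $(p,q)$, the exponents $p_0>2p$, $q_0\in(2q,\qmax)$ can be chosen, and we take $r=p_0$.

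(4) \emph{Conclusion.} Theorem \ref{t:suboptimal_Lq_Rpositive} gives
\[
\E\|\vcut^N\|^{p_0}_{L^{p_0}(\R_+;L^{q_0})}\lesssim M^{p_0}+R^{2p_0}+\E\|\vcut^N\|^{p_0}_{L^p(\R_+;H^{-1,q})}.
\]
The last term is controlled by the pathwise energy decay: in 3D, $L^2\hookrightarrow H^{-1,q}$ for $q\le 6$, and otherwise we interpolate with $\|\nabla\vcut^N\|_{L^2}$. Thus it is bounded by $C(M)$, uniformly in $N$ and $u_0$.

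The main obstacle is step (2): handling the drift $(\overline{u}_0\cdot\nabla)$ uniformly, and checking that the Galilean transformation preserves the structure of the noise. The exponent bookkeeping in step (3) is the secondary difficulty.
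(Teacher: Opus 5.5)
Your argument is correct, but it handles the drift $(\overline{u}_0\cdot\nabla)\vcut^N$ differently from the paper.

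\emph{The paper's route.} The paper keeps the drift, writes it in divergence form, and includes it in the forcing when it applies Theorem \ref{t:suboptimal_Lq_Rpositive} on $(0,T)$ with $r=p_0$. The drift and the $H^{-1,q}$ lower-order term are then both dominated by $\|\vcut^N\|_{L^p(0,T;L^q)}$. This is bounded by $C_M\|\vcut^N\|_{L^{p_0}(0,T;L^{q_0})}^{\kappa}$, using $\|f\|_{L^q}\le\|f\|_{L^2}^{1-\kappa}\|f\|_{L^{q_0}}^{\kappa}$, H\"older in time and the decay \eqref{eq:decay_energy_vcut}. It is then absorbed by Young's inequality, and Fatou's lemma lets $T\to\infty$.

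\emph{Your route.} Your Galilean change of variables does work, and you should assert it rather than hedge. Set $\tilde v(t,x)=\vcut^N(t,x+\overline{u}_0t)$.
\begin{itemize}
\item The phases are absorbed into $\tilde W^{k,\alpha}=\int_0^{\cdot}e^{2\pi\i k\cdot\overline{u}_0 s}\,\dd W^{k,\alpha}_s$. These satisfy $\overline{\tilde W^{k,\alpha}}=\tilde W^{-k,\alpha}$ and \eqref{eq:decorrelation_complex_BM}, so by L\'evy's characterization they form an admissible family on the same filtration.
\item The corrector $\mathcal{P}^N$ is shift invariant, because the phases of $\sigma_{k,\alpha}$ and $\sigma_{-k,\alpha}$ cancel.
\item The cutoff, the initial datum and all norms involved are translation invariant.
\item Theorem \ref{t:suboptimal_Lq_Rpositive} uses only these structural properties of the driving family, so it applies to $\tilde v$.
\end{itemize}
After this only the $H^{-1,q}$ term remains, and it is bounded pathwise by the energy, as you say. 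For $q>6$ use $H^{\theta}\hookrightarrow H^{-1,q}$ with $\theta=\frac12-\frac3q$. This needs $\theta p\le 2$, i.e.\ $\frac2p+\frac3q\ge\frac12$, which holds under \eqref{eq:condition_pq_subcritical_scaling_cutoff}.

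\emph{What each route buys.} Your route needs no absorption step. Hence it never uses a priori finiteness of $\E\|\vcut^N\|^{p_0}_{L^{p_0}(0,T;L^{q_0})}$, which the paper's inequality of the form $X\le C_1+\frac12 X$ tacitly requires. The paper's route is shorter and leaves the noise untouched.

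\emph{Your fallback does not close.} When $\frac2p+\frac3q<\frac32$, the space $L^p_tL^q_x$ has Sobolev index above the energy level $-\frac32$. Then the energy and its exponential decay alone do not control $\|\vcut^N\|_{L^p(\R_+;L^q)}$. Without the Galilean step you would need interpolation with the left-hand side followed by absorption, exactly as in the paper.
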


Before going into the proof of Lemma \ref{l:uniform_N_estimate_LpLq}, we first collect some observations.
First, as the initial data $w_0\stackrel{{\rm def}}{=}u_0-\overline{u}_0$ in \eqref{eq:navier_stokes_cutoff} has zero mean, we have 
\begin{equation}
\label{eq:vcut_has_mean_zero}
\int_{\T^3} \vcut^N (t)=0\ \text{ a.s.\ for all }t>0.
\end{equation}
Second, due to the well-known cancellation 
$$
\int_{\T^3} \phi_{p,q}^R (\cdot,w)\, \p[\nabla\cdot (w\otimes w)]\cdot w =0 \ \text{ for all }\ w\in \Hs^1(\T^3),
$$
as $\phi_{p,q}^R (\cdot, w)$ is spatially independent, from It\^o's formula (see e.g., \cite[Theorem 4.2.5]{LR15}), it follows that, a.s.\ for all $t>0$, the following energy balance holds
\begin{equation}
\label{eq:energy_balance_vcut}
\frac{1}{2}\|\vcut^N (t)\|_{L^2(\T^3)}^2 +\int_0^t \int_{\T^3} |\nabla \vcut^N|^2= \frac{1}{2}\| w_0\|_{L^2(\T^3)}^2.
\end{equation}
Finally, due to \eqref{eq:vcut_has_mean_zero} and the Poincar\'e inequality, 
\begin{equation}
\label{eq:decay_energy_vcut}
\|\vcut^N(t)\|_{L^2(\T^3)} \leq e^{- C t }\|w_0\|_{L^2(\T^3)} \lesssim_{q,p} e^{- C t} \|w_0\|_{B^{1-2/p}_{q,p}(\T^3)}
\end{equation}
where $C$ is a universal constant, and we used the elementary embedding $B^{1-2/p}_{q,p}\embed L^2$ as $p>2$ and $q\geq 2$.
We are now ready to prove Lemma \ref{l:uniform_N_estimate_LpLq}.

\begin{proof}[Proof of Lemma \ref{l:uniform_N_estimate_LpLq}]
In light of \eqref{eq:vcut_has_mean_zero}, we can apply Theorem \ref{t:suboptimal_Lq_Rpositive} for some
\begin{equation}
\label{eq:choice_p0q0_scaling_limit}
p_0>2p\qquad \text{ and }\qquad q_0>2q,
\end{equation}
independent of $N,T$ and $u_0$. 
Indeed, $2q<\frac{d^2}{d-2}\frac{1}{-\Sob}=\frac{9}{-\Sob}$ is equivalent to $\frac{2}{p}+\frac{3}{q}-1<\frac{9}{2q}$,
which is automatically satisfied as $p,q\in (2,\infty)$ and $d=3$ (see also the first item in Remark \ref{r:high_dimensions}). Moreover, one can check that the choice as in \eqref{eq:choice_p0q0_scaling_limit} is possible as 
$$
-\frac{2}{2p}-\frac{3}{2q}<\Sob =1-\frac{2}{p}-\frac{3}{q}\qquad \Longleftrightarrow\qquad
\frac{2}{p}+\frac{3}{q}<2,
$$
which is satisfied due to \eqref{eq:condition_pq_subcritical_scaling_cutoff}.
Therefore, from Theorem \ref{t:suboptimal_Lq_Rpositive} with $(p_0,q_0)$ satisfying \eqref{eq:choice_p0q0_scaling_limit} and $r=p_0$, there exists a constant $C_0>0$ such that, for all $N\geq 1$, $T<\infty$ and $u_0\in \mathcal{B}_{p,q}(M)$, the unique global $(p,q)$-solution to \eqref{eq:navier_stokes_cutoff} provided by Lemma \ref{l:global_cutoff} satisfies
\begin{align}
\label{eq:vcut_N_uniform_estimate_proof}
\big(\E\|\vcut^N\|_{L^{p_0}(0,T;L^{q_0})}^{p_0}\big)^{1/p_0}
&
\leq C_0 \|u_0\|_{B^{1-2/p}_{q,p}}\\
\nonumber
& + C_0  \big(\E\big\|\phi_{p,q}^R (\cdot,\vcut^N) (\vcut^N \otimes \vcut^N) \big\|_{L^p(0,T;L^q)}^{p_0}\big)^{1/p_0}\\
\nonumber
 &+ C_0\big(\E\|\vcut^N\|_{L^{p}(0,T;L^{q})}^{p_0} \big)^{1/p_0},
\end{align}
where we used that $(\overline{u}_0\cdot \nabla )\vcut^N=\nabla \cdot (\overline{u}_0\otimes\vcut^N)$.
From \eqref{eq:nonlinearity_cutoff_bounds1} with $\tcut^N=\infty$ (the latter was shown in the proof of Lemma \ref{l:global_cutoff}), we have
\begin{align}
\label{eq:nonlinearity_cutoff_bounds}
\big(\E\big\|\phi_{p,q}^R (\cdot,\vcut^N) (\vcut^N \otimes \vcut^N) \big\|_{L^p(0,T;L^q)}^{p_0}\big)^{1/p_0}
 \leq 4R^2.
\end{align}
Hence, it remains to estimate the last term on the right-hand side of \eqref{eq:vcut_N_uniform_estimate_proof}. To handle this term, we exploit the fact that it is of lower-order type compared to the norm appearing on the left-hand side of \eqref{eq:vcut_N_uniform_estimate_proof}, and that we have the uniform decay \eqref{eq:decay_energy_vcut} of the energy (which is even of lower-order compared to $L^p(L^q)$-norm).
By interpolation and $q_0>2q>q$, there exists $\kappa\in (0,1)$ such that $\|f\|_{L^q}\leq \|f\|_{L^2}^{1-\kappa}\|f\|_{L^{q_0}}^{\kappa}$ for all $f\in L^{q_0}$. Letting $p_1\in (1,\infty)$ be defined via the identity $\frac{1}{p_1}+\frac{\kappa}{p_0}=\frac{1}{p}$, H\"older's inequality ensures
\begin{align}
\label{eq:lower_order_term_interpolation_estimate_proof_uniform_bound}
\|\vcut^N\|_{L^{p}(0,T;L^{q} )}
&\leq \Big\|\|\vcut^N\|_{L^2}^{1-\kappa} \|\vcut^N\|_{L^{q_0}}^{\kappa}\Big\|_{L^p(0,T)}\\
\nonumber
&\leq \Big\|\|\vcut^N\|_{L^2}^{1-\kappa}\Big\|_{L^{p_1}(0,T)} \Big\|\|\vcut^N\|_{L^{q_0}}^{\kappa}\Big\|_{L^{p_0/\kappa}(0,T)}\\
\nonumber
&= \|\vcut^N\|_{L^{p_1(1-\kappa)}(0,T;L^2)}^{1-\kappa} 
\|\vcut^N\|_{L^{p_0}(0,T;L^{q_0})}^\kappa\\
\nonumber
&\lesssim_M \|\vcut^N\|_{L^{p_0}(0,T;L^{q_0})}^\kappa
\nonumber
\end{align}
where the last step follows from \eqref{eq:decay_energy_vcut} and $\|u_0\|_{B^{1-2/p}_{q,p}}\leq M$. Let us stress that the implicit constant in the above inequality is independent of $T<\infty$.
In particular, using \eqref{eq:nonlinearity_cutoff_bounds}-\eqref{eq:lower_order_term_interpolation_estimate_proof_uniform_bound} and Young's inequality in the estimate  \eqref{eq:vcut_N_uniform_estimate_proof}, it follows that there exists a constant $C_1$ independent of $T<\infty$, $N\geq 1$ and $u_0\in \mathcal{B}_{p,q}(M)$ such that  
\begin{equation}
\label{eq:lower_order_treatment_proof}
\big(\E\|\vcut^N\|_{L^{p_0}(0,T;L^{q_0})}^{p_0} \big)^{1/p_0}\leq C_1 +\frac{1}{2}
\big(\E\|\vcut^N\|_{L^{p_0}(0,T;L^{q_0})}^{p_0} \big)^{1/p_0}.
\end{equation}
Using the latter and \eqref{eq:nonlinearity_cutoff_bounds} in the estimate \eqref{eq:vcut_N_uniform_estimate_proof}, we obtain the existence of a constant $C_0\geq 1$  independent of $T<\infty$, $N\geq 1$ and $u_0\in \mathcal{B}_{p,q}(M)$ for which
$$
\big(\E\|\vcut^N\|_{L^{p_0}(0,T;L^{q_0})}^{p_0}\big)^{1/p_0}
\leq C_0,
$$
where we absorbed on the left-hand side the last term in \eqref{eq:lower_order_treatment_proof}.
The claimed estimate in Lemma \ref{l:uniform_N_estimate_LpLq} now follows from the above and Fatou's lemma by letting $T\uparrow \infty$.
\end{proof}

We conclude this section with the following standard result, which is a well-known consequence of the global energy balance \eqref{eq:energy_balance_vcut} and the structure of the noise (see Subsection \ref{ss:probabilistic}).

\begin{lemma}[Uniform-in-$N$ time regularity]
\label{l:time_regularity}
Fix $R,M,\mu>0$, and let $p,q\in (2,\infty)$ be such that \eqref{eq:condition_pq_subcritical_scaling_cutoff} holds.
For all $b\in (2,\infty)$ and $T\in (0,\infty)$, let $\vcut^N$ be the unique global $(p,q)$-solution to \eqref{eq:navier_stokes_cutoff} provided by Lemma \ref{l:global_cutoff} for some $u_0\in \mathcal{B}_{p,q}(M)$. 
Set 
\begin{equation}
\label{eq:martingale_vcutN_def}
\mathcal{M}^N(t)\stackrel{{\rm def}}{=}\sum_{k,\alpha} \theta_k^N \int_0^t (\sigma_{k,\alpha}\cdot \nabla) \vcut^N\,\dd W^{k,\alpha}_s.
\end{equation}
Then there exist constants $K_T,\kappa,\g_0,\g_1>0$ independent of $u_0\in \mathcal{B}_{p,q}(M)$ and $N\geq 1$ such that 
\begin{align}
\label{eq:time_regularity1}
\E [\|\mathcal{M}^N \|_{C^{\g_0}(0,T;H^{-\g_1})}^{b}]
&\leq K_T N^{-\kappa b} (1+\|u_0\|_{B^{1-2/p}_{q,p}}^{b}),\\
\label{eq:time_regularity2}
\E [\|\vcut^N \|_{C^{\g_0}(0,T;H^{-\g_1})}^{b}]
&\leq K_T (1+\|u_0\|_{B^{1-2/p}_{q,p}}^{b}),
\end{align}
where $\g_0,\g_1$ are additionally independent of $T>0$.
\end{lemma}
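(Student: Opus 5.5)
The plan is to use the Stratonovich-to-It\^o reformulation of \eqref{eq:navier_stokes_cutoff}, as in Subsection \ref{ss:ito_reformulation}. This writes $\vcut^N$ as
\begin{equation*}
\vcut^N(t)=w_0+\int_0^t \mathcal{D}^N(s)\,\dd s+\sqrt{\tfrac{3\mu}{2}}\,\p\mathcal{M}^N(t),
\end{equation*}
where the drift collects $(1+\mu)\Delta \vcut^N$, the corrector $-\mathcal{P}\vcut^N$, the transport term by $\overline{u}_0$ and the cut-off nonlinearity. We first prove \eqref{eq:time_regularity1}, then \eqref{eq:time_regularity2}, using only the pathwise energy balance \eqref{eq:energy_balance_vcut}, which gives $\sup_t\|\vcut^N\|_{L^2}+\|\nabla\vcut^N\|_{L^2(\R_+\times\T^3)}\lesssim \|w_0\|_{L^2}\lesssim_{p,q}\|u_0\|_{B^{1-2/p}_{q,p}}$ almost surely.

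\textbf{Martingale part.} For $s<t\le T$, BDG in the Hilbert space $H^{-\g_1}$ gives
\begin{equation*}
\E\|\mathcal{M}^N(t)-\mathcal{M}^N(s)\|_{H^{-\g_1}}^b\lesssim \E\Big(\int_s^t\sum_{k,\alpha}(\theta^N_k)^2\|(\sigma_{k,\alpha}\cdot\nabla)\vcut^N\|_{H^{-\g_1}}^2\Big)^{b/2}.
\end{equation*}
We write $(\sigma_{k,\alpha}\cdot\nabla)\vcut^N=\nabla\cdot(\vcut^N\otimes\sigma_{k,\alpha})$ and bound $\|\cdot\|_{H^{-\g_1}}\le\|\vcut^N\otimes\sigma_{k,\alpha}\|_{H^{1-\g_1}}$. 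Taking $\g_1>1+3/2$ (for instance $\g_1=3$), in Fourier variables the sum over $k$ of $(\theta_k^N)^2|\widehat{\vcut^N}(j-k)|^2(1+|j|^2)^{1-\g_1}$ is bounded by $\|\theta^N\|_{\ell^\infty}^2\|\vcut^N\|_{L^2}^2\sum_j(1+|j|^2)^{1-\g_1}$. This is the standard Bessel/Parseval computation of \cite{FGL21}, and mirrors Lemma \ref{l:Bessel_property} with $\delta=1$.

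Since $\|\theta^N\|_{\ell^\infty}\lesssim N^{-3/2}$ by \eqref{eq:choice_thetaN}, the integrand is at most $CN^{-3}\sup_t\|\vcut^N\|_{L^2}^2$. Hence
\begin{equation*}
\E\|\mathcal{M}^N(t)-\mathcal{M}^N(s)\|_{H^{-\g_1}}^b\lesssim N^{-3b/2}|t-s|^{b/2}\|u_0\|^b_{B^{1-2/p}_{q,p}}.
\end{equation*}
Kolmogorov's continuity criterion, in its moment form for H\"older norms and with $b>2$, then yields \eqref{eq:time_regularity1} with $\kappa=3/2$ and any $\g_0<1/2-1/b$. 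Since $\mathcal{M}^N(0)=0$, this also controls the sup norm. One point to check is that $b>2$ gives a positive $\g_0$. If $\g_0$ must be independent of $b$, we run the argument at a fixed larger moment and use H\"older in $\O$.

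\textbf{Drift and full process.} We bound the drift in $L^2(0,T;H^{-\g_1})$ pathwise, uniformly in $N$.
\begin{itemize}
\item $\|\Delta\vcut^N\|_{H^{-1}}\le\|\nabla\vcut^N\|_{L^2}$, which is square-integrable in time by \eqref{eq:energy_balance_vcut}.
\item The corrector satisfies $\mathcal{L}=\mu\Delta-\mathcal{P}$. Moreover $\mathcal{P}\vcut^N=\tfrac{3\mu}{2}\sum\theta_k\p\nabla\cdot(\nabla\wt P_{k,\alpha}\otimes\sigma_{-k,\alpha})$, and Cauchy--Schwarz with $\|\theta^N\|_{\ell^2}=1$ gives $\|\mathcal{P}\vcut^N\|_{H^{-1}}\lesssim\|\nabla \vcut^N\|_{L^2}$.
\item The transport term satisfies $\|\nabla\cdot(\overline{u}_0\otimes\vcut^N)\|_{H^{-1}}\le|\overline{u}_0|\,\|\vcut^N\|_{L^2}$.
\item For the nonlinearity we use $L^1\embed H^{-\g_1+1}$, valid since $\g_1-1>3/2$. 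This gives $\|\phi\,\p\nabla\cdot(\vcut^N\otimes\vcut^N)\|_{H^{-\g_1}}\lesssim\|\vcut^N\|_{L^2}^2$, which is bounded in time.
\end{itemize}
Therefore $\|\int_s^t\mathcal{D}^N\|_{H^{-\g_1}}\lesssim|t-s|^{1/2}C_T(1+\|u_0\|^2)$. The quadratic power is harmless: we can instead use $\|\vcut^N\otimes\vcut^N\|_{L^1}\le\|w_0\|^2$ with $M$-dependence absorbed into $K_T$, since $u_0\in\mathcal{B}_{p,q}(M)$.

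Combining these bounds gives the H\"older estimate of the drift with exponent $1/2$. Adding \eqref{eq:time_regularity1}, where $\p$ is bounded on $H^{-\g_1}$, and $\|w_0\|_{H^{-\g_1}}\le\|w_0\|_{L^2}$, yields \eqref{eq:time_regularity2}. The main obstacle is the martingale covariance sum; the rest is bookkeeping.
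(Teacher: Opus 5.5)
Your proposal is correct and follows essentially the same route as the paper, which simply defers to the standard argument of \cite[p.\ 1779]{FGL21} and \cite[Lemma 6.3]{A22} combined with the energy balance \eqref{eq:energy_balance_vcut}. That argument consists of BDG together with the Fourier bound $\sum_{k,\alpha}(\theta^N_k)^2\|(\sigma_{k,\alpha}\cdot\nabla)\vcut^N\|_{H^{-\g_1}}^2\lesssim \|\theta^N\|_{\ell^\infty}^2\|\vcut^N\|_{L^2}^2\lesssim N^{-3}\|\vcut^N\|_{L^2}^2$ for the martingale, Kolmogorov's criterion, and pathwise control of the drift in a negative Sobolev space. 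This is exactly what you carry out.
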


The proof of the above follows verbatim from the arguments in either \cite[p.\ 1779]{FGL21} or \cite[Lemma 6.3]{A22} combined with the energy balance \eqref{eq:energy_balance_vcut}.

\subsection{A compactness result on the half-line}
\label{ss:compactness}
We begin by introducing some notation. Let $X$ be a Banach space. For a sequence $\mathbf{a}=(a_n)_{n}$ of (strictly) positive numbers and $\g>0$, we let $C_{\loc}^{\mathbf{a},\g}([0,\infty);X)$ be the set of all maps $f:[0,\infty)\to X$ such that $f|_{[0,n]}\in C^{\g}(0,n;X)$ for all $n$ and
$$
\|f\|_{C_{\loc}^{\mathbf{a},\g}([0,\infty);X)}
\stackrel{{\rm def}}{=} \sup_{n} \big( a_n \, \|f\|_{C^{\g}(0,n;X)}\big)<\infty.
$$
Clearly, $C_{\loc}^{\mathbf{a},\g}([0,\infty);X)$ is a Banach space.
Moreover, for $\eta>0$, we denote by $C_{\eta}([0,\infty);X)$ the set of all continuous maps $f:\R_+\to X$ such that 
$$
\|f\|_{C_{\eta}([0,\infty);X)}
\stackrel{{\rm def}}{=}
 \sup_{t\in \R_+} (e^{\eta t }\|f(t)\|_{X})<\infty.
$$
Finally, $C_{0}([0,\infty);X)$ denotes the set of bounded and continuous functions vanishing at infinity with the sup-norm.
The main result of this subsection reads as follows.

\begin{lemma}[Compactness on the half-line]
\label{l:compactness}
Fix 
$$
\g,\sigma,\eta,\varepsilon>0, \qquad 1<p_0,q_0<\infty, \qquad p_1\in (2,p_0),\ q_1\in (2,q_0),
$$
and a sequence $\mathbf{a}=(a_n)_{n}$ of strictly positive numbers. Let
\begin{align*}
\Y
&=C_{\loc}^{\mathbf{a},\g}([0,\infty);H^{-\sigma}) \cap C_{\eta}([0,\infty);L^2)\cap  L^2 (\R_+;H^1)\cap L^{p_0}(\R_+;L^{q_0}),\\
\X
&=L^{p_1}(\R_+;L^{q_1})\cap C_{0}([0,\infty);H^{-\varepsilon}).
\end{align*}
Then, the embedding $\Y\embed \X$ is compact. Moreover, for all $K\geq 1$, 
$$
\X_{K}=\big\{f\in \X\,:\, \|f\|_{L^2(\R_+;H^1)}\leq K\big\}
$$
is a closed subset of $\X$.
\end{lemma}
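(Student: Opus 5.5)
The plan is to show that every bounded sequence $(f_k)_k\subset\Y$ has a subsequence that is Cauchy in $\X$. We combine a local Aubin--Lions/Arzelà--Ascoli argument on each finite interval $[0,n]$ with a tail estimate at infinity coming from the exponential weight in $C_\eta([0,\infty);L^2)$. Concretely, let $\|f_k\|_{\Y}\le 1$. Then, for every $n$:
\begin{itemize}
\item $f_k|_{[0,n]}$ is bounded in $C^{\g}(0,n;H^{-\sigma})$ with constant $a_n^{-1}$;
\item $f_k|_{[0,n]}$ is bounded in $L^\infty(0,n;L^2)\cap L^2(0,n;H^1)$.
\end{itemize}
On the torus, $L^2\embed H^{-\varepsilon}$ compactly and $H^{-\varepsilon}\embed H^{-\sigma}$ (we may assume $\sigma\ge\varepsilon$). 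By Arzelà--Ascoli combined with the interpolation inequality $\|g\|_{H^{-\varepsilon}}\le \|g\|_{H^{-\sigma}}^{\theta}\|g\|_{L^2}^{1-\theta}$, the family is relatively compact in $C([0,n];H^{-\varepsilon})$. Similarly, the Aubin--Lions--Simon lemma applied to $L^2(0,n;H^1)\cap C^{\g}(0,n;H^{-\sigma})$ gives relative compactness in $L^2(0,n;L^2)$. A diagonal argument over $n\in\N$ then yields a subsequence converging in $C([0,n];H^{-\varepsilon})$ and in $L^2(0,n;L^2)$ for every $n$.

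Next we upgrade the local convergence to $L^{p_1}(0,n;L^{q_1})$. Since $p_1<p_0$ and $q_1<q_0$, we can choose $\vartheta\in(0,1)$ and an auxiliary pair $(\tilde p,\tilde q)$ with $\tilde p\le 2$ or close to it, such that $L^{p_1}_tL^{q_1}_x$ is an interpolation space between $L^{\tilde p}(0,n;L^{\tilde q})$, which is controlled by the $L^2(0,n;L^2)$ convergence together with the $L^\infty L^2$ bound (giving convergence in $L^{s}(0,n;L^2)$ for all $s<\infty$), and $L^{p_0}(0,n;L^{q_0})$, where the sequence is only bounded. Hölder's inequality in the mixed-norm scale then gives
\[
\|f\|_{L^{p_1}L^{q_1}}\le \|f\|_{L^{s}L^{2}}^{1-\vartheta}\|f\|_{L^{p_0}L^{q_0}}^{\vartheta},
\]
which is possible since $q_1>2$ lies strictly between $2$ and $q_0$. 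This yields local convergence in $L^{p_1}(0,n;L^{q_1})$.

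The main step is the tail control, which ensures uniformity on $\R_+$. For $t\ge n$ we have $\|f_k(t)\|_{H^{-\varepsilon}}\le\|f_k(t)\|_{L^2}\le e^{-\eta n}$, so the $C_0([0,\infty);H^{-\varepsilon})$ distance on $[n,\infty)$ is at most $2e^{-\eta n}$. For the Lebesgue part, the same interpolation as above, now carried out on $(n,\infty)$, gives
\[
\|f_k\|_{L^{p_1}(n,\infty;L^{q_1})}\lesssim \|f_k\|_{L^{s}(n,\infty;L^2)}^{1-\vartheta}\cdot 1 .
\]
Here we choose $s$ finite via Hölder so that the exponents match, and then $\|f_k\|_{L^s(n,\infty;L^2)}\lesssim e^{-\eta n}$. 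The difficulty is that the exponent bookkeeping must allow a finite $s$; if the Hölder matching forces $s=\infty$, we use instead $\|e^{-\eta t}\|_{L^s(n,\infty)}$ decay directly. Combining the local convergence with the tail bounds gives the Cauchy property in $\X$, and also shows that the limit belongs to $C_0$.

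Finally, closedness of $\X_K$ follows from lower semicontinuity of the norm. If $f_j\to f$ in $\X$ with $\|f_j\|_{L^2H^1}\le K$, then up to a subsequence $f_j\rightharpoonup g$ weakly in $L^2(\R_+;H^1)$ with $\|g\|\le K$. Convergence in $\X$ implies distributional convergence, so $g=f$ and $\|f\|_{L^2H^1}\le K$.
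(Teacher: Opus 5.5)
Your proof is correct and follows essentially the same route as the paper: local compactness via Arzel\`a--Ascoli and a diagonal argument (you add Aubin--Lions--Simon for $L^2(0,n;L^2)$, where the paper instead interpolates the $H^{-\varepsilon}$ convergence against the $L^2(\R_+;H^1)$ bound), uniform tail control from the exponential weight in $C_\eta([0,\infty);L^2)$, H\"older interpolation against the $L^{p_0}(\R_+;L^{q_0})$ bound, and weak lower semicontinuity for the closedness of $\X_K$. Your hedge about the case $s=\infty$ is unnecessary: $p_1<p_0$ forces $s=(1-\vartheta)/(1/p_1-\vartheta/p_0)\in(0,\infty)$, and in any case the bound $\|f_k\|_{L^s(n,\infty;L^2)}\lesssim e^{-\eta n}$ holds for every $s\in(0,\infty]$.
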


\begin{proof}
Note that $\Y$ is a Banach space. Next, we prove that, for all $M_0\geq 0$,
\begin{equation}
\label{eq:compactness_set_M_YX}
\{v\in \Y\,:\, \|v\|_{\Y}\leq M_0\} \ \text{ is pre-compact in }\ \X.
\end{equation}
To this end, it suffices to prove that for each sequence $(f_n)_{n}\subseteq \Y$ such that $\sup_n\|f_n\|_{\Y}\leq M_0$ there exists a subsequence $(f_{n_j})_j$ that converges in $\X$.  

To begin, as $(f_n)_n\subseteq C_{\loc}^{\mathbf{a},\g}([0,\infty);H^{-\sigma})$, by the Arzel\`a-Ascoli theorem and a diagonal argument, for all $\sigma_0>\sigma$, there exists a (not-relabeled) subsequence of $(f_{n})_n$ such that 
\begin{equation}
\label{eq:locally_convergence_sequence}
\text{ for all }T<\infty, \quad
f_n \to f \ \text{ in } \ C([0,T];H^{-\sigma_0}).
\end{equation}
Next, fix $\sigma_0>\sigma$. We claim that $f_n\to f$ in $C_0([0,\infty);H^{-\sigma_0})$. Indeed, for each $\varepsilon>0$, fix $T_\varepsilon>0$ so that, for all $t\geq T_\varepsilon$ and $n\geq 1$,
$$
\|f_n(t)\|_{H^{-\sigma_0}}\leq C_{\sigma_0,\sigma}M_0 e^{-\eta T_\varepsilon}\leq \varepsilon/3\ .
$$
Due to \eqref{eq:locally_convergence_sequence}, it also holds that $\|f(t)\|_{H^{-\sigma_0}}\leq \varepsilon/3$ for $t\geq T_\varepsilon$.
Moreover, \eqref{eq:locally_convergence_sequence} ensures the existence of $N_\varepsilon$ for which $\|f_n-f\|_{C([0,T_\varepsilon];H^{-\sigma_0})}\leq \varepsilon/3$ for all $n\geq N_\varepsilon$. Therefore, for all $n \geq N_\varepsilon$,
\begin{align*}
\sup_{t\in \R_+}\|f_n(t)-f(t)\|_{H^{-\sigma_0}}
\leq  \sup_{t\leq T_\varepsilon}\|f_n(t)-f(t)\|_{H^{-\sigma_0}}
+ \sup_{t> T_\varepsilon}\|f_n(t)-f(t)\|_{H^{-\sigma_0}}\leq \varepsilon.
\end{align*}
This together with the uniform bound of $(f_n)_n$ in $C_{\eta}([0,\infty);L^2)$ implies $f_n \to f$ in $C_0([0,\infty); H^{-\sigma_0})$. Combining this, standard interpolation inequalities, and the uniform bound in $C_\eta([0,\infty);L^2)$, one has $f_n\to f$ in $C_{\beta}([0,\infty); H^{-\varepsilon})$ for all $\varepsilon\in (0,1)$ and $\beta>0$ small depending on $\varepsilon$. 
By the arbitrariness of $\varepsilon\in (0,1)$ and the uniform bound in $L^2(\R_+;H^1)$, it follows from interpolation that
$$
f_n \to f \text{ in }L^{r}(\R_+;L^2) \ \text{ for all }\ r\in (0,\infty).
$$
Now, one can readily check that, for each $p_1\in (2,p_0)$, $q_1\in (2,q_0)$, there exists $\varphi\in (0,1)$ and $r\in (0,\infty)$ for which
$$
\frac{1}{q_1}=\frac{1-\varphi}{2}+\frac{\varphi}{q_0}\quad \text{ and }\quad 
\frac{1}{p_1}=\frac{1-\varphi}{r}+\frac{\varphi}{p_0},
$$
and thus, as $n\to \infty$,
\begin{align*}
\|f-f_n\|_{L^{p_1}(\R_+;L^{q_1})}
&\lesssim \|f-f_n\|_{L^{r}(\R_+;L^2)}^{1-\varphi}\|f-f_n\|_{L^{p_0}(\R_+;L^{q_0})}^\varphi \\
&\lesssim_M  \|f-f_n\|_{L^{r}(\R_+;L^2)}^{1-\varphi}\to 0.
\end{align*}
This proves \eqref{eq:compactness_set_M_YX} and concludes the first part of the proof. The closedness of $\X_K$ in $\X$ follows from the lower semicontinuity of $L^2$ and $L^2(\R_+;H^1)$ norms.
\end{proof}

\subsection{Proof of Theorem \ref{t:scaling_limit_cutoff}}
\label{ss:proof_scaling_limit}
In this subsection, we finally prove Theorem \ref{t:scaling_limit_cutoff}. As usual in the case of scaling limits (see e.g., \cite[Theorem 6.1]{A22} or \cite[Proposition 3.7]{FGL21}), the underlying compactness argument naturally produces solutions to \eqref{eq:navier_stokes_cutoff_det} that are weaker than $(p,q)$-solutions. We make this precise in the following

\begin{definition}[Weak solutions to the 3D NSEs with cutoff]
\label{def:NSE_cutoff}
Fix $R,\mu,\varepsilon>0$ and $q,p\in (2,\infty)$ satisfying \eqref{eq:condition_pq_subcritical_scaling_cutoff}. Let $u_0\in \Bs^{1-2/p}_{q,p}(\T^3)$ and 
$$
\vcutd \in L^{2p}(\R_+;\Ls^{2q}(\T^3))\cap L^2_{\loc}([0,\infty);\Hs^1(\T^3))\cap C([0,\infty);\Hs^{-\varepsilon}(\T^3)) .
$$
We say that $\vcutd$ is a weak $(p,q)$-solution to the {{\normalfont{3D NSEs}}} with cutoff \eqref{eq:navier_stokes_cutoff_det} if for all divergence-free vector fields $\varphi\in C^{\infty}(\T^3;\R^3)$ and all $t\in \R_+$, it holds that 
\begin{align*}
\langle \vcutd(t),\varphi\rangle
&=\int_{\T^3} \varphi\cdot (u_0-\overline{u}_0)
+\Big(1+\frac{3\mu}{5}\Big)\int_0^t\int_{\T^3}
\, \vcutd\cdot\Delta \varphi \,\dd x \,\dd s\\
&+\int_0^t\int_{\T^3}
\big[ (\vcutd\otimes \overline{u}_0)+ \phi_{p,q}^R(\cdot,\vcutd)\,(\vcutd\otimes \vcutd)\big] : \nabla \varphi\,\dd x \,\dd s.
\end{align*}
\end{definition}

As a rule of thumb, due to the cutoff in a subcritical space, weak $(p,q)$-solutions to \eqref{eq:navier_stokes_cutoff_det} are actually smooth and unique.

\begin{lemma}[Uniqueness of weak solutions]
\label{l:uniqueness_lemma_weak_sol}
Fix $R,\mu>0$ and $p,q\in (2,\infty)$ satisfying \eqref{eq:condition_pq_subcritical_scaling_cutoff}. Then, for all $u_0\in \Bs^{1-2/p}_{q,p}(\T^3)$, weak solutions to \eqref{eq:navier_stokes_cutoff_det} are unique, and coincide with the $(p,q)$-solution provided by Lemma \ref{l:global_cutoff}. Moreover, if $(u_0^N)_N$ is a sequence of initial data weakly converging to $u_0$ in $B^{1-2/p}_{q,p}$, then 
\begin{equation}
\label{eq:weak_continuity_implies_strong_compactness}
\vcutd^N \to \vcutd \text{ in }L^{2p}(\R_+;L^{2q}(\T^3;\R^3))
\end{equation}
where $\vcutd^N$ denotes the solution to \eqref{eq:navier_stokes_cutoff_det} with initial data $u_0^N-\int_{\T^3}u_0^N$.
\end{lemma}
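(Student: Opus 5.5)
The plan is to prove uniqueness first, then upgrade any weak $(p,q)$-solution to the $(p,q)$-solution of Lemma \ref{l:global_cutoff}, and finally obtain \eqref{eq:weak_continuity_implies_strong_compactness} by compactness combined with uniqueness.

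\emph{Uniqueness and identification.} Let $\vcutd$ be a weak $(p,q)$-solution. The key observation is that the cutoff depends only on the path up to time $t$, so $t\mapsto \phi_{p,q}^R(t,\vcutd)$ is a deterministic, nonincreasing, continuous scalar function in $[0,1]$. Set
\[
f\stackrel{{\rm def}}{=}-\phi_{p,q}^R(\cdot,\vcutd)\,(\vcutd\otimes\vcutd)-\vcutd\otimes\overline{u}_0 .
\]
Since $\vcutd\in L^{2p}(\R_+;L^{2q})$, we have $f\in L^p_{\loc}([0,\infty);L^q)$. Consequently $\vcutd$ is a weak solution of the linear Stokes problem with viscosity $1+\frac{3\mu}{5}$, forcing $\p\nabla\cdot f$ and initial datum $u_0-\overline{u}_0\in B^{1-2/p}_{q,p}$. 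Uniqueness of weak (distributional, $L^2_{\loc}H^1$) solutions for this linear problem, e.g.\ by testing against Fourier modes, gives that $\vcutd$ coincides with the mild solution. Deterministic maximal $L^p$-regularity then places $\vcutd$ in $L^p_{\loc}(H^{1,q})\cap C(B^{1-2/p}_{q,p})$, so $\vcutd$ is a $(p,q)$-solution.

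Uniqueness among $(p,q)$-solutions follows from the uniqueness part of Lemma \ref{l:global_cutoff} in its deterministic version. Alternatively, one can argue directly: take the difference of two solutions, use that $\phi$ is Lipschitz and the nonlinearity is locally Lipschitz $L^{2p}(0,t;L^{2q})\to L^p(0,t;L^q)$, and close with maximal regularity on short intervals.

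\emph{Continuity in the data.} Let $u^N_0\rightharpoonup u_0$ weakly in $B^{1-2/p}_{q,p}$, so $\sup_N\|u^N_0\|\le M$. Running the argument of Lemma \ref{l:uniform_N_estimate_LpLq} without noise gives uniform bounds for $\vcutd^N$ in three norms:
\begin{itemize}
\item in $L^{p_0}(\R_+;L^{q_0})$ with $p_0>2p$ and $q_0>2q$, using maximal regularity together with the cutoff bound \eqref{eq:nonlinearity_cutoff_bounds1};
\item in $C_\eta([0,\infty);L^2)\cap L^2(\R_+;H^1)$, using the energy decay \eqref{eq:decay_energy_vcut};
\item in $C^{\mathbf{a},\g}_{\loc}(H^{-\sigma})$, read off from the equation.
\end{itemize}
Lemma \ref{l:compactness} then shows that every subsequence has a further subsequence converging in $L^{p_1}(\R_+;L^{q_1})$ with $p_1>2p$, $q_1>2q$, and in $C_0([0,\infty);H^{-\varepsilon})$. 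Interpolating with the $L^2$ decay gives convergence in $L^{2p}(\R_+;L^{2q})$.

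It remains to pass to the limit in the weak formulation. Convergence in $L^{2p}(L^{2q})$ makes $\phi_{p,q}^R(t,\cdot)$ converge for each $t$. Moreover, $\vcutd^N\otimes\vcutd^N\to \vcutd\otimes\vcutd$ in $L^p(L^q)$, and the averages $\int u_0^N\to\overline{u}_0$. Hence the limit is a weak $(p,q)$-solution with datum $u_0$. By the uniqueness shown above it equals $\vcutd$, so the whole sequence converges.

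The main obstacle is obtaining the uniform-in-$N$ subcritical bound in $L^{p_0}(\R_+;L^{q_0})$ on the half-line. It is handled exactly as in Lemma \ref{l:uniform_N_estimate_LpLq}: interpolate the lower-order term against the exponentially decaying energy and absorb it.
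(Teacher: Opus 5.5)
Your proposal is correct and follows essentially the paper's route: upgrade any weak solution to a $(p,q)$-solution via deterministic maximal $L^p(L^q)$-regularity and invoke the uniqueness of Lemma \ref{l:global_cutoff}, then combine uniform half-line bounds (energy decay, the cutoff bound, mean-free maximal regularity with the transport term absorbed by interpolation) with Lemma \ref{l:compactness} and uniqueness. The differences are cosmetic: you place $\vcutd\otimes\overline{u}_0$ in the forcing instead of treating it as a perturbation of the operator, and your final interpolation step is unnecessary because Lemma \ref{l:compactness} already applies with $(p_1,q_1)=(2p,2q)$. Also, the claimed monotonicity of $t\mapsto\phi^R_{p,q}(t,\vcutd)$ is not guaranteed since $\phi$ need not be monotone, but you never use it.
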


\begin{proof}
We begin by proving the uniqueness. The idea is to prove that any weak solution is actually a $(p,q)$-solution to \eqref{eq:navier_stokes_cutoff_det}, and thus uniqueness follows from Lemma \ref{l:global_cutoff}. Recall that, from the definition of weak solutions to \eqref{eq:navier_stokes_cutoff_det}, it follows that 
$$
\vcutd\in L^{2p}(0,T;L^{2q}(\T^3;\R^3)) \ \ \text{ for all }T<\infty.
$$
Next, fix $T<\infty$. From the above, we have 
$$
\phi_{p,q}^R(\cdot,\vcutd) (\vcutd\otimes \vcutd)\in L^{p}(0,T;L^{q}(\T^3;\R^{3\times 3})).
$$
By the well-known maximal $L^p(L^q)$-regularity of the operator $v\mapsto (1+\frac{3\mu}{5})\Delta v$ on the space $H^{-1,q}(\T^3)$ with domain $H^{1,q}(\T^3)$ (e.g., the periodic version of \cite[Theorem 17.4.1]{Analysis3}) as well as a standard perturbation argument to deal with the lower order operator $v\mapsto -(\overline{u}_0\cdot \nabla ) v$, it follows that 
\begin{equation}
\label{eq:LpH1q_solutions_weak}
\vcutd \in L^{p}(0,T;H^{1,q}(\T^3))\cap W^{1,p}(0,T;H^{-1,q}(\T^3)).
\end{equation}
Therefore, $\vcutd$ is a $(p,q)$-solution to \eqref{eq:navier_stokes_cutoff_det} and uniqueness holds by Lemma \ref{l:global_cutoff} (cf.\ Definition \ref{def:p_solution}).

Next, we prove \eqref{eq:weak_continuity_implies_strong_compactness}. Let $(u_0^N)_N$ be as in the statement of Lemma \ref{l:uniqueness_lemma_weak_sol}, and set $K\stackrel{{\rm def}}{=}\sup_N \|u_0^N\|_{B^{1-2/p}_{q,p}(\T^3)}<\infty$. As in \eqref{eq:decay_energy_vcut}, due to $\int_{\T^3} \vcutd(t)=0$ for all $t>0$ and the energy and Poincar\'e's inequalities, it follows that 
$$
\sup_N\int_0^\infty \|\nabla \vcutd^N\|_{L^2}^2\,\dd s \lesssim K^2 \quad \text{ and }\quad
\sup_N\|\vcutd^N(t)\|_{L^2}^2 \lesssim K^2 e^{-C t},
$$
for some universal constant $C>0$.
Again, from the argument below \eqref{eq:G_smoothness2}, we have  
\begin{equation}
\label{eq:embedding_Lp0Lq0_proof_uniqueness}
L^{p}(\R_+;H^{1,q}(\T^3))\cap W^{1,p}(\R_+;H^{-1,q}(\T^3))
\subseteq L^{p_0}(\R_+;L^{q_0}(\T^3))
\end{equation}
for some $p_0\in (2p,\infty)$, $q_0\in (2q,\infty)$ depending only on $q$ and $p$. Let $H^{\sigma,q}_{{\rm mf}}(\T^3)$ be the subspace of mean-free distributions of $H^{\sigma,q}(\T^3)$ where $\sigma\in \R$.
Now, the invertibility of $-\Delta: H^{1,q}_{{\rm mf}}(\T^3)\subseteq H^{-1,q}_{{\rm mf}}(\T^3)\to H^{-1,q}_{{\rm mf}}(\T^3)$ ensures that $L^p(L^q)$-maximal regularity estimates hold on the half-line on such spaces. This and \eqref{eq:embedding_Lp0Lq0_proof_uniqueness} imply
$$
\max\Big\{\sup_N  \|\vcutd^N \|_{C^{1-1/p}(\R_+;H^{-1,q}(\T^3))}\,,\, 
\sup_{N}\|\vcutd^N\|_{L^{p_0}(\R_+;L^{q_0}(\T^3))}\Big\}<\infty,
$$
where the inhomogeneity $\phi_{p,q}^R(\cdot,\vcutd^N) (\vcutd^N\otimes \vcutd^N)$ and $(\overline{u}_0^N\cdot\nabla) \vcutd^N$ are estimated as in \eqref{eq:nonlinearity_cutoff_bounds} and \eqref{eq:lower_order_term_interpolation_estimate_proof_uniform_bound}, respectively. In the above, we also used the Sobolev embedding $W^{1,p}(\R_+;X)\subseteq C^{1-1/p}(\R_+;X)$, which holds for every Banach space $X$.
Hence, \eqref{eq:weak_continuity_implies_strong_compactness} is a consequence of the above, Lemma \ref{l:compactness}, and the just-proved uniqueness of weak solutions to \eqref{eq:navier_stokes_cutoff_det}. 
\end{proof}

With the above results at our disposal, we can give the proof of Theorem \ref{t:scaling_limit_cutoff}.

\begin{proof}[Proof of Theorem \ref{t:scaling_limit_cutoff}]
As usual in the context of scaling limits for SPDEs (see e.g., \cite{A22,FGL21,FL19}), in light of the results of Subsections \ref{ss:uniform_estimates_N_cutoff} and \ref{ss:compactness}, we prove Theorem \ref{t:scaling_limit_cutoff} by contradiction. If \eqref{eq:scaling_limit_cutoff} does not hold, then there exist $\varepsilon_0\in (0,1)$ and a sequence $(u_0^N)_N \subseteq \mathcal{B}_{p,q}(M)$ such that 
\begin{equation}
\label{eq:contradiction_scaling_limit}
\lim_{N\to \infty} \P(\|\vcut^N - \vcutd^N\|_{L^{2p}(\R_+;L^{2q})}\geq \varepsilon_0)\geq \varepsilon_0,
\end{equation}
where $\vcut^N$ and $\vcutd^N$ denote the solutions to \eqref{eq:navier_stokes_cutoff} and \eqref{eq:navier_stokes_cutoff_det} with initial data $u_0^N-\int_{\T^3}u_0^N$, respectively.
By reflexivity of $B^{1-2/p}_{q,p}$, without loss of generality, we can assume that $u_0^N$ converges weakly in $B^{1-2/p}_{q,p}$ to some $u_0\in\mathcal{B}_{p,q}(M)$. To obtain the desired contradiction, it suffices to show that, up to extracting a (not relabeled) subsequence,
\begin{align}
\label{eq:scaling_limit_cutoff_proof_1}
\vcut^N &\to  \vcutd\,  \text{ in probability in }  L^{2p}(\R_+;L^{2q}),\\
\label{eq:scaling_limit_cutoff_proof_2}
\vcutd^N &\to \vcutd\,  \text{ in }  L^{2p}(\R_+;L^{2q}),
\end{align}
where $\vcutd$ is the unique $(p,q)$-solution to \eqref{eq:navier_stokes_cutoff_det} with initial data $u_0-\overline{u}_0$, and $u_0$ satisfies $u_0^N \to u_0$ weakly in $B^{1-2/p}_{q,p}$.
Note that \eqref{eq:scaling_limit_cutoff_proof_2} follows from \eqref{eq:weak_continuity_implies_strong_compactness} in Lemma \ref{l:uniqueness_lemma_weak_sol}.
Thus, it remains to prove \eqref{eq:scaling_limit_cutoff_proof_1}.

\smallskip

Let $p_0,q_0\in (2,\infty)$ be as in Lemma \ref{l:uniform_N_estimate_LpLq}. From the latter and \eqref{eq:decay_energy_vcut}, there exist constants $C,\eta>0$ independent of $N$ such that  
\begin{equation}
\label{eq:bound_inform_proof_scaling1}
\E\int_0^{\infty} \|\vcut^N\|_{L^{q_0}}^{p_0} \leq C \qquad \text{ and }\qquad \sup_{t>0} \big(e^{\eta t}\|\vcut^N\|_{L^2}\big) \leq C.
\end{equation}
Let $(K_T)_{T>0}$ be as in Lemma \ref{l:time_regularity}, and set $\mathbf{a}=(1/(2^k K_k^{1/p_0}))_{k\geq 1}$. Then, from Lemma \ref{l:time_regularity} and the continuous embedding $\ell^1\embed \ell^\infty$, it follows that 
\begin{equation}
\label{eq:bound_inform_proof_scaling2}
\E\big[ \|\vcut^N\|_{C_{\loc}^{\mathbf{a},\g_0}([0,\infty);H^{-\g_1})}^{p_0}\big]
\leq \sum_{k\geq 1} 2^{-kp_0}K_{k}^{-1} \E\big[\|\vcut^N\|_{C^{\g_0}(0,k;H^{-\g_1})}^{p_0}\big]
\lesssim_M 1.
\end{equation}
In particular, the implicit constant in the above is independent of $N\geq 1$. 

Fix $\varepsilon\in (0,1)$. 
Prokhorov's theorem together with Lemma \ref{l:compactness} with $p_1=2p$ and $q_1=2q$, and the uniform estimates in \eqref{eq:bound_inform_proof_scaling1} and \eqref{eq:bound_inform_proof_scaling2}, ensures that the laws $(\nu^N)_N$ are pre-compact, where 
$$
\nu^N (A)= \P(\vcut^N \in A) \  \text{ for }\  A \in \Borel(L^{2p}(\R_+;\Ls^{2q})\cap C_0([0,\infty);\Hs^{-\varepsilon})),
$$
and $\Borel$ denotes the Borel sets of $L^{2p}(\R_+;\Ls^{2q}(\T^3))\cap C_0([0,\infty);\Hs^{-\varepsilon}(\T^3))$. Moreover, from the energy balance \eqref{eq:energy_balance_vcut} and its consequence \eqref{eq:decay_energy_vcut}, there exists a constant $K>0$ depending only on $p,q$ and $M$ such that 
$$
\nu^N (\mathcal{S}_{K})=1 \text{ for all }N\geq 1,
$$
where
\begin{align*}
\mathcal{S}_{K}\stackrel{{\rm def}}{=} \Big\{w\in L^{2p}(\R_+;\Ls^{2q})\cap C_0([0,\infty);\Hs^{-\varepsilon}) \,&:\,
 \int_0^\infty \| w(t)\|_{H^1}^2\leq K\text{ and }\\
&\,\langle w(t),1 \rangle=0 \text{ for all }t>0\Big\}.
\end{align*}
Thus, up to extracting a subsequence, $\nu^N \to \nu$ weakly converging in $L^{2p}(\R_+;\Ls^{2q})\cap C_0([0,\infty);\Hs^{-\varepsilon})$ for some probability measure $\nu$. Moreover, as $\nu^N(\mathcal{S}_K)=1$ for all $N$, it follows that $\nu(\mathcal{S}_K)=1$.
Next, we claim that 
\begin{equation}
\label{eq:claim_probability_concentrated_on_weak_solutions}
\nu \big(w\in \mathcal{S}_K\text{ is a weak $(p,q)$-solution to }\eqref{eq:navier_stokes_cutoff_det}\big)=1.
\end{equation}
Next, we show that the above implies \eqref{eq:scaling_limit_cutoff_proof_1} and thus \eqref{eq:contradiction_scaling_limit} cannot hold, therefore proving the claim \eqref{eq:scaling_limit_cutoff}.
Indeed, if \eqref{eq:claim_probability_concentrated_on_weak_solutions} is valid, then Lemma \ref{l:uniqueness_lemma_weak_sol} yields 
$$
\{w\in \mathcal{S}_K\text{ is a weak $(p,q)$-solution to }\eqref{eq:navier_stokes_cutoff_det}\}=\{\vcutd\}.
$$ 
Hence, \eqref{eq:claim_probability_concentrated_on_weak_solutions} implies $\nu=\delta_{\vcutd}$. 
Thus, \eqref{eq:scaling_limit_cutoff_proof_1} is a straightforward consequence of the Portmanteau theorem (see e.g., the argument below \cite[eq.\ (6.12)]{A22} for details).

\smallskip

The proof of \eqref{eq:claim_probability_concentrated_on_weak_solutions} is standard in the context of scaling limits (see e.g., \cite[Proposition 4.2]{FL19}, \cite[Step 1, Theorem 6.1]{A22}, or \cite[Proposition 3.7]{FGL21}). The cancellation of the martingale part in the SPDE \eqref{eq:navier_stokes_cutoff} is a consequence of \eqref{eq:time_regularity1} in Lemma \ref{l:time_regularity}. For the reader's convenience, we give some details. 
Fix a smooth divergence-free vector-field $\phi$ on $\T^3$, and consider the mapping $T_\phi $ defined as 
\begin{align*}
T_\phi (w)
&\stackrel{{\rm def}}{=} \langle w(t),\phi\rangle - \int_{\T^3} \big(u_0-\overline{u}_0\big)\cdot \phi  -\int_0^t \int_{\T^3} \Big(1+\frac{3\mu}{5}\Big) w\cdot \Delta \phi\\
&-\int_0^t \int_{\T^3} (w\otimes \overline{u}_0): \nabla \phi -\int_0^t \int_{\T^3} \phi_{p,q}^R (\cdot,w) [w\otimes w] : \nabla \phi
\end{align*}
for $w\in \mathcal{S}_K$. Next we show that $T_\phi:\mathcal{S}_{K} \to C([0,\infty))$ is continuous, where $C([0,\infty))$ is endowed with the Fr\'echet topology induced by the seminorms $w\mapsto\|w\|_{C([0,M])}$ where $M$ is an integer. To see the latter, it suffices to show that, for each $T<\infty$, the assignment 
$$
\mathcal{S}_{K} \ni w \mapsto\Big[ \int_0^{\cdot} \int_{\T^3} \phi_{p,q}^R (\cdot,w) [w\otimes w] : \nabla \phi\Big] \in C([0,T])
$$ 
is continuous. The latter is an immediate consequence of the continuity of the immersion $\mathcal{S}_{K} \subseteq L^{2p}(0,T;L^{2q}(\T^3;\R^3))$.
In particular, $T_\phi^\# \nu^N \to T_\phi^\# \nu$ weakly as probability measures on $C([0,\infty))$, where $T_\phi^{\#}$ denotes the pushforward of a measure.

Fix $\varepsilon_0\in (0,1)$ and $T<\infty$, and set $A_{\varepsilon_0,T} = \{f\in C([0,\infty))\,:\, \|f\|_{C([0,T])} > \varepsilon_0 \}$. Clearly, $A_{\varepsilon_0,T}$ is an open set of $C([0,\infty))$. Note that  
\begin{align*}
T_{\phi}^\# \nu^N (A_{\varepsilon_0,T})
= \nu^N((T_\phi )^{-1}(A_{\varepsilon_0,T}))\leq \frac{1}{\varepsilon_0^2} \,\E\|T_\phi (\vcut^N)\|_{C([0,T])}^2, 
\end{align*}
and 
\begin{align*}
T_\phi \vcut^N 
&= \int_{\T^3} \big(u_0^N-\overline{u}_0^N-u_0+\overline{u}_0 \big)\cdot \phi -\int_0^t \int_{\T^3} [\vcut^N\otimes (\overline{u}_0-\overline{u}_0^N)]: \nabla \phi\\
&- \int_0^\cdot \int_{\T^3} \vcut^N\cdot  \Big(\mathcal{P}^N-\frac{2\mu}{5}\Delta\Big)\phi +\sqrt{\frac{3\mu}{2}} \langle \mathcal{M}^N , \phi\rangle,
\end{align*}
where $\mathcal{P}^N$ is the It\^o-Stratonovich corrector defined in \eqref{eq:Ito_stratonovich_correction_pressure} with $\theta=\theta^N$ and $\mathcal{M}^N$ is as defined in \eqref{eq:martingale_vcutN_def}.
From the above identity, \eqref{eq:time_regularity1} and the energy inequality combined with the It\^o-Stratonovich convergence result of \cite[Theorem 5.1]{FL19} (or the periodic version of Proposition \ref{prop:error_estimates_operators}\eqref{it:error_estimates_operators2}), it follows that
$$ 
\lim_{N \to \infty }T_{\phi}^\# \nu^N (A_{\varepsilon_0,T}) =0.
$$
Thus, as a consequence of the Portmanteau theorem, the arbitrariness of $\varepsilon_0\in (0,1)$ and $T<\infty$, we have
$$
T_{\phi}^\# \nu (f\,:\, \|f\|_{C([0,T])}=0 \text{ for all }T<\infty) =1.
$$ 
By choosing $\phi$ in $\big\{ e^{2\pi \i k\cdot x}a_{k,\alpha}\,:\, k\in \Z^3_0,\ \alpha\in \{1,2\}\big\}$ (where, as in Subsection \ref{ss:probabilistic}, $\{a_{k,1},a_{k,2}\}$ is an orthonormal basis of $k^\perp$), a standard density argument shows that the above implies \eqref{eq:claim_probability_concentrated_on_weak_solutions} (see Definition \ref{def:NSE_cutoff}).
\end{proof}

\section{Proof of the main results}
\label{s:proof_main_results}
In this section, we prove the results stated in Section \ref{s:statements}. More precisely, Subsections \ref{ss:global_proof} and \ref{ss:enhanced_proof} are devoted to the proofs of Theorem \ref{t:global_NSE} and Corollary \ref{cor:enhanced_dissipation}, respectively.

\subsection{Global smooth solutions -- Proof of Theorem \ref{t:global_NSE}}
\label{ss:global_proof}
Due to Theorem \ref{t:scaling_limit_cutoff}, the last ingredient in the proof of Theorem \ref{t:global_NSE} is the following well-known result on the deterministic 3D NSEs with increased viscosity (see Subsection \ref{ss:scaling_intro}):
\begin{equation}
\label{eq:navier_stokes_cutoff_high_viscosity}
\left\{
\begin{aligned}
\partial_t \ud 
&  +\p\big[ \nabla\cdot (\ud \otimes \ud)\big]= \Big(1 + \frac{3}{5}\mu\Big)\Delta \ud, \\
\ud(0)&=u_0.
\end{aligned}
\right.
\end{equation}
Clearly, the definition of $(p,q)$-solutions given in Definition \ref{def:p_solution} trivially extends to the above system. 
As before, $\overline{u}_0=\int_{\T^3} u_0$ denotes the average of the initial data.

\begin{lemma}[Global well-posedness of 3D NSEs with high viscosity]
\label{l:NS_increased_viscosity}
Let $p,q\in (2,\infty)$ be such that 
\begin{equation}
\label{eq:subcriticality_NS_deterministic}
\frac{2}{p}+\frac{3}{q}<2.
\end{equation}
Then, for all $M\geq 1$, there exist constants $R_0,\mu_0>0$ for which the following assertion holds. For all $\mu\geq \mu_0$ and $u_0\in \Bs^{1-2/p}_{q,p}(\T^3)$ such that 
$$
\|u_0\|_{B^{1-2/p}_{q,p}(\T^3;\R^3)}\leq M,
$$
there exists a unique global-in-time $(p,q)$-solution $\ud$ to the deterministic {{\normalfont{3D NSEs}}} with increased viscosity \eqref{eq:navier_stokes_cutoff_high_viscosity} such that 
$$
\ud\in W^{1,p}_{\loc}([0,\infty);\Hs^{-1,q}(\T^3))\cap L^p_{\loc}([0,\infty);\Hs^{1,q}(\T^3)),$$ 
and it satisfies
\begin{align}
\label{eq:R0_uniform_bound_velocity_det}
\|\ud-\overline{u}_0\|_{L^{2p}(\R_+;L^{2q}(\T^3;\R^3))}\leq R_0.
\end{align}
\end{lemma}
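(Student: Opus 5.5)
We reduce to a small-data problem by a Galilean change of variables combined with a time rescaling that converts large viscosity into small data. Set $\nu=1+\frac{3}{5}\mu$ and $w(t,x)=\ud(t,x+\overline{u}_0 t)-\overline{u}_0$. Since the mean is preserved by \eqref{eq:navier_stokes_cutoff_high_viscosity}, $w$ has zero mean. It solves the same system with initial datum $w_0=u_0-\overline{u}_0$, because the transport term $(\overline{u}_0\cdot\nabla)$ is absorbed by the translation. Translations preserve all $L^{2q}$ and Besov norms, so it suffices to prove global existence together with the bound \eqref{eq:R0_uniform_bound_velocity_det} for $w$.

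Next we rescale time by setting $W(s,x)=\nu^{-1}w(s/\nu,x)$. Then $W$ solves the NSEs with unit viscosity and initial datum $W(0)=\nu^{-1}w_0$. Its size is
\[
\|W(0)\|_{B^{1-2/p}_{q,p}}\lesssim \nu^{-1}M,
\]
which is small once $\mu\geq\mu_0$. For mean-zero data, small in the subcritical space $B^{1-2/p}_{q,p}$ with $\frac{2}{p}+\frac{3}{q}<2$, the standard fixed-point argument yields a unique global solution with exponential decay. This is the main point to verify. We apply maximal $L^p(L^q)$-regularity of $\Delta$ on mean-zero functions on the half-line, using invertibility there as in the proof of Lemma \ref{l:uniqueness_lemma_weak_sol}. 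The solution space $L^p(\R_+;H^{1,q})\cap W^{1,p}(\R_+;H^{-1,q})$ embeds into $L^{2p}(\R_+;L^{2q})$, using the subcriticality $-\frac{1}{p}-\frac{3}{2q}>-1$, the Sobolev-index computation below \eqref{eq:G_smoothness2}, and \eqref{eq:embedding_Lp0Lq0_proof_uniqueness}. Consequently the nonlinearity satisfies
\[
\|W\otimes W\|_{L^p(L^q)}\le \|W\|_{L^{2p}(L^{2q})}^2,
\]
and a contraction in a small ball closes, with a bound
\[
\|W\|_{L^{2p}(\R_+;L^{2q})}\lesssim \nu^{-1}M .
\]
Uniqueness and local regularity of $(p,q)$-solutions are taken from \cite{AV21_NS}. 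The regularity statement $W^{1,p}_{\loc}\cap L^p_{\loc}$ follows from the same maximal regularity estimate, and it transfers back to $\ud$ through the translation.

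Finally we undo the scalings. We have $w(t)=\nu W(\nu t)$, so
\[
\|w\|_{L^{2p}(\R_+;L^{2q})}=\nu^{1-\frac{1}{2p}}\|W\|_{L^{2p}(\R_+;L^{2q})}\lesssim \nu^{-\frac{1}{2p}}M\le M.
\]
This gives \eqref{eq:R0_uniform_bound_velocity_det} with $R_0=CM$, uniformly in $\mu\ge\mu_0$. The main obstacle is only bookkeeping: the smallness threshold of the global fixed point must depend on $(p,q)$ alone, so that $\mu_0$ depends only on $(M,p,q)$. The remaining steps are standard.
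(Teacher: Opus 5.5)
Your argument is correct. The paper does not write out a proof of this lemma: it calls the result standard and refers to \cite[Theorem 5.1]{A24_global_small} for a similar statement. Your proof therefore supplies the missing details in a complete, self-contained way. It consists of three steps:
\begin{enumerate}
\item the Galilean shift $w(t,x)=\ud(t,x+\overline{u}_0t)-\overline{u}_0$;
\item the parabolic time rescaling $W(s)=\nu^{-1}w(s/\nu)$, which turns large viscosity into small mean-zero data of size $\lesssim\nu^{-1}M$ for the unit-viscosity problem;
\item the half-line fixed point in $L^p(\R_+;\Hs^{1,q})\cap W^{1,p}(\R_+;\Hs^{-1,q})$ restricted to mean-zero fields.
\end{enumerate}
It also makes the dependence on the parameters explicit: $\mu_0\simeq M/\varepsilon_0(p,q)$, and in fact $\|\ud-\overline{u}_0\|_{L^{2p}(\R_+;L^{2q})}\lesssim\nu^{-1/(2p)}M$.

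Two pieces of bookkeeping should be made explicit when you write it up.
\begin{itemize}
\item Run the construction forward. Build $W$ by the fixed point, define $\ud(t,y)=\nu W(\nu t,y-\overline{u}_0t)+\overline{u}_0$, check that the computations reverse to show this is a $(p,q)$-solution, and only then invoke uniqueness. As written, you start from a $\ud$ whose global existence is exactly what is being proved.
\item When transferring the time regularity back, differentiating the translated field in $t$ produces an extra term $-(\overline{u}_0\cdot\nabla)w$. This term lies in $L^p(\R_+;L^q)\subset L^p(\R_+;H^{-1,q})$, so membership in $W^{1,p}_{\loc}([0,\infty);\Hs^{-1,q})$ is indeed preserved.
\end{itemize}
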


The proof of the above is standard; see e.g., \cite[Theorem 5.1]{A24_global_small} for a similar result. 
As commented below Theorem \ref{t:global_NSE}, the condition \eqref{eq:subcriticality_NS_deterministic} ensures the subcriticality of $B^{1-2/p}_{q,p}(\T^3)$ for the 3D NSEs, see Subsection \ref{ss:scaling_intro}.
Clearly, solutions provided by Lemma \ref{l:NS_increased_viscosity} are smooth in space for every positive time, and \eqref{eq:R0_uniform_bound_velocity_det} holds with $L^{2p}(\R_+;L^{2q})$ replaced by $L^{p_0}(\R_+;L^{q_0})$ for some $p_0>2p$ and $q_0>2q$ depending only on $p,q$ (cf.\ the argument below \eqref{eq:G_smoothness2}). 

\begin{proof}[Proof of Theorem \ref{t:global_NSE}]
Below, $q,p\in (2,\infty)$, $M\geq 1$ and $\varepsilon\in (0,1)$ are fixed as in Theorem \ref{t:global_NSE}.
Without loss of generality, by compatibility of $(p,q)$-solutions (see e.g., \cite[Remark 2.5]{AV21_NS} or \cite[Corollary 5.11]{AV25_survey}), we assume $1<\frac{2}{p}+\frac{3}{q}<2$, and additionally $p<\frac{q}{3-q}$ in case $q<3$. For instance, we can replace $(p,q)$ by $q_0=3$ and $p_0\in (2,\infty)$ so that 
$$
B^{1-2/p}_{q,p}(\T^3)\embed B^{1-2/p_0}_{3,p_0}(\T^3).
$$ 
The latter is always possible as $p\in (2,\infty)$ and $\frac{2}{p}+\frac{3}{q}<2$. 

From Lemma \ref{l:NS_increased_viscosity}, there exist $\mu>0$ and $R>0$ depending only on $M,q$ and $p$ such that the 3D NSEs \eqref{eq:navier_stokes_cutoff_high_viscosity} have a unique global $(p,q)$-solution such that \eqref{eq:R0_uniform_bound_velocity_det} holds.
From the latter, it follows that 
$$
\vcutd\stackrel{{\rm def}}=\ud-\overline{u}_0
$$
is the unique global $(p,q)$-solution to \eqref{eq:navier_stokes_cutoff_det} with $R=R_0+1$.

Applying Theorem \ref{t:scaling_limit_cutoff} with $R=R_0+1$ and $\mu$ as above, it follows that there exists $N_0$ depending only on $M,q,p$ and $\varepsilon$ such that, for all $u_0\in \mathcal{B}_{p,q}(M)$ and $N\geq N_0$, it holds that 
\begin{equation}
\label{eq:conclusion_proof_global_main_step}
\P(\|\vcut^N-\vcutd \|_{L^{2p}(\R_+;L^{2q})}\leq 1/2)>1-\varepsilon.
\end{equation}
Fix $N \geq N_0$, and let $\tau_0$ be the stopping time given by 
$$
\tau_0 \stackrel{{\rm def}}{=}\inf\{t\in[0,\infty)\,:\, \|\vcut^N\|_{L^{2p}(0,t;L^{2q})}\geq R\} 
$$
where $\inf\emptyset\stackrel{{\rm def}}{=}\infty$. From \eqref{eq:conclusion_proof_global_main_step}, $R=R_0+1$ and \eqref{eq:R0_uniform_bound_velocity_det}, we infer
\begin{equation}
\label{eq:tau0_lives_long_proof_global}
\P(\tau_0=\infty)>1-\varepsilon,
\end{equation}
and 
$$
\phi^R_{p,q}(t,\vcut^N)=1 \ \ \text{ a.s.\ for all }\ t\in [0,\tau_0).
$$
In particular, $(U,\tau_0)$, where $U=\vcut^N+ \int_{\T^3} u_0$, is a local $(p,q)$-solution to the stochastic 3D NSEs \eqref{eq:navier_stokes_intro}.
From the maximality of $(p,q)$-solutions constructed in \cite[Theorem 2.4]{AV21_NS}, it follows that the $(p,q)$-solution $(u,\tau)$ to \eqref{eq:navier_stokes_intro} satisfies $\tau\geq \tau_0$ a.s. 
Thus, the claim \eqref{eq:global_NSE} of Theorem \ref{t:global_NSE} immediately follows from \eqref{eq:tau0_lives_long_proof_global}.
\end{proof}

\subsection{Enhanced dissipation -- Proof of Corollary \ref{cor:enhanced_dissipation}}
\label{ss:enhanced_proof}
Finally, we prove the assertions in Corollary \ref{cor:enhanced_dissipation}. As noticed below Theorem \ref{t:scaling_limit_cutoff}, the conclusion \eqref{eq:scaling_limit_cutoff} also holds with $(2p,2q)$ replaced by $(p_0,q_0)$ for some $p_0>2p$ and $q_0>2q$, which depend only on $(p,q)$. 
In particular, the proof of Theorem \ref{t:global_NSE} in Subsection \ref{ss:global_proof} yields the estimate \eqref{eq:estimate_subcritical_norms_3DNSEs}.
Therefore, \eqref{eq:enhanced_dissipation3} follows by interpolating the latter with \eqref{eq:enhanced_dissipation2} and using the arbitrariness of $\lambda$. 

It remains to show \eqref{eq:enhanced_dissipation2}. Again, by the proof of Theorem \ref{t:scaling_limit_cutoff}, it suffices to show the corresponding bound on the half-line for the $(p,q)$-solution of the corresponding cutoff problem \eqref{eq:navier_stokes_cutoff}:
\begin{equation}
\label{eq:navier_stokes_cutoff_mixing}
\left\{
\begin{aligned}
\partial_t \vcut^N &+ (\overline{u}_0\cdot \nabla) \vcut^N+\phi^{R}_{p,q}(\cdot,\vcut^N)\,\p [\nabla \cdot (\vcut^N\otimes \vcut^N)]\\
&\qquad  
=\Delta \vcut^N +\sqrt{\frac{3\mu}{2}}\sum_{k,\alpha}\theta_k^N\p[ (\sigma_{k,\alpha}\cdot\nabla) \vcut^N]\circ \dot{W}^{k,\alpha}_t,\\
 \nabla \cdot \vcut^N&=0,\\
\vcut^N(0,\cdot)&=u_0-\overline{u}_0.
 \end{aligned}
\right.
\end{equation} 
The following is the last ingredient for the proof of \eqref{eq:enhanced_dissipation2}.
Below, for convenience, we often write $w_0$ instead of $u_0-\overline{u}_0$.

\begin{lemma}
\label{l:iteration_enhanced}
Fix $R>0$, $M\geq 1$ and $\eta\in (0,1)$. Let $p,q\in (2,\infty)$ be such that \eqref{eq:condition_pq_subcritical_scaling_cutoff} holds.
Then, there exist $\mu_0>0$ and $N_0\geq 1$ such that, for all $\mu\geq \mu_0$, $N\geq N_0$ and $n\geq 1$, the $(p,q)$-solution to \eqref{eq:navier_stokes_cutoff_mixing} provided by Lemma \ref{l:global_cutoff} with data $u_0\in \mathcal{B}_{p,q}(M)$ satisfies
\begin{equation}
\label{eq:contraction_iterative_step}
\E\|\vcut^N(n+1)\|^2_{L^2}\leq \eta \,\E\|\vcut^N (n)\|^2_{L^2}.
\end{equation}
\end{lemma}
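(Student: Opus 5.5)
We prove \eqref{eq:contraction_iterative_step} by a restarting argument. The cutoff system \eqref{eq:navier_stokes_cutoff_mixing} is time-homogeneous and its noise has stationary increments, so the Markov property lets us restart at the integer time $n$ with the $\F_n$-measurable datum $\vcut^N(n)$. Note that $\vcut^N(n)$ has zero mean by \eqref{eq:vcut_has_mean_zero}. The cutoff functional $\phi^R_{p,q}$ depends on the whole past, so the restarted problem carries a modified, history-dependent cutoff: $\phi(R^{-1}(A+\|\cdot\|^{2p}_{L^{2p}(n,t;L^{2q})})^{1/2p})$ with $A$ an $\F_n$-measurable number. This cutoff is still bounded by the original one, and all estimates we use (\eqref{eq:nonlinearity_cutoff_bounds}, Lemma \ref{l:uniform_N_estimate_LpLq}, Theorem \ref{t:scaling_limit_cutoff}) are uniform in such a shift. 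We will use the scaling limit on the unit time interval $[n,n+1]$, conditionally on $\F_n$. The difficulty is that Theorem \ref{t:scaling_limit_cutoff} is stated for data in $\mathcal{B}_{p,q}(M)$, whereas $\vcut^N(n)$ only satisfies an $L^2$ bound.

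The first step is to regularize the restart datum in order to reach the Besov ball. By the energy balance \eqref{eq:energy_balance_vcut}, $\int_{n-1/2}^{n}\|\nabla\vcut^N\|^2_{L^2}\le \tfrac12\|\vcut^N(n-\frac12)\|^2_{L^2}$. Hence on $[n-\frac12,n]$ we can pick a (random, but measurably selected) time $s_n$ at which $\|\vcut^N(s_n)\|_{H^1}\lesssim \|\vcut^N(n-\frac12)\|_{L^2}$. Instead of restarting at $n$, we restart at $s_n$. Since $H^1\embed B^{1-2/p}_{q,p}$ fails for large $q$, a cleaner option is to use the $L^{p_0}(L^{q_0})$ bound of Lemma \ref{l:uniform_N_estimate_LpLq} together with interpolation with $H^1$: this gives data in a ball $\mathcal{B}_{\tilde p,\tilde q}(\tilde M)$ for an admissible pair satisfying \eqref{eq:condition_pq_subcritical_scaling_cutoff}, with $\tilde M$ controlled in probability.

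The second step is the scaling limit. Given $\varepsilon_1>0$, Theorem \ref{t:scaling_limit_cutoff} (with the modified cutoff) yields $N_0$ such that, uniformly over data $w$ in the ball, the solution is within $\varepsilon_1$ of the deterministic $\vcutd$ in $C_0([0,\infty);H^{-\varepsilon})\cap L^{2p}(L^{2q})$ with probability at least $1-\varepsilon_1$. For $\vcutd$, the energy inequality with viscosity $1+\frac{3\mu}{5}$, Poincaré with zero mean, and cancellation of the nonlinearity give $\|\vcutd(1)\|_{L^2}^2\le e^{-c\mu}\|w\|_{L^2}^2$. We choose $\mu_0$ so that $e^{-c\mu_0}\le\eta/4$.

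The third step passes from convergence in $H^{-\varepsilon}$ to $L^2$ smallness, and this is the main obstacle, since the scaling limit does not converge in $L^2$ (gradients do not converge). Our plan is to interpolate. First, $\|\vcut^N(1)\|^2_{L^2}\le\|\vcut^N(1)\|_{H^{-\varepsilon}}^{2/(1+\varepsilon)}\|\vcut^N(1)\|_{H^1}^{2\varepsilon/(1+\varepsilon)}$. To control the $H^1$ factor, we average over $t\in[\frac12,1]$ using the energy balance and the monotonicity of $\|\vcut^N\|_{L^2}$: that is, we bound $\|\vcut^N(1)\|^2_{L^2}\le 2\int_{1/2}^1\|\vcut^N(t)\|^2_{L^2}dt$ and apply the interpolation inside the integral, with $\int\|\nabla\vcut^N\|^2\le\|w\|^2_{L^2}$. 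On the good event this yields $\|\vcut^N(1)\|^2\le (\eta/2)\|w\|^2$ plus a term $\lesssim\varepsilon_1^{\theta}\|w\|^{2-\theta}$. On the bad event we use $\|\vcut^N(1)\|\le\|w\|$. Taking conditional expectations then requires the constants to be homogeneous in $\|w\|_{L^2}$. We obtain this by rescaling linearly in the small-data regime (the cutoff is inactive when $\|w\|$ is small relative to $R$) and by using the uniformity over $\mathcal{B}(\tilde M)$ in general. Choosing $\varepsilon_1$ small then gives \eqref{eq:contraction_iterative_step}. If the homogeneity fails, the fallback is to prove the contraction only on the event where the data lie in the ball and to absorb the complement using the exponential energy decay \eqref{eq:decay_energy_vcut}.
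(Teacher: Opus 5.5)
Your argument fails at the homogeneity step, and neither of the remedies you sketch repairs it.

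Theorem \ref{t:scaling_limit_cutoff} is qualitative. It makes $\P(\|\vcut^N-\vcutd\|\geq\varepsilon_1)$ small for a fixed \emph{absolute} threshold $\varepsilon_1$, uniformly over a fixed ball of data. Your third step therefore yields, on the good event, $\|\vcut^N(n+1)\|_{L^2}^2\leq \frac{\eta}{2}\|w\|_{L^2}^2+C\varepsilon_1^{\theta}\|w\|_{L^2}^{2-\theta}$. This is useless once $\|w\|_{L^2}\lesssim\varepsilon_1$.

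That regime cannot be avoided. $N_0$ must be independent of $n$, while \eqref{eq:decay_energy_vcut} forces $\|\vcut^N(n)\|_{L^2}\leq e^{-Cn}\|u_0-\overline{u}_0\|_{L^2}$ pathwise. So for large $n$ every restart datum lies in it.

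Linear rescaling is not available there. For small data the cutoff factor in general does not switch the nonlinearity off; it stays near its history-determined value, typically $1$. Hence the quadratic term is present and the equation is not invariant under $w\mapsto\lambda w$. To follow this route you would need two further ingredients, neither of which you supply:
\begin{itemize}
\item a homogeneous scaling limit for the linear transport-noise Stokes system;
\item a bound, uniform in $N$, showing that the nonlinear correction is $o(\|w\|_{L^2})$.
\end{itemize}

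The fallback fails too. The decay \eqref{eq:decay_energy_vcut} uses only unit viscosity and Poincar\'e, so it contracts by the fixed factor $e^{-2C}$, not by an arbitrary $\eta$. Moreover, nothing controls the fraction of $\E\|\vcut^N(n)\|_{L^2}^2$ carried by the complement event.

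There are two smaller defects.
\begin{itemize}
\item Theorem \ref{t:scaling_limit_cutoff} is stated for fixed $\mu$, so your $N_0$ would depend on $\mu$. The lemma asks for a single $N_0$ valid for all $\mu\geq\mu_0$.
\item Restarting at $s_n\in[n-\frac12,n]$ bounds $\|\vcut^N(n+1)\|_{L^2}$ by $\|\vcut^N(s_n)\|_{L^2}\geq\|\vcut^N(n)\|_{L^2}$, which is the wrong direction. The window should be $[n,n+\frac12]$.
\end{itemize}

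What is missing is an error estimate that is quadratic in $\|\vcut^N(n)\|_{L^2}$. The paper obtains one without using the scaling limit at all. On $[n,n+1]$ it writes the mild formulation around the enhanced semigroup $e^{(1+\frac{3}{5}\mu)(t-n)\Delta}$, as $\vcut^N=I_0+I_1+I_2+I_3$. Each term is bounded in $L^2(\Omega\times(n,n+1);L^2)$ by a small multiple of $\E\|\vcut^N(n)\|_{L^2}^2$:
\begin{itemize}
\item the corrector mismatch $I_1$ (coming from $\frac{2}{5}\mu\Delta-\mathcal{P}^N$) and the stochastic convolution $I_3$ are $O(N^{-\gamma_0})$ uniformly in $\mu$, by the mixing estimates of \cite{L23_enhanced};
\item the free evolution and the transport by $\overline{u}_0$ are $O(\mu^{-1})$;
\item the nonlinearity is $O(M^2/\mu)$, since $\bigl(\int_n^{n+1}\|\nabla\vcut^N\|_{L^2}^2\bigr)^2\leq\frac14\|\vcut^N(n)\|_{L^2}^4\lesssim M^2\|\vcut^N(n)\|_{L^2}^2$.
\end{itemize}
The pathwise energy balance restarted at time $n$ is what makes all these bounds homogeneous. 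The monotonicity bound $\E\|\vcut^N(n+1)\|_{L^2}^2\leq\E\int_n^{n+1}\|\vcut^N\|_{L^2}^2$ then closes the argument, with $\mu_0$ chosen first and $N_0$ independent of both $n$ and $\mu\geq\mu_0$. This route also needs none of the restarting, measurable-selection or modified-cutoff machinery your plan relies on.
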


\begin{proof}
Here, we argue as in \cite[Section 4]{L23_enhanced}. Without loss of generality, we assume $\mu\geq 1$.
As in \eqref{eq:energy_balance_vcut}, the following pathwise energy estimate holds for the $(p,q)$-solution $\vcut^N$ to \eqref{eq:navier_stokes_cutoff_mixing}:
\begin{equation}
\label{eq:energy_balance_vcut_st}
\frac{1}{2}\|\vcut^N (t)\|_{L^2(\T^3)}^2 +\int_s^t \int_{\T^3} |\nabla \vcut^N|^2= \frac{1}{2}\| \vcut^N(s)\|_{L^2(\T^3)}^2
\end{equation}
a.s.\ 
for all $0\leq s<t<\infty$. In particular, $\int_0^\infty \int_{\T^3} |\nabla \vcut^N|^2\leq M^2$ and
\begin{equation}
\label{eq:L2_norm_iteration_enhanced_dissipation}
\E\|\vcut^N(n+1)\|^2_{L^2}\leq \E \int_n^{n+1} \|\vcut^N(s)\|_{L^2}^2\,\dd s.
\end{equation}
Let $(e^{t\Delta})_{t\geq 0}$ be the heat semigroup.
From \eqref{eq:navier_stokes_cutoff_mixing} (see Definition \ref{def:p_solution}), it follows that 
\begin{align*}
\vcut^N(t)=I_0(t)+ I_1(t)+I_2(t) +I_3(t)
\end{align*}
where 
\begin{align*}
I_0(t)&= e^{(1+\frac{3}{5}\mu)(t-n) \Delta } \vcut^N (n)
-\int_n^t e^{(1+\frac{3}{5}\mu)(t-s) \Delta }(\overline{u}_0\cdot \nabla) \vcut^N\,\dd s,\\
I_1 (t)&= 
\int_n^t e^{(1+\frac{3}{5}\mu)(t-s)\Delta } \Big(\frac{2}{5}\mu\Delta-\mathcal{P}^N\Big)\vcut^N(s)\,\dd s, \\
I_2(t)&=
-\int_n^t e^{(1+\frac{3}{5}\mu)(t-s) \Delta } \phi^R_{p,q}(\vcut^N)\p\big[\nabla \cdot (\vcut^N(s)\otimes \vcut^N(s))\big]\,\dd s, \\
I_3(t)&
=\sqrt{c_d\mu}\sum_{k,\alpha}\theta^N_k\int_n^t e^{(1+\frac{3}{5}\mu)(t-s)\Delta} \p[(\sigma_{k,\alpha}\cdot \nabla) \vcut^N(s) ]\,\dd W^{k,\alpha}_s,
\end{align*}
and $\mathcal{P}^N$ denotes the It\^o-Stratonovich corrector associated with the noise coefficient $\theta^N$, see \eqref{eq:Ito_stratonovich_correction_pressure}. 
Firstly, as in \cite[Section 4]{L23_enhanced} (see also \cite[Lemma 7.8]{AKT26}), using \eqref{eq:energy_balance_vcut_st}, one can check that there exists another universal constant $\g_0>0$ for which 
\begin{align*}
\E\|I_1\|_{L^2(n,n+1;L^2)}^2 +
 \E \|I_3\|_{L^2(n,n+1;L^2)}^2\lesssim \frac{1}{N^{\g_0}}\E\|\vcut^N(n)\|_{L^2}^2,
\end{align*}
with an implicit constant independent of $\mu$. 
Secondly, from \eqref{eq:energy_balance_vcut_st}, $\int_{\T^3} \vcut^N=0$ and standard energy estimates for the heat equation, it follows that (cf.\ \cite[Lemma 7.7]{AKT26})
\begin{align*}
\E\|I_0\|_{L^2(n,n+1;L^2)}^2
&\lesssim \frac{1}{\mu} \Big( \E\|\vcut^N(n)\|_{L^2}^2+ \E \int_n^{n+1} \|(\overline{u}_0\cdot \nabla) \vcut^N\|_{H^{-2}}^2\,\dd s \Big)\\
&\lesssim\frac{1}{\mu}  \E\|\vcut^N(n)\|_{L^2}^2
\end{align*}
as well as
\begin{align*}
\E \|I_2\|_{L^2(n,n+1;L^2)}^2
&
\lesssim \frac{1}{\mu}
\E\Big( \int_n^{n+1} \Big\|\phi_{p,q}^R(\cdot,\vcut^N)\,\p\big[\nabla \cdot (\vcut^N\otimes \vcut^N)\big]\Big\|_{H^{-1}}\Big)^2
\\
&
\lesssim \frac{1}{\mu}
\E\Big( \int_n^{n+1} \|\vcut^N\|_{L^4}^2 \Big)^2\\
&
\stackrel{(i)}{\lesssim} \frac{1}{\mu}
\E\Big( \int_n^{n+1} \|\nabla \vcut^N\|_{L^2}^2 \Big)^2
\stackrel{(ii)}{\lesssim} \frac{M^2}{\mu}\E\|\vcut^N(n)\|_{L^2}^2,
\end{align*}
where all the implicit constants are independent of $N$, in $(i)$ we applied the Sobolev embedding $H^1(\T^3)\embed L^6(\T^3)$ and the fact that $\vcut^N$ has zero mean, and in $(ii)$ \eqref{eq:energy_balance_vcut_st}.

The claim \eqref{eq:contraction_iterative_step} now follows by using the above estimates in \eqref{eq:L2_norm_iteration_enhanced_dissipation}.
\end{proof}

\begin{proof}[Proof of Corollary \ref{cor:enhanced_dissipation}]
As discussed at the beginning of Subsection \ref{ss:enhanced_proof}, it only remains to prove that \eqref{eq:enhanced_dissipation2} holds with $u-\int_{\T^3} u_0$ replaced by $\vcut^N$ for $\tau_0=\infty$ and suitably chosen $N$ and $\mu$. The latter is a standard consequence of the Borel-Cantelli lemma and Lemma \ref{l:iteration_enhanced}, see e.g.,
\cite[Theorem 2.3]{L23_enhanced}
or 
\cite[Subsection 7.1]{AKT26}.
\end{proof}

\appendix

\section{Local It\^o-Stratonovich corrector -- Proof of Lemma \ref{l:local_ito_stratonovich_corrector}}
\label{app:cancellation_ito_stratonovich}
Here, we prove the convergence result of Lemma \ref{l:local_ito_stratonovich_corrector} for the bilinear form $A^{N,\delta}$ associated with the local It\^o-Stratonovich corrector defined in \eqref{eq:local_stratonovich_corrector}, which was the essential tool to obtain the quantitative localized scaling limit result of Theorem \ref{t:universal_scaling_limit}. 
Throughout this subsection, for notational convenience, we employ the Einstein summation convention for repeated indices. 

We begin by recalling the setting:
\begin{align}
\label{eq:definition_A_bilinear_appendix}
&A^{N,\delta}(H,\Phi) \\
\nonumber
&=\int_{B} \Big(- D_d' H^\top : \nabla \Phi + \sum_{k,\alpha}  (\theta^N_k)^2 \q_{\T^d_B} [\chi(\sigma^{\delta}_{k,\alpha}\cdot H)] \cdot [ (\sigma^{\delta}_{-k,\alpha} \cdot  \nabla) \Phi]\Big)\,\dd x
\end{align}
where $D_d'= \frac{(d+1)}{d(d+2)}$, $B\subseteq \R^d$ is a ball with center $x_0\in \R^d$ and radius $\leq 1/4$, and
\begin{itemize}
 \item  $\T^d_B =x_0 + [-1/2,1/2)^d$ with periodic boundary conditions and $\q_{\T^d_B}$ the corresponding complementary Helmholtz projection on $\T^d_B$ (see Subsection \ref{ss:Helmh_div_free});
 \item $\theta^N=(\theta^N_k)_{k}$ denotes the noise coefficients defined in \eqref{eq:choice_thetaN};
 \item $\sigma_{k,\alpha}^{\delta}=\sigma_{k,\alpha}(x_0+\delta(\cdot-x_0))$ denotes the rescaled vector fields around $x_0$;
 \item $\chi\in C^{\infty}_{{\rm c}}(2B)$ is such that $\chi|_{B}=1$;
 \item $\Phi\in W^{2,\infty}(B;\R^d)\cap H^1_0(B;\R^d)$;
 \item $H\in L^2(2B;\R^{d\times d})$ satisfies 
 \begin{equation*}
\Tr (H)=0 \text{ a.e.\ on }2B, \ \ \  \text{ and }\ \  \
\nabla \cdot H=0 \ \text{ in }\D'(2B).
\end{equation*}
\end{itemize}
As claimed in Lemma \ref{l:local_ito_stratonovich_corrector}, here we prove that $A^{N,\delta}(H,\Phi)\to 0$ quantitatively with a rate depending only on the \emph{cumulative frequency} $L=\delta N$. The main obstacle is the lack of smoothness of $H$, which was of key importance in the proof of Theorem \ref{t:universal_scaling_limit}, see Subsection \ref{ss:quantitative_scaling_limits}.

\smallskip

We first outline the main idea behind the proof of Lemma \ref{l:local_ito_stratonovich_corrector}. Our method differs significantly from the Fourier-based techniques of Flandoli and Luo \cite[Appendix A]{FL19} (cf. \cite[Appendix A]{L23_enhanced}), which are limited to periodic boundary conditions. Here, we work on the spatial domain rather than frequency space. 
The primary advantage of our approach is that it makes the transition from the \emph{non-local} It\^o-Stratonovich correctors to their \emph{local} limit completely transparent. Specifically, this locality arises from the behavior of the covariance operator associated with the transport noise for the oscillating rescaled stochastic Stokes system \eqref{eq:turbulent_Stokes_scaling_quantitative}:
\begin{equation}
\label{eq:expression_covariance_operator}
\Cov^{N,\delta}(z)
=\sum_{k}(\theta_k^N)^2  e^{2\pi \i \delta z\cdot k} \Big(\mathrm{Id}-\frac{k\otimes k}{|k|^2}\Big),
\end{equation}
which clearly satisfies
\begin{align}
\label{eq:expression_covariance_operator2}
\Cov^{N,\delta}(x-y)
=\sum_{k,\alpha}(\theta_k^N)^2 \sigma^\delta_{k,\alpha}(x)\otimes \sigma^\delta_{-k,\alpha}(y) \quad \text{ and }\quad \Cov^{N,\delta}(0)=\frac{d-1}{d}\mathrm{Id},
\end{align}
where the second identity in the above is the $d$-dimensional analogue of \eqref{eq:symmetric_equal_diagonal_matrix}. 
In particular, the covariance function $\Cov^{N,\delta}$ is independent of $x_0$. Let us point out that the vanishing in the scaling limit $N\to \infty$ and $\delta=1$ of the covariance operator on $\T^d$ has already been noted and exploited, see e.g., \cite{FGL21,FGL21_eddy,FGL21_quantitative} and the references therein.

To see the appearance of the covariance operator $\Cov^{N,\delta}$ in the limiting behavior of \eqref{eq:definition_A_bilinear_appendix}, let us begin by noticing that, from Subsection \ref{ss:Helmh_div_free} and standard Fourier methods, we have
$$
[\q_{\T^d_B} f]_i(x) =\frac{1}{d}f^i(x)+  {\normalfont{\text{P.V.}}}\int_{\T^d_B} \Gamma_{i,j}(x-y)f_j (y)\,\dd y \ \ \text{ for }i\in \{1,\dots,d\},\ x\in \T^d_B,
$$
where ${\normalfont{\text{P.V.}}}$ stands for principal value, and the (singular) kernel $\Gamma_{i,j}$ defined on $\T^d\setminus\{0\}$ is uniquely identified via the identity:
\begin{equation}
\label{eq:definition_gamma_torus}
 {\normalfont{\text{P.V.}}}(\Gamma_{i,j})=\mathcal{F}_{\T^d}^{-1}(m_{i,j}),\quad \text{ where } \quad m_{i,j}(k)=\frac{k_ik_j}{|k|^2}-\frac{\delta_{i,j}}{d} \ \text{ for }\ k\neq 0,
\end{equation}
and $m_{i,j}(0)=-\delta_{i,j}/d$.
The fact that $\mathcal{F}_{\T^d}^{-1}(m_{i,j})$ is a principal-value distribution follows by periodizing the corresponding Euclidean homogeneous case. Indeed, the map $\xi\mapsto (\xi_i\xi_j)/|\xi|^2-\delta_{i,j}/d$ is smooth and homogeneous of degree zero on $\R^d\setminus\{0\}$, and has zero spherical mean: $\int_{\mathbb{S}^{d-1}}(\sigma_i\sigma_j-\delta_{i,j}/d)\,\dd \mathcal{H}^{d-1}(\sigma)=0$ where $\mathbb{S}^{d-1}\subseteq \R^d$ is the unit sphere. 
Hence, its Euclidean inverse Fourier transform is a homogeneous principal-value distribution of degree $-d$; see \cite[Proposition 2.4.7]{Grafakos1}. Restricting the multiplier to $\Z^d\setminus\{0\}$ and applying the standard periodization argument yields the corresponding periodic principal-value distribution; see, for instance, \cite[Theorems 6.2 and 6.4]{RuzhanskyTurunen}.

Due to $\chi|_{B}=1$ and $\supp\chi\subseteq 2B$, we can express the second term of $A^{N,\delta}$ as
\begin{align}
\label{eq:expression_via_covariance_ito_strat}
&\sum_{k,\alpha}  (\theta^N_k)^2\int_B \q_{\T^d_B} [\chi(\sigma^{\delta}_{k,\alpha}\cdot H)] \cdot [ (\sigma^{\delta}_{-k,\alpha} \cdot  \nabla) \Phi]\,\dd x= E^{N,\delta}(H,\Phi)+F(H,\Phi)  ,
\end{align}
where, employing the Einstein summation convention for repeated indices, 
\begin{align*}
&E^{N,\delta}(H,\Phi) 
\\
&\stackrel{{\rm def}}{=}\sum_{k,\alpha}  (\theta^N_k)^2
{\normalfont{\text{P.V.}}}\,
\int_{ B\times 2B} \big(\Gamma_{i,j}(x-y)\cdot [\sigma_{k,\alpha}^\delta(y) \cdot H (y)]_j\, \chi(y) \big)\cdot [(\sigma_{-k,\alpha}^{\delta}\cdot \nabla )\Phi]_i\,\dd x\,\dd y \\
&=
{\normalfont{\text{P.V.}}}
\int_{B\times 2B} \Gamma_{i,j}(x-y)  \Cov^{N,\delta}_{m,n}(x-y)\chi(y) H_{m,j}(y)\partial_n \Phi_i \,\dd x\,\dd y,
\end{align*}
and 
\begin{align*}
F(H,\Phi)
\stackrel{{\rm def}}{=}\frac{1}{d}\sum_{k,\alpha}  (\theta^N_k)^2\int_B (\sigma^{\delta}_{k,\alpha}\cdot H)\cdot  (\sigma^{\delta}_{-k,\alpha} \cdot  \nabla) \Phi 
=\frac{d-1}{d^2}\int_B  H^\top :  \nabla \Phi.
\end{align*}
Let us point out that in the above formula, we used the second identity in \eqref{eq:expression_covariance_operator2}.

\smallskip

Having the representation formula \eqref{eq:expression_via_covariance_ito_strat} and subsequent identities at our disposal, we can formulate the last observation leading to the proof of Lemma \ref{l:local_ito_stratonovich_corrector}. More precisely, from Lemma \ref{l:Bessel_property} and using that finite sums of the form $\sum_{k} f_k (x) g_k(y)$ are dense in $L^2(B\times 2B)$, one can check that, for all $F\in L^2(B\times 2B;\R^{d\times d})$,  
\begin{equation}
\label{eq:convergence_zero_covariance}
\int_{B\times 2B} \Cov^{N,\delta}(x-y): F(x,y)\,\dd x\,\dd y  \to 0 \quad \text{ as } \quad \delta N\to \infty,
\end{equation}
see Lemma \ref{l:convergence_covariance} for a quantitative version.
Comparing \eqref{eq:expression_via_covariance_ito_strat} and \eqref{eq:convergence_zero_covariance}, the obstacle to the convergence is the singularity of $\Gamma_{i,j}$ on the diagonal $x=y$. More precisely, for all $\varepsilon>0$ and all cutoff functions $\om_\varepsilon$ such that $\supp\om_{\varepsilon}\subseteq B_{2\varepsilon}(0)$,
\begin{align*}
\int_{B\times 2B}(1-\om_\varepsilon(x-y)) \Gamma_{i,j}(x-y) \Cov^{N,\delta}_{m,n}(x-y) \chi(y) H_{m,j}(y)\partial_n \Phi_i \,\dd x\,\dd y\to0
\end{align*}
as $\delta N\to \infty$.
 In particular, any distributional limit point as $\delta N \to \infty$ of the kernel associated with the bilinear form \eqref{eq:expression_via_covariance_ito_strat} by means of the Schwartz kernel theorem (see e.g., \cite[Theorem 6.1]{TayPDE1}) is concentrated on the diagonal of $B\times B$. Hence, the corresponding operator is \emph{local}.

\smallskip 

The proof of Lemma \ref{l:local_ito_stratonovich_corrector} is given in Subsection \ref{ss:local_ito_stratonovich_corrector_proof} and essentially follows the above reasoning. One key ingredient is a quantitative version of the assertion \eqref{eq:convergence_zero_covariance}, and it will be investigated in the following subsection.

\subsection{Decay estimates for the covariance operator}
\label{ss:decay_covariance}
As discussed above \eqref{eq:convergence_zero_covariance}, the latter convergence is a consequence of Lemma \ref{l:Bessel_property} and the $L^2$-density of finite sums of products of functions of one variable. To make a quantitative version of the latter, we need to assume smoothness in one of the variables. This is the content of the following result.
For later use, let us note that  
\begin{equation}
\label{eq:covariance_Linfty_bound}
\|\Cov^{N,\delta}\|_{L^\infty(\R^d)}\lesssim_d 1,
\end{equation}
with an implicit constant independent of $N$ and $\delta$, as $\|\theta^N\|_{\ell^2}=1$ for all $N\geq 1$.

\begin{lemma}[Quantitative convergence of the tested covariance operator]
\label{l:convergence_covariance}
Let $B_1,B_2$ and $B$ be balls in $\R^d$ such that $B_1\cup B_2\subseteq B$, and $B$ has radius $\leq 1/2$. Then there exist constants $C,\g>0$ such that, for all $N\geq 1$, $\delta\in (0,1]$, and $F\in L^2(B_1\times B_2;\R^{d\times d})$ such that $\nabla_x F\in L^2(B_1\times B_2)$, it holds that 
\begin{equation}
\label{eq:covariance_operator_decay}
\Big|\int_{B_1\times B_2} \Cov^{N,\delta}(x-y): F(x,y)\,\dd x \,\dd y\Big| \leq \frac{C}{(\delta N)^{\g}}\big(\|F\|_{L^2(B_1\times B_2)}+\|\nabla_x F\|_{L^2(B_1\times B_2)}\big).
\end{equation}
\end{lemma}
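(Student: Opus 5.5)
The plan is to expand the covariance via \eqref{eq:expression_covariance_operator2}, $\Cov^{N,\delta}(x-y)=\sum_{k,\alpha}(\theta_k^N)^2\sigma^\delta_{k,\alpha}(x)\otimes\sigma^\delta_{-k,\alpha}(y)$, so that the tested quantity becomes
$$
\sum_{k,\alpha}(\theta_k^N)^2\int_{B_2}\Big(\int_{B_1}\sigma^\delta_{k,\alpha}(x)\cdot F(x,y)\,\dd x\Big)\cdot\sigma^\delta_{-k,\alpha}(y)\,\dd y .
$$
One cannot apply the Bessel inequality of Lemma \ref{l:Bessel_property} in both variables simultaneously, since $F$ is a general $L^2$ function of $(x,y)$ and that would only give a bound of order $(\delta N)^{-d}$ times $\|F\|_{L^1_yL^2_x}$ after losing $\ell^2$ summability. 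The idea is instead to exploit only decay in $x$, and to use the trivial bound $|\sigma^\delta_{-k,\alpha}(y)|\le 1$ together with $\|\theta^N\|_{\ell^2}=1$ in $y$.

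The first step is to apply Cauchy--Schwarz in $(k,\alpha)$ for fixed $y$. The above is bounded by
$$
\int_{B_2}\Big(\sum_{k,\alpha}(\theta_k^N)^2\Big|\int_{B_1}\sigma^\delta_{k,\alpha}\cdot F(\cdot,y)\Big|^2\Big)^{1/2}\Big(\sum_{k,\alpha}(\theta^N_k)^2\Big)^{1/2}\dd y .
$$
The second factor is a dimensional constant. For the first factor, there are two routes. The first is \eqref{eq:Parseval_combined_with_decay}, which gives $C(\delta N)^{-d/2}\|F(\cdot,y)\|_{L^2(B_1)}$, and this already yields \eqref{eq:covariance_operator_decay} with $\g=d/2$ after integrating in $y$ and using $|B_2|<\infty$ with Cauchy--Schwarz. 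That route needs only $\|F\|_{L^2}$, and it relies on $\|\theta^N\|_{\ell^\infty}\lesssim N^{-d/2}$, which holds for the choice \eqref{eq:choice_thetaN}.

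The second route is more robust because it uses only $\supp\theta^N\subseteq\{N\le|k|\le 2N\}$ and $\|\theta^N\|_{\ell^2}=1$, and it is where $\nabla_xF$ enters. For $\g\in(0,1/2)$, duality $H^{-\g}(B_1)=(H^\g(B_1))^*$ combined with \eqref{eq:exponential_rate_Hg} gives
$$
\Big|\int_{B_1}\sigma^\delta_{k,\alpha}\cdot F(\cdot,y)\Big|\lesssim(\delta|k|)^{-\g}\|F(\cdot,y)\|_{H^\g(B_1)}\lesssim(\delta N)^{-\g}\|F(\cdot,y)\|_{H^1(B_1)} .
$$
This mirrors the argument used in the proof of Proposition \ref{prop:error_estimates_operators}\eqref{it:error_estimates_operators3}. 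Summing with the weights $(\theta_k^N)^2$ and then integrating in $y$ with Cauchy--Schwarz over $B_2$ produces $(\delta N)^{-\g}(\|F\|_{L^2}+\|\nabla_xF\|_{L^2})$, which is exactly the form of the right-hand side of \eqref{eq:covariance_operator_decay}. I would present this second route as the main argument, since it matches the stated hypotheses and is uniform in the center because of the translation invariance noted after \eqref{eq:expression_covariance_operator2}.

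The main technical point is the uniformity of the constant in \eqref{eq:exponential_rate_Hg} with respect to the position of $B_1$, and the reduction via the trivial bound \eqref{eq:covariance_Linfty_bound} when $\delta N<1$. The first is handled by translating $B_1$, which only multiplies the exponential by a unimodular constant. The second is immediate from \eqref{eq:covariance_Linfty_bound} and $C\ge1$. Measurability of $y\mapsto\|F(\cdot,y)\|_{H^1(B_1)}$ follows from Fubini.
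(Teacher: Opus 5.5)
Your argument is correct, and both of your routes differ from the paper's.

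\textbf{The paper's route.} It first proves the bilinear bound $|\overline{\Cov}^{N,\delta}(f_1,f_2)|\lesssim(\delta N)^{-d}\|f_1\|_{L^2}\|f_2\|_{L^2}$ for tensor products, via Lemma \ref{l:Bessel_property} and polarization. It then handles a general $F$ by expanding $F(\cdot,y)$ in Dirichlet eigenfunctions of $B_1$ and truncating at $R$ modes. The tail is controlled by the $H^\sigma$-regularity in $x$, Weyl's law and \eqref{eq:covariance_Linfty_bound}, and $R\eqsim(\delta N)^{d^2/(d+\sigma)}$ is optimized. This is the only place where $\nabla_xF$ enters, and it yields a small, non-explicit $\gamma$.

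\textbf{Your key move.} You apply Cauchy--Schwarz in $(k,\alpha)$ pointwise in $y$. This spends the oscillation only in the $x$-variable and uses $|\sigma^\delta_{-k,\alpha}|=1$ in $y$, so $F$ never has to be approximated by finite sums of products.

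\textbf{Your first route.} It proves the lemma with $\gamma=d/2$ and with no derivative on $F$. It is essentially the Hilbert--Schmidt bound $\|\Cov^{N,\delta}(x-y)\|_{L^2(B_1\times B_2)}\lesssim(\delta N)^{-d/2}$. The exponent $d/2$ cannot be improved for a bound in terms of $\|F\|_{L^2}$ alone: test with $F(x,y)=\Cov^{N,\delta}(x-y)$, which is real-valued and satisfies $|\Cov^{N,\delta}(z)|\gtrsim1$ for $|z|\lesssim(\delta N)^{-1}$. So this is strictly stronger than the statement and makes the paper's eigenfunction truncation unnecessary. Downstream, it would also remove the term $R_{2,2}$ in the proof of Lemma \ref{l:remainders1}. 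Like the paper's Step 1, it relies on $\|\theta^N\|_{\ell^\infty}\lesssim N^{-d/2}$ through \eqref{eq:Parseval_combined_with_decay}.

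\textbf{Your second route.} It trades rate for robustness. It uses only $\supp\theta^N\subseteq\{N\le|k|\le2N\}$ and $\|\theta^N\|_{\ell^2}=1$, as in the proof of Proposition \ref{prop:error_estimates_operators}\eqref{it:error_estimates_operators3}. It gives any $\gamma<1/2$ and uses the hypothesis on $\nabla_xF$ exactly as stated.

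Your reduction to the case $\delta N\ge1$ is harmless but unnecessary, since both bounds already hold for all $N\ge1$ and $\delta\in(0,1]$.
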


Of course, by symmetry, one can replace $\|\nabla_x F\|_{L^2(B_1\times B_2)}$ by $\|\nabla_y F\|_{L^2(B_1\times B_2)}$ on the right-hand side of \eqref{eq:covariance_operator_decay} together with a corresponding regularity assumption.

\begin{proof}
We divide the proof into two steps. 

\smallskip

\emph{Step 1: There exists a constant $C_0>0$ such that, for all $N\geq 1$, $\delta\in (0,1]$, and $f_i\in L^2(B_i;\R^{d})$ with $i\in \{1,2\}$, it holds that }
\begin{equation}
\label{eq:covariance_operator_decay_diagonal}
\Big|\int_{B_1\times B_2} [\Cov^{N,\delta}(x-y)\cdot f_1(x)]\cdot f_2(y)\,\dd x\,\dd y\Big| \leq \frac{C_0}{(\delta N)^{d}} \|f_1\|_{L^2(B_1)}\|f_2\|_{L^2(B_2)}.
\end{equation}
Replacing $(f_1,f_2)$ by $(\one_{B}f_1,\one_{B}f_2)$, it suffices to prove the claim of Step 1 with $B_1=B_2$. 
Let $\overline{\Cov}^{N,\delta}$ be the bilinear form associated with $
{\Cov}^{N,\delta}$ on $L^2(B;\R^d)$:
\begin{equation}
\label{eq:covariance_bilinear_form}
\overline{\Cov}^{N,\delta}(f_1,f_2)=\int_{B\times B} [\Cov^{N,\delta}(x-y)\cdot f_1(x)]\cdot f_2(y)\,\dd x\,\dd y.
\end{equation}
Clearly, due to \eqref{eq:expression_covariance_operator}, $\overline{\Cov}^{N,\delta}$ is symmetric.
It follows from Lemma \ref{l:Bessel_property} that there exists a constant $C>0$ such that, for all $N\geq 1$, $\delta\in (0,1]$ and $f\in L^2(B;\R^d)$ for which
$$
0\leq \overline{\Cov}^{N,\delta}(f,f)=\sum_{k,\alpha} (\theta^N_k)^2\Big|\int_{B} \sigma^\delta_{k,\alpha}\cdot f  \Big|^2
\leq \frac{C}{(\delta N)^{d}} \|f\|_{L^2(B)}^2.
$$
By the polarization identity, for all $f_1,f_2\in L^2(B;\R^d)$ satisfying $\|f_i\|_{L^2(B;\R^d)}=1$,
\begin{equation*}
|\overline{\Cov}^{N,\delta}(f_1,f_2)|\leq\frac{1}{4}\big(|\overline{\Cov}^{N,\delta}(f_1+f_2,f_1+f_2)|+|\overline{\Cov}^{N,\delta}(f_1-f_2,f_1-f_2)|\big) \leq \frac{2C}{(\delta N)^d} .
\end{equation*}
Thus, \eqref{eq:covariance_operator_decay_diagonal} follows by applying the above estimate with $(f_1,f_2)$ replaced by their $L^2$-normalizations $(f_1/\|f_1\|_{L^2(B;\R^d)},f_2/\|f_2\|_{L^2(B;\R^d)})$.

\smallskip

\emph{Step 2: Proof of \eqref{eq:covariance_operator_decay}.}
Without loss of generality, we may assume $F_{n,m}=f$ for some $f\in L^2(B_2;H^1(B_1))$ and $n,m\in \{1,\dots,d\}$, and $F_{i,j}=0$ if $(i,j)\neq (n,m)$. 
Hence, for a.e.\ $y\in B_2$, it holds that $f(\cdot,y)\in H^1(B_1)$. Thus, the generalized Fourier expansion yields
$$
f(x,y)= \sum_{j\geq 1} e_j (x) \wh{f}_j(y)\quad \text{ and }\quad \wh{f}_j(y)=\int_{B_1} e_j(x') f(x',y)\,\dd x',
$$
where $(\lambda_j)_{j\geq 1}$ and $(e_j)_{j\geq 1}$ are the eigenvalues and eigenfunctions of the Dirichlet Laplacian on $B_1$, respectively. Next, fix $\sigma\in (0,1/2)$. Recall that   
$$
H^1(B_1)\embed H^{\sigma}(B_1) = 
\Do((-\Delta_{B_1}^{\Dir})^{\sigma/2}),
$$
where $\Delta_{B_1}^{\Dir}$ is the Dirichlet Laplacian on $L^2(B_1)$, see Subsection \ref{ss:Helmholtz_domains}.
Therefore,
\begin{align*}
\sum_{j\geq 1}\lambda_j^{\sigma}\int_{B_2}  |\wh{f}_j(y)|^2\,\dd y\lesssim_\sigma
\int_{B_2}\| f(\cdot,y) \|_{H^1(B_1)}^2\,\dd y.
\end{align*}
From Weyl's law $\lambda_j \sim j^{2/d}$ and the $L^2$-orthogonality of the eigenfunctions $(e_j)_{j\geq 1}$, we have, for all integers $R$,
\begin{align}
\label{eq:estimate_high_order_peices_generalized_Fourier}
\int_{B_1\times B_2} \Big|\sum_{j\geq  R} e_j(x) \wh{f}_j(y) \Big|^2 \,\dd x \,\dd y
&=
\sum_{j\geq R}\int_{B_2} | \wh{f}_j(y) |^2\,\dd y\\
\nonumber
&\leq 
R^{-(2\sigma)/d}
\sum_{j\geq R} \lambda^{\sigma}_j\int_{B_2} | \wh{f}_j(y) |^2\,\dd y\\
\nonumber
&\lesssim_\sigma
R^{-(2\sigma)/d}
\int_{B_2}\|f(\cdot,y) \|_{H^1(B_1)}^2\,\dd y.
\end{align}
Therefore, letting $\Cov_{n,m}^{N,\delta}$ denote the components of the covariance function $\Cov^{N,\delta}$,  and using the notation introduced in \eqref{eq:covariance_bilinear_form}, for all integers $R$, it holds that 
\begin{align*}
&\Big|\int_{B_1\times B_2} \Cov^{N,\delta}(x-y): F(x,y)\,\dd x \,\dd y\Big|\\
&\qquad\qquad\leq \Big|\int_{B_1\times B_2} \Cov^{N,\delta}_{n,m}(x-y) \sum_{j\geq R} e_j(x) \wh{f}_j(y)\,\dd x\,\dd y\Big|
+ \sum_{j=1}^R |\overline{\Cov}^{N,\delta}(e_j,\wh{f}_j)|\\
&\qquad\qquad\stackrel{(i)}{\leq} C \int_{B_1\times B_2} \Big| \sum_{j\geq R} e_j(x) \wh{f}_j(y)\Big|\,\dd x\,\dd y
+ \frac{C}{(\delta N)^d}\sum_{j=1}^R \|e_j\|_{L^2(B_1)}\|\wh{f}_j\|_{L^2(B_2)}\\
&\qquad\qquad\stackrel{(ii)}{\leq} C \Big(\frac{1}{R^{\sigma/d}} 
+ \frac{R}{(\delta N)^d}\Big) \Big(
\int_{B_2}\|f(\cdot,y)\|_{H^1(B_1)}^2\,\dd y\Big)^{1/2},
\end{align*}
where in $(i)$ we used \eqref{eq:covariance_Linfty_bound} and Step 1, and in $(ii)$ \eqref{eq:estimate_high_order_peices_generalized_Fourier}.
Letting $R\eqsim (\delta N)^{d^2/(d+\sigma)}$, we have  
$R^{-(\sigma)/d} 
+ R (\delta N)^{-d}\lesssim (\delta N)^{-\gamma}$ where $\gamma>0$ depends only on $d$ and $\sigma$.
\end{proof}

\subsection{Proof of Lemma \ref{l:local_ito_stratonovich_corrector}}
\label{ss:local_ito_stratonovich_corrector_proof}
Recall that the setting of Lemma \ref{l:local_ito_stratonovich_corrector} was already introduced at the beginning of Appendix \ref{app:cancellation_ito_stratonovich}. In particular, $E^{N,\delta}(H,\Phi)$ and $\Gamma_{i,j}$ are as defined below \eqref{eq:expression_via_covariance_ito_strat} and \eqref{eq:definition_gamma_torus}, respectively.
Arguing as in \eqref{eq:expression_via_covariance_ito_strat}, from standard Fourier techniques, for $f=(f_j)_{j=1}^d\in L^2(\T^d_B;\R^d)$, the $i$-th component of $\q_{\T^d_B}f(x)= \frac{1}{d}f^i(x)+{\normalfont{\text{P.V.}}} \int_{\T^d_B} \Gamma_{i,j}(x-y)f_j(y)\,\dd y$ where the latter is given by 
\begin{align}
\label{eq:helmholtz_projection_PV}
{\normalfont{\text{P.V.}}} \int_{\T^d_B} \Gamma_{i,j}(x-y)f_j(y)\,\dd y
= 
\lim_{r\downarrow 0}\int_{\T^d_B} \one_{\{\mathrm{d}_{\T^d_B}(x,y)\geq r\} }\Gamma_{i,j}(x-y)f_j(y)\,\dd y,
\end{align}
where $\mathrm{d}_{\T^d_B}(x,y)$ denotes the distance on $\T^d_B$, and the limit holds, for instance, in the weak $L^2(\T^d_B;\R^d)$-topology (see e.g., \cite[Chapter 4]{Grafakos1} for more refined results). Let $\mathcal{F}^{-1}_{\R^d}$ be the inverse Fourier transform on $\R^d$ and let $\mathcal{G}_{i,j}$ be the kernel defined on $\R^d\setminus \{0\}$ such that
\begin{equation}
\label{eq:kernel_function_onRd}
{\normalfont{\text{P.V.}}} (\mathcal{G}_{i,j})\stackrel{{\rm def}}{=} \mathcal{F}^{-1}_{\R^d}\Big[\xi\mapsto\Big(\frac{\xi_i\xi_j}{|\xi|^2}-\frac{\delta_{i,j}}{d}\Big) \Big],
\end{equation} 
where, as in \eqref{eq:definition_gamma_torus}, the distribution on the right-hand side of the above is of principal value type because the corresponding symbol has mean zero over the unit sphere $\mathbb{S}^{d-1}$.
Moreover, $\mathcal{G}_{i,j}$ is a Calder\'on-Zygmund kernel, and it is given by
\begin{align}
\label{eq:kernel_function_onRd2}
\mathcal{G}_{i,j}(x)=
\frac{1}{|\mathbb{S}^{d-1}|}
\frac{\delta_{ij}|x|^2-dx_ix_j}{|x|^{d+2}}.
\end{align}
By using the Green's kernel of the Laplacians on $\R^d$ and $\T^d$ together with a simple cutoff argument, one can readily check that 
$$
\Gamma_{i,j}(x-y)= \mathcal{S}_{i,j}(x,y)+ 
\mathcal{G}_{i,j} (x-y)\ \text{ for } x\in 2B \text{ and } y\in \supp\chi,
$$
where $\mathcal{S}_{i,j}$ is a smooth function on $\T^d\times \T^d$.
Let $\om\in C^{\infty}_{{\rm c}}(\R^d)$ be such that $\om|_{B_1(0)}=1$ and $\om|_{\R^d\setminus B_2(0)}=0$, and set  $\om_\varepsilon=\om(\cdot/\varepsilon)$. Take $\Psi\in W^{2,\infty}(2B;\R^d)$ such that 
\begin{equation}
\label{eq:choice_Phi_phi}
\Psi|_{B}=\Phi\qquad \text{ and }\qquad\|\Psi\|_{W^{2,\infty}(2B)}\lesssim \|\Phi\|_{W^{2,\infty}(B)}.
\end{equation}
Below $H_{i,m}$ and $\Phi_j$ denote 
the components of the matrix field $H$ and vector field $\Phi$, respectively. The same convention is used for the
remaining objects.
Arguing as in \eqref{eq:expression_via_covariance_ito_strat}, using $\supp\chi\subseteq 2B$ and $\#\{k\,:\, \theta^N_k \neq 0\}<\infty$ for all $N\geq 1$, we have   
\begin{align}
\label{eq:decomposition_Ito_stratonovich_proof_convergence}
&E^{N,\delta}(H,\Phi)\\
\nonumber
&={\normalfont{\text{P.V.}}}\int_{B} \int_{2B} \Gamma_{i,j}(x-y) \Cov^{N,\delta}_{m,n}(x-y) \chi(y)H_{m,j}(y)\partial_n \Phi_i(x)\,\dd y\,\dd x\\
\nonumber
&={\normalfont{\text{P.V.}}}\int_{B} \int_{2B} \mathcal{G}_{i,j}(x-y)\om_{\varepsilon}(x-y) \Cov^{N,\delta}_{m,n}(x-y) \chi(y)H_{m,j}(y)\partial_n \Psi_i(y)\,\dd y\,\dd x\\
\nonumber
&+R_1+ R_2+R_3,
\end{align}
where
\begin{align*}
R_1&= \int_{B} \int_{2B} \mathcal{G}_{i,j}(x-y) \Cov^{N,\delta}_{m,n}(x-y)\om_{\varepsilon}(x-y)  \chi(y)H_{m,j}(y)(\partial_n \Psi_i(x)-\partial_n \Psi_i(y))\,\dd y\,\dd x\\
R_2&= \int_{B} \int_{2B} \mathcal{G}_{i,j}(x-y) \Cov^{N,\delta}_{m,n}(x-y)(1-\om_{\varepsilon}(x-y) ) \chi(y)H_{m,j}(y)\partial_n \Phi_i(x)\,\dd y\,\dd x\\
R_3&=\int_{B} \int_{2B} \mathcal{S}_{i,j}(x,y) \Cov^{N,\delta}_{m,n}(x-y) \chi(y)H_{m,j}(y)\partial_n \Phi_i(x)\,\dd y\,\dd x
\end{align*}
and for the principal value integral in \eqref{eq:decomposition_Ito_stratonovich_proof_convergence}, we used a similar convention as in \eqref{eq:helmholtz_projection_PV}.
As the following lemma shows, the remainders $R_1$, $R_2$, and $R_3$ exist as Lebesgue integrals and can be suitably controlled.

\begin{lemma}[Estimates for the remainders I]
\label{l:remainders1}
With the previous notation, there exist constants $C,\g>0$ such that, for all $\varepsilon\in (0,1]$, $\delta\in (0,1]$, $N\geq 1$, $H\in L^2(2B;\R^{d\times d})$ and $\Phi\in W^{2,\infty}(B;\R^d)$, it holds that 
$$
|R_1|+|R_2|+ |R_3|\leq C \|\Phi\|_{W^{2,\infty}(B)} \|H\|_{L^2(2B)}\big(\varepsilon+ \varepsilon^{-d/2-1} (\delta N)^{-\g}\big),
$$
where $\Psi$ is chosen as in \eqref{eq:choice_Phi_phi}.
\end{lemma}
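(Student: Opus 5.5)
I will treat the three remainders separately; each is an honest Lebesgue integral because every singular piece is either cut off or multiplied by a factor that vanishes on the diagonal.

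For $R_1$, use $|\partial_n\Psi_i(x)-\partial_n\Psi_i(y)|\le \|\Psi\|_{W^{2,\infty}(2B)}|x-y|$ together with $|\mathcal{G}_{i,j}(z)|\lesssim |z|^{-d}$ from \eqref{eq:kernel_function_onRd2}. The integrand is then dominated by
\[
|x-y|^{1-d}\one_{\{|x-y|\le 2\varepsilon\}}\,|H(y)|\,\|\Phi\|_{W^{2,\infty}(B)},
\]
where \eqref{eq:covariance_Linfty_bound} and \eqref{eq:choice_Phi_phi} control the remaining factors. Integrating first in $x$ gives $\int_{|z|\le 2\varepsilon}|z|^{1-d}\,\dd z\lesssim\varepsilon$. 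Cauchy--Schwarz in $y$ over the bounded set $2B$ then yields $|R_1|\lesssim \varepsilon\|\Phi\|_{W^{2,\infty}(B)}\|H\|_{L^2(2B)}$.

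For $R_2$ and $R_3$, apply Lemma \ref{l:convergence_covariance} with $B_1=B$, $B_2=2B$ and the matrix field
\[
F_{m,n}(x,y)=\mathcal{K}_{i,j}(x,y)\,\chi(y)H_{m,j}(y)\,\partial_n\Phi_i(x),
\]
where $\mathcal{K}=\mathcal{G}(x-y)(1-\om_\varepsilon(x-y))$ for $R_2$ and $\mathcal{K}=\mathcal{S}$ for $R_3$. Both kernels are smooth. For $R_2$, $\|\mathcal{K}\|_{L^\infty}\lesssim\varepsilon^{-d}$ and $\|\nabla_x\mathcal{K}\|_{L^\infty}\lesssim\varepsilon^{-d-1}$, since $|\nabla\mathcal{G}(z)|\lesssim|z|^{-d-1}$ on $|z|\ge\varepsilon$ and $|\nabla\om_\varepsilon|\lesssim\varepsilon^{-1}$. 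For $R_3$, $\mathcal{S}$ is $\varepsilon$-independent and smooth.

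Since $\partial_n\Phi$ is Lipschitz, $\nabla_x F$ contains no derivative of $H$. Hence $\|F\|_{L^2}+\|\nabla_xF\|_{L^2}\lesssim \varepsilon^{-d-1}\|\Phi\|_{W^{2,\infty}}\|H\|_{L^2(2B)}$, and Lemma \ref{l:convergence_covariance} gives the bound $(\delta N)^{-\g}\varepsilon^{-d-1}$.

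The main obstacle is reaching the sharper power $\varepsilon^{-d/2-1}$. For this I would estimate in $L^2$ instead of $L^\infty$, using $\int_{|z|\ge\varepsilon}|z|^{-2(d+1)}\,\dd z\lesssim\varepsilon^{-d-2}$ in the $x$-variable. This gives $\|\nabla_xF\|_{L^2(B\times 2B)}\lesssim \varepsilon^{-d/2-1}\|\Phi\|_{W^{2,\infty}}\|H\|_{L^2}$, and similarly $\|F\|_{L^2}\lesssim\varepsilon^{-d/2}\|\Phi\|_{W^{2,\infty}}\|H\|_{L^2}$, by Fubini and Young's convolution inequality. Any extra $\varepsilon^{-1}$ loss could in any case be absorbed by shrinking $\g$ after optimizing in $\varepsilon$.

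Summing the three bounds gives the claimed estimate.
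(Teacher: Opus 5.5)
Your proof is correct and follows the paper's argument: for $R_1$ you combine the Lipschitz bound on $\nabla\Psi$ with $|\mathcal{G}_{i,j}(z)|\lesssim|z|^{-d}$ to get the factor $\varepsilon$, and for $R_2$, $R_3$ you apply Lemma \ref{l:convergence_covariance}, with the Fubini-type $L^2$ bounds $\int_{|z|\ge\varepsilon}|z|^{-2d}\,\dd z\lesssim\varepsilon^{-d}$ and $\int_{|z|\ge\varepsilon}|z|^{-2d-2}\,\dd z\lesssim\varepsilon^{-d-2}$ producing the $\varepsilon^{-d/2-1}$ factor, exactly as in the paper. The preliminary $L^\infty$ estimate is an unnecessary detour, and your closing remark about absorbing a larger power of $\varepsilon^{-1}$ by shrinking $\g$ would not give the lemma as stated (the bound must hold uniformly for all $\varepsilon\in(0,1]$). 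You do not need that remark, however, since your $L^2$ computation already gives the required power.
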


In order not to interrupt the flow of the presentation, we postpone the proof of the above to Subsection \ref{ss:remainders_lemma}.
To proceed further, we elaborate on the first term on the right-hand side of \eqref{eq:decomposition_Ito_stratonovich_proof_convergence}.

By Fubini's theorem, it follows that 
\begin{align}
\label{eq:decomposition_Ito_stratonovich_proof_convergence1}
&{\normalfont{\text{P.V.}}}\int_{B}
\int_{2B} \mathcal{G}_{i,j}(x-y)\om_{\varepsilon}(x-y) \Cov^{N,\delta}_{m,n}(x-y) \chi(y)H_{m,j}(y)\partial_n \Psi_i(y)\,\dd y\,\dd x\\
\nonumber
&=\lim_{r\downarrow 0}\int_{B}
\int_{2B} \one_{\{|x-y|\geq r\}}\, \mathcal{G}_{i,j}(x-y)\om_{\varepsilon}(x-y) \Cov^{N,\delta}_{m,n}(x-y) \chi(y)H_{m,j}(y)\partial_n \Psi_i(y)\,\dd y\,\dd x\\
\nonumber
&=
\int_{2B}\Big({\normalfont{\text{P.V.}}}\,\int_{B} 
\mathcal{G}_{i,j}(x-y)\om_{\varepsilon}(x-y) \Cov^{N,\delta}_{m,n}(x-y)\,\dd x\Big) \chi(y)H_{m,j}(y)\partial_n \Psi_i(y)\,\dd y\\
\nonumber
&=
\int_{2B}\Big({\normalfont{\text{P.V.}}}\,\int_{B-y} 
\mathcal{G}_{i,j}(z)\om_{\varepsilon}(z) \Cov^{N,\delta}_{m,n}(z)\,\dd z\Big) \chi(y)H_{m,j}(y)\partial_n \Psi_i(y)\,\dd y\\
\nonumber
&=\sum_{k}(\theta^N_k)^2 \Big(\delta_{m,n}-\frac{k_mk_n}{|k|^2}\Big)\wh{\mathcal{G}_{i,j}}(k)\int_B H_{m,j}\partial_n \Phi_i+R_4
\end{align}
where we used the explicit value of the covariance function \eqref{eq:expression_covariance_operator}, and we set
\begin{align}
\label{eq:def_R4}
R_4 &=\sum_{k}(\theta^N_k)^2 \Big(\delta_{m,n}-\frac{k_mk_n}{|k|^2}\Big)\times\\
\nonumber
& \Big[
 \int_{2B} \chi(y)H_{m,j}(y)\partial_n \Psi_i(y)\Big({\normalfont{\text{P.V.}}}\int_{B-y} \mathcal{G}_{i,j}(x)\om_{\varepsilon}(x) e^{2\pi \i \delta k\cdot x} \,\dd x\Big)\,\dd y\\
\nonumber
&-\wh{\mathcal{G}_{i,j}}(k)\int_{B}  H_{m,j}\partial_n \Phi_i\Big],
\end{align}
where the principal value is understood in the weak $L^2$-topology as in \eqref{eq:helmholtz_projection_PV}.
For the remainder $R_4$, we have the following lemma. 

\begin{lemma}[Estimates for the remainders II]
\label{l:remainders2}
With the previous notation, there exists $C>0$ such that, for all $\varepsilon\in (0,1]$, $\delta\in (0,1]$, $N\geq 1$, $H\in L^2(2B;\R^{d\times d})$ and $\Phi\in W^{2,\infty}(B;\R^d)$, it holds that 
\begin{align*}
|R_4|
\leq C \| H\|_{L^2(2B)} \| \Phi\|_{W^{2,\infty}(B)}(\varepsilon^{-1} (\delta N)^{-1} + \sqrt{\varepsilon}) ,
\end{align*}
where $\Psi$ is chosen as in \eqref{eq:choice_Phi_phi}.
\end{lemma}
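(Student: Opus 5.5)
We treat $R_4$ frequency by frequency. Since $\|\theta^N\|_{\ell^2}=1$ and $|\delta_{m,n}-k_mk_n/|k|^2|\le 1$, it suffices to bound, uniformly in $k\in\supp\theta^N$, the bracket in \eqref{eq:def_R4} by $C\|H\|_{L^2(2B)}\|\Phi\|_{W^{2,\infty}(B)}(\varepsilon^{-1}(\delta N)^{-1}+\sqrt{\varepsilon})$. Set
$$g_{k,\varepsilon}(y)\stackrel{{\rm def}}{=}{\normalfont{\text{P.V.}}}\int_{B-y}\mathcal{G}_{i,j}(x)\om_\varepsilon(x)e^{2\pi\i\delta k\cdot x}\,\dd x .$$
The first step is a decomposition of $2B$ into the \emph{interior region} $B_{\rm in}=\{y\in B:\operatorname{dist}(y,\partial B)>2\varepsilon\}$ and its complement. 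For $y\in B_{\rm in}$ we have $\supp\om_\varepsilon\subseteq B_{2\varepsilon}(0)\subseteq B-y$, so $g_{k,\varepsilon}(y)$ is independent of $y$ and equals the full-space integral ${\normalfont{\text{P.V.}}}\int_{\R^d}\mathcal{G}_{i,j}\om_\varepsilon e^{2\pi\i\delta k\cdot x}\,\dd x=(\wh{\mathcal{G}_{i,j}}*\wh{\om_\varepsilon})(-\delta k)$, where $\wh{\mathcal{G}_{i,j}}$ is the symbol in \eqref{eq:kernel_function_onRd}, i.e.\ $\xi\mapsto \xi_i\xi_j/|\xi|^2-\delta_{ij}/d$. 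On $B_{\rm in}$, moreover, $\chi=1$ and $\Psi=\Phi$. Here we read $\wh{\mathcal{G}_{i,j}}(k)$ in \eqref{eq:def_R4} as this homogeneous symbol; it is insensitive to the dilation $\delta$ and the sign of $k$.

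The second step is the interior estimate. Since $\int\wh{\om_\varepsilon}=\om_\varepsilon(0)=1$, we get
$$(\wh{\mathcal{G}}*\wh{\om_\varepsilon})(\delta k)-\wh{\mathcal{G}}(\delta k)=\int\big(\wh{\mathcal{G}}(\delta k-\eta)-\wh{\mathcal{G}}(\delta k)\big)\wh{\om_\varepsilon}(\eta)\,\dd\eta .$$
The symbol is smooth and homogeneous of degree $0$, so $|\wh{\mathcal{G}}(\delta k-\eta)-\wh{\mathcal{G}}(\delta k)|\lesssim|\eta|/|\delta k|$ for $|\eta|\le|\delta k|/2$, and it is bounded otherwise. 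Together with $\wh{\om_\varepsilon}(\eta)=\varepsilon^d\wh\om(\varepsilon\eta)$, which is Schwartz at scale $\varepsilon^{-1}$, this gives a difference of order $\varepsilon^{-1}(\delta|k|)^{-1}\lesssim\varepsilon^{-1}(\delta N)^{-1}$. Multiplying by $\int_{B_{\rm in}}|H||\nabla\Phi|\lesssim\|H\|_{L^2}\|\Phi\|_{W^{1,\infty}}$ yields the first contribution.

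The third step handles the boundary layer $2B\setminus B_{\rm in}$, where both $\chi H\partial\Psi\, g_{k,\varepsilon}$ and $\wh{\mathcal{G}}\,H\partial\Phi\,\one_B$ live. This region has measure $\lesssim\varepsilon$, so Cauchy–Schwarz gives $\int_{\rm layer}|H|\lesssim\sqrt{\varepsilon}\,\|H\|_{L^2(2B)}$, provided $\sup_y|g_{k,\varepsilon}(y)|\lesssim1$ uniformly in $y,k,\varepsilon,\delta$. This uniform bound is the main obstacle. The kernel $\mathcal{G}\,\om_\varepsilon\,e^{2\pi\i\delta k\cdot x}$ is truncated to the non-symmetric domain $B-y$. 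We would prove the bound as follows. First rescale $x=\varepsilon z$; by the $(-d)$-homogeneity of $\mathcal{G}$ the integral becomes ${\normalfont{\text{P.V.}}}\int_{(B-y)/\varepsilon}\mathcal{G}(z)\om(z)e^{2\pi\i\varepsilon\delta k\cdot z}\,\dd z$, with the domain a large ball containing or touching the origin. Next, split $e^{2\pi\i\varepsilon\delta k\cdot z}=1+(e^{2\pi\i\varepsilon\delta k\cdot z}-1)$ on $|z|<\min(1,(\varepsilon\delta|k|)^{-1})$. The term with $1$ is bounded by the cancellation of $\mathcal{G}$ over spheres, using that for $y$ near $\partial B$ the truncated domain is a ball of radius $\gtrsim1$, and a Calderón–Zygmund kernel truncated to a $C^2$ domain has uniformly bounded principal value with $\om$ smooth. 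The second term is bounded by $\int\varepsilon\delta|k||z|^{1-d}\lesssim1$. The remaining region, $|z|\gtrsim(\varepsilon\delta|k|)^{-1}$ intersected with $\supp\om$, is handled by integration by parts in the direction of $k$, which yields an $O(1)$ bound from $|\nabla(\mathcal{G}\om)|\lesssim|z|^{-d-1}$ plus boundary terms of the same order. Combining the three steps and summing over $k$ with $\sum(\theta^N_k)^2=1$ gives the claim.
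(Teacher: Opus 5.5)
Your argument is correct in substance but takes a different route from the paper. Two points need tightening.

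First, $2B\setminus B_{\rm in}$ does not have measure $O(\varepsilon)$, since it contains $2B\setminus B$. What you need is that $g_{k,\varepsilon}(y)=0$ once $\operatorname{dist}(y,B)>2\varepsilon$, because then $(B-y)\cap\supp\om_\varepsilon=\emptyset$. So outside $B_{\rm in}$ the integrand lives on $\{y\in 2B:\operatorname{dist}(y,\partial B)\le 2\varepsilon\}$, which does have measure $\lesssim\varepsilon$.

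Second, a uniform bound on truncated principal values is not a general property of Calder\'on--Zygmund kernels on smooth domains. For odd kernels the truncations blow up like $\log(1/s)$, where $s$ is the distance from the origin to the boundary. The bound holds here because $\mathcal{G}_{i,j}$ is even, hence has zero mean on every hemisphere. Let $s$ now denote the distance from $0$ to $\partial\big((B-y)/\varepsilon\big)$. In polar coordinates, the angular average of $\mathcal{G}_{i,j}$ over $\{\sigma:t\sigma\in(B-y)/\varepsilon\}$ vanishes for $t<s$ and is $O(s/t+t\varepsilon/\varrho)$ for $t>s$. This is integrable against $\dd t/t$, and it is the fact your cancellation step should invoke.

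The paper avoids any pointwise bound on $g_{k,\varepsilon}$. It tests against an arbitrary $g\in L^2(\R^d)$ and uses Fubini on the truncations to transfer the restriction to $B-y$ onto $g$. This splits the quantity into two pieces.

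\begin{itemize}
\item $R_{4,2}=(\wh{\mathcal{G}_{i,j}\om_\varepsilon}-\wh{\mathcal{G}_{i,j}})(-\delta k)\int_B g$ is your interior term. The paper bounds it by $C\varepsilon^{-1}|\delta k|^{-1}$ via one integration by parts in physical space on $(1-\om_\varepsilon)\mathcal{G}_{i,j}$, which is equivalent to your frequency-side Lipschitz estimate.
\item In $R_{4,1}$ the boundary effect appears as the factor $\int g(y)[\one_B(x+y)-\one_B(y)]\,\dd y$. Since $\one_B$ is BV, this factor is $\lesssim\|g\|_{L^2}\sqrt{|x|}$. Hence $R_{4,1}$ converges absolutely and is bounded by $\|g\|_{L^2}\int_{|x|\le 2\varepsilon}|x|^{-d+1/2}\,\dd x\lesssim\sqrt{\varepsilon}\,\|g\|_{L^2}$.
\end{itemize}

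The paper's argument uses no cancellation of $\mathcal{G}_{i,j}$, no oscillatory integration by parts, and no regularity of $\partial B$ beyond finite perimeter. Your route yields more: a uniform pointwise bound on the truncated oscillatory principal value and a transparent interior/boundary-layer picture. But it rests on a delicate lemma, only sketched in your write-up, that depends on the evenness of the kernel and the curvature of $\partial B$. The paper's translation-continuity argument is shorter and more robust.
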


Given the above results, we can prove Lemma \ref{l:local_ito_stratonovich_corrector}.

\begin{proof}[Proof of Lemma \ref{l:local_ito_stratonovich_corrector}]
We divide the proof into three steps.

\smallskip

\emph{Step 1: It holds that 
\begin{equation}
\label{eq:step1_representation_limiting_A_proof}
\Big|\sum_{k}(\theta^N_k)^2 \Big(\delta_{n,m}- \frac{k_n k_m}{|k|^{2}}\Big)\wh{\mathcal{G}_{i,j}}(k) - A^{i,j}_{n,m} \Big| \leq \frac{C}{ N},
\end{equation}
where}
\begin{equation}
\label{eq:explicit_expression_A_local_Ito}
A^{i,j}_{n,m}\stackrel{{\rm def}}{=}\frac{2}{d^2(d+2)} \delta_{n,m}\delta_{i,j} - \frac{1}{d(d+2)}\big(\delta_{n,i}\delta_{m,j} + \delta_{n,j}\delta_{m,i}\big).
\end{equation}
\emph{Since $\delta\leq 1$, the right-hand side of \eqref{eq:step1_representation_limiting_A_proof} can be estimated by $C(\delta N)^{-1}$.}

\smallskip

Recall that $\wh{\mathcal{G}_{i,j}}(k) =(k_i k_j)/|k|^2-\delta_{i,j}/d$ by \eqref{eq:kernel_function_onRd}, and that $\theta^N$ are defined in \eqref{eq:choice_thetaN}. From the expression of $\theta^N$, it follows that $\sum_{k}(\theta^N_k)^2 \big(\delta_{n,m}- (k_n k_m)/|k|^{2}\big)\wh{\mathcal{G}_{i,j}}(k)$ is an approximation of a quotient of Riemann integrals on the annuli $\{ 1 \leq |\xi| \leq 2\}$:
\begin{align}
\label{eq:limit_Riemann_sum_annulus}
&\sum_{k}(\theta^N_k)^2 \Big(\delta_{n,m}- \frac{k_n k_m}{|k|^{2}}\Big)\Big(\frac{k_i k_j}{|k|^2} -\frac{\delta_{i,j}}{d}\Big)\\
\nonumber
&= \Big[\frac{1}{N^d}\sum_{N\leq |k| \leq 2N} \Big|\frac{N}{k}\Big|^{2a}\Big]^{-1} \Big[\frac{1}{N^d}\sum_{N\leq |k| \leq 2N} \Big|\frac{N}{k}\Big|^{2a} \Big(\delta_{n,m}- \frac{k_n k_m}{|k|^{2}}\Big)\Big(\frac{k_i k_j}{|k|^2}  -\frac{\delta_{i,j}}{d}\Big)\Big] \\
\nonumber
&=
\Big(\int_{\{ 1 \leq |\xi| \leq 2\}} \frac{1}{|\xi|^{2a}}\,\dd \xi\Big)^{-1} \int_{\{ 1 \leq |\xi| \leq 2\}} \frac{1}{|\xi|^{2a}} \Big(\delta_{n,m} -\frac{\xi_{n}\xi_{m}}{|\xi|^2} \Big)\Big(\frac{\xi_i \xi_j}{|\xi|^2}  -\frac{\delta_{i,j}}{d}\Big)\,\dd \xi+  \mathcal{O}(N^{-1})\\
\nonumber
&=\fint_{\mathbb{S}^{d-1}} (\delta_{n,m} -\sigma_{n}\sigma_{m})\Big(\sigma_i\sigma_j-\frac{\delta_{i,j}}{d}\Big)\,\dd \mathcal{H}^{d-1}(\sigma) +\mathcal{O}(N^{-1}),
\end{align}
where $\mathcal{O}(N^{-1})$ denotes the error between the left-hand side and the first term on the right-hand side.
The convergence rate $\mathcal{O}(N^{-1})$ in \eqref{eq:limit_Riemann_sum_annulus} follows from standard error estimates for Riemann sums on regular domains.

It is routine to check that the spherical integral in the last equality of \eqref{eq:limit_Riemann_sum_annulus} coincides with $A_{n,m}^{i,j}$ in \eqref{eq:explicit_expression_A_local_Ito}. This completes the proof of Step 1.

\smallskip

\emph{Step 2: Let $A^{i,j}_{n,m}$ be as in Step 1. Then, for all $\Phi\in W^{2,\infty}(B)$ such that $\Phi|_{\partial B}=0$, and traceless $H\in L^2(B;\R^{d\times d})$ satisfying $\nabla \cdot H=0$ in $\D'(B)$, it holds that} 
\begin{align*}
\sum_{i,j,n,m}A^{i,j}_{n,m}\int_{B }H_{m,j} \partial_{n} \Phi_i=  \frac{2}{d^2(d+2)}  \int_{B} H^\top : \nabla \Phi. 
\end{align*}
Since $\sum_{j,m}\int_{B} H_{m,j}\partial_m \Phi_j =\int_B H^\top: \nabla \Phi$, 
due to \eqref{eq:explicit_expression_A_local_Ito}, it remains to prove that:
\begin{align}
\label{eq:claim_step_2_ito_stat_corrector_local}
\sum_{i,j}  \int_{B }H_{j,j} \partial_{i} \Phi_i = 0, 
\qquad \text{ and }\qquad \sum_{j,m}\int_{B }H_{m,j} \partial_{j} \Phi_m = 0.
\end{align}
The first identity in \eqref{eq:claim_step_2_ito_stat_corrector_local} follows from $\Tr(H)=0$. For the second one, let $(\Phi^\ell)_{\ell}\subseteq C^{\infty}_{{\rm c}}(B;\R^d)$ be a sequence such that $\Phi^\ell\to \Phi$ as $\ell\to \infty$ in $H^1(B;\R^d)$ (this is possible since $\Phi|_{\partial B}=0$ and $\Phi\in W^{2,\infty}(B;\R^d)$). From $(\nabla \cdot H)_m= \partial_j H_{m,j}$ in $\D'(B)$, we infer
\begin{align*}
\sum_{m,j}  \int_{B }H_{m,j} \partial_{j} \Phi_m 
&=\lim_{\ell\to \infty} \sum_{m,j} 
\int_{B} H_{m,j} \partial_j \Phi^\ell_m=0
\end{align*}
as desired.

\smallskip

\emph{Step 3: Conclusion}. 
Recall that $\delta N\geq 1$ by assumption. 
Without loss of generality, we may assume that Lemma \ref{l:remainders1} holds with $\g\leq 1$. Thus, applying Lemmas \ref{l:remainders1} and \ref{l:remainders2} with the choice
$
\varepsilon= (\delta N)^{-(2\g)/(d+3)} $, we get the following estimates for the remainders
$$
\sum_{1\leq i\leq 4} |R_i|\leq C [\sqrt{\varepsilon}+ \varepsilon^{-(d+2)/2}(\delta N)^{-\g}] = C (\delta N)^{-\g/(d+3)}.
$$
Hence, the claim of Lemma \ref{l:local_ito_stratonovich_corrector} follows from the above, Steps 1 and 2, the decompositions \eqref{eq:expression_via_covariance_ito_strat}, \eqref{eq:decomposition_Ito_stratonovich_proof_convergence}-\eqref{eq:decomposition_Ito_stratonovich_proof_convergence1} and the identity $D'_d=\frac{d-1}{d^2}+ \frac{2}{d^2(d+2)}$.
\end{proof}

\subsection{Proof of Lemmas \ref{l:remainders1} and \ref{l:remainders2}}
\label{ss:remainders_lemma}
In this last subsection, we prove Lemmas \ref{l:remainders1} and \ref{l:remainders2}. Here, we use that, due to \eqref{eq:kernel_function_onRd2}, for all integers $k\geq 0$, 
\begin{equation}
\label{eq:decay_riesz_kernel}
|\nabla^k \mathcal{G}_{i,j}(x)|\leq \frac{C_k}{|x|^{d+k}} \ \text{ for all } \ x\neq 0. 
\end{equation}

\begin{proof}[Proof of Lemma \ref{l:remainders1}]
We begin by estimating the first remainder contribution $R_1$. From \eqref{eq:covariance_Linfty_bound}, the smoothness of $\chi$, and $\Psi\in W^{2,\infty}(2B;\R^d)$, it follows that 
\begin{align*}
|R_1|&\leq  C\max_{i,j,m,n} \int_{2B\times 2B} | \mathcal{G}_{i,j}(x-y)| \om_{\varepsilon}(x-y)  |H_{m,j}(y)| |
\partial_n \Psi_i(x)-\partial_n \Psi_i(y)|\,\dd y\,\dd x\\
&\leq  C\|\Psi\|_{W^{2,\infty}(2B)} \max_{m,j} \int_{2B\times 2B} |x-y|^{-d+1} \om_{\varepsilon}(x-y)  |H_{m,j}(y)| \,\dd y\,\dd x\\
&\stackrel{(i)}{\leq}  C\|\Psi\|_{W^{2,\infty}(2B)} \max_{m,j}\Big(\int_{2B} |H_{m,j}(y)|\Big)\Big( \int_{\{|z|\leq \varepsilon\}} |z|^{-d+1} \Big)\\
&\leq C \varepsilon\|\Psi\|_{W^{2,\infty}(2B)}\|H\|_{L^2(2B)}\\
&\stackrel{(ii)}{\leq} C \varepsilon\|\Phi\|_{W^{2,\infty}(B)}\|H\|_{L^2(2B)},
\end{align*}
where in $(i)$ we used $\supp\om_\varepsilon\subseteq B_{2\varepsilon}(0)$ and in $(ii)$ \eqref{eq:choice_Phi_phi}. 

\smallskip

To bound the remainders $R_2$ and $R_3$, we use Lemma \ref{l:convergence_covariance}. Let $\g>0$ be as in the latter result.
Clearly, as $\mathcal{S}_{i,j}\in C^\infty(\T^d\times \T^d)$ and $\Phi\in W^{2,\infty}(B;\R^d)$, it follows that 
\begin{align*}
|R_3|\leq \frac{C}{(\delta N)^\g }\|\Phi\|_{W^{2,\infty}(B)}\|H\|_{L^2(2B)}.
\end{align*}
It remains to estimate $R_2$. From \eqref{eq:decay_riesz_kernel} with $k\in \{0,1\}$ and Lemma \ref{l:convergence_covariance}, we have  
\begin{align*}
|R_2|
&\leq\frac{C}{(\delta N)^\g} \big(R_{2,1}+R_{2,2}\big),
\end{align*}
where
\begin{align*}
 R_{2,1}&\stackrel{{\rm def}}{=}
\max_{i,j,m,n}\big\|(x,y)\mapsto \mathcal{G}_{i,j}(x-y) (1-\om_{\varepsilon}(x-y)) \chi(y)H_{m,j}(y)\partial_n \Phi_i(x)\big\|_{L^2(B\times 2B)}\\
 R_{2,2}&\stackrel{{\rm def}}{=}\max_{i,j,m,n}\big\|(x,y)\mapsto \chi(y)H_{m,j}(y) \nabla_x [ \mathcal{G}_{i,j}(x-y) (1-\om_{\varepsilon}(x-y)) \partial_n \Phi_i(x)\big]\big\|_{L^2(B\times 2B)}.
\end{align*}
Since $\om_\varepsilon|_{B_\varepsilon(0)}=1 $, it follows from \eqref{eq:decay_riesz_kernel} and Fubini's theorem that
\begin{align*}
R_{2,1}
&\leq \max_{i,j,m,n}
\Big(\int_{B\times 2B}
\big| \mathcal{G}_{i,j}(x-y) (1-\om_{\varepsilon}(x-y)) \chi(y)H_{m,j}(y)\partial_n \Phi_i(x)\big|^2\,\dd y\,\dd x\Big)^{1/2}\\
&\leq C \|\Phi\|_{W^{1,\infty}(B)} \max_{i,j,m}\Big(\int_{(B\times 2B) \cap \{|x-y|\geq \varepsilon\}} | \mathcal{G}_{i,j}(x-y)|^2 |H_{m,j}(y)|^2\,\dd y\,\dd x\Big)^{1/2}\\
&\leq 
C \|\Phi\|_{W^{1,\infty}(B)} \Big(\int_{2B} |H|^2 \Big)^{1/2}\Big(\int_{\{|z|\geq \varepsilon\}} |z|^{-2d} \Big)^{1/2}\\
&\leq 
C\varepsilon^{-d/2} \|\Phi\|_{W^{1,\infty}(B)} \|H\|_{L^2(2B)}.
\end{align*}
Similarly, one can readily check that \eqref{eq:decay_riesz_kernel} with $k=1$ implies 
$$
|R_{2,2}|\leq 
C\varepsilon^{-d/2-1} \|\Phi\|_{W^{2,\infty}(B)} \|H\|_{L^2(2B)}.
$$
The claimed estimate of Lemma \ref{l:remainders1} follows by collecting the previous estimates.
\end{proof}

\begin{proof}[Proof of Lemma \ref{l:remainders2}]
Let $B$ be a fixed ball in $\R^d$ as in Lemma \ref{l:local_ito_stratonovich_corrector}.
We claim that there exists $C>0$ such that, for all $g\in L^2(\R^d)$, 
\begin{align}
\label{eq:estimate_G_pv_proof}
\Big| \int_{\R^d} g(y)\, {\normalfont{\text{P.V.}}} \int_{B-y} \mathcal{G}_{i,j}(x)\om_{\varepsilon}(x) e^{2\pi \i \delta k\cdot x} \,\dd x\,\dd y-\wh{\mathcal{G}_{i,j}}(k) \int_{B} g \Big|\qquad &\\
\nonumber
\leq C\|g\|_{L^2(\R^d)} \big[(\varepsilon \delta |k|)^{-1} + \sqrt{\varepsilon}\,\big]&
\end{align}
Indeed, if the above holds, then it follows from \eqref{eq:choice_Phi_phi} and \eqref{eq:def_R4} that
\begin{align*}
|R_4| 
&\leq C \Big(\big[(\varepsilon \delta N)^{-1} + \sqrt{\varepsilon}\,\big]\sum_{k}(\theta^N_k)^2 \Big|\mathrm{Id}-\frac{k\otimes k}{|k|^2}\Big|\Big)\max_{i,j,m,n} 
\|\one_{2B}\chi H_{m,j} \partial_n \Psi_i\|_{L^2(\R^d)}&\\
&\leq C \big[(\varepsilon \delta N)^{-1} + \sqrt{\varepsilon}\,\big]\|H\|_{L^2(2B)}\|\Phi\|_{W^{2,\infty}(B)},
\end{align*}
where we used $\supp\theta^N \subseteq\{N \leq |k|\leq 2N\}$ and $\|\theta^N\|_{\ell^2}=1$.

\smallskip

Thus, Lemma \ref{l:remainders2} is a consequence of \eqref{eq:estimate_G_pv_proof}. In the remaining part of the proof, we show the validity of \eqref{eq:estimate_G_pv_proof}.
Let $g\in L^2(\R^d)$ be fixed.
To prove \eqref{eq:estimate_G_pv_proof}, first, note that $\wh{\mathcal{G}_{i,j}}(k)=\wh{\mathcal{G}_{i,j}}(-\delta k)$ as $ \R^d\setminus \{0\}\ni \xi\mapsto\wh{\mathcal{G}_{i,j}}(\xi)$ is homogeneous of degree $0$ and even, see \eqref{eq:kernel_function_onRd}. Applying Fubini's theorem to the truncated integrals and using the weak $L^2$-convergence of the principal-value truncations, we may write
\begin{align*}
&\int_{\R^d} g (y)\,{\normalfont{\text{P.V.}}} \int_{B-y} \mathcal{G}_{i,j}(x)\om_{\varepsilon}(x) e^{2\pi \i \delta k\cdot x} 
\,\dd x\,\dd y\\
&= {\normalfont{\text{P.V.}}}\int_{\R^d} \mathcal{G}_{i,j}(x) \om_{\varepsilon}(x) e^{2\pi \i \delta k\cdot x}  \Big(\int_{\R^d}  g(y)\one_{B}(x+y) 
\,\dd y\Big)\, \dd x
\end{align*}
and hence,
\begin{align*}
\int_{\R^d} g (y) \, {\normalfont{\text{P.V.}}}\,\int_{B-y} \mathcal{G}_{i,j}(x)\om_{\varepsilon}(x) e^{2\pi \i \delta k\cdot x} 
\,\dd x\,\dd y
-\wh{\mathcal{G}_{i,j}}(-\delta k) \int_{B} g
=R_{4,1}+ R_{4,2},
\end{align*}
where
\begin{align*}
R_{4,1}&=	
{\normalfont{\text{P.V.}}}\int_{\R^d} \mathcal{G}_{i,j}(x) \om_{\varepsilon}(x) e^{2\pi \i \delta k\cdot x} \Big( \int_{\R^d}  g(y)[\one_{B}(x+y) -\one_{B}(y)]
\,\dd y\Big)\, \dd x\\
R_{4,2}&=
\big(\wh{\mathcal{G}_{i,j}\om_{\varepsilon}}(-\delta k)-\wh{\mathcal{G}_{i,j}}(-\delta k) \big)\int_{B} g,
\end{align*}
where $\wh{\mathcal{G}_{i,j}\om_{\varepsilon}}$ denotes the Fourier transform associated with the principal value distribution $\operatorname{P.V.}[\mathcal{G}_{i,j}\om_{\varepsilon}]$.

We estimate $R_{4,1}$ and $R_{4,2}$ in Steps 1 and 2 below, respectively.

\smallskip

\emph{Step 1: There exists $C>0$ such that, for all $g\in L^2(\R^d)$,}
$$
|R_{4,1}|\leq C \|g\|_{L^2(\R^d)} \sqrt{\varepsilon}.
$$
We begin by noticing that   
\begin{align*}
\Big|\int_{\R^d} g(y) [\one_{B}(x+y)-\one_{B}(y)] \,\dd y\Big|
&\leq \|g\|_{L^2(\R^d)}\|\one_{B}(x+y)-\one_{B}(y)\|_{L^2(\R^d)}\\
&\stackrel{(i)}{=}\|g\|_{L^2(\R^d)}\|\one_{B}(x+y)-\one_{B}(y)\|_{L^1(\R^d)}^{1/2}\\
&\stackrel{(ii)}{\leq} C\|g\|_{L^2(\R^d)}\sqrt{|x|},
\end{align*}
where in $(i)$ we used $\one_{B}(x+y)-\one_{B}(y)\in \{-1,0,1\}$, and in $(ii)$ the continuity in the $L^1$-norm of $BV$-functions, which follows by approximation \cite[Theorem 5.3]{EG_measure} and $\| \nabla \one_{B}\|_{TV}\lesssim_d 1$, where $TV$ stands for total variation.

From the above estimate, it follows that the integral in the definition of $R_{4,1}$ converges absolutely. More precisely, as $\supp\om_\varepsilon\subseteq B_{2\varepsilon}(0)$, we have 
\begin{align*}
|R_{4,1}|
\leq C \|g\|_{L^2(\R^d)}
\int_{|x|\leq 2\varepsilon} |\mathcal{G}_{i,j}(x) |\sqrt{|x|}\,\dd x 
\leq C \|g\|_{L^2(\R^d)} \sqrt{\varepsilon},
\end{align*}
where we used \eqref{eq:decay_riesz_kernel}. 

\smallskip

\emph{Step 2: There exists $C>0$ such that, for all $\xi \in \R^d\setminus\{0\}$ and $i,j\in \{1,\dots,d\}$, it holds that}
$$
\big|
\wh{\mathcal{G}_{i,j}}(\xi)- \wh{\mathcal{G}_{i,j}\om_{\varepsilon}}(\xi) \big|
\leq C \varepsilon^{-1}  |\xi|^{-1}.
$$
\emph{In particular, $|R_{4,2}|\leq C \varepsilon^{-1}(\delta N)^{-1} \|g\|_{L^2(\R^d)}$ with $C>0$ independent of $g$.}

To prove the claim of Step 2, letting $\varphi_n=\varphi(\cdot/n)$ for $n\geq 1$ and $\varphi\in C_{{\rm c}}^{\infty}(\R^d)$ such that $\varphi|_{B_1(0)}=1$ and $\varphi|_{\R^d\setminus B_2(0)}=0$, for all $k\in \{1,\dots,d\}$ and $\xi\in \R^d$ such that $\xi_k\neq 0$, we have 
\begin{align*}
&\big|\wh{\mathcal{G}_{i,j}}(\xi)- \wh{\mathcal{G}_{i,j}\om_{\varepsilon}}(\xi) \big|\\
&\qquad\quad  =
\lim_{n\to \infty} \Big| \int_{\R^d}\varphi_n(x) e^{-2\pi \i \xi  \cdot x} (1-\om_{\varepsilon}(x) )\mathcal{G}_{i,j}(x)\,\dd x\Big|\\
&\qquad \quad \stackrel{(i)}{=} 
\lim_{n\to \infty} \Big| \int_{\R^d} \varphi(\varepsilon x'/n) e^{-2\pi \i \varepsilon  \xi \cdot x'} (1-\om(x') )\mathcal{G}_{i,j}(x')\,\dd x'\Big|\\
&\qquad\quad  \lesssim \varepsilon^{-1}|\xi_k|^{-1} \Big(  \limsup_{n\to \infty} \frac{\varepsilon}{n}\Big| \int_{\R^d}  (\partial_{x'_k}\varphi)(\varepsilon x'/n) e^{-2\pi \i \varepsilon  \xi  \cdot  x'} (1-\om(x') )\mathcal{G}_{i,j}(x')\,\dd x'\Big|\\
&\qquad\quad  +\limsup_{n\to \infty} \Big| \int_{\R^d}  \varphi_n(\varepsilon x') e^{-2\pi \i \varepsilon \xi  \cdot x'}  \partial_{x_k'}[(\om(x')-1 )\mathcal{G}_{i,j}(x')]\,\dd x'\Big|\Big)\\
&\qquad\quad  \stackrel{(ii)}{\lesssim} \varepsilon^{-1}|\xi_k|^{-1}\Big( \lim_{n\to \infty} \frac{\varepsilon}{n}+1\Big)\lesssim \varepsilon^{-1}|\xi_k|^{-1},
\end{align*}
where in $(i)$ we used that $\mathcal{G}_{i,j}$ is homogeneous of order $-d$, in $(ii)$ that $\nabla [(1-\om)\mathcal{G}_{i,j}]$ is integrable over $\R^d$ as $\om|_{B_1(0)}=1$, and the fact that 
\begin{align*}
\Big| \int_{\R^d}  (\partial_{x'_k}\varphi)(\varepsilon x'/n) e^{2\pi \i \varepsilon  \xi  \cdot  x'} (1-\om(x') )\mathcal{G}_{i,j}(x')\,\dd x'\Big|
&\lesssim \int_{\{n/\varepsilon \leq |x|\leq 2n/\varepsilon\}} |\mathcal{G}_{i,j}|\\
&\eqsim_d  \int_{n/\varepsilon}^{2n/\varepsilon} \frac{\dd r}{r} \lesssim 1,
\end{align*}
as $\supp\nabla \varphi \subseteq \{1\leq |x|\leq 2\}$.
The claim of Step 2 follows from the arbitrariness of $k\in \{1,\dots,d\}$ (e.g., choosing $k\in \{1,\dots,d\}$ such that $|\xi_k|\geq |\xi|/\sqrt{d}$).
\end{proof}

\medskip

\noindent
{\bf Acknowledgements.}
The author thanks Federico Cornalba, Theresa Lange, and Mark Veraar for their helpful comments.
The author is grateful to Franco Flandoli for fruitful discussions on the local energy inequality of Subsection \ref{ss:local_energy_inequality} during a visit to Scuola Normale Superiore in January 2023.

\medskip

\noindent
{\bf Data availability.} This manuscript has no associated data.
\medskip

\noindent
{\bf Declaration – Conflict of interest.} The author has no conflict of interest.

\def\polhk#1{\setbox0=\hbox{#1}{\ooalign{\hidewidth
  \lower1.5ex\hbox{`}\hidewidth\crcr\unhbox0}}} \def\cprime{$'$}

\end{document}